\setlist[enumerate]{label*=\arabic*.}
\newtheorem{theorem}{Theorem}[section]
\newtheorem{proposition}[theorem]{Proposition}
\newtheorem{lemma}[theorem]{Lemma}
\newtheorem{corollary}[theorem]{Corollary}
\theoremstyle{definition}
\newtheorem{definition}[theorem]{Definition}
\newtheorem{remark}[theorem]{Remark}
\newtheorem{convention}[theorem]{Convention}
\newtheorem{example}[theorem]{Example}
\DeclareMathAlphabet{\mathcalligra}{T1}{calligra}{m}{n}
\DeclareFontShape{T1}{calligra}{m}{n}{<->s*[2.2]callig15}{}
\newcommand{\Ent}{s}
\newcommand{\GradEnt}{S}
\newcommand{\LeftoverGradEnt}{U}
\newcommand{\LogDensity}{\uprho}
\newcommand{\vortrenormalized}{\Omega}
\newcommand{\Speed}{c}
\newcommand{\VortVort}{\mathcal{C}}
\newcommand{\DivGradEnt}{\mathcal{D}}
\newcommand{\dive}{\mbox{\upshape div}}
\newcommand{\curl}{\mbox{\upshape curl}}
\newcommand{\Transport}{\mathbf{B}}
\newcommand{\Timefunction}{\uptau}
\newcommand{\gensmoothfunction}{\mathrm{f}}
\newcommand{\enmomem}{\mathbf{Q}}
\newcommand{\SigmatTan}{V}
\newcommand{\spherenormal}{Z}
\newcommand{\gen}{\underline{H}}
\newcommand{\modgen}{\breve{\underline{H}}}
\newcommand{\lapsemodgen}{\underline{\iota}} 
\newcommand{\utang}{\Theta}
\newcommand{\tang}{\check{\Theta}}
\newcommand{\uspecialgen}{\underline{E}}
\newcommand{\specialgen}{E}
\newcommand{\keydetvectorfield}{Y}
\newcommand{\sidehypnorm}{\underline{N}}
\newcommand{\tophypnorm}{N}
\newcommand{\modtophypnorm}{Q}
\newcommand{\urescalednewgenminushypnorm}{P}
\newcommand{\rescalednewgenminushypnorm}{\check{P}}
\newcommand{\projectedtransport}{K}
\newcommand{\utandecompvectorfielddownarg}[1]{W_{(#1)}}
\newcommand{\utandecompvectorfieldmixedarg}[2]{W_{(#1)}^{#2}}
\newcommand{\tandecompvectorfielddownarg}[1]{\check{W}_{(#1)}}
\newcommand{\lengthofgen}{\underline{\upeta}}
\newcommand{\lengthofmodgen}{\underline{\ell}}
\newcommand{\lengthoftophypnorm}{\upnu}
\newcommand{\lengthofmodtophypnorm}{q}
\newcommand{\lengthofsidehypnorm}{\underline{\upnu}}
\newcommand{\uposinnerproduct}{\underline{z}}
\newcommand{\seconduposinnerproduct}{\underline{h}}
\newcommand{\posinnerproduct}{z}
\newcommand{\secondposinnerproduct}{h}
\newcommand{\Jenarg}[2]{{^{(#1)} \mkern-3mu \mathbf{J}^{#2}}}
\newcommand{\Jenwithlowerarg}[2]{{^{(#1)} \mkern-3mu \widetilde{\mathbf{J}}^{#2}}}
\newcommand{\deformarg}[3]{{^{(#1)} \mkern-1mu \pmb{\pi}_{#2 #3}}}
\newcommand{\deformmixedarg}[3]{{^{(#1)} \mkern-1mu \pmb{\pi}_{\ #2}^{#3}}}
\newcommand{\gfour}{\mathbf{g}}
\newcommand{\gsphere}{g \mkern-8.5mu / }
\newcommand{\euc}{\mathbf{e}}
\newcommand{\euct}{e}
\newcommand{\topfirstfund}{\widetilde{g}}
\newcommand{\sidefirstfund}{\underline{g}}
\newcommand{\Chfour}{\pmb{\Gamma}}
\newtheorem{notation}{Notation}[section]
\newcommand{\Flatdiv}{\mbox{\upshape div}\mkern 1mu}
\newcommand{\Flatcurl}{\mbox{\upshape curl}\mkern 1mu}
\newcommand{\sideproject}{\underline{\Pi}}
\newcommand{\sphereproject}{{\Pi \mkern-12mu / } \, }
\newcommand{\Sigmatproject}{\Pi}
\newcommand{\topproject}{\widetilde{\Pi}}
\newcommand{\Lunit}{L}
\newcommand{\uLunit}{\underline{L}}
\newcommand{\Lgeo}{L_{(Geo)}}
\newcommand{\uLgeo}{\underline{L}_{(Geo)}}
\newcommand{\newL}{\breve{L}}
\newcommand{\newuL}{\breve{\underline{L}}}
\newcommand{\ReciprocalLunitAppliedtoTimeFunction}{\iota}
\newcommand{\ReciprocaluLunitAppliedtoTimeFunction}{\underline{\iota}}
\newcommand{\MagnitueofinnerproductofnewLandnewuL}{\uplambda}
\newcommand{\Dfour}{\mathbf{D}}
\newcommand{\angD}{ {\nabla \mkern-14mu / \,} }
\newcommand{\angdiv}{\mbox{\upshape{div} $\mkern-17mu /$\,}}
\newcommand{\angpartial}{ {\partial \mkern-10mu / \,} }
\newcommand{\angpartialarg}[1]{{\angpartial_{\mkern-3mu #1}}}
\newcommand{\angpartialuparg}[1]{{\angpartial^{\mkern-3mu #1}}}
\newcommand{\toppartial}{\widetilde{\partial}}
\newcommand{\toppartialarg}[1]{\widetilde{\partial}_{#1}}
\newcommand{\toppartialuparg}[1]{\widetilde{\partial}^{#1}}
\newcommand{\sidepartial}{\underline{\partial}}
\newcommand{\sidepartialarg}[1]{\underline{\partial}_{#1}}
\newcommand{\sidepartialuparg}[1]{\underline{\partial}^{#1}}
\newcommand{\angvortrenormalized}{ { {\vortrenormalized \mkern-12mu /} \, } }
\newcommand{\angGradEnt}{ { {\GradEnt \mkern-11mu /} \, } }
\newcommand{\angV}{ { {V \mkern-14mu /} \, } }
\newcommand{\angVarg}[1]{ {{V \mkern-14mu /}^{\mkern7mu #1} \, } }
\newcommand{\weight}{\mathscr{W}}
\newcommand{\Lie}{\mathcal{L}}
\newcommand{\toten}{\mathbb{Q}}
\newcommand{\spacetimeen}{\mathbb{K}}
\begin{document}
\title{Remarkable localized integral identities for $3D$ compressible Euler flow
				and the double-null framework
}
\author[LA]{Leonardo Abbrescia$^{* \dagger}$}
\author[JS]{Jared Speck$^{** \dagger \dagger}$}
	
\thanks{$^{\dagger}$LA gratefully acknowledges support from NSF GRF award \# DGE-1424871.}

\thanks{$^{\dagger \dagger}$JS gratefully acknowledges support from NSF grant \# 1162211,
from NSF CAREER grant \# 1454419,
and from a Sloan Research Fellowship provided by the Alfred P. Sloan foundation.}

\thanks{$^{*}$Michigan State University, East Lansing, MI, USA
\texttt{abbresci@msu.edu}}

\thanks{$^{**}$Vanderbilt University, Nashville, TN, USA
\texttt{jared.speck@vanderbilt.edu}}

\begin{abstract}
We derive new, localized geometric integral identities for solutions to the $3D$ compressible Euler equations 
under an arbitrary equation of state when the sound speed is positive.
The identities are coercive in the first derivatives of the specific vorticity (defined to be vorticity divided by density)
and the second derivatives of the entropy,
and the error terms exhibit remarkable regularity and null structures.
Our framework allows one to simultaneously unleash the full power of the geometric vectorfield method for both the wave- and transport- parts of the flow
on compact regions, and our approach reveals fundamental new coordinate invariant structural features of the flow.
In particular, the integral identities yield localized control over one additional derivative 
of the vorticity and entropy compared to standard results,
assuming that the initial data enjoy the same gain. 
Similar results hold for the solution's higher derivatives.
We derive the identities in detail for two classes of spacetime regions that frequently arise in PDE applications:
\textbf{i)} compact spacetime regions that are globally hyperbolic with respect to the acoustical metric,
	where the top and bottom boundaries are acoustically spacelike -- but not necessarily
	equal to portions of constant Cartesian-time hypersurfaces;
and
\textbf{ii)} compact regions covered by double-acoustically null foliations.
As we describe in the paper,
the results have implications for the geometry and regularity of solutions, 
the formation of shocks,
the structure of the maximal classical development of the data,
and for controlling solutions whose state along a pair of
intersecting characteristic hypersurfaces is known. 
Our analysis relies on a recent new formulation 
of the compressible Euler equations that splits the flow into
a geometric wave-part coupled to a div-curl-transport part.
The main new contribution of the present article 
is our analysis of the positive co-dimension, spacelike boundary integrals that arise in the
div-curl identities. By exploiting interplay between the elliptic and hyperbolic parts of the new formulation
and using careful geometric decompositions,
we observe several crucial cancellations,
which in total show that after a further integration with respect to an acoustical time function,
the boundary integrals 
have a good sign,
up to error terms that can be controlled due to their good null structure and regularity properties.

\bigskip

\noindent \textbf{Keywords}: 
acoustical metric;
characteristic initial value problem;
double-null foliation;
energy identity;
Hodge identity;
null condition;
null structure;
shock formation;
vectorfield method
\bigskip

\noindent \textbf{Mathematics Subject Classification (2010):} 
Primary: 35Q31;
Secondary: 35L67, 76L05	

\end{abstract}

\maketitle

\centerline{\today}

\tableofcontents

\newpage

\section{Introduction}
\label{S:INTRO}
The compressible Euler equations are the fundamental equations of compressible fluid mechanics.
They are arguably on par with Einstein's equations of general relativity in terms of their 
mathematical richness, physical relevance,
and variety of subtle nonlinear behaviors that solutions can exhibit.
The equations remain a fertile source of outstanding mathematical challenges,
even though they are among the earliest PDE systems written down
(they were formulated by Euler in 1757 \cite{lE1757}).
Despite the complexity of the equations, 
in recent years, the rigorous mathematical theory of solutions has enjoyed dramatic progress, 
driven by insights and identities that have their origins in Lorentzian geometry, 
general relativity,
the theory of geometric wave equations,
and, in some key cases,
the remarkable structures exhibited by a new formulation of the equations \cites{jLjS2016a,jS2019c}
as geometric wave equations coupled to div-curl-transport equations.
As examples of progress, 
we note Christodoulou's work \cite{dC2007} on shock formation in $3D$ without symmetry assumptions 
for irrotational relativistic Euler solutions, 
his joint extension \cite{dCsM2014} of this result to the case of the non-relativistic compressible Euler equations in $3D$,
his subsequent resolution of the restricted shock development problem \cite{dC2019} in both the relativistic and non-relativistic cases, 
the second author's joint extension \cite{jLjS2018} of Christodoulou's shock formation result to allow for
the presence of vorticity in $2D$ in the barotropic\footnote{Barotropic equations of state are such that 
the pressure can be expressed a function of the density alone.} case, 
the second author's recent
joint work \cite{mDcLgMjS2019} on low regularity solutions with vorticity and entropy in $3D$,
Wang's extension of this work \cite{qW2019} to further lower the regularity of the vorticity in the barotropic case,
and the existence of initially $C^{\infty}$ solutions that form an infinite-density singularity in
finite time \cites{fMpRiRjS2019b,fMpRiRjS2019c}.

In this paper, we study
the $3D$ compressible Euler equations 
under an arbitrary physical\footnote{By a ``physical equation of state,'' we mean that the speed of sound, defined in \eqref{E:SOUNDSPEED}, 
is assumed to be positive, at least for an open set of positive density values (which would be a set of density values for which our results hold).}
equation of state in which the pressure $p$ is a given function of the density $\varrho$ and entropy $\Ent$. 
Our main results augment the geometric insights 
and tools developed in \cites{dC2007,jLjS2016a,jLjS2018,jS2019c}
by allowing for a sharp localization of the key structures
that are needed to control the ``div-curl-transport-part'' of the flow, that is, 
the vorticity and entropy.
We now informally and tersely summarize our main results;
see Theorem~\ref{T:MAINIDSCHEMATICSTATEMENT} 
for a more precise -- but still schematic -- statement of the results,
and
Theorems~\ref{T:STRUCTUREOFERRORTERMS}, \ref{T:MAINREMARKABLESPACETIMEINTEGRALIDENTITY},
\ref{T:LOCALIZEDAPRIORIESTIMATES}, and \ref{T:DOUBLENULLMAINTHEOREM} for precise statements.
See also Subsect.\,\ref{SS:REMARKSONTHEPROOF} for a summary of the key ideas in the proofs.
\begin{quote}
	We derive a new family of coercive, localized (i.e., on compact spacetime regions) integral identities 
	that yield spacetime $L^2$-type control over the vorticity and entropy at one derivative level higher 
	compared to standard estimates. By ``one derivative level,'' 
	we mean that the gain in regularity for the vorticity and entropy holds
	for \emph{all} of their Cartesian partial derivatives, not just 
	in the direction of the material derivative vectorfield
	(see \eqref{E:MATERIALVECTORVIELDRELATIVECTORECTANGULAR}),
	for which the improved regularity is a standard result.\footnote{The additional
	regularity for the specific vorticity and entropy gradient
	in the direction of the material derivative vectorfield $\Transport$ is straightforward to derive by using
	the transport equations \eqref{E:RENORMALIZEDVORTICTITYTRANSPORTEQUATION} and \eqref{E:GRADENTROPYTRANSPORT}
	for algebraic substitution and showing that the terms on the right-hand sides of
	\eqref{E:RENORMALIZEDVORTICTITYTRANSPORTEQUATION} and \eqref{E:GRADENTROPYTRANSPORT} have the desired regularity.
	In fact, one can derive the additional regularity of the material derivative of the 
	specific vorticity and entropy gradient
	with respect to norms of type $L^{\infty}(Time)L^2(Space)$, 
	which is a stronger result (at least locally in time) compared to achieving control in a spacetime $L^2$-type norm;
	see Footnote~\ref{FN:LINFINITYTIMEL2SPACECONTROLOFMATERIALDERIVATIVETERMS}.}
	The error terms in the integral identities, \emph{especially the positive co-dimension boundary integrals},
	exhibit remarkable quasilinear null structures and regularity properties.
	The identities hold on a large family of compact acoustically globally hyperbolic spacetime regions
	of the type that typically arise in PDE applications; 
	see Fig.\,\ref{F:SPACETIMEDOMAIN} for an example of a region
	and Remark~\ref{R:ACOUSTICALLYGLOBALLYHYPERBOLIC} for a discussion of why the domains are acoustically globally hyperbolic.
	In particular, our results allow us to implement the framework of \emph{double-(acoustically)-null} foliations
	in compressible fluid mechanics; see Fig.\,\ref{F:DOUBLENULL}. 
	Our approach does not rely on special coordinate systems such as Lagrangian coordinates,
	but rather reveals fundamental new coordinate invariant structural features of the flow.
	We believe that the integral identities and the remarkable structure of the error terms
	are key new ingredients for studying important aspects of solutions
	that were previously inaccessible. We discuss some of these aspects in Subsect.\,\ref{SS:APPLICATIONS}.
\end{quote}

While analogous integral identities are entirely standard in the context of wave equations 
and are easily derivable by the vectorfield multiplier method
(see Subsect.\,\ref{SS:VECTORFIELDMULTIPLIERMETHOD}),
the compressible equations are not wave equations. 
That is, it is well-known that solutions exhibit two kinds of propagation phenomena: 
the propagation of sound waves,
which is present even in the simplified setting of irrotational and isentropic solutions,
and the transporting of $\Ent$ and the vorticity $\upomega := \Flatcurl v$ (where $v$ is the velocity), 
which is present in general solutions and which
occurs at a ``slower'' speed\footnote{Sound waves can propagate along acoustically null hypersurfaces,
while \eqref{E:TRANSPORTISUNITLENGTHANDTIMELIKE} and the equations of Theorem~\ref{T:GEOMETRICWAVETRANSPORTSYSTEM}
imply that specific vorticity and entropy are transported along acoustically timelike curves.} 
compared to sound waves.
More precisely, as we alluded to above,
it is more accurate, at least at the top derivative level, 
to describe the evolution of vorticity and entropy as being driven by ``div-curl-transport'' equations. 
That is, \cites{jLjS2016a,jS2019c} 
showed that the compressible Euler equations
can be formulated as geometric wave equations coupled to div-curl-transport equations
for the specific vorticity $\vortrenormalized$ (defined below to be $\upomega$ divided by a dimensionless density)
and the entropy.\footnote{More precisely, the div-curl-transport system 
\eqref{E:TRANSPORTFLATDIVGRADENT}-\eqref{E:CURLGRADENTVANISHES}
for the entropy is expressed in terms of the entropy gradient vectorfield,
which we denote by $\GradEnt$ and define below in \eqref{E:ENTROPYGRADIENT}.}
We now further explain -- still informally -- the significance of our main results.

\begin{itemize}
	\item (\textbf{Extending the geometric vectorfield method to the div-curl-transport part on spatially compact regions}) 
		It is precisely the div-curl-transport part of the flow that lies outside of the scope
		of the traditional geometric vectorfield method (developed for wave and wave-like equations), 
		and our integral identities allow us to handle this part of the flow,
		\emph{notably the difficult boundary integrals} 
		that arise when we derive elliptic Hodge-type identities on spatially compact regions.
		Handling these (positive co-dimension) boundary integrals requires a combination of elliptic, hyperbolic, and geometric techniques
		that rely on the special structures of the formulation of compressible Euler flow derived in
		\cites{jLjS2016a,jS2019c}
		as well as the precise structure of equations\footnote{For example, in the proof of Lemma~\ref{L:TRANSPORTLOGDENSITYISTANGENTIALEXCEPTINNULLCASE},
		we use equations \eqref{E:TRANSPORTDENSRENORMALIZEDRELATIVECTORECTANGULAR}-\eqref{E:TRANSPORTVELOCITYRELATIVECTORECTANGULAR}.}  
		\eqref{E:TRANSPORTDENSRENORMALIZEDRELATIVECTORECTANGULAR}-\eqref{E:ENTROPYTRANSPORT},
		which are a standard first-order formulation of compressible Euler flow;
		see also Remark~\ref{R:EXPLOITINGSPECIALSTRUCTUREOFCOMPRESSIBLEEULER}.
	\item (\textbf{Regularity}). Specifically, our results yield a large family of energy identities
	for the \emph{first} derivatives of the specific vorticity $\vortrenormalized$ 
	and the \emph{second} derivatives of the entropy $\Ent$
	on the (compact) spacetime regions under consideration, where the error terms
	involve the up-to-\emph{first} order derivatives of velocity $v$, the density\footnote{In practice, rather than
	working with the density, we prefer to work with the logarithmic density $\LogDensity$, which we define in \eqref{E:LOGDENSITY};
	since we consider only solutions with strictly positive density, these two variables are essentially equivalent for the purposes of this article.
	Moreover, rather than working with the second derivatives of $\Ent$, 
	we prefer to work with $\pmb{\partial} \GradEnt$, 
	where $\GradEnt$ is the entropy gradient vectorfield defined in \eqref{E:ENTROPYGRADIENT}.} 
	$\varrho$, and $\Ent$.
	That is, with $\pmb{\partial}$ denoting the Cartesian coordinate spacetime gradient and
	$\partial$ denoting the Cartesian coordinate spatial gradient,
	our results yield $L^2$ control of
	$\pmb{\partial} \vortrenormalized$ and $\pmb{\partial} \partial \Ent$ in terms of $L^2$ norms of
	$\pmb{\partial}^{\leq 1} \varrho$, 
	$\pmb{\partial}^{\leq 1} v$, 
	$\pmb{\partial}^{\leq 1} \Ent$,
	$\VortVort$,
	and 
	$\DivGradEnt$,
	where the latter two quantities,
	discovered in \cites{jLjS2016a,jS2019c} and recalled below in Def.\,\ref{D:RENORMALIZEDCURLOFSPECIFICVORTICITY},
	are special combinations of fluid solution variables that solve transport equations 
	(see \eqref{E:EVOLUTIONEQUATIONFLATCURLRENORMALIZEDVORTICITY} and \eqref{E:TRANSPORTFLATDIVGRADENT})
	with source terms exhibiting unexpectedly good regularity and null properties.
	In turn, 
	under suitable assumptions on the initial data,
	the quantities
	$\pmb{\partial}^{\leq 1} \varrho$, 
	$\pmb{\partial}^{\leq 1} v$, 
	$\pmb{\partial}^{\leq 1} \Ent$,
	$\VortVort$,
	and 
	$\DivGradEnt$,
	can be shown to have sufficient $L^2$ regularity
	by virtue of standard energy estimates for the wave equations \eqref{E:VELOCITYWAVEEQUATION}-\eqref{E:ENTROPYWAVEEQUATION}
	and the transport equations \eqref{E:EVOLUTIONEQUATIONFLATCURLRENORMALIZEDVORTICITY} and \eqref{E:TRANSPORTFLATDIVGRADENT}.
	In particular, our results allow 
	one to locally propagate a gain of one derivative 
	worth of Sobolev regularity for the
	vorticity and entropy compared to standard estimates,
	which is important for the study of the maximal development in the context of shock formation; see Subsect.\,\ref{SS:APPLICATIONS}.
	\item (\textbf{Null structure}). The ``error terms'' in the integral identities for $\pmb{\partial} \vortrenormalized$ and $\pmb{\partial} \partial \Ent$
	have remarkable quasilinear null structures and regularity properties 
	that, as we explain in Subsect.\,\ref{SS:APPLICATIONS}, are crucial for applications. 
	For example, for regions that have acoustically null hypersurface boundaries,
	the identities feature error integrals along the null hypersurfaces,
	but these error terms involve only \emph{tangential} derivatives of quantities
	that have \emph{sufficient regularity}. By ``sufficient regularity,'' we mean in particular
	that the error terms can be treated with transport equation estimates or
	wave equation energy estimates, 
	where the latter, though well-known to be \emph{degenerate} along null hypersurfaces, 
	yield control over tangential derivatives.
	\item (\textbf{Flexibility of the approach}).
	In total, the integral identities allow one to simultaneously implement the 
	full power of the geometric vectorfield method
	for both the wave- and div-curl-transport-parts of the solution on domains that are important for PDE applications,
	in a manner such that the error terms exhibit good regularity and quasilinear null structures.
	Moreover, our framework is well-suited to handle the kinds of ``custom modifications''
	that are typically needed in applications.
	In particular, one could commute the equations with appropriate geometric vectorfields to obtain
	similar integral identities for the solution's higher derivatives, 
	one could incorporate weights\footnote{In fact, in our main results, we allow for the presence of an arbitrary weight function, denoted by
	$\weight$.} into the identities, etc.
	That is, our framework affords flexibility in the ways it can be implemented.
\end{itemize}

Central to the approach of the present paper
are the div-curl-transport systems 
for the specific vorticity and entropy gradient
derived in \cites{jLjS2016a,jS2019c};
see equations
\eqref{E:FLATDIVOFRENORMALIZEDVORTICITY}-\eqref{E:EVOLUTIONEQUATIONFLATCURLRENORMALIZEDVORTICITY}
and
\eqref{E:TRANSPORTFLATDIVGRADENT}-\eqref{E:CURLGRADENTVANISHES}.
These systems are adapted to constant Cartesian time hypersurfaces.
From the perspective of analysis, the main new contributions of the present work are as follows.
\begin{enumerate}
	\item 
	Even though the
	div-curl systems from \cites{jLjS2016a,jS2019c}
	(which we recall as 
	\eqref{E:FLATDIVOFRENORMALIZEDVORTICITY}-\eqref{E:EVOLUTIONEQUATIONFLATCURLRENORMALIZEDVORTICITY}
	and
	\eqref{E:TRANSPORTFLATDIVGRADENT}-\eqref{E:CURLGRADENTVANISHES})
	are PDEs relative to flat hypersurfaces of constant Cartesian time,
	our results yield coercive integral identities on regions foliated by \emph{arbitrary acoustically spacelike hypersurfaces}.
\item The work \cite{jS2019c} outlined how to derive the integral identities on much simpler spacetime regions of the form
$[0,T] \times \Sigma$, where the $3D$ ``spatial manifold'' $\Sigma$
had an \emph{empty boundary} (for example, $\Sigma = \mathbb{R}^3$ or $\Sigma = \mathbb{R} \times \mathbb{T}^2$);
see Subsubsect.\,\ref{SSS:SIMPLERDOMAINS} for the main ideas.
In the present article, we handle the \emph{spatial boundary terms} that arise in various
div-curl identities on \emph{spatially compact} domains.
In particular, we show that the boundary integrals have a compatible amount of regularity and, at the same time,
exhibit the remarkable null structures that have proven to be important in applications. 
As we will explain, our localized results \emph{do not hold 
for typical div-curl-transport systems}, 
especially on the full class of spacetime regions 
that we treat in this paper; our results are possible only 
because we have exploited some newly identified structures in the compressible Euler equations.
\end{enumerate}

We close this introduction by highlighting the significance of allowing the equation of state to depend on entropy:
\begin{itemize}
	\item Entropy is an unavoidable ingredient in fluid models that incorporate thermodynamic effects.
		Moreover, non-constant entropy is generated past the formation of a shock \cite{dC2007}, 
		even if before the shock the solution is smooth, irrotational, and isentropic.
	\item In the integral identities, the most difficult error terms 
		-- by far -- involve the second derivatives of the entropy.
		We devote Subsects.\,\ref{SS:MOSTSUBTLEENTROPYTERM}
		and \ref{SS:REMARKABLESTRUCTUREOFMOSTSUBTLETERM}
		to proving that these terms exhibit the remarkable structures that are
		crucial for our main results. This requires a host of insights about
		hidden ``quasilinear'' geometric and analytic structures in the equations.
\end{itemize}

\subsection{Basic notation and a standard first-order formation of the equations}
\label{SS:FIRSTORDERFORMULATION}
Before further discussing our main results, we set up our study of compressible Euler flow
by introducing a standard first-order formulation of the equations,
specifically the system \eqref{E:TRANSPORTDENSRENORMALIZEDRELATIVECTORECTANGULAR}-\eqref{E:ENTROPYTRANSPORT}.
We again stress that \emph{the first-order formulation is not the one we use to derive our main results};
rather, we use the equations of Theorem~\ref{T:GEOMETRICWAVETRANSPORTSYSTEM},
which are consequences of the first-order formulation and which were derived in \cite{jS2019c}.

\subsubsection{Basic notation and conventions}
\label{SSS:BASICNOTATION}
Throughout,
$\lbrace x^{\alpha} \rbrace_{\alpha=0,1,2,3}$ 
denotes the standard Cartesian coordinates 
on $\mathbb{R}^{1+3}$,
where $x^0 :=t$ denotes time (we also refer to $t$ as the ``Cartesian time function'')
and $\lbrace x^a \rbrace_{a=1,2,3}$ are the Cartesian spatial coordinates.
We denote the standard partial derivative vectorfields with respect to the Cartesian coordinates by
$\partial_{\alpha} := \frac{\partial}{\partial x^{\alpha}}$,
and we often use the alternate notation $\partial_t := \partial_0$.
In addition, $\Sigma_t$ denotes the standard flat three-dimensional hypersurface of constant Cartesian time $t$.
Moreover, lowercase Latin ``spatial'' indices such as $a$ vary over $1,2,3$,
lowercase Greek ``spacetime'' indices such as $\alpha$ vary over $0,1,2,3$, where $0$ is the ``time'' index,
and we use Einstein's summation convention in that repeated indices are summed over their ranges.
If $f$ is a scalar function and $\mathbf{X}$ is a vectorfield, then $\mathbf{X} f := \mathbf{X}^{\alpha} \partial_{\alpha} f$
denotes the derivative of $f$ in the direction $\mathbf{X}$.
If $\SigmatTan$ is a $\Sigma_t$-tangent vectorfield with Cartesian spatial components $\lbrace \SigmatTan^i \rbrace_{i=1,2,3}$, 
then  $\curl \SigmatTan := \upepsilon_{ijk} \updelta^{jl} \partial_l \SigmatTan^k$ denotes its standard Euclidean curl,
where $\upepsilon_{ijk}$ denotes the fully antisymmetric symbol normalized by
$\upepsilon_{123} = 1$ and $\updelta^{ij}$ denotes the Kronecker delta.
In addition, $\dive \SigmatTan := \partial_a \SigmatTan^a$ denotes its Euclidean divergence.
Finally, $\upepsilon_{\alpha \beta \gamma \delta}$ denotes the fully antisymmetric symbol normalized by
$\upepsilon_{0123} = 1$. See Subsubsect.\,\ref{SSS:ACOUSTICALMETRIC} and Convention~\ref{C:LOWERANDRAISE} 
for our conventions for lowering and raising indices.

\subsubsection{Setup and definitions of the basic fluid variables}
\label{SSS:SETUPANDDEFSOFBASICFLUIDVARIABLES}
The compressible equations can be formulated as evolution equations for the density $\varrho : \mathbb{R}^{1+3} \rightarrow [0,\infty)$,
the Cartesian velocity components $v^i : \mathbb{R}^{1+3} \rightarrow \mathbb{R}$, ($i=1,2,3$),
and the entropy $\Ent : \mathbb{R}^{1+3} \rightarrow \mathbb{R}$.
When setting up the equations, authors frequently also use the pressure 
$p : \mathbb{R}^{1+3} \rightarrow [0,\infty)$.
The resulting PDE system is under-determined unless one supplies an additional equation. Here we close the system
in the standard fashion by assuming an equation of state $p = p(\varrho,\Ent)$, that is, a function yielding
the pressure in terms of the density and entropy.
In this article, we consider only equations of state such that the speed of sound $\Speed$ is positive when the density is positive:\footnote{Of course, if $\Speed$ is positive only on an open set of positive density values, then our results hold for solutions whose density is contained in that open set.}
\begin{align} \label{E:SOUNDSPEED}
	\Speed & := \sqrt{\frac{\partial p}{\partial \varrho}\left|\right._{\Ent}} > 0,
	& & \mbox{\upshape when } \varrho > 0.
\end{align}
In \eqref{E:SOUNDSPEED},
$\frac{\partial p}{\partial \varrho}\left|\right._{\Ent}$ denotes the derivative of 
the pressure with
respect to the density at fixed entropy.
The positivity of $\Speed$ is fundamental for the hyperbolicity of the PDEs.

In what follows, we will refer to the vorticity $\upomega$, 
which is defined to be the $\Sigma_t$-tangent vectorfield
with the following Cartesian spatial components, ($i=1,2,3$):
\begin{align} \label{E:VORTICITYDEFINITION}
	\upomega^i
	& := (\curl v)^i
	= \upepsilon_{ijk} \updelta^{jl} \partial_l v^k.
\end{align}

In this article, we study only solutions with strictly positive density. Thus, rather than studying the density and vorticity,
we can fix an (arbitrary) ``background density'' $\bar{\varrho} > 0$ and instead
study the logarithmic density $\LogDensity : \mathbb{R} \times \mathbb{R}^3 \rightarrow \mathbb{R}$
and the specific vorticity $\vortrenormalized$. The advantage of working with $\LogDensity$ is that
some our equations take a simplified form when expressed in terms of this variable. 
Our analysis also crucially relies on a $\Sigma_t$-tangent vectorfield
$\GradEnt$ equal to the spatial gradient of $\Ent$. We now precisely define these quantities. 

\begin{definition}[\textbf{Logarithmic density, specific vorticity, and entropy gradient}]
\label{D:ADDITIONALFLUIDVARIABLES}
Relative to the Cartesian coordinates, 
we define the logarithmic density $\LogDensity$, which is a scalar function, 
the specific vorticity $\vortrenormalized$, which is a $\Sigma_t-$tangent vectorfield, 
and the entropy gradient $\GradEnt$, which also is a $\Sigma_t-$tangent vectorfield,
as follows, ($i=1,2,3$):
\begin{subequations}
\begin{align} \label{E:LOGDENSITY}
	\LogDensity
	& := \ln \left(\frac{\varrho}{\bar{\varrho}} \right),
		\\
	\vortrenormalized^i
	& := \frac{\upomega^i}{(\varrho/\bar{\varrho})}
	= \frac{\upomega^i}{\exp(\LogDensity)},
		\label{E:SPECIFICVORTICITY} \\
	\GradEnt^i
	& := \updelta^{ia} \partial_a \Ent,
	\label{E:ENTROPYGRADIENT}
\end{align}
\end{subequations}
where $\updelta^{ij}$ denotes the Kronecker delta.
\end{definition}

From now on, we will view $\Speed  = \Speed(\LogDensity,\Ent)$. Similarly, we will view $p$
and its partial derivatives with respect to $\LogDensity$ and $\Ent$ to be functions of
$(\LogDensity,\Ent)$.

\subsubsection{Standard first-order formulation of the equations}
\label{SSS:FIRSTORDERCOMPRESSIBLEEULER}
Relative to the standard Cartesian coordinates $(t,x^1,x^2,x^3)$ on $\mathbb{R}^{1+3}$,
the compressible equations can be expressed in the following standard first-order form, where
$i = 1,2,3$
(see Subsubsect.\,\ref{SSS:BASICNOTATION} regarding our index and summation conventions):
\begin{subequations}
\begin{align} \label{E:TRANSPORTDENSRENORMALIZEDRELATIVECTORECTANGULAR}
	\Transport \LogDensity
	& = - \Flatdiv v,
		\\
	\Transport v^i 
	& = 
	- \Speed^2 \updelta^{ia} \partial_a \LogDensity
	- \exp(-\LogDensity) \frac{p_{;\Ent}}{\bar{\varrho}} \updelta^{ia} \partial_a \Ent,
	\label{E:TRANSPORTVELOCITYRELATIVECTORECTANGULAR}
		\\
	\Transport \Ent
	& = 0.
	\label{E:ENTROPYTRANSPORT}
\end{align}
\end{subequations}
In \eqref{E:TRANSPORTDENSRENORMALIZEDRELATIVECTORECTANGULAR}-\eqref{E:ENTROPYTRANSPORT} and throughout,
$\updelta^{ij}$ denotes the Kronecker delta and
$\Transport$ denotes the \emph{material derivative} vectorfield,
defined relative to the Cartesian coordinates by
\begin{align} \label{E:MATERIALVECTORVIELDRELATIVECTORECTANGULAR}
	\Transport 
	& := 
	\partial_t 
	+ 
	v^a \partial_a.
\end{align}
We refer readers to \cite{dCsM2014} for a discussion of the physical considerations that lead to the system
\eqref{E:TRANSPORTDENSRENORMALIZEDRELATIVECTORECTANGULAR}-\eqref{E:ENTROPYTRANSPORT}.
We clarify that in \cite{dCsM2014}, the compressible Euler equations are stated in terms of the density $\varrho$
rather than the logarithmic density $\LogDensity$; it is straightforward to obtain
\eqref{E:TRANSPORTDENSRENORMALIZEDRELATIVECTORECTANGULAR}-\eqref{E:ENTROPYTRANSPORT} as a consequence of the equations
presented in \cite{dCsM2014}.

\subsection{Informal description of the spacetime regions $\mathcal{M}$}
\label{SS:FIRSTDESCRIPTIONOFREGIONS}
We now give an informal description of the spacetime regions $\mathcal{M}$ on which our integral identities hold.
Our assumptions are geometric and refer to the 
\emph{acoustical metric} $\gfour$ of Def.\,\ref{D:ACOUSTICALMETRIC}, 
a fluid-solution-dependent Lorentzian\footnote{By ``Lorentzian,'' we mean that the $4 \times 4$ matrix $\gfour_{\alpha \beta}$ of Cartesian components
has signature $(-,+,+,+)$.} metric 
on $\mathcal{M}$ that governs the propagation of sound waves.
\begin{quote}
Roughly, one could say that $\mathcal{M}$ is allowed to be any region on which one can derive a coercive
energy identity for solutions to the wave equation $\square_{\gfour} \phi = 0$; see \eqref{E:WAVEOPERATORARBITRARYCOORDINATES}
for the definition of $\square_{\gfour}$.
\end{quote}

We now give a brief description of the first of two classes of regions $\mathcal{M}$ on which our integral identities hold;
see Subsect.\,\ref{SS:ASSUMPTIONSONSPACETIMEREGION} for the precise assumptions
and Fig.\,\ref{F:SPACETIMEDOMAIN} for a schematic depiction of $\mathcal{M}$, where
for reasons explained below, $\mathcal{M}_T$ is alternate notation for $\mathcal{M}$.
Later in the paper, we will define all of the objects in the figure.
For now, we only note that the top boundary $\widetilde{\Sigma}_T$
and the bottom boundary 
$\widetilde{\Sigma}_0$
are allowed to be arbitrary acoustically spacelike portions
and that the lateral boundary $\underline{\mathcal{H}}$ is allowed to be an arbitrary acoustically
spacelike or \emph{acoustically null} hypersurface portion.
Above and throughout, 
the term ``acoustic,'' as well as the
Lorentzian notions of  ``causal,'' ``spacelike,'' ``null,'' etc., 
refer to aforementioned acoustical metric $\gfour$.
On such domains,
our integral identities could be combined with standard energy estimates for wave and transport equations 
to yield a priori estimates for compressible Euler solutions with vorticity and entropy
with initial data given along $\widetilde{\Sigma}_0$; 
see Theorem~\ref{T:LOCALIZEDAPRIORIESTIMATES} for one such result.

\begin{center}
\begin{overpic}[scale=.4,grid=false]{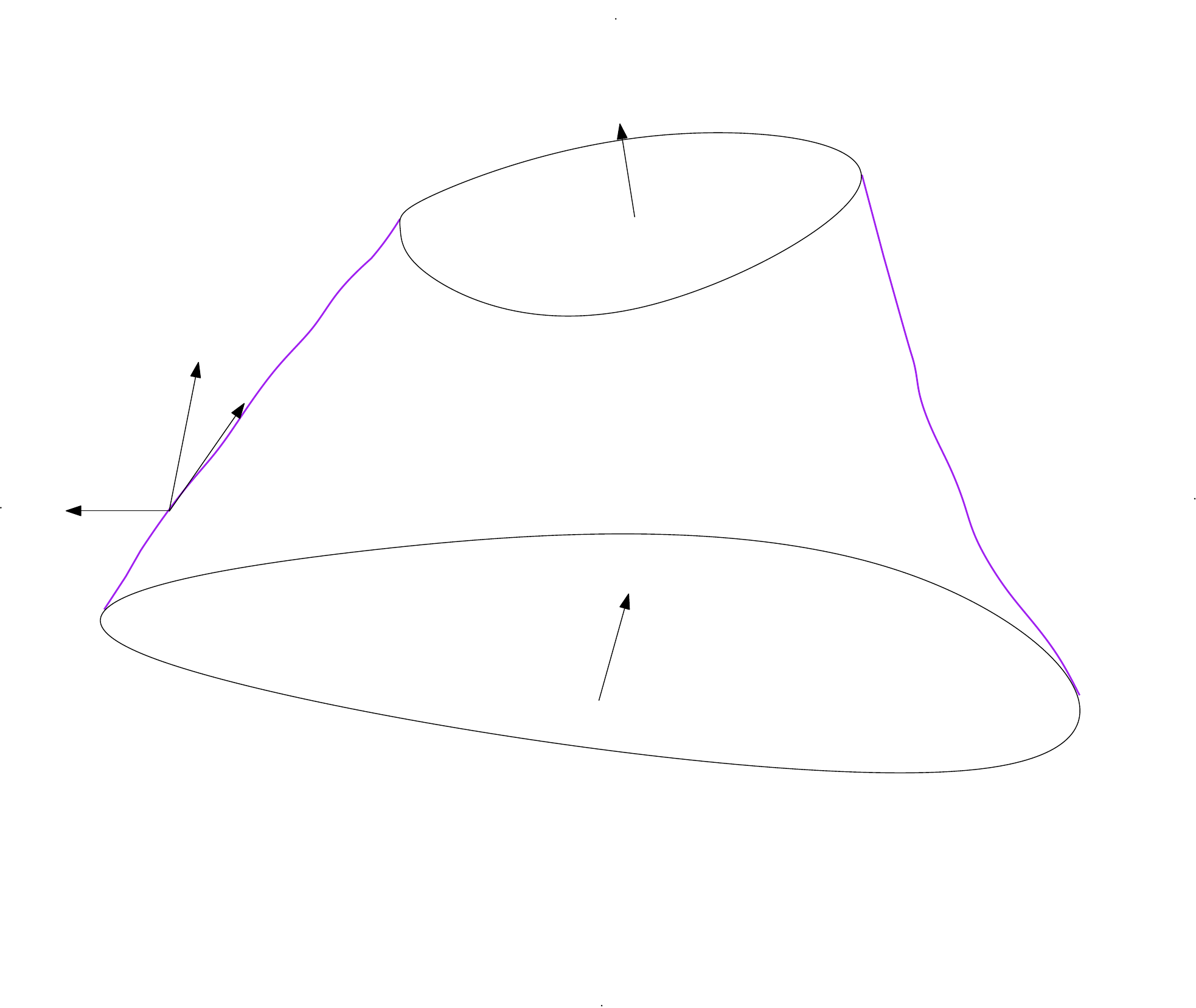} 
\put (0,40) {\large$\displaystyle \spherenormal$}
\put (12.5,56) {\large$\displaystyle \sidehypnorm$}
\put (20,45) {\large$\displaystyle \gen$}
\put (52,26) {\large$\displaystyle \tophypnorm$}
\put (54,65) {\large$\displaystyle \tophypnorm$}
\put (36,30) {\large$\displaystyle \widetilde{\Sigma}_0$}
\put (39,62) {\large$\displaystyle \widetilde{\Sigma}_T$}
\put (81.5,45) {\large$\displaystyle \underline{\mathcal{H}}$}
\put (86.5,16.5) {\large$\displaystyle \mathcal{S}_0$}
\put (68,73) {\large$\displaystyle \mathcal{S}_T$}
\put (45,48) {\large$\displaystyle \mathcal{M} = \mathcal{M}_T$}
\thicklines
\end{overpic}
\captionof{figure}{A spacetime region $\mathcal{M}$ on which the integral identities hold}
\label{F:SPACETIMEDOMAIN}
\end{center}

The second class of regions $\mathcal{M}$ on which our identities hold comprises
regions that are \emph{double-null foliated},
that is, foliated by a pair of acoustic eikonal functions, 
whose level sets are acoustically null;
see Fig.\,\ref{F:DOUBLENULL}.
As in the case of the first class of domains, on double-null-foliated domains,
our integral identities could be combined with standard energy estimates for wave and transport equations 
to yield a priori estimates for compressible Euler solutions with vorticity and entropy.
Here, by ``a priori estimates,'' we mean estimates for the solution on $\mathcal{M}$
in terms of the ``state of the solution'' along a pair of intersecting acoustically null hypersurfaces.
This opens the door for the further study of the (acoustically) characteristic initial value problem;
see Subsect.\,\ref{SS:APPLICATIONS} for further discussion.
To derive the integral identities in the case of double-null foliations,
we have to modify the approach that we use
to treat the domains featured in Fig.\,\ref{F:SPACETIMEDOMAIN}.
This requires additional constructions, which we provide in Sect.\,\ref{S:DOUBLENULL}.

\subsection{A schematic overview of the main results}
\label{SS:SCHEMATICSTATEMENTOFMAINRESULTS}
In this subsection, we provide a schematic overview of the new integral identities on 
spacetime regions $\mathcal{M}$ of the type depicted in Fig.\,\ref{F:SPACETIMEDOMAIN}.
Similar ideas can be used to handle double-null foliated regions of the type featured in
Fig.\,\ref{F:DOUBLENULL}. For this reason, do not provide an overview of the double-null case;
see Sect.\,\ref{S:DOUBLENULL} for the details.

\subsubsection{A slightly more detailed description of $\mathcal{M}$}
\label{SSS:INTRODESCRIPTIONOFREGION}
We now further describe our assumptions on the regions depicted in Fig.\,\ref{F:SPACETIMEDOMAIN};
see Subsect.\,\ref{SS:ASSUMPTIONSONSPACETIMEREGION} for the complete, precise assumptions.
We assume that there is a smooth ``acoustical time function'' $\Timefunction$ with a past-directed, $\gfour$-timelike gradient
such that $\mathcal{M}$ is foliated by the level sets of $\Timefunction$. Here and throughout,
$\gfour$ is the acoustical metric of Def.\,\ref{D:ACOUSTICALMETRIC}.
That is, with $\widetilde{\Sigma}_{\Timefunction'} := \mathcal{M} \cap \lbrace (t,x) \in \mathbb{R}^{1+3} \ | \ \Timefunction(t,x) = \Timefunction' \rbrace$,
we assume that there is a $T > 0$ such that
$\mathcal{M} = \cup_{\Timefunction' \in [0,T]} \widetilde{\Sigma}_{\Timefunction'}$
and such that each $\widetilde{\Sigma}_{\Timefunction'}$ is $\gfour$-spacelike, that is, spacelike with respect to the acoustical metric.
We assume that the $\partial \mathcal{M} = \widetilde{\Sigma}_0 \cup \widetilde{\Sigma}_T \cup \underline{\mathcal{H}}$,
where the lateral boundary $\underline{\mathcal{H}}$ 
is smooth and either $\gfour$-spacelike or $\gfour$-null at each of its points 
(see Def.\,\ref{D:SPACELIKETIMELIKENULL}).
We also assume that for $\Timefunction' \in [0,T]$, $\widetilde{\Sigma}_{\Timefunction'}$ intersects $\underline{\mathcal{H}}$ transversally
in a topological sphere $\mathcal{S}_{\Timefunction'}$, that is, that 
$\mathcal{S}_{\Timefunction'} := \widetilde{\Sigma}_{\Timefunction'} \cap \underline{\mathcal{H}} = \partial \widetilde{\Sigma}_{\Timefunction'}$,
where $\mathcal{S}_{\Timefunction'}$ is diffeomorphic to $\mathbb{S}^2$.
We set $\mathcal{M}_{\Timefunction} := \cup_{\Timefunction' \in [0,\Timefunction]} \widetilde{\Sigma}_{\Timefunction'}$ and
$\underline{\mathcal{H}}_{\Timefunction} = \underline{\mathcal{H}} \cap \mathcal{M}_{\Timefunction} = \cup_{\Timefunction' \in [0,\Timefunction]} \mathcal{S}_{\Timefunction'}$.
Note that $\mathcal{M} = \mathcal{M}_T$ and $\underline{\mathcal{H}} = \underline{\mathcal{H}}_T$.

\subsubsection{Schematic statement of the main results}
\label{SSS:SCHEMATICSTATEMENTOFMAINRESULTS}
For brevity, we will mainly restrict our attention to providing an overview of the integral identities
involving the square integral of the gradient $\pmb{\partial} \vortrenormalized$,
which denotes the gradient of the specific vorticity $\vortrenormalized$ (see \eqref{E:SPECIFICVORTICITY})
with respect to the Cartesian coordinates.
According to equation \eqref{E:RENORMALIZEDVORTICTITYTRANSPORTEQUATION}, 
$\vortrenormalized$ satisfies a transport equation that,
from the point of view regularity, can be caricatured as follows, where
$\partial$ denotes gradient with respect to the Cartesian spatial coordinates
and $\GradEnt$ denotes the entropy gradient vectorfield (see \eqref{E:ENTROPYGRADIENT}):
\begin{align} \label{E:INTROTRANSPORTSPECIFICVORTICITYCARICATURE}
	\Transport \vortrenormalized
	& = \vortrenormalized \cdot \partial v + \pmb{\partial} v \cdot \GradEnt.
\end{align}
Since transport equation solutions are generally no more regular than their source terms,
equation \eqref{E:INTROTRANSPORTSPECIFICVORTICITYCARICATURE} \emph{suggests}, incorrectly, that $\vortrenormalized$ 
can be no more regular than the source term factor $\pmb{\partial} v$. As we will explain, $\vortrenormalized$ is in fact one derivative
more regular, that is, as regular as $\varrho$ and $v$, assuming that the initial data of $\vortrenormalized$ enjoy this property.
The gain in regularity can be revealed by constructing a suitable coercive spacetime identity on $\mathcal{M}$ -- 
the construction of which is in fact one of our main results.
Moreover, as we stressed earlier in the paper, the error terms in the identity have a remarkable structure.
We summarize these results as Theorem~\ref{T:MAINIDSCHEMATICSTATEMENT}, which provides a more precise -- but still schematic -- 
statement of our main results; see Theorems~\ref{T:STRUCTUREOFERRORTERMS}, \ref{T:MAINREMARKABLESPACETIMEINTEGRALIDENTITY},
\ref{T:LOCALIZEDAPRIORIESTIMATES}, and \ref{T:DOUBLENULLMAINTHEOREM} for the precise statements.
In Subsect.\,\ref{SS:REMARKSONTHEPROOF}, we summarize some of the key ideas in the proof.

\begin{theorem}[Remarkable localized integral identities for solutions (Rough summary)]
	\label{T:MAINIDSCHEMATICSTATEMENT}
	Consider a smooth solution (see Remark~\ref{R:SMOOTHNESSNOTNEEDED})
	to the compressible Euler equations \eqref{E:TRANSPORTDENSRENORMALIZEDRELATIVECTORECTANGULAR}-\eqref{E:ENTROPYTRANSPORT} 
	in $3D$ on a spacetime region $\mathcal{M}_T$ foliated by an acoustical time function $\Timefunction \in [0,T]$ with
	$\gfour$-spacelike level set portions $\widetilde{\Sigma}_{\Timefunction}$
	(where $\gfour$ is the acoustical metric of Def.\,\ref{D:ACOUSTICALMETRIC}), 
	as described in Subsubsect.\,\ref{SSS:INTRODESCRIPTIONOFREGION}; see Fig.\,\ref{F:SPACETIMEDOMAIN}.
	For $\Timefunction \in [0,T]$ let $\mathcal{M}_{\Timefunction}$,
	$\underline{\mathcal{H}}_{\Timefunction}$, 
	and $\mathcal{S}_{\Timefunction}$ be the subsets defined in Subsubsect.\,\ref{SSS:INTRODESCRIPTIONOFREGION}.
	Then there exists a solution-adapted, positive definite 
	quadratic form $\mathscr{Q}(\pmb{\partial} \vortrenormalized,\pmb{\partial} \vortrenormalized) \approx |\pmb{\partial} \vortrenormalized|^2$
	(see Def.\,\ref{D:QUADRATICFORMSFORCONTROLLINGFIRSTDERIVATIVESOFSPECIFICVORTICITYANDENTROPYGRADIENT})
	such that for $\Timefunction \in [0,T]$,
	up to $\mathcal{O}(1)$ factors, the following integral identity holds,
	where $\angvortrenormalized$ denotes $\gfour$-orthogonal projection of $\vortrenormalized$ onto 
	the embedded two-dimensional $\gfour$-spacelike manifold $\mathcal{S}_{\Timefunction}$,
	and we have suppressed the volume and area forms
	(see Subsect.\,\ref{SS:VOLUMEFORMSANDINTEGRALS} for their definitions):
	\begin{align} \label{E:SCHEMATICKEYIDENTITY}
			\int_{\mathcal{M}_{\Timefunction}}
				\mathscr{Q}(\pmb{\partial} \vortrenormalized,\pmb{\partial} \vortrenormalized)
			+
			\int_{\mathcal{S}_{\Timefunction}}
				|\angvortrenormalized|^2
			=
			\int_{\mathcal{S}_0}
				|\angvortrenormalized|^2
			+
			\int_{\mathcal{M}_{\Timefunction}}
				\mbox{\upshape Controllable}
				+
				\int_{\underline{\mathcal{H}}_{\Timefunction}}
					\underline{\mbox{\upshape Tangential}}.
	\end{align}
	The term $\int_{\mathcal{S}_0}
	|\angvortrenormalized|^2$
	on RHS~\eqref{E:SCHEMATICKEYIDENTITY} is determined by initial data specified along $\mathcal{S}_0 \subset \widetilde{\Sigma}_0$.
	Moreover, the remaining terms on RHS~\eqref{E:SCHEMATICKEYIDENTITY} enjoy the following properties: 
	\begin{itemize}
	\item The terms ``$\mbox{\upshape Controllable}$'' 
		either \textbf{i)}
		depend on the fluid variables, the unit $\gfour$-normal to the hypersurfaces 
		$\lbrace \widetilde{\Sigma}_{\Timefunction'} \rbrace_{\Timefunction' \in [0,\Timefunction]}$,
		and some first derivatives of these quantities
		or \textbf{ii)} are \underline{linear} in $\pmb{\partial} \vortrenormalized$.
		All of these terms enjoy a compatible\footnote{Using Young's inequality and the positivity of 
		$\mathscr{Q}(\pmb{\partial} \vortrenormalized,\pmb{\partial} \vortrenormalized)$,
		we can absorb the ``$\pmb{\partial} \vortrenormalized$ part'' 
		of the type \textbf{ii)} terms on RHS~\eqref{E:SCHEMATICKEYIDENTITY} back into LHS~\eqref{E:SCHEMATICKEYIDENTITY};
		see, for example, inequality \eqref{E:COMBINEDSPACETIMEINTEGRALALMOSTGRONWALLREADY} and the discussion just below it,
		which leads to the proof of \eqref{E:COMBINEDSPACETIMEINTEGRALGRONWALLREADY}.
		\label{FN:YOUNGSINEQUALITY}}  
		amount of Sobolev regularity, e.g.,
		the type \textbf{i)} terms can be bounded via standard estimates for the wave and transport equations of Theorem~\ref{T:GEOMETRICWAVETRANSPORTSYSTEM}
		(as long as the initial data enjoy compatible regularity);
		see Theorem~\ref{T:LOCALIZEDAPRIORIESTIMATES} for the details in the case that the 
		$\widetilde{\Sigma}_{\Timefunction}$ are standard flat hypersurfaces of constant Cartesian time.
	\item The terms ``$\underline{\mbox{\upshape Tangential}}$'' depend on the fluid variables, 
		on a vectorfield
		frame tangent to the lateral boundary portion $\underline{\mathcal{H}}_{\Timefunction}$, 
		and on the derivatives of some these variables \textbf{in directions tangent to
		$\underline{\mathcal{H}}_{\Timefunction}$}. These terms can be bounded via energy estimates for the wave and transport equations 
		of Theorem~\ref{T:GEOMETRICWAVETRANSPORTSYSTEM},
		\textbf{even in the case that $\underline{\mathcal{H}}_{\Timefunction}$ is acoustically null}.
		The crucial point is that in the acoustically null case,
		the wave energies along $\underline{\mathcal{H}}_{\Timefunction}$ degenerate and \underline{control only 
		$\underline{\mathcal{H}}_{\Timefunction}$-tangential derivatives},
		that is, compared to the $\gfour$-spacelike case, they lose their full positivity and become only positive \textbf{semi}-definite.
		Put differently, all of the terms in ``$\underline{\mbox{\upshape Tangential}}$'' are controllable by the degenerate wave energies
		or transport energies.
		In the case that the $\widetilde{\Sigma}_{\Timefunction}$ are standard flat hypersurfaces of constant Cartesian time,
		the degeneracy of the wave energies 
		is precisely captured by the ``null-flux'' coercivity result \eqref{E:NULLCASEWAVEFLUXESSEMICOERCIVE}.
	\item The integral $\int_{\mathcal{S}_{\Timefunction}} |\angvortrenormalized|^2$ 
		on LHS~\eqref{E:SCHEMATICKEYIDENTITY} is
		\textbf{positive definite} with respect to $\angvortrenormalized$.
		In the precise identity, namely \eqref{E:SPACETIMEREMARKABLEIDENTITYSPECIFICVORTICITY},
		the positivity stems from that of the scalar functions 
		$\lapsemodgen$,
		$\uposinnerproduct$, and $\seconduposinnerproduct$,
		defined respectively in 
		\eqref{E:LAPSEFORMODGENCORRESPONDINGTOTIMEFUNCTION}
		and
		\eqref{E:INGOINGCONDITION}-\eqref{E:SECONDINGOINGCONDITION},
		whose positivity in turn stems from the basic geometric properties of $\mathcal{M}$
		with respect to $\gfour$.
	\end{itemize}
	
	Moreover, the following additional results hold:
	\begin{itemize}
		\item One can incorporate a weight function $\weight$ into the integral identities;
			see Theorem~\ref{T:MAINREMARKABLESPACETIMEINTEGRALIDENTITY}.
		\item  An identity similar to \eqref{E:SCHEMATICKEYIDENTITY} holds with 
			the entropy gradient vectorfield $\GradEnt := \partial \Ent$ in place of $\vortrenormalized$
			(and thus the LHS of the identity controls the integral of $|\pmb{\partial} \GradEnt|^2 
			= |\pmb{\partial} \partial \Ent|^2$); see \eqref{E:SPACETIMEREMARKABLEIDENTITYENTROPYGRADIENT}.
		\item Similar results hold on spacetime regions covered by double-null foliations;
			see Sect.\,\ref{S:DOUBLENULL}, and Fig.\,\ref{F:DOUBLENULL} in particular.
	\end{itemize} 
\end{theorem}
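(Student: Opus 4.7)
The plan is to derive the identity \eqref{E:SCHEMATICKEYIDENTITY} by a Hodge-plus-multiplier strategy adapted to the div-curl-transport character of the PDEs satisfied by $\vortrenormalized$ on each acoustically spacelike slice $\widetilde{\Sigma}_{\Timefunction}$. First I would split the Cartesian gradient $\pmb{\partial}\vortrenormalized$ along the $\Timefunction$-foliation: the directional piece along $\gen$ (the generator of the foliation) is controlled by the transport equation \eqref{E:RENORMALIZEDVORTICTITYTRANSPORTEQUATION}, whose right-hand side consists of terms of the form (velocity gradient)$\cdot \vortrenormalized$ and (velocity gradient)$\cdot \GradEnt$; these have the form of the ``Controllable type (i)'' terms in the statement. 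The tangential-to-$\widetilde{\Sigma}_{\Timefunction}$ derivatives of $\vortrenormalized$ are then the main object of study, and I would estimate them by applying an $L^2$-Hodge identity on $\widetilde{\Sigma}_{\Timefunction}$, using the inputs $\Flatdiv \vortrenormalized$ and $\Flatcurl \vortrenormalized = \VortVort + \mathrm{l.o.t.}$ from \cites{jLjS2016a,jS2019c}, whose source terms enjoy better-than-naive regularity and null structure through equations \eqref{E:FLATDIVOFRENORMALIZEDVORTICITY}--\eqref{E:EVOLUTIONEQUATIONFLATCURLRENORMALIZEDVORTICITY}.

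Carrying this out, an integration by parts on $\widetilde{\Sigma}_{\Timefunction}$ produces a slice identity of the schematic form
\begin{align*}
\int_{\widetilde{\Sigma}_{\Timefunction}} \mathscr{Q}(\pmb{\partial}\vortrenormalized,\pmb{\partial}\vortrenormalized)
&= \int_{\widetilde{\Sigma}_{\Timefunction}}\!\!\bigl(|\Flatdiv\vortrenormalized|^2 + |\VortVort|^2 + \text{Controllable}\bigr)
+ \int_{\partial \widetilde{\Sigma}_{\Timefunction}}\!\! \mathscr{B},
\end{align*}
where $\partial \widetilde{\Sigma}_{\Timefunction} = \mathcal{S}_{\Timefunction}$ and $\mathscr{B}$ is a co-dimension-one boundary integrand that, a priori, contains \emph{all} components of $\pmb{\partial}\vortrenormalized$ at $\mathcal{S}_{\Timefunction}$. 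I would then integrate this identity in $\Timefunction\in [0,T]$ against the lapse $\lapsemodgen$ (so that $d\Timefunction\wedge (\text{slice volume})$ is the acoustical spacetime volume up to this lapse), producing the two time-endpoint terms $\int_{\mathcal{S}_0}$ and $\int_{\mathcal{S}_{\Timefunction}}$ on the left and a flux $\int_{\underline{\mathcal{H}}_{\Timefunction}}\mathscr{B}$ on the right.

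The main obstacle -- and where the new structural cancellations are essential -- is to prove two things about the flux integrand $\mathscr{B}$ along $\underline{\mathcal{H}}_{\Timefunction}$: first, that every normal derivative of $\vortrenormalized$ it superficially contains can be eliminated in favor of $\underline{\mathcal{H}}_{\Timefunction}$-tangential derivatives of $(\vortrenormalized,v,\LogDensity,\Ent,\GradEnt)$ plus the good source terms $\VortVort,\DivGradEnt$; and second, that the $\mathcal{S}_{\Timefunction}$-piece of the resulting identity reduces, up to controllable remainders, to a definite quadratic form in the $\gfour$-spherically-tangential component $\angvortrenormalized$. For the first, my plan is to decompose on $\mathcal{S}_{\Timefunction}$ with respect to the frame $(\gen,\sidehypnorm,\utang)$ adapted to $\underline{\mathcal{H}}_{\Timefunction}$, then use \eqref{E:TRANSPORTDENSRENORMALIZEDRELATIVECTORECTANGULAR}--\eqref{E:ENTROPYTRANSPORT} together with the wave-transport formulation of \cite{jS2019c} to trade the single $\sidehypnorm$-derivative of $\vortrenormalized$ hidden in $\Flatcurl\vortrenormalized$ and $\Flatdiv\vortrenormalized$ for tangential quantities; this is possible precisely because $\Flatdiv\vortrenormalized$ is algebraically controlled and $\Transport$ is transverse to $\underline{\mathcal{H}}_{\Timefunction}$ except in the null case (where the elimination simplifies rather than degenerates). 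For the second, the positivity follows from $\lapsemodgen>0$, $\uposinnerproduct>0$, $\seconduposinnerproduct>0$, i.e.\ directly from $\widetilde{\Sigma}_{\Timefunction}$ being $\gfour$-spacelike (Def.\,\ref{D:SPACELIKETIMELIKENULL}).

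Finally, the analogous identity for $\pmb{\partial}\GradEnt = \pmb{\partial}\partial\Ent$ proceeds in parallel but with $\Flatcurl\GradEnt \equiv 0$ (cf.\ \eqref{E:CURLGRADENTVANISHES}) and $\Flatdiv\GradEnt = \DivGradEnt$ as the Hodge inputs, so the interior estimate is cleaner and only the same normal-derivative elimination on $\underline{\mathcal{H}}_{\Timefunction}$ is new; the positivity mechanism for the $\mathcal{S}_{\Timefunction}$ term is identical. The weight $\weight$ enters as an overall multiplicative factor that drops into the same scheme with lower-order commutator terms, and the extension to double-null foliations (Fig.\,\ref{F:DOUBLENULL}) amounts to replacing the single foliation by $\Timefunction$ with the foliation by the spheres cut out by two acoustic eikonal functions and redoing the tangential decomposition in the double-null frame, which is the purpose of Sect.\,\ref{S:DOUBLENULL}.
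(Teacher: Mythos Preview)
Your overall architecture---slice Hodge identity, integrate in $\Timefunction$, then analyze the lateral boundary term---matches the paper, but two key mechanisms are misidentified.

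First, the positive sphere terms $\int_{\mathcal{S}_{\Timefunction}}|\angvortrenormalized|^2$ and $\int_{\mathcal{S}_0}|\angvortrenormalized|^2$ do \emph{not} arise as ``time-endpoint terms'' from integrating the slice identity in $\Timefunction$. Integrating $\int_{\widetilde{\Sigma}_{\Timefunction'}}\mathscr{Q}$ in $\Timefunction'$ gives $\int_{\mathcal{M}_{\Timefunction}}\mathscr{Q}$, and integrating the slice boundary term $\int_{\mathcal{S}_{\Timefunction'}}\mathscr{B}$ gives $\int_{\underline{\mathcal{H}}_{\Timefunction}}\mathscr{B}$; no sphere integrals appear. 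The paper's mechanism (Lemma~\ref{L:PRELIMINARYANALYSISOFBOUNDARYINTEGRAND}) is to decompose $\spherenormal_{\alpha}J^{\alpha}[\vortrenormalized]$ and identify, \emph{within} the boundary integrand, a perfect derivative $-\modgen\{\weight\tfrac{\uposinnerproduct}{\seconduposinnerproduct\lapsemodgen}|\angvortrenormalized|_{\gsphere}^2\}$ along the null generator $\modgen$ of $\underline{\mathcal{H}}$. Only after applying the fundamental theorem of calculus along $\underline{\mathcal{H}}$ (Lemma~\ref{L:DIFFERENTIATIONANDINTEGRALIDENTITEISINVOLVINGST}) to this term do the sphere integrals emerge with the correct sign. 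This is not an incidental bookkeeping point: without isolating this $\modgen$-derivative, the boundary term stays at a critical trace regularity that cannot be absorbed (see the discussion around \eqref{E:INTROCARICATUREOFPRELIMINARYANALYSISOFBOUNDARYINTEGRAND}).

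Second, you describe the remaining boundary difficulty as eliminating a ``$\sidehypnorm$-derivative of $\vortrenormalized$ hidden in $\Flatcurl\vortrenormalized$ and $\Flatdiv\vortrenormalized$.'' That is not the obstruction. After Lemma~\ref{L:PRELIMINARYANALYSISOFBOUNDARYINTEGRAND}, the residual hard term is $2\weight\uposinnerproduct\,\uspecialgen^{\alpha}\angvortrenormalized^{\beta}(\partial_{\alpha}\vortrenormalized_{\beta}-\partial_{\beta}\vortrenormalized_{\alpha})$, in which the differentiations of $\vortrenormalized$ are already along the $\underline{\mathcal{H}}$-tangent vectors $\uspecialgen,\angvortrenormalized$. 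The problem is \emph{regularity}: the identity yields only spacetime-$L^2$ control of $\pmb{\partial}\vortrenormalized$, so any first derivative of $\vortrenormalized$ in an $\underline{\mathcal{H}}$-integral is a priori uncontrolled. The paper's fix (Cor.~\ref{C:SHARPDECOMPOSITIONOFANTISYMMETRICGRADIENTS}) uses the compressible Euler structure to rewrite $(d\vortrenormalized_{\flat})_{\alpha\beta}$ entirely in terms of $\VortVort$ (controlled along $\underline{\mathcal{H}}$ via its transport equation) and first derivatives of $(\LogDensity,v,\GradEnt)$. This leaves one genuinely dangerous entropy-driven piece containing $\Transport\LogDensity$, which is $\underline{\mathcal{H}}$-transversal; Prop.~\ref{P:KEYDETERMINANT} shows this piece \emph{vanishes identically} in the null case via a determinant computation exploiting the precise form of $\uspecialgen$. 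Your sketch does not anticipate either the regularity (rather than directional) nature of the difficulty or this cancellation, which is the main new structural observation.
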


\begin{remark}[The results are most interesting when the lateral boundary is $\gfour$-null]
	\label{R:RESULTSINTERESTINGINNULLCASE}
	When $\underline{\mathcal{H}}_{\Timefunction}$ is $\gfour$-spacelike, the tangential derivative structure of the 
	terms ``$\underline{\mbox{\upshape Tangential}}$'' is perhaps to be expected. 
	This is because when $\underline{\mathcal{H}}_{\Timefunction}$ is $\gfour$-spacelike,
	the compressible Euler equations \eqref{E:TRANSPORTDENSRENORMALIZEDRELATIVECTORECTANGULAR}-\eqref{E:ENTROPYTRANSPORT}
	roughly have the following algebraic content: some $\underline{\mathcal{H}}_{\Timefunction}$-transversal derivative of the solution
	can be re-expressed as $\underline{\mathcal{H}}_{\Timefunction}$-tangential derivatives. Put differently, in the spacelike case,
	one can use the compressible Euler equations to eliminate transversal derivatives in terms of
	tangential derivatives. In contrast, when $\underline{\mathcal{H}}_{\Timefunction}$ is $\gfour$-null,
	this heuristic argument is false, and the tangential-differentiation structure of the 
	terms ``$\underline{\mbox{\upshape Tangential}}$'' becomes much more interesting and difficult to uncover.
\end{remark}

\begin{remark}[We have avoided some geometry to assist future applications]
	\label{R:AVOIDINGCHRISTOFFEL}
	Some of the error terms in the integral identities
	involve first derivatives of various vectorfields in geometric directions.
	We have intentionally chosen to express these error terms relative to the Cartesian coordinates,
	even though they could be rewritten in a much more geometric fashion by using covariant derivatives
	and referring to the second fundamental forms of the foliations. 
	We are motivated by the following consideration:
	covariant derivatives would involve the Cartesian coordinate Christoffel symbols,
	which, in the context of the study of shocks, can be singular \cites{dC2007,jS2016b,jLjS2018}. 
	That is,  
	we have aimed to express error terms in a manner such that
	in the context of shocks, 
	the singular terms will be manifest, which means avoiding certain covariant expressions.
	This is reminiscent of the renormalizations used in the works on impulsive gravitational wave solutions to Einstein's equations \cites{jLiR2015,jLiR2017},
	which showed that the quantities
	with the best analytic structures are generally not the same
	as the quantities with the most geometric interpretation. 
\end{remark}

\begin{remark}[The smoothness assumption could be substantially weakened]
	\label{R:SMOOTHNESSNOTNEEDED}
	In Theorem~\ref{T:MAINIDSCHEMATICSTATEMENT} and throughout, we have assumed that the solution is sufficiently smooth 
	(e.g., assuming $\varrho$, $v$, and $\Ent$ are $C^3$ would suffice).
	We made this assumption only for convenience, as it facilitates our arguments involving integration by parts;
	standard techniques could be used to extend 
	our results to solutions of suitable finite Sobolev regularity.
\end{remark}

\begin{remark}[Similar results for the solution's higher derivatives]
\label{R:HOWTOTREATHIGHERDERIVATIVES}
For convenience, in this paper, we have exhibited the identities only at the lowest derivative levels,
that is, at the level of the undifferentiated equations of Theorem~\ref{T:GEOMETRICWAVETRANSPORTSYSTEM}.
However, as our proofs make clear, similar results hold for the higher-order derivatives of the solution,
assuming that the initial data enjoy compatible regularity.
The reason is that all of the special cancellations that we observe stem from \underline{linear factors}
on the right-hand side of various equations from
Theorem~\ref{T:GEOMETRICWAVETRANSPORTSYSTEM}
and/or the equations \eqref{E:TRANSPORTDENSRENORMALIZEDRELATIVECTORECTANGULAR}-\eqref{E:ENTROPYTRANSPORT}
(which are a standard first-order formulation of compressible Euler flow).
Thus, one could treat the differentiated equations in the same way,\footnote{In applications, 
one would of course have to construct good commutator vectorfields
in order to ensure that the commutator terms also have good structure.} 
where the special cancellations would occur in products such that all the derivatives fall on the key linear factors;
the remaining terms in the differentiated equations are error terms that either have a below-top-order derivative count
(and thus are easy to incorporate into the framework of this paper)
or that could be handled via standard estimates for wave and transport equations.
\end{remark}

\subsection{Applications and potential applications of the integral identities}
\label{SS:APPLICATIONS}
We now highlight some of the main applications/potential applications of the integral identities.
\begin{enumerate}
	\renewcommand{\labelenumi}{\textbf{\Roman{enumi})}}

	\item \textbf{Sharpened picture of the basic regularity theory for compressible Euler solutions}.
		As the statement of Theorem~\ref{T:MAINIDSCHEMATICSTATEMENT} suggests,
		our results can be used to exhibit a gain in regularity compared to standard estimates, 
		that is, that the specific vorticity $\vortrenormalized$ and entropy gradient
		$\GradEnt$ are exactly as differentiable as the velocity $v$ and density $\varrho$.
		We illustrate this in detail in Theorem~\ref{T:LOCALIZEDAPRIORIESTIMATES},
		where as an application,
		we derive a priori Sobolev estimates at the level of the undifferentiated equations of
		\cite{jS2019c}. These estimates provide a new, sharpened picture of the basic regularity theory for solutions. We highlight the following main
		points.
		\begin{quote}
			For classical solutions whose initial data enjoy the gain in regularity for $\vortrenormalized$ and $\GradEnt$,
			the correct\footnote{By ``correct regularity space'' for $\vortrenormalized$ and $\GradEnt$, 
			we mean the function space for which estimates are available
			\emph{and} compatible with the regularity of the other solution variables.} 
			regularity space
			for these fluid variables
			on compact acoustically globally hyperbolic spacetime regions
			is such that at the top derivative level,
			their divergence and curl\footnote{Although $\curl \GradEnt = 0$ (see \eqref{E:CURLGRADENTVANISHES}), 
			the curl of the derivatives of $\GradEnt$ with respect to vectorfields is generally not $0$ due to the commutator 
			of the vectorfield with the operator $\curl$. This issue arises in the study of shock waves without symmetry assumptions,
			where a huge amount of effort is required to construct appropriate vectorfield differential operators
			and to control commutator terms.} belong to 
			$L^{\infty}(Time)L^2(Space)$ (which are the standard regularity spaces for hyperbolic PDE solutions).
			In contrast, \emph{their spatial gradient is generally less regular},\footnote{We note, however,
			that the top-order $L^{\infty}(Time)L^2(Space)$ regularity of $\Transport \vortrenormalized$ and $\Transport \GradEnt$
			can easily be derived by using the transport equations \eqref{E:RENORMALIZEDVORTICTITYTRANSPORTEQUATION} and \eqref{E:GRADENTROPYTRANSPORT}
			for algebraic substitution and showing that the terms on the right-hand sides 
			enjoy the desired $L^{\infty}(Time)L^2(Space)$ regularity. \label{FN:LINFINITYTIMEL2SPACECONTROLOFMATERIALDERIVATIVETERMS}} 
			belonging only to
			$L^2(Time)L^2(Space)$.
			That is, unless one considers special solutions/regions\footnote{If the vorticity and entropy are compactly supported in space,
			then one can deduce a stronger estimate. That is,
			from the standard Hodge estimate $\| \partial \upxi \|_{L^2(\mathbb{R}^3)} 
			\lesssim \| \dive \upxi \|_{L^2(\mathbb{R}^3)} + \| \curl \upxi \|_{L^2(\mathbb{R}^3)}$
			for vectorfields $\upxi$ on $\mathbb{R}^3$ (where $\partial$ denotes gradient with respect to the Cartesian spatial coordinates),
			we see that $\partial \upxi \in L^{\infty}(Time)L^2(Space)$
			would in fact follow from knowing that $\dive \upxi \in L^{\infty}(Time)L^2(Space)$ and $\curl \upxi \in L^{\infty}(Time)L^2(Space)$.} 
			such that the divergence and curl vanish along the lateral boundary,
			the regularity theory at the top spatial derivative level involves \emph{spacetime integrals}
			of $|\pmb{\partial} \dot{\vortrenormalized}|^2$ and $|\pmb{\partial} \dot{\GradEnt}|^2$, where the ``$\cdot$'' schematically denotes
			a top-order derivative that has been commuted through the Euler equations.
			In the context of the priori estimates of Theorem~\ref{T:LOCALIZEDAPRIORIESTIMATES}, this is captured by the
			spacetime integral estimate \eqref{E:COMBINEDSPACETIMEINTEGRALGRONWALLED}.
	\end{quote}
	Relative to Lagrangian coordinates, the gain in differentiability for the vorticity in the barotropic case, 
	achieved via combinations of Hodge estimates
	and transport equation estimates,
	has long been known,
	specifically because it has played a central role in proofs of local well-posedness  
	for the compressible Euler equations for initial data featuring a fluid-vacuum boundary
	satisfying the ``physical vacuum'' boundary condition
	\cites{dChLsS2010,dCsS2011,dCsS2012,jJnM2011,jJnM2009}.
	By the nature of free-boundary problems, these results are spatially localized.
	In the context of estimates across all of space 
	(in particular, without the difficult boundary terms that we handle in this paper),
	the gain in differentiability for the vorticity
	with respect to arbitrary vectorfield differential operators
	was first shown in \cite{jLjS2016a}, while the gain
	in differentiability for the entropy was first shown in \cite{jS2019c}.
	The freedom to gain the derivative relative to general vectorfield differential operators
	is important for the mathematical theory of shock waves without symmetry assumptions.
	The reason is that Lagrangian coordinates seem to be unsuitable for deriving the full structure of the singular set\footnote{Here, 
	in the context of shocks, by ``singular set,''
	we roughly mean the portion of the boundary of the maximal classical development on which the first derivatives of the density and velocity blow up;
	see below for further discussion of the maximal classical development.}
	because they are not adapted to the acoustic characteristics,\footnote{Acoustic characteristics are hypersurfaces that are null with respect 
	to the acoustical metric $\gfour$ defined in of Def.\,\ref{D:ACOUSTICALMETRIC}. Sound cones emanating from points serve as examples.} whose intersection 
	corresponds to the formation of a shock; we refer readers to \cites{jLjS2016a,jS2019c}
	for further discussion of these issues.
	\item \textbf{Localized analysis of solutions with vorticity and entropy that form shocks and the interaction of shocks}.
		In $3D$, 
		integral identities of the form \eqref{E:SCHEMATICKEYIDENTITY}
		are of fundamental importance for the localized analysis
		of stable shock formation in the presence of vorticity and entropy for solutions without symmetry assumptions.
		Roughly, shocks are singularities such that $\varrho$, $v$, and $\Ent$ remain bounded,\footnote{In \cite{jLjS2018},
		it was shown that in the barotropic case in $2D$, the specific vorticity remains Lipschitz up to the shock, 
		at least for the open set of initial data treated there.}
		while some directional derivatives of $\varrho$ and $v$ blow up in a very particular fashion.
		The blowup is tied to the intersection of distinct characteristic hypersurfaces. 
		What blows up are derivatives of the solution in directions transversal to the characteristics;
		the solution and its derivatives in direction tangent to the characteristics remain bounded.
		This is a multiple-spatial-dimension analog of the singularity formation that occurs in the model case of the $1D$ Burgers' equation
		$\partial_t \Psi + \Psi \partial_x \Psi = 0$.
		We refer readers to \cites{dC2007,gHsKjSwW2016,jS2016b,jLjS2016a,jS2018c,mDjS2019,jS2019c} for
		background on shock formation in multiple spatial dimensions without symmetry assumptions. 
		Shocks are singularities of a sufficiently mild nature that one is left with hope that
		it might be possible to uniquely extend the solution in a weak sense past 
		the singularity, subject to suitable admissibility criteria; see Point \textbf{IV} below for further discussion of this issue.
		In the work \cite{jLjS2018}, the second author and J.\,Luk proved a stable shock formation result 
		in $2D$ under barotropic equations of state,
		a setting that is much simpler than the $3D$ case 
		since the absence of vorticity stretching\footnote{For barotropic equations of state in $2D$, 
		the absence of vorticity stretching is equivalent to the fact that RHS~\eqref{E:RENORMALIZEDVORTICTITYTRANSPORTEQUATION} 
		is identically $0$, a well-known fact that holds only in the $2D$ case. 
		Due to this vanishing, one can handle the specific vorticity without using
		the div-curl-transport system
		\eqref{E:FLATDIVOFRENORMALIZEDVORTICITY}-\eqref{E:EVOLUTIONEQUATIONFLATCURLRENORMALIZEDVORTICITY},
		which drastically simplifies the regularity theory of the specific vorticity 
		compared to the general $3D$ case.}
		in $2D$ allows one to avoid elliptic estimates for the vorticity. 
		In particular, integral identities
		of the type that we derive in this paper are not needed to close the estimates.
		In \cites{jLjS2016a,jS2019c}, the second author and J.\,Luk outlined of 
		a proof of stable shock formation in $3D$
		(full details will be presented in a forthcoming paper) in the barotropic case,
		but the approach described there neither required nor yielded localized information
		about the specific vorticity and entropy at the top derivative level.
		The reason is that the elliptic estimates in
		\cites{jLjS2016a,jS2019c} involved integrals taken across of space
		and thus did not involve the difficult boundary terms that we treat here.
		Put differently, the approach described in
		\cites{jLjS2016a,jS2019c} works only for spacetime regions 
		of the form $[0,T] \times \Sigma$, where
		the ``space manifold'' $\Sigma$ has no boundary (e.g., $\Sigma = \mathbb{R}^3$).
		In contrast, integral identities of the form \eqref{E:SCHEMATICKEYIDENTITY}
		allow one to approach the problem of shock formation in the presence of vorticity and entropy 
		via analysis on spatially localized regions. The viability of a local approach  
		is desirable in the sense that solutions exhibit finite speed of propagation,
		and from a physical point of view,
		one would like to be able to describe the shock formation using only ``local information.''
		Such localized information is expected to be important for studying the interaction of shock waves.
	\item \textbf{Sharp information about the boundary of the maximal classical development}.
		Roughly, the maximal classical development is the largest spacetime region on which the solution exists
		classically and is uniquely determined by the initial data; we refer to \cites{jSb2016,wW2013} for further discussion.
		The localized analysis of shock formation described in Point \textbf{II} above is of
		crucial importance for obtaining information about the maximal classical development of the initial data,
		including information about the solution (including the vorticity and entropy) up to the boundary.
		In \cite{dC2007}, Christodoulou provided a sharp picture of the maximal classical development near shock singularities
		for irrotational and isentropic solutions to the $3D$ relativistic Euler equations. 
		He showed that the boundary of the maximal development can have various components enjoying the structure of a smooth manifold
		with respect to an appropriate dynamically constructed coordinate system. For example, the boundary can have singular components,
		along which some Cartesian partial derivative of the solution blows up,
		as well as acoustically null hypersurface portions along which no blowup occurs (roughly, these null portions are Cauchy horizons).
		His proof exploited that in the irrotational and isentropic setting, 
		the equations of motion reduce to a single quasilinear wave equation for a potential function.
		In particular, Christodoulou did not have to derive div-curl-transport estimates for the fluid variables.
		See also \cite{dCsM2014} for a similar sharp description of the maximal classical development 
		of shock-forming solutions to the irrotational and isentropic non-relativistic $3D$ compressible Euler equations.
		We highlight the following key point.
		\begin{quote}
			Integral identities of the form \eqref{E:SCHEMATICKEYIDENTITY} are the main new ingredients needed to extend the framework
			of \cites{dC2007,jLjS2016a,jS2019c} to derive sharp information up to the boundary of the maximal classical development 
			for solutions with vorticity and non-constant entropy. The point is that, roughly, by finite speed of propagation,
			the boundary of the maximal classical development is ``locally determined,''
			which necessitates the use of localized identities/estimates.
			Such a result would elevate our understanding of such solutions
			to the same level achieved by Christodoulou \cite{dC2007} in the irrotational and isentropic case.
		\end{quote}
			
		We also refer to \cites{sA1999a,sA1999b,tBsSvV2019a,tBsSvV2019b} for alternate approaches 
		to proving blowup outside of symmetry. 
		The frameworks used in these works allow one to follow solutions up to the constant-time hypersurface of
		first blowup for an open set of initial data such that 
		the solution's first singularity is non-degenerate in the constant-hypersurface of first blowup.
		Roughly, ``non-degenerate'' means that the first singularity is isolated in the constant-hypersurface of first blowup
		and that the \emph{reciprocal} of the singular directional derivative of the solution behaves 
		quadratically
		(with respect to suitably constructed\footnote{That is, if one dynamically constructs special solution-dependent 
		spatial coordinates $\lbrace y^i \rbrace_{i=1,2,3}$, which
		are degenerate with respect to the Cartesian coordinates and normalized
		so that the first singularity occurs at the origin, then at the time of first blowup, 
		some first-order Cartesian coordinate partial derivative of the 
		solution blows up in space like $\frac{1}{|y|^2}$.} 
		coordinates)
		within the constant-hypersurface of first blowup.
		\item \textbf{Connections to the shock development problem}.
		Although sharp results about the maximal classical development (as described in Point \textbf{III}) are of interest in themselves,
		they are also essential for properly setting up the ``initial'' data for the \emph{shock development problem}.
		This is the problem of locally solving the compressible Euler equations past the shock singularity in a weak sense (uniquely under
		appropriate admissibility conditions tied to jump conditions), and, at the same time, constructing the shock hypersurface
		across which the solution jumps. In \cite{dCaL2016}, Christodoulou--Lisibach solved the problem for the $3D$ relativistic Euler equations
		in spherical symmetry, while in the recent breakthrough monograph \cite{dC2019}, 
		Christodoulou solved the \emph{restricted shock development problem} in $3D$ without
		symmetry assumptions. Here, the term ``restricted'' means that Christodoulou considered only irrotational initial data,
		and he ignored the jump in entropy and vorticity 
		across the shock hypersurface, so that the mathematical problem concerned only irrotational solutions.
		We stress that the ``initial'' data for the restricted shock development problem in \cite{dC2019} 
		are provided by the state of the solution
		on the boundary of its maximal classical development 
		(which was derived by Christodoulou in \cite{dC2007} in the irrotational case, starting from smooth initial conditions along $\lbrace t=0 \rbrace$),
		and moreover, the quantitative estimates near the boundary from \cite{dC2007}
		are crucially used in \cite{dC2019} to implement the iteration scheme that lies at the heart of the solution of the restricted shock development problem.
		We also highlight that in the (yet unsolved ``unrestricted'') shock development problem, 
		vorticity and entropy are generated across the shock hypersurface,
		even if the initial data are irrotational and isentropic. Thus, we expect that
		the results of the present article will be useful for properly setting up/ studying the shock development problem 
		for general solutions (that are smooth along $\lbrace t=0 \rbrace$ but form shocks in finite time),
		in which the vorticity is non-zero and the entropy is dynamic.
	\item \textbf{Characteristic initial value problem}.
		In the theory of hyperbolic PDEs, the characteristic initial value problem is the Cauchy problem with initial data prescribed on
		a pair of intersecting characteristic hypersurfaces (i.e., acoustically null hypersurfaces in the context of compressible Euler flow).
		This kind of Cauchy problem has proven to be of immense value for solving
		important problems in hyperbolic PDE theory; 
		see, for example,
		Christodoulou's celebrated proof \cite{dC2008} of the formation of trapped surfaces in Einstein-vacuum flow.
		Double-null foliations also played a central role in Christodoulou's resolution of the restricted shock development problem \cite{dC2019},
		mentioned above.
		A crucial difference compared to the case of the standard Cauchy problem (in which the data are prescribed on a spacelike hypersurface)
		is that in the characteristic case, some ``components'' of the initial data are ``constrained,'' that is, 
		determined by the ``free components'' by virtue of equations obtained by restricting the PDE to the initial characteristic surfaces.
		In the context of compressible Euler flow, the (acoustically) characteristic initial value problem has not yet been understood, except
		in the case of spherically symmetric barotropic flow \cite{aL2017}.
		The integral identities of the present article, specifically the ones provided by Theorem~\ref{T:DOUBLENULLMAINTHEOREM},
		can be used to derive a priori estimates for solutions to the full $3D$ compressible Euler system in terms of various norms
		along the initial (acoustically) characteristic hypersurfaces.
		To extend the a priori estimates to a full proof of local well-posedness with characteristic initial data, 
		one must in particular understand which components of data are free and which are constrained.
		This will be the subject of a future work.
\end{enumerate}

\subsection{Key ideas in the proof of Theorem~\ref{T:MAINIDSCHEMATICSTATEMENT}}
\label{SS:REMARKSONTHEPROOF}
In this subsection, we provide an outline of the proof of Theorem~\ref{T:MAINIDSCHEMATICSTATEMENT}, highlighting
the key ideas without discussing technical details.
\begin{convention}[The symbol ``$\underline{\mathcal{N}}$'']
	\label{C:INTROALTNOTATION}
	Throughout this subsection,
	we will consider only the most interesting and difficult case in which the lateral boundary $\underline{\mathcal{H}}$ in Fig.\,\ref{F:SPACETIMEDOMAIN}
	is null with respect to the acoustical metric of Def.\,\ref{D:ACOUSTICALMETRIC}. 
	We therefore use the alternate notation 
	``$\underline{\mathcal{N}}$'' in place of ``$\underline{\mathcal{H}}$'' to emphasize its null character.
	In the rest of the paper, we also use this alternate notation in the null case;
	see Convention~\ref{C:NULLCASE}.
\end{convention}

To initiate the proof of Theorem~\ref{T:MAINIDSCHEMATICSTATEMENT} for $\vortrenormalized$,
one must circumvent the difficulties described at the start of Subsubsect.\,\ref{SSS:SCHEMATICSTATEMENTOFMAINRESULTS}, 
which are caused in part by the \emph{apparent} lack of sufficient regularity in the terms on the right-hand side of the 
transport equation \eqref{E:INTROTRANSPORTSPECIFICVORTICITYCARICATURE} for $\vortrenormalized$.
To this end, one considers equations
\eqref{E:FLATDIVOFRENORMALIZEDVORTICITY}-\eqref{E:EVOLUTIONEQUATIONFLATCURLRENORMALIZEDVORTICITY},
which, roughly speaking, show that $\vortrenormalized$ satisfies\footnote{More precisely, equation \eqref{E:EVOLUTIONEQUATIONFLATCURLRENORMALIZEDVORTICITY}
involves a modified version of $\curl \vortrenormalized$,
denoted by $\VortVort$ and defined below in \eqref{E:RENORMALIZEDCURLOFSPECIFICVORTICITY}.
In practice, one must work with $\VortVort$ since it satisfies a transport equation
whose source terms exhibit improved regularity and other good structures
compared to the transport equation satisfied by $\curl \vortrenormalized$.
However, to keep the discussion short, in our schematic overview,
we will ignore the distinction between $\curl \vortrenormalized$ and $\VortVort$. \label{FN:SHOULDUSEMODIFIEDVARIABLES}} 
a div-curl-transport system, where the div and curl operators are the standard Euclidean ones. 
To keep the discussion short, here we caricature the system as:
\begin{align}  \label{E:INTRODIVCURLTRANSPORTONSIGMAT}
	\Flatdiv \vortrenormalized
	& := F = \vortrenormalized \cdot \partial \LogDensity,
	&
	\Transport 
	\Flatcurl \vortrenormalized
	&  := G = \partial v \cdot \partial \vortrenormalized + \cdots,
\end{align}
where $\Flatdiv$ and $\Flatcurl$ denote the standard Euclidean divergence and curl operators on $\Sigma_t$ and
$\cdots$ denotes similar or easier error terms. Here and throughout, $\Sigma_t = \lbrace t \rbrace \times \mathbb{R}^3 \subset \mathbb{R}^{1+3}$ 
denotes the standard flat hypersurface of constant Cartesian time $t$.
We highlight that, as \eqref{E:EVOLUTIONEQUATIONFLATCURLRENORMALIZEDVORTICITY} shows (see also Footnote~\ref{FN:MORENULLFORMS}), 
all derivative-quadratic terms in $G$ are in fact null forms with respect to the acoustical metric $\gfour$.
As we explain in Subsubsect.\,\ref{SSS:STANDARDNULLFORMS}, 
the null form structure is important for applications to shock waves, 
though we downplay the significance of this structure in the present discussion.
We also note that to handle the entropy gradient $\GradEnt$,
one would carry out similar arguments using 
the div-curl-transport system \eqref{E:TRANSPORTFLATDIVGRADENT}-\eqref{E:CURLGRADENTVANISHES}
in place of \eqref{E:FLATDIVOFRENORMALIZEDVORTICITY}-\eqref{E:EVOLUTIONEQUATIONFLATCURLRENORMALIZEDVORTICITY}.

To prove Theorem~\ref{T:MAINIDSCHEMATICSTATEMENT} for $\vortrenormalized$, we must accomplish the following:
\begin{enumerate}
	\item Derive an integral identity whose left-hand side is comparable to 
		$\int_{\mathcal{M}_{\Timefunction}} |\pmb{\partial} \vortrenormalized|^2
			+
			\int_{\underline{\mathcal{S}}_{\Timefunction}} |\angvortrenormalized|^2
		$,
		where $\angvortrenormalized$ is the $\gfour$-orthogonal projection of $\vortrenormalized$ onto $\mathcal{S}_{\Timefunction}$.
		We clarify that in Subsect.\,\ref{SS:REMARKSONTHEPROOF}, we will not explicitly display (or even define) the volume forms in any of the integrals.
	\item We must show that the error terms 
	on the right-hand side of the integral identity
	enjoy a consistent amount of Sobolev regularity,
	assuming that the initial data (which we assume to be prescribed along $\widetilde{\Sigma}_0$) also enjoy it.
	Specifically, the \emph{spacetime} error integrals $\int_{\mathcal{M}_{\Timefunction}} \cdots$ 
	on RHS~\eqref{E:SCHEMATICKEYIDENTITY}
	are allowed to have arbitrary dependence on
		the \emph{un-differentiated} quantities
		$\LogDensity$,
		$v$,
		$\Ent$,
		$\vortrenormalized$, 
		$\GradEnt$,
		and
		up-to-\emph{quadratic} dependence
		on $\pmb{\partial} \LogDensity$,
		$\pmb{\partial} v$,
		$\pmb{\partial} \Ent$,
		and
		$\Flatcurl \vortrenormalized$ (in reality, one needs to use $\VortVort$ in place of 
		$\Flatcurl \vortrenormalized$ -- see Footnote~\ref{FN:SHOULDUSEMODIFIEDVARIABLES}).
		Under standard $C^1$-type bootstrap assumptions enjoyed by classical solutions (see Subsubsect.\,\ref{SSS:ASSUMPTIONSONTHESOLUTION}),
		these spacetime error integrals can be bounded using energy standard estimates for the wave and transport equations featured in
		Theorem~\ref{T:GEOMETRICWAVETRANSPORTSYSTEM}; see e.g.\ the proof of Theorem~\ref{T:LOCALIZEDAPRIORIESTIMATES}.
		Moreover, these spacetime error integrals are allowed to have 
		\emph{linear dependence} on $\pmb{\partial} \vortrenormalized$ and $\pmb{\partial} \GradEnt$
		(see Footnote~\ref{FN:YOUNGSINEQUALITY} regarding the admissibility of such linear dependence).
		Below, we further explain the allowed quadratic dependence on $\Flatcurl \vortrenormalized$.
	\item On the right-hand side of the integral identity \eqref{E:SCHEMATICKEYIDENTITY}, 
		the \emph{null hypersurface} error integrals $\int_{\underline{\mathcal{N}}_{\Timefunction}} \cdots$ 
		are allowed to have arbitrary dependence on
		the \emph{un-differentiated} quantities
		$\LogDensity$,
		$v$,
		$\Ent$,
		$\vortrenormalized$, 
		$\GradEnt$,
		$\Flatcurl \vortrenormalized$,
		and
		up-to-\emph{quadratic} dependence on the first-order $\underline{\mathcal{N}}_{\Timefunction}$-\emph{tangential derivatives}
		of $\LogDensity$
		and
		$v$.
		Under bootstrap assumptions of the type mentioned above,
		these null hypersurface integrals can be also bounded using energy standard estimates for the wave and transport equations 
		featured in Theorem~\ref{T:GEOMETRICWAVETRANSPORTSYSTEM};
		see e.g.\ the proof of Theorem~\ref{T:LOCALIZEDAPRIORIESTIMATES}.
	\item On the right-hand side of the integral identity \eqref{E:SCHEMATICKEYIDENTITY},
		the ``data term'' $\int_{\mathcal{S}_0} |\vortrenormalized|^2$
		is allowed.
	\end{enumerate}

\subsubsection{A warm-up problem: Flat spacetime slabs without spatial boundaries}
\label{SSS:SIMPLERDOMAINS}
We first explain how to prove Theorem~\ref{T:MAINIDSCHEMATICSTATEMENT}
in the drastically simplified setting of spacetime regions of the type 
$\mathcal{M}_T = [0,T] \times \mathbb{R}^3$, 
that is, for spacetime slabs foliated by the constant Cartesian-time hypersurfaces
$\Sigma_t$ (without a boundary). 
The main challenge is to overcome the regularity difficulties described at the beginning of Subsect.\,\ref{SS:REMARKSONTHEPROOF}.
We will proceed by 
using the system \eqref{E:INTRODIVCURLTRANSPORTONSIGMAT} and applying 
the divergence theorem to the  $\Sigma_t$-tangent vectorfield
with the following Cartesian spatial components:
\begin{align} \label{E:INTROSIMPLEELLIPTICCURRENT}
J^i := \vortrenormalized^a \partial_a \vortrenormalized^i
-
\vortrenormalized^i
\Flatdiv \vortrenormalized.
\end{align}
Next, we note that straightforward computations yield the Hodge-type identity
\begin{align} \label{E:INTROVERYSIMPLEHODGEID}
	|\partial \vortrenormalized|^2 
	& = \Flatdiv J + (\Flatdiv \vortrenormalized)^2 + |\Flatcurl \vortrenormalized|^2.
\end{align}
Thus, we can apply the divergence theorem to $J$
on $\Sigma_t$ (relative to the standard Euclidean metric on $\Sigma_t$ and without boundary terms), 
to obtain (suppressing the standard integration measures):
$\int_{\Sigma_t} |\partial \vortrenormalized|^2 
= \int_{\Sigma_t} (\Flatdiv \vortrenormalized)^2 + \int_{\Sigma_t} |\Flatcurl \vortrenormalized|^2$.
Then integrating this elliptic identity with respect to Cartesian time and substituting $\Flatdiv \vortrenormalized$ with $F$ from
\eqref{E:INTRODIVCURLTRANSPORTONSIGMAT}, we deduce:
\begin{align} \label{E:INTROSIMPLEHODGEINTEGRALIDENTITY}
\int_{\mathcal{M}_t} |\partial \vortrenormalized|^2 
& = 
\int_{\mathcal{M}_t} F^2 
+ 
\int_{\mathcal{M}_t} |\Flatcurl \vortrenormalized|^2.
\end{align}
Note that the difficult boundary integrals $\int_{\underline{\mathcal{N}}_t} \cdots$ (see Convention~\ref{C:INTROALTNOTATION}) 
found on RHS~\eqref{E:SCHEMATICKEYIDENTITY} are completely absent in the present simplified setting.

\begin{remark}
	\label{R:WHYINTEGRATEINTIME}
	In the present simplified setting, it is not necessary to integrate the elliptic identity with respect to time
	to obtain the spacetime integral identity \eqref{E:INTROSIMPLEHODGEINTEGRALIDENTITY}.
	We have carried out the integration in time only because later on, 
	when treating regions with spatial boundaries, we will need to integrate in time to see various special structures;
	see the integral identity \eqref{E:INTROMAINSPACETIMEINTEGRALIDENTITY} and the discussion surrounding it.
\end{remark}

To complete our proof sketch of Theorem~\ref{T:MAINIDSCHEMATICSTATEMENT} in the present simplified setting,
we will uncover the regularity properties of the
terms in the integral identity \eqref{E:INTROSIMPLEHODGEINTEGRALIDENTITY}
and explain why they have sufficient regularity for controlling
LHS~\eqref{E:INTROSIMPLEHODGEINTEGRALIDENTITY}, assuming that the initial data enjoy sufficient regularity.
From the point of view of regularity, the most difficult term to handle is the source term $G$
in \eqref{E:INTRODIVCURLTRANSPORTONSIGMAT}, specifically the factor $\partial \vortrenormalized$.
Thus, for illustration, we will assume that the other factor in $G$, namely $\partial v$, 
is uniformly bounded by a constant.
Under this assumption, we will explain how to prove that there exists a $C > 0$
such that the following estimate holds
(note that in the present simplified setting, we have $\Timefunction := t$ and $\mathcal{M}_t = [0,t] \times \mathbb{R}^3$):
\begin{align} \label{E:MODELVORTICITYESTIMATEGRONWALLREADY}
\int_{\Sigma_t} |\Flatcurl \vortrenormalized|^2
+
\int_{\mathcal{M}_t} |\partial \vortrenormalized|^2 
&
\leq 
\frac{1}{2}
\int_{\mathcal{M}_t}  |\partial \vortrenormalized|^2
+
C(1+t^2)
\int_{\mathcal{M}_t} |\Flatcurl \vortrenormalized|^2
	\\
& \ \
+
\int_{\mathcal{M}_t} F^2 
+
t 
\int_{\Sigma_0} |\Flatcurl \vortrenormalized|^2 
+
\cdots.
\notag
\end{align}
We then note that the first term on RHS~\eqref{E:MODELVORTICITYESTIMATEGRONWALLREADY} can be absorbed
back into the left and that the remaining terms on RHS~\eqref{E:MODELVORTICITYESTIMATEGRONWALLREADY}
either 
\textbf{i)} are controlled by the initial data on $\Sigma_0$, 
\textbf{ii)} have a consistent amount of 
Sobolev regularity in the sense described in point 2 below \eqref{E:INTRODIVCURLTRANSPORTONSIGMAT}, 
or \textbf{iii)} can be treated with Gronwall's inequality
(where LHS~\eqref{E:MODELVORTICITYESTIMATEGRONWALLREADY} is the quantity to which one applies Gronwall's inequality).
In total, as a consequence of \eqref{E:MODELVORTICITYESTIMATEGRONWALLREADY},
the assumptions described above, and Gronwall's inequality,
one could derive an estimate of the following form for $t \in [0,T]$:
\begin{align} \label{E:MODELVORTICITYELLIPTICHYPERBOLICESTIMATE}
\int_{\Sigma_t} |\Flatcurl \vortrenormalized|^2
+
\int_{\mathcal{M}_t} |\partial \vortrenormalized|^2 
& \lesssim 
\exp[C(1+t^2)] \cdot \mbox{\upshape data},
\end{align}
where ``$\mbox{\upshape data}$'' denotes an appropriate Sobolev norm of the initial data on $\Sigma_0$;
estimates in the spirit of \eqref{E:MODELVORTICITYELLIPTICHYPERBOLICESTIMATE} are fundamentally important for 
applications of the type described in Subsect.\,\ref{SS:APPLICATIONS}.

It remains for us to explain how to prove \eqref{E:MODELVORTICITYESTIMATEGRONWALLREADY}. 
In view of \eqref{E:INTROSIMPLEHODGEINTEGRALIDENTITY}, we see that
we only have to explain how to control the integral 
$\int_{\Sigma_t} |\Flatcurl \vortrenormalized|^2$ on LHS~\eqref{E:MODELVORTICITYESTIMATEGRONWALLREADY}.
In fact, for this term, control along $\Sigma_t$ can be achieved:
standard transport equation energy identities (see Prop.\,\ref{P:ENERGYFLUXID})
for the second equation in \eqref{E:INTRODIVCURLTRANSPORTONSIGMAT} yield, 
for $t \in [0,T]$,
taking into account our assumed uniform bound on $\partial v$, 
the following energy inequality, which we depict schematically:
\begin{align} \label{E:MODELCURLOFVORTICITYENERGYESTIMATE}
\int_{\Sigma_t} |\Flatcurl \vortrenormalized|^2 
& 
\leq 
\int_{\Sigma_0} |\Flatcurl \vortrenormalized|^2 
+
C
\int_{\mathcal{M}_t} |\Flatcurl \vortrenormalized \cdot \partial \vortrenormalized|
+
\cdots.
\end{align}
Integrating \eqref{E:MODELCURLOFVORTICITYENERGYESTIMATE} 
with respect to time, we obtain
\begin{align} \label{E:MODELCURLOFVORTICITYENERGYESTIMATETIMEINTEGRATED}
\int_{\mathcal{M}_t} |\Flatcurl \vortrenormalized|^2 
& \leq
t 
\int_{\Sigma_0} |\Flatcurl \vortrenormalized|^2 
+
C t \int_{\mathcal{M}_t}  |\Flatcurl \vortrenormalized \cdot \partial \vortrenormalized|
+
\cdots.
\end{align}
Using \eqref{E:MODELCURLOFVORTICITYENERGYESTIMATETIMEINTEGRATED}
to control the last integral on RHS~\eqref{E:INTROSIMPLEHODGEINTEGRALIDENTITY},
adding the resulting inequality to the identity 
\eqref{E:MODELCURLOFVORTICITYENERGYESTIMATE},
and using Young's inequality in the form $ab \lesssim a^2 + b^2$,
we deduce that there is a constant $C > 0$ such that
\eqref{E:MODELVORTICITYESTIMATEGRONWALLREADY} holds.

\subsubsection{An overview of the analysis on the domains $\mathcal{M}$ from our main results}
\label{SSS:DOMAINSOFINTEREST}
We will now explain how to handle all of the additional complications that arise
when deriving the integral identities on domains $\mathcal{M}$ featured in Theorem~\ref{T:MAINIDSCHEMATICSTATEMENT},
specifically domains of the type featured in Fig.\,\ref{F:SPACETIMEDOMAIN}
and under the null lateral boundary assumption stated in Convention~\ref{C:INTROALTNOTATION}.
We will again focus our attention on deriving the integral identities for the specific vorticity vectorfield $\vortrenormalized$.

First, we highlight that $\mathcal{M}$ is assumed to be foliated by level sets of an acoustical time function $\Timefunction$,
which is not generally equal to the Cartesian time function $t$;
see Subsubsect.\,\ref{SSS:INTRODESCRIPTIONOFREGION}.
Since the divergence and curl operators in \eqref{E:INTRODIVCURLTRANSPORTONSIGMAT} 
\emph{are} the standard ``flat'' operators on the level sets of constant Cartesian time $t$,
we must find a way to relate information about the solution on $\Sigma_t$ to information about the solution
on $\widetilde{\Sigma}_{\Timefunction}$, where we recall that $\widetilde{\Sigma}_{\Timefunction}$ denotes
the portion of the level set of $\Timefunction$ in $\mathcal{M}$.
Moreover, to detect the remarkable null structures that are present when the lateral boundary $\underline{\mathcal{N}}_{\Timefunction}$
is null, it is crucial that we adapt our approach to the ``true geometry,'' that is, the geometry corresponding to the acoustical metric $\gfour$
of Def.\,\ref{D:ACOUSTICALMETRIC}.
To this end, we rely on a collection of geometric tensorfields adapted to $\widetilde{\Sigma}_{\Timefunction}$.
Specifically, we let $\topfirstfund$ denote the Riemannian metric induced on $\widetilde{\Sigma}_{\Timefunction}$
by $\gfour$, and we extend $\topfirstfund$ in the standard fashion to a positive semi-definite
quadratic form on spacetime tensors that vanishes along the $\gfour$-normal of  $\widetilde{\Sigma}_{\Timefunction}$.
We also let $\topproject$ denote $\gfour$-orthogonal projection onto $\widetilde{\Sigma}_{\Timefunction}$;
see Subsect.\,\ref{SS:FIRSTFUNDANDPROJECTIONS} for the precise definitions of these tensorfields.
In place of the vectorfield $J$ defined in \eqref{E:INTROSIMPLEELLIPTICCURRENT}, 
we use the following vectorfield, which we stress is
$\widetilde{\Sigma}_{\Timefunction}$-tangent, even though $\vortrenormalized$ is $\Sigma_t$-tangent:
\begin{align} \label{E:INTRONEWELLIPTICHYPERBOLICCURRENT}
	J^{\alpha}
	& := \vortrenormalized^{\beta} \topproject_{\ \beta}^{\delta} \topproject_{\ \gamma}^{\alpha} \partial_{\delta} \vortrenormalized^{\gamma}
		-
	\vortrenormalized^{\gamma} \topproject_{\ \gamma}^{\alpha} 
		\topproject_{\ \lambda}^{\kappa}
		\partial_{\kappa} \vortrenormalized^{\lambda}.
	\end{align}

A key step in the analysis is provided by\footnote{More precisely, in obtaining \eqref{E:INTROSIGMATIDLDEADAPTEDDIVERGENCEIDENTITY},
we have set the weight function $\weight$ in Lemma~\ref{L:BASICDIVERGENCEIDENTTIY} equal to unity.} 
Lemma~\ref{L:BASICDIVERGENCEIDENTTIY}.
This lemma yields the following Hodge-type identity,
which can be viewed of an analog of \eqref{E:INTROVERYSIMPLEHODGEID}
that is adapted to $\widetilde{\Sigma}_{\Timefunction}$:
\begin{align} \label{E:INTROSIGMATIDLDEADAPTEDDIVERGENCEIDENTITY}
|\pmb{\partial} \vortrenormalized|_{\topfirstfund}^2
	-
	(\projectedtransport_{\alpha}
	\projectedtransport \vortrenormalized^{\alpha})^2
&
=
\widetilde{\nabla}_{\alpha} J^{\alpha}
+
\frac{1}{2}
|d (\vortrenormalized_{\flat})|_{\topfirstfund}^2
+
(\Flatdiv \vortrenormalized)^2
	\\
& \ \
		+
			\lengthoftophypnorm^2
			(\projectedtransport_{\alpha} \Transport \vortrenormalized^{\alpha})^2
		+
			2 
			(\projectedtransport_{\alpha}\projectedtransport \vortrenormalized^{\alpha})
			\Flatdiv \vortrenormalized
				\notag \\
& \ \
			-
			2
			\lengthoftophypnorm 
			(\projectedtransport_{\alpha} \Transport \vortrenormalized^{\alpha})
			\projectedtransport_{\alpha} \projectedtransport \vortrenormalized^{\alpha}
			-
			2
			\lengthoftophypnorm 
			(\Flatdiv \vortrenormalized)
			\projectedtransport_{\alpha} 
			\Transport \vortrenormalized^{\alpha}
+
\cdots.
\notag
\end{align}
In \eqref{E:INTROSIGMATIDLDEADAPTEDDIVERGENCEIDENTITY},
$|\cdot|_{\topfirstfund}$ denotes the pointwise seminorm of a tensor with respect to $\topfirstfund$
(it is a seminorm since $\topfirstfund$ is positive definite only on $\widetilde{\Sigma}_{\Timefunction}$-tangent tensors).
Moreover,
$\projectedtransport$ is a $\widetilde{\Sigma}_{\Timefunction}$-tangent
vectorfield satisfying $|\projectedtransport|_{\topfirstfund} < 1$ 
(see \eqref{E:LENGTHOFPROJECTIONOFPONTTOTILDESIGMA} and just above \eqref{E:LENGTHOFTOPHYPNORM}),
which by Cauchy--Schwarz implies that LHS~\eqref{E:INTROSIGMATIDLDEADAPTEDDIVERGENCEIDENTITY}
is \emph{positive definite} in the derivatives of $\vortrenormalized$ in
directions tangent to $\widetilde{\Sigma}_{\Timefunction}$.
Furthermore,
we lower and raise Greek indices with $\gfour$ and its inverse,
$\vortrenormalized_{\flat}$ denotes the $\gfour$-dual of $\vortrenormalized$ 
(i.e., $\vortrenormalized_{\alpha} =  \gfour_{\alpha \beta} \vortrenormalized^{\beta}$),
$d$ denotes exterior derivative,
$\lengthoftophypnorm > 0$ is a function measuring the length of a certain normal to $\widetilde{\Sigma}_{\Timefunction}$,
$\cdots$ denotes error terms that are at most \emph{linear} in $\pmb{\partial} \vortrenormalized$,
and we stress that the terms $\Flatdiv \vortrenormalized$ on RHS~\eqref{E:INTROSIGMATIDLDEADAPTEDDIVERGENCEIDENTITY}
denote the standard Euclidean divergence of $\vortrenormalized$, while $\widetilde{\nabla}_{\alpha} J^{\alpha}$
denotes the covariant divergence of $J$ with respect to the Levi--Civita connection of $\topfirstfund$.
In addition, by using the transport equation \eqref{E:INTROTRANSPORTSPECIFICVORTICITYCARICATURE},
we see that the error terms on RHS~\eqref{E:INTROSIGMATIDLDEADAPTEDDIVERGENCEIDENTITY}
featuring a factor of $\Transport \vortrenormalized$ are also of type ``$\cdots$''.

Next, we apply the divergence theorem\footnote{See Sect.\,\ref{S:VOLUMEFORMSANDINTEGRALS} for the definitions of the geometric volume and area forms
that we use in our integrals.} 
along $\widetilde{\Sigma}_{\Timefunction}$
to the vectorfield $J$ defined in \eqref{E:INTROSIGMATIDLDEADAPTEDDIVERGENCEIDENTITY}
and use \eqref{E:INTRODIVCURLTRANSPORTONSIGMAT} to substitute for the term $(\Flatdiv \vortrenormalized)^2$
on RHS~\eqref{E:INTROSIGMATIDLDEADAPTEDDIVERGENCEIDENTITY},
thus obtaining the following integral identity, where $\spherenormal$  
(see Fig.\,\ref{F:SPACETIMEDOMAIN}) 
is the $\gfour$-unit outer normal to $\mathcal{S}_{\Timefunction}$ in $\widetilde{\Sigma}_{\Timefunction}$:
\begin{align} \label{E:INTRODIVTHEOREMALONGTILDESIGMA}
\int_{\widetilde{\Sigma}_{\Timefunction}} 
	\left\lbrace
		|\pmb{\partial} \vortrenormalized|_{\topfirstfund}^2
		-
		(\projectedtransport_{\alpha}
		\projectedtransport \vortrenormalized^{\alpha})^2
	\right\rbrace
=
\underbrace{
\int_{\mathcal{S}_{\Timefunction}}
\spherenormal_{\alpha} J^{\alpha} 
}_{\mbox{\upshape Dangerous}}
+
\int_{\widetilde{\Sigma}_{\Timefunction}} F^2 
+ 
\frac{1}{2}
\int_{\widetilde{\Sigma}_{\Timefunction}}
	|d (\vortrenormalized_{\flat})|_{\topfirstfund}^2
+
\int_{\widetilde{\Sigma}_{\Timefunction}}
\cdots.
\end{align}
As we further describe below, the integrals $\int_{\widetilde{\Sigma}_{\Timefunction}}$
on RHS~\eqref{E:INTRODIVTHEOREMALONGTILDESIGMA} can be shown to have sufficient regularity,
after an integration with respect to $\Timefunction$. Even this step requires substantial new ideas
compared to the simple domains treated in Subsubsect.\,\ref{SSS:SIMPLERDOMAINS};
we will return to this issue below.
For now, we focus on how to handle the ``Dangerous'' $\mathcal{S}_{\Timefunction}$ integral on RHS~\eqref{E:INTRODIVTHEOREMALONGTILDESIGMA}.
Its presence is simply a manifestation of the fact that
in the standard approach to elliptic estimates on $\widetilde{\Sigma}_{\Timefunction}$, 
\emph{one must know the data along the boundary $\mathcal{S}_{\Timefunction}$} in order to treat the boundary integrals. 
That is, the standard elliptic approach does not allow one to control LHS~\eqref{E:INTRODIVTHEOREMALONGTILDESIGMA} 
without \emph{specifying} boundary data for $\vortrenormalized$ along $\mathcal{S}_{\Timefunction}$. Put differently:
\begin{quote}
	Since $\vortrenormalized$ is determined\footnote{The transport equation
	\eqref{E:RENORMALIZEDVORTICTITYTRANSPORTEQUATION} for $\vortrenormalized$ and our assumptions on the spacetime
	region $\mathcal{M}_T$ guarantee that $\vortrenormalized|_{\mathcal{S}_{\Timefunction}}$ 
	and $\partial \vortrenormalized|_{\mathcal{S}_{\Timefunction}}$ are evolutionarily determined by their initial data
	on $\widetilde{\Sigma}_0$.
	However, by itself, equation \eqref{E:RENORMALIZEDVORTICTITYTRANSPORTEQUATION} does not allow
	us to conclude the desired quantitative Sobolev control over $\vortrenormalized|_{\mathcal{S}_{\Timefunction}}$ 
	and $\partial \vortrenormalized|_{\mathcal{S}_{\Timefunction}}$
	that we need in order for \eqref{E:INTRODIVTHEOREMALONGTILDESIGMA} to be useful.} 
	by the initial data on $\widetilde{\Sigma}_0$, 
	we cannot ``specify'' its boundary data on $\mathcal{S}_{\Timefunction}$; the boundary data $\mathcal{S}_{\Timefunction}$ is evolutionarily 
	determined by the initial data, and for the identity \eqref{E:INTRODIVTHEOREMALONGTILDESIGMA} to be of any use,
	we must find a way to ``access quantitative 
	information about the boundary data.''
\end{quote}

Let us describe a natural attempt at how to control the dangerous integral
$\int_{\mathcal{S}_{\Timefunction}}
\spherenormal_{\alpha} J^{\alpha}$ on RHS~\eqref{E:INTRODIVTHEOREMALONGTILDESIGMA}, 
which in the end does not work.
Specifically, through careful geometric decompositions,
in the spirit of those carried out in the proof of Lemma~\ref{L:PRELIMINARYANALYSISOFBOUNDARYINTEGRAND},
it can be shown that
the boundary integrand $\spherenormal_{\alpha} J^{\alpha}$ 
can be re-expressed as terms of the schematic form $\vortrenormalized \cdot \angpartial \vortrenormalized$,
where the operator $\angpartial$ is $\mathcal{S}_{\Timefunction}$-tangent.  
Now formally, we have a fractional integration by parts identity of the schematic form
$\int_{\mathcal{S}_{\Timefunction}}
\vortrenormalized \cdot \angpartial \vortrenormalized
=
\int_{\mathcal{S}_{\Timefunction}}
	\angpartial^{1/2} \vortrenormalized 
	\cdot 
	\angpartial^{1/2} \vortrenormalized
+
\cdots
$,
and Sobolev trace estimates suggest that
$
\int_{\mathcal{S}_{\Timefunction}}
	\angpartial^{1/2} \vortrenormalized 
	\cdot 
	\angpartial^{1/2} \vortrenormalized
$
can be controlled in terms of
$
\int_{\widetilde{\Sigma}_{\Timefunction}}
	|\partial \vortrenormalized|_{\topfirstfund}^2 
$
(plus lower-order terms that we ignore here),
consistent with the strength of LHS~\eqref{E:INTRODIVTHEOREMALONGTILDESIGMA}.
However, we highlight the following key point:
\begin{quote}
	The Sobolev trace estimate mentioned above is at a critical regularity level,
	and the ``constant'' in the trace estimate could be large.
	This could prevent one from treating the
	boundary integral 
	$\int_{\mathcal{S}_{\Timefunction}}
	\vortrenormalized \cdot \angpartial \vortrenormalized$ 
	as an error term to be absorbed into LHS~\eqref{E:INTRODIVTHEOREMALONGTILDESIGMA}.
	In total, \emph{this calls into question the usefulness of the Hodge-type identity \eqref{E:INTRODIVTHEOREMALONGTILDESIGMA}}
	and suggests that the $\mathcal{S}_{\Timefunction}$ integral cannot be directly controlled using only elliptic theory, 
	prompting us to seek a new approach.
\end{quote}

To overcome the regularity difficulty described above, 
we adopt a different strategy to handle the boundary integrand
$\spherenormal_{\alpha} J^{\alpha}$.
First, in Lemma~\ref{L:PRELIMINARYANALYSISOFBOUNDARYINTEGRAND},
without using the compressible Euler equations,
we derive the following crucial geometric identity,\footnote{In obtaining \eqref{E:INTROCARICATUREOFPRELIMINARYANALYSISOFBOUNDARYINTEGRAND},
we have set the weight function $\weight$ from Lemma~\ref{L:PRELIMINARYANALYSISOFBOUNDARYINTEGRAND} equal to unity.} 
which we  
restate here in schematic form as follows, ignoring order unity coefficients but
respecting the overall sign of the important terms:
\begin{align} \label{E:INTROCARICATUREOFPRELIMINARYANALYSISOFBOUNDARYINTEGRAND}
\spherenormal_{\alpha} J^{\alpha}
& =
	-
		\modgen
			\left\lbrace
			|\angvortrenormalized|^2
		\right\rbrace 
		+
		\uspecialgen^{\alpha}
		\angvortrenormalized^{\beta}
		(\partial_{\alpha} \angvortrenormalized_{\beta} - \partial_{\beta} \angvortrenormalized_{\alpha})
		+
		\underline{\mbox{\upshape Tangential}}
		+
		\cdots,
\end{align}
where $\cdots$ denotes perfect $\mathcal{S}_{\Timefunction}$-divergences
(which therefore vanish when inserted into the $\mathcal{S}_{\Timefunction}$ integral on RHS~\eqref{E:INTRODIVTHEOREMALONGTILDESIGMA}).
In \eqref{E:INTROCARICATUREOFPRELIMINARYANALYSISOFBOUNDARYINTEGRAND}, 
the vectorfield $\modgen$ is $\gfour$-null,
$\underline{\mathcal{N}}_{\Timefunction}$-\emph{tangent}, and normalized by $\modgen \Timefunction = 1$,
i.e., it is a null generator adapted to $\Timefunction$ 
(and thus $\modgen = \frac{\partial}{\partial \Timefunction}$ relative to appropriate coordinates on $\underline{\mathcal{N}}_{\Timefunction}$).
Moreover, $\uspecialgen$ (see Def.\,\ref{D:SPECIALGENERATOR}) is $\underline{\mathcal{N}}_{\Timefunction}$-\emph{tangent},
$\angvortrenormalized$ is the $\gfour$-orthogonal projection of $\vortrenormalized$ onto $\mathcal{S}_{\Timefunction}$
(and thus $\angvortrenormalized$ is $\mathcal{S}_{\Timefunction}$-\emph{tangent}),
and the error term $\underline{\mbox{\upshape Tangential}}$
has the good properties described in Theorem~\ref{T:MAINIDSCHEMATICSTATEMENT},
e.g., it enjoys admissible regularity and the only derivatives of the density and velocity that appear are \emph{tangent} to $\underline{\mathcal{N}}_{\Timefunction}$.
Next, we substitute RHS~\eqref{E:INTROCARICATUREOFPRELIMINARYANALYSISOFBOUNDARYINTEGRAND} 
for the first integrand on RHS~\eqref{E:INTRODIVTHEOREMALONGTILDESIGMA} and then
integrate \eqref{E:INTRODIVTHEOREMALONGTILDESIGMA} with respect to $\Timefunction$
(see Remark~\ref{R:WHYINTEGRATEINTIME}),
use the fundamental theorem of calculus-type result
$
-
\int_{\underline{\mathcal{N}}_{\Timefunction}}
\modgen
			\left\lbrace
			|\angvortrenormalized|^2
		\right\rbrace 
=
-
\int_{\mathcal{S}_{\Timefunction}}
|\angvortrenormalized|^2
+
\int_{\mathcal{S}_0}
|\angvortrenormalized|^2
+
\cdots
$
(see Lemma~\ref{L:DIFFERENTIATIONANDINTEGRALIDENTITEISINVOLVINGST} for the details),
and use \eqref{E:INTROTRANSPORTSPECIFICVORTICITYCARICATURE} to algebraically
substitute for $\Transport \vortrenormalized$,
thereby obtaining the following identity for $\Timefunction \in [0,T]$, expressed in schematic form
(see \eqref{E:IDENTITYMAINQUADRATICFORMFORCONTROLLINGFIRSTDERIVATIVESOFSPECIFICVORTICITYANDENTROPYGRADIENT}
for a precise formula for the integrand of the spacetime integral $\int_{\mathcal{M}_{\Timefunction}} \cdots$ on the left-hand side):
\begin{align} \label{E:INTROMAINSPACETIMEINTEGRALIDENTITY}
\int_{\mathcal{M}_{\Timefunction}}
	&
	\left\lbrace
	|\Transport \vortrenormalized|^2
	+
	|\pmb{\partial} \vortrenormalized|_{\topfirstfund}^2
	-
	(\projectedtransport_{\alpha}
	\projectedtransport \vortrenormalized^{\alpha})^2
	\right\rbrace
+
\int_{\mathcal{S}_{\Timefunction}} 
|\angvortrenormalized|^2
	\\
& =
\int_{\mathcal{S}_0} 
|\angvortrenormalized|^2
+
\int_{\underline{\mathcal{N}}_{\Timefunction}}
	\uspecialgen^{\alpha}
	\angvortrenormalized^{\beta}
	(\partial_{\alpha} \vortrenormalized_{\beta} - \partial_{\beta} \vortrenormalized_{\alpha}) 
+
\int_{\underline{\mathcal{N}}_{\Timefunction}}
\underline{\mbox{\upshape Tangential}}
	\notag \\
& \ \
+
\int_{\mathcal{M}_{\Timefunction}} 
F^2 
+ 
\frac{1}{2}
\int_{\mathcal{M}_{\Timefunction}}
	|d (\vortrenormalized_{\flat})|_{\topfirstfund}^2
+
\int_{\mathcal{M}_{\Timefunction}}
|\vortrenormalized \cdot \partial v + \Transport v \cdot \GradEnt|^2
+
\int_{\mathcal{M}_{\Timefunction}}
\cdots.
\notag
\end{align}
Note that $\int_{\mathcal{S}_0} 
|\angvortrenormalized|^2$ is an ``initial data term'' that we consider known.
Since the timelike vectorfield $\Transport$ is transversal to $\widetilde{\Sigma}_{\Timefunction}$ 
(see Footnote~\ref{FN:TIMEFUNCTIONPASTDIRECTEDGRADIENT}),
it follows that the spacetime integrand on 
LHS~\eqref{E:INTROMAINSPACETIMEINTEGRALIDENTITY} is positive definite in $\pmb{\partial} \vortrenormalized$;
this explains the positivity of the quadratic form $\mathscr{Q}(\pmb{\partial} \vortrenormalized,\pmb{\partial} \vortrenormalized)$
in Theorem~\ref{T:MAINIDSCHEMATICSTATEMENT}; see Lemma~\ref{L:POSITIVITYPROPERTIESOFVARIOUSQUADRATICFORMS}
for a proof of the positivity.

All error integrals on RHS~\eqref{E:INTROMAINSPACETIMEINTEGRALIDENTITY}
except for
$
\int_{\underline{\mathcal{N}}_{\Timefunction}}
	\uspecialgen^{\alpha}
	\angvortrenormalized^{\beta}
	(\partial_{\alpha} \vortrenormalized_{\beta} - \partial_{\beta} \vortrenormalized_{\alpha}) 
$
can readily be shown to have the desired regularity and structural properties;
see, for example, the proof of Theorem~\ref{T:LOCALIZEDAPRIORIESTIMATES} for the details.
Thus, to finish the proof sketch of Theorem~\ref{T:MAINIDSCHEMATICSTATEMENT},
it remains for us
to explain how to handle the null hypersurface integral
$
\int_{\underline{\mathcal{N}}_{\Timefunction}}
	\uspecialgen^{\alpha}
	\angvortrenormalized^{\beta}
	(\partial_{\alpha} \vortrenormalized_{\beta} - \partial_{\beta} \vortrenormalized_{\alpha}) 
$.
This is the most difficult analysis in the paper.
While the integrand 
$\uspecialgen^{\alpha}
	\angvortrenormalized^{\beta}
	(\partial_{\alpha} \vortrenormalized_{\beta} - \partial_{\beta} \vortrenormalized_{\alpha}) 
$
has the desired $\underline{\mathcal{N}}_{\Timefunction}$-tangential differentiation structure,
it is not clear that it has sufficient regularity to be treated as an error term.
The difficulty is that the integral involves $\pmb{\partial} \vortrenormalized$ and is 
along the hypersurface $\underline{\mathcal{N}}_{\Timefunction}$, while
LHS~\eqref{E:INTROMAINSPACETIMEINTEGRALIDENTITY} is 
quadratic in $\pmb{\partial} \vortrenormalized$ and is
an integral over
the spacetime region $\mathcal{M}_{\Timefunction}$;
thus, for reasons similar to the ones given in the paragraph above \eqref{E:INTROCARICATUREOFPRELIMINARYANALYSISOFBOUNDARYINTEGRAND},
$
\int_{\underline{\mathcal{N}}_{\Timefunction}}
	\uspecialgen^{\alpha}
	\angvortrenormalized^{\beta}
	(\partial_{\alpha} \vortrenormalized_{\beta} - \partial_{\beta} \vortrenormalized_{\alpha}) 
$ cannot be controlled with hypersurface trace estimates.
We dedicate all of Sect.\,\ref{S:GEOMETRICDECOMPOSITIONSTEIDTOANTISYMMETRICPART}
to showing that the integrand
$\uspecialgen^{\alpha}
	\angvortrenormalized^{\beta}
	(\partial_{\alpha} \vortrenormalized_{\beta} - \partial_{\beta} \vortrenormalized_{\alpha})$
enjoys the desired structures. 
We now highlight the main steps in this analysis.
\begin{itemize}
 \item First, in Cor.\,\ref{C:SHARPDECOMPOSITIONOFANTISYMMETRICGRADIENTS},
	we use the new formulation of compressible Euler flow from Theorem~\ref{T:GEOMETRICWAVETRANSPORTSYSTEM} 
	to provide a geo-analytic decomposition of the two-form
	$
\partial_{\alpha} \vortrenormalized_{\beta} - \partial_{\beta} \vortrenormalized_{\alpha}
$,
i.e., the decomposition holds only for solutions. 
The main idea is to split the two-form into components tangent to $\underline{\mathcal{N}}_{\Timefunction}$ and components in the direction of $\Transport$,
and to separate out the Euclidean curl components 
$\Flatcurl \vortrenormalized$, which by 
\eqref{E:INTRODIVCURLTRANSPORTONSIGMAT}
can be independently controlled along $\underline{\mathcal{N}}_{\Timefunction}$.
The ``$\Transport$'' components can be treated with the transport equation \eqref{E:RENORMALIZEDVORTICTITYTRANSPORTEQUATION}
together with the simple fact that $\Transport^{\alpha} \vortrenormalized_{\alpha} = 0$,
since $\Transport$ is $\gfour$-orthogonal to $\Sigma_t$ (see \eqref{E:TRANSPORTONEFORMIDENTITY})
while $\vortrenormalized$ is $\Sigma_t$-tangent.
In total, this allows us to show that ``most pieces'' of the term
$\uspecialgen^{\alpha}
	\angvortrenormalized^{\beta}
	(\partial_{\alpha} \vortrenormalized_{\beta} - \partial_{\beta} \vortrenormalized_{\alpha})$
	exhibit the desired structures. However,
	there is one remaining ``difficult piece'' that requires special geometric treatment.
\item The ``difficult piece'' mentioned in the previous sentence is in fact generated by the entropy gradient term
$\Transport v \cdot \GradEnt$ on RHS~\eqref{E:INTROTRANSPORTSPECIFICVORTICITYCARICATURE} 
and thus is absent in the isentropic case $\Ent \equiv \mbox{\upshape const}$.
In the context of Cor.\,\ref{C:SHARPDECOMPOSITIONOFANTISYMMETRICGRADIENTS}, the difficult entropy gradient terms re-emerge
as the fifth and sixth terms on RHS~\eqref{E:KEYIDENTITYANTISYMMETRICPARTOFSPECIFICVORTICITYDUALGRADIENT},
specifically
$
-
		\upepsilon_{\alpha \beta \gamma \delta}
		 \uspecialgen^{\alpha} 
		 \angvortrenormalized^{\beta}
		(\Transport v^{\gamma}) 
		\GradEnt^{\delta}
		+
		\upepsilon_{\alpha \beta \gamma \delta}
			\uspecialgen^{\alpha} 
			\angvortrenormalized^{\beta} 
			\Transport^{\gamma}
			[\GradEnt^{\delta}
				(\Flatdiv v)
				-
				\GradEnt^a \partial_a v^{\delta}]
$,
where $\upepsilon_{\alpha \beta \gamma \delta}$ is the fully antisymmetric symbol normalized by
$\upepsilon_{0123}=1$. The difficult part of the analysis 
is showing that,
after contracting this combination of terms against $\uspecialgen^{\alpha}
	\angvortrenormalized^{\beta}$,
the resulting expression
involves only $\underline{\mathcal{N}}_{\Timefunction}$-tangential derivatives of the velocity and density;
as we have mentioned, $\underline{\mathcal{N}}_{\Timefunction}$-tangential derivatives of the velocity and density
are controllable using standard energy estimates for the wave equations 
\eqref{E:VELOCITYWAVEEQUATION}-\eqref{E:RENORMALIZEDDENSITYWAVEEQUATION}.
After a standard preliminary geometric decomposition provided by
Lemma~\ref{L:PRELIMINARYDECOMPOSITIONOFSUBTLETERMS},
in which we decompose all derivatives in this combination of terms
into their $\underline{\mathcal{N}}_{\Timefunction}$-tangential components and $\Transport$-parallel components
and exploit the compressible Euler formulation provided by Theorem~\ref{T:GEOMETRICWAVETRANSPORTSYSTEM},
we are left with precisely one product that needs to be carefully treated:\footnote{When $\underline{\mathcal{N}}_{\Timefunction}$ is spacelike,
the formula in Prop.\,\ref{P:KEYDETERMINANT} has $\sidehypnorm$ in place of $\uLunit$, where $\sidehypnorm$ is normal to
$\underline{\mathcal{N}}_{\Timefunction}$ and normalized by $\sidehypnorm t = 1$. When $\underline{\mathcal{N}}_{\Timefunction}$ 
is null, we have $\sidehypnorm = \uLunit$.}
$\upepsilon_{\alpha \beta \gamma \delta} 
				\uspecialgen^{\alpha}
				\angvortrenormalized^{\beta}
				\uLunit^{\gamma}
				(\GradEnt^{\delta} 
				+
				\GradEnt^a \uLunit_a
				\Transport^{\delta})
				\Transport \LogDensity
$,
where $\uLunit$ is a null generator of $\underline{\mathcal{N}}_{\Timefunction}$, normalized by $\uLunit t = 1$ (where $t$ is Cartesian time).
The difficulty is that the factor $\Transport \LogDensity$ involves a derivative of $\LogDensity$
in a direction transversal to $\underline{\mathcal{N}}_{\Timefunction}$;
Since the first derivatives of $\LogDensity$ can be controlled only with the wave equation \eqref{E:RENORMALIZEDDENSITYWAVEEQUATION},
and since wave equation energies along null hypersurfaces do not control transversal derivatives (see e.g.\ \eqref{E:NULLCASEWAVEFLUXESSEMICOERCIVE}),
this calls into question whether or not one can control the corresponding null hypersurface error integral
$
\int_{\underline{\mathcal{N}}_{\Timefunction}}
\upepsilon_{\alpha \beta \gamma \delta} 
				\uspecialgen^{\alpha}
				\angvortrenormalized^{\beta}
				\uLunit^{\gamma}
				(\GradEnt^{\delta} 
				+
				\GradEnt^a \uLunit_a
				\Transport^{\delta})
				\Transport \LogDensity
$.
\item In Prop.\,\ref{P:KEYDETERMINANT} ,
we use the detailed structure of the vectorfield $\uspecialgen$ (see Def.\,\ref{D:SPECIALGENERATOR})
to prove that this remaining difficult product
$\upepsilon_{\alpha \beta \gamma \delta} 
				\uspecialgen^{\alpha}
				\angvortrenormalized^{\beta}
				\uLunit^{\gamma}
				(\GradEnt^{\delta} 
				+
				\GradEnt^a \uLunit_a
				\Transport^{\delta})
				\Transport \LogDensity
$
in fact
\emph{completely vanishes};\footnote{When $\underline{\mathcal{N}}_{\Timefunction}$ is spacelike, this product does not vanish. In this case,
the identity \eqref{E:KEYDETERMINANT} provided by
Prop.\,\ref{P:KEYDETERMINANT} allows us to re-express it in terms of $\underline{\mathcal{N}}_{\Timefunction}$-tangential derivatives.}
see the identity \eqref{E:NULLCASEKEYDETERMINANT}.
\end{itemize}

This finishes our proof sketch of Theorem~\ref{T:MAINIDSCHEMATICSTATEMENT} for $\vortrenormalized$ 
on domains $\mathcal{M}$ of the type depicted in
Fig.\,\ref{F:SPACETIMEDOMAIN}.
The analysis in the case of the entropy gradient is similar but much simpler, mostly because
the error term
$\uspecialgen^{\alpha}
	\angvortrenormalized^{\beta}
	(\partial_{\alpha} \GradEnt_{\beta} - \partial_{\beta} \GradEnt_{\alpha})$
is much simpler
than the error
term 
$\uspecialgen^{\alpha}
	\angvortrenormalized^{\beta}
	(\partial_{\alpha} \vortrenormalized_{\beta} - \partial_{\beta} \vortrenormalized_{\alpha})$,
stemming from the much simpler structure of the identity
\eqref{E:KEYIDENTITYANTISYMMETRICPARTOFENTROPYGRADIENTDUALGRADIENT} for $d (\GradEnt_{\flat})$
compared to the identity \eqref{E:KEYIDENTITYANTISYMMETRICPARTOFSPECIFICVORTICITYDUALGRADIENT} for $d (\vortrenormalized_{\flat})$.

In the case of domains $\mathcal{M}$ covered by double-null foliations,
Theorem~\ref{T:MAINIDSCHEMATICSTATEMENT} can be proved using 
modifications of the arguments sketched above;
see Sect.\,\ref{S:DOUBLENULL} for the details.

\subsection{Paper outline}
\label{SS:PAPEROUTLINE}
The remainder of the paper is organized as follows.

\begin{itemize}
	\item In Sect.\,\ref{S:COMPRESSIBLEEULEREQUATIONS},
		we recall, as Theorem~\ref{T:GEOMETRICWAVETRANSPORTSYSTEM}, 
		the results of \cite{jS2019c}, 
		which provide the geometric formulation of compressible Euler flow that we use in proving our main results.
		The equations of Theorem~\ref{T:GEOMETRICWAVETRANSPORTSYSTEM} feature ``hyperbolic'' wave- and transport-parts,
		as well as ``elliptic-hyperbolic'' div-curl-transport parts.
		We also define some geometric tensors that play a fundamental role in the rest of the article.
	\item In Sect.\,\ref{S:SPACETIMEDOMAINS}, we state our assumptions 
			on the acoustical time function $\Timefunction$ that we use to foliate the spacetime regions $\mathcal{M}$ under study,
			and we state the assumptions on $\mathcal{M}$
			that we use to prove our main results. We then derive some basic properties
			of various tensors tied to the geometry of $\mathcal{M}$.
	\item In Sect.\,\ref{S:ELLIPTICHYPERBOLICDIVERGENCEIDENTITYANDPRELIMANALYSISOFOUNDARY}, 
		we first construct the coercive quadratic forms that are featured in our main integral identities,
		which are provided by Theorem~\ref{T:MAINREMARKABLESPACETIMEINTEGRALIDENTITY}.
		We then derive some divergence-form elliptic Hodge-type identities for the specific vorticity $\vortrenormalized$ and the entropy gradient $\GradEnt$.
		The identities are valid along the portions of the level sets of $\Timefunction$ that are contained in $\mathcal{M}$; 
		we denote these portions by $\widetilde{\Sigma}_{\Timefunction}$. 
		In the proof of Theorem~\ref{T:MAINREMARKABLESPACETIMEINTEGRALIDENTITY},
		we will integrate these divergence identities along $\widetilde{\Sigma}_{\Timefunction}$,
		which leads to boundary integrals along the topological spheres $\mathcal{S}_{\Timefunction} = \partial \widetilde{\Sigma}_{\Timefunction}$.
		The boundary integrals are seemingly dangerous from the point of view of regularity, and
		it is not apparent that they are controllable. However,
		Prop.\,\ref{P:STRUCTUREOFERRORINTEGRALS} and Theorem~\ref{T:STRUCTUREOFERRORTERMS}
		together show that after an integration with respect to $\Timefunction$
		and arguments that exploit the special geo-analytic structures exhibited by the compressible Euler formulation of
		Theorem~\ref{T:GEOMETRICWAVETRANSPORTSYSTEM},
		these dangerous-looking terms can be shown to be equal to error integrals along the lateral hypersurface $\underline{\mathcal{H}}$
		featuring only $\underline{\mathcal{H}}$-tangential derivatives of quantities that enjoy
		sufficient regularity;
		as we described in Subsect.\,\ref{SS:APPLICATIONS}, 
		these properties are crucial for the study of shocks.
	\item In Sect.\,\ref{S:GEOMETRICDECOMPOSITIONSTEIDTOANTISYMMETRICPART},
		we provide a series of geometric identities tied to the antisymmetric tensors
		$\partial_{\alpha} \vortrenormalized_{\beta} - \partial_{\beta} \vortrenormalized_{\alpha}$
		and
		$\partial_{\alpha} \GradEnt_{\beta} - \partial_{\beta} \GradEnt_{\alpha}$.
		The results of this section are crucial
		for the proof of Prop.\,\ref{P:STRUCTUREOFERRORINTEGRALS},
		where they are used to show that some
		difficult error terms involving these antisymmetric tensors
		are controllable from the point of view of regularity and thanks to their
		$\underline{\mathcal{H}}$-tangential derivative structure.
		Many aspects of the analysis in this section rely on the compressible Euler formulation of
		Theorem~\ref{T:GEOMETRICWAVETRANSPORTSYSTEM}.
	\item In Sect.\,\ref{S:VOLUMEFORMSANDINTEGRALS}, we define the geometric volume forms, area forms, and integrals 
		that we use in our main integral identities, and we derive some simple identities tied to them.
	\item In Sect.\,\ref{S:STRUCTUREOFLATERALERRORINTEGRALS}, we first prove
		Prop.\,\ref{P:STRUCTUREOFERRORINTEGRALS}, which yields identities for
		the error integrals along the lateral boundary hypersurface $\underline{\mathcal{H}}$
		mentioned above. We then prove the aforementioned Theorem~\ref{T:STRUCTUREOFERRORTERMS},
		which exhibits the remarkable structure of the integrands.
	\item In Sect.\,\ref{S:MAININTEGRALIDENTITIES}, we 
		prove Theorem~\ref{T:MAINREMARKABLESPACETIMEINTEGRALIDENTITY}, which provides the 
		main new integral identities verified by the first derivatives
		of the specific vorticity and the entropy gradient. 
		In Remark~\ref{R:HIGHLIGHTKEYSTRUCTURES}, we highlight some of the key structural properties of the identities,
		and in Theorem~\ref{T:STRUCTUREOFERRORTERMS}, we give a precise description of the most important of these properties.
		The proof of Theorem~\ref{T:MAINREMARKABLESPACETIMEINTEGRALIDENTITY} follows in a straightforward fashion from
		the divergence identities of Sect.\,\ref{S:ELLIPTICHYPERBOLICDIVERGENCEIDENTITYANDPRELIMANALYSISOFOUNDARY}
		and the identities for the lateral boundary hypersurface error integrals from
		Prop.\,\ref{P:STRUCTUREOFERRORINTEGRALS}.
	\item Sect.\,\ref{S:APRIORI} provides an application of the previous results.
		Specifically, we combine the results of the previous sections
		with standard applications of the geometric vectorfield method for wave equations
		to prove Theorem~\ref{T:LOCALIZEDAPRIORIESTIMATES}, 
		which yields a priori estimates for compressible Euler solutions,
		localized to compact acoustically globally hyperbolic spacetime subsets $\mathcal{M}$.
		The estimates exhibit the local gain in regularity for the specific vorticity and entropy gradient
		described in Point \textbf{I} of Subsect.\,\ref{SS:APPLICATIONS}.
		The proof of Theorem~\ref{T:LOCALIZEDAPRIORIESTIMATES}
		crucially relies on the special structures of the integral identities provided by Theorem~\ref{T:MAINREMARKABLESPACETIMEINTEGRALIDENTITY}
		and Theorem~\ref{T:STRUCTUREOFERRORTERMS},
		especially in the case that the lateral boundary $\underline{\mathcal{H}}$ is acoustically null.
		To shorten the exposition, in Sect.\,\ref{S:APRIORI}, we assume that the acoustical time function $\Timefunction$ 
		is equal to the Cartesian time function $t$.
	\item In Sect.\,\ref{S:DOUBLENULL}, we extend Theorem~\ref{T:MAINREMARKABLESPACETIMEINTEGRALIDENTITY} 
		yield analogous integral identities for spacetime regions covered by double-null foliations; 
		as we explained in Subsect.\,\ref{SS:APPLICATIONS}, 
		double-null foliations form the starting point for the study of 
		the characteristic initial value problem for compressible Euler flow.
		The main result of this section is Theorem~\ref{T:DOUBLENULLMAINTHEOREM}.
	\item In Appendix~\ref{A:APPENDIXFORBULK}, we summarize some of the key notation used throughout the article.
\end{itemize}

\section{The geometric formulation of $3D$ compressible Euler flow}
\label{S:COMPRESSIBLEEULEREQUATIONS}
In this section, we provide Theorem~\ref{T:GEOMETRICWAVETRANSPORTSYSTEM},
which recalls the new formulation of compressible Euler flow from \cite{jS2019c}.
The remarkable structures in this formulation play a fundamental role in 
the rest of the paper.
Before stating the theorem, 
we first define some non-standard fluid variables that
are prominently featured in its statement, specifically
modified fluid variables of Def.\,\ref{D:RENORMALIZEDCURLOFSPECIFICVORTICITY}.
We also define several geometric
tensors associated to the flow, notably the acoustical metric $\gfour$ of
Def.\,\ref{D:ACOUSTICALMETRIC}.
Moreover, in Subsubsect.\,\ref{SSS:STANDARDNULLFORMS}, to explain
the significance of the null form structures revealed by Theorem~\ref{T:GEOMETRICWAVETRANSPORTSYSTEM},
we recall the definitions and properties of null forms relative to $\gfour$.

\subsection{Additional geometric and analytic quantities associated to the flow}
\label{SS:ADDITIONALGEOANALYTICQUANTITIES}
In this subsection, we define various objects of analytic 
and physical significance that are needed for
the formulation of compressible Euler flow provided by
Theorem~\ref{T:GEOMETRICWAVETRANSPORTSYSTEM}.

\subsubsection{Modified fluid variables}
\label{SSS:MODIFIEDFLUIDVARIABLES}
The tensorfields $\VortVort$ and $\DivGradEnt$ 
in the next definition
are modified versions of $\curl \vortrenormalized$ and $\dive \GradEnt$.
They were discovered in \cites{jLjS2016a,jS2019c}
and play a fundamental role in our analysis.
Specifically, the equations of
Theorem~\ref{T:GEOMETRICWAVETRANSPORTSYSTEM}
show that $\VortVort$ and $\DivGradEnt$ satisfy 
transport equations whose source terms 
\textbf{i)} are one degree more differentiable than expected
and 
\textbf{ii)} exhibit remarkable null structures.
We exploit Property \textbf{i)} 
when deriving div-curl-transport estimates to exhibit a gain
in differentiability for $\vortrenormalized$
and $\GradEnt$; see Theorem~\ref{T:LOCALIZEDAPRIORIESTIMATES}. 
As is explained in \cites{gHsKjSwW2016,jS2016b,jLjS2016a,jS2018c,mDjS2019,jS2019c},
Property \textbf{ii)} is crucial for the study of shock formation without symmetry assumptions;
see also Subsubsect.\,\ref{SSS:STANDARDNULLFORMS}.

\begin{definition}[\textbf{Modified fluid variables}]
	\label{D:RENORMALIZEDCURLOFSPECIFICVORTICITY}
	We define the Cartesian components of the $\Sigma_t$-tangent vectorfield $\VortVort$ 
	and the scalar function $\DivGradEnt$ as follows,
	($i=1,2,3$):
	\begin{subequations}
	\begin{align} \label{E:RENORMALIZEDCURLOFSPECIFICVORTICITY}
		\VortVort^i
		& :=
			\exp(-\LogDensity) (\Flatcurl \vortrenormalized)^i
			+
			\exp(-3\LogDensity) \Speed^{-2} \frac{p_{;\Ent}}{\bar{\varrho}} \GradEnt^a \partial_a v^i
			-
			\exp(-3\LogDensity) \Speed^{-2} \frac{p_{;\Ent}}{\bar{\varrho}} (\Flatdiv v) \GradEnt^i,
				\\
		\DivGradEnt
		& := 
			\exp(-2 \LogDensity) \Flatdiv \GradEnt 
			-
			\exp(-2 \LogDensity) \GradEnt^a \partial_a \LogDensity,
			\label{E:RENORMALIZEDDIVOFENTROPY}
	\end{align}
	\end{subequations}
	where $\Flatcurl$ denotes the standard Euclidean curl operator on $\Sigma_t$
	and $\Flatdiv$ denotes the standard Euclidean divergence operator on $\Sigma_t$.
\end{definition}

\subsubsection{The acoustical metric $\gfour$, basic properties of $\gfour$ and $\Transport$, and classification of vectors and hypersurfaces}
\label{SSS:ACOUSTICALMETRIC}
The \emph{acoustical metric} drives the propagation of sound waves
and is featured prominently in Theorem~\ref{T:GEOMETRICWAVETRANSPORTSYSTEM}.

\begin{definition}[The acoustical metric\footnote{Other authors have defined the acoustical metric to be $\Speed^2 \gfour$.
	We prefer our definition because it implies that $(\gfour^{-1})^{00} = - 1$,
	which simplifies the presentation of many formulas.} 
and its inverse] 
\label{D:ACOUSTICALMETRIC}
We define the \emph{acoustical metric} $\gfour$ and the 
\emph{inverse acoustical metric} 
$\gfour^{-1}$ relative
to the Cartesian coordinates as follows:
\begin{subequations}
	\begin{align}
		\gfour
		& := 
		-  dt \otimes dt
			+ 
			\Speed^{-2} \sum_{a=1}^3(dx^a - v^a dt) \otimes (dx^a - v^a dt),
				\label{E:ACOUSTICALMETRIC} \\
		\gfour^{-1} 
		& := 
			- \Transport \otimes \Transport
			+ \Speed^2 \sum_{a=1}^3 \partial_a \otimes \partial_a.
			\label{E:INVERSEACOUSTICALMETRIC}
	\end{align}
\end{subequations}

\end{definition}

It is straightforward to check that
indeed, the $4 \times 4$ matrix with components $(\gfour^{-1})^{\alpha \beta}$
is the inverse of the $4 \times 4$ matrix with components $\gfour_{\alpha \beta}$.

Throughout, if $\mathbf{X}$ and $\mathbf{Y}$ are vectors, then
$\gfour(\mathbf{X},\mathbf{Y}) := \gfour_{\alpha \beta} \mathbf{X}^{\alpha} \mathbf{Y}^{\beta}$
denotes their inner product with respect to $\gfour$.

Most of the geometric constructions in this paper are tied to $\gfour$.
\begin{quote}
	Thus, in the rest of the paper, we lower and raise lowercase Greek ``spacetime'' indices	
	with $\gfour$ and $\gfour^{-1}$, e.g., $\Transport_{\alpha} := \gfour_{\alpha \beta} \Transport^{\beta}$.
\end{quote}

On a a few occasions, we will find it convenient to explicitly distinguish between a vectorfield and its $\gfour$-dual
one-form. Specifically, if $\mathbf{X}$ is a vectorfield, then 
we sometimes use the notation $\mathbf{X}_{\flat}$ to denote the corresponding $\gfour$-dual one-form,
i.e.,
\begin{align} \label{E:DUALONEFORMINDICES}
	(\mathbf{X}_{\flat})_{\alpha}
	& := \gfour_{\alpha \beta} \mathbf{X}^{\beta}.
\end{align}

We now provide the following basic definition, also tied to $\gfour$, 
which plays a key role in our analysis.

\begin{definition}[$\gfour$-spacelike, $\gfour$-timelike, and $\gfour$-null]
	\label{D:SPACELIKETIMELIKENULL}
	Vectors $\mathbf{X}$ are classified as follows:
	\begin{subequations}
	\begin{align}
		\gfour(\mathbf{X},\mathbf{X}) & < 0
		& & \gfour\mbox{\upshape-timelike},
			\\
		\gfour(\mathbf{X},\mathbf{X}) & = 0
		& & \gfour \mbox{\upshape-null},
			\\
		\gfour(\mathbf{X},\mathbf{X}) & > 0
		& & \gfour \mbox{\upshape-spacelike}.
	\end{align}
	\end{subequations}
	
	Hypersurfaces $\mathcal{H}$ of $\mathbb{R}^{1+3}$ 
	are classified as follows, where $\mathbf{H}$ denotes its $\gfour$-normal vectorfield:
	\begin{subequations}
	\begin{align}
		\gfour(\mathbf{H},\mathbf{H}) & < 0 \mbox{ at all points in } \mathcal{H}
		& & \gfour\mbox{\upshape-spacelike},
			\\
		\gfour(\mathbf{H},\mathbf{H}) & = 0 \mbox{ at all points in } \mathcal{H}
		& & \gfour \mbox{\upshape-null},
			\\
		\gfour(\mathbf{H},\mathbf{H}) & > 0 \mbox{ at all points in } \mathcal{H}
		& & \gfour \mbox{\upshape-timelike}.
	\end{align}
	\end{subequations}
	
	Moreover, if $\mathcal{S}$ is a co-dimension two submanifold of $\mathbb{R}^{1+3}$,
	we say that $\mathcal{S}$ is $\gfour$-spacelike if at each of its points, 
	all non-zero vectors $Y$ that are tangent to $\mathcal{S}$ verify $\gfour(Y,Y) > 0$.
\end{definition}

We close this subsection with a simple lemma that exhibits some basic properties of $\Transport$.

\begin{lemma}[Basic properties of $\Transport$]
\label{L:BASICPROPERTIESOFTRANSPORTVECTORFIELD}
The vectorfield $\Transport$ defined by \eqref{E:MATERIALVECTORVIELDRELATIVECTORECTANGULAR}
is $\gfour$-timelike and has $\gfour$-unit-length:
\begin{align} \label{E:TRANSPORTISUNITLENGTHANDTIMELIKE}
	\gfour(\Transport,\Transport)
	& = -1.
\end{align}

Moreover, relative to the Cartesian coordinates, we have
\begin{align} \label{E:TRANSPORTONEFORMIDENTITY}
	\Transport_{\alpha}
	& = -(dt)_{\alpha}
	=
	- \updelta_{\alpha}^0,
\end{align}
where $\updelta_{\alpha}^{\beta}$ is the Kronecker delta.
Thus, $\Transport$ is $\gfour$-orthogonal to $\Sigma_t$, i.e., $\gfour(\Transport,V) = 0$
for all vectorfields $\SigmatTan$ that are tangent to $\Sigma_t$.

Finally, we have $\Transport^0 = 1$, which implies in particular that $\Transport$ is future-directed\footnote{If
$\mathbf{X}$ is $\gfour$-timelike or $\gfour$-null, then we say that $\mathbf{X}$ is future-directed
if its Cartesian component $\mathbf{X}^0 = \mathbf{X} t$ is positive. \label{FN:FUTUREDIRECTED}}
\end{lemma}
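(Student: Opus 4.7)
The plan is to verify all three claims by direct computation in the Cartesian coordinates, using only the definitions \eqref{E:MATERIALVECTORVIELDRELATIVECTORECTANGULAR} of $\Transport$ and \eqref{E:ACOUSTICALMETRIC} of $\gfour$. From \eqref{E:MATERIALVECTORVIELDRELATIVECTORECTANGULAR}, $\Transport$ has Cartesian components $\Transport^0 = 1$ and $\Transport^a = v^a$, so the final assertion $\Transport^0 = 1$ --- and hence future-directedness in the sense of Footnote~\ref{FN:FUTUREDIRECTED} --- is immediate.

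The key observation, which drives the two remaining claims, is that every one-form $dx^a - v^a dt$ appearing in \eqref{E:ACOUSTICALMETRIC} annihilates $\Transport$:
\begin{align*}
(dx^a - v^a \, dt)(\Transport) = \Transport^a - v^a \Transport^0 = v^a - v^a = 0.
\end{align*}
To obtain \eqref{E:TRANSPORTISUNITLENGTHANDTIMELIKE}, I would evaluate $\gfour(\Transport, \Transport)$ via \eqref{E:ACOUSTICALMETRIC}: the $\Speed^{-2}$-weighted sum of squares drops out by the above vanishing, leaving only $-(dt(\Transport))^2 = -1$. For \eqref{E:TRANSPORTONEFORMIDENTITY}, I would lower the index by contracting each tensor factor of \eqref{E:ACOUSTICALMETRIC} against $\Transport$ in its second slot; the same cancellation kills the $\Speed^{-2}$-weighted contribution, and one is left with $\Transport_\alpha = -(dt)_\alpha \cdot dt(\Transport) = -(dt)_\alpha = -\updelta_\alpha^0$. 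The $\gfour$-orthogonality to $\Sigma_t$ then follows from the elementary observation that any $\Sigma_t$-tangent vectorfield $\SigmatTan$ satisfies $\SigmatTan^0 = \SigmatTan t = 0$, whence $\gfour(\Transport, \SigmatTan) = \Transport_\alpha \SigmatTan^\alpha = -\SigmatTan^0 = 0$.

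There is no genuine obstacle here: the defining expression \eqref{E:ACOUSTICALMETRIC} for $\gfour$ is already written in the ``$3+1$-style'' decomposition adapted to the integral curves of $\Transport$, so all three conclusions are essentially built into the construction and could even be read off by inspection. The only care required is keeping the index-lowering conventions from Subsubsect.~\ref{SSS:ACOUSTICALMETRIC} straight when passing between the vectorfield $\Transport$ and its $\gfour$-dual one-form.
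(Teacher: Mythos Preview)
Your proof is correct and takes exactly the same approach as the paper, which simply states that the lemma follows from straightforward calculations relative to the Cartesian coordinates based on \eqref{E:MATERIALVECTORVIELDRELATIVECTORECTANGULAR} and \eqref{E:ACOUSTICALMETRIC}. You have merely spelled out those calculations explicitly, with the clean organizing observation that each one-form $dx^a - v^a\,dt$ annihilates $\Transport$.
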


\begin{proof}
	The lemma follows from straightforward calculations
	relative to the Cartesian coordinates, 
	based on 
	\eqref{E:MATERIALVECTORVIELDRELATIVECTORECTANGULAR}
	and
	\eqref{E:ACOUSTICALMETRIC}.
\end{proof}

\subsubsection{Null forms relative to $\gfour$}
\label{SSS:STANDARDNULLFORMS}
The statement of Theorem~\ref{T:GEOMETRICWAVETRANSPORTSYSTEM} refers to 
``null forms relative to $\gfour$,'' where $\gfour$ is the acoustical metric from Def.\,\ref{D:ACOUSTICALMETRIC}.
We will now briefly describe  
their importance in the study of shock waves without
symmetry assumptions, since this line of investigation is a primary motivating factor for the results of this paper.
By definition, null forms relative to $\gfour$ are linear combinations 
(with coefficients depending on $\LogDensity$, $v$, $\Ent$, $\vortrenormalized$, and $\GradEnt$ -- but \emph{not} their derivatives)
of the \emph{standard null forms relative to} $\gfour$, which we define in Def.\,\ref{D:NULLFORMS}.
Since Klainerman's foundational work \cite{sK1984}, it has been understood that (quadratic) null form nonlinearities
are ``weak'' in the sense that, at least in the setting of wave-like PDEs, 
they often allow for proofs of small-data global existence.
That is, null forms are quadratic terms exhibiting special cancellations that allow for small global solutions.
Null forms are also important in other contexts, such as the study of low regularity well-posedness \cites{sKmM1993,sKmM1994,sKmM1995}.
We clarify that in fact, there are different classes of null forms, and that Klainerman's notion of a null
form was adapted to the Minkowski metric, that is, to the geometry of special relativity.
Perhaps surprisingly, \emph{appropriately defined} null forms also play a crucial role in the theory of shock formation.
By ``appropriately defined,'' we mean that the null forms in the theory of shocks do not coincide\footnote{The antisymmetric null forms defined in 
\eqref{E:QALPHABETANULLFORM}
\emph{do} appear in Klainerman's framework \cite{sK1984}, but the null form
$\mathfrak{Q}^{(\gfour)}$ defined \eqref{E:Q0NULLFORM} does not; the analog of
$\mathfrak{Q}^{(\gfour)}$ in Klainerman's framework is $\mathfrak{Q}^{(m)}$,
where $m$ is the Minkowski metric.} 
with the null forms from Klainerman's framework \cite{sK1984}; see the next paragraph for further discussion.
In the context of shock waves, the appropriately defined null forms are also ``weak'' in the sense that, at least in certain solution regimes,
they are not strong enough to prevent the formation of shocks. That is, in compressible fluid mechanics,
shocks are singularities driven by
``strong'' derivative-quadratic Riccati-type terms,
and null forms relative to $\gfour$ do not contain any such Riccati-type interaction;
and see \cites{jLjS2016a,jS2018c,jS2019c} for further discussion.

Let us clarify the phrase ``appropriately defined'' from the previous paragraph.
In the study of shock formation without symmetry assumptions,
\emph{the precise nonlinear structure of the null forms is crucial}, in particular 
more important than it is in the context of
proving small-data global existence. That is, proofs of small-data global existence (say, for wave equations on $\mathbb{R}^{1+3}$) 
are typically stable under the addition of higher-order nonlinearities to the equations, 
such as perturbing a null form
by adding derivative-cubic nonlinearities. In contrast, in the context of shock waves, 
the needed ``special cancellations'' 
(which render the null form weak) become visible only when one decomposes the derivative-quadratic terms
relative to the characteristic surfaces
(i.e., acoustically null hypersurfaces in the context of compressible Euler flow), 
\emph{which, in quasilinear problems, are evolutionarily determined by the solution}. 
Thus, since in the context of compressible Euler flow, acoustically null hypersurfaces
are determined by $\gfour$, the relevant ``special cancellations'' are inextricably tied to $\gfour$.
For this reason, we speak of ``null forms relative to $\gfour$.''
Specifically, the null form $\mathfrak{Q}^{(\gfour)}$ defined in \eqref{E:Q0NULLFORM} 
explicitly depends on $\gfour$.
In the context of compressible fluid mechanics, the special cancellations
can be described as follows: if $\mathfrak{Q}(\partial \phi, \partial \widetilde{\phi})$ is a null form relative to $\gfour$
(in particular $\mathfrak{Q}$ is derivative-quadratic)
and $\underline{\mathcal{N}}$ is \emph{any} hypersurface that is null\footnote{Acoustically null hypersurfaces have $\gfour$-normals $\uLunit$
that are null, i.e., $\gfour(\uLunit,\uLunit) = 0$.} 
(i.e., characteristic) 
relative to $\gfour$, then there holds a decomposition of the schematic form
\begin{align} \label{E:NULLFORMRELATIVETOGSCHEMATICGOODPROPERTIES}
\mathfrak{Q}(\partial \phi, \partial \widetilde{\phi}) 
= 
\mathcal{T} \phi \cdot \pmb{\partial} \widetilde{\phi}
+
\mathcal{T}  \widetilde{\phi} \cdot \pmb{\partial} \phi,
\end{align}
where $\mathcal{T}$ denotes a derivative in a direction \emph{tangent} to  $\underline{\mathcal{N}}$
and $\pmb{\partial}$ denotes a generic derivative. 

The connection between the decomposition of null forms relative to $\gfour$ highlighted in \eqref{E:NULLFORMRELATIVETOGSCHEMATICGOODPROPERTIES}
and shock formation is as follows: 
in all solution regimes for the compressible Euler equations in which stable shock formation without symmetry assumptions has been shown,
one can construct a foliation of spacetime by acoustically null hypersurfaces $\underline{\mathcal{N}}$ such that
the $\mathcal{T}$-derivatives of the solution are 
\emph{much less singular} than its derivatives in directions transversal to $\underline{\mathcal{N}}$.
In fact, in all known results, it is precisely the transversal derivatives of the solution that blow up, as in the model case of
Burgers' equation. Thus, null forms are \emph{linear} in the terms that blow up.
Therefore, they are much less singular than generic quadratic terms 
$\pmb{\partial} \phi \cdot \pmb{\partial} \widetilde{\phi}$,
which are what drive the formation of the shock.
This explains why null forms represent ``weak'' nonlinearities in the context of shock formation,
and also clarifies why shock formation proofs are typically unstable under perturbing the equations
by generic higher-order nonlinearities, such as derivative-cubic ones:
if the blowup is driven by derivative-quadratic terms, then generic derivative-cubic (or higher-order) terms
would be expected to blow up \emph{at an even faster rate}, possibly radically altering the nature
of the singularity or even preventing it altogether.
The importance of null forms relative to $\gfour$ in the context of proving shock formation 
is further explained in \cites{gHsKjSwW2016,jS2016b,jLjS2016a,jS2018c,mDjS2019,jS2019c}.

We close our discussion by highlighting a connection
between the good properties of null forms relative to $\gfour$ in the context of shock formation
and the results of the present paper:
\begin{quote}
	In our main integral identities, in the case that the lateral boundary is acoustically null
	(let us refer to it as $\underline{\mathcal{N}}$),
	the integrals along $\underline{\mathcal{N}}$ \emph{involve only tangential derivatives} $\mathcal{T}$;
	see \eqref{E:ERRORTERMSSCHEMATICSTRUCTURENULLCASE} for the precise statement.
	This structure is crucial for controlling these error integrals in the context of shock-forming solutions,
	in analogy with the way that null forms relative to $\gfour$
	lead to ``weak'' (i.e., controllable) error terms.
\end{quote}

We now define the standard null forms relative to $\gfour$. 

\begin{definition}[Standard null forms relative to $\gfour$]
	\label{D:NULLFORMS}
	The standard null forms $\mathfrak{Q}^{(\gfour)}(\cdot,\cdot)$
	(relative to $\gfour$)
	and
	$\mathfrak{Q}_{(\alpha \beta)}(\cdot,\cdot)$,
	($0 \leq \alpha < \beta \leq 3$),
	act on pairs $(\phi,\widetilde{\phi})$
	of scalar-valued functions as follows:
	\begin{subequations}
		\begin{align}
		\mathfrak{Q}^{(\gfour)}(\partial \phi, \partial \widetilde{\phi})
		&:= (\gfour^{-1})^{\alpha \beta} \partial_{\alpha} \phi \partial_{\beta} \widetilde{\phi},
			\label{E:Q0NULLFORM} \\
		\mathfrak{Q}_{(\alpha \beta)}(\partial \phi, \partial \widetilde{\phi})
		& := \partial_{\alpha} \phi \partial_{\beta} \widetilde{\phi}
				-
			 \partial_{\alpha} \widetilde{\phi} \partial_{\beta} \phi.
			\label{E:QALPHABETANULLFORM}
		\end{align}
	\end{subequations}
\end{definition}

\subsubsection{Covariant wave operator}
\label{SSS:COVARIANTWAVEOPERATOR}
The statement of Theorem~\ref{T:GEOMETRICWAVETRANSPORTSYSTEM} refers to 
the covariant wave operator $\square_{\gfour}$
of the acoustical metric $\gfour$, defined below in Def.\,\ref{D:COVWAVEOP}.
The main significance of covariant wave operators
is that sophisticated geo-analytic technology has been developed for such operators.
It allows one to construct commutator and multiplier
vectorfields that are dynamically adapted to $\square_{\gfour}$.
It turns out that this technology is crucial for the study of
shocks without symmetry assumptions, in particular for deriving
energy estimates with controllable error terms both for the solution
and its higher derivatives; we refer readers to
\cites{gHsKjSwW2016,jS2016b,jLjS2016a,jS2018c,mDjS2019,jS2019c} for further discussion of these issues.
The technology is also important for the study of low-regularity solutions in the context of quasilinear problems;
we refer readers to \cite{mDcLgMjS2019} for further discussion.
In the present article, in our derivation of energy identities,
we will use only a basic version of the multiplier method, which we review in
Subsect.\,\ref{SS:VECTORFIELDMULTIPLIERMETHOD}.

\begin{definition}[Covariant wave operator]
\label{D:COVWAVEOP}
The covariant wave operator $\square_{\gfour}$ 
acts on scalar-valued functions $\phi$ according to the following formula:
\begin{align} \label{E:WAVEOPERATORARBITRARYCOORDINATES}
\square_{\gfour} \phi
	:= \frac{1}{\sqrt{|\mbox{\upshape det} \gfour|}}
	\partial_{\alpha}
	\left\lbrace
			\sqrt{|\mbox{\upshape det} \gfour|} (\gfour^{-1})^{\alpha \beta}
			\partial_{\beta} \phi
	\right\rbrace.
\end{align}
\end{definition}

\begin{remark}[Coordinate invariance of $\square_{\gfour}$]
	It is a standard fact that RHS~\eqref{E:WAVEOPERATORARBITRARYCOORDINATES} is coordinate invariant.
\end{remark}

\subsubsection{Some notation}
\label{SSS:STATESPACEVARIABLEDIFFERENTIATION}
We use the following notation in our statement of Theorem~\ref{T:GEOMETRICWAVETRANSPORTSYSTEM}.

\begin{notation}[\textbf{Differentiation with respect to state-space variables via semicolons}]
	\label{N:STATESPACEDIFFERENTIATION}
	If $f = f(\LogDensity,\Ent)$ is a scalar function, then
	we use the following notation to denote partial differentiation with respect to
	$\LogDensity$ and $\Ent$:
	$
	\displaystyle
	f_{;\LogDensity} 
	:= \frac{\partial f}{\partial \LogDensity}
	$
	and
	$
	\displaystyle
	f_{;\Ent} 
	:= \frac{\partial f}{\partial \Ent}
	$.
	Moreover, 
	$
	\displaystyle
	f_{;\LogDensity;\Ent} 
	:= \frac{\partial^2 f}{\partial \Ent \partial \LogDensity}
	$,
	and we use similar notation for other higher-order partial derivatives of $f$
	with respect to $\LogDensity$ and $\Ent$.
\end{notation}

\subsection{The geometric wave-transport-divergence-curl formulation of the compressible Euler equations}
Our main results fundamentally rely on the following formulation of the compressible Euler equations,
derived in \cite{jS2019c}.

\begin{theorem} \cite{jS2019c}*{Theorem 1; The geometric wave-transport-divergence-curl formulation of the compressible Euler equations}
	\label{T:GEOMETRICWAVETRANSPORTSYSTEM}
	Let $\bar{\varrho} > 0$ be any constant background density,\footnote{Recall that the definition \eqref{E:LOGDENSITY} of $\LogDensity$ depends on $\bar{\varrho}$.}
	and assume that $(\LogDensity,v^1,v^2,v^3,\Ent)$ is
	a $C^3$ solution\footnote{We have made the $C^3$ assumption only for convenience, 
	i.e., so that all of the quantities on the left- and right-hand sides of the equations of
	Theorem~\ref{T:GEOMETRICWAVETRANSPORTSYSTEM} are at least continuous. In applications, one can make
	sense of the equations and solutions in a distributional sense 
	under weaker regularity assumptions
	(for example, in suitable Sobolev spaces).} 
	to the compressible Euler equations 
	\eqref{E:TRANSPORTDENSRENORMALIZEDRELATIVECTORECTANGULAR}-\eqref{E:ENTROPYTRANSPORT}
	in three spatial dimensions under an arbitrary equation of state $p = p(\varrho,\Ent)$ with positive sound speed 
	$\Speed$ (see \eqref{E:SOUNDSPEED}).
	Let $\Transport$ be the material derivative vectorfield defined in \eqref{E:MATERIALVECTORVIELDRELATIVECTORECTANGULAR},
	let $\gfour$ be the acoustical metric from Def.\,\ref{D:ACOUSTICALMETRIC},
	and let $\VortVort$ and $\DivGradEnt$
	be the modified fluid variables from Def.\,\ref{D:RENORMALIZEDCURLOFSPECIFICVORTICITY}.
	Then the scalar-valued functions
	$\LogDensity$
	and
	$v^i$,
	$\vortrenormalized^i$,
$\Ent$,
$\GradEnt^i$,
$\Flatdiv \vortrenormalized$,
$\VortVort^i$,
$\DivGradEnt$,
and
$(\Flatcurl \GradEnt)^i$,
	($i=1,2,3$),
also solve the following equations,
	where $\upepsilon_{ijk}$ is the fully antisymmetric symbol normalized by $\upepsilon_{123}=1$ 
	and \textbf{the Cartesian component functions $v^i$ are 
	treated as scalar-valued functions
	under covariant differentiation on LHS~\eqref{E:VELOCITYWAVEEQUATION}}:

\medskip

\noindent \underline{\textbf{\upshape Covariant wave equations}}
	\begin{subequations}
	\begin{align}
		\square_{\gfour} v^i
		& = 
			- 
			\Speed^2 \exp(2 \LogDensity) \VortVort^i
			+ 
			\mathfrak{Q}_{(v)}^i
			+ 
			\mathfrak{L}_{(v)}^i,
			\label{E:VELOCITYWAVEEQUATION}	\\
	\square_{\gfour} \LogDensity
	& = 
		-
		\exp(\LogDensity) \frac{p_{;\Ent}}{\bar{\varrho}} \DivGradEnt
		+
		\mathfrak{Q}_{(\LogDensity)}
		+
		\mathfrak{L}_{(\LogDensity)},
			\label{E:RENORMALIZEDDENSITYWAVEEQUATION} 
				\\
	\square_{\gfour} \Ent
	& = 
		\Speed^2 \exp(2 \LogDensity)  \DivGradEnt
		+
		\mathfrak{L}_{(\Ent)}.
	\label{E:ENTROPYWAVEEQUATION}
\end{align}
\end{subequations}

\medskip

\noindent \underline{\textbf{\upshape Transport equations}}
\begin{subequations}
\begin{align}	\Transport \vortrenormalized^i
	& = \mathfrak{L}_{(\vortrenormalized)}^i,
		\label{E:RENORMALIZEDVORTICTITYTRANSPORTEQUATION}
		\\
	\Transport \Ent
	& = 0,	
		\label{E:ENTROPYTRANSPORTMAINSYSTEM}
			\\
	\Transport \GradEnt^i
	& = \mathfrak{L}_{(\GradEnt)}^i.
		\label{E:GRADENTROPYTRANSPORT}
	\end{align}
	\end{subequations}

\medskip	
	
\noindent \underline{\textbf{\upshape Transport-divergence-curl system for the specific vorticity}}
\begin{subequations}
\begin{align} \label{E:FLATDIVOFRENORMALIZEDVORTICITY}
	\Flatdiv \vortrenormalized
	& = 
		\mathfrak{L}_{(\Flatdiv \vortrenormalized)},
		\\
\Transport 
\VortVort^i
& = 
		- 
		2 \updelta_{jk} \upepsilon_{iab} \exp(-\LogDensity) (\partial_a v^j) \partial_b \vortrenormalized^k
		+
		\upepsilon_{ajk}
		\exp(-\LogDensity)
		(\partial_a v^i) 
		\partial_j \vortrenormalized^k
		\label{E:EVOLUTIONEQUATIONFLATCURLRENORMALIZEDVORTICITY} 
			\\
& \ \
		+ 
		\exp(-3 \LogDensity) \Speed^{-2} \frac{p_{;\Ent}}{\bar{\varrho}} 
		\left\lbrace
			(\Transport \GradEnt^a) \partial_a v^i
			-
			(\Transport v^i) \partial_a \GradEnt^a
		\right\rbrace
		\notag \\
	& \ \
		+
		\exp(-3 \LogDensity) \Speed^{-2} \frac{p_{;\Ent}}{\bar{\varrho}}  
		\left\lbrace
			(\Transport v^a) \partial_a \GradEnt^i
			- 
			( \Transport \GradEnt^i) \partial_a v^a
		\right\rbrace
		\notag \\
& \ \
	+
	\mathfrak{Q}_{(\VortVort)}^i
	+	
	\mathfrak{L}_{(\VortVort)}^i.
	\notag 
\end{align}	
\end{subequations}

\medskip

\noindent \underline{\textbf{\upshape Transport-divergence-curl system for the entropy gradient}}
\begin{subequations}
\begin{align} 	
\Transport \DivGradEnt
	& =  
			2 \exp(-2 \LogDensity) 
			\left\lbrace
				(\partial_a v^a) \partial_b \GradEnt^b
				-
				(\partial_a \GradEnt^b) \partial_b v^a
			\right\rbrace
			+
			\exp(-\LogDensity) \updelta_{ab} (\Flatcurl \vortrenormalized)^a \GradEnt^b
			\label{E:TRANSPORTFLATDIVGRADENT}
				\\
	& \ \
			+
			\mathfrak{Q}_{(\DivGradEnt)},
			\notag 
			\\
	(\Flatcurl \GradEnt)^i & = 0.
	\label{E:CURLGRADENTVANISHES}
\end{align}
\end{subequations}

	Above, 
	$\mathfrak{Q}_{(v)}^i$,
	$\mathfrak{Q}_{(\LogDensity)}$, 
	$\mathfrak{Q}_{(\VortVort)}^i$,
	and
	$\mathfrak{Q}_{(\DivGradEnt)}$
	are\footnote{The terms on the first four lines of
	RHS~\eqref{E:EVOLUTIONEQUATIONFLATCURLRENORMALIZEDVORTICITY} 
	and the first product
	on RHS~\eqref{E:TRANSPORTFLATDIVGRADENT} are also null forms relative to $\gfour$.
	We have explicitly displayed these null forms since in applications, they 
	are more difficult to treat than 
	$\mathfrak{Q}_{(v)}^i$,
	$\mathfrak{Q}_{(\LogDensity)}$, 
	$\mathfrak{Q}_{(\VortVort)}^i$,
	and
	$\mathfrak{Q}_{(\DivGradEnt)}$;
	from the point of view of regularity,
	the explicitly displayed null forms
	need to be treated with 
	div-curl-transport identities.
	\label{FN:MORENULLFORMS}} 
	the \textbf{null forms relative to} $\gfour$ defined by
	\begin{subequations}
		\begin{align}
		\mathfrak{Q}_{(v)}^i	
		& := 	-
					\left\lbrace
						1
						+
						\Speed^{-1} \Speed_{;\LogDensity}
					\right\rbrace
					(\gfour^{-1})^{\alpha \beta} (\partial_{\alpha} \LogDensity) \partial_{\beta} v^i,
			\label{E:VELOCITYNULLFORM} \\
		\mathfrak{Q}_{(\LogDensity)}
		& := 
		- 
		3 \Speed^{-1} \Speed_{;\LogDensity} 
		(\gfour^{-1})^{\alpha \beta} (\partial_{\alpha} \LogDensity) \partial_{\beta} \LogDensity
		+ 
		\left\lbrace
			(\partial_a v^a) \partial_b v^b
			-
			(\partial_a v^b) \partial_b v^a
		\right\rbrace,
			\label{E:DENSITYNULLFORM}
				\\
	\mathfrak{Q}_{(\VortVort)}^i
	& :=
		\exp(-3 \LogDensity) \Speed^{-2} \frac{p_{;\Ent}}{\bar{\varrho}}  \GradEnt^i
		\left\lbrace
			(\partial_a v^b) \partial_b v^a
			-
			(\partial_a v^a) \partial_b v^b
		\right\rbrace
			\label{E:RENORMALIZEDVORTICITYCURLNULLFORM} \\
& \ \
		+ 
		\exp(-3 \LogDensity) \Speed^{-2} \frac{p_{;\Ent}}{\bar{\varrho}}
		\GradEnt^b 
		\left\lbrace
			(\partial_a v^a) \partial_b v^i 
			- 
			(\partial_a v^i) \partial_b v^a 
		\right\rbrace
		\notag \\
& \ \
		+ 
		2 \exp(-3 \LogDensity) \Speed^{-2} \frac{p_{;\Ent}}{\bar{\varrho}}
		\GradEnt^a
		\left\lbrace
			(\partial_a \LogDensity)  \Transport v^i
			 - 
		  (\partial_a v^i) \Transport \LogDensity
		\right\rbrace
		\notag \\
	&  \ \
			+ 
			2 \exp(-3 \LogDensity) \Speed^{-3} \Speed_{;\LogDensity} \frac{p_{;\Ent}}{\bar{\varrho}}
			\GradEnt^a 
			\left\lbrace
				(\partial_a \LogDensity)  \Transport v^i
				- 
				(\partial_a v^i) \Transport \LogDensity
			\right\rbrace
		\notag \\
	& \ \
		+ 
		\exp(-3 \LogDensity) \Speed^{-2} \frac{p_{;\Ent;\LogDensity}}{\bar{\varrho}}
		\GradEnt^a 
		\left\lbrace
			(\partial_a v^i)
			\Transport \LogDensity 
			- 
			(\partial_a \LogDensity) \Transport v^i
		\right\rbrace
		\notag \\
	& \ \
		+
		\exp(-3 \LogDensity) \Speed^{-2} \frac{p_{;\Ent;\LogDensity}}{\bar{\varrho}} \GradEnt^i
		\left\lbrace
			(\Transport v^a) \partial_a \LogDensity
			-
			 (\Transport \LogDensity) \partial_a v^a
		\right\rbrace
		\notag \\
	& \ \
		+
		2 \exp(-3 \LogDensity) \Speed^{-2} \frac{p_{;\Ent}}{\bar{\varrho}} \GradEnt^i
		\left\lbrace
			(\Transport \LogDensity) \partial_a v^a 
			- 
			(\Transport v^a) \partial_a \LogDensity
		\right\rbrace
		\notag \\
	& \ \
		 	+
			2 \exp(-3 \LogDensity) \Speed^{-3} \Speed_{;\LogDensity} \frac{p_{;\Ent}}{\bar{\varrho}} \GradEnt^i
			\left\lbrace
				(\Transport \LogDensity) \partial_a v^a
		 		- 
		 		(\Transport v^a) \partial_a \LogDensity
		 	\right\rbrace,
		 		\notag
		 		\\
\label{E:DIVENTROPYGRADIENTNULLFORM}
	\mathfrak{Q}_{(\DivGradEnt)}
	& :=
		2 \exp(-2 \LogDensity) 
		\GradEnt^a 
		\left\lbrace
			(\partial_a v^b) \partial_b \LogDensity
			-
			(\partial_a \LogDensity)
			\partial_b v^b 
		\right\rbrace.
\end{align}
\end{subequations}
	In addition, the terms
	$\mathfrak{L}_{(v)}^i$,
	$\mathfrak{L}_{(\LogDensity)}$,
	$\mathfrak{L}_{(\Ent)}$,
	$\mathfrak{L}_{(\vortrenormalized)}^i$,
	$\mathfrak{L}_{(\GradEnt)}^i$,
		$\mathfrak{L}_{(\Flatdiv \vortrenormalized)}$,
		and
	$\mathfrak{L}_{(\VortVort)}^i$,
	which are at most linear in the derivatives of the unknowns, are defined as follows:
	\begin{subequations}
	\begin{align} \label{E:VELOCITYILINEARORBETTER} 
		\mathfrak{L}_{(v)}^i
		& := 
		2 \exp(\LogDensity) \upepsilon_{iab} (\Transport v^a) \vortrenormalized^b
		-
		\frac{p_{;\Ent}}{\bar{\varrho}} \upepsilon_{iab} \vortrenormalized^a \GradEnt^b
			\\
	& \ \
		- 
		\frac{1}{2} \exp(-\LogDensity) \frac{p_{;\LogDensity;\Ent}}{\bar{\varrho}} \GradEnt^a \partial_a v^i
			\notag \\
		& \ \
		- 
		2 \exp(-\LogDensity) \Speed^{-1} \Speed_{;\LogDensity} \frac{p_{;\Ent}}{\bar{\varrho}} 
		(\Transport \LogDensity) \GradEnt^i
		+
		\exp(-\LogDensity) \frac{p_{;\Ent;\LogDensity}}{\bar{\varrho}} (\Transport \LogDensity) \GradEnt^i,
			\notag  \\
		\mathfrak{L}_{(\LogDensity)}
		& :=
		-
		\frac{5}{2} \exp(-\LogDensity) \frac{p_{;\Ent;\LogDensity}}{\bar{\varrho}} \GradEnt^a \partial_a \LogDensity 
		-
		\exp(-\LogDensity) \frac{p_{;\Ent;\Ent}}{\bar{\varrho}} \updelta_{ab} \GradEnt^a \GradEnt^b,
		 \label{E:DENSITYLINEARORBETTER}
		 	\\
		\mathfrak{L}_{(\Ent)}
		& := 
			\Speed^2  \GradEnt^a \partial_a \LogDensity
			- 
			\Speed \Speed_{;\LogDensity} \GradEnt^a \partial_a \LogDensity
			- 
			\Speed \Speed_{;\Ent} \updelta_{ab} \GradEnt^a \GradEnt^b,
			\label{E:ENTROPYLINEARORBETTER} \\
		\mathfrak{L}_{(\vortrenormalized)}^i
		& := 
		\vortrenormalized^a \partial_a v^i
		-
		\exp(-2 \LogDensity) \Speed^{-2} \frac{p_{;\Ent}}{\bar{\varrho}} \upepsilon_{iab} (\Transport v^a) \GradEnt^b,
		\label{E:SPECIFICVORTICITYLINEARORBETTER}
			\\
		\mathfrak{L}_{(\GradEnt)}^i
		& :=
			- \GradEnt^a \partial_a v^i
			+ 
			\upepsilon_{iab} \exp(\LogDensity) \vortrenormalized^a \GradEnt^b,
			\label{E:ENTROPYGRADIENTLINEARORBETTER}
				\\
	\mathfrak{L}_{(\Flatdiv \vortrenormalized)}
		& := - \vortrenormalized^a \partial_a \LogDensity,
		\label{E:RENORMALIZEDVORTICITYDIVLINEARORBETTER} \\
	\mathfrak{L}_{(\VortVort)}^i	 		
	& :=
		 		2 \exp(-3 \LogDensity) \Speed^{-3} \Speed_{;\Ent} \frac{p_{;\Ent}}{\bar{\varrho}} 
				(\Transport v^i) \updelta_{ab} \GradEnt^a \GradEnt^b
				\label{E:RENORMALIZEDVORTICITYCURLLINEARORBETTER} \\
		& \ \
			-
				2 \exp(-3 \LogDensity) \Speed^{-3} \Speed_{;\Ent} \frac{p_{;\Ent}}{\bar{\varrho}} 
				\updelta_{ab} \GradEnt^a (\Transport v^b) \GradEnt^i
				\notag \\
		& \ \
			+ 
			\exp(-3 \LogDensity) \Speed^{-2} \frac{p_{;\Ent;\Ent}}{\bar{\varrho}} \updelta_{ab} (\Transport v^a) \GradEnt^b \GradEnt^i
			\notag \\
		& \ \ 
			- 
			\exp(-3 \LogDensity) \Speed^{-2} \frac{p_{;\Ent;\Ent}}{\bar{\varrho}} (\Transport v^i) \updelta_{ab} \GradEnt^a \GradEnt^b.
			\notag 
		\end{align}
	\end{subequations}
\end{theorem}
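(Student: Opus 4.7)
The plan is to derive each family of equations as an algebraic consequence of the first-order system \eqref{E:TRANSPORTDENSRENORMALIZEDRELATIVECTORECTANGULAR}-\eqref{E:ENTROPYTRANSPORT} together with the algebraic definitions \eqref{E:VORTICITYDEFINITION}, \eqref{E:LOGDENSITY}-\eqref{E:ENTROPYGRADIENT}, and \eqref{E:RENORMALIZEDCURLOFSPECIFICVORTICITY}-\eqref{E:RENORMALIZEDDIVOFENTROPY}. The elementary tools would be the commutator identity $[\Transport,\partial_j]\phi = -(\partial_j v^a)\partial_a\phi$, the $\upepsilon_{ijk}$-contraction identities, and the structural formula
\begin{align*}
\square_{\gfour}\phi
= -\Transport\Transport\phi + \Speed^2\updelta^{ab}\partial_a\partial_b\phi + (\text{terms linear in }\pmb{\partial}\phi\text{ with coefficients depending on }\LogDensity, v, \Ent, \Speed),
\end{align*}
which follows from evaluating \eqref{E:WAVEOPERATORARBITRARYCOORDINATES} against \eqref{E:INVERSEACOUSTICALMETRIC} after computing $\sqrt{|\det\gfour|}$. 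Every identity in the theorem can, in principle, be verified by direct computation; the content lies in showing that the computation produces the specific null-form/regularity structure claimed.

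I would first dispatch the easier identities. The transport equation \eqref{E:GRADENTROPYTRANSPORT} follows by applying $\partial_i$ to \eqref{E:ENTROPYTRANSPORT} and commuting $[\Transport,\partial_i]$; \eqref{E:RENORMALIZEDVORTICTITYTRANSPORTEQUATION} follows by applying $\Flatcurl$ to \eqref{E:TRANSPORTVELOCITYRELATIVECTORECTANGULAR}, dividing by $\exp(\LogDensity)$, and using \eqref{E:TRANSPORTDENSRENORMALIZEDRELATIVECTORECTANGULAR} to eliminate $\Transport\LogDensity$. The divergence identity \eqref{E:FLATDIVOFRENORMALIZEDVORTICITY} comes from differentiating $\vortrenormalized^i = \exp(-\LogDensity)\upomega^i$ against $\Flatdiv\upomega = 0$, and \eqref{E:CURLGRADENTVANISHES} is immediate. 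For the wave equations \eqref{E:VELOCITYWAVEEQUATION}-\eqref{E:RENORMALIZEDDENSITYWAVEEQUATION}, I would apply $\Transport$ to \eqref{E:TRANSPORTDENSRENORMALIZEDRELATIVECTORECTANGULAR}-\eqref{E:TRANSPORTVELOCITYRELATIVECTORECTANGULAR} and, after commuting $\Transport$ past $\partial_a$ and using the first-order equations to substitute away first-order $\Transport$-derivatives, add $\Speed^2\updelta^{ab}\partial_a\partial_b$ of the same variable via the structural formula above; the Laplacian of the density generates $\DivGradEnt$ through \eqref{E:RENORMALIZEDDIVOFENTROPY}, while the Laplacian of velocity generates $\VortVort$ through the decomposition $\updelta^{ab}\partial_a\partial_b v^i = \partial^i(\Flatdiv v) - \upepsilon^{iab}\partial_a \upomega_b$. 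Equation \eqref{E:ENTROPYWAVEEQUATION} is obtained from $\Transport^2\Ent = 0$ after writing $\Speed^2\updelta^{ab}\partial_a\partial_b\Ent$ in terms of $\DivGradEnt$.

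The main obstacle — and the core of the derivation — is the sharp evolution equation \eqref{E:EVOLUTIONEQUATIONFLATCURLRENORMALIZEDVORTICITY} for $\VortVort$, along with its easier sibling \eqref{E:TRANSPORTFLATDIVGRADENT} for $\DivGradEnt$. A naive computation of $\Transport(\Flatcurl\vortrenormalized)$, via $\upepsilon_{iab}\partial_a$ of \eqref{E:RENORMALIZEDVORTICTITYTRANSPORTEQUATION} and commutation, produces source terms of schematic form $(\pmb{\partial}\pmb{\partial} v)\cdot\GradEnt$ and $(\Transport v)\cdot(\pmb{\partial}\GradEnt)$ that are \emph{not} null forms relative to $\gfour$ and lose one derivative relative to the other unknowns. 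The crucial observation is that the corrective terms $\exp(-3\LogDensity)\Speed^{-2}(p_{;\Ent}/\bar{\varrho})[\GradEnt^a\partial_a v^i - (\Flatdiv v)\GradEnt^i]$ defining $\VortVort^i$ in \eqref{E:RENORMALIZEDCURLOFSPECIFICVORTICITY} are \emph{exactly} what is needed: when $\Transport$ is applied to them, the $\Transport\GradEnt^a$ factors are eliminated via \eqref{E:GRADENTROPYTRANSPORT}, the $\Transport(\partial_a v^i)$ factors are treated by commuting $\Transport$ past $\partial_a$ and substituting \eqref{E:TRANSPORTVELOCITYRELATIVECTORECTANGULAR}, and the resulting top-order terms cancel against those produced by $\Transport(\Flatcurl\vortrenormalized)$. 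What survives reorganizes — after mechanical but lengthy $\upepsilon_{ijk}\upepsilon_{abk} = \updelta_{ia}\updelta_{jb}-\updelta_{ib}\updelta_{ja}$ manipulations — into the explicitly displayed null-form-like combinations on the first four lines of \eqref{E:EVOLUTIONEQUATIONFLATCURLRENORMALIZEDVORTICITY}, the null form $\mathfrak{Q}_{(\VortVort)}^i$, and the linear remainder $\mathfrak{L}_{(\VortVort)}^i$. The derivation of \eqref{E:TRANSPORTFLATDIVGRADENT} is structurally parallel but easier, since the correction $-\exp(-2\LogDensity)\GradEnt^a\partial_a\LogDensity$ in \eqref{E:RENORMALIZEDDIVOFENTROPY} is only linear in $\pmb{\partial}\Ent$; the baroclinic coupling $\exp(-\LogDensity)\updelta_{ab}(\Flatcurl\vortrenormalized)^a\GradEnt^b$ emerges when one applies $\partial_a$ to \eqref{E:TRANSPORTVELOCITYRELATIVECTORECTANGULAR}, divides by $\exp(2\LogDensity)$, and matches against the Euclidean curl-gradient decomposition of $\partial_a\partial_b v - \partial_b\partial_a v = 0$.

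The subtlest bookkeeping step — which I expect to be the true main obstacle — is verifying that every derivative-quadratic term surviving the cancellations assembles into a null form of Definition~\ref{D:NULLFORMS} rather than merely an arbitrary quadratic expression in $\pmb{\partial}\LogDensity$, $\pmb{\partial} v$, $\pmb{\partial}\Ent$. This requires repeatedly recognizing the inverse-metric structure $(\gfour^{-1})^{\alpha\beta} = -\Transport^{\alpha}\Transport^{\beta} + \Speed^2\updelta^{ij}\updelta_i^{\alpha}\updelta_j^{\beta}$ hidden in combinations such as $(\Flatdiv v)(\Transport\phi) - v^a\partial_a\Transport\phi + \ldots$ and collecting coefficients against the thermodynamic factors $\Speed^{-1}\Speed_{;\LogDensity}$, $p_{;\Ent}/\bar{\varrho}$, and $p_{;\Ent;\LogDensity}/\bar{\varrho}$ so that they match \eqref{E:VELOCITYNULLFORM}-\eqref{E:DIVENTROPYGRADIENTNULLFORM} exactly. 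The \emph{existence} of this null-form structure is not forced by the abstract form of a quasilinear hyperbolic system and is the special structural feature of compressible Euler flow that makes the whole framework useful; once recognized, its verification is a coefficient-by-coefficient check against the first-order equations.
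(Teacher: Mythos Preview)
The paper does not prove this theorem; it is recalled verbatim from \cite{jS2019c}*{Theorem~1}, so there is no ``paper's own proof'' to compare against. Your sketch is the standard route and is correct in outline: derive the transport equations by commuting $\partial_i$ and $\Flatcurl$ through \eqref{E:TRANSPORTDENSRENORMALIZEDRELATIVECTORECTANGULAR}-\eqref{E:ENTROPYTRANSPORT}, obtain the wave equations by applying $\Transport$ once more and reorganizing against the explicit form of $\square_{\gfour}$, and verify that the corrective terms built into $\VortVort$ and $\DivGradEnt$ cancel the top-order non-null contributions. You have also correctly identified the genuine content of the result, namely that the surviving derivative-quadratic terms assemble into null forms relative to $\gfour$; this is indeed a coefficient-by-coefficient verification rather than a soft argument, and it is the part where a careless computation would fail.
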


\section{The spacetime regions and their topology and geometry}
\label{S:SPACETIMEDOMAINS}
In this section, we state our assumptions on the spacetime region $\mathcal{M}$ on which we will derive our main integral identities,
as well as the acoustical time function $\Timefunction$ that foliates $\mathcal{M}$. We then define a collection of geometric
tensors associated to $\mathcal{M}$ and exhibit their basic properties.

\subsection{The spacetime region $\mathcal{M}$, the acoustical time function $\Timefunction$, and related constructions}
\label{SS:DOMAINANDTIMEFUNCTIONETC}
Until Sect.\,\ref{S:DOUBLENULL},
our results concern compressible Euler solutions 
on subsets of spacetime, denoted by $\mathcal{M}$ and depicted in Fig.\,\ref{F:SPACETIMEDOMAIN}.
We assume that $\mathcal{M}$ is a compact, connected manifold with corners;
the purpose of the latter assumption is that it allows us to use Stokes' theorem 
(more precisely, in the context of the present paper, the divergence theorem) on $\mathcal{M}$.
We assume that $\partial \mathcal{M}$ (i.e., the boundary of $\mathcal{M}$, viewed as a subset of $\mathbb{R}^{1+3}$) 
can be decomposed\footnote{The union \eqref{E:SPLITTINGOFBOUNDARYOFREGION} is not disjoint since, as we describe below, our assumptions
imply that $\underline{\mathcal{H}}$ intersects $\widetilde{\Sigma}_{\Timefunction}$ in two-dimensional submanifolds.} 
as
\begin{align} \label{E:SPLITTINGOFBOUNDARYOFREGION}
	\partial \mathcal{M}
	& = \widetilde{\Sigma}_0 
			\cup
			\widetilde{\Sigma}_T
			\cup
			\underline{\mathcal{H}},
\end{align}
where $\widetilde{\Sigma}_T$ is the top boundary of $\mathcal{M}$,
$\widetilde{\Sigma}_0$ is the bottom boundary of $\mathcal{M}$, 
$\underline{\mathcal{H}}$ is the lateral boundary of $\mathcal{M}$,
and just below, we explain the meaning of the subscripts on the symbol ``$\widetilde{\Sigma}$.''
We assume that for some $T > 0$, $\mathcal{M}$ 
is foliated by $\gfour$-spacelike hypersurface portions 
$\widetilde{\Sigma}_{\Timefunction}$ for $\Timefunction \in [0,T]$. 
More precisely, we assume that $\Timefunction$ 
is a smooth \emph{acoustical time function} (not necessarily equal to Cartesian time) on an open, connected subset $\mathscr{O}$ of $\mathbb{R}^{1+3}$ containing $\mathcal{M}$. By ``acoustical time function,'' we mean a time function in the sense of Lorentzian geometry 
(see \cite{rW1984}*{Section~8.2} for background material on time functions),
where the Lorentzian metric is the acoustical metric $\gfour$.
Specifically, we assume that on $\mathscr{O}$, $\Timefunction$ has non-vanishing gradient
and that $\Dfour \Timefunction$ is past-directed,\footnote{By ``past-directed,'' we mean that $\Transport \Timefunction > 0$, i.e., $\Timefunction$ increases
along the integral curves of $\Transport$, like Cartesian time does (since $\Transport t = 1$). 
By \eqref{E:TRANSPORTONEFORMIDENTITY}, 
this is equivalent to the assumption that the opposite of the $\gfour$-dual of the gradient of $\Timefunction$, namely 
$- (\gfour^{-1})^{\alpha \beta} \partial_{\beta} \Timefunction$,
is a future-directed vectorfield in the sense of Footnote~\ref{FN:FUTUREDIRECTED}.
\label{FN:TIMEFUNCTIONPASTDIRECTEDGRADIENT}}  
where $\Dfour \Timefunction$ denotes the gradient one-form of $\Timefunction$,
and\footnote{The assumption $(\gfour^{-1})^{\alpha \beta} \partial_{\alpha} \Timefunction \partial_{\beta} \Timefunction < 0$ implies that $\widetilde{\Sigma}_{\Timefunction}$
is $\gfour$-spacelike in the sense of Def.\,\ref{D:SPACELIKETIMELIKENULL}.} 
$(\gfour^{-1})^{\alpha \beta} \partial_{\alpha} \Timefunction \partial_{\beta} \Timefunction < 0$.
We then set
\begin{align} \label{E:WIDETILDESIGMAISTHEINTERSECTIOFDOMAINWITHLEVELSETOFTIMEFUNCTION}
	\widetilde{\Sigma}_{\Timefunction'}
	& := \mathcal{M} \cap \lbrace (t,x^1,x^2,x^3) \in \mathscr{O} \ | \ \Timefunction(t,x^1,x^2,x^3) = \Timefunction' \rbrace.
\end{align}
Note that our assumptions imply that\footnote{Throughout, we abuse notation by using the symbol ``$\Timefunction$'' to denote both the acoustical time function and the values that it takes on;
the precise meaning of the symbol will be clear from context.} 
\begin{align} \label{E:DOMAINFOLIATEDBYSPACELIKEHYPERSURFACES}
	\mathcal{M}
	& = \cup_{\Timefunction \in [0,T]} \widetilde{\Sigma}_{\Timefunction}.
\end{align}
We assume that $\underline{\mathcal{H}}$ is the intersection of
$\mathcal{M}$ with a smooth, three-dimensional embedded submanifold of $\mathscr{O}$,
and that $\underline{\mathcal{H}}$ is $\gfour$-spacelike at all of its points or $\gfour$-null at all of its points
(in the sense of Def.\,\ref{D:SPACELIKETIMELIKENULL}).
This implies, in particular, that
$\underline{\mathcal{H}}$ is transversal\footnote{That is, at every point in $\underline{\mathcal{H}} \cap \widetilde{\Sigma}_{\Timefunction}$, 
the normal of $\widetilde{\Sigma}_{\Timefunction}$ is $\gfour$-timelike
and therefore cannot be parallel to the normal of $\underline{\mathcal{H}}$.} 
to the level sets of the acoustical time function $\Timefunction$. 
Finally, we assume that for $\Timefunction \in [0,T]$, $\widetilde{\Sigma}_{\Timefunction}$ 
is diffeomorphic to the closed unit ball in $\mathbb{R}^3$.
This implies, in particular, that the boundary of $\widetilde{\Sigma}_{\Timefunction}$, viewed as a subset
of the $\Timefunction$-level set of the acoustical time function, 
and which we denote by $\partial \widetilde{\Sigma}_{\Timefunction}$,
is diffeomorphic to $\mathbb{S}^2$. 

The following subsets of spacetime, associated to $\mathcal{M}$, 
will play a fundamental role in the ensuing discussion.
\begin{definition}[Subsets of spacetime]
For $0 \leq \Timefunction \leq T$, we define
\begin{subequations}
\begin{align} 
	\mathcal{S}_{\Timefunction}
	& := \underline{\mathcal{H}} \cap \widetilde{\Sigma}_{\Timefunction},
		\label{E:SPHEREDEF} \\
	\underline{\mathcal{H}}_{\Timefunction}
	& := \underline{\mathcal{H}} \cap \mathcal{M}_{\Timefunction},
	\label{E:LATERALHYPERSURFACEPORTION}
		\\
\mathcal{M}_{\Timefunction}
	& := \cup_{\Timefunction'\in [0,\Timefunction]} \widetilde{\Sigma}_{\Timefunction'}.
	\label{E:TIMETRUNCATEDDOMAIN}
\end{align}
\end{subequations}
\end{definition}
From the above definitions, it follows that $\mathcal{M} = \mathcal{M}_T$;
see Fig.\,\ref{F:SPACETIMEDOMAIN}.
Moreover, we note that $\underline{\mathcal{H}} = \underline{\mathcal{H}}_T$
and that for $\Timefunction \in [0,T]$, we have
\begin{align} \label{E:LATERALNULLHYPERSURFACEFOLIATEDBYSPHERES}
	\underline{\mathcal{H}}_{\Timefunction} = \cup_{\Timefunction' \in [0,\Timefunction]} \mathcal{S}_{\Timefunction'}.
\end{align}
We refer to either of $\underline{\mathcal{H}}$ or
$\underline{\mathcal{H}}_{\Timefunction}$ as the ``lateral hypersurface''
or the ``lateral boundary'' of $\mathcal{M}$.

From the above assumptions, in the language of manifolds with corners,
it follows that points in $\mathcal{S}_T \cup \mathcal{S}_0$ are index\footnote{By definition, 
a point of index $k$ is contained in a subset $\mathcal{D}_k$ of $\mathcal{M}$ such that $\mathcal{D}_k$ is
diffeomorphic to a neighborhood of the origin in $[0,\infty)^k \times \mathbb{R}^{4-k}$; the case $k = 0$
corresponds to the standard notion of a differentiable manifold.} $2$,
that points in $\partial \mathcal{M} \backslash (\mathcal{S}_T \cup \mathcal{S}_0)$
are index $1$, and that the remaining points in $\mathcal{M}$ (which belong to its interior)
are index $0$.

We also note that since $\underline{\mathcal{H}}$ is transversal to the level
sets of the acoustical time function, the following identity holds for $\Timefunction \in [0,T]$:
\begin{align} \label{E:BOUNDARYOFTILDESIGMAISSPHERE}
	\partial \widetilde{\Sigma}_{\Timefunction}
	& = \mathcal{S}_{\Timefunction}.
\end{align}
Thus, in view of our assumption that $\widetilde{\Sigma}_{\Timefunction}$ is diffeomorphic to the closed unit ball in $\mathbb{R}^3$,
it follows that $\mathcal{S}_{\Timefunction}$ is diffeomorphic to $\mathbb{S}^2$.

Finally, we note that the above assumptions imply that for $\Timefunction \in [0,T]$,
we have
\begin{align} \label{E:BOUNDARYOFTRUNCATEDREGION}
	\partial \mathcal{M}_{\Timefunction}
	& := \mbox{\upshape the boundary of } \mathcal{M}_{\Timefunction} \mbox{ \upshape in } \mathbb{R}^{1+3}
		= \widetilde{\Sigma}_0 \cup \widetilde{\Sigma}_{\Timefunction} \cup \underline{\mathcal{H}}_{\Timefunction}.
	\end{align}

See Example~\ref{EX:EXAMPLEOFDOMAIN} below for a canonical example of a family of spacetime regions that satisfy our assumptions:
truncated backwards sound cones.

\subsection{\texorpdfstring{The vectorfields $\tophypnorm$, $\modtophypnorm$, $\sidehypnorm$, $\spherenormal$, $\gen$, and $\modgen$,
and the scalar function $\lapsemodgen$}{Geometric vectorfields and conventions in the null case}}
In this subsection, we define a collection of geometric vectorfields associated to $\mathcal{M}$. 
We also introduce alternate notation that we often use when the lateral boundary $\underline{\mathcal{H}}$ is $\gfour$-null.

\begin{definition}[The vectorfields $\tophypnorm$, $\modtophypnorm$, $\sidehypnorm$, $\spherenormal$, $\gen$, and $\modgen$,
and the scalar function $\lapsemodgen$]
\label{D:HYPNORMANDSPHEREFORMDEFS}
We define $\tophypnorm$ to be the vectorfield that is $\gfour$-orthogonal to $\widetilde{\Sigma}_{\Timefunction}$ and
normalized by 
\begin{align} \label{E:FUTURENORMALTOTOPHYPERSURFACE}
	\tophypnorm t & = 1.
\end{align} 
Note that $\tophypnorm$ is $\gfour$-timelike since $\widetilde{\Sigma}_{\Timefunction}$ is $\gfour$-spacelike by assumption.

We next define $\modtophypnorm$ to be the vectorfield that is $\gfour$-orthogonal to $\widetilde{\Sigma}_{\Timefunction}$
(i.e., parallel to $\tophypnorm$) and
normalized by
\begin{align} \label{E:TOPHYPNORMNORMALIZEDAGAINSTTIMEFUNCTION}
	\modtophypnorm \Timefunction
	& = 1,
\end{align}
where $\Timefunction$ is the acoustical time function from the beginning of Sect.\,\ref{S:SPACETIMEDOMAINS}.

We define $\sidehypnorm$ to be the $\gfour$-normal to $\underline{\mathcal{H}}$,
normalized by 
\begin{align} \label{E:FUTURENORMALTOHYPERSURFACE}
	\sidehypnorm t & = 1.
\end{align} 

We define $\spherenormal$ to be the $\gfour$-unit outer normal
	to $\mathcal{S}_{\Timefunction}$ in $\widetilde{\Sigma}_{\Timefunction}$. In particular, 
	$\spherenormal$ is tangent to $\widetilde{\Sigma}_{\Timefunction}$, $\gfour$-normal to $\mathcal{S}_{\Timefunction}$,
	and satisfies
	\begin{align}  \label{E:SPHERENORMALISUNITLENGTH}
		\gfour(\spherenormal,\spherenormal)
		& = 1.
	\end{align}
	
	We define $\gen$ to be the vectorfield that is tangent to $\underline{\mathcal{H}}$,
	$\gfour$-orthogonal to $\mathcal{S}_{\Timefunction}$,
	and normalized by 
	\begin{align} \label{E:GENERATOROFHYPERSURFACE}
		\gen t = 1.
	\end{align} 
	
	Finally, we define\footnote{Since $\gen$ and the $\gfour$-dual of $- \Dfour \Timefunction$ are both future-directed in the sense of Footnote~\ref{FN:FUTUREDIRECTED}
	(the former by \eqref{E:GENERATOROFHYPERSURFACE} and the latter by assumption),
	it follows that $\gen \Timefunction > 0$.}
	the scalar function
	\begin{align} \label{E:LAPSEFORMODGENCORRESPONDINGTOTIMEFUNCTION}
		\lapsemodgen
		& := \frac{1}{\gen \Timefunction}
	\end{align}
	and the vectorfield
	\begin{align} \label{E:NORMALIZEDAGAINSTTIMEFUNCTIONGENERATOROFHYPERSURFACE}
		\modgen
		& := \lapsemodgen \gen. 
	\end{align}
	
\end{definition}

Note that by \eqref{E:TRANSPORTONEFORMIDENTITY}, 
\eqref{E:FUTURENORMALTOTOPHYPERSURFACE} is equivalent to
\begin{align} \label{E:EQUIVALENTFUTURENORMALTOTOPHYPERSURFACE}
	\gfour(\tophypnorm,\Transport) & = -1,
\end{align}
\eqref{E:FUTURENORMALTOHYPERSURFACE} is equivalent to
\begin{align} \label{E:EQUIVALENTFUTURENORMALTOHYPERSURFACE}
	\gfour(\sidehypnorm,\Transport) & = -1,
\end{align} 
and \eqref{E:GENERATOROFHYPERSURFACE} is equivalent to
\begin{align} \label{E:EQUIVALENTGENERATOROFHYPERSURFACE}
	\gfour(\gen,\Transport) = -1.
\end{align} 

Note also that
	\eqref{E:LAPSEFORMODGENCORRESPONDINGTOTIMEFUNCTION}-\eqref{E:NORMALIZEDAGAINSTTIMEFUNCTIONGENERATOROFHYPERSURFACE}
imply that
\begin{align} \label{E:SIDEGENERATORNORMALIZEDAGAINSTTIMEFUNCTION}
	\modgen \Timefunction
	& = 1.
\end{align}

We also note that since the acoustical time function $\tau$ is constant along each hypersurface $\widetilde{\Sigma}_{\Timefunction}$, 
it follows from \eqref{E:TRANSPORTONEFORMIDENTITY} and \eqref{E:FUTURENORMALTOTOPHYPERSURFACE} that
\begin{align} \label{E:TOPHYPNORMINTERMSOFGRADIENTOFTIMEFUNCTION}
	\tophypnorm^{\alpha}
	& = \frac{(\gfour^{-1})^{\alpha \beta} \partial_{\beta} \Timefunction}{(\gfour^{-1})^{\kappa \lambda} \partial_{\kappa} t \partial_{\lambda} \Timefunction}
	=
	\frac{- (\gfour^{-1})^{\alpha \beta} \partial_{\beta} \Timefunction}{\Transport \Timefunction}.
\end{align}

\begin{remark}
	For setting up the geometry, 
	we find it convenient to normalize various vectorfields with respect to
	Cartesian time $t$, as we did in 
	\eqref{E:FUTURENORMALTOTOPHYPERSURFACE},
	\eqref{E:FUTURENORMALTOHYPERSURFACE},
	and \eqref{E:GENERATOROFHYPERSURFACE}. Nonetheless, our geometric identities will be able 
	to accommodate foliations of spacetime regions with respect to arbitrary smooth acoustical time functions $\Timefunction$.
\end{remark}

\begin{convention}[$\underline{\mathcal{N}}$ vs.\, $\underline{\mathcal{H}}$ and $\uLunit$ vs.\, $\sidehypnorm$]	
	\label{C:NULLCASE}
	If the lateral hypersurface $\underline{\mathcal{H}}$ is $\gfour$-null, we often
	refer to this as the ``null case.''
	In the null case, we often use the
	alternate notation $\underline{\mathcal{N}}$ in place of $\underline{\mathcal{H}}$, 
	$\underline{\mathcal{N}}_{\Timefunction}$ in place of $\underline{\mathcal{H}}_{\Timefunction}$,
	etc. Moreover, in the null case, we often use the notation
	\begin{align} \label{E:ULNIT}
		\uLunit
	\end{align}
	in place of $\sidehypnorm$ since in Lorentzian geometry, $\uLunit$ is common notation for an ``ingoing'' null vector
	(where $\uLunit$ will be ``ingoing'' thanks to our assumptions in Subsect.\,\ref{SS:ASSUMPTIONSONSPACETIMEREGION}).
\end{convention}

\subsection{The positivity of $\uposinnerproduct$, $\seconduposinnerproduct$, and $\lapsemodgen$ and some consequences}
\label{SS:ASSUMPTIONSONSPACETIMEREGION}
In this subsection, we introduce the scalar functions $\uposinnerproduct$ and $\seconduposinnerproduct$,
whose assumed positivity, together with the positivity of $\lapsemodgen$ (see \eqref{E:POSITIVITYOFLAPSEMODGEN}), 
is crucial for all of our main results. We also discuss some geometric and topological consequences of 
the positivity.

Specifically, we make the following assumptions on $\mathcal{M}$.
\begin{itemize}
	\item If $\underline{\mathcal{H}}$ is either $\gfour$-spacelike or $\gfour$-null, 
		then we assume that the scalar functions 
	$\uposinnerproduct$
	and
	$\seconduposinnerproduct$
	are such that
		\begin{subequations}
		\begin{align} \label{E:INGOINGCONDITION}
			\gfour(\spherenormal,\sidehypnorm) 
			& : = - \uposinnerproduct,
			&
			&
			\uposinnerproduct
			> 0,
				\\
			\gfour(\gen,\tophypnorm) 
			& := - \seconduposinnerproduct,
			&
			&
			\seconduposinnerproduct
			> 0.
			\label{E:SECONDINGOINGCONDITION}
		\end{align}
		\end{subequations}
		Note that \eqref{E:EQUIVALENTFUTURENORMALTOHYPERSURFACE} and the fact that 
		$\Transport \Timefunction > 0$ (see Footnote~\ref{FN:TIMEFUNCTIONPASTDIRECTEDGRADIENT}) 
		together imply that $\sidehypnorm \Timefunction > 0$,  
		i.e., along each sphere $\mathcal{S}_{\Timefunction} = \underline{\mathcal{H}} \cap \widetilde{\Sigma}_{\Timefunction}$,
		$\sidehypnorm$
		(which by definition is $\gfour$-orthogonal to $\underline{\mathcal{H}}$) 
		points to the future of $\widetilde{\Sigma}_{\Timefunction}$.
		The assumption \eqref{E:INGOINGCONDITION} is tantamount to the assumption that
		$\underline{\mathcal{H}}$ is in fact \emph{ingoing} to the future in the sense that when $\underline{\mathcal{H}}$ is $\gfour$-spacelike,
		$\sidehypnorm$ 
		points outward to $\mathcal{M}$. 
		To explain why $\sidehypnorm$ is outward-pointing when $\underline{\mathcal{H}}$ is $\gfour$-spacelike, we first note that
		at a given point $q \in \underline{\mathcal{H}}$, 
		the set of vectors belonging to the tangent space of spacetime at $q$ 
		that are \emph{not} tangent to $\underline{\mathcal{H}}$
		is equal to the disjoint union of two connected components:
		$\lbrace \mathbf{X} \ | \ \gfour(\mathbf{X},\sidehypnorm) < 0 \rbrace \cup \lbrace \mathbf{X} \ | \ \gfour(\mathbf{X},\sidehypnorm) > 0 \rbrace$.
		One of these components is the set of inward-pointing vectors to $\mathcal{M}$ at $q$,
		and the other is the set of outward-pointing vectors to $\mathcal{M}$ at $q$.
		Since $\sidehypnorm$ is $\gfour$-timelike by assumption (and thus $\gfour(\sidehypnorm,\sidehypnorm) < 0$), \eqref{E:INGOINGCONDITION} guarantees that
		$\sidehypnorm$ and $\spherenormal$ belong to the same connected component.
		Thus, since along each sphere $\mathcal{S}_{\Timefunction}$,
		$\spherenormal$ points outward to $\mathcal{M}$ by assumption, we conclude that when $\underline{\mathcal{H}}$ is $\gfour$-spacelike,
		$\sidehypnorm$ also points outward to $\mathcal{M}$.
		Similarly, since $\tophypnorm$ points outward to $\mathcal{M}_{\Timefunction}$ along each point in $\widetilde{\Sigma}_{\Timefunction}$,
		\eqref{E:SECONDINGOINGCONDITION}
		is tantamount to the assumption that along each sphere $\mathcal{S}_{\Timefunction} \subset \widetilde{\Sigma}_{\Timefunction}$,
		the generator $\gen$ of $\underline{\mathcal{H}}$ (which is tangent to $\underline{\mathcal{H}}$)
		also points outward to $\mathcal{M}_{\Timefunction}$. 
\end{itemize}

We next observe that 
\eqref{E:LAPSEFORMODGENCORRESPONDINGTOTIMEFUNCTION},
\eqref{E:TOPHYPNORMINTERMSOFGRADIENTOFTIMEFUNCTION},
\eqref{E:SECONDINGOINGCONDITION},
and the assumption $\Transport \Timefunction > 0$ (see Footnote~\ref{FN:TIMEFUNCTIONPASTDIRECTEDGRADIENT})
imply that
\begin{align} \label{E:POSITIVITYOFLAPSEMODGEN}
		\lapsemodgen
		& =
		\frac{1}
		{\gen \Timefunction}
		= \frac{1}{- \gfour(\gen,\tophypnorm) \Transport \Timefunction}
		= \frac{1}{\seconduposinnerproduct \Transport \Timefunction}
		> 0.
\end{align}

From the perspective of analysis, the positivity of $\lapsemodgen$, $\uposinnerproduct$, and $\seconduposinnerproduct$
is important for the coerciveness of some key terms in our integral identities;
see, for example, the $\mathcal{S}_{\Timefunction}$ integrals 
on LHSs~\eqref{E:SPACETIMEREMARKABLEIDENTITYSPECIFICVORTICITY} and \eqref{E:SPACETIMEREMARKABLEIDENTITYENTROPYGRADIENT}.

\begin{example}[Canonical examples: truncated backwards sound cones]
	\label{EX:EXAMPLEOFDOMAIN}
	Here we provide canonical examples of spacetime regions $\mathcal{M}$ with $\gfour$-null lateral boundaries
	to which the results of this paper apply; 
	such regions arise in the study of shock formation.
	For convenience, we consider the case in which the acoustical time function $\Timefunction$ is equal to the Cartesian time function $t$.
	Let $\mathcal{S}_0$ be any embedded two-dimensional submanifold of $\Sigma_0$ that is diffeomorphic to $\mathbb{S}^2$
	(for example, $\mathcal{S}_0$ could be equal to $\mathbb{S}^2 \subset \Sigma_0 \simeq \mathbb{R}^3$). 
	At each $q \in \mathcal{S}_0$, there is a unique vector
	$\underline{\ell}_q \in T_q \mathcal{S}_0$ that is $\gfour$-null,
	future-directed,
	$\gfour$-orthogonal to $\mathcal{S}_0$, 
	and inwards-pointing in the sense that
	its $\gfour$-orthogonal projection onto $\Sigma_0$ points inwards to $\mathcal{S}_0$.
	Next, for each fixed $q \in \mathcal{S}_0$, 
	we construct the null geodesic curve $\upgamma_q : I_q \rightarrow \mathbb{R}^{1+3}$
	with initial data $\upgamma_q(0) = q$ and $\dot{\upgamma}_q(0) = \underline{\ell}_q$,
	where $I_q = [0,A_q]$ is $q$-dependent interval of parameter-time. 
	That is, with $\upgamma_q = \upgamma_q(\uplambda)$,
	$\dot{\upgamma}_q(\uplambda) := \frac{d}{d \uplambda} \upgamma_q(\uplambda)$,
	and with $\Dfour$ denoting the Levi--Civita connection of $\gfour$ (see Subsect.\,\ref{SS:LEVICIVITACONNECTIONS}),
	we solve the geodesic equation 
	$\Dfour_{\dot{\upgamma}_q} \dot{\upgamma}_q = 0$
	with initial conditions $\upgamma_q(0) = q$
	and $\dot{\upgamma}_q(0) = \underline{\ell}_q$.
	Assuming that the compressible Euler solution (on which $\gfour$ depends) is smooth, 
	standard existence and uniqueness theory for ODEs with parameter-dependent initial conditions
	and the compactness of $\mathcal{S}_0$
	together imply that the interval $I_q$ can be chosen to be uniform over $q$ (let's refer to the uniform interval as ``$I$''),
	that there is a $T > 0$ such that $\min_{q \in \mathcal{S}_0} \max_{\uplambda \in I} \upgamma_q^0(\uplambda) > T$
	(where $\upgamma_q^0(\uplambda)$ is the Cartesian time component of the point $\upgamma_q(\uplambda)$),
	and such that the set $\underline{\mathcal{H}} := \lbrace \upgamma_q(I) \ | \ q \in \mathcal{S}_0 \rbrace \cap [0,T] \times \mathbb{R}^3$
	is an embedded three-dimensional manifold-with-boundary.
	Moreover, the results of \cite{mDcLgMjS2019}*{Section~9} 
	can be used to show\footnote{More precisely, \cite{mDcLgMjS2019}*{Section~9} addressed the existence of outgoing $\gfour$-null cones
	emanating from a point, but the results can readily be extended so as to apply to the present example.}
	that $\underline{\mathcal{H}}$ is $\gfour$-null, 
	and that $\underline{\mathcal{H}}$ is the lateral boundary of a region of $\mathcal{M}$
	(in this case a truncated backwards sound cone with a flat top and bottom) 
	satisfying the assumptions stated in Subsect.\,\ref{SS:DOMAINANDTIMEFUNCTIONETC}
	(where for this example, $\Timefunction = t$).
\end{example}

\begin{remark}[The results could be extended to substantially more general spacetime regions]
	\label{R:MOREGENERALDOMAINS}
	The results of this paper could be extended to apply to substantially more general spacetime regions $\mathcal{M}$,
	and it is only for convenience and concreteness that we have made the precise assumptions stated above.
	For example, $\mathcal{M}$ need not be compact and could ``extend to spatial infinity'' 
	(e.g., $\mathcal{M}$ could be the portion of the exterior of an outgoing sound cone that lies in between two constant-time hyperplanes).
	The most crucial assumptions are that the lateral boundary $\underline{\mathcal{H}}$ is $\gfour$-spacelike or $\gfour$-null,
	that positivity properties in the spirit of \eqref{E:INGOINGCONDITION}-\eqref{E:SECONDINGOINGCONDITION} hold
	(these are needed to ensure the coerciveness of our integral identities),
	and that $\mathcal{S}_{\Timefunction} = \underline{\mathcal{H}} \cap \widetilde{\Sigma}_{\Timefunction}$ is a closed manifold
	(this last assumption is helpful in the sense that it guarantees that no boundary terms occur when we integrate by parts over $\mathcal{S}_{\Timefunction}$ in the 
	first step of the proof of Prop.\,\ref{P:STRUCTUREOFERRORINTEGRALS}).
\end{remark}

\begin{remark}[The regions $\mathcal{M}_T$ are acoustically globally hyperbolic]
	\label{R:ACOUSTICALLYGLOBALLYHYPERBOLIC}
	Although it is not directly needed in the paper,
	we can now explain why the spacetime regions $\mathcal{M} = \mathcal{M}_T$ that we study are globally hyperbolic with respect to
	the acoustical metric $\gfour$. That is, we will show that $\widetilde{\Sigma}_0$ is a Cauchy hypersurface in $\mathcal{M}_T$.
	We will consider in detail the case where the lateral boundary $\mathcal{H}_T$ is $\gfour$-spacelike; 
	the $\gfour$-null case can be addressed using similar arguments.
	More precisely, we will show that every past-inextendible future-directed $\gfour$-causal curve
	$\upgamma$ contained in $\mathcal{M}_T$
	must intersect $\widetilde{\Sigma}_0$;
	see \cite{rW1984}*{Chapter~8} for background material on causality, and note that causal curves do not have to be differentiable.
	We recall that we are considering only smooth fluid solutions (see Remark~\ref{R:SMOOTHNESSNOTNEEDED}),
	and thus $\Transport$, $\gfour$, etc.\ are smooth on $\mathcal{M}_T$.
	We can assume that $\upgamma$ is parametrized by\footnote{We can assume this because
	$\upgamma$ is $\gfour$-causal
	and because 
	\eqref{E:MATERIALVECTORVIELDRELATIVECTORECTANGULAR}
	and
	\eqref{E:INVERSEACOUSTICALMETRIC} imply that the gradient of $t$ is $\gfour$-timelike.} 
	$t$, that is,
	there exists an interval $I$ such that
	the domain of $\upgamma$ is $I$ and such that for $t \in I$,
	$\upgamma^0(t) = t$
	and $\upgamma(t) \in \mathcal{M}_T$.
	We argue by contradiction, assuming that $\upgamma$ has a past endpoint
	$q \in \mathcal{M}_T$ such that $q \notin \widetilde{\Sigma}_0$.
	It is straightforward to see that $\upgamma$ can be continuously extended so that $q = \upgamma(a)$,
	where $a$ is the left-endpoint of the closure of $I$,
	and that $\upgamma(a)$ must be a boundary point
	of $\mathcal{M}_T$ not lying in $\widetilde{\Sigma}_0$,
	that is,
	$\upgamma(a) \in (\mathcal{H}_T \backslash \widetilde{\Sigma}_0) \cup \widetilde{\Sigma}_T$.
	From the discussion just below \eqref{E:INGOINGCONDITION},
	we see that if $\upgamma(a) \in \mathcal{H}_T \backslash \widetilde{\Sigma}_0$,
	then $\sidehypnorm|_{\upgamma(a)}$ points outward to $\mathcal{M}_T$ at $\upgamma(a)$.
	It follows that there is an $\epsilon > 0$ such that
	we can extend $\upgamma$ as a causal curve such that relative
	to the Cartesian coordinates, 
	$\dot{\upgamma}^{\alpha}(t) \equiv \sidehypnorm^{\alpha}|_{\upgamma(a)}$ for $t \in [a-\epsilon,a]$.
	This contradicts the assumption that $\upgamma$ is inextendible.
	Similarly, from the discussion just below \eqref{E:INGOINGCONDITION}, we see that
	 if $\upgamma(a) \in \widetilde{\Sigma}_T$,
	then $\tophypnorm|_{\upgamma(a)}$ points outward to $\mathcal{M}_T$ at $\upgamma(a)$,
	and there is an $\epsilon > 0$ such that we can extend $\upgamma$ as a causal curve such that relative
	to the Cartesian coordinates, 
	$\dot{\upgamma}^{\alpha}(t) \equiv \tophypnorm^{\alpha}|_{\upgamma(a)}$ for $t \in [a-\epsilon,a]$,
	again contradicting the assumption that $\upgamma$ is inextendible.
	In total, we have shown that
	$\upgamma(a) \in \widetilde{\Sigma}_0$
	as desired.
	
\end{remark}

\subsection{Additional geometric quantities associated to $\mathcal{M}$}
\label{SS:ADDITIONALGEOMETRICQUANTITIES}
In this subsection, we define some additional geometric quantities 
that we use to prove our main results,
and we prove a simple lemma that yields some identities.

\begin{definition}[$\lengthofgen$, $\lengthofmodgen$, $\lengthoftophypnorm$, $\lengthofmodtophypnorm$, $\lengthofsidehypnorm$, 
$\hat{\tophypnorm}$, and $\hat{\sidehypnorm}$]
\label{D:LENGTHOFVARIOUSVECTORFIELDSETC}
We define $\lengthofgen$ to be the following scalar function,
which is positive\footnote{The identity \eqref{E:NORMALISGENERATORINNULLCASE} implies that $\lengthofgen = 0$ 
when $\sidehypnorm$ is $\gfour$-null.} 
when $\sidehypnorm$ is $\gfour$-timelike (because in this case 
$\underline{\mathcal{H}}$ is $\gfour$-spacelike and thus
$\gen$ is $\gfour$-spacelike
with $\gfour(\gen,\gen) > 0$): 
\begin{align} \label{E:LENGTHOFSIDEGEN}
	\lengthofgen
	& := \sqrt{\gfour(\gen,\gen)}.
\end{align}

Similarly, we define $\lengthofmodgen$ to be the following scalar function,
which is positive when $\underline{\mathcal{H}}$ is $\gfour$-spacelike:
\begin{align} \label{E:LENGTHOFMODSIDEGEN}
	\lengthofmodgen
	& := \sqrt{\gfour(\modgen,\modgen)}.
\end{align}

In addition,
we define $\lengthoftophypnorm > 0$ to be the following scalar function,
which is positive because $\widetilde{\Sigma}_{\Timefunction}$ is $\gfour$-spacelike:
\begin{align} \label{E:LENGTHOFTOPHYPNORM}
	\lengthoftophypnorm
	& := \sqrt{-\gfour(\tophypnorm,\tophypnorm)}.
\end{align}

Similarly, we define $\lengthofmodtophypnorm > 0$ by
\begin{align} \label{E:LENGTHOFTOPHYPNORMNORMALIZEDAGAINSTTIMEFUNCTION}
	\lengthofmodtophypnorm
	& := \sqrt{- \gfour(\modtophypnorm,\modtophypnorm)}. 
\end{align}

Next,
we define $\lengthofsidehypnorm$ to be the following scalar function,
which is positive when $\sidehypnorm$ is $\gfour$-timelike
and vanishing when $\sidehypnorm$ is $\gfour$-null:
\begin{align} \label{E:LENGTHOFHYPNORM}
	\lengthofsidehypnorm
	& := \sqrt{-\gfour(\sidehypnorm,\sidehypnorm)}.
\end{align}

In addition, we define $\hat{\tophypnorm}$ to be the following vectorfield
($\hat{\tophypnorm}$ is the $\gfour$-unit future-directed (see Footnote~\ref{FN:FUTUREDIRECTED}) 
normal to $\widetilde{\Sigma}_{\Timefunction}$):
	\begin{align} \label{E:TOPUNITHYPERSURFACENORMAL}
		\hat{\tophypnorm}^{\alpha}
		& := \frac{\tophypnorm^{\alpha}}{\lengthoftophypnorm}.
	\end{align}	
 
Finally, when $\sidehypnorm$ is $\gfour$-timelike, we define $\hat{\sidehypnorm}$ to be the following vectorfield
($\hat{\sidehypnorm}$ is the $\gfour$-unit future-directed normal to $\underline{\mathcal{H}}$):
	\begin{align} \label{E:UNITHYPERSURFACENORMAL}
		\hat{\sidehypnorm}^{\alpha}
		& := \frac{\sidehypnorm^{\alpha}}{\lengthofsidehypnorm}.
	\end{align}	

\end{definition}

\begin{lemma}[Some convenient identities]
	\label{L:SOMECONVENIENTIDENTITIES}
	Assume that $\underline{\mathcal{H}}$ is $\gfour$-spacelike,
	let $\lengthoftophypnorm > 0$ be the scalar function defined in \eqref{E:LENGTHOFTOPHYPNORM},
	let $\lengthofsidehypnorm > 0$ be the scalar function defined in \eqref{E:LENGTHOFHYPNORM},
	let $\hat{\tophypnorm}$ be the vectorfield defined in \eqref{E:TOPUNITHYPERSURFACENORMAL},
	and let $\hat{\sidehypnorm}$ be the vectorfield defined in \eqref{E:UNITHYPERSURFACENORMAL}.
	Then the following identities hold:
	\begin{align} 
	\gfour(\Transport,\hat{\tophypnorm}) 
	& 
	= - \frac{1}{\lengthoftophypnorm},
		\label{E:INNERPRODUCTOFTRANPORTANDFUTUREUNITNORMALTOTOPHYPERSURFACE} \\
	\gfour(\Transport,\hat{\sidehypnorm}) 
	& 
	= - \frac{1}{\lengthofsidehypnorm}.
	\label{E:INNERPRODUCTOFTRANPORTANDFUTUREUNITNORMALTOHYPERSURFACE}
\end{align} 

Moreover, let $\sidehypnorm$, $\gen$, and $\tophypnorm$
be the vectorfields from Def.\,\ref{D:HYPNORMANDSPHEREFORMDEFS},
let $\lengthofgen \geq 0$ be the scalar function from Def.\,\ref{D:LENGTHOFVARIOUSVECTORFIELDSETC},
and let $\seconduposinnerproduct > 0$ be the scalar function defined in \eqref{E:SECONDINGOINGCONDITION}.
Then the following identity holds:
\begin{align} \label{E:SIDEHYPNORMINTERMSOFGENANDTOPHYPNORM}
	\sidehypnorm
	& = 
		\frac{\seconduposinnerproduct}{\seconduposinnerproduct + \lengthofgen^2} \gen
		+
		\frac{\lengthofgen^2}{\seconduposinnerproduct + \lengthofgen^2} \tophypnorm.
\end{align}

In addition,
\begin{align} \label{E:INNERPRODUCTOFSIDENORMANDTOPNORM}
	\gfour(\sidehypnorm,\tophypnorm)
	& = - 
			\frac{\seconduposinnerproduct^2 + \lengthofgen^2 \lengthoftophypnorm^2}{\seconduposinnerproduct + \lengthofgen^2}.
\end{align}

Furthermore,
\begin{align} \label{E:RATIOLENGTHOFSIDEHYPNORMLENGTHOFGEN}
	\frac{\lengthofsidehypnorm^2}{\lengthofgen^2}
	& = \frac{\lengthofgen^2 \lengthoftophypnorm^2 + \seconduposinnerproduct^2}{(\seconduposinnerproduct + \lengthofgen^2)^2}.
\end{align}
	
Finally, in the $\gfour$-null case (i.e., $\underline{\mathcal{H}} = \underline{\mathcal{N}}$ and $\uLunit = \sidehypnorm$),
we have
\begin{align} \label{E:NORMALISGENERATORINNULLCASE}
	\uLunit
	& = \gen.
\end{align}

\end{lemma}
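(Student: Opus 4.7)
The proof will consist of four largely independent computations, all of which reduce to linear algebra in the tangent space at a single point, with no analysis beyond the definitions in Def.\,\ref{D:HYPNORMANDSPHEREFORMDEFS} and Def.\,\ref{D:LENGTHOFVARIOUSVECTORFIELDSETC} together with the positivity assumptions \eqref{E:INGOINGCONDITION}-\eqref{E:SECONDINGOINGCONDITION}. The identities \eqref{E:INNERPRODUCTOFTRANPORTANDFUTUREUNITNORMALTOTOPHYPERSURFACE} and \eqref{E:INNERPRODUCTOFTRANPORTANDFUTUREUNITNORMALTOHYPERSURFACE} are immediate: dividing \eqref{E:EQUIVALENTFUTURENORMALTOTOPHYPERSURFACE} (respectively \eqref{E:EQUIVALENTFUTURENORMALTOHYPERSURFACE}) by the positive scalar $\lengthoftophypnorm$ (respectively $\lengthofsidehypnorm$) and recalling the definitions \eqref{E:TOPUNITHYPERSURFACENORMAL} and \eqref{E:UNITHYPERSURFACENORMAL} yields the claims.

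The main step is \eqref{E:SIDEHYPNORMINTERMSOFGENANDTOPHYPNORM}. Since $\sidehypnorm$ is $\gfour$-normal to the three-dimensional hypersurface $\underline{\mathcal{H}}$, it is determined up to a scalar by the three conditions of being $\gfour$-orthogonal to $\gen$ and to every vector tangent to $\mathcal{S}_{\Timefunction}$. The plan is to look for $\sidehypnorm$ in the form $A\gen + B\tophypnorm$: this ansatz is automatically $\gfour$-orthogonal to $\mathcal{S}_{\Timefunction}$, because $\gen$ is $\gfour$-orthogonal to $\mathcal{S}_{\Timefunction}$ by Def.\,\ref{D:HYPNORMANDSPHEREFORMDEFS} and $\tophypnorm$ is $\gfour$-orthogonal to all of $\widetilde{\Sigma}_{\Timefunction} \supset \mathcal{S}_{\Timefunction}$. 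The remaining two scalar conditions are the orthogonality relation $\gfour(A\gen+B\tophypnorm,\gen) = A\lengthofgen^{2} - B\seconduposinnerproduct = 0$ and the normalization $(A\gen + B\tophypnorm) t = A + B = 1$, where I have used \eqref{E:LENGTHOFSIDEGEN}, \eqref{E:SECONDINGOINGCONDITION}, \eqref{E:FUTURENORMALTOTOPHYPERSURFACE}, and \eqref{E:GENERATOROFHYPERSURFACE}. Solving this two-by-two linear system produces exactly the coefficients in \eqref{E:SIDEHYPNORMINTERMSOFGENANDTOPHYPNORM}, with the denominator $\seconduposinnerproduct + \lengthofgen^{2}$ being strictly positive thanks to \eqref{E:SECONDINGOINGCONDITION}; uniqueness of the $\gfour$-normal satisfying $\sidehypnorm t = 1$ finishes this step. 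A small point to be careful about is that \eqref{E:SIDEHYPNORMINTERMSOFGENANDTOPHYPNORM} is stated under the assumption that $\underline{\mathcal{H}}$ is $\gfour$-spacelike, which is exactly when $\lengthofgen > 0$ and the decomposition $\lbrace \gen, \tophypnorm \rbrace$ is linearly independent.

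With \eqref{E:SIDEHYPNORMINTERMSOFGENANDTOPHYPNORM} in hand, the identities \eqref{E:INNERPRODUCTOFSIDENORMANDTOPNORM} and \eqref{E:RATIOLENGTHOFSIDEHYPNORMLENGTHOFGEN} are routine. For \eqref{E:INNERPRODUCTOFSIDENORMANDTOPNORM} I simply substitute the decomposition and use $\gfour(\gen,\tophypnorm) = -\seconduposinnerproduct$ and $\gfour(\tophypnorm,\tophypnorm) = -\lengthoftophypnorm^{2}$ (from \eqref{E:SECONDINGOINGCONDITION} and \eqref{E:LENGTHOFTOPHYPNORM}). For \eqref{E:RATIOLENGTHOFSIDEHYPNORMLENGTHOFGEN}, I compute $\gfour(\sidehypnorm,\sidehypnorm) = B\,\gfour(\sidehypnorm,\tophypnorm)$, having exploited the already-established orthogonality $\gfour(\sidehypnorm,\gen) = 0$ to kill the other term, and then insert \eqref{E:INNERPRODUCTOFSIDENORMANDTOPNORM} and the formula for $B$; taking the negative, using definition \eqref{E:LENGTHOFHYPNORM}, and dividing by $\lengthofgen^{2}$ gives the claim.

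The null case \eqref{E:NORMALISGENERATORINNULLCASE} requires a separate geometric argument because the decomposition of the previous paragraph degenerates. The key observation is that a $\gfour$-null vector that is $\gfour$-normal to a null hypersurface is necessarily also tangent to it, so $\uLunit$ is tangent to $\underline{\mathcal{N}}$. Writing $\gen = a\uLunit + X$ for some $X \in T\mathcal{S}_{\Timefunction}$ (using that $\lbrace \uLunit, T\mathcal{S}_{\Timefunction} \rbrace$ spans $T\underline{\mathcal{N}}$), the defining $\gfour$-orthogonality of $\gen$ to $\mathcal{S}_{\Timefunction}$ combined with $\uLunit \perp T\mathcal{S}_{\Timefunction}$ forces $\gfour(X,Y) = 0$ for all $Y \in T\mathcal{S}_{\Timefunction}$; since $\mathcal{S}_{\Timefunction}$ is $\gfour$-spacelike, $\gfour$ is positive definite on $T\mathcal{S}_{\Timefunction}$ and hence $X = 0$. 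Comparing the normalizations \eqref{E:EQUIVALENTGENERATOROFHYPERSURFACE} and \eqref{E:EQUIVALENTFUTURENORMALTOHYPERSURFACE} via an inner product with $\Transport$ pins down $a = 1$, so $\gen = \uLunit$. No real obstacle arises in any of these steps; the only subtle point is recognizing that the spacelike formulas break down precisely in the null limit $\lengthofgen \to 0$ and must be replaced by the null-hypersurface argument just sketched.
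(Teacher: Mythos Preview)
Your proof is correct and follows essentially the same approach as the paper's: the first two identities are immediate from the definitions, \eqref{E:SIDEHYPNORMINTERMSOFGENANDTOPHYPNORM} is obtained by writing $\sidehypnorm = A\gen + B\tophypnorm$ and solving the two linear constraints (orthogonality to $\gen$ and the normalization $\cdot\, t = 1$, equivalently the inner product with $\Transport$), the next two identities follow by direct substitution, and \eqref{E:NORMALISGENERATORINNULLCASE} follows from the standard fact that a null normal is tangent plus the uniqueness of $\gen$. Your computation of $\lengthofsidehypnorm^{2}$ via $\gfour(\sidehypnorm,\sidehypnorm) = B\,\gfour(\sidehypnorm,\tophypnorm)$ is a minor shortcut over the paper's direct expansion, but otherwise the arguments coincide.
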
	

\begin{proof}
	\eqref{E:INNERPRODUCTOFTRANPORTANDFUTUREUNITNORMALTOTOPHYPERSURFACE} 
	is a simple consequence of \eqref{E:EQUIVALENTFUTURENORMALTOTOPHYPERSURFACE} and \eqref{E:TOPUNITHYPERSURFACENORMAL}.
	Similarly,
	\eqref{E:INNERPRODUCTOFTRANPORTANDFUTUREUNITNORMALTOHYPERSURFACE}
	is a simple consequence of \eqref{E:EQUIVALENTFUTURENORMALTOHYPERSURFACE} and \eqref{E:UNITHYPERSURFACENORMAL}.
	
	To prove \eqref{E:SIDEHYPNORMINTERMSOFGENANDTOPHYPNORM},
	we first note that $\gen$ and $\tophypnorm$ belong to the $\gfour$-orthogonal complement of $\mathcal{S}_{\Timefunction}$
	and are linearly independent
	(since $\tophypnorm$ is $\gfour$-timelike while $\gen$ is not).
	It follows that the two-dimensional subspace 
	$\mbox{\upshape span} \lbrace \gen, \tophypnorm \rbrace$
	is the $\gfour$-orthogonal complement of $\mathcal{S}_{\Timefunction}$.
	Therefore, since $\sidehypnorm$ is also $\gfour$-orthogonal to $\mathcal{S}_{\Timefunction}$,
	there are scalar functions $a_1$ and $a_2$ such that
	$\sidehypnorm = a_1 \gen + a_2 \tophypnorm$. Taking the $\gfour$-inner product 
	of each side of this equation with respect to $\Transport$
	and using \eqref{E:EQUIVALENTFUTURENORMALTOTOPHYPERSURFACE}-\eqref{E:EQUIVALENTGENERATOROFHYPERSURFACE},
	we find that $a_1 + a_2 =1$.
	Next, taking the $\gfour$-inner product of the identity
	with respect to $\gen$
	and using
	\eqref{E:SECONDINGOINGCONDITION},
	\eqref{E:LENGTHOFSIDEGEN},
	and the relation $\gfour(\sidehypnorm,\gen)=0$,
	we find that
	$0 = a_1 \lengthofgen^2 - a_2 \seconduposinnerproduct$.
	Solving the two equations for $a_1$ and $a_2$, we conclude \eqref{E:SIDEHYPNORMINTERMSOFGENANDTOPHYPNORM}.
	
	To prove \eqref{E:INNERPRODUCTOFSIDENORMANDTOPNORM},
	we simply take the $\gfour$-inner product of each side of \eqref{E:SIDEHYPNORMINTERMSOFGENANDTOPHYPNORM}
	with respect to $\tophypnorm$
	and use 
	\eqref{E:SECONDINGOINGCONDITION}
	and
	\eqref{E:LENGTHOFTOPHYPNORM}.

	To prove \eqref{E:RATIOLENGTHOFSIDEHYPNORMLENGTHOFGEN},
	we take the $\gfour$-inner product of each side of \eqref{E:SIDEHYPNORMINTERMSOFGENANDTOPHYPNORM}
	with respect to itself and use
	\eqref{E:SECONDINGOINGCONDITION},
	\eqref{E:LENGTHOFSIDEGEN},
	\eqref{E:LENGTHOFTOPHYPNORM},
	and
	\eqref{E:LENGTHOFHYPNORM},
	and then carry out straightforward algebraic computations.
	
The identity \eqref{E:NORMALISGENERATORINNULLCASE} holds because in the $\gfour$-null case,
$\uLunit$ is $\gfour$-orthogonal to itself (where $\uLunit$ is alternate notation for $\sidehypnorm$) 
and is therefore $\underline{\mathcal{N}}$-tangent;
thus, $\uLunit$ satisfies all of the conditions from Def.\,\ref{D:HYPNORMANDSPHEREFORMDEFS}
that uniquely define $\gen$.
	
\end{proof}

\subsection{The vectorfields $\utang$ and $\uspecialgen$}
\label{SS:THEVECTORFIELDSTANGANDSPECIALGEN}
The following two vectorfields are featured prominently in the ensuing analysis.

\begin{definition}[The vectorfields $\utang$ and $\uspecialgen$]
	\label{D:SPECIALGENERATOR}
Let $\Transport$ be the vectorfield defined in \eqref{E:MATERIALVECTORVIELDRELATIVECTORECTANGULAR},
let $\spherenormal$ and $\gen$ be the vectorfields from Def.\,\ref{D:HYPNORMANDSPHEREFORMDEFS},
and let $\uposinnerproduct > 0$ and $\seconduposinnerproduct > 0$
be the scalar functions defined in \eqref{E:INGOINGCONDITION}-\eqref{E:SECONDINGOINGCONDITION}.
We define the vectorfield $\utang$ by
		\begin{align} \label{E:STUTANGENTPARTOFTRANSPORT}
			\utang
			& :=
			\Transport
			-
			\frac{1}{\seconduposinnerproduct}
			\gen 
			- 
			\frac{1}{\uposinnerproduct}
			\spherenormal,
		\end{align}
	and the vectorfield $\uspecialgen$ by
	\begin{align} 
		\uspecialgen
		& := 
				\Transport
				-
				\frac{1}{\uposinnerproduct}
				\spherenormal.
				\label{E:SPECIALGENERATOR}
	\end{align}
\end{definition}

\subsection{Algebraic identities in which the sign matters}
\label{SS:ALGEBRAICSIGNMATTERS}
The following lemma provides algebraic identities relating various vectorfields tied to the 
geometry of $\mathcal{M}$. In order for our main results to useful, it is crucial that
the scalar functions $\seconduposinnerproduct$ and $\uposinnerproduct$ 
on RHS~\eqref{E:TRANSPORTDECOMPOSITION} are positive (by assumption -- see Subsect.\,\ref{SS:ASSUMPTIONSONSPACETIMEREGION}).
The positivity is in particular necessary for the coercivity of the boundary terms in
our main integral identities \eqref{E:SPACETIMEREMARKABLEIDENTITYSPECIFICVORTICITY}-\eqref{E:SPACETIMEREMARKABLEIDENTITYENTROPYGRADIENT}.

\begin{lemma}[Properties of $\utang$ and connections between $\Transport$, $\spherenormal$, $\gen$, $\uspecialgen$, and $\utang$]
\label{L:KEYIDBETWEENVARIOUSVECTORFIELDS}
The vectorfield $\utang$ from Def.\,\ref{D:SPECIALGENERATOR}
is $\mathcal{S}_{\Timefunction}$-tangent,
while the vectorfield $\uspecialgen$ from Def.\,\ref{D:SPECIALGENERATOR} is $\underline{\mathcal{H}}$-tangent.
Moreover, the following identities hold
along $\underline{\mathcal{H}}$:
\begin{align} \label{E:SPECIALGENERATORIDENTITY}
	\uspecialgen
	& = \frac{1}{\seconduposinnerproduct}
			\gen
			+ 
			\utang,
\end{align}

\begin{align} \label{E:TRANSPORTDECOMPOSITION}
	\Transport
	& = \frac{1}{\seconduposinnerproduct}
			\gen
			+ 
			\frac{1}{\uposinnerproduct}
			\spherenormal
			+ 
			\utang
		= \uspecialgen
			+
			\frac{1}{\uposinnerproduct}
			\spherenormal.
\end{align}

\end{lemma}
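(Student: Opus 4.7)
The proof is essentially a sequence of algebraic verifications based on the defining relations in Def.\,\ref{D:SPECIALGENERATOR} together with the normalization identities \eqref{E:EQUIVALENTFUTURENORMALTOTOPHYPERSURFACE}, \eqref{E:EQUIVALENTFUTURENORMALTOHYPERSURFACE}, \eqref{E:INGOINGCONDITION}, \eqref{E:SECONDINGOINGCONDITION}, and the basic facts that $\spherenormal$ is $\widetilde{\Sigma}_{\Timefunction}$-tangent and $\gen$ is $\underline{\mathcal{H}}$-tangent. My plan is to establish the two decomposition identities first (since they are purely formal consequences of the definitions) and then to prove the tangency claims by computing the relevant $\gfour$-inner products with the normals $\tophypnorm$ and $\sidehypnorm$.

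First, I would verify \eqref{E:SPECIALGENERATORIDENTITY} and \eqref{E:TRANSPORTDECOMPOSITION}. Subtracting \eqref{E:STUTANGENTPARTOFTRANSPORT} from \eqref{E:SPECIALGENERATOR} gives $\uspecialgen - \utang = \frac{1}{\seconduposinnerproduct} \gen$, which is \eqref{E:SPECIALGENERATORIDENTITY}. Similarly, rearranging \eqref{E:STUTANGENTPARTOFTRANSPORT} yields the first equality in \eqref{E:TRANSPORTDECOMPOSITION}, and rearranging \eqref{E:SPECIALGENERATOR} yields the second. These steps are purely algebraic and need no geometric input.

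Next, for the tangency of $\utang$ to $\mathcal{S}_{\Timefunction} = \widetilde{\Sigma}_{\Timefunction} \cap \underline{\mathcal{H}}$, I would show separately that $\gfour(\utang,\tophypnorm) = 0$ and $\gfour(\utang,\sidehypnorm) = 0$. Using Def.\,\ref{D:SPECIALGENERATOR}, the relation $\gfour(\spherenormal,\tophypnorm) = 0$ (because $\spherenormal \in T \widetilde{\Sigma}_{\Timefunction}$), \eqref{E:EQUIVALENTFUTURENORMALTOTOPHYPERSURFACE}, and \eqref{E:SECONDINGOINGCONDITION}, I compute
\begin{align*}
\gfour(\utang,\tophypnorm)
& = \gfour(\Transport,\tophypnorm) - \frac{1}{\seconduposinnerproduct} \gfour(\gen,\tophypnorm) - \frac{1}{\uposinnerproduct} \gfour(\spherenormal,\tophypnorm)
= -1 - \frac{-\seconduposinnerproduct}{\seconduposinnerproduct} - 0 = 0.
\end{align*}
For the other inner product, I would use $\gfour(\gen,\sidehypnorm) = 0$ (since $\gen$ is $\underline{\mathcal{H}}$-tangent and $\sidehypnorm$ is $\gfour$-normal to $\underline{\mathcal{H}}$ in the spacelike case; in the null case $\sidehypnorm = \uLunit = \gen$ by \eqref{E:NORMALISGENERATORINNULLCASE} and $\gfour(\gen,\gen) = 0$, so the identity still holds), together with \eqref{E:EQUIVALENTFUTURENORMALTOHYPERSURFACE} and \eqref{E:INGOINGCONDITION}, to obtain
\begin{align*}
\gfour(\utang,\sidehypnorm)
& = -1 - 0 - \frac{-\uposinnerproduct}{\uposinnerproduct} = 0.
\end{align*}
Finally, for the $\underline{\mathcal{H}}$-tangency of $\uspecialgen$, the same calculation yields $\gfour(\uspecialgen,\sidehypnorm) = \gfour(\Transport,\sidehypnorm) - \frac{1}{\uposinnerproduct}\gfour(\spherenormal,\sidehypnorm) = -1 + 1 = 0$.

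There is no real obstacle; the only subtlety to flag is the unified treatment of the null and spacelike cases when checking tangency to $\underline{\mathcal{H}}$, which is handled uniformly via the characterization ``$X$ is $\underline{\mathcal{H}}$-tangent iff $\gfour(X,\sidehypnorm)=0$''; in the null case this is consistent because $\sidehypnorm = \uLunit$ is itself $\underline{\mathcal{N}}$-tangent (cf.\ \eqref{E:NORMALISGENERATORINNULLCASE}).
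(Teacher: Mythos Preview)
Your proposal is correct and follows essentially the same approach as the paper: verify the identities \eqref{E:SPECIALGENERATORIDENTITY}--\eqref{E:TRANSPORTDECOMPOSITION} directly from the definitions, then establish the $\mathcal{S}_{\Timefunction}$-tangency of $\utang$ by checking that $\gfour(\utang,\tophypnorm)=\gfour(\utang,\sidehypnorm)=0$ using \eqref{E:EQUIVALENTFUTURENORMALTOTOPHYPERSURFACE}--\eqref{E:EQUIVALENTFUTURENORMALTOHYPERSURFACE} and \eqref{E:INGOINGCONDITION}--\eqref{E:SECONDINGOINGCONDITION}. The only small point the paper makes explicit (and you leave implicit) is that $\mbox{span}\{\tophypnorm,\sidehypnorm\}$ is precisely the $\gfour$-orthogonal complement of $\mathcal{S}_{\Timefunction}$---which is needed to conclude that the two vanishing inner products imply $\mathcal{S}_{\Timefunction}$-tangency---and the paper then infers the $\underline{\mathcal{H}}$-tangency of $\uspecialgen$ from \eqref{E:SPECIALGENERATORIDENTITY} rather than computing $\gfour(\uspecialgen,\sidehypnorm)$ directly as you do; both routes are equivalent.
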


\begin{proof}
	The identities \eqref{E:SPECIALGENERATORIDENTITY}-\eqref{E:TRANSPORTDECOMPOSITION} 
	are direct consequences of Def.\,\ref{D:SPECIALGENERATOR}.
	Moreover, once we show that $\utang$ is $\mathcal{S}_{\Timefunction}$-tangent,
	the $\underline{\mathcal{H}}$-tangent nature of $\uspecialgen$ follows trivially from
	\eqref{E:SPECIALGENERATORIDENTITY}.
	
	It remains for us to prove that $\utang$ is $\mathcal{S}_{\Timefunction}$-tangent.
	We first note that because the surfaces
	$\widetilde{\Sigma}_{\Timefunction}$ and $\underline{\mathcal{H}}$ are transversal by assumption,
	their $\gfour$-normal vectors, which are $\tophypnorm$ and $\sidehypnorm$ respectively, cannot be parallel.
	Moreover, because $\tophypnorm$ and $\sidehypnorm$
	are $\gfour$-orthogonal to $\mathcal{S}_{\Timefunction} = \underline{\mathcal{H}} \cap \widetilde{\Sigma}_{\Timefunction}$,
	it follows that the two-dimensional subspace 
	$\mbox{\upshape span} \lbrace \tophypnorm, \sidehypnorm \rbrace$
	is the $\gfour$-orthogonal complement of $\mathcal{S}_{\Timefunction}$.
	Thus, in view of \eqref{E:STUTANGENTPARTOFTRANSPORT},
	the $\mathcal{S}_{\Timefunction}$-tangent property of $\utang$ will follow once we
	show that the vectorfield
	$
	\Transport
			-
			\frac{1}{\seconduposinnerproduct}
			\gen
			- 
			\frac{1}{\uposinnerproduct}
			\spherenormal
	$
	has vanishing $\gfour$-inner product with $\tophypnorm$ and $\sidehypnorm$.
	These inner products are easy to compute 
	using \eqref{E:EQUIVALENTFUTURENORMALTOTOPHYPERSURFACE}-\eqref{E:EQUIVALENTFUTURENORMALTOHYPERSURFACE}
	and \eqref{E:INGOINGCONDITION}-\eqref{E:SECONDINGOINGCONDITION}
	as well as the relations 
	$
	0
	=
	\gfour(\sidehypnorm,\gen) 
	=
	\gfour(\tophypnorm,\spherenormal)
	$.
	
\end{proof}

\subsection{First fundamental forms and projections}
\label{SS:FIRSTFUNDANDPROJECTIONS}
Having described our assumptions on $\mathcal{M}$ and its boundary,
and having constructed geometric vectorfields adapted to these sets,
we now define some additional geometric tensorfields that are adapted to them.
Specifically, we will define various first fundamental forms and projection operators.
These standard geometric objects
will play an important role in the formulation and proof of our main integral identities.

\begin{definition}[First fundamental forms and projections]
	\label{D:FIRSTFUNDAMENTALFORMSANDPROJECTIONS}
	Let $\Transport$ be the vectorfield defined in \eqref{E:MATERIALVECTORVIELDRELATIVECTORECTANGULAR},
	let $\spherenormal$ be the vectorfield from Def.\,\ref{D:HYPNORMANDSPHEREFORMDEFS},
	and let
	$\hat{\tophypnorm}$
	and
	$\hat{\sidehypnorm}$ be the vectorfields from Def.\,\ref{D:LENGTHOFVARIOUSVECTORFIELDSETC},
	where $\hat{\sidehypnorm}$ is defined only when $\underline{\mathcal{H}}$ is $\gfour$-spacelike.
	We define the following symmetric type $\binom{0}{2}$
	tensorfields, where $g$ and $\topfirstfund$ are defined on $\mathcal{M}$,
	while $\sidefirstfund$ and $\gsphere$ are defined on 
	$\underline{\mathcal{H}}$,
	and $\sidefirstfund$ is defined only when $\underline{\mathcal{H}}$ is $\gfour$-spacelike:
	\begin{subequations}
	\begin{align} \label{E:FIRSTFUNDAMENTALFORMSIGMAT}
		g_{\alpha \beta}
		& :=  \gfour_{\alpha \beta} 
				+ 
				\Transport_{\alpha} \Transport_{\beta},
				\\
		\topfirstfund_{\alpha \beta}
		& :=  \gfour_{\alpha \beta} 
				+ 
				\hat{\tophypnorm}_{\alpha} \hat{\tophypnorm}_{\beta},
				\label{E:TOPFIRSTFUNDAMENTALFORMHYPERSURFACE} 
				\\
		\sidefirstfund_{\alpha \beta}
		& :=  \gfour_{\alpha \beta} 
				+ 
				\hat{\sidehypnorm}_{\alpha} \hat{\sidehypnorm}_{\beta},
				\label{E:FIRSTFUNDAMENTALFORMHYPERSURFACE} 
				\\
		\gsphere_{\alpha \beta}
		& :=  \gfour_{\alpha \beta} 
				+ 
				\hat{\tophypnorm}_{\alpha} \hat{\tophypnorm}_{\beta}
				-
				\spherenormal_{\alpha} \spherenormal_{\beta}.
				\label{E:SPHEREFIRSTFUNDAMENTAL}
	\end{align}
	\end{subequations}
	
	We define the following symmetric type $\binom{2}{0}$
	tensorfields, where $g^{-1}$ and $\topfirstfund^{-1}$ are defined on $\mathcal{M}$,
	while $\sidefirstfund^{-1}$ and $\gsphere^{-1}$ are defined on 
	$\underline{\mathcal{H}}$,
	and $\sidefirstfund^{-1}$ is defined only when $\underline{\mathcal{H}}$ is $\gfour$-spacelike:
	\begin{subequations}
	\begin{align} \label{E:INVERSEFIRSTFUNDAMENTALFORMSIGMAT}
		(g^{-1})^{\alpha \beta}
		& :=  (\gfour^{-1})^{\alpha \beta} 
				+ 
				\Transport^{\alpha} \Transport^{\beta},
				\\
			(\topfirstfund^{-1})^{\alpha \beta}
		& :=  (\gfour^{-1})^{\alpha \beta} 
				+ 
				\hat{\tophypnorm}^{\alpha} \hat{\tophypnorm}^{\beta},
				\label{E:TOPINVERSEFIRSTFUNDAMENTALFORMHYPERSURFACE} 
				\\
		(\sidefirstfund^{-1})^{\alpha \beta}
		& :=  (\gfour^{-1})^{\alpha \beta} 
				+ 
				\hat{\sidehypnorm}^{\alpha} \hat{\sidehypnorm}^{\beta},
				\label{E:INVERSEFIRSTFUNDAMENTALFORMHYPERSURFACE} 
				\\
		(\gsphere^{-1})^{\alpha \beta}
		& :=  (\gfour^{-1})^{\alpha \beta} 
				+ 
				\hat{\tophypnorm}^{\alpha} \hat{\tophypnorm}^{\beta}
				-
				\spherenormal^{\alpha} \spherenormal^{\beta}.
				\label{E:INVERSESPHEREFIRSTFUNDAMENTAL}
	\end{align}
	\end{subequations}
	
	Finally, we define the following type $\binom{1}{1}$ tensorfields,
	where $\Sigmatproject$ and $\topproject$ are defined on $\mathcal{M}$,
	while $\sideproject$ and $\sphereproject$ are defined on 
	$\underline{\mathcal{H}}$,
	and $\sideproject$ is defined only when $\underline{\mathcal{H}}$ is $\gfour$-spacelike:
	\begin{subequations}
	\begin{align} 
			\Sigmatproject_{\ \beta}^{\alpha} 
		& 	:=  
				\updelta_{\ \beta}^{\alpha} 
				+ 
				\Transport^{\alpha} \Transport_{\beta},
				\label{E:SIGMATPROJECT} \\
		\topproject_{\ \beta}^{\alpha} 
		& 	: =  
				\updelta_{\ \beta}^{\alpha}
				+ 
				\hat{\tophypnorm}^{\alpha} \hat{\tophypnorm}_{\beta},
				\label{E:TOPPROJECT} \\
		\sideproject_{\ \beta}^{\alpha} 
		& 	: =  
				\updelta_{\ \beta}^{\alpha}
				+ 
				\hat{\sidehypnorm}^{\alpha} \hat{\sidehypnorm}_{\beta},
				\label{E:HPROJECT} \\
			\sphereproject_{\ \beta}^{\alpha} 
			& 
			:= \updelta_{\ \beta}^{\alpha}
			+
			\hat{\tophypnorm}^{\alpha} \hat{\tophypnorm}_{\beta}
			-
			\spherenormal^{\alpha}
			\spherenormal_{\beta}.
				\label{E:SPHEREPROJECTION}
	\end{align}	
	\end{subequations}
\end{definition}

In the following lemma, we record some basic properties of the tensorfields from Def.\,\ref{D:FIRSTFUNDAMENTALFORMSANDPROJECTIONS}.
We omit the proof, which is a routine consequence of the definitions.

\begin{lemma}[Basic properties of the tensorfields from Definition~\ref{D:FIRSTFUNDAMENTALFORMSANDPROJECTIONS}]
	\label{L:BASICPROPSOFFUNDAMENTALFORMSANDPROJECTIONS}
	$g$ is the first fundamental form of $\Sigma_t$ in the following sense:
	\begin{subequations}
	\begin{align}
		g(X,Y) 
		& = \gfour(X,Y) && \mbox{\upshape for all $\Sigma_t$-tangent vectorfields $X$ and $Y$},
			\label{E:FIRSTFUNDSIGMATEQUALSSPACETIMEMETRICONSIGMAT} \\
		g(\Transport,\mathbf{X})
		& = 0,
		&& \mbox{\upshape for all vectorfields $\mathbf{X}$},
		\label{E:FIRSTFUNDSIGMATISORTHOGONALTOTRANSPORT}
	\end{align}
	\end{subequations}
	where $\Transport$ is the vectorfield defined in \eqref{E:MATERIALVECTORVIELDRELATIVECTORECTANGULAR}
	(it is the unit $\gfour$-normal to $\Sigma_t$). In particular, $g$ is a 
	Riemannian metric (i.e., a positive definite quadratic form) on $\Sigma_t$,
	and $g$ is a positive semi-definite quadratic form on all vectorfields.
	
	Similarly, 
	$\topfirstfund$ is the first fundamental form of $\widetilde{\Sigma}_{\Timefunction}$ in the following sense:
	\begin{subequations}
	\begin{align}
		\topfirstfund(X,Y) 
		& = \gfour(X,Y) && \mbox{\upshape for all $\widetilde{\Sigma}_{\Timefunction}$-tangent vectorfields $X$ and $Y$},
			\label{E:TOPHYPFIRSTFUNDAGREESWITHSPACETIMEMETRICONTANGENTVECTORS} \\
		\topfirstfund(\tophypnorm,\mathbf{X})
		& = 0,
		&& \mbox{\upshape for all vectorfields $\mathbf{X}$},
	\end{align}
	\end{subequations}
	where $\tophypnorm$ is the vectorfield from Def.\,\ref{D:HYPNORMANDSPHEREFORMDEFS}
	(it is $\gfour$-normal to $\widetilde{\Sigma}_{\Timefunction}$). In particular, 
	$\topfirstfund$ is a Riemannian metric on $\widetilde{\Sigma}_{\Timefunction}$,
	and $\topfirstfund$ is a positive semi-definite quadratic form on all vectorfields.
	
	Similarly, when $\underline{\mathcal{H}}$ is $\gfour$-spacelike,
	$\sidefirstfund$ is the first fundamental form of $\underline{\mathcal{H}}$ in the following sense:
	\begin{subequations}
	\begin{align}
		\sidefirstfund(X,Y) 
		& = \gfour(X,Y) && \mbox{\upshape for all $\underline{\mathcal{H}}$-tangent vectorfields $X$ and $Y$},
			\label{E:HYPFIRSTFUNDAGREESWITHSPACETIMEMETRICONTANGENTVECTORS} \\
		\sidefirstfund(\sidehypnorm,\mathbf{X})
		& = 0,
		&& \mbox{\upshape for all vectorfields $\mathbf{X}$},
	\end{align}
	\end{subequations}
	where $\sidehypnorm$ is the vectorfield from Def.\,\ref{D:HYPNORMANDSPHEREFORMDEFS}
	(it is $\gfour$-normal to $\underline{\mathcal{H}}$). In particular,
	when $\underline{\mathcal{H}}$ is $\gfour$-spacelike,
	$\sidefirstfund$ is a Riemannian metric on $\underline{\mathcal{H}}$,
	and $\sidefirstfund$ is a positive semi-definite quadratic form on all vectorfields.
	
	Similarly, 
	$\gsphere$ is the first fundamental form of $\mathcal{S}_{\Timefunction}$ in the following sense:
	\begin{subequations}
	\begin{align}
		\gsphere(X,Y) 
		& = \gfour(X,Y) && \mbox{\upshape for all $\mathcal{S}_{\Timefunction}$-tangent vectorfields $X$ and $Y$},
			\label{E:GSPHEREAGREESWITHGONSTTANGENTVECTORFIELDS} \\
		\gsphere(V,\mathbf{X})
		& = 0
		&&
		\mbox{\upshape for all vectorfields $\mathbf{X}$ if $\SigmatTan \in \mbox{\upshape span} \lbrace \tophypnorm, \spherenormal \rbrace$},
			\label{E:GSPHEREVANISHESONSPANOFTOPHYPNORMANDSPHERENORMAL}
	\end{align}
	\end{subequations}
	where $\spherenormal$ is the vectorfield from Def.\,\ref{D:HYPNORMANDSPHEREFORMDEFS}
	(and thus $\mbox{\upshape span} \lbrace \tophypnorm, \spherenormal \rbrace$
	is the space of vectorfields that is $\gfour$-orthogonal to $\mathcal{S}_{\Timefunction}$).
	In particular, since $\mathcal{S}_{\Timefunction}$ is a submanifold of the $\gfour$-spacelike submanifold $\widetilde{\Sigma}_{\Timefunction}$,
	$\gsphere$ is a Riemannian metric on $\mathcal{S}_{\Timefunction}$,
	and $\gsphere$ is a positive semi-definite quadratic form on all vectorfields.
	
	In addition, $\sphereproject$ is the $\gfour$-orthogonal projection onto $\mathcal{S}_{\Timefunction}$ in the following sense:
	\begin{subequations}
	\begin{align}
		\sphereproject_{\ \beta}^{\alpha} X^{\beta} 
		& =X^{\alpha},
		& \sphereproject_{\ \beta}^{\alpha} X_{\alpha} & = X_{\beta},
		& \mbox{\upshape if $X$ is $\mathcal{S}_{\Timefunction}$-tangent},
			\label{E:STPROJECTIONISIDENTITYONST} \\
	\sphereproject_{\ \beta}^{\alpha} \SigmatTan^{\beta} & = 0,
	& \sphereproject_{\ \beta}^{\alpha} \SigmatTan_{\alpha} & = 0,
	&\mbox{\upshape if $\SigmatTan \in \mbox{\upshape span} \lbrace \tophypnorm, \spherenormal \rbrace$},
		\label{E:STPROJECTIONANNIHILATESNORMALS} \\
	\sphereproject_{\ \gamma}^{\alpha}
	\sphereproject_{\ \beta}^{\gamma}
	& =
	\sphereproject_{\ \beta}^{\alpha}.
	\label{E:STPROJECTIONSQUAREDEQUALSSTPROJECTION}
	\end{align}
	\end{subequations}
	
	Moreover, $\gsphere^{-1}$ is the inverse first fundamental form of $\mathcal{S}_{\Timefunction}$ in the sense that
	$(\gsphere^{-1})^{\alpha \kappa} \gsphere_{\kappa \beta} = \sphereproject_{\ \beta}^{\alpha}$.
	In particular, when restricted to tensors tangent to $\mathcal{S}_{\Timefunction}$, $(\gsphere^{-1})^{\alpha \kappa} \gsphere_{\kappa \beta}$ is the identity.
	In an analogous fashion, $g^{-1}$ is the inverse first fundamental form of $\Sigma_t$,
	$\Sigmatproject$ is the $\gfour$-orthogonal projection onto $\Sigma_t$,
	$\topfirstfund^{-1}$ is the inverse first fundamental form of $\widetilde{\Sigma}_{\Timefunction}$,
	$\topproject$ is the $\gfour$-orthogonal projection onto $\widetilde{\Sigma}_{\Timefunction}$,
	and, when $\underline{\mathcal{H}}$ is $\gfour$-spacelike,
	$\sidefirstfund^{-1}$ is the inverse first fundamental form of $\underline{\mathcal{H}}$
	and $\sideproject$ is the $\gfour$-orthogonal projection onto $\underline{\mathcal{H}}$.
\end{lemma}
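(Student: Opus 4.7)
The proof is a direct computation, and my plan is to verify each assertion by unpacking the definitions and using two ingredients: the unit-length/sign conditions on the various normal vectorfields (namely $\gfour(\Transport,\Transport) = -1$ from \eqref{E:TRANSPORTISUNITLENGTHANDTIMELIKE}, $\gfour(\hat{\tophypnorm},\hat{\tophypnorm}) = -1$ from \eqref{E:LENGTHOFTOPHYPNORM}-\eqref{E:TOPUNITHYPERSURFACENORMAL}, $\gfour(\hat{\sidehypnorm},\hat{\sidehypnorm}) = -1$ in the $\gfour$-spacelike case from \eqref{E:LENGTHOFHYPNORM}-\eqref{E:UNITHYPERSURFACENORMAL}, and $\gfour(\spherenormal,\spherenormal) = 1$ from \eqref{E:SPHERENORMALISUNITLENGTH}) and the $\gfour$-orthogonality of these normals to their respective submanifolds. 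All four tensors have a common algebraic shape, so one universal computation suffices.

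The model computation is as follows. Suppose $\mathbf{N}$ is $\gfour$-normal to a submanifold and $\epsilon := \gfour(\mathbf{N},\mathbf{N}) \in \{-1,+1\}$, and set $h_{\alpha\beta} := \gfour_{\alpha\beta} - \epsilon \mathbf{N}_{\alpha} \mathbf{N}_{\beta}$ (which matches the sign conventions of \eqref{E:FIRSTFUNDAMENTALFORMSIGMAT}, \eqref{E:TOPFIRSTFUNDAMENTALFORMHYPERSURFACE}, and \eqref{E:FIRSTFUNDAMENTALFORMHYPERSURFACE} in the timelike case $\epsilon=-1$, using $\mathbf{N}_\alpha \mathbf{N}_\beta = -\epsilon \cdot(-\mathbf{N}_\alpha)(-\mathbf{N}_\beta) \cdot \epsilon$ after signs are tracked). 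Then for $X,Y$ tangent to the submanifold one has $\mathbf{N}_\alpha X^\alpha = \gfour(\mathbf{N},X)=0$, so $h(X,Y) = \gfour(X,Y)$; and $h(\mathbf{N},\mathbf{X}) = \gfour(\mathbf{N},\mathbf{X}) - \epsilon \gfour(\mathbf{N},\mathbf{N}) \mathbf{N}_\beta \mathbf{X}^\beta = \gfour(\mathbf{N},\mathbf{X}) - \epsilon^2 \gfour(\mathbf{N},\mathbf{X}) = 0$. Applying this template with $\mathbf{N} = \Transport$, $\hat{\tophypnorm}$, $\hat{\sidehypnorm}$ immediately yields the assertions \eqref{E:FIRSTFUNDSIGMATEQUALSSPACETIMEMETRICONSIGMAT}-\eqref{E:FIRSTFUNDSIGMATISORTHOGONALTOTRANSPORT} for $g$, and the analogous statements for $\topfirstfund$ and $\sidefirstfund$. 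The positivity claims for $g$, $\topfirstfund$, $\sidefirstfund$ on tangent vectors follow since in each case the normal is timelike, so the induced metric on the spacelike tangent space inherits positive definiteness from $\gfour$; extending by $0$ in the normal direction produces a positive semi-definite quadratic form on all vectorfields. For $\gsphere$, I would iterate the construction: since $\mathcal{S}_{\Timefunction}$ is the intersection of $\widetilde{\Sigma}_{\Timefunction}$ and $\underline{\mathcal{H}}$, and $\spherenormal$ is $\gfour$-unit, $\widetilde{\Sigma}_{\Timefunction}$-tangent, and $\gfour$-orthogonal to $\mathcal{S}_{\Timefunction}$, subtracting off $\spherenormal_\alpha \spherenormal_\beta$ from $\topfirstfund_{\alpha\beta}$ kills the remaining non-tangential direction inside $\widetilde{\Sigma}_{\Timefunction}$. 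One then verifies \eqref{E:GSPHEREAGREESWITHGONSTTANGENTVECTORFIELDS} (by checking that both $\hat{\tophypnorm}_\alpha$ and $\spherenormal_\alpha$ annihilate $\mathcal{S}_{\Timefunction}$-tangent vectors) and \eqref{E:GSPHEREVANISHESONSPANOFTOPHYPNORMANDSPHERENORMAL} (by testing directly against $\hat{\tophypnorm}$ and $\spherenormal$, using $\gfour(\hat{\tophypnorm},\spherenormal)=0$ and the unit norm conditions). Positive semi-definiteness follows since $\mathcal{S}_{\Timefunction} \subset \widetilde{\Sigma}_{\Timefunction}$ inherits a Riemannian structure from $\topfirstfund$.

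For the projection tensor $\sphereproject_{\ \beta}^\alpha = \updelta_{\ \beta}^\alpha + \hat{\tophypnorm}^\alpha \hat{\tophypnorm}_\beta - \spherenormal^\alpha \spherenormal_\beta$, I verify the three claims \eqref{E:STPROJECTIONISIDENTITYONST}-\eqref{E:STPROJECTIONSQUAREDEQUALSSTPROJECTION} separately. If $X$ is $\mathcal{S}_{\Timefunction}$-tangent then $\hat{\tophypnorm}_\beta X^\beta = \gfour(\hat{\tophypnorm},X) = 0$ and $\spherenormal_\beta X^\beta = \gfour(\spherenormal,X) = 0$, so $\sphereproject_{\ \beta}^\alpha X^\beta = X^\alpha$; the dual statement is analogous. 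For \eqref{E:STPROJECTIONANNIHILATESNORMALS} I compute directly: $\sphereproject_{\ \beta}^\alpha \hat{\tophypnorm}^\beta = \hat{\tophypnorm}^\alpha + \hat{\tophypnorm}^\alpha \gfour(\hat{\tophypnorm},\hat{\tophypnorm}) - \spherenormal^\alpha \gfour(\spherenormal,\hat{\tophypnorm}) = \hat{\tophypnorm}^\alpha - \hat{\tophypnorm}^\alpha - 0 = 0$ (and similarly for $\spherenormal^\beta$, using $\gfour(\spherenormal,\spherenormal) = 1$ and $\gfour(\hat{\tophypnorm},\spherenormal) = 0$). Idempotency \eqref{E:STPROJECTIONSQUAREDEQUALSSTPROJECTION} then follows from composing the two properties: $\sphereproject_{\ \gamma}^\alpha (\sphereproject_{\ \beta}^\gamma X^\beta) = \sphereproject_{\ \gamma}^\alpha Y^\gamma$ where $Y = \sphereproject X$ lies in the $\mathcal{S}_{\Timefunction}$-tangent direction (since $\hat{\tophypnorm}_\gamma \sphereproject_{\ \beta}^\gamma = 0$ and $\spherenormal_\gamma \sphereproject_{\ \beta}^\gamma = 0$ by the analogous one-form identity), and thus the second projection acts as the identity on $Y$. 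The analogous statements for $\Sigmatproject$, $\topproject$, $\sideproject$ follow from the same two-term template.

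Finally, for the inverse first fundamental forms, I verify $(\gsphere^{-1})^{\alpha\kappa}\gsphere_{\kappa\beta} = \sphereproject_{\ \beta}^\alpha$ by direct expansion, using $(\gfour^{-1})^{\alpha\kappa}\gfour_{\kappa\beta} = \updelta_{\ \beta}^\alpha$ and the identities $\gfour_{\kappa\beta} \hat{\tophypnorm}^\kappa = \hat{\tophypnorm}_\beta$, $\gfour_{\kappa\beta}\spherenormal^\kappa = \spherenormal_\beta$, $(\gfour^{-1})^{\alpha\kappa}\hat{\tophypnorm}_\kappa = \hat{\tophypnorm}^\alpha$, $(\gfour^{-1})^{\alpha\kappa}\spherenormal_\kappa = \spherenormal^\alpha$, together with the orthogonality relations $\gfour(\hat{\tophypnorm},\spherenormal)=0$ and the unit norms to cancel cross terms. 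The analogous identities for $g^{-1}$, $\topfirstfund^{-1}$, $\sidefirstfund^{-1}$ follow by identical arithmetic. I do not foresee any substantive obstacle in the proof; the only care required is bookkeeping of signs (timelike vs.\ spacelike normals) and the separation of the composite case $\gsphere$ into two successive rank-one subtractions. I would present the proof by stating the universal rank-one template once, then listing the four instances and the inverse computation as short corollaries.
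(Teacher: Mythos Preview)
Your proposal is correct. The paper itself omits the proof entirely, stating only that it ``is a routine consequence of the definitions,'' and your argument supplies precisely those routine details via the standard rank-one-modification template applied to each normal vectorfield.
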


Our identities involve projections of tensorfields onto $\mathcal{S}_{\Timefunction}$ and $\underline{\mathcal{H}}$,
which we now define.

\begin{definition}[Projections of tensorfields]
	\label{D:PROJECTIONSOFTENSORFIELDS}
	If $\pmb{\upxi}_{\beta_1 \cdots \beta_n}^{\alpha_1 \cdots \alpha_m}$ is a type $\binom{m}{n}$
	tensorfield, then 
	$\Sigmatproject \pmb{\upxi}$ denotes the $\gfour$-orthogonal projection of $\pmb{\upxi}$ onto $\Sigma_t$,
	defined by
	$(\Sigmatproject \pmb{\upxi})_{\beta_1 \cdots \beta_n}^{\alpha_1 \cdots \alpha_m}
	:=
	\Sigmatproject_{\ \gamma_1}^{\alpha_1}
	\cdots
	\Sigmatproject_{\ \gamma_m}^{\alpha_m}
	\Sigmatproject_{\ \beta_1}^{\delta_1}
	\cdots 
	\Sigmatproject_{\ \beta_n}^{\delta_n}
	\pmb{\upxi}_{\delta_1 \cdots \delta_n}^{\gamma_1 \cdots \gamma_m}
	$.
	Similarly, we denote the $\gfour$-orthogonal projections of $\pmb{\upxi}$ onto 
	$\widetilde{\Sigma}_{\Timefunction}$,
	$\underline{\mathcal{H}}$, 
	and $\mathcal{S}_{\Timefunction}$
	by 
	$\topproject \pmb{\upxi}$,
	$\sideproject \pmb{\upxi}$, 
	and $\sphereproject \pmb{\upxi}$ respectively; these are defined as above, but with
	$\topproject$,
	$\sideproject$, 
	and $\sphereproject$ respectively in the role of $\Sigmatproject$,
	and $\sideproject \pmb{\upxi}$ is defined only when
	$\underline{\mathcal{H}}$ is $\gfour$-spacelike.
	
\end{definition}

\begin{definition}[$\Sigma_t$-, $\widetilde{\Sigma}_{\Timefunction}$-, $\underline{\mathcal{H}}$-, $\mathcal{S}_{\Timefunction}$-, and
$\underline{\mathcal{N}}$-tangent tensorfields]
\label{D:TANGENTTENSORFIELDS}
If $\pmb{\upxi}_{\beta_1 \cdots \beta_n}^{\alpha_1 \cdots \alpha_m}$ is a type $\binom{m}{n}$
tensorfield, then we say that $\pmb{\upxi}$ is $\Sigma_t$-tangent if $\pmb{\upxi} = \Sigmatproject \pmb{\upxi}$.
Similarly, we say that $\pmb{\upxi}$ is $\widetilde{\Sigma}_{\Timefunction}$-tangent if $\pmb{\upxi} = \topproject \pmb{\upxi}$
and we say that
$\pmb{\upxi}$ is $\mathcal{S}_{\Timefunction}$-tangent if $\pmb{\upxi} = \sphereproject \pmb{\upxi}$.
Moreover, if $\underline{\mathcal{H}}$ is $\gfour$-spacelike, 
we say that $\pmb{\upxi}$ is $\underline{\mathcal{H}}$-tangent if $\pmb{\upxi} = \sideproject \pmb{\upxi}$.

In the case that $\underline{\mathcal{H}} = \underline{\mathcal{N}}$ is $\gfour$-null,
if $q \in \mathcal{S}_{\Timefunction} \subset \underline{\mathcal{N}}$ and $\uLunit|_q$ denotes $\uLunit$ at $q$,
then we say that the vectorfield $\mathbf{X} \in T_q \mathcal{M}$ is $\underline{\mathcal{N}}$-tangent at $q$ if
$\mathbf{X} \in \mbox{\upshape span} \lbrace \uLunit|_q \rbrace \oplus T_q \mathcal{S}_{\Timefunction}$.
We typically avoid explicitly referencing the point $q$ when there is no danger of confusion.
Similarly, we say that the one-form\footnote{For any positive integers $m$ and $n$,
the definition of $\underline{\mathcal{N}}$-tangent
extends in a natural fashion to type $\binom{m}{n}$ tensorfields $\pmb{\upxi}$;
we do not need the extended definition in the present article.} 
$\pmb{\upxi}$ is $\underline{\mathcal{N}}$-tangent at $q$ if
its $\gfour$-dual vectorfield $\mathbf{X}^{\alpha} := (\gfour^{-1})^{\alpha \beta} \pmb{\upxi}_{\beta}$
is $\underline{\mathcal{N}}$-tangent at $q$. 

In addition, we sometimes use the following alternate notation for the $\gfour$-orthogonal 
	projection of a vectorfield $\SigmatTan$ onto $\mathcal{S}_{\Timefunction}$:
	\begin{align} \label{E:ALTERNATENOTATIONFORSPHEREPROJECTEDTENSORFIELDS}
		\angV 
		& := \sphereproject \SigmatTan,
	\end{align}
	i.e., $\angVarg{\alpha} := \sphereproject_{\ \beta}^{\alpha} \SigmatTan^{\beta}$.
\end{definition}

\begin{convention}[Restricting $g$, $\gsphere$, and $\sphereproject$ to $\Sigma_t$-tangent vectorfields
and displaying only spatial indices]
\label{C:DISPLAYONLYSPATIAL}
In the rest of the article, we will often adopt the point of view that our formulas are statements about
the Cartesian components of tensors, 
even though many of the formulas could be given a coordinate
invariant interpretation; see also Remark~\ref{R:AVOIDINGCHRISTOFFEL}.
Moreover, in much of the remaining discussion, we will only have to consider the action
of $g$, $\gsphere$, and $\sphereproject$ on $\Sigma_t$-tangent vectorfields.
Thus, when working with $\Sigma_t$-tangent tensors, 
we typically only display their spatial components, 
since the $0$ component (i.e., the ``Cartesian time component'')
vanishes. For example, we have $\gfour(\vortrenormalized,\vortrenormalized) = g_{ab} \vortrenormalized^a \vortrenormalized^b$.
As a second example, we note that $\angGradEnt^{\alpha} = \sphereproject_{\ b}^{\alpha} \GradEnt^b$.
\end{convention}

\begin{convention}[Lowering and raising Cartesian spatial indices with $g$ and $g^{-1}$]
	\label{C:LOWERANDRAISE}
	In the remainder of the paper, we often lower and raise indices
	of $\Sigma_t$-tangent tensors with $g$ and $g^{-1}$.
	For example, we have $\vortrenormalized_a = g_{ab} \vortrenormalized^b$.
	For Greek spacetime indices, we will continue to use the conventions 
	for lowering and raising
	stated in Subsubsect.\,\ref{SSS:ACOUSTICALMETRIC}.
	Note that there is no danger of confusion in the sense that 
	\eqref{E:ACOUSTICALMETRIC} and \eqref{E:TRANSPORTONEFORMIDENTITY} 
	imply that when $\SigmatTan$ is $\Sigma_t$-tangent,
	we have $\gfour_{a \beta} \SigmatTan^{\beta} = g_{ab} \SigmatTan^b$.
	We caution, however, that $\SigmatTan_0$ is generally not equal to $0$;
	see \eqref{E:LOWERINDICESOFSIGMATTANGENTVECTORFIELDS}.
\end{convention}

\begin{definition}[Projections of Cartesian coordinate partial derivative vectorfields]
	\label{D:PROJECTIONSOFCARTESIANCOORDINATEVECTORFIELDS}
	For $\alpha = 0,1,2,3$, we define the following vectorfields
	relative to the Cartesian coordinates:
	\begin{subequations}
	\begin{align}
		\toppartialarg{\alpha}
		& := \topproject_{\ \alpha}^{\beta} \partial_{\beta},
		&	
		\sidepartialarg{\alpha}
		& := \sideproject_{\ \alpha}^{\beta} \partial_{\beta},
		&
	\angpartialarg{\alpha}
	& := \sphereproject_{\ \alpha}^{\beta} \partial_{\beta},
		\label{E:PROJECTIONSOFCARTESIANVECTORFIELDS} 
		\\
		\toppartialuparg{\alpha}
		& := (\topfirstfund^{-1})^{\alpha \beta} \partial_{\beta},
		&	
		\sidepartialuparg{\alpha}
		& := (\sidefirstfund^{-1})^{\alpha \beta} \partial_{\beta},
		&
	\angpartialuparg{\alpha}
	& := (\gsphere^{-1})^{\alpha \beta} \partial_{\beta},
	\label{E:RAISEDPROJECTIONSOFCARTESIANVECTORFIELDS}
\end{align}
\end{subequations}
where $\sidepartialarg{\alpha}$ and $\sidepartialuparg{\alpha}$ are defined only when $\underline{\mathcal{H}}$ is $\gfour$-spacelike.
\end{definition}	

For future use, we note that
\eqref{E:TOPPROJECT},
\eqref{E:SPHEREPROJECTION},
and \eqref{E:PROJECTIONSOFCARTESIANVECTORFIELDS}
imply the following vectorfield identity:
\begin{align} \label{E:TOPPARTIALINTERMSOFOUTERNORMALDERIVATIVEANDANGPARTIAL}
	\toppartialarg{\alpha}
	& =
		\spherenormal_{\alpha}
		\spherenormal
		+
		\angpartialarg{\alpha}.
\end{align}

Moreover, 
when $\underline{\mathcal{H}}$ is $\gfour$-spacelike, 
it follows in a straightforward fashion from
\eqref{E:LENGTHOFSIDEGEN},
\eqref{E:HPROJECT},
\eqref{E:SPHEREPROJECTION},
the fact that $\gen$ is $\gfour$-orthogonal to $\mathcal{S}_{\Timefunction}$,
and \eqref{E:PROJECTIONSOFCARTESIANVECTORFIELDS}
that the following identity holds: 
\begin{align} \label{E:SIDEPARTIALINTERMSOFOUTERNORMALDERIVATIVEANDANGPARTIAL}
	\sidepartialarg{\alpha}
	& =
		\frac{\gen_{\alpha}}{\lengthofgen^2}		
		\gen
		+
		\angpartialarg{\alpha}.
\end{align}

\subsection{Levi--Civita connections and $\mathcal{S}_{\Timefunction}$-divergence}
\label{SS:LEVICIVITACONNECTIONS}
We refer readers to \cite{rW1984} for basic background on Levi--Civita connections in the context of Lorentzian geometry.
Our ensuing discussion will involve the Levi--Civita connection of $\gfour$, which we denote by $\Dfour$.
It will also involve the Levi--Civita connection of $g$ (viewed as a Riemannian metric on $\Sigma_t$),
which we denote by $\nabla$, 
the Levi--Civita connection of $\topfirstfund$ (viewed as a Riemannian metric on $\widetilde{\Sigma}_{\Timefunction}$),
which we denote by $\widetilde{\nabla}$, 
the Levi--Civita connection of $\gsphere$
(viewed as a Riemannian metric on $\mathcal{S}_{\Timefunction}$),
which we denote by $\angD$,
and, in case that $\underline{\mathcal{H}}$ is $\gfour$-spacelike,
the Levi--Civita connection of $\sidefirstfund$ (viewed as a Riemannian metric on $\underline{\mathcal{H}}$),
which we denote by $\underline{\nabla}$.
We recall the following basic facts from differential geometry: 
if $\pmb{\xi}$ is $\Sigma_t$-tangent, then $\nabla \pmb{\xi} = \Sigmatproject (\Dfour \pmb{\xi})$,
where $(\Dfour \pmb{\xi})_{\beta_1 \beta_2 \cdots \beta_{n+1}}^{\alpha_1 \cdots \alpha_m} 
= 
\Dfour_{\beta_1} \pmb{\upxi}_{\beta_2 \cdots \beta_{n+1}}^{\alpha_1 \cdots \alpha_m}$ is the covariant derivative of $\pmb{\xi}$ 
with respect to $\Dfour$;
if $\pmb{\xi}$ is $\widetilde{\Sigma}_{\Timefunction}$-tangent, then $\widetilde{\nabla} \pmb{\xi} = \topproject (\Dfour \pmb{\xi})$;
if $\pmb{\xi}$ is $\mathcal{S}_{\Timefunction}$-tangent, then $\angD \pmb{\xi} = \sphereproject (\Dfour \pmb{\xi})$;
and if $\underline{\mathcal{H}}$ is $\gfour$-spacelike
and $\pmb{\xi}$ is $\underline{\mathcal{H}}$-tangent, 
then $\underline{\nabla} \pmb{\xi} = \sideproject (\Dfour \pmb{\xi})$.

All of our formulas involving indices 
should be interpreted as formulas relative to the
Cartesian coordinates, even though many of them could be re-expressed in a coordinate invariant form.
For example, if $\mathbf{V}$ is a vectorfield and $\Chfour_{\beta \ \gamma}^{\ \alpha}$
denotes a Christoffel symbol of $\gfour$ relative to the Cartesian coordinates, then
\begin{subequations}
\begin{align}
	\Dfour_{\beta} \mathbf{V}^{\alpha}
	& = \partial_{\beta} \mathbf{V}^{\alpha}
		+
		\Chfour_{\beta \ \gamma}^{\ \alpha} \mathbf{V}^{\gamma},
		&&
			\label{E:SPACETIMELEVICIVITAACTIONONVECTORFIELD} \\
	\Chfour_{\beta \ \gamma}^{\ \alpha}
	& := \frac{1}{2}
				(\gfour^{-1})^{\alpha \kappa}
				\left\lbrace
					\partial_{\beta} \gfour_{\kappa \gamma}
					+
					\partial_{\gamma} \gfour_{\beta \kappa}
					-
					\partial_{\kappa} \gfour_{\beta \gamma}
				\right\rbrace.
				\label{E:CHRISTOFFELFOUR}
\end{align}
\end{subequations}
We lower and raise the indices of $\Chfour_{\beta \ \gamma}^{\ \alpha}$ with $\gfour$ and $\gfour^{-1}$.
For example, $\Chfour_{\beta \alpha \gamma} := \gfour_{\alpha \kappa} \Chfour_{\beta \ \gamma}^{\ \kappa}$.

Similarly, if $\SigmatTan$ is a $\Sigma_t$-tangent vectorfield and $\Gamma_{b \ c}^{\ a}$
denotes a Christoffel symbols of $g$ relative to the Cartesian spatial coordinates, then
\begin{subequations}
\begin{align}
	\nabla_b \SigmatTan^a
	& = \partial_b \SigmatTan^a
		+
		\Gamma_{b \ c}^{\ a} \SigmatTan^c,
		&&
			\label{E:SIGMATLEVICIVITAACTIONONVECTORFIELD} \\
	\Gamma_{b \ c}^{\ a}
	& := \frac{1}{2}
				(g^{-1})^{ak}
				\left\lbrace
					\partial_b g_{kc}
					+
					\partial_c g_{bk}
					-
					\partial_k g_{bc}
				\right\rbrace.
				\label{E:CHRISTOFFELSIGMAT}
\end{align}
\end{subequations}

A few of our formulas involve the $\mathcal{S}_{\Timefunction}$-divergence,
i.e., the divergence of $\mathcal{S}_{\Timefunction}$-tangent vectorfields with respect
to the connection $\angD$.
Relative to arbitrary local coordinates $(\vartheta^1,\vartheta^2)$
on $\mathcal{S}_{\Timefunction}$, with $Y = Y^A \frac{\partial}{\partial \vartheta^A}$, 
we have $\angdiv Y = \angD_A Y^A$. 

\subsection{Expressions for the divergence of various vectorfields}
\label{SS:DIVERGNCESOFVARIOUSVECTORFIELDS}
For future use, in the next lemma, 
we provide an expression for the $\widetilde{\nabla}$-divergence of $\widetilde{\Sigma}_{\Timefunction}$-tangent vectorfields
and an expression for the $\angD$-divergence of $\mathcal{S}_{\Timefunction}$-tangent vectorfields.

\begin{lemma}[Expressions for the divergence of various vectorfields]
	\label{L:EXPRESSIONFORWIDETILDESIGMAGRADIENTANDDIVERGENCEINCOORDINATES}
	Let $J$ be a $\widetilde{\Sigma}_{\Timefunction}$-tangent vectorfield defined on
	$\mathcal{M}$. Then relative to the Cartesian coordinates,
	we have the following identities:
	\begin{subequations}
	\begin{align} \label{E:EXPRESSIONFORWIDETILDESIGMAGRADIENTINCOORDINATES}
		\widetilde{\nabla}_{\alpha} J^{\beta}
		& = 
		\toppartialarg{\alpha} J^{\beta}
		-
		\hat{\tophypnorm}^{\beta}
		 J^{\gamma}
		\toppartialarg{\alpha} \hat{\tophypnorm}_{\gamma}
			\\
		& \ \
		+
		\frac{1}{2}
		(\topfirstfund^{-1})^{\delta \beta}
		J^{\kappa}
		\toppartialarg{\alpha} \gfour_{\delta \kappa}
		+
		\frac{1}{2}
		\topproject_{\ \alpha}^{\gamma} 
		(\topfirstfund^{-1})^{\delta \beta}
		J \gfour_{\gamma \delta}
		-
		\frac{1}{2}
		\topproject_{\ \alpha}^{\gamma} 
		J^{\kappa}
		\toppartialuparg{\beta}
		\gfour_{\gamma \kappa},
		\notag
			\\
		\widetilde{\nabla}_{\alpha} J^{\alpha}
		& 
		= 
		\toppartialarg{\alpha} 
		J^{\alpha}
		+
		\frac{1}{2}
		 (\topfirstfund^{-1})^{\alpha \beta}
		J \gfour_{\alpha \beta}
			\label{E:EXPRESSIONFORWIDETILDESIGMADIVERGENCEINCOORDINATES} \\
	& = 
		\partial_{\alpha} J^{\alpha}
		-
		J_{\alpha}
		\hat{\tophypnorm} \hat{\tophypnorm}^{\alpha} 
		-
		(\hat{\tophypnorm} \gfour_{\alpha \beta})
		J^{\alpha}
		\hat{\tophypnorm}^{\beta}
		+
		\frac{1}{2}
		 (\topfirstfund^{-1})^{\alpha \beta}
		J \gfour_{\alpha \beta}.
			\notag
	\end{align}
	\end{subequations}
	
	Moreover, 
	if $Y$ is an $\mathcal{S}_{\Timefunction}$-tangent vectorfield defined on
	$\mathcal{M}$, then relative to the Cartesian coordinates,
	we have the following identities:
	\begin{subequations}
	\begin{align} \label{E:EXPRESSIONFORSPHEREGRADIENTINCOORDINATES}
		\angD_{\alpha} Y^{\beta}
		& = 
		\angpartialarg{\alpha} Y^{\beta}
		-
		\hat{\tophypnorm}^{\beta}
		Y^{\gamma}
		\angpartialarg{\alpha} \hat{\tophypnorm}_{\gamma}
		+
		\spherenormal^{\beta}
		Y^{\gamma}
		\angpartialarg{\alpha} \spherenormal_{\gamma}
			\\
		& \ \
		+
		\frac{1}{2}
		(\gsphere^{-1})^{\delta \beta}
		Y^{\kappa}
		\angpartialarg{\alpha} \gfour_{\delta \kappa}
		+
		\frac{1}{2}
		\sphereproject_{\ \alpha}^{\gamma} 
		(\gsphere^{-1})^{\delta \beta}
		Y \gfour_{\gamma \delta}
		-
		\frac{1}{2}
		\sphereproject_{\ \alpha}^{\gamma} 
		Y^{\kappa}
		\angpartialuparg{\beta}
		\gfour_{\gamma \kappa},
		\notag
			\\
		\angdiv Y
		=
		\angD_{\alpha} Y^{\alpha}
		& 
		= 
		\angpartialarg{\alpha} 
		Y^{\alpha}
		+
		\frac{1}{2}
		(\gsphere^{-1})^{\alpha \beta}
		Y \gfour_{\alpha \beta}.
			\label{E:EXPRESSIONFORSPHEREDIVERGENCEINCOORDINATES} 
	\end{align}
	\end{subequations}
	
	Finally, in the particular case $Y = \angpartialarg{\kappa}$,
	we have the following identity relative to the Cartesian coordinates:
	\begin{align} \label{E:EXPRESSIONFORSPHEREDIVERGENCEOFANGPARTIALVECTORFIELDSINCOORDINATES} 
	\angdiv \angpartialarg{\kappa}
		& 
		= 
		\hat{\tophypnorm}_{\kappa}
	\angpartialarg{\alpha} \hat{\tophypnorm}^{\alpha}
	-
	\spherenormal_{\kappa} 
	\angpartialarg{\alpha} \spherenormal^{\alpha}
		+
		\frac{1}{2}
		(\gsphere^{-1})^{\alpha \beta}
		\angpartialarg{\kappa} \gfour_{\alpha \beta}.
	\end{align}
\end{lemma}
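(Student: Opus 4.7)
The plan is to exploit two standard facts from submanifold geometry:
\textbf{(i)} for a $\widetilde{\Sigma}_{\Timefunction}$-tangent tensor $J$, the induced Levi--Civita derivative is the ambient one projected, namely $\widetilde{\nabla}_{\alpha} J^{\beta} = \topproject_{\ \alpha}^{\gamma} \topproject_{\ \delta}^{\beta} \Dfour_{\gamma} J^{\delta}$, and the analogous formula $\angD_{\alpha} Y^{\beta} = \sphereproject_{\ \alpha}^{\gamma} \sphereproject_{\ \delta}^{\beta} \Dfour_{\gamma} Y^{\delta}$ for an $\mathcal{S}_{\Timefunction}$-tangent vectorfield $Y$;
\textbf{(ii)} the algebraic identities $\topproject_{\ \delta}^{\beta} (\gfour^{-1})^{\delta \lambda} = (\topfirstfund^{-1})^{\beta \lambda}$ and $\sphereproject_{\ \delta}^{\beta} (\gfour^{-1})^{\delta \lambda} = (\gsphere^{-1})^{\beta \lambda}$, both of which follow directly from Def.\,\ref{D:FIRSTFUNDAMENTALFORMSANDPROJECTIONS} and the facts $\hat{\tophypnorm}^{\alpha} \hat{\tophypnorm}_{\alpha} = -1$, $\spherenormal^{\alpha} \hat{\tophypnorm}_{\alpha} = 0$, $\spherenormal^{\alpha} \spherenormal_{\alpha} = 1$. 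Given \textbf{(i)} and \textbf{(ii)}, the proof becomes a direct calculation using \eqref{E:SPACETIMELEVICIVITAACTIONONVECTORFIELD}--\eqref{E:CHRISTOFFELFOUR}.

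First I would prove \eqref{E:EXPRESSIONFORWIDETILDESIGMAGRADIENTINCOORDINATES} by expanding $\Dfour_{\gamma} J^{\delta} = \partial_{\gamma} J^{\delta} + \Chfour_{\gamma\ \kappa}^{\ \delta} J^{\kappa}$ and contracting with $\topproject_{\ \alpha}^{\gamma} \topproject_{\ \delta}^{\beta}$. The $\partial_{\gamma} J^{\delta}$ piece immediately produces $\topproject_{\ \delta}^{\beta}(\toppartialarg{\alpha} J^{\delta})$, which I then split via $\topproject_{\ \delta}^{\beta} = \updelta_{\ \delta}^{\beta} + \hat{\tophypnorm}^{\beta} \hat{\tophypnorm}_{\delta}$; the second piece is rewritten using $\hat{\tophypnorm}_{\delta} J^{\delta} = 0$ and the Leibniz rule to produce the term $-\hat{\tophypnorm}^{\beta} J^{\gamma} \toppartialarg{\alpha} \hat{\tophypnorm}_{\gamma}$ appearing in the formula. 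The three Christoffel pieces produce, upon contracting with $\topproject_{\ \delta}^{\beta}$ and using identity \textbf{(ii)}, precisely the three final terms on RHS~\eqref{E:EXPRESSIONFORWIDETILDESIGMAGRADIENTINCOORDINATES} — the middle one after relabeling dummy indices and recognizing that $\topproject_{\ \alpha}^{\gamma} \partial_{\kappa}$ acting after contraction with $J^{\kappa}$ becomes the directional derivative $\topproject_{\ \alpha}^{\gamma}(J \gfour_{\gamma\delta})$.

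Next I would obtain the trace formula \eqref{E:EXPRESSIONFORWIDETILDESIGMADIVERGENCEINCOORDINATES} by setting $\beta = \alpha$ in \eqref{E:EXPRESSIONFORWIDETILDESIGMAGRADIENTINCOORDINATES} and observing several cancellations: the $\hat{\tophypnorm}$-term drops because $\hat{\tophypnorm}^{\alpha} \toppartialarg{\alpha} = 0$ (by the annihilation property of $\topproject$), and the third and fifth terms cancel using \textbf{(ii)} and the symmetry of $\gfour$. For the alternative expression in \eqref{E:EXPRESSIONFORWIDETILDESIGMADIVERGENCEINCOORDINATES}, I would expand $\toppartialarg{\alpha} J^{\alpha} = \partial_{\alpha} J^{\alpha} + \hat{\tophypnorm}^{\gamma} \hat{\tophypnorm}_{\alpha} \partial_{\gamma} J^{\alpha}$ and use $\hat{\tophypnorm}_{\alpha} J^{\alpha} = 0$ together with the product rule applied to $\hat{\tophypnorm}_{\alpha} = \gfour_{\alpha \beta} \hat{\tophypnorm}^{\beta}$ to produce the two correction terms.

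The formulas \eqref{E:EXPRESSIONFORSPHEREGRADIENTINCOORDINATES}--\eqref{E:EXPRESSIONFORSPHEREDIVERGENCEINCOORDINATES} for the $\mathcal{S}_{\Timefunction}$-connection are then obtained by an identical computation, replacing $\topproject$ with $\sphereproject = \updelta + \hat{\tophypnorm} \hat{\tophypnorm} - \spherenormal \spherenormal$; the only new feature is the appearance of the extra term $+\spherenormal^{\beta} Y^{\gamma} \angpartialarg{\alpha} \spherenormal_{\gamma}$, coming from the Leibniz argument applied to the constraint $\spherenormal_{\gamma} Y^{\gamma} = 0$ (note the opposite sign compared to the $\hat{\tophypnorm}$-term because $\spherenormal^{\alpha} \spherenormal_{\alpha} = +1$ whereas $\hat{\tophypnorm}^{\alpha} \hat{\tophypnorm}_{\alpha} = -1$). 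Finally, \eqref{E:EXPRESSIONFORSPHEREDIVERGENCEOFANGPARTIALVECTORFIELDSINCOORDINATES} follows by substituting $Y^{\alpha} = \sphereproject_{\ \kappa}^{\alpha}$ into \eqref{E:EXPRESSIONFORSPHEREDIVERGENCEINCOORDINATES}, differentiating \eqref{E:SPHEREPROJECTION}, and using once more the annihilation identities $\hat{\tophypnorm}^{\alpha} \angpartialarg{\alpha} = \spherenormal^{\alpha} \angpartialarg{\alpha} = 0$. I expect no real obstacles here: since the definitions of $\topfirstfund^{-1}$ and $\gsphere^{-1}$ have been chosen precisely to make identity \textbf{(ii)} hold, the entire proof is a sequence of index manipulations; the only points requiring attention are keeping track of the signs stemming from $\hat{\tophypnorm}$ being $\gfour$-timelike versus $\spherenormal$ being $\gfour$-spacelike, and correctly applying the Leibniz rule to convert $\hat{\tophypnorm}_{\alpha} \toppartialarg{\beta} J^{\alpha}$ into $-J^{\alpha} \toppartialarg{\beta} \hat{\tophypnorm}_{\alpha}$.
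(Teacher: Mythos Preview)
Your proposal is correct and follows essentially the same approach as the paper: both start from the projection formula $\widetilde{\nabla}_{\alpha} J^{\beta} = \topproject_{\ \alpha}^{\gamma} \topproject_{\ \delta}^{\beta} \Dfour_{\gamma} J^{\delta}$ (resp.\ its $\sphereproject$-analog), expand $\Dfour$ via \eqref{E:SPACETIMELEVICIVITAACTIONONVECTORFIELD}--\eqref{E:CHRISTOFFELFOUR}, use the tangency constraint $\hat{\tophypnorm}_{\delta} J^{\delta} = 0$ together with the Leibniz rule, and then trace and simplify. The paper even records the intermediate computation \eqref{E:FIRSTSTEPEXPRESSIONFORWIDETILDESIGMADIVERGENCEINCOORDINATES} matching your outline, and obtains \eqref{E:EXPRESSIONFORSPHEREDIVERGENCEOFANGPARTIALVECTORFIELDSINCOORDINATES} by the same substitution $(\angpartialarg{\kappa})^{\alpha} = \updelta_{\kappa}^{\alpha} + \hat{\tophypnorm}_{\kappa}\hat{\tophypnorm}^{\alpha} - \spherenormal_{\kappa}\spherenormal^{\alpha}$ that you propose.
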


\begin{proof}
	We first prove \eqref{E:EXPRESSIONFORWIDETILDESIGMAGRADIENTINCOORDINATES}.
	As we mentioned in Subsect.\,\ref{SS:LEVICIVITACONNECTIONS} (see also \cite{rW1984}),
	we have
	$
		\widetilde{\nabla}_{\alpha} J^{\beta}
		=
		\topproject_{\ \alpha}^{\gamma} \topproject_{\ \delta}^{\beta} \Dfour_{\gamma} J^{\delta}
	$,
	where $\topproject$ is the $\widetilde{\Sigma}_{\Timefunction}$ projection tensorfield
	(see \eqref{E:TOPPROJECT}).
	Thus, in view of 
	\eqref{E:TOPPROJECT},
	\eqref{E:PROJECTIONSOFCARTESIANVECTORFIELDS}-\eqref{E:RAISEDPROJECTIONSOFCARTESIANVECTORFIELDS},
	\eqref{E:SPACETIMELEVICIVITAACTIONONVECTORFIELD}-\eqref{E:CHRISTOFFELFOUR}, 
	and the assumption that $J$ is $\widetilde{\Sigma}_{\Timefunction}$-tangent
	(which implies in particular that $J^{\alpha} \hat{\tophypnorm}_{\alpha} = 0$
	and
	$(\partial_{\beta} J^{\alpha}) \hat{\tophypnorm}_{\alpha} = - J^{\alpha} \partial_{\beta} \tophypnorm_{\alpha}$),
	we compute that relative to the Cartesian coordinates, we have
	\begin{align} \label{E:FIRSTSTEPEXPRESSIONFORWIDETILDESIGMADIVERGENCEINCOORDINATES}
		\widetilde{\nabla}_{\alpha} J^{\beta}
		& =
		\topproject_{\ \alpha}^{\gamma} 
		\topproject_{\ \delta}^{\beta}
		\partial_{\gamma} J^{\delta}
		+
		\topproject_{\ \alpha}^{\gamma} 
		(\topfirstfund^{-1})^{\delta \beta}
		\Chfour_{\gamma \delta \kappa}
		J^{\kappa}
			\\
		& =
		\toppartialarg{\alpha} J^{\beta}
		-
		\hat{\tophypnorm}^{\beta}
		 J^{\gamma}
		\toppartialarg{\alpha} \hat{\tophypnorm}_{\gamma}
			\notag \\
		& \ \
		+
		\frac{1}{2}
		(\topfirstfund^{-1})^{\delta \beta}
		J^{\kappa}
		\toppartialarg{\alpha} \gfour_{\delta \kappa}
		+
		\frac{1}{2}
		\topproject_{\ \alpha}^{\gamma} 
		(\topfirstfund^{-1})^{\delta \beta}
		J \gfour_{\gamma \delta}
		-
		\frac{1}{2}
		\topproject_{\ \alpha}^{\gamma} 
		J^{\kappa}
		\toppartialuparg{\beta}
		\gfour_{\gamma \kappa}
		\notag
	\end{align}
	as desired.
	
	To prove \eqref{E:EXPRESSIONFORWIDETILDESIGMADIVERGENCEINCOORDINATES},
	we trace \eqref{E:EXPRESSIONFORWIDETILDESIGMAGRADIENTINCOORDINATES} over the indices $\alpha$ and $\beta$
	and carry out straightforward computations.
	
	The identities \eqref{E:EXPRESSIONFORSPHEREGRADIENTINCOORDINATES}-\eqref{E:EXPRESSIONFORSPHEREDIVERGENCEINCOORDINATES}
	can be proved using similar arguments that rely on the
	identity 
	$
		\angD_{\alpha} J^{\beta}
		=
		\sphereproject_{\ \alpha}^{\gamma} \sphereproject_{\ \delta}^{\beta} \Dfour_{\gamma} J^{\delta}
	$
	and the expression \eqref{E:SPHEREPROJECTION}; we omit the details.
	
	\eqref{E:EXPRESSIONFORSPHEREDIVERGENCEOFANGPARTIALVECTORFIELDSINCOORDINATES} 
	follows from \eqref{E:EXPRESSIONFORSPHEREDIVERGENCEINCOORDINATES}
	and the following identity for Cartesian components,
	which follows from
	\eqref{E:SPHEREPROJECTION}
	and
	\eqref{E:PROJECTIONSOFCARTESIANVECTORFIELDS}:
	$
	(\angpartialarg{\kappa})^{\alpha}
	=
	\updelta_{\kappa}^{\alpha}
	+
	\hat{\tophypnorm}_{\kappa}
	\hat{\tophypnorm}^{\alpha}
	-
	\spherenormal_{\kappa}
	\spherenormal^{\alpha}
	$.

\end{proof}

\subsection{Gradients, Euclidean metrics, and pointwise norms with respect to various metrics}
\label{SS:PROPERTIESOFMETRICS}
Our ensuing analysis involves several kinds of gradients of tensorfields and 
pointwise norms of tensorfields with respect to various Riemannian metrics.
In this subsection, we provide the relevant definitions.
We also define the standard Euclidean metrics on $\mathbb{R}^{1+3}$ and $\Sigma_t$.
We also remind the reader that, as is stated in Convention~\ref{C:DISPLAYONLYSPATIAL}, 
we typically only display the Cartesian spatial indices of $\Sigma_t$-tangent tensorfields.

\subsubsection{Gradients}
\label{SSS:GRADIENTS}
Let $\pmb{\upxi}$ be a type $\binom{m}{n}$ tensorfield with Cartesian components 
$\pmb{\upxi}_{\beta_1 \cdots \beta_n}^{\alpha_1 \cdots \alpha_m}$.
\begin{itemize}
\item $\pmb{\partial} \pmb{\upxi}$ denotes the type $\binom{m}{n+1}$ tensorfield with Cartesian components 
	$\partial_{\beta_1} \pmb{\upxi}_{\beta_2 \cdots \beta_{n+1}}^{\alpha_1 \cdots \alpha_m}$.
	\item $\toppartial \pmb{\upxi}$ denotes the type $\binom{m}{n+1}$ tensorfield with Cartesian components 
	$\toppartialarg{\beta_1} \pmb{\upxi}_{\beta_2 \cdots \beta_{n+1}}^{\alpha_1 \cdots \alpha_m}$,
	where $\lbrace \toppartialarg{\alpha} \rbrace_{\alpha=0,1,2,3}$ denotes the 
	$\widetilde{\Sigma}_{\Timefunction}$-tangent vectorfields from \eqref{E:PROJECTIONSOFCARTESIANVECTORFIELDS}.
	\item When $\underline{\mathcal{H}}$ is $\gfour$-spacelike,
	$\sidepartial \pmb{\upxi}$ denotes the type $\binom{m}{n+1}$ tensorfield with Cartesian components 
	$\sidepartialarg{\beta_1} \pmb{\upxi}_{\beta_2 \cdots \beta_{n+1}}^{\alpha_1 \cdots \alpha_m}$,
	where $\lbrace \sidepartialarg{\alpha} \rbrace_{\alpha=0,1,2,3}$ denotes the 
	$\underline{\mathcal{H}}$-tangent vectorfields from \eqref{E:PROJECTIONSOFCARTESIANVECTORFIELDS}.
	\item $\angpartial \pmb{\upxi}$ denotes the type $\binom{m}{n+1}$ tensorfield with Cartesian spatial components 
	$\angpartialarg{\beta_1} \pmb{\upxi}_{\beta_2 \cdots \beta_{n+1}}^{\alpha_1 \cdots \alpha_m}$,
	where $\lbrace \angpartialarg{\alpha} \rbrace_{\alpha=0,1,2,3}$ denotes the 
	$\mathcal{S}_{\Timefunction}$-tangent vectorfields from \eqref{E:PROJECTIONSOFCARTESIANVECTORFIELDS}.
\end{itemize}

\begin{itemize}
\item Given any $\Sigma_t$-tangent type $\binom{m}{n}$ tensorfield $\upxi$ with Cartesian components 
$\upxi_{b_1 \cdots b_n}^{a_1 \cdots a_m}$,
$\partial \upxi$ denotes the $\Sigma_t$-tangent type $\binom{m}{n+1}$ tensorfield 
with Cartesian spatial components 
$\partial_{b_1} \upxi_{b_2 \cdots b_{n+1}}^{a_1 \cdots a_m}$.
\end{itemize}

\subsubsection{The Euclidean metrics on $\mathbb{R}^{1+3}$ and $\Sigma_t$}
We let $\euc$ denote the standard Euclidean metric on $\mathbb{R}^{1+3}$, i.e., relative to the Cartesian coordinates, 
$\euc_{\alpha \beta} = \updelta_{\alpha \beta}$, where $\updelta_{\alpha \beta}$ is the Kronecker delta. Moreover, $\euc^{-1}$ denotes the corresponding 
inverse metric. Similarly, $\euct_{ij} = \updelta_{ij}$ denotes the standard Euclidean metric on $\Sigma_t$, and $\euct^{-1}$ denotes 
the corresponding inverse metric.

\subsubsection{Pointwise seminorms and norms}
\label{SSS:GRADIENTSANDPOINTWISENORMS}

\begin{itemize}
\item Given any spacetime tensorfield $\upxi_{\beta_1 \cdots \beta_n}^{\alpha_1 \cdots \alpha_m}$,
\[|\upxi|_{\euc}
	:= 
	\sqrt{\euc_{\alpha_1 \gamma_1} \cdots \euc_{\alpha_m \gamma_m} (\euc^{-1})^{\beta_1 \delta_1} \cdots (\euc^{-1})^{\beta_n \delta_n}
	\upxi_{\beta_1 \cdots \beta_n}^{\alpha_1 \cdots \alpha_m} \upxi_{\delta_1 \cdots \delta_n}^{\gamma_1 \cdots \gamma_m}}
\]
denotes its pointwise norm with respect to $\euc$. 
\item Given any $\Sigma_t$-tangent spacetime tensorfield $\upxi_{b_1 \cdots b_n}^{a_1 \cdots a_m}$,
\[
	|\upxi|_g
	:= \sqrt{g_{a_1 c_1} \cdots g_{a_m c_m} (g^{-1})^{b_1 d_1} \cdots (g^{-1})^{b_n d_n}
	\upxi_{b_1 \cdots b_n}^{a_1 \cdots a_m} \upxi_{d_1 \cdots d_n}^{c_1 \cdots c_m}}
\]
denotes its pointwise norm with respect to $g$. We define the pointwise norm of $\upxi$ with respect
to the Euclidean metric $\euct$ in an analogous fashion.
For example, relative to the Cartesian spatial 
coordinates, we have $|\partial \vortrenormalized|_{\euct}^2 = \sum_{a,b=1}^3 (\partial_a \vortrenormalized^b)^2$.
\item Given any type $\binom{m}{n}$ tensorfield 
$\upxi_{\beta_1 \cdots \beta_n}^{\alpha_1 \cdots \alpha_m}$,
\[
	|\upxi|_{\topfirstfund}
	:= 
	\sqrt{\topfirstfund_{\alpha_1 \gamma_1} \cdots \topfirstfund_{\alpha_m \gamma_m} (\topfirstfund^{-1})^{\beta_1 \delta_1} 
	\cdots (\topfirstfund^{-1})^{\beta_n \delta_n}
	\upxi_{\beta_1 \cdots \beta_n}^{\alpha_1 \cdots \alpha_m} \upxi_{\delta_1 \cdots \delta_n}^{\gamma_1 \cdots \gamma_m}}
\]
denotes its pointwise seminorm with respect to $\topfirstfund$.
Note that $|\cdot|_{\topfirstfund}$ is a norm on $\widetilde{\Sigma}_{\Timefunction}$-tangent tensorfields.
\item When $\underline{\mathcal{H}}$ is $\gfour$-spacelike, 
given any type $\binom{m}{n}$ tensorfield 
$\upxi_{\beta_1 \cdots \beta_n}^{\alpha_1 \cdots \alpha_m}$,
\[
	|\upxi|_{\sidefirstfund}
	:= 
	\sqrt{\sidefirstfund_{\alpha_1 \gamma_1} \cdots \sidefirstfund_{\alpha_m \gamma_m} (\sidefirstfund^{-1})^{\beta_1 \delta_1} \cdots 
	(\sidefirstfund^{-1})^{\beta_n \delta_n}
	\upxi_{\beta_1 \cdots \beta_n}^{\alpha_1 \cdots \alpha_m} \upxi_{\delta_1 \cdots \delta_n}^{\gamma_1 \cdots \gamma_m}}
\]
denotes its pointwise seminorm with respect to $\sidefirstfund$.
Note that $|\cdot|_{\sidefirstfund}$ is a norm on $\underline{\mathcal{H}}$-tangent tensorfields.
\item Given any type $\binom{m}{n}$ tensorfield $\upxi_{\beta_1 \cdots \beta_n}^{\alpha_1 \cdots \alpha_m}$,
\[
|\upxi|_{\gsphere}
:= 
\sqrt{\gsphere_{\alpha_1 \gamma_1} \cdots \gsphere_{\alpha_m \gamma_m} (\gsphere^{-1})^{\beta_1 \delta_1} \cdots (\gsphere^{-1})^{\beta_n \delta_n}
\upxi_{\beta_1 \cdots \beta_n}^{\alpha_1 \cdots \alpha_m} \upxi_{\delta_1 \cdots \delta_n}^{\gamma_1 \cdots \gamma_m}}
\]
denotes its pointwise seminorm with respect to $\gsphere$.
Note that $|\cdot|_{\gsphere}$ is a norm on $\mathcal{S}_{\Timefunction}$-tangent tensorfields.
\end{itemize}

If $\vec{\varphi}$ is an array of $\Sigma_t$-tangent tensorfields, 
then $|\vec{\varphi}|_{\euct}^2$ denotes the sum of the squares of the norms $|\cdot|_{\euct}$
of the elements of the array. Norms of arrays with respect to other metrics are defined in an analogous fashion, 
e.g., if $\underline{\mathcal{H}}$ is $\gfour$-spacelike
and if $\vec{\varphi}$ is an array of $\underline{\mathcal{H}}$-tangent tensorfields, 
then $|\vec{\varphi}|_{\sidefirstfund}^2$ denotes the sum of the squares of the seminorms $|\cdot|_{\sidefirstfund}$
of the elements of the array.

\section{The coercive quadratic form, the elliptic-hyperbolic divergence identities, and preliminary analysis of the boundary integrands}
\label{S:ELLIPTICHYPERBOLICDIVERGENCEIDENTITYANDPRELIMANALYSISOFOUNDARY}
Our first main goal in this section is to define the coercive quadratic form $\mathscr{Q}$
featured in our main integral identities (i.e., in Theorem~\ref{T:MAINREMARKABLESPACETIMEINTEGRALIDENTITY})
and to exhibit its coerciveness properties.
Our second main result in this section is Lemma~\ref{L:BASICDIVERGENCEIDENTTIY},
which provides the elliptic Hodge-type divergence identity
along $\widetilde{\Sigma}_{\Timefunction}$
that forms the starting point for the proof of Theorem~\ref{T:MAINREMARKABLESPACETIMEINTEGRALIDENTITY}.
Our third main result in this section is Lemma~\ref{L:PRELIMINARYANALYSISOFBOUNDARYINTEGRAND},
in which we provide a preliminary analysis of the boundary integrand terms in the elliptic Hodge-type identities.
The lemma shows 
that most boundary integrand terms involve only $\underline{\mathcal{H}}$-tangential derivatives
of terms enjoying a compatible amount of regularity.
The remaining boundary terms
also enjoy these same good properties, 
but the proof requires 
an integration with respect to $\Timefunction$ and substantial additional arguments 
that, unlike the results of the present section, 
exploit the special properties of the compressible Euler formulation
provided by Theorem~\ref{T:GEOMETRICWAVETRANSPORTSYSTEM};
see Prop.\,\ref{P:STRUCTUREOFERRORINTEGRALS} and Theorem~\ref{T:STRUCTUREOFERRORTERMS}
for the detailed statements showing that \emph{all} boundary integrand terms have these good properties.

\subsection{A solution-adapted coercive quadratic form}
\label{SS:SOLUTIONADPATEDCOERCIVEQUADRATICFORM}

\subsubsection{A $\widetilde{\Sigma}_{\Timefunction}$-tangent vectorfield that arises in the analysis}
\label{SSS:PROJECTIONOFPONTTOTILDESIGMA}
The coercive quadratic form that we provide in Def.\,\ref{D:QUADRATICFORMSFORCONTROLLINGFIRSTDERIVATIVESOFSPECIFICVORTICITYANDENTROPYGRADIENT}
involves the $\widetilde{\Sigma}_{\Timefunction}$-tangent vectorfield $\projectedtransport$ from the following definition.

\begin{definition}[A rescaled, $\widetilde{\Sigma}_{\Timefunction}$-projected version of $\Transport$]
\label{D:PROJECTIONOFPONTTOTILDESIGMA}
	Let $\lengthoftophypnorm > 0$ be the scalar function defined in \eqref{E:LENGTHOFTOPHYPNORM},
	and let $\topproject$ be the $\gfour$-orthogonal projection onto $\widetilde{\Sigma}_{\Timefunction}$ defined in \eqref{E:TOPPROJECT}.
	We define $\projectedtransport$ to be the $\widetilde{\Sigma}_{\Timefunction}$-tangent vectorfield with the following
	Cartesian components:
\begin{align} \label{E:PROJECTIONOFTRANSPORTONTTOTILDESIGMA}
		\projectedtransport^{\alpha}
		& 
		:= \lengthoftophypnorm (\topproject \Transport)^{\alpha}
		= 
		\lengthoftophypnorm \topproject_{\ \beta}^{\alpha} \Transport^{\beta}.
\end{align}
\end{definition}

In the next lemma, we derive some simple identities involving $\projectedtransport$.

\begin{lemma}[Identities involving $\projectedtransport$]
\label{L:IDENTITIESINVOLVINGSIGMATILDEPROJECTEDTRANSPORT}
Let $\projectedtransport$ be vectorfield
defined by \eqref{E:PROJECTIONOFTRANSPORTONTTOTILDESIGMA},
let $\lengthoftophypnorm > 0$ be the scalar function defined in \eqref{E:LENGTHOFTOPHYPNORM},
let $\tophypnorm$ be the vectorfield from Def.\,\ref{D:HYPNORMANDSPHEREFORMDEFS},
and let $\hat{\tophypnorm}$ be the vectorfield from Def.\,\ref{D:LENGTHOFVARIOUSVECTORFIELDSETC}.
Then the following identities hold:
\begin{align} \label{E:SIGMATILDEPROJECTEDTRANSPORTIDENTITY}
\projectedtransport^{\alpha}
& 
=
\lengthoftophypnorm
\Transport^{\alpha}
-
\frac{1}{\lengthoftophypnorm}
\tophypnorm^{\alpha}
=
\lengthoftophypnorm
\Transport^{\alpha}
-
\hat{\tophypnorm}^{\alpha},
\end{align}
	
	\begin{align}  \label{E:LENGTHOFPROJECTIONOFPONTTOTILDESIGMA}
	|\projectedtransport|_{\topfirstfund}^2
	&:=
	\topfirstfund(\projectedtransport,\projectedtransport)
	= 1 - \lengthoftophypnorm^2
	< 1.
\end{align}

	Moreover, the following identities hold, where
	$\topfirstfund^{-1}$ is the inverse first fundamental form of $\widetilde{\Sigma}_{\Timefunction}$ from 
	Def.\,\ref{D:FIRSTFUNDAMENTALFORMSANDPROJECTIONS},
	$g^{-1}$ is the inverse first fundamental form of $\Sigma_t$ from 
	Def.\,\ref{D:FIRSTFUNDAMENTALFORMSANDPROJECTIONS},
	and
	$\Sigmatproject$ is the
	$\gfour$-orthogonal projection onto $\Sigma_t$ from Def.\,\ref{D:FIRSTFUNDAMENTALFORMSANDPROJECTIONS}:
	\begin{subequations}
	\begin{align} \label{E:DECOMPOSETILDEMETRICINTOSIGMATMETRICMINUSATANGENTIALTENSORPRODUCT}
	(\topfirstfund^{-1})^{\alpha \beta}
	& 
	= 
	(g^{-1})^{\alpha \beta}
	+
	\projectedtransport^{\alpha} \projectedtransport^{\beta}
	-
	\lengthoftophypnorm \Transport^{\alpha} \projectedtransport^{\beta}
	-
	\lengthoftophypnorm \projectedtransport^{\alpha} \Transport^{\beta} 
	-
	(1 - \lengthoftophypnorm^2)
	\Transport^{\alpha}
	\Transport^{\beta},
		\\
	\topproject_{\ \beta}^{\alpha}
	& 
	= 
	\Sigmatproject_{\ \beta}^{\alpha}
	+
	\projectedtransport^{\alpha} \projectedtransport_{\beta}
	-
	\lengthoftophypnorm \Transport^{\alpha} \projectedtransport_{\beta}
	-
	\lengthoftophypnorm \projectedtransport^{\alpha} \Transport_{\beta} 
	-
	(1 - \lengthoftophypnorm^2)
	\Transport^{\alpha}
	\Transport_{\beta}.
	\label{E:DECOMPOSETILDEPROJECTIONINTOSIGMATPROJECTIONMINUSATANGENTIALTENSORPRODUCT}
\end{align}
\end{subequations}

Finally, if $\SigmatTan$ is a $\Sigma_t$-tangent vectorfield defined on $\mathcal{M}$, 
then the following identity holds:
\begin{align} \label{E:SIGMATILDEDIVERGENCEIDENTITYFORSIGMATTANGENTVECTORFIELDS}
	\topproject_{\ \beta}^{\alpha}
	\partial_{\alpha} \SigmatTan^{\beta}
	 & =
		\partial_a \SigmatTan^a
		+
	 \projectedtransport_{\alpha}
		\projectedtransport \SigmatTan^{\alpha}
	-
	\lengthoftophypnorm \projectedtransport_{\alpha}
	 \Transport \SigmatTan^{\alpha}.
\end{align}
\end{lemma}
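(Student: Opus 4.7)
The plan is to prove all five identities by direct algebraic manipulation, using only the projection formulas in Def.\,\ref{D:FIRSTFUNDAMENTALFORMSANDPROJECTIONS} (specifically \eqref{E:SIGMATPROJECT}, \eqref{E:TOPPROJECT}, \eqref{E:INVERSEFIRSTFUNDAMENTALFORMSIGMAT}, \eqref{E:TOPINVERSEFIRSTFUNDAMENTALFORMHYPERSURFACE}), the three scalar inner products $\gfour(\Transport,\Transport) = -1$ (from \eqref{E:TRANSPORTISUNITLENGTHANDTIMELIKE}), $\gfour(\hat{\tophypnorm},\hat{\tophypnorm}) = -1$, and $\gfour(\Transport,\hat{\tophypnorm}) = -1/\lengthoftophypnorm$ (from \eqref{E:INNERPRODUCTOFTRANPORTANDFUTUREUNITNORMALTOTOPHYPERSURFACE}), plus the crucial observation that $\Transport_{\beta} = -\updelta_{\beta}^{0}$ from \eqref{E:TRANSPORTONEFORMIDENTITY}. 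There is no conceptual obstacle; the only thing to watch is the sign bookkeeping and the distinction between $\Transport$ (a vectorfield) and its $\gfour$-dual one-form.

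For \eqref{E:SIGMATILDEPROJECTEDTRANSPORTIDENTITY} I substitute \eqref{E:TOPPROJECT} into Def.\,\ref{D:PROJECTIONOFPONTTOTILDESIGMA} to get $\topproject_{\ \beta}^{\alpha}\Transport^{\beta} = \Transport^{\alpha} + \hat{\tophypnorm}^{\alpha}(\hat{\tophypnorm}_{\beta}\Transport^{\beta}) = \Transport^{\alpha} - \hat{\tophypnorm}^{\alpha}/\lengthoftophypnorm$, then multiply by $\lengthoftophypnorm$ and use $\hat{\tophypnorm} = \tophypnorm/\lengthoftophypnorm$ from \eqref{E:TOPUNITHYPERSURFACENORMAL} for both forms. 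For \eqref{E:LENGTHOFPROJECTIONOFPONTTOTILDESIGMA}, since $\projectedtransport$ is $\widetilde{\Sigma}_{\Timefunction}$-tangent, Lemma~\ref{L:BASICPROPSOFFUNDAMENTALFORMSANDPROJECTIONS} lets me replace $\topfirstfund$ by $\gfour$; expanding $\gfour(\lengthoftophypnorm\Transport - \hat{\tophypnorm},\lengthoftophypnorm\Transport - \hat{\tophypnorm})$ with the three scalar inner products yields $-\lengthoftophypnorm^{2} + 2 - 1 = 1-\lengthoftophypnorm^{2}$, and the strict inequality follows from $\lengthoftophypnorm > 0$. For \eqref{E:DECOMPOSETILDEMETRICINTOSIGMATMETRICMINUSATANGENTIALTENSORPRODUCT} and \eqref{E:DECOMPOSETILDEPROJECTIONINTOSIGMATPROJECTIONMINUSATANGENTIALTENSORPRODUCT}, I subtract \eqref{E:INVERSEFIRSTFUNDAMENTALFORMSIGMAT} from \eqref{E:TOPINVERSEFIRSTFUNDAMENTALFORMHYPERSURFACE} (respectively \eqref{E:SIGMATPROJECT} from \eqref{E:TOPPROJECT}) to obtain
\[
(\topfirstfund^{-1})^{\alpha\beta} - (g^{-1})^{\alpha\beta} = \hat{\tophypnorm}^{\alpha}\hat{\tophypnorm}^{\beta} - \Transport^{\alpha}\Transport^{\beta},
\]
and similarly for the $\binom{1}{1}$ versions; inverting \eqref{E:SIGMATILDEPROJECTEDTRANSPORTIDENTITY} to $\hat{\tophypnorm}^{\alpha} = \lengthoftophypnorm\Transport^{\alpha} - \projectedtransport^{\alpha}$ and expanding the product gives a $\Transport^{\alpha}\Transport^{\beta}$ coefficient of $\lengthoftophypnorm^{2} - 1 = -(1-\lengthoftophypnorm^{2})$ together with the two cross terms and the quadratic-in-$\projectedtransport$ term, matching the claimed decompositions.

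Finally, for \eqref{E:SIGMATILDEDIVERGENCEIDENTITYFORSIGMATTANGENTVECTORFIELDS} I insert \eqref{E:DECOMPOSETILDEPROJECTIONINTOSIGMATPROJECTIONMINUSATANGENTIALTENSORPRODUCT} into $\topproject_{\ \beta}^{\alpha}\partial_{\alpha}\SigmatTan^{\beta}$. The key simplification is that $\Transport_{\beta} = -\updelta_{\beta}^{0}$ has constant Cartesian components and $\SigmatTan^{0} \equiv 0$ (since $\SigmatTan$ is $\Sigma_t$-tangent), so $\Transport_{\beta}\partial_{\alpha}\SigmatTan^{\beta} = -\partial_{\alpha}\SigmatTan^{0} = 0$; this annihilates the two terms containing $\Transport_{\beta}$ in the decomposition of $\topproject_{\ \beta}^{\alpha}$. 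Similarly, $\Sigmatproject_{\ \beta}^{\alpha}\partial_{\alpha}\SigmatTan^{\beta} = \partial_{\beta}\SigmatTan^{\beta} = \partial_{a}\SigmatTan^{a}$ since the correction $\Transport^{\alpha}\Transport_{\beta}\partial_{\alpha}\SigmatTan^{\beta}$ vanishes for the same reason. The two surviving terms are $\projectedtransport^{\alpha}\projectedtransport_{\beta}\partial_{\alpha}\SigmatTan^{\beta}$ and $-\lengthoftophypnorm\Transport^{\alpha}\projectedtransport_{\beta}\partial_{\alpha}\SigmatTan^{\beta}$, which I rewrite as $\projectedtransport_{\alpha}\projectedtransport\SigmatTan^{\alpha}$ and $-\lengthoftophypnorm\projectedtransport_{\alpha}\Transport\SigmatTan^{\alpha}$ after relabelling indices, producing \eqref{E:SIGMATILDEDIVERGENCEIDENTITYFORSIGMATTANGENTVECTORFIELDS}.
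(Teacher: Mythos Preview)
Your proposal is correct and follows essentially the same approach as the paper: both proofs derive \eqref{E:SIGMATILDEPROJECTEDTRANSPORTIDENTITY} directly from \eqref{E:TOPPROJECT} and \eqref{E:INNERPRODUCTOFTRANPORTANDFUTUREUNITNORMALTOTOPHYPERSURFACE}, compute \eqref{E:LENGTHOFPROJECTIONOFPONTTOTILDESIGMA} by expanding $\gfour(\projectedtransport,\projectedtransport)$, obtain \eqref{E:DECOMPOSETILDEMETRICINTOSIGMATMETRICMINUSATANGENTIALTENSORPRODUCT}--\eqref{E:DECOMPOSETILDEPROJECTIONINTOSIGMATPROJECTIONMINUSATANGENTIALTENSORPRODUCT} by writing $(\topfirstfund^{-1})^{\alpha\beta} = (g^{-1})^{\alpha\beta} + \hat{\tophypnorm}^{\alpha}\hat{\tophypnorm}^{\beta} - \Transport^{\alpha}\Transport^{\beta}$ and substituting $\hat{\tophypnorm} = \lengthoftophypnorm\Transport - \projectedtransport$, and then use $\Transport_{\beta}\partial_{\alpha}\SigmatTan^{\beta} = 0$ (from \eqref{E:TRANSPORTONEFORMIDENTITY} and $\SigmatTan^{0}=0$) to reduce \eqref{E:DECOMPOSETILDEPROJECTIONINTOSIGMATPROJECTIONMINUSATANGENTIALTENSORPRODUCT} to \eqref{E:SIGMATILDEDIVERGENCEIDENTITYFORSIGMATTANGENTVECTORFIELDS}. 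The only cosmetic difference is that the paper obtains \eqref{E:DECOMPOSETILDEPROJECTIONINTOSIGMATPROJECTIONMINUSATANGENTIALTENSORPRODUCT} by lowering an index in \eqref{E:DECOMPOSETILDEMETRICINTOSIGMATMETRICMINUSATANGENTIALTENSORPRODUCT} rather than repeating the subtraction with \eqref{E:SIGMATPROJECT} and \eqref{E:TOPPROJECT}.
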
	

\begin{proof}
	\eqref{E:SIGMATILDEPROJECTEDTRANSPORTIDENTITY} is a simple consequence of definition \eqref{E:PROJECTIONOFTRANSPORTONTTOTILDESIGMA},
	\eqref{E:EQUIVALENTFUTURENORMALTOTOPHYPERSURFACE},
	\eqref{E:TOPPROJECT},
	and 
	\eqref{E:TOPUNITHYPERSURFACENORMAL}.
	\eqref{E:LENGTHOFPROJECTIONOFPONTTOTILDESIGMA} then follows from
	\eqref{E:SIGMATILDEPROJECTEDTRANSPORTIDENTITY},
	\eqref{E:TRANSPORTISUNITLENGTHANDTIMELIKE},
	\eqref{E:EQUIVALENTFUTURENORMALTOTOPHYPERSURFACE},
	and \eqref{E:LENGTHOFTOPHYPNORM}.
	
	To prove \eqref{E:DECOMPOSETILDEMETRICINTOSIGMATMETRICMINUSATANGENTIALTENSORPRODUCT}, we first use
	\eqref{E:INVERSEFIRSTFUNDAMENTALFORMSIGMAT}
	and
	\eqref{E:TOPINVERSEFIRSTFUNDAMENTALFORMHYPERSURFACE}
	to express
	$
	(\topfirstfund^{-1})^{\alpha \beta}
	=
	(g^{-1})^{\alpha \beta}
	-
	\Transport^{\alpha} \Transport^{\beta}
	+
	\hat{\tophypnorm}^{\alpha} \hat{\tophypnorm}^{\beta}
	$.
	We then use \eqref{E:SIGMATILDEPROJECTEDTRANSPORTIDENTITY}
	to express the two factors of $\hat{\tophypnorm}$ 
	in terms of $\projectedtransport$ and $\Transport$,
	which in total yields \eqref{E:DECOMPOSETILDEMETRICINTOSIGMATMETRICMINUSATANGENTIALTENSORPRODUCT}.
	
	\eqref{E:DECOMPOSETILDEPROJECTIONINTOSIGMATPROJECTIONMINUSATANGENTIALTENSORPRODUCT} follows from 
	lowering the index $\beta$ in \eqref{E:DECOMPOSETILDEMETRICINTOSIGMATMETRICMINUSATANGENTIALTENSORPRODUCT} with $\gfour$.
	
	\eqref{E:SIGMATILDEDIVERGENCEIDENTITYFORSIGMATTANGENTVECTORFIELDS}
	follows from \eqref{E:DECOMPOSETILDEPROJECTIONINTOSIGMATPROJECTIONMINUSATANGENTIALTENSORPRODUCT},
	\eqref{E:SIGMATPROJECT},
	and the fact that by \eqref{E:TRANSPORTONEFORMIDENTITY}, 
	$\Transport_{\beta} \partial_{\alpha} \SigmatTan^{\beta} = 0$ when $\SigmatTan$ is $\Sigma_t$-tangent.
\end{proof}

\subsubsection{A solution-adapted coercive quadratic form}
\label{SSS:SOLUTIONADAPATEDCOERCIVEQUADRATICFORMS}
We now define the quadratic form $\mathscr{Q}$ that we use to control the first derivatives of the
specific vorticity and entropy gradient. $\mathscr{Q}$ is the main integrand factor on the left-hand 
side of the integral identities provided by Theorem~\ref{T:MAINREMARKABLESPACETIMEINTEGRALIDENTITY}.
In Lemma~\ref{L:POSITIVITYPROPERTIESOFVARIOUSQUADRATICFORMS}, we exhibit the coerciveness of $\mathscr{Q}$.

\begin{definition}[A solution-adapted coercive quadratic form for controlling the first derivatives of $\vortrenormalized$ and $\GradEnt$]
\label{D:QUADRATICFORMSFORCONTROLLINGFIRSTDERIVATIVESOFSPECIFICVORTICITYANDENTROPYGRADIENT}
We define the quadratic form $\mathscr{Q}(\cdot,\cdot)$
on type $\binom{1}{1}$ tensorfields $\mathbf{U}_{\ \alpha}^{\beta}$ as follows relative to the Cartesian coordinates,
where $\topfirstfund$ and $\topfirstfund^{-1}$ are respectively the first fundamental form 
and inverse first fundamental form
of $\widetilde{\Sigma}_{\Timefunction}$ from 
Def.\,\ref{D:FIRSTFUNDAMENTALFORMSANDPROJECTIONS}:
\begin{align} \label{E:MAINQUADRATICFORMFORCONTROLLINGFIRSTDERIVATIVESOFSPECIFICVORTICITYANDENTROPYGRADIENT}
		\mathscr{Q}(\mathbf{U},\mathbf{U})
		& 
		:=
		|\topproject \mathbf{U}|_{\topfirstfund}^2
		-
		(\projectedtransport_{\beta} \projectedtransport^{\alpha} \mathbf{U}_{\ \alpha}^{\beta})^2
			\\
	& \ \
		+
		\topfirstfund_{\gamma \delta} (\Transport^{\alpha} \mathbf{U}_{\ \alpha}^{\gamma}) (\Transport^{\beta} \mathbf{U}_{\ \beta}^{\delta})
		+
		(\topfirstfund^{-1})^{\alpha \beta} (\Transport_{\gamma} \mathbf{U}_{\ \alpha}^{\gamma}) (\Transport_{\delta} \mathbf{U}_{\ \beta}^{\delta})
		+
		(\Transport_{\beta} \Transport^{\alpha} \mathbf{U}_{\ \alpha}^{\beta})^2.
		\notag
\end{align}

\end{definition}

\subsubsection{The positive definiteness of $\mathscr{Q}(\cdot,\cdot)$}
\label{SSS:COERCIVENESSOFQUADRATICFORM}
In the next lemma, we exhibit the positive definite nature of $\mathscr{Q}(\cdot,\cdot)$.

\begin{lemma}[Positivity properties of the quadratic form]
	\label{L:POSITIVITYPROPERTIESOFVARIOUSQUADRATICFORMS}
	Recall that $\widetilde{\Sigma}_{\Timefunction}$ is $\gfour$-spacelike by assumption. Then 
	as a consequence, the quadratic form $\mathscr{Q}(\mathbf{U},\mathbf{U})$
	from Def.\,\ref{D:QUADRATICFORMSFORCONTROLLINGFIRSTDERIVATIVESOFSPECIFICVORTICITYANDENTROPYGRADIENT}
	is positive definite on the space of type $\binom{1}{1}$ tensorfields $\mathbf{U}_{\ \alpha}^{\beta}$.
	
	Moreover, if $\SigmatTan$ is a $\Sigma_t$-tangent vectorfield, then the following identity holds
	(i.e., with $\partial_{\alpha} \SigmatTan^{\beta}$ in the role of $\mathbf{U}_{\ \alpha}^{\beta}$),
	where
	where $\topproject$ 
	is the $\gfour$-orthogonal projection onto $\widetilde{\Sigma}_{\Timefunction}$
	from 
	Def.\,\ref{D:FIRSTFUNDAMENTALFORMSANDPROJECTIONS}
	and $\topfirstfund$ the first fundamental form 
	of $\widetilde{\Sigma}_{\Timefunction}$ from 
	Def.\,\ref{D:FIRSTFUNDAMENTALFORMSANDPROJECTIONS}:
	\begin{align} \label{E:IDENTITYMAINQUADRATICFORMFORCONTROLLINGFIRSTDERIVATIVESOFSPECIFICVORTICITYANDENTROPYGRADIENT}
		\mathscr{Q}(\pmb{\partial} \SigmatTan,\pmb{\partial} \SigmatTan)
		& 
		=
		|\topproject(\pmb{\partial} \SigmatTan)|_{\topfirstfund}^2
		-
		(\projectedtransport_{\alpha} \projectedtransport \SigmatTan^{\alpha})^2
		+
		\topfirstfund_{\alpha \beta} (\Transport \SigmatTan^{\alpha}) (\Transport \SigmatTan^{\beta}).
\end{align}
\end{lemma}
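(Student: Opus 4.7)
\medskip

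\noindent \textbf{Proof proposal.} The plan is to prove the two assertions separately, starting with the simpler identity \eqref{E:IDENTITYMAINQUADRATICFORMFORCONTROLLINGFIRSTDERIVATIVESOFSPECIFICVORTICITYANDENTROPYGRADIENT} for $\Sigma_t$-tangent vectorfields $V$, then using the structure it reveals to guide the positivity argument. For the identity, the key observation is that since $V$ is $\Sigma_t$-tangent, $V^0 \equiv 0$ relative to Cartesian coordinates, while by \eqref{E:TRANSPORTONEFORMIDENTITY} we have $\Transport_\gamma = - \updelta_\gamma^0$; hence $\Transport_\gamma \partial_\alpha V^\gamma = - \partial_\alpha V^0 = 0$. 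This kills the two terms of \eqref{E:MAINQUADRATICFORMFORCONTROLLINGFIRSTDERIVATIVESOFSPECIFICVORTICITYANDENTROPYGRADIENT} that involve $\Transport_\gamma \mathbf{U}_{\ \alpha}^\gamma$ (namely the fourth and fifth), and substitution of $\mathbf{U}_{\ \alpha}^\beta = \partial_\alpha V^\beta$ into the three surviving terms produces exactly RHS~\eqref{E:IDENTITYMAINQUADRATICFORMFORCONTROLLINGFIRSTDERIVATIVESOFSPECIFICVORTICITYANDENTROPYGRADIENT}.

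For positivity, I would verify that each of the five terms in \eqref{E:MAINQUADRATICFORMFORCONTROLLINGFIRSTDERIVATIVESOFSPECIFICVORTICITYANDENTROPYGRADIENT}, suitably grouped, is non-negative. The third and fourth terms are manifest $\topfirstfund$- and $\topfirstfund^{-1}$-seminorms, which are non-negative because $\widetilde{\Sigma}_{\Timefunction}$ is $\gfour$-spacelike (see Lemma~\ref{L:BASICPROPSOFFUNDAMENTALFORMSANDPROJECTIONS}), and the fifth term is a perfect square. The nontrivial step is to handle the combination
\[
|\topproject \mathbf{U}|_{\topfirstfund}^2
-
(\projectedtransport_\beta \projectedtransport^\alpha \mathbf{U}_{\ \alpha}^\beta)^2.
\]
For this, since $\projectedtransport$ is $\widetilde{\Sigma}_{\Timefunction}$-tangent, the scalar $\projectedtransport_\beta \projectedtransport^\alpha \mathbf{U}_{\ \alpha}^\beta$ depends only on $\topproject \mathbf{U}$. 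Applying the Cauchy--Schwarz inequality for the Riemannian metric $\topfirstfund$ (which is a genuine inner product on $\widetilde{\Sigma}_{\Timefunction}$-tangent tensors) and using the crucial identity \eqref{E:LENGTHOFPROJECTIONOFPONTTOTILDESIGMA}, namely $|\projectedtransport|_{\topfirstfund}^2 = 1 - \lengthoftophypnorm^2 < 1$, yields
\[
(\projectedtransport_\beta \projectedtransport^\alpha \mathbf{U}_{\ \alpha}^\beta)^2
\leq
|\projectedtransport|_{\topfirstfund}^4 \, |\topproject \mathbf{U}|_{\topfirstfund}^2
=
(1-\lengthoftophypnorm^2)^2 \, |\topproject \mathbf{U}|_{\topfirstfund}^2,
\]
so that the combination is bounded below by $[1 - (1-\lengthoftophypnorm^2)^2] |\topproject \mathbf{U}|_{\topfirstfund}^2 \geq 0$, with strict positivity whenever $\topproject \mathbf{U} \neq 0$.

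The final step is to extract non-degeneracy: assume $\mathscr{Q}(\mathbf{U},\mathbf{U}) = 0$ and conclude $\mathbf{U} \equiv 0$. The vanishing forces $\topproject \mathbf{U} = 0$ (from the argument above, since $\lengthoftophypnorm > 0$), $\topproject(\Transport^\alpha \mathbf{U}_{\ \alpha}) = 0$ (from the third term), $\topproject$ of the one-form $\Transport_\gamma \mathbf{U}_{\ \alpha}^\gamma$ vanishes (from the fourth), and $\Transport_\beta \Transport^\alpha \mathbf{U}_{\ \alpha}^\beta = 0$ (from the fifth). Using the identity $\Transport_\beta \hat{\tophypnorm}^\beta = -1/\lengthoftophypnorm \neq 0$ from \eqref{E:INNERPRODUCTOFTRANPORTANDFUTUREUNITNORMALTOTOPHYPERSURFACE}, the combined constraints on $\topproject(\Transport^\alpha \mathbf{U}_{\ \alpha})$ and $\Transport_\beta \Transport^\alpha \mathbf{U}_{\ \alpha}^\beta$ upgrade to $\Transport^\alpha \mathbf{U}_{\ \alpha}^\beta \equiv 0$, and symmetrically $\Transport_\gamma \mathbf{U}_{\ \alpha}^\gamma \equiv 0$. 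Since $\{\Transport, e_1, e_2, e_3\}$ (with $\{e_i\}$ any $\gfour$-orthonormal basis of $\widetilde{\Sigma}_{\Timefunction}$) is a basis of the tangent space, the evaluations $\mathbf{U}(e^i, e_j) = 0$ (from $\topproject \mathbf{U} = 0$) together with $\mathbf{U}(\cdot, \Transport) = 0$ and $\mathbf{U}(\Transport_\flat, \cdot) = 0$ cover every basis pair, forcing $\mathbf{U} \equiv 0$. The main obstacle --- and the source of the whole argument --- is the strict inequality $|\projectedtransport|_{\topfirstfund}^2 < 1$; this is precisely where the $\gfour$-spacelike assumption on $\widetilde{\Sigma}_{\Timefunction}$ enters, and without it Cauchy--Schwarz would saturate and $\mathscr{Q}$ could fail to be positive definite.
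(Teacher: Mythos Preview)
Your proof is correct and follows essentially the same approach as the paper: the identity via $\Transport_\gamma \partial_\alpha \SigmatTan^\gamma = 0$, the Cauchy--Schwarz bound using $|\projectedtransport|_{\topfirstfund}^2 < 1$ for the first two terms, and the non-degeneracy argument by successively exhausting the frame components. The only cosmetic difference is that the paper phrases the final step in terms of $\hat{\tophypnorm}$-contractions rather than working in the $\{\Transport, e_i\}$ frame, but the two are equivalent via $\hat{\tophypnorm} = \lengthoftophypnorm \Transport - \projectedtransport$.
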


\begin{proof}
	\eqref{E:IDENTITYMAINQUADRATICFORMFORCONTROLLINGFIRSTDERIVATIVESOFSPECIFICVORTICITYANDENTROPYGRADIENT} 
	is a straightforward consequence of definition \eqref{E:MAINQUADRATICFORMFORCONTROLLINGFIRSTDERIVATIVESOFSPECIFICVORTICITYANDENTROPYGRADIENT} 
	and the identity \eqref{E:TRANSPORTONEFORMIDENTITY},
	which in particular implies that
	$\Transport_{\beta} \partial_{\alpha} \SigmatTan^{\beta} = 0$ when $\SigmatTan$ is $\Sigma_t$-tangent.
	
	To prove the positivity of $\mathscr{Q}$,
	we first use
	$\topfirstfund$-Cauchy--Schwarz,
	\eqref{E:PROJECTIONOFTRANSPORTONTTOTILDESIGMA},
	and
	\eqref{E:LENGTHOFPROJECTIONOFPONTTOTILDESIGMA}
	to deduce
	that the first two terms 
	$|\topproject \mathbf{U}|_{\topfirstfund}^2
		-
		(\projectedtransport_{\beta} \projectedtransport^{\alpha} \mathbf{U}_{\ \alpha}^{\beta})^2$
	on RHS~\eqref{E:MAINQUADRATICFORMFORCONTROLLINGFIRSTDERIVATIVESOFSPECIFICVORTICITYANDENTROPYGRADIENT} 
	are collectively positive semi-definite on the space of type $\binom{1}{1}$ tensorfields $\mathbf{U}_{\ \alpha}^{\beta}$
	and positive definite on the subspace of such tensorfields that are tangent to $\widetilde{\Sigma}_{\Timefunction}$.
	Moreover, the each of the last three products on RHS~\eqref{E:MAINQUADRATICFORMFORCONTROLLINGFIRSTDERIVATIVESOFSPECIFICVORTICITYANDENTROPYGRADIENT}  
	are manifestly positive semi-definite on the space of type $\binom{1}{1}$ tensorfields.
	In particular, for all $\mathbf{U}$, we have $\mathscr{Q}(\mathbf{U},\mathbf{U}) \geq 0$.
	Thus, to demonstrate
	the desired positive definiteness of $\mathscr{Q}(\mathbf{U},\mathbf{U})$,
	it suffices to show that $\mathscr{Q}(\mathbf{U},\mathbf{U}) = 0 \implies \mathbf{U} = 0$.

	To proceed, we assume that $\mathscr{Q}(\mathbf{U},\mathbf{U}) = 0$.
	From the discussion in the previous paragraph,
	it follows that
		$|\topproject \mathbf{U}|_{\topfirstfund}^2
		-
		(\projectedtransport_{\beta} \projectedtransport^{\alpha} \mathbf{U}_{\ \alpha}^{\beta})^2
		= 0$
	and that $\topproject \mathbf{U} = 0$.
	From this identity,
	\eqref{E:SIGMATILDEPROJECTEDTRANSPORTIDENTITY},
	and the $\widetilde{\Sigma}_{\Timefunction}$-tangent property of $\projectedtransport$,
	we see that contractions of $\mathbf{U}$ against
	$\Transport$ are equal, up to a scalar function multiple,
	to contractions against $\hat{\tophypnorm}$.
	From this fact,
	the fact that
	the third term 
	$
	\topfirstfund_{\gamma \delta} 
	(\Transport^{\alpha} \mathbf{U}_{\ \alpha}^{\gamma}) 
	(\Transport^{\beta} \mathbf{U}_{\ \beta}^{\delta})
	$
	on RHS~\eqref{E:MAINQUADRATICFORMFORCONTROLLINGFIRSTDERIVATIVESOFSPECIFICVORTICITYANDENTROPYGRADIENT} must also vanish,
	and the fact that $\topfirstfund$ is positive definite on the space of $\widetilde{\Sigma}_{\Timefunction}$-tangent vectorfields
	and the fact that $\topfirstfund$ vanishes when contracted with $\hat{\tophypnorm}$, 
	we see that the vectorfield with components $\Transport^{\alpha} \mathbf{U}_{\ \alpha}^{\beta}$ must be proportional to 
	$\hat{\tophypnorm}^{\beta}$.
	From these facts,
	the fact that
	the last term 
	$(\Transport_{\beta} \Transport^{\alpha} \mathbf{U}_{\ \alpha}^{\beta})^2$
	on RHS~\eqref{E:MAINQUADRATICFORMFORCONTROLLINGFIRSTDERIVATIVESOFSPECIFICVORTICITYANDENTROPYGRADIENT} must also vanish,
	and the fact that $\Transport_{\kappa} \hat{\tophypnorm}^{\kappa} \neq 0$ (see \eqref{E:INNERPRODUCTOFTRANPORTANDFUTUREUNITNORMALTOTOPHYPERSURFACE}),
	we find that $\hat{\tophypnorm}^{\alpha} \mathbf{U}_{\ \alpha}^{\beta} = 0$.
	Similar reasoning, based on exploiting the positive semi-definiteness of
	the fourth and last terms on RHS~\eqref{E:MAINQUADRATICFORMFORCONTROLLINGFIRSTDERIVATIVESOFSPECIFICVORTICITYANDENTROPYGRADIENT},
	leads to the identity 
	$\hat{\tophypnorm}_{\beta} \mathbf{U}_{\ \alpha}^{\beta} = 0$.
	We have therefore shown that the $\gfour$-orthogonal projection of $\mathbf{U}$ onto $\widetilde{\Sigma}_{\Timefunction}$ vanishes,
	and that any contraction of $\mathbf{U}$ against the unit normal to $\widetilde{\Sigma}_{\Timefunction}$ (namely $\hat{\tophypnorm}$) vanishes.
	This implies that $\mathbf{U} = 0$, which completes the proof of the lemma. 
	
\end{proof}

\subsection{Definition of the elliptic-hyperbolic currents}
\label{SS:DEFINITIONSOFHYPERBOLICELLIPTICCURRENT}
The $\widetilde{\Sigma}_{\Timefunction}$-tangent vectorfields $J$ in the next definition play a key role in our analysis.
We refer to them as ``elliptic-hyperbolic currents.'' We motivate this terminology as follows:
even though Lemma~\ref{L:BASICDIVERGENCEIDENTTIY} shows 
that $J$ is tied to elliptic-type identities along $\widetilde{\Sigma}_{\Timefunction}$, 
the complete set of structures that we need to control the boundary terms 
become manifest only in Prop.\,\ref{P:STRUCTUREOFERRORINTEGRALS} and Theorem~\ref{T:STRUCTUREOFERRORTERMS}, 
after we integrate the elliptic identities in time
and exploit some special structural features found in the
``\emph{hyperbolic} part'' of the equations of Theorem~\ref{T:GEOMETRICWAVETRANSPORTSYSTEM}.

\begin{definition}[Elliptic-hyperbolic current]
\label{D:ELLIPTICHYPERBOLICCURRENT}
Given a $\Sigma_t$-tangent vectorfield $\SigmatTan$ (which in our forthcoming applications will be equal to $\vortrenormalized$ or $\GradEnt$), 
we define $J[\SigmatTan]$ to be the $\widetilde{\Sigma}_{\Timefunction}$-tangent vectorfield 
with the following Cartesian components,
where $\topproject$ is the $\widetilde{\Sigma}_{\Timefunction}$ projection tensorfield
defined in \eqref{E:TOPPROJECT}:
\begin{align} \label{E:NEWELLIPTICHYPERBOLICCURRENT}
	J^{\alpha}[\SigmatTan]
	& := \SigmatTan^{\gamma} \topproject_{\ \gamma}^{\lambda} \topproject_{\ \kappa}^{\alpha} \partial_{\lambda} \SigmatTan^{\kappa}
		-
		\SigmatTan^{\gamma} \topproject_{\ \gamma}^{\alpha} 
		\topproject_{\ \lambda}^{\kappa}
		\partial_{\kappa} \SigmatTan^{\lambda}
		= \SigmatTan^{\gamma} \topproject_{\ \kappa}^{\alpha} \toppartialarg{\gamma} \SigmatTan^{\kappa}
			-
		\SigmatTan^{\gamma} \topproject_{\ \gamma}^{\alpha} 
		\toppartialarg{\lambda} 
		\SigmatTan^{\lambda},
	\end{align}
	where to obtain the last equality in \eqref{E:NEWELLIPTICHYPERBOLICCURRENT}, we used \eqref{E:PROJECTIONSOFCARTESIANVECTORFIELDS}.
\end{definition}

\begin{remark}[{$J[\SigmatTan]$ is $\widetilde{\Sigma}_{\Timefunction}$-tangent}]
	We stress that $J[\SigmatTan]$ is $\widetilde{\Sigma}_{\Timefunction}$-tangent, even though $\SigmatTan$ is $\Sigma_t$-tangent.
\end{remark}

\subsection{The elliptic divergence identity}
\label{SS:ELLIPTICDIVERGENCE}
In the next lemma, we derive a covariant divergence identity for the elliptic-hyperbolic current $J[\SigmatTan]$.
With future applications in mind, we have allowed for the presence of a ``weight function''
$\weight$ in the identity.
We have carefully organized the structure of the ``main terms'' in the divergence identity so that later on,
with the help of Lemma~\ref{L:POSITIVITYPROPERTIESOFVARIOUSQUADRATICFORMS}, we will be able to show
that the quadratic form $\mathscr{Q}(\pmb{\partial} \SigmatTan,\pmb{\partial} \SigmatTan)$ 
(see in particular \eqref{E:IDENTITYMAINQUADRATICFORMFORCONTROLLINGFIRSTDERIVATIVESOFSPECIFICVORTICITYANDENTROPYGRADIENT})
can be used to derive coercive, spacetime $L^2$-type control over the first derivatives of $\SigmatTan$.
We have carefully organized the structure of the ``error terms'' in the divergence identity \eqref{E:NEWSTANDARDDIVERGENCEIDENTITYFORELLIPTICHYPERBOLICCURRENT} 
so that later on,
with the help of the equations of Theorem~\ref{T:GEOMETRICWAVETRANSPORTSYSTEM},
we will be able to see that these terms exhibit the remarkable structures that are needed in applications.
In the proof of our main integral identities, namely Theorem~\ref{T:MAINREMARKABLESPACETIMEINTEGRALIDENTITY}, 
we will use the divergence identity
when we apply the divergence theorem.

\begin{lemma}[Covariant divergence identity along $\widetilde{\Sigma}_{\Timefunction}$ satisfied by {$J[\SigmatTan]$}]
	\label{L:BASICDIVERGENCEIDENTTIY}
	Let $\SigmatTan$ be a $\Sigma_t$-tangent vectorfield defined on $\mathcal{M}$,
	let $J[\SigmatTan]$ be the $\widetilde{\Sigma}_{\Timefunction}$-tangent vectorfield 
	from Def.\,\ref{D:ELLIPTICHYPERBOLICCURRENT}, and let $\weight$ be an arbitrary scalar function.  
	Then the following
	divergence identity holds 
	relative to the Cartesian coordinates:
	\begin{align} \label{E:NEWSTANDARDDIVERGENCEIDENTITYFORELLIPTICHYPERBOLICCURRENT}
			\weight
			|\pmb{\partial} \SigmatTan|_{\topfirstfund}^2
			-
			\weight
			(\projectedtransport_{\alpha}
			\projectedtransport \SigmatTan^{\alpha})^2
			& 
			:=
			\weight
			(\topfirstfund^{-1})^{\alpha \gamma}
			\topfirstfund_{\beta \delta}
			(\partial_{\alpha} \SigmatTan^{\beta})
			\partial_{\gamma} \SigmatTan^{\delta}
			-
			\weight
			(\projectedtransport_{\alpha}
			\projectedtransport \SigmatTan^{\alpha})^2
				\\
		& =
			\widetilde{\nabla}_{\alpha} \left(\weight J^{\alpha}[\SigmatTan] \right)
			\notag	\\
		& \ \
			+
			\weight
			\mathfrak{J}_{(\widetilde{Antisym})}[\pmb{\partial} V,\pmb{\partial} V]
			+
			\weight
			\mathfrak{J}_{(Div)}[\pmb{\partial} V,\pmb{\partial} V]
			+
			\weight
			\mathfrak{J}_{(Coeff)}[\SigmatTan,\pmb{\partial} \SigmatTan]
				\notag \\
		& \ \
			+
			\mathfrak{J}_{(\pmb{\partial} \weight)}[\SigmatTan,\pmb{\partial} \SigmatTan],
				\notag
		\end{align}
		where
		\begin{align} \label{E:ANTISYMMETRICERRORTERMDIVERGENCEOFSIGMATILDECURRENT}
			\mathfrak{J}_{(\widetilde{Antisym})}[\pmb{\partial} V,\pmb{\partial} V]
			& :=
			\frac{1}{2}
			|d (\SigmatTan_{\flat})|_{\topfirstfund}^2
			=
			\frac{1}{2}
			(\topfirstfund^{-1})^{\alpha \gamma}
			(\topfirstfund^{-1})^{\beta \delta}
			\left\lbrace
				\partial_{\alpha} \SigmatTan_{\beta}
				-
				\partial_{\beta} \SigmatTan_{\alpha}
			\right\rbrace
			\left\lbrace
				\partial_{\gamma} \SigmatTan_{\delta}
				-
				\partial_{\delta} \SigmatTan_{\gamma}
			\right\rbrace,
		\end{align}
		
		\begin{align} \label{E:DIVSYMMETRICERRORTERMDIVERGENCEOFSIGMATILDECURRENT}
			\mathfrak{J}_{(Div)}[\pmb{\partial} V,\pmb{\partial} V]
			& :=
			(\partial_a \SigmatTan^a)^2
			+
			\lengthoftophypnorm^2
			(\projectedtransport_{\alpha} \Transport \SigmatTan^{\alpha})^2
				\notag \\
			& \ \
			+
			2 
			(\projectedtransport_{\alpha}\projectedtransport \SigmatTan^{\alpha})
			\partial_a \SigmatTan^a
			-
			2
			\lengthoftophypnorm 
			(\projectedtransport_{\alpha} \Transport \SigmatTan^{\alpha})
			\projectedtransport_{\alpha}\projectedtransport \SigmatTan^{\alpha}
			-
			2
			\lengthoftophypnorm 
			(\partial_a \SigmatTan^a)
			\projectedtransport_{\alpha} 
			\Transport \SigmatTan^{\alpha},
	\end{align}
	
	\begin{align} \label{E:THIRDERRORTERMDIVERGENCEOFSIGMATILDECURRENT}
		\mathfrak{J}_{(Coeff)}[\SigmatTan,\pmb{\partial} \SigmatTan]
		& =
			\SigmatTan^{\alpha} 
			\gfour_{\beta \gamma}
			(\toppartialarg{\alpha} \hat{\tophypnorm}^{\beta})
			\hat{\tophypnorm} \SigmatTan^{\gamma}
			-
			\SigmatTan_{\alpha} 
			(\toppartialarg{\beta} \hat{\tophypnorm}^{\alpha})
			\hat{\tophypnorm} \SigmatTan^{\beta}
				\\
		& \ \
				+
		\SigmatTan^{\alpha} 
		\hat{\tophypnorm}_{\alpha} 
		(\toppartialarg{\beta} \hat{\tophypnorm}^{\beta}) 
		\toppartialarg{\gamma} \SigmatTan^{\gamma}
			-
			\SigmatTan^{\alpha} \hat{\tophypnorm}_{\alpha} 
			(\toppartialarg{\beta} \hat{\tophypnorm}^{\gamma}) 
			\toppartialarg{\gamma} 
			\SigmatTan^{\beta} 
				\notag \\
		& \ \
		+
		\SigmatTan^{\alpha} 
		\hat{\tophypnorm}_{\beta}
		(\toppartialarg{\alpha} \hat{\tophypnorm}^{\gamma})
		\toppartialarg{\gamma} \SigmatTan^{\beta}
			-
			\SigmatTan^{\alpha} 
			\hat{\tophypnorm}_{\beta} 
			(\toppartialarg{\gamma} \hat{\tophypnorm}^{\gamma})  
			\toppartialarg{\alpha} \SigmatTan^{\beta}
		\notag
			\\
		&  \ \
			+
			\SigmatTan^{\alpha} 
			\hat{\tophypnorm}^{\beta}
			(\toppartialarg{\alpha} \gfour_{\beta \gamma})
			\hat{\tophypnorm} \SigmatTan^{\gamma}
			-
			\SigmatTan^{\alpha} 
			\hat{\tophypnorm}^{\beta}
			(\toppartialarg{\gamma} \gfour_{\alpha \beta})
			\hat{\tophypnorm} \SigmatTan^{\gamma}
				\notag \\
		& \ \
			+
			\frac{1}{2}
			\SigmatTan^{\alpha} 
			\hat{\tophypnorm}_{\beta}
			\hat{\tophypnorm}^{\gamma}
			\hat{\tophypnorm}^{\delta}
			(\toppartialarg{\alpha} \gfour_{\gamma \delta})			
			\hat{\tophypnorm} \SigmatTan^{\beta}
			-
			\frac{1}{2}
			\SigmatTan^{\alpha} \hat{\tophypnorm}_{\alpha} 
			\hat{\tophypnorm}^{\beta}
			\hat{\tophypnorm}^{\gamma}
			(\toppartialarg{\delta} \gfour_{\beta \gamma}) 
			\hat{\tophypnorm} \SigmatTan^{\delta} 
				\notag \\
	& \ \
		+
		2
		\SigmatTan^{\alpha}
		(\toppartialarg{\beta} \gfour_{\alpha \gamma})
		\toppartialuparg{\gamma} \SigmatTan^{\beta}
		-
		2
		\topproject_{\ \beta}^{\alpha}
		\SigmatTan^{\gamma}
		(\toppartialuparg{\delta} \gfour_{\alpha \gamma}) 
		\toppartialarg{\delta} \SigmatTan^{\beta}
				\notag \\
	& \ \
		+
		\frac{1}{2}
		(\topfirstfund^{-1})^{\alpha \beta}
		\SigmatTan^{\gamma} 
		(\toppartialarg{\gamma} \gfour_{\alpha \beta})
		\toppartialarg{\delta} \SigmatTan^{\delta}
		-
		\frac{1}{2}
		(\topfirstfund^{-1})^{\alpha \beta}
		\SigmatTan^{\gamma} 
		(\toppartialarg{\delta} \gfour_{\alpha \beta})
		\toppartialarg{\gamma} \SigmatTan^{\delta}
		\notag
			\\
		& \ \
			+
			\SigmatTan^{\alpha} 
			\SigmatTan^{\beta}
			(\toppartialuparg{\gamma} \gfour_{\alpha \delta})
			(\toppartialuparg{\delta} \gfour_{\beta \gamma})
		-
		\SigmatTan^{\alpha} 
		\SigmatTan^{\beta}
		(\topfirstfund^{-1})^{\gamma \delta} 
		(\toppartialuparg{\kappa} \gfour_{\alpha \gamma}) 
		\toppartialarg{\kappa} \gfour_{\beta \delta},
				\notag
	\end{align}
	and
	\begin{align} \label{E:DIVERGENCEIDELLIPTICHYPERBOLICCURRENTBULKERRORTERMWITHWEIGHTDERIVATIVES}
		\mathfrak{J}_{(\pmb{\partial} \weight)}[\SigmatTan,\pmb{\partial} \SigmatTan]
		:= - J[\SigmatTan] \weight 
		=
		\SigmatTan^{\kappa}  
		(\toppartialarg{\kappa} \weight)
		\toppartialarg{\lambda} \SigmatTan^{\lambda}
		-
		\SigmatTan^{\kappa} 
		(\toppartialarg{\lambda} \weight)
		\toppartialarg{\kappa} \SigmatTan^{\lambda}.
	\end{align}
	
	\end{lemma}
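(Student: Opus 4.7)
The plan is to verify \eqref{E:NEWSTANDARDDIVERGENCEIDENTITYFORELLIPTICHYPERBOLICCURRENT} by directly expanding $\widetilde{\nabla}_\alpha(\weight J^\alpha[\SigmatTan])$ and reorganizing. First I peel off the weight factor via Leibniz,
\[
\widetilde{\nabla}_\alpha(\weight J^\alpha[\SigmatTan]) = \weight\,\widetilde{\nabla}_\alpha J^\alpha[\SigmatTan] + J^\alpha[\SigmatTan]\,\toppartialarg{\alpha}\weight,
\]
and observe that the second summand is exactly $-\mathfrak{J}_{(\pmb{\partial}\weight)}[\SigmatTan,\pmb{\partial}\SigmatTan]$ by comparing \eqref{E:NEWELLIPTICHYPERBOLICCURRENT} with \eqref{E:DIVERGENCEIDELLIPTICHYPERBOLICCURRENTBULKERRORTERMWITHWEIGHTDERIVATIVES}. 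This reduces the problem to the unweighted case $\weight \equiv 1$.

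For the unweighted identity, I apply \eqref{E:EXPRESSIONFORWIDETILDESIGMADIVERGENCEINCOORDINATES} to rewrite $\widetilde{\nabla}_\alpha J^\alpha[\SigmatTan] = \toppartialarg{\alpha}J^\alpha[\SigmatTan] + \tfrac{1}{2}(\topfirstfund^{-1})^{\alpha\beta} J[\SigmatTan]\gfour_{\alpha\beta}$, then distribute each $\toppartial$ through $J[\SigmatTan]$ by the Leibniz rule. The crucial observation is that the $\pmb{\partial}^2\SigmatTan$ contributions from the two halves of $J[\SigmatTan]$ exactly cancel: idempotence of $\topproject$ in the form $\topproject_{\ \alpha}^{\mu}\topproject_{\ \kappa}^{\alpha} = \topproject_{\ \kappa}^{\mu}$ (a consequence of $\hat{\tophypnorm}_\alpha \hat{\tophypnorm}^\alpha = -1$) combined with the symmetry of mixed Cartesian partials reduces one second-derivative term to the other. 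This is the standard Hodge cancellation underlying ``div-curl'' identities. What remains splits into two types: (a) pointwise quadratic in $\pmb{\partial}\SigmatTan$, with coefficients built algebraically from $\gfour$, $\topfirstfund^{-1}$, and $\hat{\tophypnorm}$ but carrying no derivatives of these tensors; and (b) products of the schematic shape $\SigmatTan\cdot\pmb{\partial}\SigmatTan$ multiplied by first derivatives of $\hat{\tophypnorm}$ or $\gfour$.

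For the type-(a) contributions I invoke the algebraic antisymmetrization identity
\[
(\topfirstfund^{-1})^{\alpha\gamma}\topfirstfund_{\beta\delta}(\partial_\alpha \SigmatTan^\beta)(\partial_\gamma \SigmatTan^\delta) = \tfrac{1}{2}|d\SigmatTan_{\flat}|_{\topfirstfund}^2 + (\topfirstfund^{-1})^{\alpha\gamma}\topfirstfund_{\beta\delta}(\partial_\alpha \SigmatTan^\beta)(\partial_\delta \SigmatTan^\gamma),
\]
which immediately extracts $\mathfrak{J}_{(\widetilde{Antisym})}$. The symmetric cross term then collapses to the square of the $\topfirstfund$-trace $\topproject_{\ \beta}^{\alpha}\partial_\alpha \SigmatTan^\beta$ (up to type-(b) remainders from contractions involving $\hat{\tophypnorm}$, which I defer). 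Applying the conversion identity \eqref{E:SIGMATILDEDIVERGENCEIDENTITYFORSIGMATTANGENTVECTORFIELDS}, namely $\topproject_{\ \beta}^{\alpha}\partial_\alpha \SigmatTan^\beta = \partial_a \SigmatTan^a + \projectedtransport_\alpha \projectedtransport \SigmatTan^\alpha - \lengthoftophypnorm\,\projectedtransport_\alpha \Transport \SigmatTan^\alpha$, and squaring this trinomial produces precisely the six products in \eqref{E:DIVSYMMETRICERRORTERMDIVERGENCEOFSIGMATILDECURRENT}, yielding $\mathfrak{J}_{(Div)}$. The leftover factor $-\weight(\projectedtransport_\alpha\projectedtransport\SigmatTan^\alpha)^2$ on the left-hand side is present to absorb the cross product $\projectedtransport_\alpha\projectedtransport\SigmatTan^\alpha \cdot \partial_a\SigmatTan^a$ arising from this squaring in the correct sign.

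The main obstacle is bookkeeping the type-(b) terms and showing they assemble exactly into $\mathfrak{J}_{(Coeff)}$. These originate from two sources: (i) $\toppartial$ landing on either of the $\topproject$ factors inside $J^\alpha[\SigmatTan]$, which via $\topproject_{\ \beta}^{\alpha} = \updelta_{\ \beta}^{\alpha} + \hat{\tophypnorm}^\alpha \hat{\tophypnorm}_\beta$ yields the eight products in \eqref{E:THIRDERRORTERMDIVERGENCEOFSIGMATILDECURRENT} that feature $\toppartial \hat{\tophypnorm}$; and (ii) the trace correction $\tfrac{1}{2}(\topfirstfund^{-1})^{\alpha\beta}J[\SigmatTan]\gfour_{\alpha\beta}$ from \eqref{E:EXPRESSIONFORWIDETILDESIGMADIVERGENCEINCOORDINATES} together with the $\toppartial\gfour$ terms produced when lowering or raising indices with $\topfirstfund$ inside the symmetric type-(a) analysis, which yield the remaining products featuring $\toppartial \gfour$ or $\toppartialuparg{\cdot}\gfour$. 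The only subtlety is the systematic collapse of contracted chains using $\topproject_{\ \beta}^{\alpha}\hat{\tophypnorm}^\beta = 0$ and $\hat{\tophypnorm}_\alpha \hat{\tophypnorm}^\alpha = -1$; once those reductions are executed in each product, the resulting expression matches \eqref{E:THIRDERRORTERMDIVERGENCEOFSIGMATILDECURRENT} term by term. No input from the compressible Euler equations or from Theorem~\ref{T:GEOMETRICWAVETRANSPORTSYSTEM} is used, since the identity is purely geometric in character and holds for any $\Sigma_t$-tangent vectorfield $\SigmatTan$.
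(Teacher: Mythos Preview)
Your proposal is correct and follows essentially the same route as the paper. The paper also reduces to $\weight\equiv 1$ via Leibniz, computes $\partial_\alpha J^\alpha[\SigmatTan]$ directly (observing the second-derivative cancellation), extracts the antisymmetric part via the same algebraic identity you cite, applies \eqref{E:SIGMATILDEDIVERGENCEIDENTITYFORSIGMATTANGENTVECTORFIELDS} to expand the squared trace, and then matches the remaining coefficient terms to \eqref{E:THIRDERRORTERMDIVERGENCEOFSIGMATILDECURRENT}; the only cosmetic difference is that the paper works with $\partial_\alpha J^\alpha$ first and converts to $\widetilde{\nabla}_\alpha J^\alpha$ at the end via \eqref{E:EXPRESSIONFORWIDETILDESIGMADIVERGENCEINCOORDINATES}, whereas you start from the covariant divergence formula.
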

	
	\begin{proof}
	In view of
	\eqref{E:NEWELLIPTICHYPERBOLICCURRENT}
	and
	\eqref{E:DIVERGENCEIDELLIPTICHYPERBOLICCURRENTBULKERRORTERMWITHWEIGHTDERIVATIVES},
	we see that once we have proven the identity \eqref{E:NEWSTANDARDDIVERGENCEIDENTITYFORELLIPTICHYPERBOLICCURRENT} 
	in the case $\weight = 1$, 
	the identity in the case of a general $\weight$
	then follows from the
	simple algebraic identity
	$
	\weight \widetilde{\nabla}_{\alpha} J^{\alpha}[\SigmatTan]
	=
	\widetilde{\nabla}_{\alpha} \left(\weight J^{\alpha}[\SigmatTan] \right)
	-
	J[\SigmatTan] \weight
	$.
	
	It remains for us to prove \eqref{E:NEWSTANDARDDIVERGENCEIDENTITYFORELLIPTICHYPERBOLICCURRENT} in the case $\weight = 1$.
	In our computations, we will silently use the following simple matrix inverse differentiation identity,
	which follows from differentiating the identity $(\gfour^{-1})^{\gamma \kappa} \gfour_{\kappa \delta} = \updelta_{\ \delta}^{\gamma}$,
	where $\updelta_{\ \delta}^{\gamma}$ is the Kronecker delta:
	\begin{align} \label{E:SIMPLEMATRIXINVERSEDIFFERENTIATIONIDENTITY}
		\partial_{\alpha} (\gfour^{-1})^{\gamma \delta}
		& = - 
		(\gfour^{-1})^{\gamma \kappa}
		(\gfour^{-1})^{\delta \lambda}
		\partial_{\alpha} \gfour_{\kappa \lambda}.
	\end{align}
	We now apply $\partial_{\alpha}$ to the first equality in \eqref{E:NEWELLIPTICHYPERBOLICCURRENT} and use \eqref{E:TOPPROJECT}
	to compute the following identity, which in particular shows that terms involving the second derivatives of
	$\SigmatTan$ cancel:
	\begin{align} \label{E:FIRSTSTEPNEWSTANDARDDIVERGENCEIDENTITYFORELLIPTICHYPERBOLICCURRENT}
		\partial_{\alpha} J^{\alpha}[\SigmatTan]
		& = 
			\topproject_{\ \beta}^{\delta} \topproject_{\ \gamma}^{\alpha} 
			(\partial_{\alpha} \SigmatTan^{\beta}) 
			\partial_{\delta} \SigmatTan^{\gamma}
			-
			(\topproject_{\ \lambda}^{\kappa} \partial_{\kappa} \SigmatTan^{\lambda})^2
				\\
		& \ \
			+
			\SigmatTan^{\beta} \hat{\tophypnorm}_{\beta} \topproject_{\ \gamma}^{\alpha} (\partial_{\alpha} \hat{\tophypnorm}^{\delta}) 
			\partial_{\delta} \SigmatTan^{\gamma}
			+
			\hat{\tophypnorm}^{\delta} 
			\topproject_{\ \gamma}^{\alpha}
			(\partial_{\alpha} \hat{\tophypnorm}^{\beta})
			\SigmatTan_{\beta} 
			\partial_{\delta} \SigmatTan^{\gamma}
			+
			\hat{\tophypnorm}^{\delta} 
			\topproject_{\ \gamma}^{\alpha}
			(\partial_{\alpha} \gfour_{\beta \kappa})
			\hat{\tophypnorm}^{\kappa}
			\SigmatTan^{\beta} 
			\partial_{\delta} \SigmatTan^{\gamma}
				\notag \\
		& \ \
			+
			\SigmatTan^{\beta} \topproject_{\ \beta}^{\delta} 
			(\partial_{\alpha} \hat{\tophypnorm}^{\alpha})  
			\hat{\tophypnorm}_{\gamma} 
			\partial_{\delta} \SigmatTan^{\gamma}
			+
			\SigmatTan^{\beta} \topproject_{\ \beta}^{\delta} 
			\gfour_{\gamma \kappa}
			(\hat{\tophypnorm}^{\alpha} \partial_{\alpha} \hat{\tophypnorm}^{\gamma}) 
			\partial_{\delta} \SigmatTan^{\kappa}
			+
			\SigmatTan^{\beta} \topproject_{\ \beta}^{\delta} 
			(\hat{\tophypnorm}^{\alpha} \partial_{\alpha} \gfour_{\gamma \kappa}) 
			\hat{\tophypnorm}^{\kappa}
			\partial_{\delta} \SigmatTan^{\gamma}
				\notag \\
		& \ \
		-
		\SigmatTan^{\gamma} 
		\hat{\tophypnorm}_{\gamma} 
		(\partial_{\alpha} \hat{\tophypnorm}^{\alpha}) 
		\topproject_{\ \lambda}^{\kappa}
		\partial_{\kappa} \SigmatTan^{\lambda}
		-
		\SigmatTan_{\gamma} 
		(\hat{\tophypnorm}^{\alpha} \partial_{\alpha} \hat{\tophypnorm}^{\gamma})
		\topproject_{\ \lambda}^{\kappa}
		\partial_{\kappa} \SigmatTan^{\lambda}
		-
		\SigmatTan^{\gamma} 
		\hat{\tophypnorm}^{\beta}
		(\hat{\tophypnorm}^{\alpha} \partial_{\alpha} \gfour_{\gamma \beta})
		\topproject_{\ \lambda}^{\kappa}
		\partial_{\kappa} \SigmatTan^{\lambda}
			\notag \\
	& \ \
		-
		\SigmatTan^{\gamma} 
		\topproject_{\ \gamma}^{\alpha}
		(\partial_{\alpha} \hat{\tophypnorm}^{\kappa})
		\hat{\tophypnorm}_{\lambda}
		\partial_{\kappa} \SigmatTan^{\lambda}
		-
		\SigmatTan^{\gamma} 
		\topproject_{\ \gamma}^{\alpha}
		\hat{\tophypnorm}^{\kappa}
		\gfour_{\lambda \beta}
		(\partial_{\alpha} \hat{\tophypnorm}^{\beta})
		\partial_{\kappa} \SigmatTan^{\lambda}
		-
		\SigmatTan^{\gamma} 
		\topproject_{\ \gamma}^{\alpha}
		\hat{\tophypnorm}^{\kappa}
		\hat{\tophypnorm}^{\beta}
		(\partial_{\alpha} \gfour_{\lambda \beta})
		\partial_{\kappa} \SigmatTan^{\lambda}.
		\notag
	\end{align}
	
	Next, using that $\SigmatTan^{\beta} = (\gfour^{-1})^{\beta \kappa} \SigmatTan_{\kappa}$
	and
	$\SigmatTan^{\gamma} = (\gfour^{-1})^{\gamma \lambda} \SigmatTan_{\lambda}$,
	and using the simple identity
	\begin{align*}
	(\topfirstfund^{-1})^{\beta \delta} 
				(\topfirstfund^{-1})^{\alpha \gamma} 
				(\partial_{\alpha} \SigmatTan_{\beta}) 
				\partial_{\delta} \SigmatTan_{\gamma}
	&
	=
	(\topfirstfund^{-1})^{\beta \delta} 
				(\topfirstfund^{-1})^{\alpha \gamma} 
				(\partial_{\alpha} \SigmatTan_{\beta}) 
				\partial_{\gamma} \SigmatTan_{\delta}
			\\
	& \ \
	+
	\frac{1}{2}
				(\topfirstfund^{-1})^{\beta \delta} 
				(\topfirstfund^{-1})^{\alpha \gamma} 
				\left\lbrace
					\partial_{\alpha} \SigmatTan_{\beta}
					-
					\partial_{\beta} \SigmatTan_{\alpha}
				\right\rbrace
				\left\lbrace
					\partial_{\delta} \SigmatTan_{\gamma}
					-
					\partial_{\gamma} \SigmatTan_{\delta}
				\right\rbrace,
	\end{align*}
	we rewrite the first product on RHS~\eqref{E:FIRSTSTEPNEWSTANDARDDIVERGENCEIDENTITYFORELLIPTICHYPERBOLICCURRENT} as follows:
	\begin{align} \label{E:SECONDSTEPNEWSTANDARDDIVERGENCEIDENTITYFORELLIPTICHYPERBOLICCURRENT}
			\topproject_{\ \beta}^{\delta} 
			\topproject_{\ \gamma}^{\alpha} 
			(\partial_{\alpha} \SigmatTan^{\beta}) 
			\partial_{\delta} \SigmatTan^{\gamma}
			& = 	
				(\topfirstfund^{-1})^{\beta \delta} 
				(\topfirstfund^{-1})^{\alpha \gamma} 
				(\partial_{\alpha} \SigmatTan_{\beta}) 
				\partial_{\gamma} \SigmatTan_{\delta}
				+
				\frac{1}{2}
				(\topfirstfund^{-1})^{\beta \delta} 
				(\topfirstfund^{-1})^{\alpha \gamma} 
				\left\lbrace
					\partial_{\alpha} \SigmatTan_{\beta}
					-
					\partial_{\beta} \SigmatTan_{\alpha}
				\right\rbrace
				\left\lbrace
					\partial_{\delta} \SigmatTan_{\gamma}
					-
					\partial_{\gamma} \SigmatTan_{\delta}
				\right\rbrace
				\\
			& \ \
			-
			(\topfirstfund^{-1})^{\beta \delta} 
			\topproject_{\ \gamma}^{\alpha} 
			\SigmatTan^{\kappa}
			(\partial_{\alpha} \gfour_{\beta \kappa})
			\partial_{\delta} \SigmatTan^{\gamma}
			-
			\topproject_{\ \beta}^{\delta} 
			(\topfirstfund^{-1})^{\alpha \gamma} 
			(\partial_{\alpha} \SigmatTan^{\beta}) 
			\SigmatTan^{\lambda}
			\partial_{\delta} \gfour_{\gamma \lambda}
				\notag \\
			& \ \
			-
			(\topfirstfund^{-1})^{\beta \delta} 
			(\topfirstfund^{-1})^{\alpha \gamma} 
			(\partial_{\alpha} \gfour_{\beta \kappa})
			(\partial_{\delta} \gfour_{\gamma \lambda})
			\SigmatTan^{\kappa} \SigmatTan^{\lambda}.
				\notag
	\end{align}
	
	Next, using that $\SigmatTan_{\beta} = \gfour_{\beta \kappa} \SigmatTan^{\kappa}$
	and
	$\SigmatTan_{\delta} = \gfour_{\delta \lambda} \SigmatTan^{\lambda}$,
	we rewrite the first product on RHS~\eqref{E:SECONDSTEPNEWSTANDARDDIVERGENCEIDENTITYFORELLIPTICHYPERBOLICCURRENT} as follows:
	\begin{align} \label{E:THIRDSTEPNEWSTANDARDDIVERGENCEIDENTITYFORELLIPTICHYPERBOLICCURRENT}
			(\topfirstfund^{-1})^{\beta \delta} 
			(\topfirstfund^{-1})^{\alpha \gamma} 
			(\partial_{\alpha} \SigmatTan_{\beta}) 
			\partial_{\gamma} \SigmatTan_{\delta}
	& = 
			(\topfirstfund^{-1})^{\alpha \gamma}
			\topfirstfund_{\beta \delta}
			(\partial_{\alpha} \SigmatTan^{\beta})
			\partial_{\gamma} \SigmatTan^{\delta}
				\\
	& \ \
		+
		\topproject_{\ \lambda}^{\beta}
		(\topfirstfund^{-1})^{\alpha \gamma} 
		\SigmatTan^{\kappa}
		(\partial_{\alpha} \gfour_{\beta \kappa}) 
		\partial_{\gamma} \SigmatTan^{\lambda}
		+
		\topproject_{\ \kappa}^{\delta} 
		(\topfirstfund^{-1})^{\alpha \gamma} 
		(\partial_{\alpha} 	\SigmatTan^{\kappa}) 
		\SigmatTan^{\lambda}
		\partial_{\gamma} \gfour_{\delta \lambda}
			\notag \\
	& \ \
		+
		(\topfirstfund^{-1})^{\beta \delta} 
		(\topfirstfund^{-1})^{\alpha \gamma} 
		(\partial_{\alpha} \gfour_{\beta \kappa}) 
		(\partial_{\gamma} \gfour_{\delta \lambda})
		\SigmatTan^{\kappa} \SigmatTan^{\lambda}.
		\notag
	\end{align}
	
	Next, using \eqref{E:SIGMATILDEDIVERGENCEIDENTITYFORSIGMATTANGENTVECTORFIELDS},
	we rewrite the second product on RHS~\eqref{E:FIRSTSTEPNEWSTANDARDDIVERGENCEIDENTITYFORELLIPTICHYPERBOLICCURRENT}
	as follows:
	\begin{align} \label{E:FOURTHSTEPNEWSTANDARDDIVERGENCEIDENTITYFORELLIPTICHYPERBOLICCURRENT}
		(\topproject_{\ \lambda}^{\kappa} \partial_{\kappa} \SigmatTan^{\lambda})^2
		& =
		(\projectedtransport_{\alpha} \projectedtransport \SigmatTan^{\alpha})^2
		+
		(\partial_a \SigmatTan^a)^2
		+
		\lengthoftophypnorm^2
		(\projectedtransport_{\alpha} \Transport \SigmatTan^{\alpha})^2
			\\
		& \ \
			+
			2 
			(\projectedtransport_{\alpha}\projectedtransport \SigmatTan^{\alpha})
			\partial_a \SigmatTan^a
			-
			2
			\lengthoftophypnorm 
			(\projectedtransport_{\alpha} \Transport \SigmatTan^{\alpha})
			\projectedtransport_{\alpha}\projectedtransport \SigmatTan^{\alpha}
			-
			2
			\lengthoftophypnorm 
			(\partial_a \SigmatTan^a)
			\projectedtransport_{\alpha} 
			\Transport \SigmatTan^{\alpha}.
			\notag
	\end{align}
	
	Combining
	\eqref{E:FIRSTSTEPNEWSTANDARDDIVERGENCEIDENTITYFORELLIPTICHYPERBOLICCURRENT}-\eqref{E:FOURTHSTEPNEWSTANDARDDIVERGENCEIDENTITYFORELLIPTICHYPERBOLICCURRENT},
	and also using \eqref{E:EXPRESSIONFORWIDETILDESIGMADIVERGENCEINCOORDINATES} 
	as well as \eqref{E:NEWELLIPTICHYPERBOLICCURRENT}
	and \eqref{E:PROJECTIONSOFCARTESIANVECTORFIELDS}-\eqref{E:RAISEDPROJECTIONSOFCARTESIANVECTORFIELDS},
	and rearranging terms and relabeling indices,
	we arrive at the desired identity \eqref{E:NEWSTANDARDDIVERGENCEIDENTITYFORELLIPTICHYPERBOLICCURRENT}
	in the case $\weight = 1$ 
	(note that the term
		$\mathfrak{J}_{(\pmb{\partial} \weight)}[\SigmatTan,\pmb{\partial} \SigmatTan]$
		defined in
	\eqref{E:DIVERGENCEIDELLIPTICHYPERBOLICCURRENTBULKERRORTERMWITHWEIGHTDERIVATIVES}
	vanishes in this case),
	but in place of the expression for the error term $\mathfrak{J}_{(Coeff)}[\SigmatTan,\pmb{\partial} \SigmatTan]$
	stated in \eqref{E:THIRDERRORTERMDIVERGENCEOFSIGMATILDECURRENT},
	we instead have the following expression
	involving the Cartesian partial derivative vectorfields $\partial_{\alpha}$
	\emph{and} the $\widetilde{\Sigma}_{\Timefunction}$-projected vectorfields $\toppartialarg{\alpha}$ defined in \eqref{E:PROJECTIONSOFCARTESIANVECTORFIELDS}:
	\begin{align} \label{E:MOREPROOFTHIRDERRORTERMDIVERGENCEOFSIGMATILDECURRENT}
		\mathfrak{J}_{(Coeff)}[\SigmatTan,\pmb{\partial} \SigmatTan]
		& =
			-
			\SigmatTan^{\alpha} \hat{\tophypnorm}_{\alpha} 
			(\toppartialarg{\beta} \hat{\tophypnorm}^{\gamma}) 
			\partial_{\gamma} 
			\SigmatTan^{\beta} 
			-
			\SigmatTan_{\alpha} 
			(\toppartialarg{\beta} \hat{\tophypnorm}^{\alpha})
			\hat{\tophypnorm} \SigmatTan^{\beta}
				\\
		& \ \
			-
			\SigmatTan^{\alpha} 
			\hat{\tophypnorm}_{\beta} 
			(\partial_{\gamma} \hat{\tophypnorm}^{\gamma})  
			\toppartialarg{\alpha} \SigmatTan^{\beta}
			-
			\SigmatTan^{\alpha} 
			\gfour_{\beta \gamma}
			(\hat{\tophypnorm} \hat{\tophypnorm}^{\beta}) 
			\toppartialarg{\alpha} \SigmatTan^{\gamma}
			\notag \\
		& \ \
		+
		\SigmatTan^{\alpha} 
		\hat{\tophypnorm}_{\alpha} 
		(\partial_{\beta} \hat{\tophypnorm}^{\beta}) 
		\toppartialarg{\gamma} \SigmatTan^{\gamma}
		+
		\SigmatTan_{\alpha} 
		(\hat{\tophypnorm} \hat{\tophypnorm}^{\alpha})
		\toppartialarg{\beta} \SigmatTan^{\beta}
		\notag \\
	& \ \
		+
		\SigmatTan^{\alpha} 
		\hat{\tophypnorm}_{\beta}
		(\toppartialarg{\alpha} \hat{\tophypnorm}^{\gamma})
		\partial_{\gamma} \SigmatTan^{\beta}
		+
		\SigmatTan^{\alpha} 
		\gfour_{\beta \gamma}
		(\toppartialarg{\alpha} \hat{\tophypnorm}^{\beta})
		\hat{\tophypnorm} \SigmatTan^{\gamma}
		\notag
			\\
			& \ \
		+
		\SigmatTan^{\alpha} 
		\topfirstfund_{\beta \gamma} 
		(\hat{\tophypnorm} \hat{\tophypnorm}^{\beta})
		\toppartialarg{\alpha} \SigmatTan^{\gamma}
		-
		\SigmatTan^{\alpha} 
		\topfirstfund_{\alpha \beta}
		(\hat{\tophypnorm} \hat{\tophypnorm}^{\beta})
		\toppartialarg{\gamma} \SigmatTan^{\gamma}
		\notag \\
		&  \ \
			-
			\SigmatTan^{\alpha} 
			\hat{\tophypnorm}^{\beta}
			(\toppartialarg{\gamma} \gfour_{\alpha \beta})
			\hat{\tophypnorm} \SigmatTan^{\gamma}
			-
			\SigmatTan^{\alpha} 
			\hat{\tophypnorm}^{\beta}
			(\hat{\tophypnorm} \gfour_{\beta \gamma}) 
			\toppartialarg{\alpha} \SigmatTan^{\gamma}
			+
		\SigmatTan^{\alpha} 
		\hat{\tophypnorm}^{\beta}
		(\hat{\tophypnorm} \gfour_{\alpha \beta})
		\toppartialarg{\gamma} \SigmatTan^{\gamma}
		+
		\SigmatTan^{\alpha} 
		\hat{\tophypnorm}^{\beta}
		(\toppartialarg{\alpha} \gfour_{\beta \gamma})
		\hat{\tophypnorm} \SigmatTan^{\gamma}	
			\notag
			\\
		& \ \
		+
		2
		\SigmatTan^{\alpha}
		(\toppartialarg{\beta} \gfour_{\alpha \gamma})
		\toppartialuparg{\gamma} \SigmatTan^{\beta}
		-
		2
		\topproject_{\ \beta}^{\alpha}
		\SigmatTan^{\gamma}
		(\toppartialuparg{\delta} \gfour_{\alpha \gamma}) 
		\toppartialarg{\delta} \SigmatTan^{\beta}
				\notag \\
		&  \ \
		+
		\SigmatTan^{\alpha} 
		\topproject_{\ \gamma}^{\beta} 
		\hat{\tophypnorm}^{\delta}
		(\hat{\tophypnorm} \gfour_{\beta \delta})
		\toppartialarg{\alpha} \SigmatTan^{\gamma}
		-
		\SigmatTan^{\alpha} 
		\topproject_{\ \alpha}^{\beta} 
		\hat{\tophypnorm}^{\gamma}
		(\hat{\tophypnorm} \gfour_{\beta \gamma})
		\toppartialarg{\delta} \SigmatTan^{\delta}
		\notag \\
	& \ \
		+
		\frac{1}{2}
		(\topfirstfund^{-1})^{\alpha \beta}
		\SigmatTan^{\gamma} 
		(\toppartialarg{\gamma} \gfour_{\alpha \beta})
		\toppartialarg{\delta} \SigmatTan^{\delta}
		-
		\frac{1}{2}
		(\topfirstfund^{-1})^{\alpha \beta}
		\SigmatTan^{\gamma} 
		(\toppartialarg{\delta} \gfour_{\alpha \beta})
		\toppartialarg{\gamma} \SigmatTan^{\delta}
		\notag
			\\
		& \ \
			+
			\SigmatTan^{\alpha} 
			\SigmatTan^{\beta}
			(\toppartialuparg{\gamma} \gfour_{\alpha \delta})
			(\toppartialuparg{\delta} \gfour_{\beta \gamma})
		-
		\SigmatTan^{\alpha} 
		\SigmatTan^{\beta}
		(\topfirstfund^{-1})^{\gamma \delta} 
		(\toppartialuparg{\kappa} \gfour_{\alpha \gamma}) 
		\toppartialarg{\kappa} \gfour_{\beta \delta}.
				\notag
	\end{align}
	To complete the proof, it remains for us to show that 
	$\mbox{RHS~\eqref{E:MOREPROOFTHIRDERRORTERMDIVERGENCEOFSIGMATILDECURRENT}} = \mbox{RHS~\eqref{E:THIRDERRORTERMDIVERGENCEOFSIGMATILDECURRENT}}$.
	This can be shown through straightforward calculations 
	(which in particular lead to the cancellation of many terms on RHS~\eqref{E:MOREPROOFTHIRDERRORTERMDIVERGENCEOFSIGMATILDECURRENT}),
	based on splitting the vectorfield $\partial_{\gamma}$ 
	on RHS~\eqref{E:MOREPROOFTHIRDERRORTERMDIVERGENCEOFSIGMATILDECURRENT}
	into its $\widetilde{\Sigma}_{\Timefunction}$-tangential and $\widetilde{\Sigma}_{\Timefunction}$-orthogonal parts via
	the identity
		\begin{align}
		\partial_{\gamma}
		 & = \toppartialarg{\gamma}
				-
				\hat{\tophypnorm}_{\gamma}
				\hat{\tophypnorm},
	\end{align}
	which follows from definition \eqref{E:PROJECTIONSOFCARTESIANVECTORFIELDS},
	from using the identity
	\begin{align} \label{E:COORDINATEDERIVATIVEOFTOPNORMALCONTRACTEDWITHTOPNORMAL}
		\hat{\tophypnorm}_{\gamma}
		\partial_{\alpha} 
		\hat{\tophypnorm}^{\gamma}
		& = 
		- \frac{1}{2}
			\hat{\tophypnorm}^{\beta} \hat{\tophypnorm}^{\gamma}
			(\partial_{\alpha} \gfour_{\beta \gamma}),
	\end{align}
	which follows from differentiating the relation
	$\gfour_{\kappa \lambda} \hat{\tophypnorm}^{\kappa} \hat{\tophypnorm}^{\lambda} = -1$,
	from using \eqref{E:TOPFIRSTFUNDAMENTALFORMHYPERSURFACE}
	to decompose the factors of $\topfirstfund$ in the terms
	$
	\SigmatTan^{\alpha} 
		\topfirstfund_{\beta \gamma} 
		(\hat{\tophypnorm} \hat{\tophypnorm}^{\beta})
		\toppartialarg{\alpha} \SigmatTan^{\gamma}
		-
		\SigmatTan^{\alpha} 
		\topfirstfund_{\alpha \beta}
		(\hat{\tophypnorm} \hat{\tophypnorm}^{\beta})
		\toppartialarg{\gamma} \SigmatTan^{\gamma}
	$
	on RHS~\eqref{E:MOREPROOFTHIRDERRORTERMDIVERGENCEOFSIGMATILDECURRENT},
	and from using 
	\eqref{E:TOPPROJECT} 
	to decompose
	the factors of $\topproject$
	in the terms
	$
	\SigmatTan^{\alpha} 
		\topproject_{\ \gamma}^{\beta} 
		\hat{\tophypnorm}^{\delta}
		(\hat{\tophypnorm} \gfour_{\beta \delta})
		\toppartialarg{\alpha} \SigmatTan^{\gamma}
		-
		\SigmatTan^{\alpha} 
		\topproject_{\ \alpha}^{\beta} 
		\hat{\tophypnorm}^{\gamma}
		(\hat{\tophypnorm} \gfour_{\beta \gamma})
		\toppartialarg{\delta} \SigmatTan^{\delta}
	$
	on RHS~\eqref{E:MOREPROOFTHIRDERRORTERMDIVERGENCEOFSIGMATILDECURRENT}.
	
	\end{proof}

\subsection{Preliminary analysis of the boundary integrand}
\label{SS:PRELIMINARYDECOMPOSITIONOFOUNDARYINTEGRAND}
When we apply the divergence theorem on $\widetilde{\Sigma}_{\Timefunction}$ 
to the current $J^{\alpha}[\SigmatTan]$
defined in \eqref{E:NEWELLIPTICHYPERBOLICCURRENT},
we will encounter boundary terms on $\mathcal{S}_{\Timefunction}$.
In Prop.\,\ref{P:STRUCTUREOFERRORINTEGRALS} and Theorem~\ref{T:STRUCTUREOFERRORTERMS}, 
we show that after integration with respect to $\Timefunction$, 
the boundary terms involve derivatives of various quantities 
\emph{only in $\underline{\mathcal{H}}$-tangential directions}.
In the next lemma, namely Lemma~\ref{L:PRELIMINARYANALYSISOFBOUNDARYINTEGRAND}, 
we perform some preliminary analysis that essentially shows all terms
have the desired structure,
except for the term
$
		2 
		\weight
		\uposinnerproduct
		\uspecialgen^{\alpha}
		\angV^{\beta}
		(\partial_{\alpha} \SigmatTan_{\beta} - \partial_{\beta} \SigmatTan_{\alpha})
$
on RHS~\eqref{E:PRELIMINARYDECOMPOFBOUNDARYINTEGRAND},
which is much more difficult to handle; we dedicate all of
Sect.\,\ref{S:GEOMETRICDECOMPOSITIONSTEIDTOANTISYMMETRICPART}
to understanding the structure of this term.
The proof of Lemma~\ref{L:PRELIMINARYANALYSISOFBOUNDARYINTEGRAND} relies on careful geometric decompositions,
but unlike the analysis of Sect.\,\ref{S:GEOMETRICDECOMPOSITIONSTEIDTOANTISYMMETRICPART}
and the proof of Prop.\,\ref{P:STRUCTUREOFERRORINTEGRALS},
it does not rely on the formulation of compressible Euler flow provided by Theorem~\ref{T:GEOMETRICWAVETRANSPORTSYSTEM}.

\begin{lemma}[Preliminary analysis of the boundary integrand]
	\label{L:PRELIMINARYANALYSISOFBOUNDARYINTEGRAND}
	Let $\SigmatTan$ be a $\Sigma_t$-tangent vectorfield defined on $\mathcal{M}$,
	let $\angV$ be its $\gfour$-orthogonal projection onto $\mathcal{S}_{\Timefunction}$ 
	(see Defs.\,\ref{D:PROJECTIONSOFTENSORFIELDS} and \ref{D:TANGENTTENSORFIELDS}),
	let $J[\SigmatTan]$ be the $\widetilde{\Sigma}_{\Timefunction}$-tangent vectorfield 
	from Def.\,\ref{D:ELLIPTICHYPERBOLICCURRENT}, and let $\weight$ be an arbitrary scalar function. 
	Let $\uspecialgen$ and $\utang$ be the vectorfields from
	Def.\,\ref{D:SPECIALGENERATOR} and Lemma~\ref{L:KEYIDBETWEENVARIOUSVECTORFIELDS}.
	The following identity holds along $\mathcal{S}_{\Timefunction}$,
	where on RHS~\eqref{E:PRELIMINARYDECOMPOFBOUNDARYINTEGRAND},
	$\angdiv \angpartialarg{\beta}$ denotes the $\angD$-divergence
	of the $\mathcal{S}_{\Timefunction}$-tangent vectorfield 
	$\angpartialarg{\beta}$ (as in \eqref{E:EXPRESSIONFORSPHEREDIVERGENCEOFANGPARTIALVECTORFIELDSINCOORDINATES}):
	\begin{align} \label{E:PRELIMINARYDECOMPOFBOUNDARYINTEGRAND}
		\weight \spherenormal_{\alpha} J^{\alpha}[\SigmatTan]
		& = 
			-
		\modgen
		\left\lbrace
			\weight
			\frac{\uposinnerproduct}{\seconduposinnerproduct \lapsemodgen} 
			|\SigmatTan|_{\gsphere}^2
		\right\rbrace
		+
		2 
		\weight
		\uposinnerproduct
		\uspecialgen^{\alpha}
		\angV^{\beta}
		(\partial_{\alpha} \SigmatTan_{\beta} - \partial_{\beta} \SigmatTan_{\alpha})
			\\
		& \ \
		+
		\left\lbrace
		\modgen
		\left[\weight \frac{\uposinnerproduct}{\seconduposinnerproduct \lapsemodgen} (\gsphere^{-1})^{\alpha \beta} \right]
		\right\rbrace
		\SigmatTan_{\alpha} 
	  \SigmatTan_{\beta}
		+
		\left\lbrace
		\utang
			\left[\weight \uposinnerproduct (\gsphere^{-1})^{\alpha \beta} \right]
		\right\rbrace
		\SigmatTan_{\alpha} 
	  \SigmatTan_{\beta}
		-
		2 \weight \SigmatTan_{\alpha} (\angV \Transport^{\alpha})
		\notag \\
		& \ \
				+
			\weight
			\uposinnerproduct 
			|\angV|_{\gsphere}^2
			\angdiv \utang 
		+
		\weight
		\spherenormal_{\alpha} \SigmatTan^{\alpha}  
		\SigmatTan^{\beta}
		\angdiv \angpartialarg{\beta}
		+
		\weight
		\SigmatTan_{\alpha}
		\angV \spherenormal^{\alpha}	
			\notag \\
		& \ \
		+
		\weight
		\SigmatTan^{\alpha}
		\spherenormal^{\beta}
		\angV \gfour_{\alpha \beta}
		+
		\SigmatTan_{\alpha} \spherenormal^{\alpha}
		\angV \weight
		\notag
			\\
		& \ \
		-
		\angdiv
		\left\lbrace
			\weight
			\uposinnerproduct 
			|\angV|_{\gsphere}^2
			\utang 
		\right\rbrace
		-
		\angdiv
		\left\lbrace
			\weight
			\spherenormal_{\alpha} \SigmatTan^{\alpha}  
			\angV 
		\right\rbrace.
		\notag
	\end{align}
	\end{lemma}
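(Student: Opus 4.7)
The plan is to prove the identity \eqref{E:PRELIMINARYDECOMPOFBOUNDARYINTEGRAND} by direct geometric computation along $\mathcal{S}_{\Timefunction}$, without invoking any compressible Euler equation. The argument runs in three steps: a cancellation that eliminates all $\spherenormal$-directional derivatives of $\SigmatTan$ from $\spherenormal_{\alpha}J^{\alpha}[\SigmatTan]$; an antisymmetric/symmetric split that exposes the two-form $\partial_{\alpha}\SigmatTan_{\beta} - \partial_{\beta}\SigmatTan_{\alpha}$ and extracts the exact-derivative $\modgen\{\cdots\}$ term via the substitution \eqref{E:TRANSPORTDECOMPOSITION} of $\Transport$; and conversion of the remaining divergence-type term into an $\angD$-divergence plus Leibniz errors.

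First, I contract the definition \eqref{E:NEWELLIPTICHYPERBOLICCURRENT} with $\spherenormal_{\alpha}$. Comparing \eqref{E:TOPPROJECT} with \eqref{E:SPHEREPROJECTION} and using $\gfour(\spherenormal,\spherenormal)=1$ together with $\gfour(\hat{\tophypnorm},\spherenormal)=0$ (since $\spherenormal$ is tangent to $\widetilde{\Sigma}_{\Timefunction}$) yields the pointwise relation $\topproject_{\ \beta}^{\alpha} = \sphereproject_{\ \beta}^{\alpha} + \spherenormal^{\alpha}\spherenormal_{\beta}$ along $\mathcal{S}_{\Timefunction}$. Since $\spherenormal_{\alpha}\sphereproject_{\ \kappa}^{\alpha}=0$, this gives $\spherenormal_{\alpha}\topproject_{\ \kappa}^{\alpha}=\spherenormal_{\kappa}$. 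Combining this with $\SigmatTan^{\gamma}\topproject_{\ \gamma}^{\lambda} = \angV^{\lambda} + (\spherenormal_{\gamma}\SigmatTan^{\gamma})\spherenormal^{\lambda}$ and substituting into both pieces of $J^{\alpha}[\SigmatTan]$, the pure $\spherenormal$-directional derivative contributions $(\spherenormal_{\gamma}\SigmatTan^{\gamma})\spherenormal^{\lambda}\spherenormal_{\kappa}\partial_{\lambda}\SigmatTan^{\kappa}$ cancel exactly against $(\spherenormal_{\gamma}\SigmatTan^{\gamma})\spherenormal_{\lambda}\spherenormal\SigmatTan^{\lambda}$, leaving the clean starting formula
\[
\spherenormal_{\alpha}J^{\alpha}[\SigmatTan] \;=\; \spherenormal_{\kappa}\,\angV\SigmatTan^{\kappa} \;-\; (\spherenormal_{\gamma}\SigmatTan^{\gamma})\,\angpartialarg{\lambda}\SigmatTan^{\lambda},
\]
in which only $\mathcal{S}_{\Timefunction}$-tangential differentiation of $\SigmatTan$ appears. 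This first cancellation is the crux of the argument.

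Second, I split $\spherenormal^{\kappa}\angV^{\alpha}\partial_{\alpha}\SigmatTan_{\kappa}$ into antisymmetric and symmetric parts in $(\alpha,\kappa)$ and, via \eqref{E:TRANSPORTDECOMPOSITION} together with \eqref{E:LAPSEFORMODGENCORRESPONDINGTOTIMEFUNCTION}--\eqref{E:NORMALIZEDAGAINSTTIMEFUNCTIONGENERATOROFHYPERSURFACE}, substitute
\[
\spherenormal \;=\; \uposinnerproduct\,\Transport \;-\; \frac{\uposinnerproduct}{\seconduposinnerproduct\lapsemodgen}\,\modgen \;-\; \uposinnerproduct\,\utang
\]
(equivalently, $\spherenormal = \uposinnerproduct(\Transport - \uspecialgen)$, recalling \eqref{E:SPECIALGENERATOR}). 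In the antisymmetric part, the $\Transport$-piece simplifies because $\Transport^{\alpha}\SigmatTan_{\alpha}=0$ (a consequence of \eqref{E:TRANSPORTONEFORMIDENTITY} and the $\Sigma_t$-tangency of $\SigmatTan$), allowing the derivative to be transferred onto $\Transport^{\alpha}$ and producing the error term $-2\weight\SigmatTan_{\alpha}(\angV\Transport^{\alpha})$; the combined $\modgen$- and $\utang$-pieces reassemble via $\uspecialgen = \Transport - \frac{1}{\uposinnerproduct}\spherenormal$ into the antisymmetric expression $2\weight\uposinnerproduct\uspecialgen^{\alpha}\angV^{\beta}(\partial_{\alpha}\SigmatTan_{\beta} - \partial_{\beta}\SigmatTan_{\alpha})$ after relabeling the summed indices and using antisymmetry. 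In the symmetric half, I use $|\SigmatTan|_{\gsphere}^2 = |\angV|_{\gsphere}^2$ (which follows from \eqref{E:GSPHEREVANISHESONSPANOFTOPHYPNORMANDSPHERENORMAL}) to rewrite $\SigmatTan_{\alpha}\SigmatTan_{\beta}$-quadratic forms as derivatives of $(\gsphere^{-1})^{\alpha\beta}\SigmatTan_{\alpha}\SigmatTan_{\beta}$ minus Leibniz remainders falling on $(\gsphere^{-1})^{\alpha\beta}$ and on the coefficients $\weight\uposinnerproduct/(\seconduposinnerproduct\lapsemodgen)$; the $\modgen$-contribution packages into the exact derivative $-\modgen\bigl\{\weight\tfrac{\uposinnerproduct}{\seconduposinnerproduct\lapsemodgen}|\SigmatTan|_{\gsphere}^2\bigr\}$ together with the coefficient correction $\bigl\{\modgen\bigl[\weight\tfrac{\uposinnerproduct}{\seconduposinnerproduct\lapsemodgen}(\gsphere^{-1})^{\alpha\beta}\bigr]\bigr\}\SigmatTan_{\alpha}\SigmatTan_{\beta}$, while the $\utang$-contribution packages into $-\angdiv\bigl\{\weight\uposinnerproduct|\angV|_{\gsphere}^2\utang\bigr\}$ together with its Leibniz remainders $\weight\uposinnerproduct|\angV|_{\gsphere}^2\angdiv\utang$ and $\bigl\{\utang[\weight\uposinnerproduct(\gsphere^{-1})^{\alpha\beta}]\bigr\}\SigmatTan_{\alpha}\SigmatTan_{\beta}$.

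Third, I convert the trailing term $-\weight(\spherenormal_{\gamma}\SigmatTan^{\gamma})\angpartialarg{\lambda}\SigmatTan^{\lambda}$ into the exact $\angD$-divergence $-\angdiv\bigl\{\weight(\spherenormal_{\alpha}\SigmatTan^{\alpha})\angV\bigr\}$ modulo four Leibniz remainders: one falling on $\weight$ produces $\SigmatTan_{\alpha}\spherenormal^{\alpha}\angV\weight$; one falling on $\spherenormal^{\alpha}$ produces $\weight\SigmatTan_{\alpha}\angV\spherenormal^{\alpha}$; one falling on $\gfour_{\alpha\beta}$ (through index-raising/lowering and through \eqref{E:EXPRESSIONFORSPHEREDIVERGENCEINCOORDINATES} applied to $\angV$) produces $\weight\SigmatTan^{\alpha}\spherenormal^{\beta}\angV\gfour_{\alpha\beta}$; and the discrepancy between $\angpartialarg{\lambda}\SigmatTan^{\lambda}$ and $\angdiv\angV$, controlled by \eqref{E:EXPRESSIONFORSPHEREDIVERGENCEOFANGPARTIALVECTORFIELDSINCOORDINATES} applied with $\kappa$ contracted against $\SigmatTan^{\kappa}$, produces $\weight\spherenormal_{\alpha}\SigmatTan^{\alpha}\SigmatTan^{\beta}\angdiv\angpartialarg{\beta}$. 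Assembling all contributions from the three steps term-by-term yields the claimed identity \eqref{E:PRELIMINARYDECOMPOFBOUNDARYINTEGRAND}.

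The principal obstacle is bookkeeping rather than any deep geometric difficulty: the Leibniz remainders generated when $(\gsphere^{-1})^{\alpha\beta}$, $\gfour_{\alpha\beta}$, and the scalar coefficients $\weight$, $\uposinnerproduct$, $\seconduposinnerproduct$, $\lapsemodgen$ are differentiated must be consistently assigned to the $\modgen[\cdots]$, $\utang[\cdots]$, $\angdiv\utang$, and $\angdiv\angpartialarg{\beta}$ terms displayed on the right-hand side, with no residue. I emphasize that this step is purely geometric and avoids the compressible Euler equations entirely; the formulation of Theorem~\ref{T:GEOMETRICWAVETRANSPORTSYSTEM} enters only later in Proposition~\ref{P:STRUCTUREOFERRORINTEGRALS} and Section~\ref{S:GEOMETRICDECOMPOSITIONSTEIDTOANTISYMMETRICPART}, when the remaining antisymmetric term $2\weight\uposinnerproduct\uspecialgen^{\alpha}\angV^{\beta}(\partial_{\alpha}\SigmatTan_{\beta} - \partial_{\beta}\SigmatTan_{\alpha})$ is itself decomposed to expose its regularity and $\underline{\mathcal{H}}$-tangential structure.
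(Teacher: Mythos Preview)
Your plan is essentially the paper's: both start with the same step-1 cancellation of $\spherenormal$-directional derivatives via $\topproject = \sphereproject + \spherenormal\otimes\spherenormal$, both substitute $\spherenormal = \uposinnerproduct(\Transport - \uspecialgen)$ and exploit $\Transport^{\alpha}\SigmatTan_{\alpha}=0$, and both package the divergence-type piece as $-\angdiv\{\weight(\spherenormal_{\alpha}\SigmatTan^{\alpha})\angV\}$ plus Leibniz remainders.

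However, the paper reorders your steps 2 and 3, and this exposes a concrete omission in your step 3. The paper's first move after step 1 (its equation \eqref{E:SECONDSTEPPRELIMINARYDECOMPOFBOUNDARYINTEGRAND}) is to integrate by parts on the second term, which produces a \emph{second copy} of the first term, so that all subsequent manipulations act on the doubled expression $2\weight\spherenormal_{\kappa}\angV\SigmatTan^{\kappa}$. In your ordering, the same phenomenon must appear as a Leibniz remainder in step 3: when you expand $\angdiv\{\weight(\spherenormal_{\gamma}\SigmatTan^{\gamma})\angV\}$, one remainder comes from $\angV$ acting on the factor $\SigmatTan^{\gamma}$ inside $\spherenormal_{\gamma}\SigmatTan^{\gamma}$, namely $\weight\spherenormal_{\gamma}\angV\SigmatTan^{\gamma}$ --- which is exactly your step-1 Term A again. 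You list only four remainders and omit this fifth one. It is precisely this hidden extra copy of Term A that supplies every factor of $2$ on the right-hand side: in $2\weight\uposinnerproduct\uspecialgen^{\alpha}\angV^{\beta}(\partial_{\alpha}\SigmatTan_{\beta}-\partial_{\beta}\SigmatTan_{\alpha})$, in $-2\weight\SigmatTan_{\alpha}\angV\Transport^{\alpha}$, and crucially in the identity $2(\gsphere^{-1})^{\alpha\beta}\SigmatTan_{\alpha}X\SigmatTan_{\beta} = X|\SigmatTan|_{\gsphere}^2 - [X(\gsphere^{-1})^{\alpha\beta}]\SigmatTan_{\alpha}\SigmatTan_{\beta}$ needed to package the $\modgen$-piece of your step-2 symmetric half into a full derivative. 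With a single copy of Term A all of your step-2 coefficients would be off by $\tfrac12$; once the missing remainder is included, your argument and the paper's coincide.
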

	
	\begin{proof}
		First, using \eqref{E:NEWELLIPTICHYPERBOLICCURRENT}
		and
		\eqref{E:TOPPARTIALINTERMSOFOUTERNORMALDERIVATIVEANDANGPARTIAL}
		and the fact that $\topproject \spherenormal = \spherenormal$,
		we compute that
		\begin{align} \label{E:FIRSTSTEPPRELIMINARYDECOMPOFBOUNDARYINTEGRAND}
		\weight
		\spherenormal_{\alpha} J^{\alpha}[\SigmatTan]
		& 
		=
		\weight
		\spherenormal_{\gamma} 
		\SigmatTan^{\beta} 
		\sphereproject_{\ \beta}^{\delta} 
		\angpartialarg{\delta} \SigmatTan^{\gamma}
		-
		\weight
		\spherenormal_{\gamma} 
		\SigmatTan^{\gamma} 
		\sphereproject_{\ \lambda}^{\kappa}
		\angpartialarg{\kappa} \SigmatTan^{\lambda}.
	\end{align}
	Differentiating by parts on the last product 
	$
	-
		\weight
		\spherenormal_{\gamma} 
		\SigmatTan^{\gamma} 
		\sphereproject_{\ \lambda}^{\kappa}
		\angpartialarg{\kappa} \SigmatTan^{\lambda}
	$
	on RHS~\eqref{E:FIRSTSTEPPRELIMINARYDECOMPOFBOUNDARYINTEGRAND}
	and using the simple identity 
		$\angV = \SigmatTan^{\beta} \angpartialarg{\beta}
		=
		\SigmatTan^{\lambda} 
		\sphereproject_{\ \lambda}^{\kappa}
		\angpartialarg{\kappa}
		$
		(which follows from \eqref{E:ALTERNATENOTATIONFORSPHEREPROJECTEDTENSORFIELDS}, 
		\eqref{E:STPROJECTIONSQUAREDEQUALSSTPROJECTION}, and definition \eqref{E:PROJECTIONSOFCARTESIANVECTORFIELDS}),
		we compute that
		\begin{align} \label{E:SECONDSTEPPRELIMINARYDECOMPOFBOUNDARYINTEGRAND}
		\mbox{RHS}~\eqref{E:FIRSTSTEPPRELIMINARYDECOMPOFBOUNDARYINTEGRAND}
		& =
		2
		\weight
		\spherenormal_{\gamma} 
		\SigmatTan^{\beta} 
		\sphereproject_{\ \beta}^{\delta} 
		\angpartialarg{\delta} \SigmatTan^{\gamma}
		-
		\angdiv
		\left\lbrace
			\weight
			\spherenormal_{\alpha} \SigmatTan^{\alpha}  
			\angV 
		\right\rbrace
		+
		\weight
		\spherenormal_{\alpha} \SigmatTan^{\alpha}  
		\SigmatTan^{\beta}
		\angdiv \angpartialarg{\beta}
		+
		\SigmatTan^{\alpha}
		\angV (\weight \spherenormal_{\alpha}),
		\end{align}
	where we stress that on RHS~\eqref{E:SECONDSTEPPRELIMINARYDECOMPOFBOUNDARYINTEGRAND},
	we are viewing $\angpartialarg{\beta}$ as an $\mathcal{S}_{\Timefunction}$-tangent vectorfield.
	Expressing the last factor on RHS~\eqref{E:SECONDSTEPPRELIMINARYDECOMPOFBOUNDARYINTEGRAND}
	as $\spherenormal_{\alpha} = \gfour_{\alpha \beta} \spherenormal^{\beta}$,
	we compute that
	\begin{align} \label{E:THIRDSTEPPRELIMINARYDECOMPOFBOUNDARYINTEGRAND}
		\mbox{RHS}~\eqref{E:SECONDSTEPPRELIMINARYDECOMPOFBOUNDARYINTEGRAND}
		& =
		2
		\weight
		\SigmatTan^{\beta} 
		\sphereproject_{\ \beta}^{\delta} 
		\spherenormal^{\alpha}
		\angpartialarg{\delta} \SigmatTan_{\alpha}
		-
		\angdiv
		\left\lbrace
			\weight
			\spherenormal_{\alpha} \SigmatTan^{\alpha}  
			\angV 
		\right\rbrace
		+
		\weight
		\spherenormal_{\alpha} \SigmatTan^{\alpha}  
		\SigmatTan^{\beta}
		\angdiv \angpartialarg{\beta}
		+
		\weight
		\SigmatTan_{\alpha}
		\angV \spherenormal^{\alpha}
			\\
	& \ \
		+
		\weight
		\SigmatTan^{\alpha}
		\spherenormal^{\beta}
		\angV \gfour_{\alpha \beta}
		+
		\SigmatTan_{\alpha} \spherenormal^{\alpha}
		\angV \weight.
		\notag
\end{align}
Next, we use \eqref{E:TRANSPORTDECOMPOSITION}
to substitute for the factor $\spherenormal^{\alpha}$ in the first product on RHS~\eqref{E:THIRDSTEPPRELIMINARYDECOMPOFBOUNDARYINTEGRAND},
which allows us to rewrite the factor as follows:
\begin{align} \label{E:FOURTHSTEPPRELIMINARYDECOMPOFBOUNDARYINTEGRAND}
		2 
		\weight
		\SigmatTan^{\beta} 
		\sphereproject_{\ \beta}^{\delta} 
		\spherenormal^{\alpha}
		\angpartialarg{\delta} \SigmatTan_{\alpha}
		& = 
		2 
		\weight
		\uposinnerproduct \SigmatTan^{\beta} \sphereproject_{\ \beta}^{\delta} \Transport^{\alpha}
		\angpartialarg{\delta} \SigmatTan_{\alpha}
		-
		2 
		\weight
		\frac{\uposinnerproduct}{\seconduposinnerproduct} \SigmatTan^{\beta} \sphereproject_{\ \beta}^{\delta} \gen^{\alpha}
		\angpartialarg{\delta} \SigmatTan_{\alpha}
		-
		2 
		\weight
		\uposinnerproduct \SigmatTan^{\beta} \sphereproject_{\ \beta}^{\delta} \utang^{\alpha}
		\angpartialarg{\delta} \SigmatTan_{\alpha}.
\end{align}
Next, since $\Transport$ is $\gfour$-orthogonal to $\Sigma_t$, we have $\Transport^{\alpha} \SigmatTan_{\alpha} = 0$,
and by differentiating this relation, we obtain the identity
that $\Transport^{\alpha} \angpartialarg{\delta} \SigmatTan_{\alpha} = - (\angpartialarg{\delta} \Transport^{\alpha}) \SigmatTan_{\alpha}$.
Using this identity
to remove the derivatives off the factor $\SigmatTan_{\alpha}$ in the first term on RHS~\eqref{E:FOURTHSTEPPRELIMINARYDECOMPOFBOUNDARYINTEGRAND},
we deduce that
\begin{align} \label{E:FIFTHSTEPPRELIMINARYDECOMPOFBOUNDARYINTEGRAND}
		2 
		\weight
		\SigmatTan^{\beta} 
		\sphereproject_{\ \beta}^{\delta} 
		\spherenormal^{\alpha}
		\angpartialarg{\delta} \SigmatTan_{\alpha}
		& = 
		-
		2 \weight 
		\uposinnerproduct  \SigmatTan_{\alpha} \angV \Transport^{\alpha}
		-
		2 
		\weight
		\frac{\uposinnerproduct}{\seconduposinnerproduct} \SigmatTan^{\beta} \sphereproject_{\ \beta}^{\delta} \gen^{\alpha}
		\angpartialarg{\delta} \SigmatTan_{\alpha}
		-
		2 
		\weight
		\uposinnerproduct \SigmatTan^{\beta} \sphereproject_{\ \beta}^{\delta} \utang^{\alpha}
		\angpartialarg{\delta} \SigmatTan_{\alpha}.
\end{align}	
Next, we use straightforward algebraic calculations
to rewrite the last two products on RHS~\eqref{E:FIFTHSTEPPRELIMINARYDECOMPOFBOUNDARYINTEGRAND}
as follows, where we take into account \eqref{E:SPECIALGENERATORIDENTITY}:
\begin{align} \label{E:SIXTHSTEPPRELIMINARYDECOMPOFBOUNDARYINTEGRAND}
	-
		2 
		\weight
		\frac{\uposinnerproduct}{\seconduposinnerproduct} \SigmatTan^{\beta} \sphereproject_{\ \beta}^{\delta} \gen^{\alpha}
		\angpartialarg{\delta} \SigmatTan_{\alpha}
		-
		2 
		\weight
		\uposinnerproduct \SigmatTan^{\beta} \sphereproject_{\ \beta}^{\delta} \utang^{\alpha}
		\angpartialarg{\delta} \SigmatTan_{\alpha}
		&
		=
		-
		2 
		\weight
		\frac{\uposinnerproduct}{\seconduposinnerproduct} 
		(\gsphere^{-1})^{\beta \delta}
		\SigmatTan_{\beta}
		\gen \SigmatTan_{\delta}
		-
		2 
		\weight
		\uposinnerproduct
		(\gsphere^{-1})^{\beta \delta}
		 \SigmatTan_{\beta}
		\utang \SigmatTan_{\alpha}
			\\
	& \ \
		+ 
		2 
		\weight
		\uposinnerproduct
		\uspecialgen^{\alpha} \angV^{\beta} 
		\left\lbrace
			\partial_{\alpha} \SigmatTan_{\beta} 
			- 
			\partial_{\beta} \SigmatTan_{\alpha}
		\right\rbrace.
		\notag
\end{align}
Using \eqref{E:NORMALIZEDAGAINSTTIMEFUNCTIONGENERATOROFHYPERSURFACE},
differentiating by parts on the first two products on RHS~\eqref{E:SIXTHSTEPPRELIMINARYDECOMPOFBOUNDARYINTEGRAND},
and using the simple identity $|\SigmatTan|_{\gsphere}^2 = (\gsphere^{-1})^{\alpha \beta} \SigmatTan_{\alpha} \SigmatTan_{\beta}$,
we rewrite \eqref{E:SIXTHSTEPPRELIMINARYDECOMPOFBOUNDARYINTEGRAND} as follows:
\begin{align} \label{E:SEVENTHSTEPPRELIMINARYDECOMPOFBOUNDARYINTEGRAND}
		&
		-
		2 
		\weight
		\frac{\uposinnerproduct}{\seconduposinnerproduct} \SigmatTan^{\beta} \sphereproject_{\ \beta}^{\delta} \gen^{\alpha}
		\angpartialarg{\delta} \SigmatTan_{\alpha}
		-
		2 
		\weight
		\uposinnerproduct \SigmatTan^{\beta} \sphereproject_{\ \beta}^{\delta} \utang^{\alpha}
		\angpartialarg{\delta} \SigmatTan_{\alpha}
			\\
		&
		=
		-
		\modgen
		\left\lbrace
			\weight
			\frac{\uposinnerproduct}{\seconduposinnerproduct \lapsemodgen} 
			|\SigmatTan|_{\gsphere}^2
		\right\rbrace
		-
		\angdiv
		\left\lbrace
			\weight
			\uposinnerproduct 
			|\angV|_{\gsphere}^2
			\utang 
		\right\rbrace
	\notag \\
		& \ \
		+
		\left\lbrace
		\modgen
			\left[\weight \frac{\uposinnerproduct}{\seconduposinnerproduct \lapsemodgen} (\gsphere^{-1})^{\alpha \beta} \right]
		\right\rbrace
		\SigmatTan_{\alpha} 
	  \SigmatTan_{\beta}
		+
		\left\lbrace
		\utang
			\left[\weight \uposinnerproduct (\gsphere^{-1})^{\alpha \beta} \right]
		\right\rbrace
		\SigmatTan_{\alpha} 
	  \SigmatTan_{\beta}
			+
			\weight
			\uposinnerproduct 
			|\angV|_{\gsphere}^2
			\angdiv \utang 
		\notag
			\\
	& \ \
		+ 
		2 
		\weight
		\uposinnerproduct
		\uspecialgen^{\alpha} \angV^{\beta} 
		\left\lbrace
			\partial_{\alpha} \SigmatTan_{\beta} 
			- 
			\partial_{\beta} \SigmatTan_{\alpha}
		\right\rbrace.
		\notag
\end{align}
Finally, combining \eqref{E:FIRSTSTEPPRELIMINARYDECOMPOFBOUNDARYINTEGRAND}-\eqref{E:SEVENTHSTEPPRELIMINARYDECOMPOFBOUNDARYINTEGRAND},
we conclude the desired identity \eqref{E:PRELIMINARYDECOMPOFBOUNDARYINTEGRAND}.	
	
\end{proof}

\section{Additional geometric decompositions tied to $\partial_{\alpha} \SigmatTan_{\beta} - \partial_{\beta} \SigmatTan_{\alpha}$}
\label{S:GEOMETRICDECOMPOSITIONSTEIDTOANTISYMMETRICPART}
In order to derive our main results, we need to uncover some subtle structures
found in the term
$
2 
		\weight
		\uposinnerproduct
		\uspecialgen^{\alpha}
		\angV^{\beta}
		(\partial_{\alpha} \SigmatTan_{\beta} - \partial_{\beta} \SigmatTan_{\alpha})
$
on the right-hand side of the boundary term identity \eqref{E:PRELIMINARYDECOMPOFBOUNDARYINTEGRAND}.
Although this term has the desired feature that it involves only $\underline{\mathcal{H}}$-tangential derivatives of $\SigmatTan$,
as written, it \emph{appears} to have insufficient regularity for applications. 
The reason is that our forthcoming analysis
(specifically the proof of Prop.\,\ref{P:STRUCTUREOFERRORINTEGRALS} -- 
see the second integral on RHS~\eqref{E:INTEGRATEDPRELIMINARYDECOMPOFBOUNDARYINTEGRAND})
involves the integral of 
$
2 
		\weight
		\uposinnerproduct
		\uspecialgen^{\alpha}
		\angV^{\beta}
		(\partial_{\alpha} \SigmatTan_{\beta} - \partial_{\beta} \SigmatTan_{\alpha})
$
along the lateral hypersurface $\underline{\mathcal{H}}$,
and the difficulty is that for $V \in \lbrace \vortrenormalized, \GradEnt \rbrace$,
we have no control over even $\underline{\mathcal{H}}$-tangential first derivatives of $\SigmatTan$ in $L^2$ along $\underline{\mathcal{H}}$;
this is consistent with the fact that 
our main integral identities, which are provided by Theorem~\ref{T:MAINREMARKABLESPACETIMEINTEGRALIDENTITY}, yield only \emph{spacetime} $L^2$ control
over the first derivatives of $\SigmatTan$.
To overcome this difficulty, we will use the compressible Euler equations to show that
$
2 
		\weight
		\uposinnerproduct
		\uspecialgen^{\alpha}
		\angV^{\beta}
		(\partial_{\alpha} \SigmatTan_{\beta} - \partial_{\beta} \SigmatTan_{\alpha})
$
can be rewritten
as \emph{terms that involve only $\underline{\mathcal{H}}$-tangential derivatives of quantities with sufficient regularity}.
In this section, we provide various geo-analytic decompositions
that in total reveal the structures of interest.
The main results in this section are 
Lemma~\ref{L:TRANSPORTLOGDENSITYISTANGENTIALEXCEPTINNULLCASE},
Cor.\,\ref{C:SHARPDECOMPOSITIONOFANTISYMMETRICGRADIENTS},
and
Prop.\,\ref{P:KEYDETERMINANT}. 

\begin{remark}[Exploiting the special structure of the compressible Euler equations]
\label{R:EXPLOITINGSPECIALSTRUCTUREOFCOMPRESSIBLEEULER}
Lemma~\ref{L:TRANSPORTLOGDENSITYISTANGENTIALEXCEPTINNULLCASE},
Cor.\,\ref{C:SHARPDECOMPOSITIONOFANTISYMMETRICGRADIENTS},
and
Prop.\,\ref{P:KEYDETERMINANT} are the main results in the paper in which 
we crucially exploit the special properties of the compressible Euler formulation
provided by Theorem~\ref{T:GEOMETRICWAVETRANSPORTSYSTEM} \emph{and} the precise structure of equations
\eqref{E:TRANSPORTDENSRENORMALIZEDRELATIVECTORECTANGULAR}-\eqref{E:TRANSPORTVELOCITYRELATIVECTORECTANGULAR}
(where the latter two equations are part of the standard first-order formulation of compressible Euler flow).
\end{remark}

\subsection{Some preliminary geometric decompositions}
\label{SS:PRELIMINARYGEOMETRICDECOMPOSITIONSFORDIFFICULTTERM}

\subsubsection{The vectorfield $\urescalednewgenminushypnorm$}
\label{SSS:RESCALEDVERSIONOFHYPNORMMINUSNEWGEN}
In our decompositions, we will encounter the vectorfield $\urescalednewgenminushypnorm$,
which we now define.

\begin{definition}[The vectorfield $\urescalednewgenminushypnorm$]
	\label{D:RESCALEDVERSIONOFHYPNORMMINUSNEWGEN}
	Let $\tophypnorm$ and $\sidehypnorm$ be the vectorfields
	from Def.\,\ref{D:HYPNORMANDSPHEREFORMDEFS}, and let $\seconduposinnerproduct > 0$ be the scalar function
	from \eqref{E:SECONDINGOINGCONDITION}. 
	Along $\underline{\mathcal{H}}$, we define $\urescalednewgenminushypnorm$ to be the following vectorfield:
	\begin{align} \label{E:REGULARFORMRESCALEDVERSIONOFHYPNORMMINUSNEWGEN}
		\urescalednewgenminushypnorm
		& := \frac{\sidehypnorm - \tophypnorm}{\seconduposinnerproduct}.
	\end{align}
	
\end{definition}

\begin{lemma}[Properties of $\urescalednewgenminushypnorm$]
	\label{L:PROPERTIESOFRESCALEDGENERATORMINUSSIDEHYPNORM}
	The vectorfield $\urescalednewgenminushypnorm$ from Def.\,\ref{D:RESCALEDVERSIONOFHYPNORMMINUSNEWGEN},
	which is defined along $\underline{\mathcal{H}}$,
	is $\Sigma_t$-tangent (i.e., $\urescalednewgenminushypnorm t = 0$) and $\gfour$-spacelike.

	\end{lemma}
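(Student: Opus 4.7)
\medskip

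\noindent\textbf{Proof proposal.} The plan is to establish the two assertions separately, leveraging the normalizations of $\sidehypnorm$ and $\tophypnorm$ against Cartesian time and then exploiting the Riemannian character of the first fundamental form $g$ of $\Sigma_t$ to upgrade $\Sigma_t$-tangency into $\gfour$-spacelikeness.

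First, to establish the $\Sigma_t$-tangent property, I would compute $\urescalednewgenminushypnorm t$ directly from the definition \eqref{E:REGULARFORMRESCALEDVERSIONOFHYPNORMMINUSNEWGEN}. By the normalization \eqref{E:FUTURENORMALTOHYPERSURFACE} we have $\sidehypnorm t = 1$, and by the normalization \eqref{E:FUTURENORMALTOTOPHYPERSURFACE} we have $\tophypnorm t = 1$. Since $\seconduposinnerproduct > 0$ by assumption \eqref{E:SECONDINGOINGCONDITION}, it follows that
\begin{align*}
	\urescalednewgenminushypnorm t
	& = \frac{\sidehypnorm t - \tophypnorm t}{\seconduposinnerproduct}
	= \frac{1 - 1}{\seconduposinnerproduct}
	= 0,
\end{align*}
which shows that $\urescalednewgenminushypnorm$ is $\Sigma_t$-tangent in the sense that its Cartesian time component vanishes.

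Next, to show that $\urescalednewgenminushypnorm$ is $\gfour$-spacelike, I would invoke property \eqref{E:FIRSTFUNDSIGMATEQUALSSPACETIMEMETRICONSIGMAT} from Lemma~\ref{L:BASICPROPSOFFUNDAMENTALFORMSANDPROJECTIONS}, which implies that for any $\Sigma_t$-tangent vectorfield $\mathbf{X}$ we have $\gfour(\mathbf{X},\mathbf{X}) = g(\mathbf{X},\mathbf{X}) \geq 0$, with equality if and only if $\mathbf{X} = 0$ (since $g$ is Riemannian on $\Sigma_t$). Hence it only remains to verify that $\urescalednewgenminushypnorm$ is nowhere vanishing along $\underline{\mathcal{H}}$. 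The key observation is that $\sidehypnorm$ is $\gfour$-normal to $\underline{\mathcal{H}}$ while $\tophypnorm$ is $\gfour$-normal to $\widetilde{\Sigma}_{\Timefunction}$, and since by assumption $\underline{\mathcal{H}}$ is transversal to $\widetilde{\Sigma}_{\Timefunction}$ (see Subsect.\,\ref{SS:DOMAINANDTIMEFUNCTIONETC}), their $\gfour$-normals $\sidehypnorm$ and $\tophypnorm$ cannot be parallel; in particular they cannot be equal, so $\sidehypnorm - \tophypnorm \neq 0$, and dividing by the positive function $\seconduposinnerproduct$ preserves non-vanishing. Combining, $g(\urescalednewgenminushypnorm,\urescalednewgenminushypnorm) > 0$ and thus $\urescalednewgenminushypnorm$ is $\gfour$-spacelike, which completes the proof.

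There are no significant obstacles: the main content is simply recognizing that the combination $\sidehypnorm - \tophypnorm$ is engineered precisely so that the Cartesian time components cancel, reducing the metric computation to a triviality via the Riemannian nature of $g$. As an alternative (less clean) route, one could instead substitute the formula \eqref{E:INNERPRODUCTOFSIDENORMANDTOPNORM} for $\gfour(\sidehypnorm,\tophypnorm)$ into the expansion $\gfour(\sidehypnorm - \tophypnorm, \sidehypnorm - \tophypnorm) = -\lengthofsidehypnorm^2 - 2\gfour(\sidehypnorm,\tophypnorm) - \lengthoftophypnorm^2$ and verify positivity by direct calculation, but this would require separately handling the $\gfour$-spacelike and $\gfour$-null cases for $\underline{\mathcal{H}}$, so I would avoid it.
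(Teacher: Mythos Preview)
Your proof is correct and your argument for $\Sigma_t$-tangency is identical to the paper's: both simply compute $(\sidehypnorm - \tophypnorm)t = 1 - 1 = 0$ from the normalizations \eqref{E:FUTURENORMALTOTOPHYPERSURFACE} and \eqref{E:FUTURENORMALTOHYPERSURFACE}. Your treatment of the $\gfour$-spacelike claim is actually more thorough than the paper's, which stops after establishing $\Sigma_t$-tangency and leaves the spacelike conclusion implicit; your transversality argument for non-vanishing (together with the positivity of $g$ on $\Sigma_t$) fills that in cleanly.
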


\begin{proof}
	From \eqref{E:FUTURENORMALTOTOPHYPERSURFACE} and \eqref{E:FUTURENORMALTOHYPERSURFACE},
	we see that $(\sidehypnorm - \tophypnorm)t = 1 - 1 = 0$.
	In view of \eqref{E:REGULARFORMRESCALEDVERSIONOFHYPNORMMINUSNEWGEN}, we
	conclude that $\urescalednewgenminushypnorm$ is $\Sigma_t$-tangent as desired.

\end{proof}

\subsubsection{A geometric decomposition of the Cartesian coordinate partial derivative vectorfield $\partial_{\alpha}$}
\label{SSS:NEWDECOMPOSITIONOFCOORDINATEPARTIALDERIVATIVEVECTORFIELDS}
In our forthcoming analysis, we will often decompose
the Cartesian partial derivative vectorfield $\partial_{\alpha}$
into a part that is parallel to $\Transport$ and a part that is
$\underline{\mathcal{H}}$-tangent. 
In the next lemma, we provide the decomposition.

\begin{lemma}[Decomposition of $\partial_{\alpha}$ into $\Transport$-parallel and $\underline{\mathcal{H}}$-tangential components]
	\label{L:NEWDECOMPOSITIONOFCOORDINATEPARTIALDERIVATIVEVECTORFIELDS}
	Let 
	$\sidehypnorm$
	and
	$\gen$
	be the vectorfields from Def.\,\ref{D:HYPNORMANDSPHEREFORMDEFS},
	and let $\urescalednewgenminushypnorm$ be the vectorfield from
	Def.\,\ref{D:RESCALEDVERSIONOFHYPNORMMINUSNEWGEN}.
	For $\alpha = 0,1,2,3$, let $\utandecompvectorfielddownarg{\alpha}$ be the vectorfield on $\underline{\mathcal{H}}$ 
	defined by the following identity relative to the Cartesian coordinates:
	\begin{align} \label{E:NEWDECOMPOSITIONOFCOORDINATEPARTIALDERIVATIVEVECTORFIELDS}
		\partial_{\alpha}
		& = 
			-
			\sidehypnorm_{\alpha} \Transport
			+
			\urescalednewgenminushypnorm_{\alpha} \gen
			+
			\utandecompvectorfielddownarg{\alpha}.
	\end{align}
	Then $\utandecompvectorfielddownarg{\alpha}$ is $\mathcal{S}_{\Timefunction}$-tangent.

\end{lemma}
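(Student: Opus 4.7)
The plan is to verify $\mathcal{S}_{\Timefunction}$-tangency of $\utandecompvectorfielddownarg{\alpha}$ by checking that it is $\gfour$-orthogonal to a basis of the two-dimensional $\gfour$-orthogonal complement of $T\mathcal{S}_{\Timefunction}$. Since $\widetilde{\Sigma}_{\Timefunction}$ is $\gfour$-spacelike and the two-dimensional surface $\mathcal{S}_{\Timefunction} \subset \widetilde{\Sigma}_{\Timefunction}$ inherits a Riemannian structure from $\gfour$ (so $T\mathcal{S}_{\Timefunction}$ is $\gfour$-nondegenerate), we have $(T\mathcal{S}_{\Timefunction})^{\perp\perp} = T\mathcal{S}_{\Timefunction}$. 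Thus it suffices to exhibit two linearly independent vectors in $(T\mathcal{S}_{\Timefunction})^{\perp}$ that $\utandecompvectorfielddownarg{\alpha}$ is $\gfour$-orthogonal to; I will take these to be $\tophypnorm$ and $\sidehypnorm$, which are manifestly linearly independent (even in the null case, since $\tophypnorm$ is $\gfour$-timelike while $\uLunit = \sidehypnorm$ is $\gfour$-null), and both lie in $(T\mathcal{S}_{\Timefunction})^{\perp}$ by construction.

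The computation is then a direct $\gfour$-contraction of \eqref{E:NEWDECOMPOSITIONOFCOORDINATEPARTIALDERIVATIVEVECTORFIELDS}. First I would pair both sides with $\sidehypnorm$: the left-hand side gives $\gfour(\partial_\alpha, \sidehypnorm) = \sidehypnorm_\alpha$; on the right-hand side I use \eqref{E:EQUIVALENTFUTURENORMALTOHYPERSURFACE}, which gives $\gfour(\Transport,\sidehypnorm) = -1$, together with the identity $\gfour(\gen, \sidehypnorm) = 0$ (true in the spacelike case because $\gen$ is $\underline{\mathcal{H}}$-tangent while $\sidehypnorm$ is $\gfour$-normal, and true in the null case because \eqref{E:NORMALISGENERATORINNULLCASE} yields $\gen = \sidehypnorm = \uLunit$, which is $\gfour$-null). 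After cancellation this forces $\gfour(\utandecompvectorfielddownarg{\alpha},\sidehypnorm) = 0$.

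Next I would pair both sides with $\tophypnorm$: the left-hand side gives $\tophypnorm_\alpha$; on the right-hand side I invoke \eqref{E:EQUIVALENTFUTURENORMALTOTOPHYPERSURFACE}, giving $\gfour(\Transport,\tophypnorm) = -1$, together with \eqref{E:SECONDINGOINGCONDITION}, giving $\gfour(\gen,\tophypnorm) = -\seconduposinnerproduct$. This produces
\[
\tophypnorm_\alpha = \sidehypnorm_\alpha - \seconduposinnerproduct\,\urescalednewgenminushypnorm_\alpha + \gfour(\utandecompvectorfielddownarg{\alpha},\tophypnorm).
\]
The definition \eqref{E:REGULARFORMRESCALEDVERSIONOFHYPNORMMINUSNEWGEN} of $\urescalednewgenminushypnorm$ gives $\seconduposinnerproduct\,\urescalednewgenminushypnorm_\alpha = \sidehypnorm_\alpha - \tophypnorm_\alpha$, and substituting this in produces an exact cancellation yielding $\gfour(\utandecompvectorfielddownarg{\alpha},\tophypnorm) = 0$.

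Combining the two orthogonality identities with the spanning/nondegeneracy argument from the first paragraph concludes the proof. There is no substantive obstacle here: the lemma is purely geometric/algebraic, requires no input from the compressible Euler equations, and is designed precisely so that the scaling factor $1/\seconduposinnerproduct$ in \eqref{E:REGULARFORMRESCALEDVERSIONOFHYPNORMMINUSNEWGEN} absorbs the non-unit inner product $\gfour(\gen,\tophypnorm) = -\seconduposinnerproduct$ and produces the cancellation above. The only mild point to keep track of is ensuring the argument works uniformly in the spacelike and null cases, which is handled by the dichotomy in the computation of $\gfour(\gen,\sidehypnorm)$ noted above.
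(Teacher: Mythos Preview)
Your proposal is correct and takes essentially the same approach as the paper: both arguments compute the $\gfour$-inner products of the decomposition with $\sidehypnorm$ and $\tophypnorm$, using \eqref{E:EQUIVALENTFUTURENORMALTOHYPERSURFACE}, \eqref{E:EQUIVALENTFUTURENORMALTOTOPHYPERSURFACE}, \eqref{E:SECONDINGOINGCONDITION}, and $\gfour(\gen,\sidehypnorm)=0$. The only cosmetic difference is that the paper first establishes the direct sum $T_q\mathcal{M} = \mathrm{span}\{\Transport,\gen\}\oplus T_q\mathcal{S}_\Timefunction$ and then solves for the coefficients, whereas you take the stated formula as the definition of $\utandecompvectorfielddownarg{\alpha}$ and verify its $\mathcal{S}_\Timefunction$-tangency by the same two orthogonality checks.
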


\begin{proof}
	We first claim that for every sphere $\mathcal{S}_{\Timefunction} \subset \underline{\mathcal{H}}$ and every
	$q \in \mathcal{S}_{\Timefunction}$, $T_q \mathcal{M}$ (which is the tangent space to $\mathcal{M}$ at $q$) 
	enjoys the direct sum decomposition
	$T_q \mathcal{M} = \mbox{\upshape span} \lbrace \Transport|_q, \gen|_q \rbrace \oplus T_q \mathcal{S}_{\Timefunction}$
	where $\mathbf{X}|_q$ denotes the vectorfield $\mathbf{X}$ evaluated at $q$
	(and the spaces in the direct sum are not necessarily $\gfour$-orthogonal).
	To prove the claim, we first note that since $\gen$ is $\underline{\mathcal{H}}$-tangent by construction,
	since $\mathcal{S}_{\Timefunction} = \underline{\mathcal{H}} \cap \widetilde{\Sigma}_{\Timefunction}$,
	and since $\gen$ is $\gfour$-orthogonal to $\mathcal{S}_{\Timefunction}$, 
	it follows that $T_q \underline{\mathcal{H}} = \mbox{\upshape span} \lbrace \gen|_q \rbrace \oplus T_q \mathcal{S}_{\Timefunction}$.
	Since $\underline{\mathcal{H}}$ is transversal to $\widetilde{\Sigma}_{\Timefunction}$, it follows that
	$\gen$ is transversal to $\widetilde{\Sigma}_{\Timefunction}$.
	Moreover, since $\Transport$ is $\gfour$-timelike while $\widetilde{\Sigma}_{\Timefunction}$ is $\gfour$-spacelike,
	it follows that $\Transport$ is transversal to $\widetilde{\Sigma}_{\Timefunction}$.
	Moreover, $\Transport$ and $\gen$ are linearly independent since  $\Transport$ is $\gfour$-timelike and $\gen$ is $\gfour$-spacelike 
	or null. We have therefore shown that
	$\lbrace \Transport|_q, \gen|_q \rbrace$
	is a two dimensional subspace of $T_q \mathcal{M}$
	that is transversal to $\widetilde{\Sigma}_{\Timefunction}$ at $q$.
	Since $\mathcal{S}_{\Timefunction}$ is a two-dimensional submanifold of $\widetilde{\Sigma}_{\Timefunction}$,
	we conclude the claim.
	
	From the claim, it follows that 
	there exist scalar functions
	$a_1$ and $a_2$
	and an $\mathcal{S}_{\Timefunction}$-tangent vectorfield $\utandecompvectorfielddownarg{\alpha}$
	such that the following vectorfield identity holds relative to the Cartesian coordinates:
	$\partial_{\alpha}
	=
	a_1 \Transport
	+
	a_2 \gen
	+
	\utandecompvectorfielddownarg{\alpha}
	$.
	Taking the $\gfour$-inner product of this identity with respect to 
	$\sidehypnorm$ and using \eqref{E:EQUIVALENTFUTURENORMALTOHYPERSURFACE}
	and the fact that $\sidehypnorm$ is $\gfour$-orthogonal to $\gen$ and $\utandecompvectorfielddownarg{\alpha}$,
	we find that $\sidehypnorm_{\alpha} = - a_1$ and thus $a_1 = - \sidehypnorm_{\alpha}$ as desired.
	Similarly, taking the $\gfour$-inner product of the identity with respect to 
	$\tophypnorm$ and using  
	\eqref{E:EQUIVALENTFUTURENORMALTOTOPHYPERSURFACE},
	\eqref{E:SECONDINGOINGCONDITION},
	and the fact that $\tophypnorm$ is $\gfour$-orthogonal to $\utandecompvectorfielddownarg{\alpha}$,
	we find that
	$\tophypnorm_{\alpha} = - a_1 - a_2 \seconduposinnerproduct$
	and thus $a_2 = \frac{\sidehypnorm_{\alpha} - \tophypnorm_{\alpha}}{\seconduposinnerproduct}$.
	In view of \eqref{E:REGULARFORMRESCALEDVERSIONOFHYPNORMMINUSNEWGEN},
	we conclude \eqref{E:NEWDECOMPOSITIONOFCOORDINATEPARTIALDERIVATIVEVECTORFIELDS}.
\end{proof}

In the next lemma, we show that for compressible Euler solutions, 
the term $\lengthofsidehypnorm^2 \Transport \LogDensity$, which involves a derivative of $\LogDensity$
in an $\underline{\mathcal{H}}$-transversal\footnote{$\Transport$ is transversal to $\underline{\mathcal{H}}$ because $\Transport$ is
$\gfour$-timelike while $\underline{\mathcal{H}}$ is $\gfour$-spacelike or $\gfour$-null.} direction, 
can be expressed in terms of $\underline{\mathcal{H}}$-tangential derivatives of the solution.
The lemma plays a crucial role in the proof of Prop.\,\ref{P:KEYDETERMINANT},
where it allows us to eliminate some $\underline{\mathcal{H}}$-transversal 
derivatives found in the term
$
2 	\weight
		\uposinnerproduct
		\uspecialgen^{\alpha}
		\angV^{\beta}
		(\partial_{\alpha} \SigmatTan_{\beta} - \partial_{\beta} \SigmatTan_{\alpha})
$
on the right-hand side of the boundary term identity \eqref{E:PRELIMINARYDECOMPOFBOUNDARYINTEGRAND}
in the case $\SigmatTan=\vortrenormalized$.
We stress that the identity proved in the lemma degenerates as $\underline{\mathcal{H}}$ becomes null, 
that is, as the factor
$\lengthofsidehypnorm$
on LHS~\eqref{E:TRANSPORTLOGDENSITYISTANGENTIALEXCEPTINNULLCASE} converges to $0$.

\begin{lemma}[Expression for $\lengthofsidehypnorm^2 \Transport \LogDensity$ in terms of $\underline{\mathcal{H}}$-tangential derivatives]
	\label{L:TRANSPORTLOGDENSITYISTANGENTIALEXCEPTINNULLCASE}
	Let $\sidehypnorm$ and $\gen$ be the vectorfields
	from Def.\,\ref{D:HYPNORMANDSPHEREFORMDEFS},
	let $\urescalednewgenminushypnorm$ be the vectorfield from from Def.\,\ref{D:RESCALEDVERSIONOFHYPNORMMINUSNEWGEN},
	and let $\lbrace \utandecompvectorfielddownarg{a} \rbrace_{a = 1,2,3}$ be the $\mathcal{S}_{\Timefunction}$-tangent vectorfields from
	Lemma~\ref{L:NEWDECOMPOSITIONOFCOORDINATEPARTIALDERIVATIVEVECTORFIELDS}.
	Let $\lengthofsidehypnorm$ be the scalar function from Def.\,\ref{D:LENGTHOFVARIOUSVECTORFIELDSETC}.
	Then 
	for smooth solutions (see Remark~\ref{R:SMOOTHNESSNOTNEEDED})
	to the compressible Euler equations \eqref{E:TRANSPORTDENSRENORMALIZEDRELATIVECTORECTANGULAR}-\eqref{E:ENTROPYTRANSPORT} on $\mathcal{M}$, 
	the following identities hold along $\underline{\mathcal{H}}$:
	\begin{align} \label{E:TRANSPORTLOGDENSITYISTANGENTIALEXCEPTINNULLCASE}
		\lengthofsidehypnorm^2 \Transport \LogDensity
		& = 
				-
				\left\lbrace
					\urescalednewgenminushypnorm_a \gen v^a
					+
					\sidehypnorm_a \urescalednewgenminushypnorm^a \gen \LogDensity
					+
					\utandecompvectorfielddownarg{a} v^a
					+
					(g^{-1})^{ab} \sidehypnorm_a \utandecompvectorfielddownarg{b} \LogDensity
					+
					\exp(-\LogDensity) \frac{p_{;\Ent}}{\bar{\varrho}} \sidehypnorm_a \GradEnt^a
				\right\rbrace.
	\end{align}
\end{lemma}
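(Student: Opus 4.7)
The plan is to use the two transport equations \eqref{E:TRANSPORTDENSRENORMALIZEDRELATIVECTORECTANGULAR} and \eqref{E:TRANSPORTVELOCITYRELATIVECTORECTANGULAR} in combination with the vectorfield decomposition \eqref{E:NEWDECOMPOSITIONOFCOORDINATEPARTIALDERIVATIVEVECTORFIELDS} to trade an $\underline{\mathcal{H}}$-transversal derivative for $\underline{\mathcal{H}}$-tangential derivatives, at the price of picking up the factor $\lengthofsidehypnorm^2$. The starting point is the identity $\Transport\LogDensity=-\partial_av^a$ from \eqref{E:TRANSPORTDENSRENORMALIZEDRELATIVECTORECTANGULAR}. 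First, I would apply Lemma~\ref{L:NEWDECOMPOSITIONOFCOORDINATEPARTIALDERIVATIVEVECTORFIELDS} to each $\partial_a$ to get
\begin{align*}
-\Transport\LogDensity \;=\;\partial_a v^a\;=\;-\sidehypnorm_a\Transport v^a\;+\;\urescalednewgenminushypnorm_a\gen v^a\;+\;\utandecompvectorfielddownarg{a}v^a.
\end{align*}
Then I would use \eqref{E:TRANSPORTVELOCITYRELATIVECTORECTANGULAR} to replace $\Transport v^a$ by $-\Speed^2\updelta^{ab}\partial_b\LogDensity-\exp(-\LogDensity)\frac{p_{;\Ent}}{\bar{\varrho}}\GradEnt^a$, recognizing $\updelta^{ab}\partial_b\Ent=\GradEnt^a$ from \eqref{E:ENTROPYGRADIENT}, and finally decompose $\partial_b\LogDensity$ once more via \eqref{E:NEWDECOMPOSITIONOFCOORDINATEPARTIALDERIVATIVEVECTORFIELDS}. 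This produces exactly the five terms on RHS~\eqref{E:TRANSPORTLOGDENSITYISTANGENTIALEXCEPTINNULLCASE}, together with a single remaining $\Transport\LogDensity$ term carrying the scalar coefficient $\Speed^{2}\sidehypnorm_a\updelta^{ab}\sidehypnorm_b$, which I will move back to the left-hand side.

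The crux of the argument is therefore the pointwise identity
\begin{align*}
\Speed^{2}\sidehypnorm_a\updelta^{ab}\sidehypnorm_b\;=\;1-\lengthofsidehypnorm^{2}.
\end{align*}
I would verify this directly from Def.\,\ref{D:ACOUSTICALMETRIC}: the normalization $\sidehypnorm t=1$ forces $\sidehypnorm^{0}=1$, so \eqref{E:ACOUSTICALMETRIC} gives $\sidehypnorm_a=\Speed^{-2}(\sidehypnorm^a-v^a)$ for spatial indices. Then $\Speed^{2}\sidehypnorm_a\updelta^{ab}\sidehypnorm_b=\Speed^{-2}\sum_a(\sidehypnorm^a-v^a)^{2}$, and the same metric formula gives $-\gfour(\sidehypnorm,\sidehypnorm)=1-\Speed^{-2}\sum_a(\sidehypnorm^a-v^a)^{2}$, which matches by definition \eqref{E:LENGTHOFHYPNORM}. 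Once this identity is in hand, the equation above rearranges to $\lengthofsidehypnorm^{2}\Transport\LogDensity$ on the left with the five claimed source terms on the right, modulo replacing $\Speed^{2}\sidehypnorm_a\updelta^{ab}\urescalednewgenminushypnorm_b$ by $\sidehypnorm_a\urescalednewgenminushypnorm^{a}$ and $\Speed^{2}\sidehypnorm_a\updelta^{ab}\utandecompvectorfielddownarg{b}\LogDensity$ by $(g^{-1})^{ab}\sidehypnorm_a\utandecompvectorfielddownarg{b}\LogDensity$.

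These last two rewritings are bookkeeping with the index conventions of Convention~\ref{C:LOWERANDRAISE}. Because $\Transport_a=0$ by \eqref{E:TRANSPORTONEFORMIDENTITY}, \eqref{E:FIRSTFUNDAMENTALFORMSIGMAT} yields $g_{ab}=\gfour_{ab}=\Speed^{-2}\updelta_{ab}$ on purely spatial indices, and therefore $(g^{-1})^{ab}=\Speed^{2}\updelta^{ab}$; this gives the second rewriting at once and, using $\urescalednewgenminushypnorm_b=g_{bc}\urescalednewgenminushypnorm^{c}=\Speed^{-2}\urescalednewgenminushypnorm^{b}$ (which relies on $\urescalednewgenminushypnorm$ being $\Sigma_t$-tangent, Lemma~\ref{L:PROPERTIESOFRESCALEDGENERATORMINUSSIDEHYPNORM}), also gives the first.

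The main obstacle is conceptual rather than computational: the identity is meaningful only because the Cartesian-components version \eqref{E:TRANSPORTVELOCITYRELATIVECTORECTANGULAR} of the momentum equation provides the \emph{exact} algebraic mechanism by which an $\underline{\mathcal{H}}$-transversal derivative of $\LogDensity$ produced by $\Speed^{2}\sidehypnorm_a\updelta^{ab}\partial_b\LogDensity$ recombines with $-\sidehypnorm_a\Transport v^a$ to yield the geometric coefficient $1-\lengthofsidehypnorm^{2}$. In particular, the identity degenerates precisely as $\underline{\mathcal{H}}\to\underline{\mathcal{N}}$, since $\lengthofsidehypnorm\to 0$ there; this degeneration is what necessitates the more delicate treatment carried out in Prop.\,\ref{P:KEYDETERMINANT} to show that the residual $\Transport\LogDensity$ terms vanish outright in the null case.
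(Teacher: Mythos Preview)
Your proposal is correct and follows essentially the same route as the paper's own proof: apply \eqref{E:NEWDECOMPOSITIONOFCOORDINATEPARTIALDERIVATIVEVECTORFIELDS} to $\partial_a v^a$, substitute \eqref{E:TRANSPORTVELOCITYRELATIVECTORECTANGULAR} for $\Transport v^a$, decompose $\partial_b\LogDensity$ once more, and use the key pointwise identity $\Speed^{2}\sidehypnorm_a\updelta^{ab}\sidehypnorm_b=1-\lengthofsidehypnorm^{2}$ (which the paper also derives from $\sidehypnorm_a=\Speed^{-2}(\sidehypnorm^a-v^a)$) to absorb the residual $\Transport\LogDensity$ term. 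Your final bookkeeping with $(g^{-1})^{ab}=\Speed^{2}\updelta^{ab}$ matches the paper's closing step as well.
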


\begin{proof}
			First, 
			using \eqref{E:TRANSPORTDENSRENORMALIZEDRELATIVECTORECTANGULAR}
			and
			\eqref{E:NEWDECOMPOSITIONOFCOORDINATEPARTIALDERIVATIVEVECTORFIELDS},
			we compute that
			\begin{align} \label{E:FIRSTSTEPTRANSPORTLOGDENSITYISTANGENTIALEXCEPTINNULLCASE}
			\Transport \LogDensity 
				& = - \partial_a v^a 
				= 
				\sidehypnorm_a \Transport v^a 
				-
				\urescalednewgenminushypnorm_a \gen v^a
				-
				\utandecompvectorfielddownarg{a} v^a.
			\end{align}
		We then use \eqref{E:TRANSPORTVELOCITYRELATIVECTORECTANGULAR}
		and
		\eqref{E:NEWDECOMPOSITIONOFCOORDINATEPARTIALDERIVATIVEVECTORFIELDS}
		to express the first product on RHS~\eqref{E:FIRSTSTEPTRANSPORTLOGDENSITYISTANGENTIALEXCEPTINNULLCASE}
		as follows:
		\begin{align} \label{E:SECONDSTEPTRANSPORTLOGDENSITYISTANGENTIALEXCEPTINNULLCASE}
		\sidehypnorm_a \Transport v^a 
		& = - \Speed^2 \sidehypnorm_a \partial_a \LogDensity
				- 
				\exp(-\LogDensity) \frac{p_{;\Ent}}{\bar{\varrho}} \sidehypnorm_a \GradEnt^a
			\\
		& = 
		\Speed^2 \sidehypnorm_a \sidehypnorm_a \Transport \LogDensity
		-
		\Speed^2 \sidehypnorm_a \urescalednewgenminushypnorm_a \gen \LogDensity
		- 
		\Speed^2 \sidehypnorm_a \utandecompvectorfielddownarg{a} \LogDensity
		- 
		\exp(-\LogDensity) \frac{p_{;\Ent}}{\bar{\varrho}} \sidehypnorm_a \GradEnt^a.
		\notag
	\end{align}
	Next, using \eqref{E:ACOUSTICALMETRIC},
	the fact that $\sidehypnorm^0 = 1$ (i.e., \eqref{E:FUTURENORMALTOHYPERSURFACE}),
	and the fact that $v$ is $\Sigma_t$-tangent,
	we compute that
	\begin{align} \label{E:SIDEHYPNORMSPATIALLOWEREDINTERMSOFUPPER}
		\sidehypnorm_a 
		& = - v_a + \Speed^{-2} \sidehypnorm^a
		=
		- \Speed^{-2} v^a + \Speed^{-2} \sidehypnorm^a.
	\end{align}
	Using 
	\eqref{E:ACOUSTICALMETRIC}, 
	\eqref{E:FUTURENORMALTOHYPERSURFACE}, 
	and \eqref{E:SIDEHYPNORMSPATIALLOWEREDINTERMSOFUPPER},
	we compute that
	relative to the Cartesian coordinates, we have
	$
	\Speed^2 \sidehypnorm_a \sidehypnorm_a
	= v^a v_a (\sidehypnorm^0)^2
			+
			\Speed^{-2} \sidehypnorm^a \sidehypnorm^a
			-
			2 \sidehypnorm^0 v_a \sidehypnorm^a
			= \gfour_{\alpha \beta} \sidehypnorm^{\alpha} \sidehypnorm^{\beta}
				+
				1
	$.
	From this identity and \eqref{E:LENGTHOFHYPNORM},
	we find that
	\begin{align} \label{E:THIRDSTEPTRANSPORTLOGDENSITYISTANGENTIALEXCEPTINNULLCASE}
	\Speed^2 \sidehypnorm_a \sidehypnorm_a
	& = 1 - \lengthofsidehypnorm^2.
\end{align}
Substituting RHS~\eqref{E:THIRDSTEPTRANSPORTLOGDENSITYISTANGENTIALEXCEPTINNULLCASE} for the
relevant factors in the first product on RHS~\eqref{E:SECONDSTEPTRANSPORTLOGDENSITYISTANGENTIALEXCEPTINNULLCASE},
we express the first product on RHS~\eqref{E:FIRSTSTEPTRANSPORTLOGDENSITYISTANGENTIALEXCEPTINNULLCASE}
as follows:
\begin{align} \label{E:FOURTHSTEPTRANSPORTLOGDENSITYISTANGENTIALEXCEPTINNULLCASE}
		\sidehypnorm_a \Transport v^a 
		& = 
		(1 - \lengthofsidehypnorm^2)\Transport \LogDensity
		-
		\Speed^2 \sidehypnorm_a \urescalednewgenminushypnorm_a \gen \LogDensity
		- 
		\Speed^2 \sidehypnorm_a \utandecompvectorfielddownarg{a} \LogDensity
		- 
		\exp(-\LogDensity) \frac{p_{;\Ent}}{\bar{\varrho}} \sidehypnorm_a \GradEnt^a.
\end{align}
	Finally, substituting RHS~\eqref{E:FOURTHSTEPTRANSPORTLOGDENSITYISTANGENTIALEXCEPTINNULLCASE}
	for the first product on RHS~\eqref{E:FIRSTSTEPTRANSPORTLOGDENSITYISTANGENTIALEXCEPTINNULLCASE},
	noting that \eqref{E:INVERSEACOUSTICALMETRIC} implies that
	$(g^{-1})^{ab} = \Speed^2 \updelta^{ab}$ (where $\updelta^{ab}$ is the standard Kronecker delta),
	and carrying out straightforward algebraic computations, we conclude \eqref{E:TRANSPORTLOGDENSITYISTANGENTIALEXCEPTINNULLCASE}.
	
\end{proof}

\subsection{Expressions for $\partial_{\alpha} \vortrenormalized_{\beta} - \partial_{\beta} \vortrenormalized_{\alpha}$ 
and $\partial_{\alpha} \GradEnt_{\beta} - \partial_{\beta} \GradEnt_{\alpha}$}
In this subsection, 
we derive identities for the antisymmetric gradient tensorfields 
$\partial_{\alpha} \vortrenormalized_{\beta} - \partial_{\beta} \vortrenormalized_{\alpha}$ 
and $\partial_{\alpha} \GradEnt_{\beta} - \partial_{\beta} \GradEnt_{\alpha}$.
The identities hold only for compressible Euler solutions,
and their proof relies on the precise structure of the equations of Theorem~\ref{T:GEOMETRICWAVETRANSPORTSYSTEM}.
The identities play a crucial role in revealing the good structure of the term
$
2 	\weight
		\uposinnerproduct
		\uspecialgen^{\alpha}
		\angV^{\beta}
		(\partial_{\alpha} \SigmatTan_{\beta} - \partial_{\beta} \SigmatTan_{\alpha})
$
on the right-hand side of the boundary term identity \eqref{E:PRELIMINARYDECOMPOFBOUNDARYINTEGRAND}.

\subsubsection{Simple identities for $\Sigma_t$-tangent vectorfields}
\label{SSS:SIMPLEIDENTITIESFORSIGMATTANGENTVECTORFIELDS}
We start by deriving some identities for $\Sigma_t$-tangent vectorfields.

\begin{lemma}[Identities for $\Sigma_t$-tangent vectorfields]
	\label{L:SIMPLEIDENTITIESFORSIGMATTANGENTVECTORFIELDS}
	Let $\SigmatTan$ be a $\Sigma_t$-tangent vectorfield. Then relative to the Cartesian coordinates,
	the following identities hold for $\alpha, \beta = 0,1,2,3$ and $i,j=1,2,3$, 
	where $\Speed = \Speed(\LogDensity,\Ent)$ is the speed of sound,
	$\upepsilon_{ijk}$ denotes the fully antisymmetric symbol normalized by $\upepsilon_{123} = 1$,
	and $\upepsilon_{\alpha \beta \gamma \delta}$ denotes the fully antisymmetric symbol normalized by 
	$\upepsilon_{0123} = 1$:
	\begin{align} \label{E:LOWERINDICESOFSIGMATTANGENTVECTORFIELDS}
		\SigmatTan_0
		& = - g_{ab} v^a \SigmatTan^b,
		&
		\SigmatTan_i & = \Speed^{-2} \SigmatTan^i,
	\end{align}
	
	\begin{subequations}
	\begin{align}
		\partial_i \SigmatTan_j
		-
		\partial_j \SigmatTan_i
		& = 
			\Speed^{-2} \upepsilon_{ija} \Flatcurl \SigmatTan^a
			-
			2 (\partial_i \ln \Speed) \SigmatTan_j
			+
			2 (\partial_j \ln \Speed) \SigmatTan_i,
					\label{E:SPATIALPARTANTISYMMETRICGRADIENTOFDUALTOSIGMATTTANGENTVECTORFIELD} \\
	\Transport^{\beta} \partial_{\alpha} \SigmatTan_{\beta}
	& = - \SigmatTan_{\beta} (\partial_{\alpha} \Transport^{\beta})
		= - \SigmatTan_a \partial_{\alpha} v^a,
			\label{E:SWITCHTHEDERIVATIVESTOTRANSPORTFORSIGMATTANGENTVECTORFIELDS} \\
	\partial_t \SigmatTan_i
	-
	\partial_i \SigmatTan_0
	& =\Speed^{-2} \Transport \SigmatTan^i
			- 
			2 (\partial_t \ln \Speed) \SigmatTan_i
			+
			\SigmatTan_a \partial_i v^a
				\label{E:MIXEDTIMESPACEANTISYMMETRICGRADIENTOFDUALTOSIGMATTTANGENTVECTORFIELD} \\
		& \ \
			+
			v^a
			\left\lbrace
				\Speed^{-2}
				\upepsilon_{iab}
				(\Flatcurl \SigmatTan)^b
				-
				2
				(\partial_i \ln \Speed)
				\SigmatTan_a
			\right\rbrace,
			\notag
	\end{align}
	\end{subequations}
	
	\begin{align} \label{E:MAINDECOMPOSITIONOFANTISYMMETRICPARTOFGRADIENTOFSIGMATTANGENTONEFORM}
		\partial_{\alpha} \SigmatTan_{\beta}
		-
		\partial_{\beta} \SigmatTan_{\alpha}
			&   
				= 
				2 (\partial_{\beta} \ln \Speed) \SigmatTan_{\alpha} 
				- 
				2 (\partial_{\alpha} \ln \Speed) \SigmatTan_{\beta}
				+
				\updelta_{\alpha}^0 \SigmatTan_a \partial_{\beta} v^a
				-
				\updelta_{\beta}^0 \SigmatTan_a \partial_{\alpha} v^a
				\\
				& \ \
				+
				\left\lbrace
				\updelta_{\alpha}^0 
				\gfour_{\beta \gamma}
				-
				\updelta_{\beta}^0 
				\gfour_{\alpha \gamma}
				\right\rbrace
				\Transport \SigmatTan^{\gamma}
			+
			\Speed^{-2}
			\upepsilon_{\alpha \beta \gamma \delta}
			\Transport^{\gamma}
			(\Flatcurl \SigmatTan)^{\delta}.
			\notag
	\end{align}

\end{lemma}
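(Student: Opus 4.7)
The proof is a sequence of direct computations that unpack the definitions in terms of the explicit Cartesian form of $\gfour$ from \eqref{E:ACOUSTICALMETRIC} and exploit the $\Sigma_t$-tangent hypothesis $\SigmatTan^0 = 0$. I would begin by establishing the index-lowering formulas \eqref{E:LOWERINDICESOFSIGMATTANGENTVECTORFIELDS}: reading \eqref{E:ACOUSTICALMETRIC} off component by component gives $\gfour_{ij} = \Speed^{-2}\updelta_{ij}$ and $\gfour_{0i} = -\Speed^{-2} v^i$, so that $\SigmatTan_i = \gfour_{i\beta} \SigmatTan^{\beta} = \gfour_{ij} \SigmatTan^j = \Speed^{-2}\SigmatTan^i$ and $\SigmatTan_0 = \gfour_{0j}\SigmatTan^j = -\Speed^{-2} v^a \SigmatTan^a = -g_{ab} v^a \SigmatTan^b$. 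Identity \eqref{E:SWITCHTHEDERIVATIVESTOTRANSPORTFORSIGMATTANGENTVECTORFIELDS} is even simpler: Lemma~\ref{L:BASICPROPERTIESOFTRANSPORTVECTORFIELD} shows that $\Transport$ is $\gfour$-orthogonal to $\Sigma_t$-tangent vectorfields, so $\Transport^{\beta}\SigmatTan_{\beta}=0$ identically, and differentiating this scalar relation together with the observation $\partial_\alpha \Transport^{\beta} = \updelta^{\beta}_a \partial_\alpha v^a$ (since $\Transport^0 \equiv 1$ and $\Transport^a = v^a$) gives both equalities.

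For the spatial antisymmetric identity \eqref{E:SPATIALPARTANTISYMMETRICGRADIENTOFDUALTOSIGMATTTANGENTVECTORFIELD}, I would insert $\SigmatTan_i = \Speed^{-2}\SigmatTan^i$ into $\partial_i \SigmatTan_j - \partial_j \SigmatTan_i$ and expand using the Leibniz rule; the derivative of the $\Speed^{-2}$ prefactor yields the two $\partial_i \ln \Speed$ and $\partial_j \ln \Speed$ terms, while the remaining piece $\Speed^{-2}(\partial_i \SigmatTan^j - \partial_j \SigmatTan^i)$ is rewritten using the elementary vector calculus contraction identity $\partial_i \SigmatTan^j - \partial_j \SigmatTan^i = \upepsilon_{ija}(\Flatcurl \SigmatTan)^a$, which follows from $\upepsilon_{ija}\upepsilon_{abc} = \updelta_{ib}\updelta_{jc} - \updelta_{ic}\updelta_{jb}$. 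The mixed time-space identity \eqref{E:MIXEDTIMESPACEANTISYMMETRICGRADIENTOFDUALTOSIGMATTTANGENTVECTORFIELD} is obtained by an analogous but slightly longer computation: expanding $\partial_t \SigmatTan_i = \partial_t(\Speed^{-2}\SigmatTan^i)$ and $\partial_i \SigmatTan_0 = -\partial_i(\Speed^{-2} v^a \SigmatTan^a)$, grouping the $\partial_t + v^a \partial_a$ combination to reconstruct $\Speed^{-2}\Transport \SigmatTan^i$, and recognizing the remaining antisymmetric spatial piece $v^a(\partial_i \SigmatTan^a - \partial_a \SigmatTan^i) = v^a \upepsilon_{iab}(\Flatcurl \SigmatTan)^b$ via the same curl identity.

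Finally, the master decomposition \eqref{E:MAINDECOMPOSITIONOFANTISYMMETRICPARTOFGRADIENTOFSIGMATTANGENTONEFORM} would be proved by case analysis on the pair $(\alpha,\beta)$. When $\alpha,\beta$ are both spatial, the four terms involving $\updelta_{\alpha}^0$ or $\updelta_{\beta}^0$ vanish identically, and the $\upepsilon_{\alpha\beta\gamma\delta}\Transport^{\gamma}(\Flatcurl \SigmatTan)^{\delta}$ term collapses to its $\gamma=0$ contribution alone (since the $\gamma = k$, $\delta = a$ piece has four purely spatial indices on $\upepsilon$ and therefore vanishes, while $\delta = 0$ kills the curl factor because $\Flatcurl \SigmatTan$ is $\Sigma_t$-tangent); using $\upepsilon_{ij0a} = \upepsilon_{ija}$ (our normalization $\upepsilon_{0123}=\upepsilon_{123}=1$) this reproduces \eqref{E:SPATIALPARTANTISYMMETRICGRADIENTOFDUALTOSIGMATTTANGENTVECTORFIELD}. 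When one index is $0$, say $\alpha=0,\beta=i$, the right-hand side reduces after using \eqref{E:LOWERINDICESOFSIGMATTANGENTVECTORFIELDS} to rewrite $\SigmatTan_0 = -v^a \SigmatTan_a$, after recognizing $\gfour_{i\gamma}\Transport \SigmatTan^{\gamma} = \Speed^{-2}\Transport \SigmatTan^i$, and after evaluating $\upepsilon_{0i\gamma\delta}\Transport^{\gamma}(\Flatcurl \SigmatTan)^{\delta}$ on its sole nonvanishing contribution $\gamma=j,\delta=a$ (giving $\upepsilon_{0ija} v^j (\Flatcurl \SigmatTan)^a = \upepsilon_{iab} v^a (\Flatcurl \SigmatTan)^b$), exactly to \eqref{E:MIXEDTIMESPACEANTISYMMETRICGRADIENTOFDUALTOSIGMATTTANGENTVECTORFIELD}; the case $\alpha=\beta=0$ is trivial. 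The main obstacle in the whole proof is bookkeeping in this last step: correctly tracking the $\updelta_{\alpha}^0$ inhomogeneities, the compatibility between the spacetime and spatial Levi--Civita conventions, and the sign that arises when $\SigmatTan_0$ is re-expressed via \eqref{E:LOWERINDICESOFSIGMATTANGENTVECTORFIELDS}. Beyond that, no new geometric input is required, and no use of the compressible Euler equations is needed since the lemma is a purely kinematic consequence of $\SigmatTan$ being $\Sigma_t$-tangent and $\gfour$ having the form \eqref{E:ACOUSTICALMETRIC}.
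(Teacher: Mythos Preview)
Your proposal is correct and follows essentially the same approach as the paper: direct Cartesian computation using the explicit form \eqref{E:ACOUSTICALMETRIC} of $\gfour$, the orthogonality $\Transport^{\beta}\SigmatTan_{\beta}=0$, the curl identity $\partial_i \SigmatTan^j - \partial_j \SigmatTan^i = \upepsilon_{ija}(\Flatcurl \SigmatTan)^a$, and the relation $\partial_t = \Transport - v^a\partial_a$, with the master formula \eqref{E:MAINDECOMPOSITIONOFANTISYMMETRICPARTOFGRADIENTOFSIGMATTANGENTONEFORM} then assembled by case analysis from the spatial and mixed identities. Your writeup simply makes the bookkeeping (index normalizations, Leibniz expansions, the $\updelta_{\alpha}^0$ case splits) more explicit than the paper's terse proof sketch.
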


\begin{proof}
		\eqref{E:LOWERINDICESOFSIGMATTANGENTVECTORFIELDS}-\eqref{E:MIXEDTIMESPACEANTISYMMETRICGRADIENTOFDUALTOSIGMATTTANGENTVECTORFIELD}  
		from straightforward calculations relative to the Cartesian coordinates 
		based on the identity $\partial_i \SigmatTan^j - \partial_j \SigmatTan^i = \upepsilon_{ija} (\Flatcurl \SigmatTan)^a$
		for $\Sigma_t$-tangent vectorfields $\SigmatTan$,
		the fact that 
		$\Transport^{\alpha} \SigmatTan_{\alpha} = \gfour(\Transport,\SigmatTan) = 0$ for $\Sigma_t$-tangent vectorfields $\SigmatTan$ 
		(see \eqref{E:TRANSPORTONEFORMIDENTITY}),
		\eqref{E:ACOUSTICALMETRIC},
		the decomposition $\partial_t = \Transport - v^a \partial_a$  (see \eqref{E:MATERIALVECTORVIELDRELATIVECTORECTANGULAR}),
		and the fact that $g_{ij} = \gfour_{ij} = \Speed^{-2} \updelta_{ij}$,
		where $\updelta_{ij}$ is the Kronecker delta (see \eqref{E:ACOUSTICALMETRIC} and \eqref{E:FIRSTFUNDSIGMATEQUALSSPACETIMEMETRICONSIGMAT}).
		
		\eqref{E:MAINDECOMPOSITIONOFANTISYMMETRICPARTOFGRADIENTOFSIGMATTANGENTONEFORM} is just a
		combining of
		\eqref{E:SPATIALPARTANTISYMMETRICGRADIENTOFDUALTOSIGMATTTANGENTVECTORFIELD} and \eqref{E:MIXEDTIMESPACEANTISYMMETRICGRADIENTOFDUALTOSIGMATTTANGENTVECTORFIELD}
		that takes into account the
		fact that $\Transport =  \partial_t + v^a \partial_a$,
		the identity $\SigmatTan_0 = - v^a \SigmatTan_a$ 
		(which follows from \eqref{E:LOWERINDICESOFSIGMATTANGENTVECTORFIELDS} and \eqref{E:ACOUSTICALMETRIC}),
		and the form \eqref{E:ACOUSTICALMETRIC}
		of the acoustical metric $\gfour$ relative to the Cartesian coordinates.
		
\end{proof}

\subsubsection{Specializing the identities to compressible Euler solutions}
\label{SSS:SPECIALIZINGIDENTITIESTOCOMPRESSIBLEULERSOLUTIONS}
In the next corollary, we specialize the results of Lemma~\ref{L:SIMPLEIDENTITIESFORSIGMATTANGENTVECTORFIELDS}
to $\vortrenormalized$ and $\GradEnt$. Unlike the proof of the lemma, the proof of
the corollary relies on the equations of Theorem~\ref{T:GEOMETRICWAVETRANSPORTSYSTEM}.

\begin{corollary}[Sharp decomposition of $\partial_{\alpha} \vortrenormalized_{\beta}
		-
		\partial_{\beta} \vortrenormalized_{\alpha}$
		and
		$
		\partial_{\alpha} \GradEnt_{\beta}
		-
		\partial_{\beta} \GradEnt_{\alpha}
		$]
	\label{C:SHARPDECOMPOSITIONOFANTISYMMETRICGRADIENTS}
	For smooth solutions (see Remark~\ref{R:SMOOTHNESSNOTNEEDED})
	to the compressible Euler equations \eqref{E:TRANSPORTDENSRENORMALIZEDRELATIVECTORECTANGULAR}-\eqref{E:ENTROPYTRANSPORT}
	on $\mathcal{M}$,
	the following identity holds, 
	where $\VortVort$ is the $\Sigma_t$-tangent modified fluid variable from \eqref{E:RENORMALIZEDCURLOFSPECIFICVORTICITY},
	$\vortrenormalized_{\flat}$ is the $\gfour$-dual one-form of $\vortrenormalized$ (see \eqref{E:DUALONEFORMINDICES}),
	and $d \vortrenormalized_{\flat}$ denotes the exterior derivative of $\vortrenormalized_{\flat}$:
	\begin{subequations}
	\begin{align} \label{E:KEYIDENTITYANTISYMMETRICPARTOFSPECIFICVORTICITYDUALGRADIENT}
		(d \vortrenormalized_{\flat})_{\alpha \beta}
		:=
		\partial_{\alpha} \vortrenormalized_{\beta}
		-
		\partial_{\beta} \vortrenormalized_{\alpha}
		& = 
				2 (\partial_{\beta} \ln \Speed) \vortrenormalized_{\alpha} 
				- 
				2 (\partial_{\alpha} \ln \Speed) \vortrenormalized_{\beta}
				+
				2 \updelta_{\alpha}^0 \vortrenormalized_a \partial_{\beta} v^a
				-
				2 \updelta_{\beta}^0 \vortrenormalized_a \partial_{\alpha} v^a
				\\
			& \ \
				-
				\Speed^{-4} 
				\exp(-2 \LogDensity) 
				\frac{p_{;\Ent}}{\bar{\varrho}}
				\upepsilon_{\alpha \beta \gamma \delta}
				(\Transport v^{\gamma}) 
				\GradEnt^{\delta}
					\notag
					\\
			& \ \
				+
			\Speed^{-4} 
			\exp(-2 \LogDensity) 
			\frac{p_{;\Ent}}{\bar{\varrho}}
			\upepsilon_{\alpha \beta \gamma \delta}
			 \Transport^{\gamma}
			[\GradEnt^{\delta}
				(\partial_a v^a)
				-
				\GradEnt^a \partial_a v^{\delta}]
			\notag
				\\
		& \ \
			+
			\Speed^{-2}
			\exp(\LogDensity)
			\upepsilon_{\alpha \beta \gamma \delta}
			\Transport^{\gamma}
			\VortVort^{\delta}.
			\notag
\end{align}
	
	Moreover, the following identity holds:
	\begin{align} \label{E:KEYIDENTITYANTISYMMETRICPARTOFENTROPYGRADIENTDUALGRADIENT}
		(d \GradEnt_{\flat})_{\alpha \beta}
		:=
		\partial_{\alpha} \GradEnt_{\beta}
		-
		\partial_{\beta} \GradEnt_{\alpha}
			&   
				= 
				2 (\partial_{\beta} \ln \Speed) \GradEnt_{\alpha} 
				- 
				2 (\partial_{\alpha} \ln \Speed) \GradEnt_{\beta}.
	\end{align}
	\end{subequations}
	
\end{corollary}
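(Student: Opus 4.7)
The starting point is the general decomposition \eqref{E:MAINDECOMPOSITIONOFANTISYMMETRICPARTOFGRADIENTOFSIGMATTANGENTONEFORM} from Lemma~\ref{L:SIMPLEIDENTITIESFORSIGMATTANGENTVECTORFIELDS}, applied once with $\SigmatTan = \vortrenormalized$ and once with $\SigmatTan = \GradEnt$. The plan is then to substitute the explicit expressions for $\Transport \vortrenormalized^{\gamma}$, $\Transport \GradEnt^{\gamma}$, $(\Flatcurl \vortrenormalized)^{\delta}$, and $(\Flatcurl \GradEnt)^{\delta}$ provided by the geometric formulation of Theorem~\ref{T:GEOMETRICWAVETRANSPORTSYSTEM} and to simplify via Levi--Civita symbol identities. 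The $\alpha = \beta$ case is trivial by antisymmetry, so one only needs to verify the cases $\alpha = 0, \beta = j$ (spatial) and $\alpha = i, \beta = j$ (both spatial).

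For the entropy identity \eqref{E:KEYIDENTITYANTISYMMETRICPARTOFENTROPYGRADIENTDUALGRADIENT}, I would use $\Flatcurl \GradEnt = 0$ from \eqref{E:CURLGRADENTVANISHES} to immediately kill the $\upepsilon_{\alpha\beta\gamma\delta}$-term of \eqref{E:MAINDECOMPOSITIONOFANTISYMMETRICPARTOFGRADIENTOFSIGMATTANGENTONEFORM}, and substitute $\Transport \GradEnt^{\gamma} = -\GradEnt^a \partial_a v^{\gamma} + \upepsilon_{\gamma a b}\exp(\LogDensity) \vortrenormalized^a \GradEnt^b$ (for spatial $\gamma$) from \eqref{E:GRADENTROPYTRANSPORT}--\eqref{E:ENTROPYGRADIENTLINEARORBETTER}. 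The cancellation of all non-logarithmic terms, which is nontrivial only for $\alpha = 0, \beta = j$, relies on the purely algebraic curl identity
\begin{align*}
	\partial_j v^a - \partial_a v^j
	& = \upepsilon_{jab} \upomega^b
		= \upepsilon_{jab} \exp(\LogDensity) \vortrenormalized^b,
\end{align*}
which after contraction with $\Speed^{-2} \GradEnt^a$ converts the $\GradEnt^a \partial_a v^j$ term coming from $\gfour_{j a} \Transport \GradEnt^a$ into $-\GradEnt_a \partial_j v^a$ plus a $\Speed^{-2} \exp(\LogDensity) \upepsilon_{jab} \GradEnt^a \vortrenormalized^b$ correction. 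The correction exactly cancels the contribution of $\upepsilon_{jab} \exp(\LogDensity) \vortrenormalized^a \GradEnt^b$ coming from $\Transport \GradEnt^j$, because $\GradEnt^a \vortrenormalized^b + \vortrenormalized^a \GradEnt^b$ is symmetric while $\upepsilon_{jab}$ is antisymmetric; and the remaining $-\GradEnt_a \partial_j v^a$ kills the $\updelta_{\alpha}^0 \GradEnt_a \partial_{\beta} v^a$ term from \eqref{E:MAINDECOMPOSITIONOFANTISYMMETRICPARTOFGRADIENTOFSIGMATTANGENTONEFORM}.

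For the specific vorticity identity \eqref{E:KEYIDENTITYANTISYMMETRICPARTOFSPECIFICVORTICITYDUALGRADIENT}, I would substitute $\Transport \vortrenormalized^{\gamma}$ via \eqref{E:RENORMALIZEDVORTICTITYTRANSPORTEQUATION}--\eqref{E:SPECIFICVORTICITYLINEARORBETTER} and replace $(\Flatcurl \vortrenormalized)^{\delta}$ by the rearrangement of \eqref{E:RENORMALIZEDCURLOFSPECIFICVORTICITY},
\begin{align*}
	(\Flatcurl \vortrenormalized)^i
	& = \exp(\LogDensity) \VortVort^i
		- \exp(-2 \LogDensity) \Speed^{-2} \frac{p_{;\Ent}}{\bar{\varrho}} \GradEnt^a \partial_a v^i
		+ \exp(-2 \LogDensity) \Speed^{-2} \frac{p_{;\Ent}}{\bar{\varrho}} (\Flatdiv v) \GradEnt^i.
\end{align*}
The ``doubling'' of the coefficient of $\vortrenormalized_a \partial_{\beta} v^a$ from $1$ in \eqref{E:MAINDECOMPOSITIONOFANTISYMMETRICPARTOFGRADIENTOFSIGMATTANGENTONEFORM} to $2$ in \eqref{E:KEYIDENTITYANTISYMMETRICPARTOFSPECIFICVORTICITYDUALGRADIENT} is produced by the same curl identity applied to the $\vortrenormalized^a \partial_a v^j$ piece of $\gfour_{j a} \Transport \vortrenormalized^a$: this yields $\vortrenormalized_a \partial_j v^a - \Speed^{-2} \exp(\LogDensity) \upepsilon_{jab} \vortrenormalized^a \vortrenormalized^b$, and the second term vanishes because $\upomega \parallel \vortrenormalized$ forces $\vortrenormalized^a \vortrenormalized^b$ to be symmetric against the antisymmetric $\upepsilon_{jab}$. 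The remaining terms from the $\Transport \vortrenormalized$ substitution and the $(\Flatcurl \vortrenormalized)$ substitution then produce, respectively, the $\Transport v$--$\GradEnt$ term on the second line of RHS~\eqref{E:KEYIDENTITYANTISYMMETRICPARTOFSPECIFICVORTICITYDUALGRADIENT} and the $\VortVort$ together with the $\GradEnt^a \partial_a v$ and $(\Flatdiv v) \GradEnt$ terms on the third and fourth lines.

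The main obstacle will be bookkeeping for the spacetime Levi--Civita symbol $\upepsilon_{\alpha\beta\gamma\delta}$ on RHS~\eqref{E:KEYIDENTITYANTISYMMETRICPARTOFSPECIFICVORTICITYDUALGRADIENT}, which must be reconciled with the purely spatial Levi--Civita symbols $\upepsilon_{jab}$ appearing in \eqref{E:RENORMALIZEDVORTICTITYTRANSPORTEQUATION}--\eqref{E:RENORMALIZEDCURLOFSPECIFICVORTICITY}. I would repeatedly use \eqref{E:LOWERINDICESOFSIGMATTANGENTVECTORFIELDS} together with the reduction $\upepsilon_{0 i j k} = \upepsilon_{i j k}$ and the vanishing of $\upepsilon_{\alpha\beta\gamma\delta}$ on four purely spatial indices to collapse each $\upepsilon_{\alpha\beta\gamma\delta} \Transport^{\gamma} \cdots$ contraction to its $3D$ Levi--Civita form against the spatial quantities $v^l, \VortVort^k, \GradEnt^k, \Transport v^l$, at which point the match with the expanded RHS~\eqref{E:KEYIDENTITYANTISYMMETRICPARTOFSPECIFICVORTICITYDUALGRADIENT} becomes a direct verification in each index sector.
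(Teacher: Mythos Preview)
Your proposal is correct and follows essentially the same approach as the paper: start from the general identity \eqref{E:MAINDECOMPOSITIONOFANTISYMMETRICPARTOFGRADIENTOFSIGMATTANGENTONEFORM}, substitute $\Transport \vortrenormalized$ and $\Transport \GradEnt$ via their transport equations \eqref{E:RENORMALIZEDVORTICTITYTRANSPORTEQUATION} and \eqref{E:GRADENTROPYTRANSPORT}, replace $\Flatcurl \vortrenormalized$ via the definition \eqref{E:RENORMALIZEDCURLOFSPECIFICVORTICITY} of $\VortVort$ (and use $\Flatcurl \GradEnt = 0$), and simplify using the curl identity $\partial_a v^i - \partial_i v^a = \upepsilon_{aib}\,\upomega^b$ together with the antisymmetry cancellations you describe. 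The paper's proof is terser but records exactly these substitutions; your explicit treatment of the spacetime-to-spatial Levi--Civita reduction and the coefficient doubling is a faithful expansion of what the paper calls ``straightforward algebraic calculations.''
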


\begin{proof}
	To prove \eqref{E:KEYIDENTITYANTISYMMETRICPARTOFSPECIFICVORTICITYDUALGRADIENT},
	we consider \eqref{E:MAINDECOMPOSITIONOFANTISYMMETRICPARTOFGRADIENTOFSIGMATTANGENTONEFORM}
	with $\vortrenormalized$ in the role of $\SigmatTan$.
	We next note that definition \eqref{E:SPECIFICVORTICITY} implies that
	RHS~\eqref{E:SPECIFICVORTICITYLINEARORBETTER} 
	(which is equal to RHS~\eqref{E:RENORMALIZEDVORTICTITYTRANSPORTEQUATION})
	can alternatively be expressed as
	$
	\vortrenormalized_a \partial_i v^a
	+
	\exp(\LogDensity) \upepsilon_{aij} \vortrenormalized^a \vortrenormalized^j
	-
	\exp(-2 \LogDensity) \Speed^{-2} \frac{p_{;\Ent}}{\bar{\varrho}} \upepsilon_{iab} (\Transport v^a) \GradEnt^b
	=
	\vortrenormalized_a \partial_i v^a
	-
	\exp(-2 \LogDensity) \Speed^{-2} \frac{p_{;\Ent}}{\bar{\varrho}} \upepsilon_{iab} (\Transport v^a) \GradEnt^b
	$.
	We now substitute this ``alternative'' version of RHS~\eqref{E:RENORMALIZEDVORTICTITYTRANSPORTEQUATION} for the term
	$\Transport \vortrenormalized^{\gamma}$ on RHS~\eqref{E:MAINDECOMPOSITIONOFANTISYMMETRICPARTOFGRADIENTOFSIGMATTANGENTONEFORM}
	when $\gamma = i \in \lbrace 1,2,3 \rbrace$,
	and in the case $\gamma = 0$, we use the simple identity
	$\Transport \vortrenormalized^0 = 0$. 
	Next, we use \eqref{E:RENORMALIZEDCURLOFSPECIFICVORTICITY} to algebraically substitute
	for the factor $(\Flatcurl \vortrenormalized)^{\delta}$ on RHS~\eqref{E:MAINDECOMPOSITIONOFANTISYMMETRICPARTOFGRADIENTOFSIGMATTANGENTONEFORM}
	in terms of remaining terms in \eqref{E:RENORMALIZEDCURLOFSPECIFICVORTICITY}
	when $\delta = i \in \lbrace 1,2,3 \rbrace$,
	and in the case $\gamma = 0$, we use the fact that $(\Flatcurl \vortrenormalized)^0  = 0$.
	From these steps,
	the fact that $\Transport = \partial_t + v^a \partial_a$, 
	the form \eqref{E:ACOUSTICALMETRIC}
	of the acoustical metric $\gfour$ relative to the Cartesian coordinates,
	and straightforward algebraic calculations,
	we arrive at \eqref{E:KEYIDENTITYANTISYMMETRICPARTOFSPECIFICVORTICITYDUALGRADIENT}.
	
	A similar argument yields \eqref{E:KEYIDENTITYANTISYMMETRICPARTOFENTROPYGRADIENTDUALGRADIENT},
	where we use \eqref{E:GRADENTROPYTRANSPORT} for substitution,
	we observe that definition \eqref{E:SPECIFICVORTICITY} implies that RHS~\eqref{E:ENTROPYGRADIENTLINEARORBETTER} can alternatively be expressed as
	$-\GradEnt_a \partial_i v^a$,
	and we also use the simple identity $\Flatcurl \GradEnt = 0$ (see \eqref{E:CURLGRADENTVANISHES}).
	
\end{proof}

\subsection{Preliminary decomposition of the most subtle term on RHS~\eqref{E:PRELIMINARYDECOMPOFBOUNDARYINTEGRAND}}
\label{SS:MOSTSUBTLEENTROPYTERM}
In the next lemma, namely Lemma~\ref{L:PRELIMINARYDECOMPOSITIONOFSUBTLETERMS}, 
we provide a preliminary decomposition of the most subtle part of the term
$
2 	\weight
		\uposinnerproduct
		\uspecialgen^{\alpha}
		\angV^{\beta}
		(\partial_{\alpha} \vortrenormalized_{\beta} - \partial_{\beta} \vortrenormalized_{\alpha})
$,
which is the second term
on RHS~\eqref{E:PRELIMINARYDECOMPOFBOUNDARYINTEGRAND}
in the case $\SigmatTan = \vortrenormalized$.
Specifically, in the lemma, we decompose the part of 
$
2 	\weight
		\uposinnerproduct
		\uspecialgen^{\alpha}
		\angvortrenormalized^{\beta}
		(\partial_{\alpha} \vortrenormalized_{\beta} - \partial_{\beta} \vortrenormalized_{\alpha})
$
that corresponds to the terms
\begin{align} \label{E:HARDTERMS}
-
				\Speed^{-4} 
				\exp(-2 \LogDensity) 
				\frac{p_{;\Ent}}{\bar{\varrho}}
				\upepsilon_{\alpha \beta \gamma \delta}
				(\Transport v^{\gamma}) 
				\GradEnt^{\delta}
		+
			\Speed^{-4} 
			\exp(-2 \LogDensity) 
			\frac{p_{;\Ent}}{\bar{\varrho}}
			\upepsilon_{\alpha \beta \gamma \delta}
			 \Transport^{\gamma}
			[\GradEnt^{\delta}
				(\partial_a v^a)
				-
				\GradEnt^a \partial_a v^{\delta}]
\end{align}
on RHS~\eqref{E:KEYIDENTITYANTISYMMETRICPARTOFSPECIFICVORTICITYDUALGRADIENT};
more precisely, in the lemma, we ignore the overall factor of
$
\Speed^{-4} 
				\exp(-2 \LogDensity) 
				\frac{p_{;\Ent}}{\bar{\varrho}}
$
in the previous expression.
Later, in Prop.\,\ref{P:KEYDETERMINANT}, 
with the help of Lemma~\ref{L:PRELIMINARYDECOMPOSITIONOFSUBTLETERMS}, 
we will show that for compressible Euler solutions,
the special combination of terms in \eqref{E:HARDTERMS}
	can be expressed in terms of $\underline{\mathcal{H}}$-tangential derivatives of
	of the fluid solution variables.
The ``remaining part'' of the term
$
2 	\weight
		\uposinnerproduct
		\uspecialgen^{\alpha}
		\angvortrenormalized^{\beta}
		(\partial_{\alpha} \vortrenormalized_{\beta} - \partial_{\beta} \vortrenormalized_{\alpha})
$,
as well as the entirety of the term
$
2 
		\uposinnerproduct
		\uspecialgen^{\alpha}
		\angGradEnt^{\beta}
		(\partial_{\alpha} \GradEnt_{\beta} - \partial_{\beta} \GradEnt_{\alpha})
$,
will be easy to treat, thanks to the decompositions provided by Cor.\,\ref{C:SHARPDECOMPOSITIONOFANTISYMMETRICGRADIENTS}.
		
\begin{lemma}[Preliminary geometric decomposition of the most subtle terms on RHS~\eqref{E:KEYIDENTITYANTISYMMETRICPARTOFSPECIFICVORTICITYDUALGRADIENT}]	
	\label{L:PRELIMINARYDECOMPOSITIONOFSUBTLETERMS}
	Let $\sidehypnorm$ and $\gen$ be the vectorfields
	from Def.\,\ref{D:HYPNORMANDSPHEREFORMDEFS},
	let $\uspecialgen$ be the $\underline{\mathcal{H}}$-tangent vectorfield
	from \eqref{E:SPECIALGENERATORIDENTITY},
	let $\urescalednewgenminushypnorm$ be the vectorfield from Def.\,\ref{D:RESCALEDVERSIONOFHYPNORMMINUSNEWGEN}, 
	and let $\lbrace \utandecompvectorfielddownarg{a} \rbrace_{a = 1,2,3}$ be the $\mathcal{S}_{\Timefunction}$-tangent vectorfields from
	Lemma~\ref{L:NEWDECOMPOSITIONOFCOORDINATEPARTIALDERIVATIVEVECTORFIELDS}.
	For smooth solutions (see Remark~\ref{R:SMOOTHNESSNOTNEEDED})
	to the compressible Euler equations \eqref{E:TRANSPORTDENSRENORMALIZEDRELATIVECTORECTANGULAR}-\eqref{E:ENTROPYTRANSPORT} on $\mathcal{M}$,
	the following identity holds along $\underline{\mathcal{H}}$:
	\begin{align}
		&
		\upepsilon_{\alpha \beta \gamma \delta}
		 \uspecialgen^{\alpha} 
		 \angvortrenormalized^{\beta}
		\left\lbrace
		-
		(\Transport v^{\gamma}) 
		\GradEnt^{\delta}
		+
		\Transport^{\gamma}
			[\GradEnt^{\delta}
				(\partial_a v^a)
				-
				\GradEnt^a \partial_a v^{\delta}]
		\right\rbrace
					\label{E:PRELIMINARYDECOMPOSITIONOFSUBTLETERMS} 
					\\
	& =
		-
		\upepsilon_{\alpha \beta \gamma \delta}
		 \uspecialgen^{\alpha} 
		 \angvortrenormalized^{\beta}
		\sidehypnorm^{\gamma}
		(\GradEnt^{\delta} + \GradEnt^a \sidehypnorm_a \Transport^{\delta})
		\Transport \LogDensity
			\notag 
				\\
	& \ \
		+
		\upepsilon_{\alpha \beta \gamma \delta}
		 \uspecialgen^{\alpha} 
		 \angvortrenormalized^{\beta}
		\urescalednewgenminushypnorm^{\gamma}
		\GradEnt^{\delta}
		\gen \LogDensity
		-
		\GradEnt^a \sidehypnorm_a
		\upepsilon_{\alpha \beta \gamma \delta}
		\uspecialgen^{\alpha} 
		\angvortrenormalized^{\beta} 
		\Transport^{\gamma}
		\urescalednewgenminushypnorm^{\delta} 
		\gen \LogDensity
			\notag 
			\\
	 & \ \
		+
		\Speed^2
		\upepsilon_{\alpha \beta ab}
		\uspecialgen^{\alpha}  
		\angvortrenormalized^{\beta}
		(\utandecompvectorfielddownarg{a} \LogDensity)
		\GradEnt^b
		-
		\Speed^2
		\GradEnt^a \sidehypnorm_a
		\upepsilon_{\alpha \beta \gamma d}
		\uspecialgen^{\alpha} 
		\angvortrenormalized^{\beta} 
		\Transport^{\gamma}
		\utandecompvectorfielddownarg{d} \LogDensity
		\notag
			\\
	& \ \
		-
		\GradEnt^a \urescalednewgenminushypnorm_a
		\upepsilon_{\alpha \beta \gamma \delta}
		\uspecialgen^{\alpha} 
		\angvortrenormalized^{\beta} 
		\Transport^{\gamma}
		\gen v^{\delta}
		-
		\upepsilon_{\alpha \beta \gamma \delta}
		\uspecialgen^{\alpha} 
		\angvortrenormalized^{\beta} 
		\Transport^{\gamma}
		\GradEnt^a \utandecompvectorfielddownarg{a} v^{\delta}
		\notag
			\\
	& \ \
			-
		\exp(-\LogDensity) \frac{p_{;\Ent}}{\bar{\varrho}}
		\GradEnt^a \sidehypnorm_a
		\upepsilon_{\alpha \beta \gamma \delta}
		\uspecialgen^{\alpha} 
		\angvortrenormalized^{\beta} 
		\Transport^{\gamma}
		\GradEnt^{\delta}.
		\notag
	\end{align}
\end{lemma}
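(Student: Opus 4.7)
My plan is to derive \eqref{E:PRELIMINARYDECOMPOSITIONOFSUBTLETERMS} by two applications of the velocity transport equation \eqref{E:TRANSPORTVELOCITYRELATIVECTORECTANGULAR}, one application of the density transport equation \eqref{E:TRANSPORTDENSRENORMALIZEDRELATIVECTORECTANGULAR}, and repeated use of the geometric decomposition \eqref{E:NEWDECOMPOSITIONOFCOORDINATEPARTIALDERIVATIVEVECTORFIELDS} to split $\partial_a$ into the transversal part $-\sidehypnorm_a \Transport$, the $\underline{\mathcal{H}}$-tangent but $\mathcal{S}_{\Timefunction}$-transverse part $\urescalednewgenminushypnorm_a \gen$, and the $\mathcal{S}_{\Timefunction}$-tangential part $\utandecompvectorfielddownarg{a}$. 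The idea is to systematically trade every raw $\Transport v$ for spatial derivatives of $\LogDensity$ and $\GradEnt$, and every raw $\partial_a$ applied to $v$ or $\LogDensity$ for its $\underline{\mathcal{H}}$-decomposition, so that only a single explicit occurrence of the transversal quantity $\Transport \LogDensity$ survives, namely the first line of the claimed RHS. The $\upepsilon_{\alpha \beta \gamma \delta}$ antisymmetry will cooperate by annihilating every product that ends up with a repeated index among $\gamma,\delta$.

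Concretely, I will handle the three summands on the LHS separately. For $-(\Transport v^{\gamma}) \GradEnt^{\delta}$: since $v^0 = 0$, only $\gamma = c$ spatial contributes, and \eqref{E:TRANSPORTVELOCITYRELATIVECTORECTANGULAR} gives $\Transport v^c = -\Speed^2 \updelta^{ca} \partial_a \LogDensity - \exp(-\LogDensity)(p_{;\Ent}/\bar{\varrho}) \GradEnt^c$. The $\GradEnt^c \GradEnt^{\delta}$ piece vanishes against $\upepsilon_{\alpha \beta c \delta}$ by symmetry. On the surviving piece I expand $\partial_a \LogDensity$ via \eqref{E:NEWDECOMPOSITIONOFCOORDINATEPARTIALDERIVATIVEVECTORFIELDS} and use the Cartesian-to-$\gfour$ identity $\Speed^2 \sidehypnorm_c = \sidehypnorm^c - v^c$ of \eqref{E:SIDEHYPNORMSPATIALLOWEREDINTERMSOFUPPER}, combined with $\sidehypnorm^0 = \Transport^0 = 1$ and $\Transport^c = v^c$, to rewrite $\Speed^2 \updelta^{ca}\sidehypnorm_a \upepsilon_{\alpha \beta c \delta}$ as $\upepsilon_{\alpha \beta \gamma \delta}(\sidehypnorm^{\gamma} - \Transport^{\gamma})$; the $\urescalednewgenminushypnorm$- and $\utandecompvectorfielddownarg{}$-pieces are directly converted to spacetime form using that $\urescalednewgenminushypnorm$ is $\Sigma_t$-tangent by Lemma~\ref{L:PROPERTIESOFRESCALEDGENERATORMINUSSIDEHYPNORM}, producing the targets involving $\urescalednewgenminushypnorm^{\gamma} \GradEnt^{\delta} \gen \LogDensity$ and $\Speed^2 \upepsilon_{\alpha \beta a b} \utandecompvectorfielddownarg{a}\LogDensity \, \GradEnt^b$. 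For $\Transport^{\gamma} \GradEnt^{\delta} \partial_a v^a$ I simply use $\partial_a v^a = -\Transport \LogDensity$ from \eqref{E:TRANSPORTDENSRENORMALIZEDRELATIVECTORECTANGULAR}. For $-\Transport^{\gamma} \GradEnt^a \partial_a v^{\delta}$ I decompose $\partial_a$ via \eqref{E:NEWDECOMPOSITIONOFCOORDINATEPARTIALDERIVATIVEVECTORFIELDS}: the $\urescalednewgenminushypnorm_a \gen v^{\delta}$ and $\utandecompvectorfielddownarg{a} v^{\delta}$ pieces immediately match the last two terms on RHS~\eqref{E:PRELIMINARYDECOMPOSITIONOFSUBTLETERMS}, while the $-\sidehypnorm_a \Transport v^{\delta}$ piece demands a second application of \eqref{E:TRANSPORTVELOCITYRELATIVECTORECTANGULAR}, after which the resulting $\partial_e \LogDensity$ must again be split via \eqref{E:NEWDECOMPOSITIONOFCOORDINATEPARTIALDERIVATIVEVECTORFIELDS} and the $\Speed^2 \sidehypnorm_e = \sidehypnorm^e - v^e$ identity reapplied.

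The collection step is where the magic happens. The second round of substitution of $\Transport v^{\delta}$ produces exactly one $\GradEnt^{\delta}$-type remainder (matching term 8 of the RHS), and after re-expressing $\Speed^2 \sidehypnorm_e$ via \eqref{E:SIDEHYPNORMSPATIALLOWEREDINTERMSOFUPPER} the $\Transport \LogDensity$ piece produces both $-\upepsilon_{\alpha \beta \gamma \delta}\sidehypnorm^{\gamma}\Transport^{\delta}$ and $+\upepsilon_{\alpha \beta \gamma \delta}\Transport^{\gamma}\Transport^{\delta}$ contributions, the latter of which vanishes by antisymmetry of $\upepsilon$. The leftover $\Transport^{\gamma} \GradEnt^{\delta} \Transport \LogDensity$ term originating from the first substitution cancels exactly against the $\Transport^{\gamma} \GradEnt^{\delta} \Transport \LogDensity$ term produced by $\partial_a v^a = -\Transport \LogDensity$, leaving only the $\sidehypnorm^{\gamma} \GradEnt^{\delta}$ and $\sidehypnorm^{\gamma} \GradEnt^a \sidehypnorm_a \Transport^{\delta}$ contributions that combine into the bracket $(\GradEnt^{\delta} + \GradEnt^a \sidehypnorm_a \Transport^{\delta})$ on the first line of the RHS. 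The main obstacle is purely bookkeeping: one must carefully track two nested rounds of substitution of \eqref{E:TRANSPORTVELOCITYRELATIVECTORECTANGULAR}, the two corresponding decompositions of $\partial_a \LogDensity$ via \eqref{E:NEWDECOMPOSITIONOFCOORDINATEPARTIALDERIVATIVEVECTORFIELDS}, and the three separate sources of $\Transport \LogDensity$-type terms, and then verify that every product either matches one of the eight listed terms or is killed by one of the $\upepsilon$-antisymmetries in $(\gamma,\delta)$. No genuinely new geometric input is needed beyond what is already stated in Sect.\,\ref{S:SPACETIMEDOMAINS} and in equations \eqref{E:TRANSPORTDENSRENORMALIZEDRELATIVECTORECTANGULAR}-\eqref{E:TRANSPORTVELOCITYRELATIVECTORECTANGULAR}.
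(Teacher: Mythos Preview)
Your proposal is correct and follows essentially the same approach as the paper's proof. The paper organizes the computation slightly differently---it first records the intermediate identities for $\Transport v^j$, $\partial_i v^j$, and $\GradEnt^a \partial_a v^j$ (via \eqref{E:TRANSPORTVELOCITYRELATIVECTORECTANGULAR}, \eqref{E:NEWDECOMPOSITIONOFCOORDINATEPARTIALDERIVATIVEVECTORFIELDS}, and \eqref{E:SIDEHYPNORMSPATIALLOWEREDINTERMSOFUPPER}) and then substitutes them simultaneously into the LHS---whereas you attack the three summands of the LHS one at a time; but the ingredients, the key identity $\Speed^2 \sidehypnorm_c = \sidehypnorm^c - v^c$, the two rounds of applying \eqref{E:TRANSPORTVELOCITYRELATIVECTORECTANGULAR}, and the cancellations you describe (the $\Transport^{\gamma}\GradEnt^{\delta}\Transport\LogDensity$ cancellation and the vanishing of $\upepsilon_{\alpha\beta\gamma\delta}\Transport^{\gamma}\Transport^{\delta}$) are exactly those the paper uses.
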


\begin{proof}
	First, using \eqref{E:TRANSPORTVELOCITYRELATIVECTORECTANGULAR},
	\eqref{E:NEWDECOMPOSITIONOFCOORDINATEPARTIALDERIVATIVEVECTORFIELDS},
	\eqref{E:SIDEHYPNORMSPATIALLOWEREDINTERMSOFUPPER},
	the fact that $\urescalednewgenminushypnorm$ is $\Sigma_t$-tangent (see Lemma~\ref{L:PROPERTIESOFRESCALEDGENERATORMINUSSIDEHYPNORM}),
	and the form \eqref{E:ACOUSTICALMETRIC}
	of the acoustical metric $\gfour$ relative to the Cartesian coordinates,
	we compute that
	\begin{align} \label{E:FIRSTSTEPPRELIMINARYDECOMPOSITIONOFSUBTLETERMS}
		\Transport v^j
		& = 
			- \Speed^2 \partial_j \LogDensity
			- 
			\exp(-\LogDensity) \frac{p_{;\Ent}}{\bar{\varrho}} \GradEnt^j
				\\
		& = 
			(\sidehypnorm^j - v^j) \Transport \LogDensity
			- 
			\urescalednewgenminushypnorm^j \gen \LogDensity
			- 
			\Speed^2 \utandecompvectorfielddownarg{j} \LogDensity
			- 
			\exp(-\LogDensity) \frac{p_{;\Ent}}{\bar{\varrho}} \GradEnt^j.
				\notag
	\end{align}
	
	Next, using \eqref{E:NEWDECOMPOSITIONOFCOORDINATEPARTIALDERIVATIVEVECTORFIELDS},
	\eqref{E:FIRSTSTEPPRELIMINARYDECOMPOSITIONOFSUBTLETERMS},
	and \eqref{E:SIDEHYPNORMSPATIALLOWEREDINTERMSOFUPPER},
	we compute that
	\begin{align} \label{E:SECONDSTEPPRELIMINARYDECOMPOSITIONOFSUBTLETERMS}
		\partial_i v^j
		& = - 
				\sidehypnorm_i \Transport v^j
				+
				\urescalednewgenminushypnorm_i \gen v^j
				+
				\utandecompvectorfielddownarg{i} v^j
					\\
		& = 
			-
			\sidehypnorm_i (\sidehypnorm^j - v^j) \Transport \LogDensity
			+ 
			\Speed^2 \sidehypnorm_i \urescalednewgenminushypnorm_j \gen \LogDensity
			+ 
			\Speed^2 \sidehypnorm_i \utandecompvectorfielddownarg{j} \LogDensity
			+
			\urescalednewgenminushypnorm_i \gen v^j
			+
			\utandecompvectorfielddownarg{i} v^j
			+
			\exp(-\LogDensity) \frac{p_{;\Ent}}{\bar{\varrho}} \sidehypnorm_i \GradEnt^j.
			\notag
	\end{align}
	Contracting \eqref{E:SECONDSTEPPRELIMINARYDECOMPOSITIONOFSUBTLETERMS} against $\GradEnt^i$, we deduce that
	\begin{align} \label{E:THIRDSTEPPRELIMINARYDECOMPOSITIONOFSUBTLETERMS}
		\GradEnt^a \partial_a v^j
		& = - 
				\GradEnt^a \sidehypnorm_a \Transport v^j
				+
				\GradEnt^a \urescalednewgenminushypnorm_a \gen v^j
				+
				\GradEnt^a \utandecompvectorfielddownarg{a} v^j
					\\
		& = 
			-
			\GradEnt^a \sidehypnorm_a (\sidehypnorm^j - v^j) \Transport \LogDensity
			+ 
			\Speed^2 \GradEnt^a \sidehypnorm_a \urescalednewgenminushypnorm_j \gen \LogDensity
			+ 
			\Speed^2 \GradEnt^a \sidehypnorm_a \utandecompvectorfielddownarg{j} \LogDensity
				\notag \\
		&  \ \
			+
			\GradEnt^a \urescalednewgenminushypnorm_a \gen v^j
			+
			\GradEnt^a \utandecompvectorfielddownarg{a} v^j
			+
			\exp(-\LogDensity) \frac{p_{;\Ent}}{\bar{\varrho}} \GradEnt^a \sidehypnorm_a \GradEnt^j.
			\notag
	\end{align}
	
	Using \eqref{E:FIRSTSTEPPRELIMINARYDECOMPOSITIONOFSUBTLETERMS} to substitute for the spatial components of the factor $\Transport v^{\gamma}$ 
	on LHS~\eqref{E:PRELIMINARYDECOMPOSITIONOFSUBTLETERMS},
	using \eqref{E:TRANSPORTDENSRENORMALIZEDRELATIVECTORECTANGULAR} to replace the factor $\partial_a v^a$ on LHS~\eqref{E:PRELIMINARYDECOMPOSITIONOFSUBTLETERMS}
	with $- \Transport \LogDensity$,
	using
	\eqref{E:THIRDSTEPPRELIMINARYDECOMPOSITIONOFSUBTLETERMS}
	to substitute for the spatial components of the last term 
	$\GradEnt^a \partial_a v^{\delta}$
	on LHS~\eqref{E:PRELIMINARYDECOMPOSITIONOFSUBTLETERMS},
	using the identities $\Transport^j = v^j$ and
	$0 = \GradEnt^0 = \urescalednewgenminushypnorm^0 = v^0 = \sidehypnorm^0 - \Transport^0$,
	and taking into account the form \eqref{E:ACOUSTICALMETRIC}
	of the acoustical metric $\gfour$ relative to the Cartesian coordinates,
	we deduce the following identity:
	\begin{align}
		&
		-
		\upepsilon_{\alpha \beta \gamma \delta}
		 \uspecialgen^{\alpha} 
		 \angvortrenormalized^{\beta}
		(\Transport v^{\gamma}) 
		\GradEnt^{\delta}
		+
		\upepsilon_{\alpha \beta \gamma \delta}
			\uspecialgen^{\alpha} 
			\angvortrenormalized^{\beta} 
			\Transport^{\gamma}
			[\GradEnt^{\delta}
				(\partial_a v^a)
				-
				\GradEnt^a \partial_a v^{\delta}]
					\label{E:FOURTHSTEPPRELIMINARYDECOMPOSITIONOFSUBTLETERMS}  \\
	& =
		\left\lbrace
		-
		\upepsilon_{\alpha \beta \gamma \delta}
		 \uspecialgen^{\alpha} 
		 \angvortrenormalized^{\beta}
		(\sidehypnorm^{\gamma} - \Transport^{\gamma})
		\GradEnt^{\delta}
		-
		\upepsilon_{\alpha \beta \gamma \delta}
		\uspecialgen^{\alpha} 
		\angvortrenormalized^{\beta}
		\Transport^{\gamma}
		\GradEnt^{\delta}
		+
		\upepsilon_{\alpha \beta \gamma \delta}
		\uspecialgen^{\alpha} 
		\angvortrenormalized^{\beta}
		\Transport^{\gamma}
		\GradEnt^a \sidehypnorm_a
		(\sidehypnorm^{\delta} - \Transport^{\delta})
		\right\rbrace
		\Transport \LogDensity
			\notag \\
		& \ \
		+
		\upepsilon_{\alpha \beta \gamma \delta}
		 \uspecialgen^{\alpha} 
		 \angvortrenormalized^{\beta}
		\urescalednewgenminushypnorm^{\gamma}
		\GradEnt^{\delta}
		\gen \LogDensity
		-
		\GradEnt^a \sidehypnorm_a
		\upepsilon_{\alpha \beta \gamma \delta}
		\uspecialgen^{\alpha} 
		\angvortrenormalized^{\beta} 
		\Transport^{\gamma}
		\urescalednewgenminushypnorm^{\delta} 
		\gen \LogDensity
			\notag 
			\\
	 & \ \
		+
		\Speed^2
		\upepsilon_{\alpha \beta ab}
		\uspecialgen^{\alpha}  
		\angvortrenormalized^{\beta}
		(\utandecompvectorfielddownarg{a} \LogDensity)
		\GradEnt^b
		-
		\Speed^2
		\GradEnt^a \sidehypnorm_a
		\upepsilon_{\alpha \beta \gamma d}
		\uspecialgen^{\alpha} 
		\angvortrenormalized^{\beta} 
		\Transport^{\gamma}
		\utandecompvectorfielddownarg{d} \LogDensity
		\notag
			\\
	& \ \
		-
		\GradEnt^a \urescalednewgenminushypnorm_a
		\upepsilon_{\alpha \beta \gamma \delta}
		\uspecialgen^{\alpha} 
		\angvortrenormalized^{\beta} 
		\Transport^{\gamma}
		\gen v^{\delta}
		-
		\upepsilon_{\alpha \beta \gamma \delta}
		\uspecialgen^{\alpha} 
		\angvortrenormalized^{\beta} 
		\Transport^{\gamma}
		\GradEnt^a \utandecompvectorfielddownarg{a} v^{\delta}
		\notag
			\\
	& \ \
			-
		\exp(-\LogDensity) \frac{p_{;\Ent}}{\bar{\varrho}}
		\GradEnt^a \sidehypnorm_a
		\upepsilon_{\alpha \beta \gamma \delta}
		\uspecialgen^{\alpha} 
		\angvortrenormalized^{\beta} 
		\Transport^{\gamma}
		\GradEnt^{\delta}.
		\notag
		\end{align}
		The desired identity \eqref{E:PRELIMINARYDECOMPOSITIONOFSUBTLETERMS} 
		now follows as a simple
		algebraic consequence of \eqref{E:FOURTHSTEPPRELIMINARYDECOMPOSITIONOFSUBTLETERMS}.
\end{proof}

\subsection{Remarkable geometric structure of the error term $
		-
		\upepsilon_{\alpha \beta \gamma \delta}
		 \uspecialgen^{\alpha} 
		 \angvortrenormalized^{\beta}
		\sidehypnorm^{\gamma}
		(\GradEnt^{\delta} + \GradEnt^a \sidehypnorm_a \Transport^{\delta})
		\Transport \LogDensity
$}
\label{SS:REMARKABLESTRUCTUREOFMOSTSUBTLETERM}
Recall that in Lemma~\ref{L:PRELIMINARYDECOMPOSITIONOFSUBTLETERMS},
we provided a preliminary decomposition of the most subtle part of the error term
$
2 
		\weight
		\uposinnerproduct
\uspecialgen^{\alpha}
			\angV^{\beta}
			(\partial_{\alpha} \SigmatTan_{\beta} - \partial_{\beta} \SigmatTan_{\alpha})$
on RHS~\eqref{E:PRELIMINARYDECOMPOFBOUNDARYINTEGRAND}
(see also RHS~\eqref{E:INTEGRATEDPRELIMINARYDECOMPOFBOUNDARYINTEGRAND}).
The subtle part arises in the case $\SigmatTan = \vortrenormalized$,
and we provided the decomposition of it in equation \eqref{E:PRELIMINARYDECOMPOSITIONOFSUBTLETERMS}.
All products on RHS~\eqref{E:PRELIMINARYDECOMPOSITIONOFSUBTLETERMS} manifestly involve only $\underline{\mathcal{H}}$-tangential
derivatives of the fluid solution, 
except for the first product
$
		-
		\upepsilon_{\alpha \beta \gamma \delta}
		 \uspecialgen^{\alpha} 
		 \angvortrenormalized^{\beta}
		\sidehypnorm^{\gamma}
		(\GradEnt^{\delta} + \GradEnt^a \sidehypnorm_a \Transport^{\delta})
		\Transport \LogDensity
$.
In Prop.\,\ref{P:KEYDETERMINANT},
we show that for compressible Euler solutions,
this remaining product can be re-expressed in terms of $\underline{\mathcal{H}}$-tangential
derivatives of the solution. Moreover, in the crucial case in which the
lateral boundary $\underline{\mathcal{H}}$ is $\gfour$-null, \emph{the product completely vanishes}.
The results of the proposition are of fundamental importance for our main results.
The proof relies on the precise structure of some of the transport equations in Theorem~\ref{T:GEOMETRICWAVETRANSPORTSYSTEM}.

\subsubsection{A decomposition of the entropy gradient}
\label{SSS:ENTROPYVECTORFIELDKEYTENSORIALDECOMPOSITION}
In our proof of Prop.\,\ref{P:KEYDETERMINANT}, we will use 
the simple geometric decomposition of $\GradEnt$ provided by the following lemma.

\begin{lemma}[Decomposition of the entropy gradient]
	\label{L:ENTROPYVECTORFIELDKEYTENSORIALDECOMPOSITION}
	Let $\LeftoverGradEnt$ be the vectorfield whose Cartesian components $\LeftoverGradEnt^{\alpha}$ are defined
	by the following equation, where $\GradEnt$ is the ($\Sigma_t$-tangent)
	entropy gradient vectorfield,
	the vectorfields $\tophypnorm$ and $\sidehypnorm$
	are as in Def.\,\ref{D:HYPNORMANDSPHEREFORMDEFS},
	and $\seconduposinnerproduct > 0$ is the scalar function
	defined in \eqref{E:SECONDINGOINGCONDITION}:
	\begin{align} \label{E:ENTROPYVECTORFIELDKEYTENSORIALDECOMPOSITION}
		\GradEnt^{\alpha}
		& =
			-
			\GradEnt^a \sidehypnorm_a
			\Transport^{\alpha}
			+
			\left\lbrace
				\frac{\GradEnt^a \sidehypnorm_a}{\seconduposinnerproduct}
				- 
				\frac{\GradEnt^a \tophypnorm_a}{\seconduposinnerproduct}
			\right\rbrace
			\gen^{\alpha}
			+
			\LeftoverGradEnt^{\alpha}.
	\end{align}
	Then $\LeftoverGradEnt$ is $\mathcal{S}_{\Timefunction}$-tangent.
\end{lemma}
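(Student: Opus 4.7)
\medskip

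\noindent\textbf{Proof proposal.} The plan is to verify directly that $\LeftoverGradEnt$ is $\mathcal{S}_{\Timefunction}$-tangent by checking that it is $\gfour$-orthogonal to a basis of the two-dimensional $\gfour$-orthogonal complement of $\mathcal{S}_{\Timefunction}$.

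First, I would identify a convenient such basis. Since $\mathcal{S}_{\Timefunction} = \underline{\mathcal{H}} \cap \widetilde{\Sigma}_{\Timefunction} \subset \widetilde{\Sigma}_{\Timefunction}$, the vectorfield $\tophypnorm$ (which is $\gfour$-normal to $\widetilde{\Sigma}_{\Timefunction}$) is $\gfour$-orthogonal to $\mathcal{S}_{\Timefunction}$. Similarly, since $\mathcal{S}_{\Timefunction} \subset \underline{\mathcal{H}}$, the vectorfield $\sidehypnorm$ (which is $\gfour$-normal to $\underline{\mathcal{H}}$) is $\gfour$-orthogonal to $\mathcal{S}_{\Timefunction}$. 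Moreover, $\tophypnorm$ and $\sidehypnorm$ are linearly independent, since $\widetilde{\Sigma}_{\Timefunction}$ and $\underline{\mathcal{H}}$ are transversal by assumption. Hence $\mbox{\upshape span}\lbrace \tophypnorm, \sidehypnorm \rbrace$ is the full $\gfour$-orthogonal complement of $\mathcal{S}_{\Timefunction}$, and it therefore suffices to show that $\gfour(\LeftoverGradEnt,\sidehypnorm) = 0$ and $\gfour(\LeftoverGradEnt,\tophypnorm) = 0$.

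Next, I would solve \eqref{E:ENTROPYVECTORFIELDKEYTENSORIALDECOMPOSITION} for $\LeftoverGradEnt$ and take the $\gfour$-inner product of the resulting expression with each of $\sidehypnorm$ and $\tophypnorm$, using the following basic identities: since $\GradEnt$ is $\Sigma_t$-tangent (so $\GradEnt^0=0$), the relations $\gfour(\GradEnt,\sidehypnorm) = \GradEnt^a \sidehypnorm_a$ and $\gfour(\GradEnt,\tophypnorm) = \GradEnt^a \tophypnorm_a$ hold; moreover, $\gfour(\Transport,\sidehypnorm)=-1$ by \eqref{E:EQUIVALENTFUTURENORMALTOHYPERSURFACE}, $\gfour(\Transport,\tophypnorm)=-1$ by \eqref{E:EQUIVALENTFUTURENORMALTOTOPHYPERSURFACE}, and $\gfour(\gen,\tophypnorm) = -\seconduposinnerproduct$ by \eqref{E:SECONDINGOINGCONDITION}. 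Finally, $\gfour(\gen,\sidehypnorm)=0$ because $\gen$ is $\underline{\mathcal{H}}$-tangent while $\sidehypnorm$ is $\gfour$-orthogonal to $\underline{\mathcal{H}}$.

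Contracting \eqref{E:ENTROPYVECTORFIELDKEYTENSORIALDECOMPOSITION} with $\sidehypnorm$, the $\gen$ term drops out by the last identity, and the two surviving contributions yield $\GradEnt^a\sidehypnorm_a - \GradEnt^a\sidehypnorm_a = 0$, as needed. Contracting with $\tophypnorm$, the coefficient $1/\seconduposinnerproduct$ in the $\gen$-term is precisely engineered to cancel the factor $-\seconduposinnerproduct$ coming from $\gfour(\gen,\tophypnorm)$, and a short algebraic simplification produces $\GradEnt^a \tophypnorm_a - \GradEnt^a \sidehypnorm_a + \GradEnt^a \sidehypnorm_a - \GradEnt^a \tophypnorm_a = 0$. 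There is no substantive obstacle to this argument; the only mild point is to keep track of the conventions in Subsubsect.\,\ref{SSS:ACOUSTICALMETRIC} and Convention~\ref{C:LOWERANDRAISE} for raising and lowering indices, but since $\GradEnt$ is $\Sigma_t$-tangent the contractions $\GradEnt^a\sidehypnorm_a$ and $\GradEnt^a\tophypnorm_a$ that appear in \eqref{E:ENTROPYVECTORFIELDKEYTENSORIALDECOMPOSITION} coincide with the full spacetime contractions $\gfour(\GradEnt,\sidehypnorm)$ and $\gfour(\GradEnt,\tophypnorm)$, so no ambiguity arises.
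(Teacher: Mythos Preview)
Your proof is correct and follows essentially the same approach as the paper's own proof: both contract \eqref{E:ENTROPYVECTORFIELDKEYTENSORIALDECOMPOSITION} against $\sidehypnorm$ and $\tophypnorm$, use the identities $\gfour(\Transport,\sidehypnorm)=\gfour(\Transport,\tophypnorm)=-1$, $\gfour(\gen,\sidehypnorm)=0$, and $\gfour(\gen,\tophypnorm)=-\seconduposinnerproduct$, and conclude that $\LeftoverGradEnt$ is $\gfour$-orthogonal to $\mbox{\upshape span}\lbrace \tophypnorm,\sidehypnorm\rbrace$, the $\gfour$-orthogonal complement of $\mathcal{S}_{\Timefunction}$. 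Your additional remark that $\GradEnt^a\sidehypnorm_a = \gfour(\GradEnt,\sidehypnorm)$ (since $\GradEnt^0=0$) is a helpful clarification not spelled out in the paper.
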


\begin{proof}
	Contracting each side of \eqref{E:ENTROPYVECTORFIELDKEYTENSORIALDECOMPOSITION}
	against $\sidehypnorm_{\alpha}$, using that $\gfour(\sidehypnorm,\gen) = 0$,
	and using \eqref{E:EQUIVALENTFUTURENORMALTOHYPERSURFACE},
	we find that $\gfour(\LeftoverGradEnt,\sidehypnorm) = 0$.
	Next, contracting each side of \eqref{E:ENTROPYVECTORFIELDKEYTENSORIALDECOMPOSITION}
	against $\tophypnorm_{\alpha}$ and using 
	\eqref{E:EQUIVALENTFUTURENORMALTOTOPHYPERSURFACE}
	and
	\eqref{E:SECONDINGOINGCONDITION},
	we find that $\gfour(\LeftoverGradEnt,\tophypnorm) = 0$. Thus, $\LeftoverGradEnt$ is $\gfour$-orthogonal
	to $\mbox{\upshape span} \lbrace \tophypnorm, \sidehypnorm \rbrace$, which
	is equal to the $\gfour$-orthogonal
	complement of $\mathcal{S}_{\Timefunction}$
	(as we noted in the proof of Lemma~\ref{L:KEYIDBETWEENVARIOUSVECTORFIELDS}). 
	Thus, $\LeftoverGradEnt$ is $\mathcal{S}_{\Timefunction}$-tangent as desired.
\end{proof}	

\subsubsection{An $\mathcal{S}_{\Timefunction}$-tangent vectorfield arising in the analysis}
\label{SSS:VECTORFIELDINKEYDETERMINANT}
In our proof of Prop.\,\ref{P:KEYDETERMINANT}, we will encounter the vectorfield featured in the following definition.

\begin{definition}[An $\mathcal{S}_{\Timefunction}$-tangent vectorfield arising in Prop.\,\ref{P:KEYDETERMINANT}]
		\label{D:VECTORFIELDINKEYDETERMINANT}
			Let $\utang$ be the $\mathcal{S}_{\Timefunction}$-tangent vectorfield from Lemma~\ref{L:KEYIDBETWEENVARIOUSVECTORFIELDS},
			let $\LeftoverGradEnt$ be the $\mathcal{S}_{\Timefunction}$-tangent vectorfield
			from Lemma~\ref{L:ENTROPYVECTORFIELDKEYTENSORIALDECOMPOSITION},
			let $\tophypnorm$ and $\sidehypnorm$
			be the vectorfields from Def.\,\ref{D:HYPNORMANDSPHEREFORMDEFS},
			and let $\seconduposinnerproduct > 0$ be the scalar function
			defined in \eqref{E:SECONDINGOINGCONDITION}.
			We define the $\mathcal{S}_{\Timefunction}$-tangent vectorfield $\keydetvectorfield$ as follows:
		\begin{align} \label{E:VECTORFIELDINKEYDETERMINANT}
			\keydetvectorfield^{\alpha}
			& :=
			\left\lbrace
				\frac{\GradEnt^a \sidehypnorm_a}{\seconduposinnerproduct}
				- 
				\frac{\GradEnt^a \tophypnorm_a}{\seconduposinnerproduct}
			\right\rbrace
			\utang^{\alpha} 
		-
		\frac{1}{\seconduposinnerproduct}
		\LeftoverGradEnt^{\alpha}.
		\end{align}
\end{definition}

\subsubsection{The main proposition revealing the structure of the term 
$\upepsilon_{\alpha \beta \gamma \delta} 
				\uspecialgen^{\alpha}
				\angvortrenormalized^{\beta}
				\sidehypnorm^{\gamma}
				(\GradEnt^{\delta} 
				+
				\GradEnt^a \sidehypnorm_a
				\Transport^{\delta})
				\Transport \LogDensity$
on RHS~\eqref{E:PRELIMINARYDECOMPOSITIONOFSUBTLETERMS}}
In the next proposition, we provide the main result of Subsect.\,\ref{SS:REMARKABLESTRUCTUREOFMOSTSUBTLETERM}.
We show that for compressible Euler solutions, the first product 
$
\upepsilon_{\alpha \beta \gamma \delta} 
				\uspecialgen^{\alpha}
				\angvortrenormalized^{\beta}
				\sidehypnorm^{\gamma}
				(\GradEnt^{\delta} 
				+
				\GradEnt^a \sidehypnorm_a
				\Transport^{\delta})
				\Transport \LogDensity
$
on RHS~\eqref{E:PRELIMINARYDECOMPOSITIONOFSUBTLETERMS}
can be expressed in terms of the $\underline{\mathcal{H}}$-tangential derivatives of the fluid solution
and moreover, that the product vanishes when $\underline{\mathcal{H}}$ is $\gfour$-null.
Lemma~\ref{L:TRANSPORTLOGDENSITYISTANGENTIALEXCEPTINNULLCASE} plays a key role in the proof.

\begin{proposition}[The key determinant-product calculation]
		\label{P:KEYDETERMINANT}
		Let $\sidehypnorm$ and $\gen$ be the vectorfields from
		Def.\,\ref{D:HYPNORMANDSPHEREFORMDEFS},
		let $\uspecialgen$ be the $\underline{\mathcal{H}}$-tangent vectorfield
		from \eqref{E:SPECIALGENERATORIDENTITY}, 
		let $\urescalednewgenminushypnorm$ be the vectorfield from Def.\,\ref{D:RESCALEDVERSIONOFHYPNORMMINUSNEWGEN},
		let $\lbrace \utandecompvectorfielddownarg{a} \rbrace_{a=1,2,3}$ be the $\mathcal{S}_{\Timefunction}$-tangent vectorfields from
		Lemma~\ref{L:NEWDECOMPOSITIONOFCOORDINATEPARTIALDERIVATIVEVECTORFIELDS},
		and let $\keydetvectorfield$ be the $\mathcal{S}_{\Timefunction}$-tangent vectorfield
		from Def.\,\ref{D:VECTORFIELDINKEYDETERMINANT}.
		Let $\lengthofgen \geq 0$ and $\lengthoftophypnorm > 0$
		be the scalar functions from
		Def.\,\ref{D:LENGTHOFVARIOUSVECTORFIELDSETC},
		and let $\seconduposinnerproduct > 0$ be the scalar function
		defined in \eqref{E:SECONDINGOINGCONDITION}.
		Consider a smooth solution (see Remark~\ref{R:SMOOTHNESSNOTNEEDED})
		to the compressible Euler equations \eqref{E:TRANSPORTDENSRENORMALIZEDRELATIVECTORECTANGULAR}-\eqref{E:ENTROPYTRANSPORT} on $\mathcal{M}$.
		We define
		\begin{align} \label{E:SIGNOFMAINDETERMINANTTERM}
			\upsigma
			& :=
		\mathop{\mathrm{sgn}}
		\left(
			\upepsilon_{\alpha \beta \gamma \delta} 
				\uspecialgen^{\alpha}
				\angvortrenormalized^{\beta}
				\sidehypnorm^{\gamma}
				(\GradEnt^{\delta} 
				+
				\GradEnt^a \sidehypnorm_a
				\Transport^{\delta})
		\right),
		\end{align}
		where $\mathop{\mathrm{sgn}}(0) := 0$ and for $r \in \mathbb{R}$ with $r \neq 0$, $\mathop{\mathrm{sgn}}(r) := \frac{r}{|r|}$.
		If $\underline{\mathcal{H}}$ is $\gfour$-spacelike,
		then along $\underline{\mathcal{H}}$,
		the following identity holds relative to the Cartesian coordinates:
		\begin{align} \label{E:KEYDETERMINANT}
				&
				\upepsilon_{\alpha \beta \gamma \delta} 
				\uspecialgen^{\alpha}
				\angvortrenormalized^{\beta}
				\sidehypnorm^{\gamma}
				(\GradEnt^{\delta} 
				+
				\GradEnt^a \sidehypnorm_a
				\Transport^{\delta})
				\Transport \LogDensity
					\\
				& =
		-
		\upsigma
		\Speed^3
		\frac{\seconduposinnerproduct + \lengthofgen^2}{\sqrt{\seconduposinnerproduct^2 + \lengthofgen^2 \lengthoftophypnorm^2}}
		\sqrt{\mbox{\upshape det}
		\begin{pmatrix}
		\gsphere(\angvortrenormalized,\angvortrenormalized) & 
		\gsphere(\angvortrenormalized,\keydetvectorfield)
			\\
		\gsphere(\angvortrenormalized,\keydetvectorfield) & \gsphere(\keydetvectorfield,\keydetvectorfield)
	\end{pmatrix}}
		\notag \\
	& 
	\ \ \ \ \ \
	\times
	\left\lbrace
			\urescalednewgenminushypnorm_a \gen v^a
				+
				\sidehypnorm_a \urescalednewgenminushypnorm^a \gen \LogDensity
				+
				\utandecompvectorfielddownarg{a} v^a
				+ 
				(g^{-1})^{ab} \sidehypnorm_a \utandecompvectorfielddownarg{b} \LogDensity
				+
			  \exp(-\LogDensity) \frac{p_{;\Ent}}{\bar{\varrho}} \sidehypnorm_a \GradEnt^a
	\right\rbrace.
	\notag
		\end{align}	
		In particular, all derivatives of $\LogDensity$ and $\lbrace v^a \rbrace_{a=1,2,3}$ on RHS~\eqref{E:KEYDETERMINANT} 
		are $\underline{\mathcal{H}}$-tangential.
		
		Moreover, if $\underline{\mathcal{H}}$ is $\gfour$-null,
		then we have (see Convention~\ref{C:NULLCASE}):
		\begin{align} \label{E:NULLCASEKEYDETERMINANT}
		\upepsilon_{\alpha \beta \gamma \delta} 
				\uspecialgen^{\alpha}
				\angvortrenormalized^{\beta}
				\sidehypnorm^{\gamma} 
				(\GradEnt^{\delta} 
				+
				\GradEnt^a \sidehypnorm_a
				\Transport^{\delta})
				\Transport \LogDensity
				:=
				\uspecialgen^{\alpha}
				\angvortrenormalized^{\beta}
				\uLunit^{\gamma} 
				(\GradEnt^{\delta} 
				+
				\GradEnt^a \uLunit_a
				\Transport^{\delta})
				\Transport \LogDensity
				& = 0
				= \mbox{\upshape RHS~\eqref{E:KEYDETERMINANT}}.
		\end{align}	
		
\end{proposition}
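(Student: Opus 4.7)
The plan is to reduce the left-hand side of \eqref{E:KEYDETERMINANT} to a single $\upepsilon$-contraction carrying a manifest factor of $\lengthofgen^2$ multiplied by $\Transport\LogDensity$, which Lemma~\ref{L:TRANSPORTLOGDENSITYISTANGENTIALEXCEPTINNULLCASE} in turn expresses in terms of the $\underline{\mathcal{H}}$-tangential data on RHS~\eqref{E:KEYDETERMINANT} (up to a division by $\lengthofsidehypnorm^2$). The null case \eqref{E:NULLCASEKEYDETERMINANT} will then be automatic, since \eqref{E:NORMALISGENERATORINNULLCASE} forces $\lengthofgen = 0$ when $\underline{\mathcal{H}}$ is $\gfour$-null and the $\lengthofgen^2$ prefactor kills the entire expression before the singular $1/\lengthofsidehypnorm^2$ is ever invoked. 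In the spacelike case, the remaining task is to compute the magnitude of the residual $\upepsilon$-contraction via a Lorentzian Gram determinant identity, and then to algebraically assemble the resulting factors using \eqref{E:RATIOLENGTHOFSIDEHYPNORMLENGTHOFGEN}.

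For the first reduction I will apply, simultaneously, \eqref{E:ENTROPYVECTORFIELDKEYTENSORIALDECOMPOSITION} to write
\begin{align*}
\GradEnt^{\delta} + \GradEnt^a\sidehypnorm_a\Transport^{\delta}
& = \GradEnt^a\urescalednewgenminushypnorm_a\,\gen^{\delta} + \LeftoverGradEnt^{\delta}
\end{align*}
(where I have used \eqref{E:REGULARFORMRESCALEDVERSIONOFHYPNORMMINUSNEWGEN} to recognise $(\sidehypnorm-\tophypnorm)/\seconduposinnerproduct = \urescalednewgenminushypnorm$), and \eqref{E:SPECIALGENERATORIDENTITY} to split $\uspecialgen^{\alpha} = \seconduposinnerproduct^{-1}\gen^{\alpha} + \utang^{\alpha}$. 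Expanding the resulting $\upepsilon$-contraction into four cross terms, the $\gen^{\alpha}\wedge\gen^{\delta}$ piece vanishes by antisymmetry of $\upepsilon$, and the $\utang^{\alpha}\wedge\LeftoverGradEnt^{\delta}$ piece vanishes because $\utang,\angvortrenormalized,\LeftoverGradEnt$ all lie in the two-dimensional subspace $T\mathcal{S}_{\Timefunction}$ and are therefore linearly dependent (three $\mathcal{S}_{\Timefunction}$-tangent factors paired with the totally antisymmetric $\upepsilon$ necessarily yield zero). Swapping $\alpha\leftrightarrow\delta$ in the leftover $\utang^{\alpha}\wedge\gen^{\delta}$ term and combining it with the $\gen^{\alpha}\wedge\LeftoverGradEnt^{\delta}$ term yields, via Def.~\ref{D:VECTORFIELDINKEYDETERMINANT},
\begin{align*}
\upepsilon_{\alpha\beta\gamma\delta}\uspecialgen^{\alpha}\angvortrenormalized^{\beta}\sidehypnorm^{\gamma}(\GradEnt^{\delta}+\GradEnt^a\sidehypnorm_a\Transport^{\delta})
& = -\upepsilon_{\alpha\beta\gamma\delta}\gen^{\alpha}\angvortrenormalized^{\beta}\sidehypnorm^{\gamma}\keydetvectorfield^{\delta}.
\end{align*}
Finally I decompose $\sidehypnorm^{\gamma}$ using \eqref{E:SIDEHYPNORMINTERMSOFGENANDTOPHYPNORM}; the $\gen^{\gamma}$ contribution again vanishes by antisymmetry against $\gen^{\alpha}$, leaving
\begin{align*}
-\upepsilon_{\alpha\beta\gamma\delta}\gen^{\alpha}\angvortrenormalized^{\beta}\sidehypnorm^{\gamma}\keydetvectorfield^{\delta}
& = -\frac{\lengthofgen^2}{\seconduposinnerproduct+\lengthofgen^2}\,\upepsilon_{\alpha\beta\gamma\delta}\gen^{\alpha}\angvortrenormalized^{\beta}\tophypnorm^{\gamma}\keydetvectorfield^{\delta}.
\end{align*}
In the $\gfour$-null case, \eqref{E:NORMALISGENERATORINNULLCASE} gives $\uLunit = \gen$ with $\gfour(\gen,\gen)=0$, so $\lengthofgen = 0$ and the right-hand side vanishes; this proves \eqref{E:NULLCASEKEYDETERMINANT}.

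For the $\gfour$-spacelike case I then substitute $\Transport\LogDensity = -\lengthofsidehypnorm^{-2}\{\cdots\}$ from Lemma~\ref{L:TRANSPORTLOGDENSITYISTANGENTIALEXCEPTINNULLCASE} and compute the magnitude of the surviving $\upepsilon$-contraction by squaring. The identity I will use is that, for any four vectorfields $(X_1,X_2,X_3,X_4)$ and the Cartesian antisymmetric symbol $\upepsilon$,
\begin{align*}
(\upepsilon_{\alpha\beta\gamma\delta}X_1^{\alpha}X_2^{\beta}X_3^{\gamma}X_4^{\delta})^2
& = -|\det\gfour|^{-1}\det[\gfour(X_i,X_j)]_{1\leq i,j\leq 4},
\end{align*}
which together with the direct calculation $\det\gfour = -\Speed^{-6}$ from \eqref{E:ACOUSTICALMETRIC} produces an overall factor of $\Speed^6$. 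Specialising to $(\gen,\angvortrenormalized,\tophypnorm,\keydetvectorfield)$, the Gram matrix $[\gfour(X_i,X_j)]$ is block-diagonal because $\gen,\tophypnorm$ are $\gfour$-orthogonal to $\mathcal{S}_{\Timefunction}$ while $\angvortrenormalized,\keydetvectorfield$ are $\mathcal{S}_{\Timefunction}$-tangent: the $(\gen,\tophypnorm)$ block has determinant $-(\lengthofgen^2\lengthoftophypnorm^2+\seconduposinnerproduct^2)$ by \eqref{E:LENGTHOFSIDEGEN}, \eqref{E:LENGTHOFTOPHYPNORM}, \eqref{E:SECONDINGOINGCONDITION}, and the $(\angvortrenormalized,\keydetvectorfield)$ block is precisely the $\gsphere$-Gram determinant under the square root on RHS~\eqref{E:KEYDETERMINANT}. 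The absolute value of the contraction is therefore $\Speed^3\sqrt{(\seconduposinnerproduct^2+\lengthofgen^2\lengthoftophypnorm^2)\det[\gsphere(\cdot,\cdot)]}$, with its sign captured by the definition \eqref{E:SIGNOFMAINDETERMINANTTERM} of $\upsigma$. Inserting this into the chain of identities above and using \eqref{E:RATIOLENGTHOFSIDEHYPNORMLENGTHOFGEN} to simplify
\begin{align*}
\frac{1}{\lengthofsidehypnorm^2}\cdot\frac{\lengthofgen^2}{\seconduposinnerproduct+\lengthofgen^2}\cdot\sqrt{\seconduposinnerproduct^2+\lengthofgen^2\lengthoftophypnorm^2}
& = \frac{\seconduposinnerproduct+\lengthofgen^2}{\sqrt{\seconduposinnerproduct^2+\lengthofgen^2\lengthoftophypnorm^2}}
\end{align*}
gives exactly \eqref{E:KEYDETERMINANT}. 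The main obstacle I anticipate is purely bookkeeping: tracking the Lorentzian sign in the Gram determinant formula, checking $\det\gfour = -\Speed^{-6}$, and verifying that the four competing geometric weights $(\seconduposinnerproduct,\lengthofgen,\lengthoftophypnorm,\lengthofsidehypnorm)$ telescope to precisely the ratio appearing on RHS~\eqref{E:KEYDETERMINANT}; the conceptual content has already been supplied by the decompositions \eqref{E:ENTROPYVECTORFIELDKEYTENSORIALDECOMPOSITION}, \eqref{E:SPECIALGENERATORIDENTITY}, \eqref{E:SIDEHYPNORMINTERMSOFGENANDTOPHYPNORM}, together with Lemma~\ref{L:TRANSPORTLOGDENSITYISTANGENTIALEXCEPTINNULLCASE} (where the compressible Euler equations have already been exploited).
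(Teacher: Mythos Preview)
Your proof is correct and follows essentially the same route as the paper's. The only difference is the choice of frame for the Gram determinant: the paper keeps the contraction in the form $\upepsilon(\sidehypnorm,\gen,\keydetvectorfield,\angvortrenormalized)$ and computes the Gram matrix with the $(\sidehypnorm,\gen)$ block (which is diagonal since $\gfour(\sidehypnorm,\gen)=0$), picking up the factor $\lengthofsidehypnorm^2\lengthofgen^2$ directly; you instead first trade $\sidehypnorm$ for $\tophypnorm$ via \eqref{E:SIDEHYPNORMINTERMSOFGENANDTOPHYPNORM}, extracting the $\lengthofgen^2/(\seconduposinnerproduct+\lengthofgen^2)$ factor by hand, and then compute the Gram determinant with the $(\gen,\tophypnorm)$ block. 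Both routes invoke \eqref{E:RATIOLENGTHOFSIDEHYPNORMLENGTHOFGEN} at the end to assemble the same ratio, and both make the null case immediate (the paper via $\lengthofsidehypnorm=0$, you via $\lengthofgen=0$).
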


\begin{proof}
First, using
\eqref{E:SPECIALGENERATORIDENTITY}, 
\eqref{E:ENTROPYVECTORFIELDKEYTENSORIALDECOMPOSITION},
\eqref{E:VECTORFIELDINKEYDETERMINANT},
and the fact that $\upepsilon_{\alpha \beta \gamma \delta}$ must vanish when contracted in three or more slots
against vectorfields tangent to the two-dimensional submanifold $\mathcal{S}_{\Timefunction}$,
we compute that
\begin{align}  \label{E:FIRSTSTEPKEYDETERMINANT}
	\upepsilon_{\alpha \beta \gamma \delta} 
				\uspecialgen^{\alpha}
				\angvortrenormalized^{\beta}
				\sidehypnorm^{\gamma} 
				(\GradEnt^{\delta} 
				+
				\GradEnt^a \sidehypnorm_a
				\Transport^{\delta})
	& = 
			\upepsilon_{\alpha \beta \gamma \delta}
			\sidehypnorm^{\alpha}
			\gen^{\beta}
			\keydetvectorfield^{\gamma} 
			\angvortrenormalized^{\delta}.
\end{align}

Next we note the following standard fact:
for arbitrary sets of four vectorfields
$\lbrace X_{(1)}, X_{(2)}, X_{(3)}, X_{(4)} \rbrace$,
$
\left\lbrace
	\upepsilon_{\alpha \beta \gamma \delta} X_{(1)}^{\alpha} X_{(2)}^{\beta} X_{(3)}^{\gamma} X_{(4)}^{\delta}
\right\rbrace^2
$
is equal to $(\mbox{\upshape det} \gfour)^{-1}$ 
(where the determinant is taken relative to the Cartesian coordinates)
times the determinant of the $4 \times 4$ matrix whose $(A,B)$ entry is $\gfour(X_{(A)},X_{(B)})$.
Moreover, using \eqref{E:ACOUSTICALMETRIC}, we compute that $\mbox{\upshape det} \gfour = - \Speed^{-6}$.
From these facts, 
\eqref{E:RATIOLENGTHOFSIDEHYPNORMLENGTHOFGEN},
the relations $\gfour(\sidehypnorm,\sidehypnorm) = - \lengthofsidehypnorm^2$,
$\gfour(\gen,\gen) = \lengthofgen^2$,
$\gfour(\sidehypnorm,\gen)=0$,
and the fact that $\sidehypnorm$ and $\gen$ are $\gfour$-orthogonal to the $\mathcal{S}_{\Timefunction}$-tangent
vectorfields $\angvortrenormalized$ and $\keydetvectorfield$,
we express the square of RHS~\eqref{E:FIRSTSTEPKEYDETERMINANT} as follows:
\begin{align} \label{E:SECONDSTEPKEYDETERMINANT}
		\left\lbrace
		\upepsilon_{\alpha \beta \gamma \delta} 
		\sidehypnorm^{\alpha}
		\gen^{\beta} 
		\angvortrenormalized^{\gamma}
		\keydetvectorfield^{\delta}
		\right\rbrace^2
	& =
	\Speed^6
	\mbox{\upshape det}
	\begin{pmatrix}
		\lengthofsidehypnorm^2 & 0 & 0 & 0\\
		0 & \lengthofgen^2 & 0 & 0 \\
		0 & 0 & \gsphere(\angvortrenormalized,\angvortrenormalized) & 
		\gsphere(\angvortrenormalized,\keydetvectorfield)
			\\
		0 & 0 & \gsphere(\angvortrenormalized,\keydetvectorfield) & 
		\gsphere(\keydetvectorfield,\keydetvectorfield)
	\end{pmatrix}
		\\
	& = \Speed^6 
		\lengthofsidehypnorm^4
		 \frac{(\seconduposinnerproduct + \lengthofgen^2)^2}{\seconduposinnerproduct^2 + \lengthofgen^2 \lengthoftophypnorm^2}
		\mbox{\upshape det}
		\begin{pmatrix}
		\gsphere(\angvortrenormalized,\angvortrenormalized) & 
		\gsphere(\angvortrenormalized,\keydetvectorfield)
			\\
		\gsphere(\angvortrenormalized,\keydetvectorfield) & \gsphere(\keydetvectorfield,\keydetvectorfield)
	\end{pmatrix}.
	\notag
\end{align}
From 
\eqref{E:TRANSPORTLOGDENSITYISTANGENTIALEXCEPTINNULLCASE},
\eqref{E:FIRSTSTEPKEYDETERMINANT},
\eqref{E:SECONDSTEPKEYDETERMINANT},
and the fact that RHS~\eqref{E:SECONDSTEPKEYDETERMINANT} vanishes 
when the lateral boundary $\underline{\mathcal{H}}$ is $\gfour$-null (since $\lengthofsidehypnorm = 0$ in this case),
we conclude the desired relations \eqref{E:KEYDETERMINANT} and \eqref{E:NULLCASEKEYDETERMINANT}.
\end{proof}

\section{Volume forms, area forms, and integrals}
\label{S:VOLUMEFORMSANDINTEGRALS}
In this section, we define the geometric volume forms and area forms
featured in our localized energy-flux identities
and derive some simple identities tied to them.

\subsection{Volume forms, area forms, and integrals}
\label{SS:VOLUMEFORMSANDINTEGRALS}

\subsubsection{Definitions of the volume and area forms}
\label{SSS:DEFINITIONSOFVOLUMEFORMSANDINTEGRALS}

\begin{definition}[Geometric volume forms and area forms]
\label{D:VOLUMEFORMS}
	Let $\gfour$ be the acoustical metric of Def.\,\ref{D:ACOUSTICALMETRIC},
	let $g$ be the first fundamental form of $\Sigma_t$,
	let $\topfirstfund$ be the first fundamental form of $\widetilde{\Sigma}_{\Timefunction}$,
	let $\sidefirstfund$ be the first fundamental form of $\underline{\mathcal{H}}$ when $\underline{\mathcal{H}}$ is $\gfour$-spacelike,
	and let $\gsphere$ be the first fundamental form of $\mathcal{S}_{\Timefunction}$;
	see Subsect.\,\ref{SS:FIRSTFUNDANDPROJECTIONS} for the definitions and properties of the latter four tensors
	and the beginning of Sect.\,\ref{S:SPACETIMEDOMAINS} for a description of the acoustical time function $\Timefunction$.
	We define the following volume forms\footnote{Throughout the article, we blur the distinction between the forms themselves, which are antisymmetric tensors,
	and the corresponding integration measures they induce on the relevant manifolds; the precise meaning will be clear from context.} and area forms:
	\begin{itemize}
	\item $d \varpi_{\gfour}$ denotes the canonical volume form\footnote{For example, relative to arbitrary coordinates 
		$\lbrace y^{\alpha} \rbrace_{\alpha = 0,1,2,3}$ on $\mathcal{M}$, we have
		$d \varpi_{\gfour} = \sqrt{|\mbox{\upshape det} \gfour|} dy^0 dy^1 dy^2 dy^3$,
		while relative to arbitrary coordinates 
		$\lbrace \vartheta^A \rbrace_{A=1,2}$ on $\mathcal{S}_{\Timefunction}$, we have
		$d \varpi_{\gsphere} = \sqrt{\mbox{\upshape det} \gsphere} d \vartheta^1 d \vartheta^1$.}
		induced by $\gfour$ on $\mathcal{M}$.
	\item $d \varpi_g$ denotes the canonical volume form induced by $g$ on $\Sigma_t$.
	\item $d \varpi_{\topfirstfund}$ denotes the canonical volume form induced by $\topfirstfund$ on $\widetilde{\Sigma}_{\Timefunction}$.
	\item $d \varpi_{\gsphere}$ denote the canonical area form induced by $\gsphere$ on $\mathcal{S}_{\Timefunction}$.
	\item In the case that $\underline{\mathcal{H}}$ is spacelike,
		$d \varpi_{\sidefirstfund}$ denotes the canonical volume form induced by $\sidefirstfund$ on $\underline{\mathcal{H}}$.
	\item In the case that the lateral boundary $\underline{\mathcal{H}}$ 
		is $\gfour$-null (in which case we use the alternate notation $\underline{\mathcal{N}} = \underline{\mathcal{H}}$), 
		we endow\footnote{By considering ``a limit as the spacelike hypersurface $\underline{\mathcal{H}}$
		becomes null,'' one can infer that the volume form
		$d \varpi_{\gsphere} \, d \Timefunction'$ on the limiting null hypersurface is the ``correct'' form for use in the divergence theorem; 
		see the proof of Prop.\,\ref{P:ENERGYFLUXID} for further discussion.} 
	$\underline{\mathcal{N}} = \cup_{\Timefunction' \in [0,T]} \mathcal{S}_{\Timefunction'}$ 
		with the volume form
		$d \varpi_{\gsphere} \, d \Timefunction'$, where
		$d \varpi_{\gsphere}$ is the area form induced by $\gsphere$ on $\mathcal{S}_{\Timefunction'}$.
	\end{itemize}
\end{definition}

\subsubsection{Identities for the volume and area forms}
\label{SSS:IDENTITIESFORVOLUMEANDAREAFORMS}
In the next lemma, we provide some identities satisfied by the forms from Def.\,\ref{D:VOLUMEFORMS}.

\begin{lemma}[Identities involving the volume forms on $\mathcal{M}$, $\Sigma_t$, $\widetilde{\Sigma}_{\Timefunction}$, and $\underline{\mathcal{H}}$]
	\label{L:IDENTITIESFORVOLUMEFORMS}
	Let $(t',x^1,x^2,x^3)$ be the Cartesian coordinates.
	Then the following identities hold:
	\begin{align} \label{E:SPACTIMEVOLUMEFORMANDSIGMATVOLUMEFORMRELATIVETOCARTESIANCOORDINATES}
			d \varpi_g  
			& = \Speed^{-3} dx^1 dx^2 dx^3,
			&
			d \varpi_{\gfour} 
			& 
			= \Speed^{-3} dx^1 dx^2 dx^3 dt'
			=
			d \varpi_{\gfour} = d \varpi_g dt'.
	\end{align}
	
	Moreover, let $\Timefunction'$ be the acoustical time function from the beginning of Sect.\,\ref{S:SPACETIMEDOMAINS}
	and let $\lengthofmodtophypnorm > 0 $ be the scalar function defined in
	\eqref{E:LENGTHOFTOPHYPNORMNORMALIZEDAGAINSTTIMEFUNCTION}.
	Then the following identity holds:
	\begin{align} \label{E:SPACETIMEVOLUMEFORMEXPRESSIONWITHRESPECTTOTIMEFUNCTION}
		d \varpi_{\gfour} 
		& = \lengthofmodtophypnorm d \varpi_{\topfirstfund} d \Timefunction'.
	\end{align}
	
	Finally, if $\underline{\mathcal{H}}$ is $\gfour$-spacelike,
	then with $\lengthofmodgen > 0$ denoting the scalar function
	from \eqref{E:LENGTHOFMODSIDEGEN}, we have
	\begin{align} \label{E:SIDEHYPERSURFACEVOLUMEFORMEXPRESSIONWITHRESPECTTOTIMEFUNCTIONSPACELIKECASE}
		d \varpi_{\sidefirstfund} 
		& = \lengthofmodgen d \varpi_{\gsphere} d \Timefunction'.
	\end{align}
\end{lemma}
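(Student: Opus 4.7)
The plan is to treat the three identities separately, with parts (2) and (3) relying on a common construction of coordinates adapted to the foliation by level sets of $\Timefunction$.

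For \eqref{E:SPACTIMEVOLUMEFORMANDSIGMATVOLUMEFORMRELATIVECARTESIANCOORDINATES}, I would first compute the Cartesian component determinant of $g$ directly. Since $g_{ij} = \gfour_{ij} = \Speed^{-2} \updelta_{ij}$ (this follows from \eqref{E:ACOUSTICALMETRIC} and \eqref{E:FIRSTFUNDSIGMATEQUALSSPACETIMEMETRICONSIGMAT}), we obtain $\mbox{\upshape det}\, g = \Speed^{-6}$, so $\sqrt{\mbox{\upshape det}\, g} = \Speed^{-3}$, which gives the formula for $d\varpi_g$. For $\gfour$, I would compute $\mbox{\upshape det}\, \gfour = -\Speed^{-6}$ (this calculation already appears in the proof of Prop.\,\ref{P:KEYDETERMINANT}), so $\sqrt{|\mbox{\upshape det}\, \gfour|} = \Speed^{-3}$, which yields both the formula $d\varpi_{\gfour} = \Speed^{-3} dx^1 dx^2 dx^3 dt'$ and the factorization $d\varpi_{\gfour} = d\varpi_g \, dt'$ since $\Speed^{-3}$ matches the Cartesian coordinate expression for $d\varpi_g$.

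For \eqref{E:SPACETIMEVOLUMEFORMEXPRESSIONWITHRESPECTTOTIMEFUNCTION}, I would introduce coordinates adapted to the foliation. Pick local coordinates $(y^1, y^2, y^3)$ on some fixed level set $\widetilde{\Sigma}_{\Timefunction_0}$ and extend them to a neighborhood by requiring $\modtophypnorm y^i = 0$. Since $\modtophypnorm \Timefunction = 1$ by \eqref{E:TOPHYPNORMNORMALIZEDAGAINSTTIMEFUNCTION}, the functions $(\Timefunction, y^1, y^2, y^3)$ form a coordinate system in which $\modtophypnorm = \partial/\partial\Timefunction$, and the vectorfields $\partial/\partial y^i$ are tangent to the level sets of $\Timefunction$, hence tangent to $\widetilde{\Sigma}_{\Timefunction}$. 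Because $\modtophypnorm$ is $\gfour$-orthogonal to $\widetilde{\Sigma}_{\Timefunction}$, the Cartesian-to-adapted Jacobian gives a block-diagonal form: $\gfour(\partial/\partial\Timefunction, \partial/\partial y^i) = 0$, $\gfour(\partial/\partial\Timefunction, \partial/\partial\Timefunction) = -\lengthofmodtophypnorm^2$ by \eqref{E:LENGTHOFTOPHYPNORMNORMALIZEDAGAINSTTIMEFUNCTION}, and the spatial block is $\topfirstfund_{ij} := \gfour(\partial/\partial y^i, \partial/\partial y^j)$ thanks to \eqref{E:TOPHYPFIRSTFUNDAGREESWITHSPACETIMEMETRICONTANGENTVECTORS}. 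Taking determinants yields $\mbox{\upshape det}\, \gfour = -\lengthofmodtophypnorm^2 \, \mbox{\upshape det}\, \topfirstfund$ (in the adapted coordinates), so $\sqrt{|\mbox{\upshape det}\, \gfour|} = \lengthofmodtophypnorm \sqrt{\mbox{\upshape det}\, \topfirstfund}$, which gives \eqref{E:SPACETIMEVOLUMEFORMEXPRESSIONWITHRESPECTTOTIMEFUNCTION} after recognizing the product of differentials.

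For \eqref{E:SIDEHYPERSURFACEVOLUMEFORMEXPRESSIONWITHRESPECTTOTIMEFUNCTIONSPACELIKECASE}, I would apply the same construction intrinsically on $\underline{\mathcal{H}}$. Pick local coordinates $(\vartheta^1, \vartheta^2)$ on $\mathcal{S}_0$ and extend them to $\underline{\mathcal{H}}$ by $\modgen \vartheta^A = 0$; together with $\Timefunction$, these form coordinates on $\underline{\mathcal{H}}$ in which $\modgen = \partial/\partial\Timefunction$ by \eqref{E:SIDEGENERATORNORMALIZEDAGAINSTTIMEFUNCTION}, and $\partial/\partial\vartheta^A$ is tangent to $\mathcal{S}_{\Timefunction}$. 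In the spacelike case, $\modgen$ is $\gfour$-orthogonal to $\mathcal{S}_{\Timefunction}$ (since $\gen$ is, by its defining properties in Def.\,\ref{D:HYPNORMANDSPHEREFORMDEFS}) and $\gfour$-spacelike with $\gfour(\modgen,\modgen) = \lengthofmodgen^2$ by \eqref{E:LENGTHOFMODSIDEGEN}. The induced first fundamental form $\sidefirstfund$ on $\underline{\mathcal{H}}$ therefore has block-diagonal components $\mbox{\upshape diag}(\lengthofmodgen^2, \gsphere_{AB})$, whence $\mbox{\upshape det}\, \sidefirstfund = \lengthofmodgen^2 \, \mbox{\upshape det}\, \gsphere$, giving the claim.

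The main obstacle is mostly bookkeeping: verifying that $\modtophypnorm$ and $\modgen$ are smooth and nonvanishing under the standing assumptions (which follows from $\lengthofmodtophypnorm > 0$, the positivity \eqref{E:POSITIVITYOFLAPSEMODGEN} of $\lapsemodgen$, and the spacelike assumption on $\underline{\mathcal{H}}$), and checking that the adapted coordinate systems cover $\mathcal{M}$ and $\underline{\mathcal{H}}$ up to the overlap of coordinate patches (standard, since the foliations are smooth). No analytic subtleties arise, and no tool beyond the definitions of Sect.\,\ref{S:SPACETIMEDOMAINS} and the standard change-of-variables formula for densities on manifolds is needed.
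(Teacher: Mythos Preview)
Your proposal is correct and follows essentially the same approach as the paper: compute the Cartesian determinants directly for \eqref{E:SPACTIMEVOLUMEFORMANDSIGMATVOLUMEFORMRELATIVETOCARTESIANCOORDINATES}, and for \eqref{E:SPACETIMEVOLUMEFORMEXPRESSIONWITHRESPECTTOTIMEFUNCTION} and \eqref{E:SIDEHYPERSURFACEVOLUMEFORMEXPRESSIONWITHRESPECTTOTIMEFUNCTIONSPACELIKECASE} propagate coordinates along the flow of $\modtophypnorm$ (respectively $\modgen$) to obtain block-diagonal metric expressions. The one place the paper adds detail beyond your sketch is in justifying that the $\modtophypnorm$-propagated coordinates extend from $\widetilde{\Sigma}_0$ to all of $\mathcal{M}_T$, using the ingoing geometry of $\underline{\mathcal{H}}$ from Subsect.\,\ref{SS:ASSUMPTIONSONSPACETIMEREGION}; since the volume form identity is pointwise, your local-patch formulation is already sufficient.
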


\begin{proof}
		Relative to arbitrary coordinates on $\mathcal{M} = \mathcal{M}_T$, we have
		$d \varpi_{\gfour} = \sqrt{|\mbox{\upshape det} \gfour|} dy^0 dy^1 dy^2 dy^3$.
		In the special case of the Cartesian coordinates $(t,x^1,x^2,x^3)$, 
		the desired identity \eqref{E:SPACTIMEVOLUMEFORMANDSIGMATVOLUMEFORMRELATIVETOCARTESIANCOORDINATES} 
		for $d \varpi_{\gfour}$ follows from a straightforward computation 
		based on \eqref{E:ACOUSTICALMETRIC}.
		To obtain the desired identity \eqref{E:SPACTIMEVOLUMEFORMANDSIGMATVOLUMEFORMRELATIVETOCARTESIANCOORDINATES}
		for $d \varpi_g$, we will (consistent with Convention~\ref{C:DISPLAYONLYSPATIAL}) 
		use the symbol ``$g$'' to denote
		the restriction of the first fundamental form of $\Sigma_t$ to $\Sigma_t$-tangent vectors.
		Then by \eqref{E:ACOUSTICALMETRIC}, relative to the Cartesian spatial coordinates
		$(x^1,x^2,x^3)$ on $\Sigma_t$, we have that $g = \Speed^{-2} \sum_{a=1}^3 dx^a \otimes dx^a$,
		and the desired identity for $d \varpi_g  = \sqrt{\mbox{\upshape det} g} dx^1 dx^2 dx^3$
		follows easily.
		
		To prove \eqref{E:SIDEHYPERSURFACEVOLUMEFORMEXPRESSIONWITHRESPECTTOTIMEFUNCTIONSPACELIKECASE},
		we first fix arbitrary local coordinates $(\vartheta^1,\vartheta^2)$
		on the initial sphere $\mathcal{S}_0$. We will now explain how we propagate these coordinates to all
		of $\underline{\mathcal{H}}$. Abusing notation, we will also denote the propagated coordinates by $(\vartheta^1,\vartheta^2)$. 
		Specifically, we propagate the coordinates by solving the transport equations
		$\modgen \vartheta^A = 0$, $(A=1,2)$, where the $\underline{\mathcal{H}}$-tangent vectorfield 
		$\modgen$ is defined by \eqref{E:NORMALIZEDAGAINSTTIMEFUNCTIONGENERATOROFHYPERSURFACE}
		and the initial conditions for $\vartheta^A$ are the ones specified on $\mathcal{S}_0$;
		this is possible since \eqref{E:SIDEGENERATORNORMALIZEDAGAINSTTIMEFUNCTION} implies that $\modgen$ 
		is transversal to the spheres $\mathcal{S}_{\Timefunction} \subset \widetilde{\Sigma}_{\Timefunction}$.
		Thus, for $\Timefunction \in [0,T]$, the restriction of $(\vartheta^1,\vartheta^2)$ to $\mathcal{S}_{\Timefunction}$ forms a local coordinate
		system on $\mathcal{S}_{\Timefunction}$.
		Moreover, relative to the coordinates $(\Timefunction,\vartheta^1,\vartheta^2)$ on $\underline{\mathcal{H}}$, we have 
		(again by \eqref{E:SIDEGENERATORNORMALIZEDAGAINSTTIMEFUNCTION})
		$\modgen = \frac{\partial}{\partial \Timefunction}$, and the condition \eqref{E:SIDEGENERATORNORMALIZEDAGAINSTTIMEFUNCTION} plus the fact that
		$\frac{\partial}{\partial \vartheta^A}|_{\mathcal{S}_0}$ is tangent to $\mathcal{S}_0$
		together ensure that $\frac{\partial}{\partial \vartheta^A}$ is tangent to $\mathcal{S}_{\Timefunction}$ for $\Timefunction \in [0,T]$ and $A=1,2$.
		We next recall that $\modgen$ is $\gfour$-orthogonal to $\mathcal{S}_{\Timefunction}$ by construction and, 
		by \eqref{E:LENGTHOFMODSIDEGEN},  
		that it verifies
		$\sidefirstfund(\modgen,\modgen) = \gfour(\modgen,\modgen) = 
		\lengthofmodgen^2 > 0$ (since $\underline{\mathcal{H}}$ is spacelike by assumption).
		In total, it follows that relative to the coordinates $(\Timefunction,\vartheta^1,\vartheta^2)$ 
		on $\underline{\mathcal{H}}$, we have
		$\sidefirstfund = \lengthofmodgen^2 d \Timefunction \otimes d \Timefunction + \gsphere_{AB} d \vartheta^A \otimes d \vartheta^B$
		and $\gsphere = \gsphere_{AB} d \vartheta^A \otimes d \vartheta^B$ 
		(here we are viewing $\gsphere$ as a Riemannian metric on $\mathcal{S}_{\Timefunction}$),
		where $\gsphere_{AB} := \gsphere(\frac{\partial}{\partial \vartheta^A},\frac{\partial}{\partial \vartheta^B})$
		and we are using Einstein summation convention for the capital Latin indices, which vary over $1,2$.
		Thus, 
		$
		d \varpi_{\sidefirstfund}
		=
		\sqrt{\mbox{\upshape det} \sidefirstfund} d \vartheta^1 d \vartheta^2 d \Timefunction 
		= 
		\lengthofmodgen \sqrt{\mbox{\upshape det} \gsphere} d \vartheta^1 d \vartheta^2 d \Timefunction
		= \lengthofmodgen d \varpi_{\gsphere} d \Timefunction
		$,
		which is the desired identity \eqref{E:SIDEHYPERSURFACEVOLUMEFORMEXPRESSIONWITHRESPECTTOTIMEFUNCTIONSPACELIKECASE}.
		
		The identity \eqref{E:SPACETIMEVOLUMEFORMEXPRESSIONWITHRESPECTTOTIMEFUNCTION} can be proved via similar arguments, as we now explain.
		We first fix arbitrary coordinates $\lbrace y^1,y^2,y^3 \rbrace$ on $\widetilde{\Sigma}_0$ 
		and propagate them to $\mathcal{M}$
		by solving the transport equation $\modtophypnorm y^A = 0$,
		where $\modtophypnorm$ is as in Def.\,\ref{D:HYPNORMANDSPHEREFORMDEFS}
		and the initial conditions for $\lbrace y^1,y^2,y^3 \rbrace$ are the ones specified on $\widetilde{\Sigma}_0$.
		We stress that this procedure allows us to extend $\lbrace y^1,y^2,y^3 \rbrace$ to all of $\mathcal{M}_T$
		since for $\Timefunction \in (0,T]$, every maximally extended integral curve of $\modtophypnorm$ contained in $\mathcal{M}_{\Timefunction}$ 
		must intersect $\widetilde{\Sigma}_0$ at its past endpoint. To see this, we only have to rule out the 
		possibilities that for $\Timefunction \in (0,T]$, the past endpoint of a maximally extended integral
		curve of $\modtophypnorm$ contained in $\mathcal{M}_{\Timefunction}$
		lies in its lateral boundary $\underline{\mathcal{H}}_{\Timefunction}$
		or in its top boundary $\widetilde{\Sigma}_{\Timefunction}$.
		These two possibilities are straightforward to rule out
		based on the discussion below \eqref{E:SECONDINGOINGCONDITION},
		the fact that $\modtophypnorm$ is a positive scalar function multiple of $\tophypnorm$,
		(this follows from the fact that $\modtophypnorm$ and $\tophypnorm$ are parallel,
		\eqref{E:LENGTHOFTOPHYPNORMNORMALIZEDAGAINSTTIMEFUNCTION}, 
		the fact that $\tophypnorm$ is $\gfour$-timelike and future-directed,
		and the fact that the gradient of $\Timefunction$ is $\gfour$-timelike and past-directed),
		and the fact that $\sidehypnorm$ is $\gfour$-timelike and future-directed,
		which in total imply that for $\Timefunction \in (0,T]$, 
		$\modtophypnorm$ points outward to $\mathcal{M}_{\Timefunction}$ along its the top boundary $\widetilde{\Sigma}_{\Timefunction}$
		and outward to $\mathcal{M}_{\Timefunction}$ along its lateral boundary $\underline{\mathcal{H}}_{\Timefunction}$.
		Next, using \eqref{E:TOPHYPNORMNORMALIZEDAGAINSTTIMEFUNCTION},
		\eqref{E:LENGTHOFTOPHYPNORMNORMALIZEDAGAINSTTIMEFUNCTION},
		and the fact that $\modtophypnorm$ is
		$\gfour$-orthogonal to $\widetilde{\Sigma}_{\Timefunction}$,
		we find that relative to the coordinates $\lbrace \Timefunction, y^1,y^2,y^3 \rbrace$ on $\mathcal{M}$,
		we have $\modtophypnorm = \frac{\partial}{\partial \Timefunction}$
		and
		$\gfour = \lengthofmodtophypnorm^2 d \Timefunction \otimes d \Timefunction + \topfirstfund_{AB} d y^A \otimes d y^B$,
		where $\topfirstfund_{AB} := \topfirstfund(\frac{\partial}{\partial y^A},\frac{\partial}{\partial y^B})$
		and the capital Latin indices now vary over $1,2,3$.
		From this identity for $\gfour$, the desired identity
		\eqref{E:SPACETIMEVOLUMEFORMEXPRESSIONWITHRESPECTTOTIMEFUNCTION}
		readily follows.
\end{proof}

\subsubsection{Integrals with respect to the geometric volume and area forms}
\label{SSS:INTGEGRALSWITHRESPECTTOGEOMETRICVOLUMEANDAREAFORMS}
Until Sect.\,\ref{S:DOUBLENULL}, we define all of our integrals
relative to the volume forms of Def.\,\ref{D:VOLUMEFORMS}.
For example, if $f$ is a scalar function defined on the $\gfour$-spacelike lateral boundary $\underline{\mathcal{H}}$,
$\Timefunction$ is the acoustical time function introduced at the beginning of Sect.\,\ref{S:SPACETIMEDOMAINS},
and $(\vartheta^1,\vartheta^2)$ are arbitrary local coordinates
on $\mathcal{S}_{\Timefunction}$ (which is diffeomorphic to $\mathbb{S}^2$), then
\[
\int_{\mathcal{S}_{\Timefunction}}
	f
\, d \varpi_{\gsphere}
= 
\int_{\mathbb{S}^2}
	f(\Timefunction,\vartheta^1,\vartheta^2)
\, \sqrt{\mbox{\upshape det} \gsphere(\Timefunction,\vartheta^1,\vartheta^2)} 
d \vartheta^1 d \vartheta^2,
\]
while by \eqref{E:SIDEHYPERSURFACEVOLUMEFORMEXPRESSIONWITHRESPECTTOTIMEFUNCTIONSPACELIKECASE},
we have
\[
\int_{\underline{\mathcal{H}}_{\Timefunction}}
	f
\, d \varpi_{\sidefirstfund}
= 
\int_{\Timefunction'=0}^{\Timefunction}
\int_{\mathbb{S}^2}
	\lengthofmodgen(\Timefunction',\vartheta^1,\vartheta^2) f(t,\vartheta^1,\vartheta^2)
\, \sqrt{\mbox{\upshape det} \gsphere(\Timefunction',\vartheta^1,\vartheta^2)} 
d \vartheta^1 d \vartheta^2
\, d \Timefunction'.
\]

\subsection{Differential and integral identities involving $\mathcal{S}_{\Timefunction}$}
\label{SS:DIFFERENTIATINGUNDERINTEGRAL}
The following lemma, though standard, plays a crucial rule in our proof 
of Theorem~\ref{T:MAINREMARKABLESPACETIMEINTEGRALIDENTITY}. Specifically,
we use the identity \eqref{E:KEYHYPERSURFACEINTEGRALIDENTITY} 
to show that \textbf{one of the error integrals in our
main integral identity has a good sign}, 
up to error terms that are controllable because they  
depend only on the $\underline{\mathcal{H}}$-tangential 
derivatives of various quantities.

\begin{lemma}[Differential and integral identities involving $\mathcal{S}_{\Timefunction}$]
	\label{L:DIFFERENTIATIONANDINTEGRALIDENTITEISINVOLVINGST}
	Let $\underline{\mathcal{H}}$ be $\gfour$-spacelike or $\gfour$-null (i.e. $\underline{\mathcal{H}}=\underline{\mathcal{N}}$), and let
	$f$ be a smooth function defined on $\underline{\mathcal{H}}$ (on $\underline{\mathcal{N}}$ in the null case).
	Let $\lapsemodgen > 0$ be the scalar function defined in \eqref{E:LAPSEFORMODGENCORRESPONDINGTOTIMEFUNCTION},
	and let $\modgen := \lapsemodgen \gen$ 
	be the $\underline{\mathcal{H}}$-tangent vectorfield defined in \eqref{E:NORMALIZEDAGAINSTTIMEFUNCTIONGENERATOROFHYPERSURFACE}
	(and thus $\modgen = \lapsemodgen \uLunit$ in the null case by \eqref{E:NORMALISGENERATORINNULLCASE}),
	let $\Lie_{\modgen}$ denote Lie differentiation with respect to $\modgen$,
	and let $\gsphere$ be the first fundamental form of $\mathcal{S}_{\Timefunction}$ (see Def.\,\ref{D:FIRSTFUNDAMENTALFORMSANDPROJECTIONS}).
	Let $\Timefunction$ be the acoustical time function introduced at the beginning of Subsect.\,\ref{S:SPACETIMEDOMAINS}.
	Then the following identity holds for $\Timefunction \in [0,T]$:
	\begin{align} \label{E:TIMEDERIVATIVESOFSTINTEGRAL}
		\frac{d}{d \Timefunction}
		\int_{\mathcal{S}_{\Timefunction}}
			f
		\, d \varpi_{\gsphere}
			& =
			\int_{\mathcal{S}_{\Timefunction}}
				\left\lbrace
				\modgen f
				+
				\frac{1}{2}
				f
				(\gsphere^{-1})^{\alpha \beta} \Lie_{\modgen} \gsphere_{\alpha \beta}
				\right\rbrace
				\, d \varpi_{\gsphere}.
	\end{align}
	
	In addition, 
	the following identity holds for $\Timefunction \in [0,T]$:
	\begin{align} \label{E:KEYHYPERSURFACEINTEGRALIDENTITY}
			\int_{\underline{\mathcal{H}}_{\Timefunction}}
				\modgen f
			\, d \varpi_{\gsphere} d \Timefunction'
			& =
				-
				\frac{1}{2}
				\int_{\underline{\mathcal{H}}_{\Timefunction}}
					f [(\gsphere^{-1})^{\alpha \beta} \Lie_{\modgen} \gsphere_{\alpha \beta}]
				\, d \varpi_{\gsphere} d \Timefunction'
				+
				\int_{\mathcal{S}_{\Timefunction}}
					f
				\, d \varpi_{\gsphere}
				-
				\int_{\mathcal{S}_0}
					f
				\, d \varpi_{\gsphere},
	\end{align}
	where we note that when $\underline{\mathcal{H}}$ is $\gfour$-spacelike, 
	\eqref{E:SIDEHYPERSURFACEVOLUMEFORMEXPRESSIONWITHRESPECTTOTIMEFUNCTIONSPACELIKECASE} implies that
	$d \varpi_{\gsphere} d \Timefunction' = \lengthofmodgen^{-1} \, d \varpi_{\sidefirstfund}$.
\end{lemma}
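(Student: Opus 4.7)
My plan is to prove \eqref{E:TIMEDERIVATIVESOFSTINTEGRAL} by differentiating under the integral sign in geometrically adapted coordinates on $\underline{\mathcal{H}}$, and then to obtain \eqref{E:KEYHYPERSURFACEINTEGRALIDENTITY} as its corollary via the fundamental theorem of calculus and Fubini's theorem (the latter being justified by the volume form factorization \eqref{E:SIDEHYPERSURFACEVOLUMEFORMEXPRESSIONWITHRESPECTTOTIMEFUNCTIONSPACELIKECASE} in the spacelike case and by the definition of the area form on $\underline{\mathcal{N}}$ given in Def.\,\ref{D:VOLUMEFORMS} in the null case).

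First, I would set up coordinates $(\Timefunction,\vartheta^{1},\vartheta^{2})$ on $\underline{\mathcal{H}}$ exactly as in the proof of \eqref{E:SIDEHYPERSURFACEVOLUMEFORMEXPRESSIONWITHRESPECTTOTIMEFUNCTIONSPACELIKECASE} (this construction uses only $\underline{\mathcal{H}}$-tangency of $\modgen$ and the normalization \eqref{E:SIDEGENERATORNORMALIZEDAGAINSTTIMEFUNCTION}, so it applies equally well when $\underline{\mathcal{H}}=\underline{\mathcal{N}}$ is $\gfour$-null). By construction the coordinate vectorfields satisfy $\modgen=\partial/\partial\Timefunction$ and $[\modgen,\partial/\partial\vartheta^{A}]=0$ for $A=1,2$, and $\lbrace \partial/\partial\vartheta^{A}\rbrace_{A=1,2}$ is a frame for $T\mathcal{S}_{\Timefunction}$ at each $\Timefunction\in[0,T]$. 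Writing $\gsphere_{AB}:=\gsphere(\partial/\partial\vartheta^{A},\partial/\partial\vartheta^{B})$, we have $\int_{\mathcal{S}_{\Timefunction}} f\,d\varpi_{\gsphere}=\int_{\mathbb{S}^{2}} f(\Timefunction,\vartheta)\sqrt{\det\gsphere(\Timefunction,\vartheta)}\,d\vartheta^{1}d\vartheta^{2}$.

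Next, I would differentiate under the integral sign (justified by the smoothness of $f$ and of $\gsphere$ together with the compactness of $\mathbb{S}^{2}$). In these coordinates $\partial_{\Timefunction}f=\modgen f$, and the Jacobi formula gives
\begin{align*}
\partial_{\Timefunction}\sqrt{\det\gsphere}
&=\tfrac{1}{2}\sqrt{\det\gsphere}\,(\gsphere^{-1})^{AB}\partial_{\Timefunction}\gsphere_{AB}
=\tfrac{1}{2}\sqrt{\det\gsphere}\,(\gsphere^{-1})^{AB}(\Lie_{\modgen}\gsphere)_{AB},
\end{align*}
where the second equality uses $[\modgen,\partial/\partial\vartheta^{A}]=0$ to identify the coordinate derivative $\partial_{\Timefunction}\gsphere_{AB}$ with the components of $\Lie_{\modgen}\gsphere$ in the frame $\lbrace\partial/\partial\vartheta^{A}\rbrace$. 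Since $(\gsphere^{-1})^{AB}(\Lie_{\modgen}\gsphere)_{AB}$ is the coordinate expression of the coordinate-invariant scalar $(\gsphere^{-1})^{\alpha\beta}\Lie_{\modgen}\gsphere_{\alpha\beta}$ (recalling from \eqref{E:SPHEREFIRSTFUNDAMENTAL} that $\gsphere$ extends to a $\binom{0}{2}$ tensor on $\underline{\mathcal{H}}$ along which the Lie derivative makes global sense), combining the two displayed computations yields \eqref{E:TIMEDERIVATIVESOFSTINTEGRAL}.

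Finally, to derive \eqref{E:KEYHYPERSURFACEINTEGRALIDENTITY} I would integrate \eqref{E:TIMEDERIVATIVESOFSTINTEGRAL} from $0$ to $\Timefunction$, apply the fundamental theorem of calculus on the left, and use Fubini's theorem on the right to rewrite the iterated integral $\int_{0}^{\Timefunction}\int_{\mathcal{S}_{\Timefunction'}}(\cdots)\,d\varpi_{\gsphere}\,d\Timefunction'$ as $\int_{\underline{\mathcal{H}}_{\Timefunction}}(\cdots)\,d\varpi_{\gsphere}\,d\Timefunction'$; the identification of this as integration against the volume form displayed in \eqref{E:KEYHYPERSURFACEINTEGRALIDENTITY} is the content of \eqref{E:SIDEHYPERSURFACEVOLUMEFORMEXPRESSIONWITHRESPECTTOTIMEFUNCTIONSPACELIKECASE} (spacelike case) and of the convention introduced in Def.\,\ref{D:VOLUMEFORMS} (null case). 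The only conceptual subtlety — and the place I would be most careful — is the identification of the coordinate-level derivative $\partial_{\Timefunction}\gsphere_{AB}$ with the invariant $(\gsphere^{-1})^{\alpha\beta}\Lie_{\modgen}\gsphere_{\alpha\beta}$; everything else is a direct computation.
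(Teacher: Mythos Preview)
Your proposal is correct and follows essentially the same approach as the paper: both use the coordinates $(\Timefunction,\vartheta^1,\vartheta^2)$ on $\underline{\mathcal{H}}$ constructed in the proof of Lemma~\ref{L:IDENTITIESFORVOLUMEFORMS}, differentiate under the integral using the Jacobi formula for $\partial_{\Timefunction}\sqrt{\det\gsphere}$, identify the coordinate derivative with the Lie derivative, and then integrate in $\Timefunction$ using the fundamental theorem of calculus. Your treatment is in fact slightly more explicit than the paper's about the identification $\partial_{\Timefunction}\gsphere_{AB}=(\Lie_{\modgen}\gsphere)_{AB}$ via $[\modgen,\partial/\partial\vartheta^{A}]=0$, which is exactly the point you flagged as the one conceptual subtlety.
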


\begin{proof}
	To initiate the proof of \eqref{E:TIMEDERIVATIVESOFSTINTEGRAL},
	we note that the term $(\gsphere^{-1})^{\alpha \beta} \Lie_{\modgen} \gsphere_{\alpha \beta}$ on RHS~\eqref{E:TIMEDERIVATIVESOFSTINTEGRAL}
	is coordinate invariant and depends only on $\underline{\mathcal{H}}$-tangent tensors. Thus, the term can be evaluated using
	the local coordinates $(\Timefunction,\vartheta^1,\vartheta^2)$  
	on $\underline{\mathcal{H}}$ from the proof of Lemma~\ref{L:IDENTITIESFORVOLUMEFORMS}. 
	Specifically, from the computations carried out in the proof of Lemma~\ref{L:IDENTITIESFORVOLUMEFORMS}
	and the standard determinant differentiation identity
	$\frac{\partial}{\partial \Timefunction} \mbox{\upshape det} \gsphere 
	= \mbox{\upshape det} \gsphere 
	(\gsphere^{-1})^{\alpha \beta} \frac{\partial}{\partial \Timefunction} \gsphere_{\alpha \beta}$, 
	we see that the integrand
	$
		\left\lbrace
		\modgen f
		+
				\frac{1}{2}
				f
				(\gsphere^{-1})^{\alpha \beta} \Lie_{\modgen} \gsphere_{\alpha \beta}
				\right\rbrace
			\, d \varpi_{\gsphere}
	$
	on RHS~\eqref{E:TIMEDERIVATIVESOFSTINTEGRAL}
	can be expressed as $\frac{\partial}{\partial \Timefunction}  (f \, d \varpi_{\gsphere}) 
	= \frac{\partial}{\partial \Timefunction}  
	(f \sqrt{\mbox{\upshape det} \gsphere} d \vartheta^1 d \vartheta^2)$.
	\eqref{E:TIMEDERIVATIVESOFSTINTEGRAL} therefore follows from differentiating under the integral with respect to $\Timefunction$.
	
	\eqref{E:KEYHYPERSURFACEINTEGRALIDENTITY} follows from integrating \eqref{E:TIMEDERIVATIVESOFSTINTEGRAL}
	with respect to $\Timefunction$ and using the fundamental theorem of calculus.
\end{proof}

\section{The remarkable structure of the boundary error integrals}
\label{S:STRUCTUREOFLATERALERRORINTEGRALS}
In this section, we first prove Prop.\,\ref{P:STRUCTUREOFERRORINTEGRALS},
which yields identities for the boundary error integrals
appearing in our main integral identities (which are provided by Theorem~\ref{T:MAINREMARKABLESPACETIMEINTEGRALIDENTITY}).
Then, in Theorem~\ref{T:STRUCTUREOFERRORTERMS}, 
we closely examine the structure of the terms appearing in Prop.\,\ref{P:STRUCTUREOFERRORINTEGRALS}
and, using compact notation, reveal why they have the remarkable structures that are 
crucial for various applications.

\subsection{Key identity for the boundary error integrals}
\label{SS:STRUCTUREOFERRORINTEGRALS}
In Theorem~\ref{T:MAINREMARKABLESPACETIMEINTEGRALIDENTITY}, 
we derive our main spacetime integral identities on $\mathcal{M}_{\Timefunction}$.
The identities feature ``boundary error integrals,'' that is, 
integrals along 
$\underline{\mathcal{H}}_{\Timefunction}$
and
$\mathcal{S}_{\Timefunction}$;
the discussion surrounding equation \eqref{E:FIRSTSTEPSPACETIMEREMARKABLEIDENTITYSPECIFICVORTICITY}
shows how these error integrals emerge in the proof of the theorem.
In the next proposition, we derive identities for these boundary error integrals
which, in conjunction with Theorem~\ref{T:STRUCTUREOFERRORTERMS},
show that they have remarkable structure.

\begin{proposition}[Key identity for the boundary error integrals]
\label{P:STRUCTUREOFERRORINTEGRALS}
Let $\weight$ be an arbitrary scalar function,
and
let $J[\vortrenormalized]$
and
$J[\GradEnt]$ 
be the $\widetilde{\Sigma}_{\Timefunction}$-tangent vectorfields defined in \eqref{E:NEWELLIPTICHYPERBOLICCURRENT}.
Let $\spherenormal$ be the unit outer normal to $\mathcal{S}_{\Timefunction}$ from Def.\,\ref{D:HYPNORMANDSPHEREFORMDEFS},
let
$\lapsemodgen > 0$ be the scalar function defined in \eqref{E:LAPSEFORMODGENCORRESPONDINGTOTIMEFUNCTION} (see also \eqref{E:POSITIVITYOFLAPSEMODGEN}),
let $\uposinnerproduct > 0$ be the scalar function defined in \eqref{E:INGOINGCONDITION},
and let $\seconduposinnerproduct > 0$ be the scalar function defined in \eqref{E:SECONDINGOINGCONDITION}.
For smooth solutions (see Remark~\ref{R:SMOOTHNESSNOTNEEDED})
to the compressible Euler equations \eqref{E:TRANSPORTDENSRENORMALIZEDRELATIVECTORECTANGULAR}-\eqref{E:ENTROPYTRANSPORT},
the following integral identities hold,
where the volume and area forms are as in Def.\,\ref{D:VOLUMEFORMS}:
\begin{subequations}
\begin{align} \label{E:MAINLATERALERRORINTEGRALSFORSPECIFICVORTICITY}
		\int_{\underline{\mathcal{H}}_{\Timefunction}}
			\weight 
			\spherenormal_{\alpha} J^{\alpha}[\vortrenormalized]
		\, d \varpi_{\gsphere} d \Timefunction'
		+
		\int_{\mathcal{S}_{\Timefunction}}
			\weight 
			\frac{\uposinnerproduct}{\seconduposinnerproduct \lapsemodgen} 
			|\angvortrenormalized|_{\gsphere}^2
		\, d \varpi_{\gsphere} 
		& = 
		\int_{\mathcal{S}_0}
			\weight 
			\frac{\uposinnerproduct}{\seconduposinnerproduct \lapsemodgen}
			|\angvortrenormalized|_{\gsphere}^2
		\, d \varpi_{\gsphere} 
			\\
	& \ \
		+
		\int_{\underline{\mathcal{H}}_{\Timefunction}}
			\left\lbrace
				\underline{\mathfrak{H}}_{(\pmb{\partial} \weight)}[\vortrenormalized]
				+
				\weight
				\underline{\mathfrak{H}}[\vortrenormalized]
				+
				\weight
				\underline{\mathfrak{H}}_{(1)}[\vortrenormalized]
			\right\rbrace
		\, d \varpi_{\gsphere} d \Timefunction',
		\notag
			\\
		\int_{\underline{\mathcal{H}}_{\Timefunction}}
			\weight 
			\spherenormal_{\alpha} J^{\alpha}[\GradEnt]
		\, d \varpi_{\gsphere} d \Timefunction'
		+
		\int_{\mathcal{S}_{\Timefunction}}
			\weight 
			\frac{\uposinnerproduct}{\seconduposinnerproduct \lapsemodgen}
			|\angGradEnt|_{\gsphere}^2
		\, d \varpi_{\gsphere} 
		& = 
		\int_{\mathcal{S}_0}
			\weight 
			\frac{\uposinnerproduct}{\seconduposinnerproduct \lapsemodgen} 
			|\angGradEnt|_{\gsphere}^2
		\, d \varpi_{\gsphere} 
			\label{E:MAINLATERALERRORINTEGRALSFORENTROPYGRADIENT} 
				\\
	& \ \
		+
		\int_{\underline{\mathcal{H}}_{\Timefunction}}
			\left\lbrace
				\underline{\mathfrak{H}}_{(\pmb{\partial} \weight)}[\GradEnt]
				+
				\weight
				\underline{\mathfrak{H}}[\GradEnt]
				+
				\weight
				\underline{\mathfrak{H}}_{(2)}[\GradEnt]
			\right\rbrace
		\, d \varpi_{\gsphere} d \Timefunction'.
		\notag
	\end{align}
	\end{subequations}
		In \eqref{E:MAINLATERALERRORINTEGRALSFORSPECIFICVORTICITY}-\eqref{E:MAINLATERALERRORINTEGRALSFORENTROPYGRADIENT},
		for $\SigmatTan \in \lbrace \vortrenormalized, \GradEnt \rbrace$,
		we have
		\begin{subequations}
		\begin{align} \label{E:DERIVATIVESOFWEIGHTLATERALBOUNDARYEASYERRORINTEGRANDTERMS} 
		\underline{\mathfrak{H}}_{(\pmb{\partial} \weight)}[\SigmatTan]
		& :=
		 \frac{\uposinnerproduct}{\seconduposinnerproduct} 
		|\angV|_{\gsphere}^2
		\gen \weight
		+
		\uposinnerproduct 
		|\angV|_{\gsphere}^2
		\utang \weight
		+
		\SigmatTan_{\alpha} \spherenormal^{\alpha}
		\angV \weight,
			\\
		\underline{\mathfrak{H}}[\SigmatTan]
			&
			:=
			\frac{1}{2} 
			\frac{\uposinnerproduct}{\seconduposinnerproduct \lapsemodgen}  
			|\angV|_{\gsphere}^2
			(\gsphere^{-1})^{\alpha \beta} \Lie_{\lapsemodgen \gen} \gsphere_{\alpha \beta}
			\label{E:LATERALBOUNDARYEASYERRORINTEGRANDTERMS} \\
		& \ \
		+
		\left\lbrace
			\lapsemodgen \gen
			\left[\frac{\uposinnerproduct}{\seconduposinnerproduct \lapsemodgen} (\gsphere^{-1})^{\alpha \beta} \right]
		\right\rbrace
		\SigmatTan_{\alpha} 
	  \SigmatTan_{\beta}
		+
		\left\lbrace
		\utang
			\left[\uposinnerproduct (\gsphere^{-1})^{\alpha \beta} \right]
		\right\rbrace
		\SigmatTan_{\alpha} 
	  \SigmatTan_{\beta}
		-
		2 \SigmatTan_{\alpha}
		\angV \Transport^{\alpha}
		\notag \\
		& \ \
				+
			\uposinnerproduct 
			|\angV|_{\gsphere}^2
			\angdiv \utang 
		+
		\spherenormal_{\alpha} \SigmatTan^{\alpha}  
		\SigmatTan^{\beta}
		\angdiv \angpartialarg{\beta}
			\notag \\
		& \ \
		+
		\SigmatTan^{\alpha}
		\spherenormal^{\beta}
		\angV \gfour_{\alpha \beta}
		+
		\SigmatTan_{\alpha}
		\angV \spherenormal^{\alpha},
		\notag
	\end{align}
	\end{subequations}

	\begin{align} \label{E:SPECIFICVORTITICYMAINLATERALERRORINTEGRAND}
		\underline{\mathfrak{H}}_{(1)}[\vortrenormalized]
		&  := 
		2
		\upsigma
		\uposinnerproduct
		\Speed^{-1} 
		\exp(-2 \LogDensity) 
		\frac{p_{;\Ent}}{\bar{\varrho}}
		\frac{\seconduposinnerproduct + \lengthofgen^2}{\sqrt{\seconduposinnerproduct^2 + \lengthofgen^2 \lengthoftophypnorm^2}}
		\sqrt{\mbox{\upshape det}
		\begin{pmatrix}
		\gsphere(\angvortrenormalized,\angvortrenormalized) & 
		\gsphere(\angvortrenormalized,\keydetvectorfield)
			\\
		\gsphere(\angvortrenormalized,\keydetvectorfield) & \gsphere(\keydetvectorfield,\keydetvectorfield)
	\end{pmatrix}}
		 \\
	& 
	\ \ \ \ \ \
	\times
	\left\lbrace
			\urescalednewgenminushypnorm_a \gen v^a
				+
				\sidehypnorm_a \urescalednewgenminushypnorm^a \gen \LogDensity
				+
				\utandecompvectorfielddownarg{a} v^a
				+ 
				(g^{-1})^{ab} \sidehypnorm_a \utandecompvectorfielddownarg{b} \LogDensity
				+
			  \exp(-\LogDensity) \frac{p_{;\Ent}}{\bar{\varrho}} \sidehypnorm_a \GradEnt^a
	\right\rbrace
	\notag
					\\
			& \ \
				+
				4 
				\uposinnerproduct 
				\vortrenormalized_{\alpha} \uspecialgen^{\alpha}
				\angvortrenormalized \ln \Speed
				- 
				4 
				\uposinnerproduct 
				|\angvortrenormalized|_{\gsphere}^2
				\uspecialgen \ln \Speed 
				+
				4 
				\uposinnerproduct 
				\uspecialgen^0
				\vortrenormalized_a \angvortrenormalized v^a
				-
				4 
				\uposinnerproduct
				\angvortrenormalized^0
				\vortrenormalized_a \uspecialgen v^a
					\notag \\
		& \ \
		+
		2 
		\uposinnerproduct
		\Speed^{-4} 
		\exp(-2 \LogDensity) 
		\frac{p_{;\Ent}}{\bar{\varrho}}
		\upepsilon_{\alpha \beta \gamma \delta}
		 \uspecialgen^{\alpha} 
		 \angvortrenormalized^{\beta}
		\urescalednewgenminushypnorm^{\gamma}
		\GradEnt^{\delta}
		\gen \LogDensity
		-
		2 
		\uposinnerproduct
		\Speed^{-4} 
		\exp(-2 \LogDensity) 
		\frac{p_{;\Ent}}{\bar{\varrho}}
		\GradEnt^a \sidehypnorm_a
		\upepsilon_{\alpha \beta \gamma \delta}
		\uspecialgen^{\alpha} 
		\angvortrenormalized^{\beta} 
		\Transport^{\gamma}
		\urescalednewgenminushypnorm^{\delta} 
		\gen \LogDensity
			\notag 
			\\
	 & \ \
		+
		2 
		\uposinnerproduct
		\Speed^{-2} 
		\exp(-2 \LogDensity) 
		\frac{p_{;\Ent}}{\bar{\varrho}}
		\upepsilon_{\alpha \beta ab}
		\uspecialgen^{\alpha}  
		\angvortrenormalized^{\beta}
		(\utandecompvectorfielddownarg{a} \LogDensity)
		\GradEnt^b
		-
		2 
		\uposinnerproduct
		\Speed^{-2} 
		\exp(-2 \LogDensity) 
		\frac{p_{;\Ent}}{\bar{\varrho}}
		\GradEnt^a \sidehypnorm_a
		\upepsilon_{\alpha \beta \gamma d}
		\uspecialgen^{\alpha} 
		\angvortrenormalized^{\beta} 
		\Transport^{\gamma}
		\utandecompvectorfielddownarg{d} \LogDensity
		\notag
			\\
	& \ \
		-
		2 
		\uposinnerproduct
		\Speed^{-4} 
				\exp(-2 \LogDensity) 
				\frac{p_{;\Ent}}{\bar{\varrho}}
		\GradEnt^a \urescalednewgenminushypnorm_a
		\upepsilon_{\alpha \beta \gamma \delta}
		\uspecialgen^{\alpha} 
		\angvortrenormalized^{\beta} 
		\Transport^{\gamma}
		\gen v^{\delta}
		-
		2 
		\uposinnerproduct
		\Speed^{-4} 
				\exp(-2 \LogDensity) 
				\frac{p_{;\Ent}}{\bar{\varrho}}
		\upepsilon_{\alpha \beta \gamma \delta}
		\uspecialgen^{\alpha} 
		\angvortrenormalized^{\beta} 
		\Transport^{\gamma}
		\GradEnt^a \utandecompvectorfielddownarg{a} v^{\delta}
		\notag
			\\
	& \ \
				+
			2 
			\uposinnerproduct
			\Speed^{-2}
			\exp(\LogDensity)
			\upepsilon_{\alpha \beta \gamma \delta}
			\uspecialgen^{\alpha}
			\angvortrenormalized^{\beta}
			\Transport^{\gamma}
			\VortVort^{\delta}
				\notag
				\\
	& \ \
		-
		2 
		\uposinnerproduct
		\Speed^{-4} 
		\exp(-3 \LogDensity) 
		\left\lbrace
			\frac{p_{;\Ent}}{\bar{\varrho}}
		\right\rbrace^2
		\GradEnt^a \sidehypnorm_a
		\upepsilon_{\alpha \beta \gamma \delta}
		\uspecialgen^{\alpha} 
		\angvortrenormalized^{\beta} 
		\Transport^{\gamma}
		\GradEnt^{\delta},
				\notag
	\end{align}
and
\begin{align} \label{E:ENTROPYGRADIENTMAINLATERALERRORINTEGRAND}
		\underline{\mathfrak{H}}_{(2)}[\GradEnt]
		& := 
			4 
			\uposinnerproduct \GradEnt_{\alpha} \uspecialgen^{\alpha} \angGradEnt \ln \Speed
			- 
			4 
			\uposinnerproduct |\angGradEnt|_{\gsphere}^2  \uspecialgen \ln \Speed.
\end{align}
Above, $\VortVort$ is the $\Sigma_t$-tangent modified fluid variable from Def.\,\ref{D:RENORMALIZEDCURLOFSPECIFICVORTICITY},
$\sidehypnorm$ is the vectorfield from Def.\,\ref{D:HYPNORMANDSPHEREFORMDEFS},
$\gen$ is the $\widetilde{\Sigma}_{\Timefunction}$-tangent vectorfield from Def.\,\ref{D:HYPNORMANDSPHEREFORMDEFS},
$\uspecialgen$ is the $\underline{\mathcal{H}}_{\Timefunction}$-tangent vectorfield from Lemma~\ref{L:KEYIDBETWEENVARIOUSVECTORFIELDS},
$\utang$ is the $\mathcal{S}_{\Timefunction}$-tangent vectorfield from Lemma~\ref{L:KEYIDBETWEENVARIOUSVECTORFIELDS},
$\urescalednewgenminushypnorm$ denotes the $\Sigma_t$-tangent vectorfield from
Lemma~\ref{L:PROPERTIESOFRESCALEDGENERATORMINUSSIDEHYPNORM},
$\lbrace \utandecompvectorfielddownarg{a} \rbrace_{a=1,2,3}$ 
are the $\mathcal{S}_{\Timefunction}$-tangent vectorfields from Lemma~\ref{L:NEWDECOMPOSITIONOFCOORDINATEPARTIALDERIVATIVEVECTORFIELDS},
and the first product on RHS~\eqref{E:SPECIFICVORTITICYMAINLATERALERRORINTEGRAND}
is equal to $
		-2
		\uposinnerproduct
		\Speed^{-4} 
		\exp(-2 \LogDensity) 
		\frac{p_{;\Ent}}{\bar{\varrho}}
		\times
		\mbox{RHS~\eqref{E:KEYDETERMINANT}}$.
In particular, Prop.\,\ref{P:KEYDETERMINANT} implies that
the first product on RHS~\eqref{E:SPECIFICVORTITICYMAINLATERALERRORINTEGRAND}
vanishes when the lateral boundary $\underline{\mathcal{H}}$ is $\gfour$-null.
\end{proposition}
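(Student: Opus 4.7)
The plan is to start from the pointwise boundary-integrand identity \eqref{E:PRELIMINARYDECOMPOFBOUNDARYINTEGRAND} of Lemma~\ref{L:PRELIMINARYANALYSISOFBOUNDARYINTEGRAND}, apply it once with $\SigmatTan = \vortrenormalized$ and once with $\SigmatTan = \GradEnt$, and then integrate both identities over $\underline{\mathcal{H}}_{\Timefunction}$ with respect to the measure $d \varpi_{\gsphere} \, d \Timefunction'$. Several pieces of RHS~\eqref{E:PRELIMINARYDECOMPOFBOUNDARYINTEGRAND} already appear directly in the definitions \eqref{E:DERIVATIVESOFWEIGHTLATERALBOUNDARYEASYERRORINTEGRANDTERMS}--\eqref{E:LATERALBOUNDARYEASYERRORINTEGRANDTERMS}; thus, the bulk of the argument is to show that the remaining three types of contributions transform into the terms explicitly displayed on RHS~\eqref{E:MAINLATERALERRORINTEGRALSFORSPECIFICVORTICITY}--\eqref{E:MAINLATERALERRORINTEGRALSFORENTROPYGRADIENT}.

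First, I will handle the leading total derivative $-\modgen\{\weight \frac{\uposinnerproduct}{\seconduposinnerproduct \lapsemodgen} |\SigmatTan|_{\gsphere}^2\}$ on RHS~\eqref{E:PRELIMINARYDECOMPOFBOUNDARYINTEGRAND}. Applying the integration identity \eqref{E:KEYHYPERSURFACEINTEGRALIDENTITY} with $f = \weight \frac{\uposinnerproduct}{\seconduposinnerproduct \lapsemodgen} |\SigmatTan|_{\gsphere}^2$ converts this term into the pair of $\mathcal{S}_{\Timefunction}$ and $\mathcal{S}_0$ integrals on the left- and right-hand sides of \eqref{E:MAINLATERALERRORINTEGRALSFORSPECIFICVORTICITY}--\eqref{E:MAINLATERALERRORINTEGRALSFORENTROPYGRADIENT}, plus the Lie-derivative contribution $\frac{1}{2} \weight \frac{\uposinnerproduct}{\seconduposinnerproduct \lapsemodgen} |\angV|_{\gsphere}^2 (\gsphere^{-1})^{\alpha\beta} \Lie_{\modgen}\gsphere_{\alpha\beta}$, which is the first summand of $\underline{\mathfrak{H}}[\SigmatTan]$ in \eqref{E:LATERALBOUNDARYEASYERRORINTEGRANDTERMS}. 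Next, the two perfect $\angdiv$ terms at the end of RHS~\eqref{E:PRELIMINARYDECOMPOFBOUNDARYINTEGRAND} integrate to $0$ on each sphere $\mathcal{S}_{\Timefunction'}$ since $\mathcal{S}_{\Timefunction'}$ is closed (diffeomorphic to $\mathbb{S}^2$), so they drop out after integration in $\Timefunction'$. After reallocating the remaining purely algebraic terms of \eqref{E:PRELIMINARYDECOMPOFBOUNDARYINTEGRAND} between $\underline{\mathfrak{H}}_{(\pmb{\partial} \weight)}$ and $\underline{\mathfrak{H}}$ (using that $\modgen = \lapsemodgen \gen$, so that $\modgen \weight = \lapsemodgen \gen \weight$ contributes to $\underline{\mathfrak{H}}_{(\pmb{\partial}\weight)}$ while the remaining factors land in $\underline{\mathfrak{H}}$), the only term left to process is the genuinely subtle ``antisymmetric gradient'' integrand $2 \weight \uposinnerproduct \uspecialgen^\alpha \angV^\beta (\partial_\alpha \SigmatTan_\beta - \partial_\beta \SigmatTan_\alpha)$.

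For the antisymmetric gradient term, I substitute the identities \eqref{E:KEYIDENTITYANTISYMMETRICPARTOFSPECIFICVORTICITYDUALGRADIENT} and \eqref{E:KEYIDENTITYANTISYMMETRICPARTOFENTROPYGRADIENTDUALGRADIENT} from Corollary~\ref{C:SHARPDECOMPOSITIONOFANTISYMMETRICGRADIENTS}, which crucially exploit the formulation of Theorem~\ref{T:GEOMETRICWAVETRANSPORTSYSTEM}. In the entropy case, only the two $(\partial \ln \Speed) \GradEnt$-terms survive, and a direct index contraction together with $\angGradEnt^\alpha = \sphereproject_{\ \beta}^\alpha \GradEnt^\beta$ produces exactly the two terms comprising $\underline{\mathfrak{H}}_{(2)}[\GradEnt]$ in \eqref{E:ENTROPYGRADIENTMAINLATERALERRORINTEGRAND}. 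In the vorticity case, the same computation produces the $\vortrenormalized_\alpha \uspecialgen^\alpha \angvortrenormalized \ln \Speed$ and $|\angvortrenormalized|_{\gsphere}^2 \uspecialgen \ln \Speed$ contributions in \eqref{E:SPECIFICVORTITICYMAINLATERALERRORINTEGRAND}, while the two $\updelta_{\alpha}^0 \vortrenormalized_a \partial_\beta v^a$ terms on RHS~\eqref{E:KEYIDENTITYANTISYMMETRICPARTOFSPECIFICVORTICITYDUALGRADIENT} produce the $\uspecialgen^0 \vortrenormalized_a \angvortrenormalized v^a$ and $\angvortrenormalized^0 \vortrenormalized_a \uspecialgen v^a$ contributions (after using $\angV^\alpha = \sphereproject^\alpha_{\ \beta} V^\beta$), and the $\VortVort$ term on \eqref{E:KEYIDENTITYANTISYMMETRICPARTOFSPECIFICVORTICITYDUALGRADIENT} transfers directly into the $\upepsilon_{\alpha\beta\gamma\delta} \uspecialgen^\alpha \angvortrenormalized^\beta \Transport^\gamma \VortVort^\delta$ summand of $\underline{\mathfrak{H}}_{(1)}[\vortrenormalized]$.

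The hard part, and the main obstacle of the proof, is the remaining block of \emph{entropy-gradient-carrying} contributions from \eqref{E:KEYIDENTITYANTISYMMETRICPARTOFSPECIFICVORTICITYDUALGRADIENT}, namely $-\upepsilon_{\alpha\beta\gamma\delta} \uspecialgen^\alpha \angvortrenormalized^\beta (\Transport v^\gamma) \GradEnt^\delta + \upepsilon_{\alpha\beta\gamma\delta} \uspecialgen^\alpha \angvortrenormalized^\beta \Transport^\gamma[\GradEnt^\delta \Flatdiv v - \GradEnt^a \partial_a v^\delta]$, all scaled by $2\uposinnerproduct \Speed^{-4} \exp(-2\LogDensity) \frac{p_{;\Ent}}{\bar{\varrho}}$. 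These terms \emph{a priori} contain transversal derivatives such as $\Transport v^\gamma$ and $\partial_a v^\delta$ and thus must be rewritten as tangential derivatives plus the elimination of a stubborn $\Transport \LogDensity$ factor. To this end, I apply Lemma~\ref{L:PRELIMINARYDECOMPOSITIONOFSUBTLETERMS}, which rewrites the block as a $\Transport \LogDensity$-term plus terms involving only $\gen$, $\utandecompvectorfielddownarg{a}$, $\urescalednewgenminushypnorm$, and fluid variables (hence only $\underline{\mathcal{H}}$-tangential derivatives); these latter terms match precisely the corresponding summands of $\underline{\mathfrak{H}}_{(1)}[\vortrenormalized]$ in \eqref{E:SPECIFICVORTITICYMAINLATERALERRORINTEGRAND}. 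The leftover $\Transport \LogDensity$-piece is exactly the configuration handled by Proposition~\ref{P:KEYDETERMINANT}: identity \eqref{E:KEYDETERMINANT} rewrites it as the determinant/$\underline{\mathcal{H}}$-tangential expression in the first line of \eqref{E:SPECIFICVORTITICYMAINLATERALERRORINTEGRAND}, while \eqref{E:NULLCASEKEYDETERMINANT} ensures outright vanishing in the $\gfour$-null case. Combining all contributions and bookkeeping the common prefactor $2\uposinnerproduct \Speed^{-4} \exp(-2\LogDensity) \frac{p_{;\Ent}}{\bar{\varrho}}$ (which accounts for the signs and the $-2\uposinnerproduct\Speed^{-4}\exp(-2\LogDensity) \frac{p_{;\Ent}}{\bar{\varrho}}\cdot\text{RHS~\eqref{E:KEYDETERMINANT}}$ structure described in the statement), I obtain the identity \eqref{E:MAINLATERALERRORINTEGRALSFORSPECIFICVORTICITY}. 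The analogous but much shorter computation yields \eqref{E:MAINLATERALERRORINTEGRALSFORENTROPYGRADIENT}.
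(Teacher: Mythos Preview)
Your proposal is correct and follows essentially the same approach as the paper's own proof: integrate \eqref{E:PRELIMINARYDECOMPOFBOUNDARYINTEGRAND} over $\underline{\mathcal{H}}_{\Timefunction}$, discard the perfect $\angdiv$-terms on the closed spheres, apply \eqref{E:KEYHYPERSURFACEINTEGRALIDENTITY} to the $\modgen$-total-derivative term, split the remaining algebraic terms via the product rule into $\underline{\mathfrak{H}}_{(\pmb{\partial}\weight)}$ and $\underline{\mathfrak{H}}$, and then handle the antisymmetric-gradient piece by substituting \eqref{E:KEYIDENTITYANTISYMMETRICPARTOFSPECIFICVORTICITYDUALGRADIENT}--\eqref{E:KEYIDENTITYANTISYMMETRICPARTOFENTROPYGRADIENTDUALGRADIENT} and, in the vorticity case, invoking Lemma~\ref{L:PRELIMINARYDECOMPOSITIONOFSUBTLETERMS} followed by Prop.~\ref{P:KEYDETERMINANT}. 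The bookkeeping you describe (in particular the product-rule split $\modgen[\weight\,\cdot] = \lapsemodgen(\gen\weight)\,\cdot + \weight\,\modgen[\cdot]$ and the identification of the Lie-derivative remainder from \eqref{E:KEYHYPERSURFACEINTEGRALIDENTITY} with the first summand of $\underline{\mathfrak{H}}[\SigmatTan]$) matches the paper's argument.
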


\begin{proof}
	Let $\SigmatTan$ be a $\Sigma_t$-tangent vectorfield.
	We integrate the identity \eqref{E:PRELIMINARYDECOMPOFBOUNDARYINTEGRAND} 
	with respect to 
		$d \varpi_{\gsphere} d \Timefunction'$ 
		(see Def.\,\ref{D:VOLUMEFORMS} for the definitions of the volume and area forms)
		over $\underline{\mathcal{H}}_{\Timefunction}$
		and observe that the integrals of the last two terms
		on RHS~\eqref{E:PRELIMINARYDECOMPOFBOUNDARYINTEGRAND} vanish since they are perfect $\angD$-divergences.
		Using \eqref{E:KEYHYPERSURFACEINTEGRALIDENTITY} with
		$
		-
		\weight
		\frac{\uposinnerproduct}{\seconduposinnerproduct \lapsemodgen} 
			|\SigmatTan|_{\gsphere}^2
		$
		in the role of $f$
		to substitute for
		the integral over $\underline{\mathcal{H}}_{\Timefunction}$ of the first product
		$
		-
		\modgen
		\left\lbrace
			\weight
			\frac{\uposinnerproduct}{\seconduposinnerproduct \lapsemodgen} 
			|\SigmatTan|_{\gsphere}^2
		\right\rbrace
		$
		on RHS~\eqref{E:PRELIMINARYDECOMPOFBOUNDARYINTEGRAND},
		we deduce the identity
		\begin{align} \label{E:INTEGRATEDPRELIMINARYDECOMPOFBOUNDARYINTEGRAND}
		\int_{\underline{\mathcal{H}}_{\Timefunction}}
			\weight \spherenormal_{\alpha} J^{\alpha}[\SigmatTan]
		\, d \varpi_{\gsphere} d \Timefunction'
		+
		\int_{\mathcal{S}_{\Timefunction}}
			\weight
			\frac{\uposinnerproduct}{\seconduposinnerproduct \lapsemodgen}
			|\SigmatTan|_{\gsphere}^2
		\, d \varpi_{\gsphere} 
		& = 
		\int_{\mathcal{S}_0}
			\weight
			\frac{\uposinnerproduct}{\seconduposinnerproduct \lapsemodgen}
			|\SigmatTan|_{\gsphere}^2
		\, d \varpi_{\gsphere} 
			\\
	& \ \
		+
		2 
		\int_{\underline{\mathcal{H}}_{\Timefunction}}
			\weight
			\uposinnerproduct
			\uspecialgen^{\alpha}
			\angV^{\beta}
			(\partial_{\alpha} \SigmatTan_{\beta} - \partial_{\beta} \SigmatTan_{\alpha})
		\, d \varpi_{\gsphere} d \Timefunction'
			\notag \\
	& \ \
		+
		\int_{\underline{\mathcal{H}}_{\Timefunction}}
			\left\lbrace
			\underline{\mathfrak{H}}_{(\pmb{\partial} \weight)}[\SigmatTan]
			+
			\weight
			\underline{\mathfrak{H}}[\SigmatTan]
			\right\rbrace
		\, d \varpi_{\gsphere} d \Timefunction',
		\notag
	\end{align}
	where 
	$\underline{\mathfrak{H}}_{(\pmb{\partial} \weight)}[\SigmatTan]$ is defined in \eqref{E:DERIVATIVESOFWEIGHTLATERALBOUNDARYEASYERRORINTEGRANDTERMS}
	and $\underline{\mathfrak{H}}[\SigmatTan]$ is defined in \eqref{E:LATERALBOUNDARYEASYERRORINTEGRANDTERMS}.
	
We then use the identity \eqref{E:INTEGRATEDPRELIMINARYDECOMPOFBOUNDARYINTEGRAND}
with $\vortrenormalized$ in the role of $\SigmatTan$
and use \eqref{E:KEYIDENTITYANTISYMMETRICPARTOFSPECIFICVORTICITYDUALGRADIENT} to substitute
for the integrand factor
$
\partial_{\alpha} \vortrenormalized_{\beta} - \partial_{\beta} \vortrenormalized_{\alpha}
$
found in the second integral on RHS~\eqref{E:INTEGRATEDPRELIMINARYDECOMPOFBOUNDARYINTEGRAND}.
The resulting identity features the integral
\begin{align} \label{E:DIFFICULTINTEGRALPROOFOFSTRUCTUREOFERRORINTEGRALS}
2 
\int_{\underline{\mathcal{H}}_{\Timefunction}}
			\weight
			\uposinnerproduct
			\Speed^{-4} 
			\exp(-2 \LogDensity) 
			\frac{p_{;\Ent}}{\bar{\varrho}}
			\upepsilon_{\alpha \beta \gamma \delta}
			\uspecialgen^{\alpha}
			\angV^{\beta}
			\left\lbrace
				-
				(\Transport v^{\gamma}) 
				\GradEnt^{\delta}
				+
				 \Transport^{\gamma}
				[\GradEnt^{\delta}
				(\partial_a v^a)
				-
				\GradEnt^a \partial_a v^{\delta}]
			\right\rbrace
		\, d \varpi_{\gsphere} d \Timefunction'
\end{align}
coming from the fifth and sixth products on RHS~\eqref{E:KEYIDENTITYANTISYMMETRICPARTOFSPECIFICVORTICITYDUALGRADIENT}.
We rewrite the 
factors
\[
\upepsilon_{\alpha \beta \gamma \delta}
		 \uspecialgen^{\alpha} 
		 \angvortrenormalized^{\beta}
		\left\lbrace
		-
		(\Transport v^{\gamma}) 
		\GradEnt^{\delta}
		+
		\Transport^{\gamma}
			[\GradEnt^{\delta}
				(\partial_a v^a)
				-
				\GradEnt^a \partial_a v^{\delta}]
		\right\rbrace
\]
in
\eqref{E:DIFFICULTINTEGRALPROOFOFSTRUCTUREOFERRORINTEGRALS} 
by first using the identity \eqref{E:PRELIMINARYDECOMPOSITIONOFSUBTLETERMS}
for substitution,
and then using the key identity \eqref{E:KEYDETERMINANT} (multiplied by $-1$)
to substitute for the first product on RHS~\eqref{E:PRELIMINARYDECOMPOSITIONOFSUBTLETERMS}
(we use \eqref{E:NULLCASEKEYDETERMINANT} in place of \eqref{E:KEYDETERMINANT} when the lateral boundary is $\gfour$-null).
We collect all of these terms (except for the common factor of $\weight$) into
the error term $\underline{\mathfrak{H}}_{(1)}[\vortrenormalized]$
defined in
\eqref{E:SPECIFICVORTITICYMAINLATERALERRORINTEGRAND}.
In total, these steps yield \eqref{E:MAINLATERALERRORINTEGRALSFORSPECIFICVORTICITY}.

The proof of \eqref{E:MAINLATERALERRORINTEGRALSFORENTROPYGRADIENT} is similar, 
but we use \eqref{E:KEYIDENTITYANTISYMMETRICPARTOFENTROPYGRADIENTDUALGRADIENT} in 
place of the identity \eqref{E:KEYIDENTITYANTISYMMETRICPARTOFSPECIFICVORTICITYDUALGRADIENT}
used in the previous paragraph.

\end{proof}

\subsection{The remarkable structure of the error terms}
\label{SS:STRUCTUREOFERRORTERMS}
Our main result in this subsection is Theorem~\ref{T:STRUCTUREOFERRORTERMS},
in which we exhibit the remarkable structure of the
terms on RHSs~\eqref{E:MAINLATERALERRORINTEGRALSFORSPECIFICVORTICITY}-\eqref{E:ENTROPYGRADIENTMAINLATERALERRORINTEGRAND}.

\subsubsection{Additional notation}
\label{SSS:MORENOTATION}
In this subsubsection, we introduce some notation that will facilitate our presentation of Theorem~\ref{T:STRUCTUREOFERRORTERMS}.
We let $\vec{\Psi} := \lbrace \LogDensity,v^1,v^2,v^3,\Ent \rbrace$ denote the array of the Cartesian components of the basic fluid variables.
If $\bf{X}$ is any vectorfield, then $\vec{\bf{X}} := \lbrace \bf{X}^0, \bf{X}^1, \bf{X}^2, \bf{X}^3 \rbrace$ denotes the array of its Cartesian components.
We omit the component $\bf{X}^0$ when $\bf{X}$ is $\Sigma_t$-tangent, e.g.
$\vec{\vortrenormalized} := \lbrace \vortrenormalized^1, \vortrenormalized^2, \vortrenormalized^3 \rbrace$.
If $\mathbf{V}$ is a vectorfield, then $\mathbf{V} \vec{\Psi} := \lbrace \mathbf{V} \LogDensity,\mathbf{V} v^1,\mathbf{V} v^2,\mathbf{V} v^3, \mathbf{V}\Ent \rbrace$
(where, for example, $\mathbf{V} \LogDensity := \mathbf{V}^{\alpha} \partial_{\alpha} \LogDensity$)
and $\mathbf{V} \vec{\bf{X}} := \lbrace \mathbf{V} \bf{X}^0, \mathbf{V} \bf{X}^1, \mathbf{V} \bf{X}^2, \mathbf{V} \bf{X}^3 \rbrace$.
Similarly, with $\lbrace \angpartialarg{\alpha} \rbrace_{\alpha=0,1,2,3}$ denoting 
the $\mathcal{S}_{\Timefunction}$-tangent vectorfields from Def.\,\ref{D:PROJECTIONSOFCARTESIANCOORDINATEVECTORFIELDS},
we set
$\angpartial \vec{\Psi} := \lbrace \angpartialarg{\alpha} \LogDensity,\angpartialarg{\alpha} v^1,\angpartialarg{\alpha} v^2,\angpartialarg{\alpha} v^3,
\angpartialarg{\alpha} \Ent \rbrace_{\alpha=0,1,2,3}$
and
$\angpartial \vec{\bf{X}} := \lbrace \angpartialarg{\beta} X^{\alpha} \rbrace_{\alpha,\beta = 0,1,2,3}$.
Moreover, with $\lbrace \sidepartialarg{\alpha} \rbrace_{\alpha=0,1,2,3}$ denoting 
the $\underline{\mathcal{H}}_{\Timefunction}$-tangent vectorfields from Def.\,\ref{D:PROJECTIONSOFCARTESIANCOORDINATEVECTORFIELDS},
we set
$\sidepartial \vec{\Psi} := \lbrace \sidepartialarg{\alpha} \LogDensity,\sidepartialarg{\alpha} v^1,\sidepartialarg{\alpha} v^2,\sidepartialarg{\alpha} v^3,
\sidepartialarg{\alpha} \Ent \rbrace_{\alpha=0,1,2,3}$
and
$\sidepartial \vec{\bf{X}} := \lbrace \sidepartialarg{\beta} X^{\alpha} \rbrace_{\alpha,\beta = 0,1,2,3}$.

If $A$ and $B$ are scalar functions or arrays of scalar functions, then 
$\mathscr{L}(A)[B]$ schematically denotes linear combinations of the elements of $B$
with coefficients that are continuous\footnote{By ``continuous,'' we mean continuous on an open set of the arguments ``$A$''; 
all of our results hold for solutions such that $A$ belongs to the open set.} functions of the elements of $A$.
For example, since $\tophypnorm_{\alpha} = \gfour_{\alpha \beta} \tophypnorm^{\beta}$,
$\gfour_{\alpha \beta} = \mathscr{L}(\vec{\Psi})$ (see \eqref{E:ACOUSTICALMETRIC}),
and $\angvortrenormalized^{\alpha} \angpartialarg{\alpha} = \vortrenormalized^{\alpha} \angpartialarg{\alpha}$,
we have
$\vortrenormalized^{\alpha} \angvortrenormalized^{\beta} \angpartialarg{\alpha} \tophypnorm_{\beta}
=
\mathscr{L}(\vec{\Psi},\vec{\vortrenormalized},\vec{\tophypnorm})[\angpartial \vec{\Psi},\angpartial \vec{\tophypnorm}]
$.

\subsubsection{The remarkable structure of the error terms}
\label{SSS:PROPSTRUCTUREOFERRORTERMS}
We now state and prove the theorem that exhibits the remarkable structure of the error terms in Prop.\,\ref{P:STRUCTUREOFERRORINTEGRALS}.

\begin{theorem}[The remarkable structure of the error terms in Prop.\,\ref{P:STRUCTUREOFERRORINTEGRALS}]
	\label{T:STRUCTUREOFERRORTERMS}
	Assume that the lateral boundary $\underline{\mathcal{H}}$ is $\gfour$-spacelike.
	Under the notation of Subsubsect.\,\ref{SSS:MORENOTATION},
	the error terms 
	$\underline{\mathfrak{H}}[\vortrenormalized]$,
	$\underline{\mathfrak{H}}[\GradEnt]$,
	$\underline{\mathfrak{H}}_{(1)}[\vortrenormalized]$,
	and
	$\underline{\mathfrak{H}}_{(2)}[\GradEnt]$
	defined in \eqref{E:LATERALBOUNDARYEASYERRORINTEGRANDTERMS}-\eqref{E:ENTROPYGRADIENTMAINLATERALERRORINTEGRAND}
	exhibit the following structure:	
	\begin{align} \label{E:ERRORTERMSSCHEMATICSTRUCTURESPACELIKECASE}
		\underline{\mathfrak{H}}[\vortrenormalized],
			\,
		\underline{\mathfrak{H}}[\GradEnt],
			\,
		\underline{\mathfrak{H}}_{(1)}[\vortrenormalized],
			\,
		\underline{\mathfrak{H}}_{(2)}[\GradEnt]
		& = 
		\mathscr{L}(\vec{\Psi},\vec{\vortrenormalized},\vec{\GradEnt},\vec{\gen},\vec{\spherenormal},\pmb{\partial} \Timefunction)
		[\sidepartial \vec{\Psi},\angpartial \vec{\gen},\sidepartial \vec{\spherenormal},\sidepartial \pmb{\partial} \Timefunction]
		\\
		& \ \
		+
		\mathscr{L}(\vec{\Psi},\vec{\vortrenormalized},\vec{\GradEnt},\vec{\gen},\vec{\spherenormal},\pmb{\partial} \Timefunction)
		[\vec{\vortrenormalized},\vec{\GradEnt},\vec{\VortVort}],
		\notag
	\end{align}
	where
	$\VortVort$ is the $\Sigma_t$-tangent modified fluid variable from Def.\,\ref{D:RENORMALIZEDCURLOFSPECIFICVORTICITY},
	$\Timefunction$ is the acoustical time function introduced at the beginning of Sect.\,\ref{S:SPACETIMEDOMAINS},
	$\gen$ is the $\widetilde{\Sigma}_{\Timefunction}$-tangent vectorfield from Def.\,\ref{D:HYPNORMANDSPHEREFORMDEFS},
	and $\spherenormal$ is the $\widetilde{\Sigma}_{\Timefunction}$-tangent vectorfield from Def.\,\ref{D:HYPNORMANDSPHEREFORMDEFS}.
	
	Moreover, in the case that the lateral boundary $\underline{\mathcal{H}} = \underline{\mathcal{N}}$ is $\gfour$-null,
	we have
	\begin{align} \label{E:ERRORTERMSSCHEMATICSTRUCTURENULLCASE}
		\underline{\mathfrak{H}}[\vortrenormalized],
			\,
		\underline{\mathfrak{H}}[\GradEnt],
			\,
		\underline{\mathfrak{H}}_{(1)}[\vortrenormalized],
			\,
		\underline{\mathfrak{H}}_{(2)}[\GradEnt]
		& = 
		\mathscr{L}(\vec{\Psi},\vec{\vortrenormalized},\vec{\GradEnt},\vec{\uLunit},\vec{N})
		[\uLunit \vec{\Psi},\angpartial \vec{\Psi},\angpartial \vec{\uLunit},\uLunit \vec{\spherenormal},\angpartial \vec{\spherenormal}]
			\\
	& \ \
		+
		\mathscr{L}(\vec{\Psi},\vec{\vortrenormalized},\vec{\GradEnt},\vec{\uLunit},\vec{N})
		[\vec{\vortrenormalized},\vec{\GradEnt},\vec{\VortVort}],
		\notag
	\end{align}
	where $\uLunit = \sidehypnorm = \gen$
	(see Convention~\ref{C:NULLCASE} and \ref{E:NORMALISGENERATORINNULLCASE})
	is the $\gfour$-null normal to $\underline{\mathcal{N}}$ normalized by $\uLunit t = 1$.
\end{theorem}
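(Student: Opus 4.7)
The proof will be carried out by direct, term-by-term inspection of the explicit formulas in \eqref{E:LATERALBOUNDARYEASYERRORINTEGRANDTERMS}, \eqref{E:SPECIFICVORTITICYMAINLATERALERRORINTEGRAND}, and \eqref{E:ENTROPYGRADIENTMAINLATERALERRORINTEGRAND}. The unifying principle to verify is that in each expression, every vectorfield that acts on a fluid solution quantity is already $\underline{\mathcal{H}}$-tangent (or $\mathcal{S}_{\Timefunction}$-tangent, which is stronger). Indeed, $\modgen = \lapsemodgen \gen$ and $\uspecialgen$ are $\underline{\mathcal{H}}$-tangent by construction and Lemma~\ref{L:KEYIDBETWEENVARIOUSVECTORFIELDS}, while $\utang$ and $\utandecompvectorfielddownarg{a}$ are $\mathcal{S}_{\Timefunction}$-tangent by Lemmas~\ref{L:KEYIDBETWEENVARIOUSVECTORFIELDS} and \ref{L:NEWDECOMPOSITIONOFCOORDINATEPARTIALDERIVATIVEVECTORFIELDS}, and $\angpartialarg{\alpha}$ is $\mathcal{S}_{\Timefunction}$-tangent by Def.\,\ref{D:PROJECTIONSOFCARTESIANCOORDINATEVECTORFIELDS}. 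Meanwhile, all scalar coefficients that occur, namely $\uposinnerproduct$, $\seconduposinnerproduct$, $\lapsemodgen$, $\Speed$, $(\gsphere^{-1})^{\alpha\beta}$, $\lengthofgen^2$, $\lengthoftophypnorm^2$, and $\upsigma$, are continuous functions of $\vec{\Psi}$ together with the Cartesian components of $\gen$, $\spherenormal$, and $\pmb{\partial}\Timefunction$, as is readily checked from the defining formulas in Sects.\,\ref{S:SPACETIMEDOMAINS}--\ref{S:ELLIPTICHYPERBOLICDIVERGENCEIDENTITYANDPRELIMANALYSISOFOUNDARY}.

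I will first dispatch $\underline{\mathfrak{H}}[V]$ and $\underline{\mathfrak{H}}_{(2)}[\GradEnt]$. For $\underline{\mathfrak{H}}[V]$, the terms $\angdiv \utang$ and $\angdiv \angpartialarg{\beta}$ reduce via \eqref{E:EXPRESSIONFORSPHEREDIVERGENCEINCOORDINATES}--\eqref{E:EXPRESSIONFORSPHEREDIVERGENCEOFANGPARTIALVECTORFIELDSINCOORDINATES} to linear combinations of $\angpartial \vec{\Psi}$, $\angpartial \vec{\spherenormal}$, and tangential derivatives of $\hat{\tophypnorm}$; the last of these is captured by $\sidepartial \pmb{\partial}\Timefunction$ and $\sidepartial \vec{\Psi}$ via \eqref{E:TOPHYPNORMINTERMSOFGRADIENTOFTIMEFUNCTION}. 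The contraction $(\gsphere^{-1})^{\alpha\beta} \Lie_{\modgen}\gsphere_{\alpha\beta}$ expands, using \eqref{E:SPHEREFIRSTFUNDAMENTAL}, into $\modgen$-derivatives of $\gfour_{\alpha\beta}$, $\hat{\tophypnorm}$, and $\spherenormal$, all of which are of the required type since $\modgen$ is $\underline{\mathcal{H}}$-tangent. The remaining terms---$\lapsemodgen\gen[\,\cdot\,]$, $\utang[\,\cdot\,]$, $\angV \spherenormal^{\alpha}$, $\angV \gfour_{\alpha\beta}$, and $\angV \Transport^{\alpha}$---collapse by the chain rule to the claimed form. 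For $\underline{\mathfrak{H}}_{(2)}[\GradEnt]$ the verification is immediate: since $\Speed = \Speed(\LogDensity, \Ent)$, both $\angGradEnt \ln \Speed$ and $\uspecialgen \ln \Speed$ are smooth multiples of entries of $\angpartial \vec{\Psi}$ and $\sidepartial \vec{\Psi}$.

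The main obstacle is $\underline{\mathfrak{H}}_{(1)}[\vortrenormalized]$. The leading product, inherited from Prop.\,\ref{P:KEYDETERMINANT}, has its derivative structure already exposed by \eqref{E:KEYDETERMINANT}: the bracketed factor is purely $\gen$- and $\utandecompvectorfielddownarg{a}$-tangential in $\vec{\Psi}$, while the determinant prefactor is smooth in the listed coefficient variables. The remaining summands involve $\angvortrenormalized \ln \Speed$, $\uspecialgen \ln \Speed$, $\angvortrenormalized v^a$, $\uspecialgen v^a$, $\gen v^a$, $\gen \LogDensity$, $\utandecompvectorfielddownarg{a} \LogDensity$, $\utandecompvectorfielddownarg{a} v^a$, plus an explicitly $\VortVort$-linear contribution; in every product of $\upepsilon$-contraction type, the components $\urescalednewgenminushypnorm^\gamma$, $\sidehypnorm^\gamma$, $\Transport^\gamma$, and $\GradEnt^\delta$ appear \emph{undifferentiated} and are therefore absorbed into the coefficient slot of the schema \eqref{E:ERRORTERMSSCHEMATICSTRUCTURESPACELIKECASE}. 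For the null case $\underline{\mathcal{H}} = \underline{\mathcal{N}}$, I will use the identifications $\uLunit = \gen = \sidehypnorm$ from Lemma~\ref{L:SOMECONVENIENTIDENTITIES} together with the vanishing identity \eqref{E:NULLCASEKEYDETERMINANT}, which eliminates the determinant product entirely; since every $\sidepartial$ on $\underline{\mathcal{N}}$ decomposes by \eqref{E:NEWDECOMPOSITIONOFCOORDINATEPARTIALDERIVATIVEVECTORFIELDS} into a $\uLunit$-direction piece and a $\mathcal{S}_{\Timefunction}$-tangential piece, the spacelike schema collapses into \eqref{E:ERRORTERMSSCHEMATICSTRUCTURENULLCASE}, which completes the proof.
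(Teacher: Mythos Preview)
Your proposal is correct in spirit and takes essentially the same approach as the paper: a term-by-term inspection showing that every differentiation vectorfield appearing is $\underline{\mathcal{H}}$-tangent and every coefficient is a continuous function of the listed undifferentiated quantities. Two points deserve tightening.

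First, your treatment of $(\gsphere^{-1})^{\alpha\beta}\Lie_{\modgen}\gsphere_{\alpha\beta}$ accounts only for the transport part $\modgen^\gamma\partial_\gamma\gsphere_{\alpha\beta}$. The Lie derivative of a covariant tensor also carries Jacobian terms $2\gsphere_{\gamma\beta}\partial_\alpha(\lapsemodgen\gen^\gamma)$, and these are \emph{not} a priori $\underline{\mathcal{H}}$-tangential derivatives. The paper handles this explicitly: after contraction with $(\gsphere^{-1})^{\alpha\beta}$, one has $(\gsphere^{-1})^{\alpha\beta}\gsphere_{\gamma\beta}=\sphereproject^{\alpha}_{\ \gamma}$, so the Jacobian contribution becomes $2\angpartial_\gamma(\lapsemodgen\gen^\gamma)=2\lapsemodgen\,\angpartial_\gamma\gen^\gamma$ (the $\angpartial\lapsemodgen$ piece drops because $\gen$ is $\gfour$-orthogonal to $\mathcal{S}_\Timefunction$). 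This is precisely the source of the $\angpartial\vec{\gen}$ entry in the schema \eqref{E:ERRORTERMSSCHEMATICSTRUCTURESPACELIKECASE}, and your argument as written does not produce it.

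Second, your null-case reduction invokes $\sidepartial$ on $\underline{\mathcal{N}}$, but $\sidepartial_\alpha$ is defined only when $\underline{\mathcal{H}}$ is $\gfour$-spacelike (Def.~\ref{D:PROJECTIONSOFCARTESIANCOORDINATEVECTORFIELDS}), and \eqref{E:NEWDECOMPOSITIONOFCOORDINATEPARTIALDERIVATIVEVECTORFIELDS} decomposes $\partial_\alpha$, not $\sidepartial_\alpha$. The paper instead simply reruns the spacelike inspection verbatim with the substitution $\gen=\uLunit$; since every differentiation already occurs along $\gen$, $\utang$, $\angpartial$, or $\uspecialgen$, this immediately yields \eqref{E:ERRORTERMSSCHEMATICSTRUCTURENULLCASE}, and your ``collapse'' is morally the same once the references are corrected.
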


\begin{remark}[The most important feature of Theorem~\ref{T:STRUCTUREOFERRORTERMS}]
	\label{R:MOSTIMPORTANTPARTOFTHEPROP}
The most important feature of the theorem is that \emph{all derivatives on 
RHSs~\eqref{E:MAINLATERALERRORINTEGRALSFORSPECIFICVORTICITY}-\eqref{E:MAINLATERALERRORINTEGRALSFORENTROPYGRADIENT}  
are in directions
tangent to $\underline{\mathcal{H}}$}. Moreover, the most important applications occur
when the lateral boundary is $\gfour$-null, 
i.e., when $\underline{\mathcal{H}} = \underline{\mathcal{N}}$;
as we explained in Subsubsect.\,\ref{SS:APPLICATIONS} (see also Remark~\ref{R:HIGHLIGHTKEYSTRUCTURES}),
the $\underline{\mathcal{N}}$-tangential nature of the derivatives 
is crucial for the local regularity theory of solutions 
(e.g., the proof of Theorem~\ref{T:LOCALIZEDAPRIORIESTIMATES} in the null case)
as well as the study of shocks.
\end{remark}

\begin{proof}[Proof of Theorem~\ref{T:STRUCTUREOFERRORTERMS}]
Throughout we use the notation of Subsubsect.\,\ref{SSS:MORENOTATION}. 
In particular, $\vec{\Psi} = \lbrace \LogDensity,v^1,v^2,v^3,\Ent \rbrace$ denotes the array of the Cartesian components of the basic fluid variables.
We prove only \eqref{E:ERRORTERMSSCHEMATICSTRUCTURESPACELIKECASE} since \eqref{E:ERRORTERMSSCHEMATICSTRUCTURENULLCASE}
can be proved using nearly identity arguments that take into account the fact that
$\gen = \uLunit$ in the null case (see \eqref{E:NORMALISGENERATORINNULLCASE}).

To show that the terms on RHSs~\eqref{E:LATERALBOUNDARYEASYERRORINTEGRANDTERMS}-\eqref{E:ENTROPYGRADIENTMAINLATERALERRORINTEGRAND}
(where on RHS~\eqref{E:LATERALBOUNDARYEASYERRORINTEGRANDTERMS}, we have $\SigmatTan \in \lbrace \vortrenormalized, \GradEnt \rbrace$)
have the desired structure, we first note that we have the following identities for scalar functions and the Cartesian components of various tensorfields,
where throughout the proof, 
$\gensmoothfunction$ schematically denotes a smooth function that is free to vary from line to line:
\begin{itemize}
	\item $\Speed = \gensmoothfunction(\vec{\Psi})$ (see the end of Subsubsect.\,\ref{SSS:SETUPANDDEFSOFBASICFLUIDVARIABLES})
	\item $p_{;\Ent} = \gensmoothfunction(\vec{\Psi})$ (see the end of Subsubsect.\,\ref{SSS:SETUPANDDEFSOFBASICFLUIDVARIABLES})
	\item $\Transport^{\alpha} = \gensmoothfunction(\vec{\Psi})$ (see \eqref{E:MATERIALVECTORVIELDRELATIVECTORECTANGULAR})
	\item $\gfour_{\alpha \beta} = \gensmoothfunction(\vec{\Psi})$ (see \eqref{E:ACOUSTICALMETRIC})
	\item $(\gfour^{-1})^{\alpha \beta} = \gensmoothfunction(\vec{\Psi})$ (see \eqref{E:INVERSEACOUSTICALMETRIC})
	\item $g_{\alpha \beta} = \gensmoothfunction(\vec{\Psi})$ (see \eqref{E:FIRSTFUNDAMENTALFORMSIGMAT})
	\item $(g^{-1})^{\alpha \beta} = \gensmoothfunction(\vec{\Psi})$ (see \eqref{E:INVERSEFIRSTFUNDAMENTALFORMSIGMAT})
	\item $\tophypnorm^{\alpha} = \gensmoothfunction(\vec{\Psi},\pmb{\partial} \Timefunction)$	(see \eqref{E:TOPHYPNORMINTERMSOFGRADIENTOFTIMEFUNCTION})
	\item $\lengthoftophypnorm = \gensmoothfunction(\vec{\Psi},\pmb{\partial} \Timefunction)$ (see \eqref{E:LENGTHOFTOPHYPNORM})
	\item $\lengthofgen = \gensmoothfunction(\vec{\Psi},\vec{\gen})$ (see \eqref{E:LENGTHOFSIDEGEN})
	\item $\lapsemodgen = \gensmoothfunction(\vec{\gen},\pmb{\partial} \Timefunction)$ (see \eqref{E:LAPSEFORMODGENCORRESPONDINGTOTIMEFUNCTION})
	\item $\uposinnerproduct = \gensmoothfunction(\vec{\Psi},\vec{\gen},\vec{\spherenormal},\pmb{\partial} \Timefunction)$ 
		(see \eqref{E:INGOINGCONDITION} and the schematic identity for $\sidehypnorm^{\alpha}$ stated below)
	\item $\seconduposinnerproduct = \gensmoothfunction(\vec{\Psi},\vec{\gen},\pmb{\partial} \Timefunction)$ (see \eqref{E:SECONDINGOINGCONDITION})
	\item $\sidehypnorm^{\alpha} = \gensmoothfunction(\vec{\Psi},\vec{\gen},\pmb{\partial} \Timefunction)$ (see \eqref{E:SIDEHYPNORMINTERMSOFGENANDTOPHYPNORM})
	\item $\lengthofsidehypnorm = \gensmoothfunction(\vec{\Psi},\vec{\gen},\pmb{\partial} \Timefunction)$ (see \eqref{E:LENGTHOFHYPNORM})
	\item $\hat{\tophypnorm}^{\alpha} = \gensmoothfunction(\vec{\Psi},\pmb{\partial} \Timefunction)$ (see \eqref{E:TOPUNITHYPERSURFACENORMAL})
	\item $\hat{\sidehypnorm}^{\alpha} = \gensmoothfunction(\vec{\Psi},\vec{\gen},\pmb{\partial} \Timefunction)$ (see \eqref{E:UNITHYPERSURFACENORMAL})
	\item $\utang^{\alpha} = \gensmoothfunction(\vec{\Psi},\vec{\gen},\vec{\spherenormal},\pmb{\partial} \Timefunction)$ (see \eqref{E:STUTANGENTPARTOFTRANSPORT})
	\item $\urescalednewgenminushypnorm^{\alpha} = \gensmoothfunction(\vec{\Psi},\vec{\gen},\pmb{\partial} \Timefunction)$ 
		(see \eqref{E:REGULARFORMRESCALEDVERSIONOFHYPNORMMINUSNEWGEN})
	\item $\uspecialgen^{\alpha} = \gensmoothfunction(\vec{\Psi},\vec{\gen},\vec{\spherenormal},\pmb{\partial} \Timefunction)$ (see \eqref{E:SPECIALGENERATOR})
	\item $\LeftoverGradEnt^{\alpha} = \gensmoothfunction(\vec{\Psi},\vec{\GradEnt},\vec{\gen},\pmb{\partial} \Timefunction)$ 
		(see \eqref{E:ENTROPYVECTORFIELDKEYTENSORIALDECOMPOSITION})
	\item $\keydetvectorfield^{\alpha} = \gensmoothfunction(\vec{\Psi},\vec{\GradEnt},\vec{\gen},\vec{\spherenormal},\pmb{\partial} \Timefunction)$ 
		(see \eqref{E:VECTORFIELDINKEYDETERMINANT})
	\item $\utandecompvectorfieldmixedarg{\beta}{\alpha} = \gensmoothfunction(\vec{\Psi},\vec{\gen},\pmb{\partial} \Timefunction)$ 
		(see \eqref{E:NEWDECOMPOSITIONOFCOORDINATEPARTIALDERIVATIVEVECTORFIELDS})
	\item $\gsphere_{\alpha \beta} = \gensmoothfunction(\vec{\Psi},\vec{\spherenormal},\pmb{\partial} \Timefunction)$ (see \eqref{E:SPHEREFIRSTFUNDAMENTAL})
	\item $(\gsphere^{-1})^{\alpha \beta} = \gensmoothfunction(\vec{\Psi},\vec{\spherenormal},\pmb{\partial} \Timefunction)$ (see \eqref{E:INVERSESPHEREFIRSTFUNDAMENTAL})
	\item $\sphereproject_{\ \beta}^{\alpha} = \gensmoothfunction(\vec{\Psi},\vec{\spherenormal},\pmb{\partial} \Timefunction)$ (see \eqref{E:SIGMATPROJECT})
	\item $\sideproject_{\ \beta}^{\alpha} = \gensmoothfunction(\vec{\Psi},\vec{\gen},\pmb{\partial} \Timefunction)$ (see \eqref{E:HPROJECT})
	\item For any scalar function $\varphi$, since $\utang$ is $\mathcal{S}_{\Timefunction}$-tangent, we have 
		$\utang \varphi = \utang^{\alpha} \angpartialarg{\alpha} \varphi = 
			\mathscr{L}(\vec{\Psi},\vec{\gen},\vec{\spherenormal},\pmb{\partial} \Timefunction)[\angpartial \varphi		]$
\item Similarly, $\gen \varphi = \gen^{\alpha} \sidepartialarg{\alpha} \varphi = \mathscr{L}(\vec{\gen})[\sidepartial \varphi]$
\item For scalar functions $\varphi$, 
	we have $\angpartial \varphi = \mathscr{L}(\vec{\Psi},\vec{\gen})[\sidepartial \varphi]$
	(see \eqref{E:SIDEPARTIALINTERMSOFOUTERNORMALDERIVATIVEANDANGPARTIAL})
\end{itemize}
From the above facts, the desired result \eqref{E:ERRORTERMSSCHEMATICSTRUCTURESPACELIKECASE} follows easily 
by expanding the terms on RHSs~\eqref{E:LATERALBOUNDARYEASYERRORINTEGRANDTERMS}-\eqref{E:ENTROPYGRADIENTMAINLATERALERRORINTEGRAND}
(relative to the Cartesian coordinates) using the chain and product rules,
and using the following results, which we prove just below:
\begin{align} \label{E:GENLIEDERIVATIVEOFSPHEREMETRICSCHEMATIC}
	(\gsphere^{-1})^{\alpha \beta} \Lie_{\lapsemodgen \gen} \gsphere_{\alpha \beta}
	& = \mathscr{L}(\vec{\Psi},\vec{\gen},\vec{\spherenormal},\pmb{\partial} \Timefunction)
		[\sidepartial \vec{\Psi}, \angpartial \vec{\gen}, \sidepartial \vec{\spherenormal}, \sidepartial \pmb{\partial} \Timefunction],
			\\
	\angdiv \utang
	& = \mathscr{L}(\vec{\Psi},\vec{\gen},\vec{\spherenormal},\pmb{\partial} \Timefunction)[\angpartial \vec{\Psi},\angpartial \vec{\gen},\angpartial \vec{\spherenormal},\angpartial \pmb{\partial} \Timefunction],	
	\label{E:ANGDIVTANGSCHEMATIC} \\
	\angdiv \angpartialarg{\alpha}
	& = \mathscr{L}(\vec{\Psi},\vec{\spherenormal},\pmb{\partial} \Timefunction)[\angpartial \vec{\Psi}, \angpartial \vec{\spherenormal}, \angpartial \pmb{\partial} \Timefunction].
	\label{E:ANGDIVANGPARTIALSCHEMATIC}
\end{align}
The identity \eqref{E:ANGDIVTANGSCHEMATIC} follows easily from \eqref{E:EXPRESSIONFORSPHEREDIVERGENCEINCOORDINATES} 
with $\utang$ in the role of $Y$ and from using the results obtained earlier in the proof.
Similarly, \eqref{E:ANGDIVANGPARTIALSCHEMATIC}
follows easily from \eqref{E:EXPRESSIONFORSPHEREDIVERGENCEOFANGPARTIALVECTORFIELDSINCOORDINATES}
and from using the results obtained earlier in the proof.
Finally, to obtain \eqref{E:GENLIEDERIVATIVEOFSPHEREMETRICSCHEMATIC},
we compute that relative to the Cartesian coordinates, we have
\begin{align} \label{E:FIRSTSTEPGENLIEDERIVATIVEOFSPHEREMETRICSCHEMATIC}
	(\gsphere^{-1})^{\alpha \beta} \Lie_{\lapsemodgen \gen} \gsphere_{\alpha \beta}
	& = (\gsphere^{-1})^{\alpha \beta} \lapsemodgen \gen \gsphere_{\alpha \beta}
			+
			2 (\gsphere^{-1})^{\alpha \beta} \gsphere_{\gamma \beta} \partial_{\alpha} (\lapsemodgen \gen^{\gamma}).
\end{align}
Since $(\gsphere^{-1})^{\alpha \beta} \gsphere_{\gamma \beta} = \sphereproject_{\ \gamma}^{\alpha}$ 
(see the last part of Lemma~\ref{L:BASICPROPSOFFUNDAMENTALFORMSANDPROJECTIONS}),
we see that the last product on RHS~\eqref{E:FIRSTSTEPGENLIEDERIVATIVEOFSPHEREMETRICSCHEMATIC} is equal to
$
2 \gen^{\alpha} \angpartialarg{\alpha} \lapsemodgen
+
2 
\lapsemodgen
\angpartialarg{\alpha} \gen^{\alpha}
=
2 
\lapsemodgen
\angpartialarg{\alpha} \gen^{\alpha}
$,
where to obtain the last equality, we used that $\gen$ is $\gfour$-orthogonal to $\mathcal{S}_{\Timefunction}$ by construction.
Moreover, the results from earlier in the proof imply that
$
2 
\lapsemodgen
\angpartialarg{\alpha} \gen^{\alpha}
=
\mathscr{L}(\vec{\gen},\pmb{\partial} \Timefunction) [\angpartial \vec{\gen}]
$
as desired.
In addition, the results from earlier in the proof imply that the first product on RHS~\eqref{E:FIRSTSTEPGENLIEDERIVATIVEOFSPHEREMETRICSCHEMATIC}
verifies
$(\gsphere^{-1})^{\alpha \beta} \lapsemodgen \gen \gsphere_{\alpha \beta} 
= 
\mathscr{L}(\vec{\Psi},\vec{\gen},\vec{\spherenormal},\pmb{\partial} \Timefunction)
[\sidepartial \vec{\Psi}, \angpartial \vec{\gen}, \sidepartial \vec{\spherenormal}, \sidepartial \pmb{\partial} \Timefunction]$.
In total, these identities yield \eqref{E:GENLIEDERIVATIVEOFSPHEREMETRICSCHEMATIC}.

\end{proof}

\section{The main theorem: Remarkable Hodge-transport-based integral identities for $\vortrenormalized$ and $\GradEnt$}
\label{S:MAININTEGRALIDENTITIES}
In this section, we state and prove our main theorem, 
which, for compressible Euler solutions, 
provides localized, coercive integral identities 
yielding control over the first derivatives of the specific vorticity and entropy gradient.
The identities feature boundary error integrals $\int_{\underline{\mathcal{H}}_{\Timefunction}} \cdots$
and
 $\int_{\mathcal{S}_{\Timefunction}} \cdots$,
and in Theorem~\ref{T:STRUCTUREOFERRORTERMS},
we exhibited the remarkable structures of the error integrand terms in $\int_{\underline{\mathcal{H}}_{\Timefunction}} \cdots$,
with regard to their regularity and to the tangential nature of the derivatives involved.
Moreover, the integrals $\int_{\mathcal{S}_{\Timefunction}} \cdots$, are positive, 
which is crucial for the coerciveness of the identities.
Together, Theorem~\ref{T:MAINREMARKABLESPACETIMEINTEGRALIDENTITY}, Theorem~\ref{T:STRUCTUREOFERRORTERMS},
and the demonstrated positivity of the integrals $\int_{\mathcal{S}_{\Timefunction}} \cdots$
constitute the main new contributions of the paper. 
As we described in Subsect.\,\ref{SS:APPLICATIONS}, the structures revealed by 
Theorem~\ref{T:MAINREMARKABLESPACETIMEINTEGRALIDENTITY} and Theorem~\ref{T:STRUCTUREOFERRORTERMS} 
are crucial for applications. 

\begin{theorem}[The main theorem: Remarkable Hodge-transport integral identities for $\vortrenormalized$ and $\GradEnt$]
	\label{T:MAINREMARKABLESPACETIMEINTEGRALIDENTITY}
	Let $\mathcal{M} = \mathcal{M}_T$ be a spacetime region satisfying the conditions stated in 
	Subsects.\,\ref{SS:DOMAINANDTIMEFUNCTIONETC} and \ref{SS:ASSUMPTIONSONSPACETIMEREGION} for some $T > 0$;
	see Fig.\,\ref{F:SPACETIMEDOMAIN}.
	In particular, assume that the lateral boundary $\underline{\mathcal{H}} = \underline{\mathcal{H}}_T$ is $\gfour$-spacelike
	or is $\gfour$-null (in the null case, $\underline{\mathcal{H}} := \underline{\mathcal{N}} = \underline{\mathcal{N}}_T$).
	For vectorfields $\mathbf{X}$,
	let $\mathscr{Q}(\pmb{\partial} \mathbf{X},\pmb{\partial} \mathbf{X})$
	be the quadratic form defined by
	\eqref{E:MAINQUADRATICFORMFORCONTROLLINGFIRSTDERIVATIVESOFSPECIFICVORTICITYANDENTROPYGRADIENT},
	and recall that the positive definite nature of $\mathscr{Q}$ was revealed in
	Lemma~\ref{L:POSITIVITYPROPERTIESOFVARIOUSQUADRATICFORMS}.
	Let $\weight$ be an arbitrary scalar function.
	Let $\lengthofmodtophypnorm > 0$ be the scalar function defined in \eqref{E:LENGTHOFTOPHYPNORMNORMALIZEDAGAINSTTIMEFUNCTION},
	let $\lapsemodgen > 0$ be the scalar function defined in \eqref{E:LAPSEFORMODGENCORRESPONDINGTOTIMEFUNCTION} (see also \eqref{E:POSITIVITYOFLAPSEMODGEN}),
	and let $\uposinnerproduct > 0$ and $\seconduposinnerproduct > 0$
	be the scalar functions from \eqref{E:INGOINGCONDITION}-\eqref{E:SECONDINGOINGCONDITION}.
	Then for smooth solutions (see Remark~\ref{R:SMOOTHNESSNOTNEEDED})
	to the compressible Euler equations \eqref{E:TRANSPORTDENSRENORMALIZEDRELATIVECTORECTANGULAR}-\eqref{E:ENTROPYTRANSPORT},
	the following integral identities hold, where the definitions of the volume and area forms are 
	provided in Def.\,\ref{D:VOLUMEFORMS},
	and
	the remarkable structure of the 
	integrals over $\underline{\mathcal{H}}_{\Timefunction}$ on 
	RHSs~\eqref{E:SPACETIMEREMARKABLEIDENTITYSPECIFICVORTICITY}-\eqref{E:SPACETIMEREMARKABLEIDENTITYENTROPYGRADIENT}
	was revealed in Theorem~\ref{T:STRUCTUREOFERRORTERMS}
	(and we refer to Appendix~\ref{A:APPENDIXFORBULK} for a table of the notation):
	\begin{subequations}
	\begin{align}
		&
		\int_{\mathcal{M}_{\Timefunction}}	
			\weight
			\lengthofmodtophypnorm^{-1}
			\mathscr{Q}(\pmb{\partial} \vortrenormalized,\pmb{\partial} \vortrenormalized)
		\, d \varpi_{\gfour}
		+
		\int_{\mathcal{S}_{\Timefunction}}
			\weight
			\frac{\uposinnerproduct}{\seconduposinnerproduct \lapsemodgen}
			|\angvortrenormalized|_{\gsphere}^2
		\, d \varpi_{\gsphere} 
			\label{E:SPACETIMEREMARKABLEIDENTITYSPECIFICVORTICITY} \\
		& = 
		\int_{\mathcal{S}_0}
			\weight
			\frac{\uposinnerproduct}{\seconduposinnerproduct \lapsemodgen} 
			|\angvortrenormalized|_{\gsphere}^2
		\, d \varpi_{\gsphere} 
			\notag \\
	& \ \
	+	
	\int_{\mathcal{M}_{\Timefunction}}	
			\weight
			\lengthofmodtophypnorm^{-1}
			\left\lbrace
				\frac{1}{2}|\mathfrak{A}^{(\vortrenormalized)}|_{\topfirstfund}^2
				+
				|\mathfrak{B}_{(\vortrenormalized)}|_{\topfirstfund}^2
				+
				\mathfrak{C}^{(\vortrenormalized)}
				+
				\mathfrak{D}^{(\vortrenormalized)}
				+
				\mathfrak{J}_{(Coeff)}[\vortrenormalized,\pmb{\partial} \vortrenormalized]
				\right\rbrace
	\, d \varpi_{\gfour}
		\notag \\
	& \ \
		+
		\int_{\mathcal{M}_{\Timefunction}}	
			\lengthofmodtophypnorm^{-1}
			\mathfrak{J}_{(\pmb{\partial} \weight)}[\vortrenormalized,\pmb{\partial} \vortrenormalized]
		\, d \varpi_{\gfour}
		\notag \\
	& \ \
		+
		\int_{\underline{\mathcal{H}}_{\Timefunction}}
			\left\lbrace
				\underline{\mathfrak{H}}_{(\pmb{\partial} \weight)}[\vortrenormalized]
				+
				\weight
				\underline{\mathfrak{H}}[\vortrenormalized]
				+
				\weight
				\underline{\mathfrak{H}}_{(1)}[\vortrenormalized]
			\right\rbrace
		\, d \varpi_{\gsphere} d \Timefunction',
			\notag
				\\
	&
		\int_{\mathcal{M}_{\Timefunction}}	
			\weight
			\lengthofmodtophypnorm^{-1}
			\mathscr{Q}(\pmb{\partial} \GradEnt,\pmb{\partial} \GradEnt)
		\, d \varpi_{\gfour}
		+
		\int_{\mathcal{S}_{\Timefunction}}
			\weight
			\frac{\uposinnerproduct}{\seconduposinnerproduct \lapsemodgen} 
			|\angvortrenormalized|_{\gsphere}^2
		\, d \varpi_{\gsphere} 
			\label{E:SPACETIMEREMARKABLEIDENTITYENTROPYGRADIENT} \\
		& = 
		\int_{\mathcal{S}_0}
			\weight
			\frac{\uposinnerproduct}{\seconduposinnerproduct \lapsemodgen} 
			|\angvortrenormalized|_{\gsphere}^2
		\, d \varpi_{\gsphere} 
			\notag \\
	& \ \
	+	
	\int_{\mathcal{M}_{\Timefunction}}	
			\weight
			\lengthofmodtophypnorm^{-1}
			\left\lbrace
				\frac{1}{2}|\mathfrak{A}^{(\GradEnt)}|_{\topfirstfund}^2
				+
				|\mathfrak{B}_{(\GradEnt)}|_{\topfirstfund}^2
				+
				\mathfrak{C}^{(\GradEnt)}
				+
				\mathfrak{D}^{(\GradEnt)}
				+
				\mathfrak{J}_{(Coeff)}[\GradEnt,\pmb{\partial} \GradEnt]
			\right\rbrace
	\, d \varpi_{\gfour}
		\notag \\
	& \ \
	+	
	\int_{\mathcal{M}_{\Timefunction}}	
			\lengthofmodtophypnorm^{-1}
			\mathfrak{J}_{(\pmb{\partial} \weight)}[\GradEnt,\pmb{\partial} \GradEnt]
	\, d \varpi_{\gfour}
		\notag \\
	& \ \
		+
		\int_{\underline{\mathcal{H}}_{\Timefunction}}
			\left\lbrace
				\underline{\mathfrak{H}}_{(\pmb{\partial} \weight)}[\GradEnt]
				+
				\weight
				\underline{\mathfrak{H}}[\GradEnt]
				+
				\weight
				\underline{\mathfrak{H}}_{(2)}[\GradEnt]
			\right\rbrace
		\, d \varpi_{\gsphere} d \Timefunction'.
			\notag
	\end{align}
	\end{subequations}
	On RHSs~\eqref{E:SPACETIMEREMARKABLEIDENTITYSPECIFICVORTICITY}-\eqref{E:SPACETIMEREMARKABLEIDENTITYENTROPYGRADIENT},
	$\mathfrak{A}^{(\vortrenormalized)}$ 
	and
	$\mathfrak{A}^{(\GradEnt)}$
	are two-forms with the Cartesian components
	\begin{subequations}
	\begin{align}
		\mathfrak{A}_{\alpha \beta}^{(\vortrenormalized)}
		&  :=
				2 (\partial_{\beta} \ln \Speed) \vortrenormalized_{\alpha} 
				- 
				2 (\partial_{\alpha} \ln \Speed) \vortrenormalized_{\beta}
				+
				2 \updelta_{\alpha}^0 \vortrenormalized_a \partial_{\beta} v^a
				-
				2 \updelta_{\beta}^0 \vortrenormalized_a \partial_{\alpha} v^a
				\label{E:SPECIFICVORTICITYSPACETIMEERRORTERMANTISYMMETRIC} \\
			& \ \
				-
				\Speed^{-4} 
				\exp(-2 \LogDensity) 
				\frac{p_{;\Ent}}{\bar{\varrho}}
				\upepsilon_{\alpha \beta \gamma \delta}
				(\Transport v^{\gamma}) 
				\GradEnt^{\delta}
					\notag
					\\
			& \ \
				+
			\Speed^{-4} 
			\exp(-2 \LogDensity) 
			\frac{p_{;\Ent}}{\bar{\varrho}}
			\upepsilon_{\alpha \beta \gamma \delta}
			 \Transport^{\gamma}
			[\GradEnt^{\delta}
				(\partial_a v^a)
				-
				\GradEnt^a \partial_a v^{\delta}]
			\notag
				\\
		& \ \
			+
			\Speed^{-2}
			\exp(\LogDensity)
			\upepsilon_{\alpha \beta \gamma \delta}
			\Transport^{\gamma}
			\VortVort^{\delta},
			\notag
				\\
			\mathfrak{A}_{\alpha \beta}^{(\GradEnt)}
			& :
				= 
				2 (\partial_{\beta} \ln \Speed) \GradEnt_{\alpha} 
				- 
				2 (\partial_{\alpha} \ln \Speed) \GradEnt_{\beta},
	\end{align}
	\end{subequations}
	$\mathfrak{B}_{(\vortrenormalized)}$ and $\mathfrak{B}_{(\GradEnt)}$
	are $\Sigma_t$-tangent vectorfields with the Cartesian spatial components
	\begin{subequations}
	\begin{align}
			\mathfrak{B}_{(\vortrenormalized)}^i
			& 
			:=
			\vortrenormalized^a \partial_a v^i
			-
			\exp(-2 \LogDensity) \Speed^{-2} \frac{p_{;\Ent}}{\bar{\varrho}} \upepsilon_{iab} (\Transport v^a) \GradEnt^b,	
				\label{E:SPECIFICVORTICITYSPACETIMEERRORTERMTRANSPORTDERIVATIVES} \\
			\mathfrak{B}_{(\GradEnt)}^i
			& :=
				- 
				\GradEnt^a \partial_a v^i
				+ 
				\upepsilon_{iab} \exp(\LogDensity) \vortrenormalized^a \GradEnt^b,
				\label{E:ENTROPYGRADIENTSPACETIMEERRORTERMTRANSPORTDERIVATIVES}
	\end{align}
	\end{subequations}
	$\mathfrak{C}^{(\vortrenormalized)}$ and $\mathfrak{C}^{(\GradEnt)}$ are scalar functions
	defined relative to the Cartesian coordinates by
	\begin{subequations}
	\begin{align}
		\mathfrak{C}^{(\vortrenormalized)}
		& :=
			-
			2 
			(\projectedtransport_a \projectedtransport \vortrenormalized^a)
			\vortrenormalized^b \partial_b \LogDensity
			-
			2
			\lengthoftophypnorm 
			(\projectedtransport_a \projectedtransport \vortrenormalized^a)
			\projectedtransport_b \mathfrak{B}_{(\vortrenormalized)}^b,
				\label{E:SPECIFICVORTICITYSPACETIMEERRORTERMNEEDSTOBEABSORBED} \\
	\mathfrak{C}^{(\GradEnt)}
		& :=
			2 
			(\projectedtransport_a \projectedtransport \GradEnt^a)
			\left\lbrace
			\exp(2 \LogDensity) \DivGradEnt 
				+ 
			\GradEnt^b \partial_b \LogDensity
			\right\rbrace
			-
			2
			\lengthoftophypnorm 
			(\projectedtransport_a \projectedtransport \GradEnt^a)
			\projectedtransport_b \mathfrak{B}_{(\GradEnt)}^b,
			\label{E:ENTROPYGRADIENTSPACETIMEERRORTERMNEEDSTOBEABSORBED}
\end{align}
\end{subequations}
$\mathfrak{D}^{(\vortrenormalized)}$ and $\mathfrak{D}^{(\GradEnt)}$
are scalar functions defined relative to the Cartesian coordinates by
\begin{subequations}
\begin{align}
		\mathfrak{D}^{(\vortrenormalized)}
		&
		:=
			(\vortrenormalized^a \partial_a \LogDensity)^2
			+
			(
				\lengthoftophypnorm
				\projectedtransport_a \mathfrak{B}_{(\vortrenormalized)}^a
			)^2
			+
			2
			\lengthoftophypnorm 
			(\vortrenormalized^a \partial_a \LogDensity)
			\projectedtransport_b \mathfrak{B}_{(\vortrenormalized)}^b,
				\label{E:SPECIFICVORTICITYSPACETIMEERRORTERMDIVERGENCEERRORS} \\
		\mathfrak{D}^{(\GradEnt)}
		&
		:=
			\left\lbrace
			\exp(2 \LogDensity) \DivGradEnt 
				+ 
			\GradEnt^a \partial_a \LogDensity
			\right\rbrace^2
			+
			(
			\lengthoftophypnorm
			\projectedtransport_a \mathfrak{B}_{(\GradEnt)}^a
			)^2
			-
			2
			\lengthoftophypnorm 
			\left\lbrace
			\exp(2 \LogDensity) \DivGradEnt 
				+ 
			\GradEnt^a \partial_a \LogDensity
			\right\rbrace
			\projectedtransport_b
			\mathfrak{B}_{(\GradEnt)}^b,
				\label{E:ENTROPYGRADIENTSPACETIMEERRORTERMDIVERGENCEERRORS}
	\end{align}
	\end{subequations}
	for $\SigmatTan \in \lbrace \vortrenormalized, \GradEnt \rbrace$,
	the scalar function $\mathfrak{J}_{(Coeff)}[\SigmatTan,\pmb{\partial} \SigmatTan]$ 
	is defined relative to the Cartesian coordinates by
	\begin{align} \label{E:ELLIPTICHYPERBOLICCURRENTCOEFFICIENTERRORTERM}
		\mathfrak{J}_{(Coeff)}[\SigmatTan,\pmb{\partial} \SigmatTan]
		& =
			\SigmatTan^{\alpha} 
			\gfour_{\beta \gamma}
			(\toppartialarg{\alpha} \hat{\tophypnorm}^{\beta})
			\hat{\tophypnorm} \SigmatTan^{\gamma}
			-
			\SigmatTan_{\alpha} 
			(\toppartialarg{\beta} \hat{\tophypnorm}^{\alpha})
			\hat{\tophypnorm} \SigmatTan^{\beta}
				\\
		& \ \
				+
		\SigmatTan^{\alpha} 
		\hat{\tophypnorm}_{\alpha} 
		(\toppartialarg{\beta} \hat{\tophypnorm}^{\beta}) 
		\toppartialarg{\gamma} \SigmatTan^{\gamma}
			-
			\SigmatTan^{\alpha} \hat{\tophypnorm}_{\alpha} 
			(\toppartialarg{\beta} \hat{\tophypnorm}^{\gamma}) 
			\toppartialarg{\gamma} 
			\SigmatTan^{\beta} 
				\notag \\
		& \ \
		+
		\SigmatTan^{\alpha} 
		\hat{\tophypnorm}_{\beta}
		(\toppartialarg{\alpha} \hat{\tophypnorm}^{\gamma})
		\toppartialarg{\gamma} \SigmatTan^{\beta}
			-
			\SigmatTan^{\alpha} 
			\hat{\tophypnorm}_{\beta} 
			(\toppartialarg{\gamma} \hat{\tophypnorm}^{\gamma})  
			\toppartialarg{\alpha} \SigmatTan^{\beta}
		\notag
			\\
		&  \ \
			+
			\SigmatTan^{\alpha} 
			\hat{\tophypnorm}^{\beta}
			(\toppartialarg{\alpha} \gfour_{\beta \gamma})
			\hat{\tophypnorm} \SigmatTan^{\gamma}
			-
			\SigmatTan^{\alpha} 
			\hat{\tophypnorm}^{\beta}
			(\toppartialarg{\gamma} \gfour_{\alpha \beta})
			\hat{\tophypnorm} \SigmatTan^{\gamma}
				\notag \\
		& \ \
			+
			\frac{1}{2}
			\SigmatTan^{\alpha} 
			\hat{\tophypnorm}_{\beta}
			\hat{\tophypnorm}^{\gamma}
			\hat{\tophypnorm}^{\delta}
			(\toppartialarg{\alpha} \gfour_{\gamma \delta})			
			\hat{\tophypnorm} \SigmatTan^{\beta}
			-
			\frac{1}{2}
			\SigmatTan^{\alpha} \hat{\tophypnorm}_{\alpha} 
			\hat{\tophypnorm}^{\beta}
			\hat{\tophypnorm}^{\gamma}
			(\toppartialarg{\delta} \gfour_{\beta \gamma}) 
			\hat{\tophypnorm} \SigmatTan^{\delta} 
				\notag \\
	& \ \
		+
		2
		\SigmatTan^{\alpha}
		(\toppartialarg{\beta} \gfour_{\alpha \gamma})
		\toppartialuparg{\gamma} \SigmatTan^{\beta}
		-
		2
		\topproject_{\ \beta}^{\alpha}
		\SigmatTan^{\gamma}
		(\toppartialuparg{\delta} \gfour_{\alpha \gamma}) 
		\toppartialarg{\delta} \SigmatTan^{\beta}
				\notag \\
	& \ \
		+
		\frac{1}{2}
		(\topfirstfund^{-1})^{\alpha \beta}
		\SigmatTan^{\gamma} 
		(\toppartialarg{\gamma} \gfour_{\alpha \beta})
		\toppartialarg{\delta} \SigmatTan^{\delta}
		-
		\frac{1}{2}
		(\topfirstfund^{-1})^{\alpha \beta}
		\SigmatTan^{\gamma} 
		(\toppartialarg{\delta} \gfour_{\alpha \beta})
		\toppartialarg{\gamma} \SigmatTan^{\delta}
		\notag
			\\
		& \ \
			+
			\SigmatTan^{\alpha} 
			\SigmatTan^{\beta}
			(\toppartialuparg{\gamma} \gfour_{\alpha \delta})
			(\toppartialuparg{\delta} \gfour_{\beta \gamma})
		-
		\SigmatTan^{\alpha} 
		\SigmatTan^{\beta}
		(\topfirstfund^{-1})^{\gamma \delta} 
		(\toppartialuparg{\kappa} \gfour_{\alpha \gamma}) 
		\toppartialarg{\kappa} \gfour_{\beta \delta},
				\notag
	\end{align}
	for $\SigmatTan \in \lbrace \vortrenormalized, \GradEnt \rbrace$,
	the scalar function $\mathfrak{J}_{(\pmb{\partial} \weight)}[\SigmatTan,\pmb{\partial} \SigmatTan]$ 
	is defined relative to the Cartesian coordinates by
	\begin{align} \label{E:ELLIPTICHYPERBOLICCURRENTBULKERRORTERMWITHWEIGHTDERIVATIVES}
		\mathfrak{J}_{(\pmb{\partial} \weight)}[\SigmatTan,\pmb{\partial} \SigmatTan]
		:= - J[\SigmatTan] \weight 
		=
		\SigmatTan^{\kappa}  
		(\toppartialarg{\kappa} \weight)
		\toppartialarg{\lambda} \SigmatTan^{\lambda}
		-
		\SigmatTan^{\kappa} 
		(\toppartialarg{\lambda} \weight)
		\toppartialarg{\kappa} \SigmatTan^{\lambda},
	\end{align}
	for $\SigmatTan \in \lbrace \vortrenormalized, \GradEnt \rbrace$,
	the scalar functions $\underline{\mathfrak{H}}_{(\pmb{\partial} \weight)}[\SigmatTan]$ 
	and
	$\underline{\mathfrak{H}}[\SigmatTan]$ 
	are defined relative to the Cartesian coordinates by
	\begin{subequations}
		\begin{align} \label{E:MAINTHMWEIGHTDERIVATVELATERALBOUNDARYEASYERRORINTEGRANDTERMS}
		\underline{\mathfrak{H}}_{(\pmb{\partial} \weight)}[\SigmatTan]
		& :=
		 \frac{\uposinnerproduct}{\seconduposinnerproduct} 
		|\angV|_{\gsphere}^2
		\gen \weight
		+
		\uposinnerproduct 
		|\angV|_{\gsphere}^2
		\utang \weight
		+
		\SigmatTan_{\alpha} \spherenormal^{\alpha}
		\angV \weight,
			\\
		\underline{\mathfrak{H}}[\SigmatTan]
			&
			:=
			\frac{1}{2} 
			\frac{\uposinnerproduct}{\seconduposinnerproduct \lapsemodgen}  
			|\angV|_{\gsphere}^2
			(\gsphere^{-1})^{\alpha \beta} \Lie_{\lapsemodgen \gen} \gsphere_{\alpha \beta}
			\label{E:MAINTHMLATERALBOUNDARYEASYERRORINTEGRANDTERMS} 
				\\
		& \ \
		+
		\left\lbrace
			\lapsemodgen \gen
			\left[\frac{\uposinnerproduct}{\seconduposinnerproduct \lapsemodgen} (\gsphere^{-1})^{\alpha \beta} \right]
		\right\rbrace
		\SigmatTan_{\alpha} 
	  \SigmatTan_{\beta}
		+
		\left\lbrace
		\utang
			\left[\uposinnerproduct (\gsphere^{-1})^{\alpha \beta} \right]
		\right\rbrace
		\SigmatTan_{\alpha} 
	  \SigmatTan_{\beta}
		-
		2 \SigmatTan_{\alpha}
		\angV \Transport^{\alpha}
		\notag \\
		& \ \
				+
			\uposinnerproduct 
			|\angV|_{\gsphere}^2
			\angdiv \utang 
		+
		\spherenormal_{\alpha} \SigmatTan^{\alpha}  
		\SigmatTan^{\beta}
		\angdiv \angpartialarg{\beta}
			\notag \\
		& \ \
		+
		\SigmatTan^{\alpha}
		\spherenormal^{\beta}
		\angV \gfour_{\alpha \beta}
		+
		\SigmatTan_{\alpha}
		\angV \spherenormal^{\alpha},
		\notag
	\end{align}
	\end{subequations}
	the scalar function
	$\underline{\mathfrak{H}}_{(1)}[\vortrenormalized]$
	is defined relative to the Cartesian coordinates by
	\begin{align} \label{E:MAINTHMSPECIFICVORTITICYMAINLATERALERRORINTEGRAND}
		\underline{\mathfrak{H}}_{(1)}[\vortrenormalized]
		&  
		: = 
		 2
		\upsigma
		\uposinnerproduct
		\Speed^{-1} 
		\exp(-2 \LogDensity) 
		\frac{p_{;\Ent}}{\bar{\varrho}}
		\frac{\seconduposinnerproduct + \lengthofgen^2}{\sqrt{\seconduposinnerproduct^2 + \lengthofgen^2 \lengthoftophypnorm^2}}
		\sqrt{\mbox{\upshape det}
		\begin{pmatrix}
		\gsphere(\angvortrenormalized,\angvortrenormalized) & 
		\gsphere(\angvortrenormalized,\keydetvectorfield)
			\\
		\gsphere(\angvortrenormalized,\keydetvectorfield) & \gsphere(\keydetvectorfield,\keydetvectorfield)
	\end{pmatrix}}
		\\
	& \ \ \ \ \ \
	\times
	\left\lbrace
			\urescalednewgenminushypnorm_a \gen v^a
				+
				\sidehypnorm_a \urescalednewgenminushypnorm^a \gen \LogDensity
				+
				\utandecompvectorfielddownarg{a} v^a
				+ 
				(g^{-1})^{ab} \sidehypnorm_a \utandecompvectorfielddownarg{b} \LogDensity
				+
			  \exp(-\LogDensity) \frac{p_{;\Ent}}{\bar{\varrho}} \sidehypnorm_a \GradEnt^a
	\right\rbrace	
			\notag		\\
			& \ \
				+
				4 
				\uposinnerproduct 
				\vortrenormalized_{\alpha} \uspecialgen^{\alpha}
				\angvortrenormalized \ln \Speed 
				- 
				4 
				\uposinnerproduct 
				|\angvortrenormalized|_{\gsphere}^2
				\uspecialgen \ln \Speed 
				+
				4 
				\uposinnerproduct 
				\uspecialgen^0
				\vortrenormalized_a \angvortrenormalized v^a
				-
				4 
				\uposinnerproduct
				\angvortrenormalized^0
				\vortrenormalized_a \uspecialgen v^a
					\notag \\
		& \ \
		+
		2 
		\uposinnerproduct
		\Speed^{-4} 
		\exp(-2 \LogDensity) 
		\frac{p_{;\Ent}}{\bar{\varrho}}
		\upepsilon_{\alpha \beta \gamma \delta}
		 \uspecialgen^{\alpha} 
		 \angvortrenormalized^{\beta}
		\urescalednewgenminushypnorm^{\gamma}
		\GradEnt^{\delta}
		\gen \LogDensity
		-
		2 
		\uposinnerproduct
		\Speed^{-4} 
		\exp(-2 \LogDensity) 
		\frac{p_{;\Ent}}{\bar{\varrho}}
		\GradEnt^a \sidehypnorm_a
		\upepsilon_{\alpha \beta \gamma \delta}
		\uspecialgen^{\alpha} 
		\angvortrenormalized^{\beta} 
		\Transport^{\gamma}
		\urescalednewgenminushypnorm^{\delta} 
		\gen \LogDensity
			\notag 
			\\
	 & \ \
		+
		2 
		\uposinnerproduct
		\Speed^{-2} 
		\exp(-2 \LogDensity) 
		\frac{p_{;\Ent}}{\bar{\varrho}}
		\upepsilon_{\alpha \beta ab}
		\uspecialgen^{\alpha}  
		\angvortrenormalized^{\beta}
		(\utandecompvectorfielddownarg{a} \LogDensity)
		\GradEnt^b
		-
		2 
		\uposinnerproduct
		\Speed^{-2} 
		\exp(-2 \LogDensity) 
		\frac{p_{;\Ent}}{\bar{\varrho}}
		\GradEnt^a \sidehypnorm_a
		\upepsilon_{\alpha \beta \gamma d}
		\uspecialgen^{\alpha} 
		\angvortrenormalized^{\beta} 
		\Transport^{\gamma}
		\utandecompvectorfielddownarg{d} \LogDensity
		\notag
			\\
	& \ \
		-
		2 
		\uposinnerproduct
		\Speed^{-4} 
				\exp(-2 \LogDensity) 
				\frac{p_{;\Ent}}{\bar{\varrho}}
		\GradEnt^a \urescalednewgenminushypnorm_a
		\upepsilon_{\alpha \beta \gamma \delta}
		\uspecialgen^{\alpha} 
		\angvortrenormalized^{\beta} 
		\Transport^{\gamma}
		\gen v^{\delta}
		-
		2 
		\uposinnerproduct
		\Speed^{-4} 
				\exp(-2 \LogDensity) 
				\frac{p_{;\Ent}}{\bar{\varrho}}
		\upepsilon_{\alpha \beta \gamma \delta}
		\uspecialgen^{\alpha} 
		\angvortrenormalized^{\beta} 
		\Transport^{\gamma}
		\GradEnt^a \utandecompvectorfielddownarg{a} v^{\delta}
		\notag
			\\
	& \ \
				+
			2 
			\uposinnerproduct
			\Speed^{-2}
			\exp(\LogDensity)
			\upepsilon_{\alpha \beta \gamma \delta}
			\uspecialgen^{\alpha}
			\angvortrenormalized^{\beta}
			\Transport^{\gamma}
			\VortVort^{\delta}
				\notag
				\\
	& \ \
		-
		2 
		\uposinnerproduct
		\Speed^{-4} 
		\exp(-3 \LogDensity) 
		\left\lbrace
			\frac{p_{;\Ent}}{\bar{\varrho}}
		\right\rbrace^2
		\GradEnt^a \sidehypnorm_a
		\upepsilon_{\alpha \beta \gamma \delta}
		\uspecialgen^{\alpha} 
		\angvortrenormalized^{\beta} 
		\Transport^{\gamma}
		\GradEnt^{\delta},
				\notag
	\end{align}
and the scalar function
$\underline{\mathfrak{H}}_{(2)}[\GradEnt]$
is defined relative to the Cartesian coordinates by
\begin{align} \label{E:MAINTHMENTROPYGRADIENTMAINLATERALERRORINTEGRAND}
		\underline{\mathfrak{H}}_{(2)}[\GradEnt]
		& := 
			4 
			\uposinnerproduct \GradEnt_{\alpha} \uspecialgen^{\alpha} \angGradEnt \ln \Speed
			- 
			4 
			\uposinnerproduct |\angGradEnt|_{\gsphere}^2  \uspecialgen \ln \Speed.
\end{align}
Note also that Prop.\,\ref{P:KEYDETERMINANT} implies that
the first product on RHS~\eqref{E:MAINTHMSPECIFICVORTITICYMAINLATERALERRORINTEGRAND}
vanishes when $\underline{\mathcal{H}}$ is $\gfour$-null.	
	
\end{theorem}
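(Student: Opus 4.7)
\medskip

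\noindent\textbf{Proof proposal.} The plan is to apply the elliptic-hyperbolic divergence identity of Lemma~\ref{L:BASICDIVERGENCEIDENTTIY} with $\SigmatTan\in\{\vortrenormalized,\GradEnt\}$, integrate along $\widetilde{\Sigma}_{\Timefunction'}$ and then in $\Timefunction'\in[0,\Timefunction]$, and finally invoke Prop.\,\ref{P:STRUCTUREOFERRORINTEGRALS} to rewrite the resulting $\mathcal{S}_{\Timefunction'}$-flux of $J[\SigmatTan]$. Throughout, the compressible Euler equations of Theorem~\ref{T:GEOMETRICWAVETRANSPORTSYSTEM} will be used to algebraically substitute for $\Transport \SigmatTan^{\alpha}$, $\partial_a \SigmatTan^a$, and $d\SigmatTan_{\flat}$, which is precisely what converts the generic ``error'' buckets $\mathfrak{J}_{(\widetilde{Antisym})}$, $\mathfrak{J}_{(Div)}$ of Lemma~\ref{L:BASICDIVERGENCEIDENTTIY} into the solution-adapted error terms $\mathfrak{A}^{(\SigmatTan)}$, $\mathfrak{B}_{(\SigmatTan)}$, $\mathfrak{C}^{(\SigmatTan)}$, $\mathfrak{D}^{(\SigmatTan)}$.

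\medskip

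\noindent\emph{Step 1: From the divergence identity to the coercive quadratic form.} First I would apply Lemma~\ref{L:BASICDIVERGENCEIDENTTIY} with $\SigmatTan=\vortrenormalized$ and add the manifestly nonnegative term $\weight\,\topfirstfund_{\alpha\beta}(\Transport\vortrenormalized^{\alpha})(\Transport\vortrenormalized^{\beta})$ to both sides. The left-hand side then equals $\weight\,\mathscr{Q}(\pmb{\partial}\vortrenormalized,\pmb{\partial}\vortrenormalized)$ by virtue of the algebraic identity \eqref{E:IDENTITYMAINQUADRATICFORMFORCONTROLLINGFIRSTDERIVATIVESOFSPECIFICVORTICITYANDENTROPYGRADIENT}. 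Substituting the transport equation \eqref{E:RENORMALIZEDVORTICTITYTRANSPORTEQUATION} (with $\mathfrak{L}_{(\vortrenormalized)}$ simplified to $\mathfrak{B}_{(\vortrenormalized)}$ by antisymmetry of $\upepsilon$) gives $\Transport\vortrenormalized=\mathfrak{B}_{(\vortrenormalized)}$; after this substitution, the added term becomes $\weight|\mathfrak{B}_{(\vortrenormalized)}|_{\topfirstfund}^2$. Next I would use the div identity \eqref{E:FLATDIVOFRENORMALIZEDVORTICITY}--\eqref{E:RENORMALIZEDVORTICITYDIVLINEARORBETTER}, i.e.\ $\partial_a\vortrenormalized^a=-\vortrenormalized^a\partial_a\LogDensity$, to substitute inside $\mathfrak{J}_{(Div)}[\pmb{\partial}\vortrenormalized,\pmb{\partial}\vortrenormalized]$; grouping perfect squares yields exactly $\mathfrak{D}^{(\vortrenormalized)}$ and the cross terms containing a factor of $\projectedtransport_\alpha\projectedtransport\vortrenormalized^\alpha$ assemble into $\mathfrak{C}^{(\vortrenormalized)}$. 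Finally, \eqref{E:KEYIDENTITYANTISYMMETRICPARTOFSPECIFICVORTICITYDUALGRADIENT} from Cor.\,\ref{C:SHARPDECOMPOSITIONOFANTISYMMETRICGRADIENTS} gives $d\vortrenormalized_{\flat}=\mathfrak{A}^{(\vortrenormalized)}$, turning $\mathfrak{J}_{(\widetilde{Antisym})}$ into $\tfrac{1}{2}|\mathfrak{A}^{(\vortrenormalized)}|_{\topfirstfund}^2$. The analogous substitutions (using \eqref{E:GRADENTROPYTRANSPORT}, \eqref{E:TRANSPORTFLATDIVGRADENT}--\eqref{E:CURLGRADENTVANISHES}, and \eqref{E:KEYIDENTITYANTISYMMETRICPARTOFENTROPYGRADIENTDUALGRADIENT}, together with the relation $\dive\GradEnt=\exp(2\LogDensity)\DivGradEnt+\GradEnt^a\partial_a\LogDensity$ coming from \eqref{E:RENORMALIZEDDIVOFENTROPY}) treat the $\SigmatTan=\GradEnt$ case.

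\medskip

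\noindent\emph{Step 2: Integration along $\widetilde{\Sigma}_{\Timefunction'}$ and in $\Timefunction'$.} Next I would integrate the resulting pointwise identity against $d\varpi_{\topfirstfund}$ on $\widetilde{\Sigma}_{\Timefunction'}$ and apply the divergence theorem for $\widetilde{\nabla}$ on the $3$-manifold-with-boundary $\widetilde{\Sigma}_{\Timefunction'}$. Since $\partial\widetilde{\Sigma}_{\Timefunction'}=\mathcal{S}_{\Timefunction'}$ (see \eqref{E:BOUNDARYOFTILDESIGMAISSPHERE}) and $\spherenormal$ is the outward unit $\topfirstfund$-normal, the spacetime-bulk term $\widetilde{\nabla}_\alpha(\weight J^\alpha[\SigmatTan])$ produces $\int_{\mathcal{S}_{\Timefunction'}}\weight\,\spherenormal_\alpha J^\alpha[\SigmatTan]\,d\varpi_{\gsphere}$. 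Then I would integrate in $\Timefunction'$ from $0$ to $\Timefunction$; the volume-form identity \eqref{E:SPACETIMEVOLUMEFORMEXPRESSIONWITHRESPECTTOTIMEFUNCTION}, which reads $d\varpi_{\gfour}=\lengthofmodtophypnorm\,d\varpi_{\topfirstfund}\,d\Timefunction'$, is what introduces the weight $\lengthofmodtophypnorm^{-1}$ in front of every $\mathcal{M}_{\Timefunction}$ integrand on the right-hand sides of \eqref{E:SPACETIMEREMARKABLEIDENTITYSPECIFICVORTICITY}--\eqref{E:SPACETIMEREMARKABLEIDENTITYENTROPYGRADIENT}, while the $\mathcal{S}_{\Timefunction'}$ integrals assemble into $\int_{\underline{\mathcal{H}}_{\Timefunction}}\weight\,\spherenormal_\alpha J^\alpha[\SigmatTan]\,d\varpi_{\gsphere}\,d\Timefunction'$.

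\medskip

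\noindent\emph{Step 3: Rewriting the lateral flux via Prop.\,\ref{P:STRUCTUREOFERRORINTEGRALS}.} At this point, the only term of the identity that is not yet in the form stated in Theorem~\ref{T:MAINREMARKABLESPACETIMEINTEGRALIDENTITY} is the lateral-boundary flux $\int_{\underline{\mathcal{H}}_{\Timefunction}}\weight\,\spherenormal_\alpha J^\alpha[\SigmatTan]\,d\varpi_{\gsphere}\,d\Timefunction'$. I substitute it using the identities \eqref{E:MAINLATERALERRORINTEGRALSFORSPECIFICVORTICITY}--\eqref{E:MAINLATERALERRORINTEGRALSFORENTROPYGRADIENT} of Prop.\,\ref{P:STRUCTUREOFERRORINTEGRALS}; this instantly delivers the coercive sphere integrals $\int_{\mathcal{S}_{\Timefunction}}\weight\,\tfrac{\uposinnerproduct}{\seconduposinnerproduct\lapsemodgen}|\angV|_{\gsphere}^2\,d\varpi_{\gsphere}$ on the left, the initial data term on $\mathcal{S}_0$, and the remaining error integrals along $\underline{\mathcal{H}}_{\Timefunction}$ built from $\underline{\mathfrak{H}}_{(\pmb{\partial}\weight)}$, $\underline{\mathfrak{H}}$, $\underline{\mathfrak{H}}_{(1)}[\vortrenormalized]$, $\underline{\mathfrak{H}}_{(2)}[\GradEnt]$. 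Rearranging yields exactly \eqref{E:SPACETIMEREMARKABLEIDENTITYSPECIFICVORTICITY} and \eqref{E:SPACETIMEREMARKABLEIDENTITYENTROPYGRADIENT}.

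\medskip

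\noindent\emph{Main obstacle.} The conceptually hard part has in fact already been dispatched: the most dangerous boundary term, $2\weight\,\uposinnerproduct\,\uspecialgen^{\alpha}\angV^{\beta}(\partial_\alpha\SigmatTan_\beta-\partial_\beta\SigmatTan_\alpha)$ in \eqref{E:INTEGRATEDPRELIMINARYDECOMPOFBOUNDARYINTEGRAND}, is precisely where the compressible Euler structure matters, and it is handled inside Prop.\,\ref{P:STRUCTUREOFERRORINTEGRALS} by combining Cor.\,\ref{C:SHARPDECOMPOSITIONOFANTISYMMETRICGRADIENTS}, Lemma~\ref{L:PRELIMINARYDECOMPOSITIONOFSUBTLETERMS}, Prop.\,\ref{P:KEYDETERMINANT}, and Lemma~\ref{L:TRANSPORTLOGDENSITYISTANGENTIALEXCEPTINNULLCASE}, culminating in the cancellation \eqref{E:NULLCASEKEYDETERMINANT} in the null case. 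So at the level of the present theorem the remaining challenge is purely bookkeeping: tracking signs, ensuring that every substitution of $\Transport\SigmatTan$ and $\partial_a\SigmatTan^a$ is consistently carried out in each of the five terms of $\mathfrak{J}_{(Div)}$, and verifying that the antisymmetric gradient $d\SigmatTan_{\flat}$ is replaced by $\mathfrak{A}^{(\SigmatTan)}$ throughout. The coefficient error $\mathfrak{J}_{(Coeff)}[\SigmatTan,\pmb{\partial}\SigmatTan]$ is unchanged from \eqref{E:THIRDERRORTERMDIVERGENCEOFSIGMATILDECURRENT} and is carried along as-is, and the weight-derivative error $\mathfrak{J}_{(\pmb{\partial}\weight)}$ likewise carries over verbatim from \eqref{E:DIVERGENCEIDELLIPTICHYPERBOLICCURRENTBULKERRORTERMWITHWEIGHTDERIVATIVES}.
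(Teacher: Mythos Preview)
Your proposal is correct and follows essentially the same approach as the paper's own proof: apply Lemma~\ref{L:BASICDIVERGENCEIDENTTIY}, add $\weight\,\topfirstfund_{\alpha\beta}(\Transport\SigmatTan^{\alpha})(\Transport\SigmatTan^{\beta})$ to both sides to assemble $\mathscr{Q}$, integrate over $\widetilde{\Sigma}_{\Timefunction'}$ and then in $\Timefunction'$ using \eqref{E:SPACETIMEVOLUMEFORMEXPRESSIONWITHRESPECTTOTIMEFUNCTION}, and invoke Prop.\,\ref{P:STRUCTUREOFERRORINTEGRALS} for the lateral flux. Your identification of the substitutions $\Transport\SigmatTan\to\mathfrak{B}_{(\SigmatTan)}$, $\partial_a\SigmatTan^a\to$ (div relations), and $d\SigmatTan_\flat\to\mathfrak{A}^{(\SigmatTan)}$ that convert $\mathfrak{J}_{(\widetilde{Antisym})}$ and $\mathfrak{J}_{(Div)}$ into the stated error terms matches the paper exactly.
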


\begin{remark}[Highlighting some of the key structures in Theorem~\ref{T:MAINREMARKABLESPACETIMEINTEGRALIDENTITY}]
	\label{R:HIGHLIGHTKEYSTRUCTURES}
		Here we emphasize some of the key structures in the equations of Theorem~\ref{T:MAINREMARKABLESPACETIMEINTEGRALIDENTITY}.
		\begin{itemize}
			\item All derivatives of $(\LogDensity,v,\Ent)$ and $(\gen,\spherenormal,\pmb{\partial} \Timefunction)$
				on RHSs~\eqref{E:MAINTHMLATERALBOUNDARYEASYERRORINTEGRANDTERMS}-\eqref{E:MAINTHMENTROPYGRADIENTMAINLATERALERRORINTEGRAND}
				are in directions \emph{tangent} to $\underline{\mathcal{H}}$.
				In Theorem~\ref{T:STRUCTUREOFERRORTERMS}, we provided a precise statement capturing this structure;
				see also Remark~\ref{R:MOSTIMPORTANTPARTOFTHEPROP}.
				We now point out two reasons why this structure is crucial for applications in the $\gfour$-null case $\underline{\mathcal{H}} = \underline{\mathcal{N}}$:
				\textbf{i)} The wave equation fluxes degenerate along null hypersurfaces, and they control only derivatives 
				in directions tangent to $\underline{\mathcal{N}}$
				(see, for example, \eqref{E:NULLCASEWAVEFLUXESSEMICOERCIVE} in a model case in which $\Timefunction = t$). 
				Thus, if $ \underline{\mathcal{N}}_{\Timefunction}$-transversal derivatives were present
				on RHSs~\eqref{E:MAINTHMLATERALBOUNDARYEASYERRORINTEGRANDTERMS}-\eqref{E:MAINTHMENTROPYGRADIENTMAINLATERALERRORINTEGRAND}, 
				then 
				\emph{the integrals 
				$\int_{\underline{\mathcal{N}_{\Timefunction}}}
				\cdots
				$
				on RHSs~\eqref{E:SPACETIMEREMARKABLEIDENTITYSPECIFICVORTICITY}-\eqref{E:SPACETIMEREMARKABLEIDENTITYENTROPYGRADIENT}
				would be uncontrollable from the point of view of regularity}.
				 \textbf{ii)} In applications to shock waves, the $\underline{\mathcal{N}}_{\Timefunction}$-tangential derivatives
					of the solution are less singular than the $\underline{\mathcal{N}}_{\Timefunction}$-transversal derivatives.
					Thus, in the $\gfour$-null case, the identity \eqref{E:ERRORTERMSSCHEMATICSTRUCTURENULLCASE}
					\emph{signifies the absence of the most singular terms}. 
					This is a manifestation of the good ``remarkable quasilinear null structures''
					highlighted in the indented paragraph near the beginning of the article.
					See Subsubsect.\,\ref{SSS:STANDARDNULLFORMS} for further discussion on the importance of
					$\underline{\mathcal{N}}_{\Timefunction}$-tangential derivatives in the context of shock formation.
			\item All terms on RHSs~\eqref{E:SPACETIMEREMARKABLEIDENTITYSPECIFICVORTICITY}-\eqref{E:SPACETIMEREMARKABLEIDENTITYENTROPYGRADIENT}
				are controllable from the point of view of regularity. 
				We make this statement precise in Theorem~\ref{T:LOCALIZEDAPRIORIESTIMATES}
				in a model case in which $\Timefunction = t$.
		\end{itemize}
\end{remark}

\begin{proof}[Proof of Theorem~\ref{T:MAINREMARKABLESPACETIMEINTEGRALIDENTITY}]
	We first prove \eqref{E:SPACETIMEREMARKABLEIDENTITYSPECIFICVORTICITY}.
	We start by considering the $\widetilde{\Sigma}_{\Timefunction}$-divergence identity \eqref{E:NEWSTANDARDDIVERGENCEIDENTITYFORELLIPTICHYPERBOLICCURRENT}
	with $\vortrenormalized$ in the role of $\SigmatTan$.
	We add the term
	$\weight \topfirstfund_{\alpha \beta} (\Transport \vortrenormalized^{\alpha}) (\Transport \vortrenormalized^{\beta})$
	to each side of this identity.
	Considering \eqref{E:IDENTITYMAINQUADRATICFORMFORCONTROLLINGFIRSTDERIVATIVESOFSPECIFICVORTICITYANDENTROPYGRADIENT}, 
	we see that after this addition, 
	the left-hand side of the resulting identity is equal to
	$\weight \mathscr{Q}(\pmb{\partial} \vortrenormalized,\pmb{\partial} \vortrenormalized)$.
	We then integrate the identity over $\widetilde{\Sigma}_{\Timefunction}$ with respect to the volume form
	$d \varpi_{\topfirstfund}$ of $\topfirstfund$ 
	and use the divergence theorem
	to obtain an integral identity, 
	which features the
	main term
	$
	\int_{\widetilde{\Sigma}_{\Timefunction}}	
			\weight
			\mathscr{Q}(\pmb{\partial} \vortrenormalized,\pmb{\partial} \vortrenormalized)
		\, d \varpi_{\topfirstfund}
	$
	on the left-hand side and, on the right-hand side,
	the boundary term
	$
	\int_{\mathcal{S}_{\Timefunction}}
				\weight \spherenormal_{\alpha} J^{\alpha}[\vortrenormalized]
	\,  d \varpi_{\gsphere}
	$,
	which comes from the term $\widetilde{\nabla}_{\alpha} \left(\weight J^{\alpha}[\SigmatTan] \right)$ 
	on RHS~\eqref{E:NEWSTANDARDDIVERGENCEIDENTITYFORELLIPTICHYPERBOLICCURRENT}.
	We then integrate the integral identity with respect to $\Timefunction$ 
	and use Lemma~\ref{L:IDENTITIESFORVOLUMEFORMS} to obtain,
	in view of \eqref{E:IDENTITYMAINQUADRATICFORMFORCONTROLLINGFIRSTDERIVATIVESOFSPECIFICVORTICITYANDENTROPYGRADIENT},
	the identity
	\begin{align} \label{E:FIRSTSTEPSPACETIMEREMARKABLEIDENTITYSPECIFICVORTICITY}
		&
		\int_{\mathcal{M}_{\Timefunction}}	
			\weight
			\lengthofmodtophypnorm^{-1}
			\mathscr{Q}(\pmb{\partial} \vortrenormalized,\pmb{\partial} \vortrenormalized)
		\, d \varpi_{\gfour}
		=
			\int_{\underline{\mathcal{H}}_{\Timefunction}}
				\weight
				\spherenormal_{\alpha} J^{\alpha}[\vortrenormalized]
			\, d \varpi_{\gsphere} d \Timefunction'
				\\
		& \ \
			+
			\int_{\mathcal{M}_{\Timefunction}}		
				\weight
				\lengthofmodtophypnorm^{-1}
				\left\lbrace
					\topfirstfund_{\alpha \beta} (\Transport \vortrenormalized^{\alpha}) \Transport \vortrenormalized^{\beta}
					+
					\mathfrak{J}_{(\widetilde{Antisym})}[\pmb{\partial} \vortrenormalized,\pmb{\partial} \vortrenormalized]
					+
					\mathfrak{J}_{(Div)}[\pmb{\partial} \vortrenormalized,\pmb{\partial} \vortrenormalized]
					+
					\mathfrak{J}_{(Coeff)}[\vortrenormalized,\pmb{\partial} \vortrenormalized]
			\right\rbrace
			\, d \varpi_{\gfour}
			\notag
				\\
		& \ \
			+
			\int_{\mathcal{M}_{\Timefunction}}		
				\lengthofmodtophypnorm^{-1}
				\mathfrak{J}_{(\pmb{\partial} \weight)}[\vortrenormalized,\pmb{\partial} \vortrenormalized]
			\, d \varpi_{\gfour}.
			\notag
	\end{align}
	To handle the first integral 
	$
	\int_{\underline{\mathcal{H}}_{\Timefunction}}
				\weight \spherenormal_{\alpha} J^{\alpha}[\vortrenormalized]
			\, d \varpi_{\gsphere} d \Timefunction'
	$
	on RHS~\eqref{E:FIRSTSTEPSPACETIMEREMARKABLEIDENTITYSPECIFICVORTICITY},
	we simply use the identity \eqref{E:MAINLATERALERRORINTEGRALSFORSPECIFICVORTICITY} 
	to substitute for
	$
	\int_{\underline{\mathcal{H}}_{\Timefunction}}
		\weight \spherenormal_{\alpha} J^{\alpha}[\vortrenormalized]
	\, d \varpi_{\gsphere} d \Timefunction'
	$.
	To handle the integral
	$
		\int_{\mathcal{M}_{\Timefunction}}		
				\weight
				\lengthofmodtophypnorm^{-1}
				\topfirstfund_{\alpha \beta} (\Transport \vortrenormalized^{\alpha}) \Transport \vortrenormalized^{\beta}
		\, d \varpi_{\gfour}
	$,
	we note that 
	\eqref{E:RENORMALIZEDVORTICTITYTRANSPORTEQUATION},
	\eqref{E:SPECIFICVORTICITYLINEARORBETTER},
	\eqref{E:SPECIFICVORTICITYSPACETIMEERRORTERMTRANSPORTDERIVATIVES},
	and the identity $\vortrenormalized^0 = 0$
	imply
	$
	\weight
	\lengthofmodtophypnorm^{-1}
	\topfirstfund_{\alpha \beta} (\Transport \vortrenormalized^{\alpha}) \Transport \vortrenormalized^{\beta}
	=
	\weight
	\lengthofmodtophypnorm^{-1}
	|\mathfrak{B}_{(\vortrenormalized)}|_{\topfirstfund}^2
	$,
	which is explicitly featured on 
	RHS~\eqref{E:SPACETIMEREMARKABLEIDENTITYSPECIFICVORTICITY}.
	To handle 
	$
		\int_{\mathcal{M}_{\Timefunction}}		
				\weight
				\lengthofmodtophypnorm^{-1}
				\mathfrak{J}_{(\widetilde{Antisym})}[\pmb{\partial} \vortrenormalized,\pmb{\partial} \vortrenormalized]
		\, d \varpi_{\gfour}
	$,
	we simply use
	\eqref{E:ANTISYMMETRICERRORTERMDIVERGENCEOFSIGMATILDECURRENT},
	\eqref{E:KEYIDENTITYANTISYMMETRICPARTOFSPECIFICVORTICITYDUALGRADIENT},
	and
	\eqref{E:SPECIFICVORTICITYSPACETIMEERRORTERMANTISYMMETRIC}
	to deduce that this integral is equal to the integral
	$
		\frac{1}{2}
		\int_{\mathcal{M}_{\Timefunction}}		
				\weight
				\lengthofmodtophypnorm^{-1}
				|\mathfrak{A}^{(\vortrenormalized)}|_{\topfirstfund}^2
		\, d \varpi_{\gfour}
	$
	featured on RHS~\eqref{E:SPACETIMEREMARKABLEIDENTITYSPECIFICVORTICITY}.
	To handle 
	$
		\int_{\mathcal{M}_{\Timefunction}}		
			\weight
			\lengthofmodtophypnorm^{-1}
				\mathfrak{J}_{(Div)}[\pmb{\partial} \vortrenormalized,\pmb{\partial} \vortrenormalized]
		\, d \varpi_{\gfour}
	$,
	we consider equation \eqref{E:DIVSYMMETRICERRORTERMDIVERGENCEOFSIGMATILDECURRENT} with $\vortrenormalized$
	in the role of $\SigmatTan$. Using \eqref{E:FLATDIVOFRENORMALIZEDVORTICITY} and \eqref{E:RENORMALIZEDVORTICITYDIVLINEARORBETTER},
	we rewrite all factors of $\partial_a \vortrenormalized^a$
	on RHS~\eqref{E:DIVSYMMETRICERRORTERMDIVERGENCEOFSIGMATILDECURRENT}
	as $- \vortrenormalized^a \partial_a \LogDensity$.
	We place those resulting products involving a factor of $\projectedtransport_a \projectedtransport \vortrenormalized^a$
	on RHS~\eqref{E:SPECIFICVORTICITYSPACETIMEERRORTERMNEEDSTOBEABSORBED},
	and we place the remaining products on RHS~\eqref{E:SPECIFICVORTICITYSPACETIMEERRORTERMDIVERGENCEERRORS}.
	In total, we have proved \eqref{E:SPACETIMEREMARKABLEIDENTITYSPECIFICVORTICITY}.
	
	The identity \eqref{E:SPACETIMEREMARKABLEIDENTITYENTROPYGRADIENT} can be proved using nearly identical arguments
	where, compared to the previous paragraph,
	we use \eqref{E:MAINLATERALERRORINTEGRALSFORENTROPYGRADIENT} in the role of \eqref{E:MAINLATERALERRORINTEGRALSFORSPECIFICVORTICITY},
	\eqref{E:GRADENTROPYTRANSPORT} in the role of \eqref{E:RENORMALIZEDVORTICTITYTRANSPORTEQUATION},
	\eqref{E:KEYIDENTITYANTISYMMETRICPARTOFENTROPYGRADIENTDUALGRADIENT}
	in the role of \eqref{E:KEYIDENTITYANTISYMMETRICPARTOFSPECIFICVORTICITYDUALGRADIENT},
	and the identity
	$
	\partial_a \GradEnt^a
	=
	\exp(2 \LogDensity) \DivGradEnt
	+
	\GradEnt^a \partial_a \LogDensity
	$
	(which follows from \eqref{E:RENORMALIZEDDIVOFENTROPY})
	in the role of \eqref{E:FLATDIVOFRENORMALIZEDVORTICITY}.

\end{proof}

\section{An application: Localized a priori estimates via the integral identities}
\label{S:APRIORI}
In this section, we provide a basic application of our main results: 
the derivation of a priori estimates for solutions
that exhibit the localized gain of a derivative for the specific vorticity and entropy gradient,
as we described in Subsect.\,\ref{SS:APPLICATIONS}.
\begin{quote}
	To streamline the presentation, 
	throughout this section,
	we assume that the acoustical time function $\Timefunction$ from the beginning of Sect.\,\ref{S:SPACETIMEDOMAINS}
	is equal to the Cartesian time function $t$.
\end{quote}
Our main goal is to derive localized energy-flux-elliptic estimates for solutions to the
compressible Euler equations, more precisely the formulation provided by Theorem~\ref{T:GEOMETRICWAVETRANSPORTSYSTEM}.
See Theorem~\ref{T:LOCALIZEDAPRIORIESTIMATES} for a precise statement of these estimates,
which rely on standard $C^1$-type boundedness assumptions that we state in Subsect.\,\ref{SS:ASSUMPTIONSONTHESOLUTION}.
The compressible Euler formulation provided by Theorem~\ref{T:GEOMETRICWAVETRANSPORTSYSTEM}
has an ``evolution-part'' and an ``elliptic-part.''
The main ingredients needed to control the elliptic-part
are the integral identities on the spacetime region $\mathcal{M} = \mathcal{M}_T$ 
provided by Theorem~\ref{T:MAINREMARKABLESPACETIMEINTEGRALIDENTITY}
and the structural features of the lateral error integrals revealed by Theorem~\ref{T:STRUCTUREOFERRORTERMS}
(which are important when the lateral boundary $\underline{\mathcal{H}}$ of $\mathcal{M}$ is $\gfour$-null).
In this section, we complement these results with similar, but much simpler results 
for the evolution-part of the system.
Compared to Theorem~\ref{T:MAINREMARKABLESPACETIMEINTEGRALIDENTITY},
the results of this section are standard,
though some aspects of our analysis 
rely on the detailed structure of the acoustical metric $\gfour$
of Def.\,\ref{D:ACOUSTICALMETRIC} and
the geometry of $\mathcal{M}$,
which we derived in Sect.\,\ref{S:SPACETIMEDOMAINS}.

\subsection{Various identities specialized to the case $\Timefunction = t$}
\label{SS:VARIOUSIDENTITIESWHENTIMEFUNCTIONISCARTESIAN}
In the next proposition, we provide some identities that hold
when $\Timefunction = t$.

\begin{proposition}[Various identities that hold when $\Timefunction \equiv t$]
	\label{P:VARIOUSIDENTITIESWHENTIMEFUNCTIONISCARTESIAN}
	Assume that the acoustical time function $\Timefunction$ from the beginning of Sect.\,\ref{S:SPACETIMEDOMAINS}
	is equal to the Cartesian time function $t$.
	Then the following identities hold
	(and we refer to Appendix~\ref{A:APPENDIXFORBULK} for a table of the notation):
	\begin{align} \label{E:WHENTIMEFUNCTIONISCARTESIANTRANSOPORTCOINCIDESWITHTOPNORMAL}
		\tophypnorm
		& 
		= 
		\modtophypnorm
		=
		\Transport,
			\\
		\lengthoftophypnorm
		& =  
		\lengthofmodtophypnorm
		= 1,
		\label{E:LENGTHOFMODTOPHYPNORMISUNITY}
	\end{align}	
	
	\begin{align}
		\modgen
		& = \gen,
			\label{E:MODGENISEQUALTOGEN} \\
		\lengthofmodgen
		& = \lengthofgen,
		\label{E:LENGTHOFMODGENISLENGTHOFGEN}
			\\
		\lapsemodgen
		& = 1,
			\label{E:LAPSEMODGENISUNITY} 
\end{align}
	
	\begin{align} \label{E:GENCOEFFICIENTISUNITY}
		\seconduposinnerproduct
		& = 1,
	\end{align}
	
	\begin{align} \label{E:SIGMATFIRSTFUNDAGREESWITHSIGMATILDEFIRSTFUND}
		\topfirstfund 
		& = g,
		&
		\topfirstfund^{-1} 
		& = g^{-1},
		&
	 \Sigmatproject 
	& = \topproject,
	\end{align}
	
	\begin{align} \label{E:PROJECTEDTRANSPORTVANISHES}
		\projectedtransport
		& = 0.
	\end{align}
	
	Moreover, when $\underline{\mathcal{H}}$ is $\gfour$-null, the following identities hold:
	\begin{align} \label{E:NULLCASESPHERENORMALCOEFFICIENTISUNITY}
		\uposinnerproduct
		& = 1,
			\\
		\utang 
		& = 0,
		\label{E:GNULLCASEANGULARVECTORFIELDVANISHES}
			\\
		\Transport
		& = \gen + \spherenormal
			=
			\uLunit + \spherenormal,
		\label{E:NULLCASETRANSPORTINTERMSOFGENANDSPHERENORMAL}
	\end{align}
	
	\begin{align} \label{E:INVERSEACOUSTICALMETRICNULLCASE}
	(\gfour^{-1})^{\alpha \beta}
	& = - 
			\frac{1}{2} \Lunit^{\alpha} \uLunit^{\beta}
			- 
			\frac{1}{2} \uLunit^{\alpha} \Lunit^{\beta}
			+
			(\gsphere^{-1})^{\alpha \beta},
\end{align}
where
\begin{align} \label{E:LUNITINNULLCASEWHENTIMEFUNCTIONISCARTESIAN}
	\Lunit
	 & := 
		\uLunit 
		+ 
		2 \spherenormal
		=
		\Transport 
		+ 
		\spherenormal
\end{align}
is an outgoing $\gfour$-null vectorfield that is $\gfour$-orthogonal to $\mathcal{S}_t$
and that verifies 
\begin{align} \label{E:GINNERPRODUCTOFLUNITANDTRANSPORT}
	\gfour(\Lunit,\Transport)
	& = - 1.
\end{align}

\end{proposition}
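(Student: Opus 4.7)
\medskip

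The plan is to verify each identity by a short direct computation, exploiting the uniqueness properties of the geometric objects defined in Sect.~\ref{S:SPACETIMEDOMAINS} together with the basic algebraic properties of $\Transport$ recorded in Lemma~\ref{L:BASICPROPERTIESOFTRANSPORTVECTORFIELD}. The key observation throughout is that when $\Timefunction = t$, the level sets $\widetilde{\Sigma}_{\Timefunction}$ coincide with the flat Cartesian slices $\Sigma_t$, and hence their geometry is governed by $\Transport$ itself.

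First I would establish \eqref{E:WHENTIMEFUNCTIONISCARTESIANTRANSOPORTCOINCIDESWITHTOPNORMAL}--\eqref{E:LENGTHOFMODTOPHYPNORMISUNITY} by noting that by Lemma~\ref{L:BASICPROPERTIESOFTRANSPORTVECTORFIELD}, $\Transport$ is $\gfour$-orthogonal to $\Sigma_t = \widetilde{\Sigma}_{\Timefunction}$ and satisfies $\Transport t = 1$ as well as $\Transport \Timefunction = 1$; by the uniqueness of the normalizations defining $\tophypnorm$ in \eqref{E:FUTURENORMALTOTOPHYPERSURFACE} and $\modtophypnorm$ in \eqref{E:TOPHYPNORMNORMALIZEDAGAINSTTIMEFUNCTION}, this forces $\tophypnorm = \modtophypnorm = \Transport$. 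The identity $\lengthoftophypnorm = \lengthofmodtophypnorm = 1$ then follows from \eqref{E:TRANSPORTISUNITLENGTHANDTIMELIKE} and the definitions \eqref{E:LENGTHOFTOPHYPNORM}--\eqref{E:LENGTHOFTOPHYPNORMNORMALIZEDAGAINSTTIMEFUNCTION}. For \eqref{E:MODGENISEQUALTOGEN}--\eqref{E:LAPSEMODGENISUNITY}, I would combine \eqref{E:GENERATOROFHYPERSURFACE} (which gives $\gen t = 1$, hence $\gen \Timefunction = 1$) with the definition \eqref{E:LAPSEFORMODGENCORRESPONDINGTOTIMEFUNCTION} of $\lapsemodgen$ and the formula \eqref{E:NORMALIZEDAGAINSTTIMEFUNCTIONGENERATOROFHYPERSURFACE}. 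Then \eqref{E:GENCOEFFICIENTISUNITY} follows from \eqref{E:SECONDINGOINGCONDITION} using $\tophypnorm = \Transport$ and \eqref{E:EQUIVALENTGENERATOROFHYPERSURFACE}. The identities \eqref{E:SIGMATFIRSTFUNDAGREESWITHSIGMATILDEFIRSTFUND} are then immediate from Def.~\ref{D:FIRSTFUNDAMENTALFORMSANDPROJECTIONS} together with $\hat{\tophypnorm} = \tophypnorm = \Transport$. Finally, \eqref{E:PROJECTEDTRANSPORTVANISHES} follows from \eqref{E:PROJECTIONOFTRANSPORTONTTOTILDESIGMA}, since $\topproject$ annihilates $\tophypnorm = \Transport$.

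For the null-case identities, the central step is to prove \eqref{E:NULLCASETRANSPORTINTERMSOFGENANDSPHERENORMAL}, after which \eqref{E:NULLCASESPHERENORMALCOEFFICIENTISUNITY} and \eqref{E:GNULLCASEANGULARVECTORFIELDVANISHES} follow readily. The strategy is to show that the vectorfield $\Transport - \gen$ coincides with $\spherenormal$ by checking the characterizing properties of the latter listed in Def.~\ref{D:HYPNORMANDSPHEREFORMDEFS}. Since $(\Transport - \gen)t = 0$, the difference is $\Sigma_t$-tangent; using \eqref{E:TRANSPORTISUNITLENGTHANDTIMELIKE}, \eqref{E:EQUIVALENTGENERATOROFHYPERSURFACE}, and the null condition $\gfour(\gen,\gen)=0$ (which holds in the null case by \eqref{E:NORMALISGENERATORINNULLCASE} and Convention~\ref{C:NULLCASE}), one computes $\gfour(\Transport-\gen,\Transport-\gen) = 1$, so $\Transport - \gen$ has $\gfour$-unit length. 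Moreover it is $\gfour$-orthogonal to any $\mathcal{S}_t$-tangent vector (since both $\Transport$ and $\gen$ are), hence $\Transport - \gen = \pm \spherenormal$. Pairing with $\gen = \uLunit = \sidehypnorm$ gives $\gfour(\Transport - \gen, \sidehypnorm) = -1$, and comparing with \eqref{E:INGOINGCONDITION} fixes both the sign and the value $\uposinnerproduct = 1$. Then \eqref{E:GNULLCASEANGULARVECTORFIELDVANISHES} is a direct algebraic consequence of \eqref{E:STUTANGENTPARTOFTRANSPORT}, $\seconduposinnerproduct = \uposinnerproduct = 1$, and \eqref{E:NULLCASETRANSPORTINTERMSOFGENANDSPHERENORMAL}.

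It remains to establish the null frame decomposition \eqref{E:INVERSEACOUSTICALMETRICNULLCASE}. I would first verify that $\Lunit := \uLunit + 2\spherenormal$ is $\gfour$-null, $\gfour$-orthogonal to $\mathcal{S}_t$, and satisfies $\gfour(\Lunit,\uLunit) = -2$ and \eqref{E:GINNERPRODUCTOFLUNITANDTRANSPORT}; these follow from $\gfour(\uLunit,\uLunit) = 0$, $\gfour(\spherenormal,\spherenormal) = 1$, $\gfour(\uLunit,\spherenormal) = -\uposinnerproduct = -1$, the already-established identity $\Transport = \uLunit + \spherenormal$, and \eqref{E:SPHERENORMALISUNITLENGTH}. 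The alternative expression $\Lunit = \Transport + \spherenormal$ in \eqref{E:LUNITINNULLCASEWHENTIMEFUNCTIONISCARTESIAN} follows directly by substitution. Finally, for \eqref{E:INVERSEACOUSTICALMETRICNULLCASE}, I would observe that $\{\Lunit, \uLunit\}$ spans the $\gfour$-orthogonal complement of $\mathcal{S}_t$ and, together with any $\gsphere$-orthonormal frame of $\mathcal{S}_t$, forms a null frame with inner-product matrix whose inverse is precisely the right-hand side of \eqref{E:INVERSEACOUSTICALMETRICNULLCASE}; this is a standard linear-algebra identity for two-by-two blocks of the form $\begin{pmatrix} 0 & -2 \\ -2 & 0 \end{pmatrix}$, whose inverse is $\begin{pmatrix} 0 & -1/2 \\ -1/2 & 0 \end{pmatrix}$. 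Alternatively, one may verify \eqref{E:INVERSEACOUSTICALMETRICNULLCASE} by contracting both sides against $\Lunit_\alpha$, $\uLunit_\alpha$, and an $\mathcal{S}_t$-tangent one-form and checking agreement. The computations throughout are short and algebraic; no step presents a genuine obstacle, though care is required with signs in the null-frame step and in orienting $\Transport - \gen$ relative to $\spherenormal$.
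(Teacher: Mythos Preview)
Your proposal is correct and follows essentially the same approach as the paper. The only cosmetic difference is in the null case: you show directly that $\Transport - \gen$ satisfies the defining properties of $\spherenormal$, whereas the paper instead expands $\spherenormal = a_1 \uLunit + a_2 \Transport$ in the basis $\{\uLunit,\Transport\}$ of the $\gfour$-orthogonal complement of $\mathcal{S}_t$ and solves for the coefficients; these are two equivalent ways of reading the same linear-algebra fact, and the remaining verifications (null-frame inner products and the decomposition \eqref{E:INVERSEACOUSTICALMETRICNULLCASE}) are handled identically.
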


\begin{proof}
	To prove \eqref{E:WHENTIMEFUNCTIONISCARTESIANTRANSOPORTCOINCIDESWITHTOPNORMAL},
	we first note that $\widetilde{\Sigma}_{\Timefunction}$ is equal to a portion of $\Sigma_t$
	since $\Timefunction = t$.
	Thus, from 
	\eqref{E:MATERIALVECTORVIELDRELATIVECTORECTANGULAR},
	\eqref{E:TRANSPORTONEFORMIDENTITY},
	and
	Def.\,\ref{D:HYPNORMANDSPHEREFORMDEFS}, 
	it follows that
	$\tophypnorm$,
	$\modtophypnorm$,
	and $\Transport$ are parallel
	and verify $\tophypnorm t = \modtophypnorm t = \Transport t = 1$.
	That is, these three vectorfields are equal, as desired.
	From this fact,
	\eqref{E:TRANSPORTISUNITLENGTHANDTIMELIKE},
	and Def.\,\ref{D:LENGTHOFVARIOUSVECTORFIELDSETC},
	we conclude
	\eqref{E:LENGTHOFMODTOPHYPNORMISUNITY}.
	\eqref{E:SIGMATFIRSTFUNDAGREESWITHSIGMATILDEFIRSTFUND}
	then follows from these results
	and Def.\,\ref{D:FIRSTFUNDAMENTALFORMSANDPROJECTIONS}.
	
	\eqref{E:MODGENISEQUALTOGEN}-\eqref{E:LAPSEMODGENISUNITY}
	then follow as simple consequences of
	Def.\,\ref{D:LENGTHOFVARIOUSVECTORFIELDSETC},
	\eqref{E:GENERATOROFHYPERSURFACE}-\eqref{E:NORMALIZEDAGAINSTTIMEFUNCTIONGENERATOROFHYPERSURFACE},
	and our assumption $\Timefunction = t$.
	
	\eqref{E:GENCOEFFICIENTISUNITY} follows from 
	\eqref{E:EQUIVALENTGENERATOROFHYPERSURFACE},
	\eqref{E:SECONDINGOINGCONDITION},
	and
	\eqref{E:WHENTIMEFUNCTIONISCARTESIANTRANSOPORTCOINCIDESWITHTOPNORMAL}.
	
	\eqref{E:PROJECTEDTRANSPORTVANISHES} then follows easily 
	from Def.\,\ref{D:PROJECTIONOFPONTTOTILDESIGMA},
	\eqref{E:WHENTIMEFUNCTIONISCARTESIANTRANSOPORTCOINCIDESWITHTOPNORMAL},
	\eqref{E:SIGMATFIRSTFUNDAGREESWITHSIGMATILDEFIRSTFUND},
	and the fact that $\topproject \tophypnorm = 0$.
	
	Next, from the proof of Lemma~\ref{L:SOMECONVENIENTIDENTITIES},
	\eqref{E:NORMALISGENERATORINNULLCASE},
	and \eqref{E:WHENTIMEFUNCTIONISCARTESIANTRANSOPORTCOINCIDESWITHTOPNORMAL},
	it follows that 
	$\mbox{\upshape span} \lbrace \uLunit, \Transport \rbrace$
	is the $\gfour$-orthogonal complement of $\mathcal{S}_t$.
	Since $\spherenormal$is $\gfour$-orthogonal to $\mathcal{S}_t$, 
	there exist scalar functions $a_1$ and $a_2$
	such that $\spherenormal = a_1 \uLunit + a_2 \Transport$. Taking the $\gfour$-inner product
	of this identity with respect to $\Transport$ and using \eqref{E:TRANSPORTISUNITLENGTHANDTIMELIKE},
	\eqref{E:EQUIVALENTFUTURENORMALTOHYPERSURFACE},
	and Convention~\ref{C:NULLCASE},
	we find that $a_1 = - a_2$, that is, $\spherenormal = a_1 (\uLunit - \Transport)$.
	Taking the $\gfour$-inner product of this identity with respect to itself and using that $\uLunit$ is $\gfour$-null,
	\eqref{E:SPHERENORMALISUNITLENGTH},
	\eqref{E:TRANSPORTISUNITLENGTHANDTIMELIKE},
	\eqref{E:EQUIVALENTFUTURENORMALTOHYPERSURFACE},
	and Convention~\ref{C:NULLCASE},
	we find that $1 = a_1^2$. Also using \eqref{E:INGOINGCONDITION}, we find that $a_1 = - 1$,
	that is, $\spherenormal = \Transport - \uLunit$.
	Considering also \eqref{E:NORMALISGENERATORINNULLCASE},
	we conclude \eqref{E:NULLCASESPHERENORMALCOEFFICIENTISUNITY},
	\eqref{E:NULLCASETRANSPORTINTERMSOFGENANDSPHERENORMAL},
	and the last equality in \eqref{E:LUNITINNULLCASEWHENTIMEFUNCTIONISCARTESIAN}.

	\eqref{E:GINNERPRODUCTOFLUNITANDTRANSPORT} follows easily from
	\eqref{E:LUNITINNULLCASEWHENTIMEFUNCTIONISCARTESIAN},
	\eqref{E:TRANSPORTISUNITLENGTHANDTIMELIKE},
	\eqref{E:TRANSPORTONEFORMIDENTITY},
	and the fact that $\spherenormal$ is $\Sigma_t$-tangent since $\Timefunction = t$
	(in particular, $\gfour(\Transport,\spherenormal) = 0$ in the present context).
	Considering also \eqref{E:SPHERENORMALISUNITLENGTH}, we find that $\gfour(\Lunit,\Lunit) = 0$,
	as desired.
	
	\eqref{E:GNULLCASEANGULARVECTORFIELDVANISHES} follows from the first equality in \eqref{E:TRANSPORTDECOMPOSITION},
	\eqref{E:GENCOEFFICIENTISUNITY},
	\eqref{E:NULLCASESPHERENORMALCOEFFICIENTISUNITY},
	and the first equality in \eqref{E:NULLCASETRANSPORTINTERMSOFGENANDSPHERENORMAL}.
	
	The prove \eqref{E:INVERSEACOUSTICALMETRICNULLCASE},
	we first note that
	Convention~\ref{C:NULLCASE}
	and the results from earlier in the proof
	imply that 
	$\gfour(\Lunit,\Lunit) = \gfour(\uLunit,\uLunit) = 0$,
	and
	$\gfour(\Lunit,\uLunit) 
	=
	\gfour(\Transport + \spherenormal,\Transport - \spherenormal) 
	=
	- 2$,
	that
	$\lbrace \uLunit, \Lunit \rbrace$
	spans the $\gfour$-orthogonal complement of $\mathcal{S}_t$,
	and that for any local $\gfour$-orthogonal frame $\lbrace e_{(1)}, e_{(2)} \rbrace$ on $\mathcal{S}_t$,
	the set $\lbrace \uLunit, \Lunit, e_{(1)}, e_{(2)} \rbrace$ spans the tangent space
	of $\mathcal{M}$ at each point where it is defined.
	Using these facts and 
	\eqref{E:GSPHEREAGREESWITHGONSTTANGENTVECTORFIELDS}-\eqref{E:GSPHEREVANISHESONSPANOFTOPHYPNORMANDSPHERENORMAL},
	the identity \eqref{E:INVERSEACOUSTICALMETRICNULLCASE} is straightforward to verify by
	contracting each side of it against pairs of elements of the frame $\lbrace \uLunit, \Lunit, e_{(1)}, e_{(2)} \rbrace$.

\end{proof}

\subsection{The geometric energy method for wave equations}
\label{SS:VECTORFIELDMULTIPLIERMETHOD}
To derive energy-flux identities for solutions to the covariant wave equations 
\eqref{E:VELOCITYWAVEEQUATION}-\eqref{E:ENTROPYWAVEEQUATION}
we will use the standard vectorfield multiplier method,
which we review in this subsection.
To obtain coercive energies and fluxes, we will use the ``quasilinear multiplier''
$\Transport = \partial_t + v^a \partial_a$, which is adapted to the solution.
By \eqref{E:TRANSPORTISUNITLENGTHANDTIMELIKE}, $\Transport$ is always $\gfour$-timelike,
which is the key property that leads to coercive energies and fluxes.

\subsubsection{Energy-momentum tensor, energy current, deformation tensor, and dominant energy condition}
\label{SSS:ENERGYMOMENTUM}
We start by recalling that $\Dfour$ denotes the Levi--Civita connection of $\gfour$ (see Subsect.\,\ref{SS:LEVICIVITACONNECTIONS}).
Let $\varphi$ be a scalar function 
(in practice, $\varphi$ will be a solution to one of the wave equations \eqref{E:VELOCITYWAVEEQUATION}-\eqref{E:ENTROPYWAVEEQUATION}).
We define the energy-momentum tensor associated to $\varphi$
to be the following symmetric type $\binom{0}{2}$ tensorfield:
\begin{align} \label{E:ENMOMENTUMTENSOR}
	\enmomem_{\alpha \beta}[\varphi]
	& := \partial_{\alpha} \varphi \partial_{\beta} \varphi
		- 
		\frac{1}{2} \gfour_{\alpha \beta} (\gfour^{-1})^{\kappa \lambda} \partial_{\kappa} \varphi \partial_{\lambda} \varphi.
\end{align}
Given $\varphi$ and any ``multiplier'' vectorfield $\mathbf{X}$,
we define the corresponding energy current
$\Jenarg{\mathbf{X}}{\alpha}[\varphi]$ to be the following\footnote{We remind the reader that we use the conventions of Subsubsect.\,\ref{SSS:ACOUSTICALMETRIC}
for lowering and raising Greek indices.} 
vectorfield:
\begin{align} \label{E:MULTIPLIERVECTORFIELD}
	\Jenarg{\mathbf{X}}{\alpha}[\varphi]
	& := \enmomem^{\alpha \beta}[\varphi] 
		\mathbf{X}_{\beta}.
\end{align}
We define the deformation tensor of $\mathbf{X}$ to be the following
symmetric type $\binom{0}{2}$ tensorfield:
\begin{align} \label{E:DEFORMATIONTENSOR}
\deformarg{\mathbf{X}}{\alpha}{\beta}
& := \Dfour_{\alpha} \mathbf{X}_{\beta}	
		+
		\Dfour_{\beta} \mathbf{X}_{\alpha}.
\end{align}

The \emph{dominant energy condition} is the following well-known result: $\enmomem_{\alpha \beta}[\varphi] \mathbf{X}^{\alpha} \mathbf{Y}^{\beta}$
is a positive definite quadratic form in $\pmb{\partial} \varphi$ when $\mathbf{X}$ and $\mathbf{Y}$ are both future-directed 
(see Footnote~\ref{FN:FUTUREDIRECTED})
and $\gfour$-timelike,
and $\enmomem_{\alpha \beta}[\varphi] \mathbf{X}^{\alpha} \mathbf{Y}^{\beta}$ is positive semi-definite if $\mathbf{X}$ and $\mathbf{Y}$ are both future-directed, $\mathbf{X}$ is $\gfour$-timelike, and $\mathbf{Y}$ is $\gfour$-null.
These properties are what allow one to construct \emph{coercive} energies and fluxes for wave equation solutions.
For these reasons, we will be particularly interested in the case $\mathbf{X} := \Transport$,
which is future-directed and, by \eqref{E:TRANSPORTISUNITLENGTHANDTIMELIKE}, always $\gfour$-timelike.
In this case, relative to the Cartesian coordinates, we have
$\Transport_{\alpha} = - \updelta_{\alpha}^0$ (see \eqref{E:TRANSPORTONEFORMIDENTITY}),
and it is straightforward to verify the following identity
(where $\Chfour_{\alpha \ \beta}^{\ 0}$ are Christoffel symbols of $\gfour$ relative to the Cartesian coordinates, 
as in Subsect.\,\ref{SS:LEVICIVITACONNECTIONS}):
\begin{align} \label{E:DEFORMATIONTENSOROFTRANSPORTRELATIVETOCARTESIAN}
	\deformarg{\Transport}{\alpha}{\beta}
	= 2 \Chfour_{\alpha \ \beta}^{\ 0}.
\end{align}

A straightforward computation yields the following
identity, which will form the starting point for our energy-flux identities for 
the wave equations \eqref{E:VELOCITYWAVEEQUATION}-\eqref{E:ENTROPYWAVEEQUATION}:
\begin{align} \label{E:DIVERGENCEOFENERGYCURRENT}
	\Dfour_{\kappa} \Jenarg{\mathbf{X}}{\kappa}[\varphi]
	& = (\square_{\gfour} \varphi) \mathbf{X} \varphi
			+
			\frac{1}{2} 
			\enmomem^{\kappa \lambda} \deformarg{\mathbf{X}}{\kappa}{\lambda}.
\end{align}

\subsection{Definitions of the geometric energies and fluxes and energy-flux identities}
\label{SS:DEFINITIONSOFENERGIESANDFLUXES}
In this subsection, we define the geometric energies and fluxes that we will use to analyze solutions
to the equations of Theorem~\ref{T:GEOMETRICWAVETRANSPORTSYSTEM}. We then derive energy-flux identities
for these quantities.

\subsubsection{Definitions of the energies and fluxes}
We now define the energies and fluxes. See Lemma~\ref{L:COERCIVENESS} for quantified statements regarding their coerciveness properties.

\begin{definition}[Energies and fluxes]
\label{D:ENERGIESANDFLUXES}
Assume that the acoustical time function $\Timefunction$ from the beginning of Sect.\,\ref{S:SPACETIMEDOMAINS} is equal to
the Cartesian time function $t$.
If the lateral boundary $\underline{\mathcal{H}}$ is $\gfour$-spacelike, then let
$\widetilde{\Sigma}_t$
and
$\underline{\mathcal{H}}_t$
be the hypersurface portions defined in Subsect.\,\ref{SS:DOMAINANDTIMEFUNCTIONETC},
and let $\Transport$
and
$\hat{\sidehypnorm}$
be, respectively, their future-directed (see Footnote~\ref{FN:FUTUREDIRECTED}) 
unit normals
(see 
\eqref{E:MATERIALVECTORVIELDRELATIVECTORECTANGULAR},
\eqref{E:TRANSPORTISUNITLENGTHANDTIMELIKE}, 
\eqref{E:FUTURENORMALTOHYPERSURFACE},
\eqref{E:UNITHYPERSURFACENORMAL},
and \eqref{E:WHENTIMEFUNCTIONISCARTESIANTRANSOPORTCOINCIDESWITHTOPNORMAL}).
Similarly, if the lateral boundary $\underline{\mathcal{H}}$ is $\gfour$-null,
then let $\uLunit$ be the future-directed null normal normalized by $\uLunit t = 1$
(see \eqref{E:FUTURENORMALTOHYPERSURFACE} and Convention~\ref{C:NULLCASE}).
Let $\varphi$ be a scalar function, let $\Jenarg{\Transport}{\alpha}[\varphi]$
be the energy current defined by \eqref{E:MULTIPLIERVECTORFIELD},
and recall that our geometric volume and forms are defined in Def.\,\ref{D:VOLUMEFORMS}.

For $t \in [0,T]$, we define the following ``wave'' and ``transport'' energies along $\widetilde{\Sigma}_t$:
\begin{align} \label{E:SIGMATENERGYDEF}
	\mathbb{E}_{(Wave)}[\varphi](t)
	& := \int_{\widetilde{\Sigma}_t}
					\left\lbrace
						\gfour_{\alpha \beta} \Jenarg{\Transport}{\alpha}[\varphi] \Transport^{\beta}
						+ 
						\varphi^2
					\right\rbrace
			 \, d \varpi_g,
	&
	\mathbb{E}_{(Transport)}[\varphi](t)
	& := \int_{\widetilde{\Sigma}_t}
				\varphi^2
			 \, d \varpi_g.
	\end{align}
	
	When the lateral boundary $\underline{\mathcal{H}}$
	is $\gfour$-spacelike, we define the following ``wave'' and ``transport'' $\underline{\mathcal{H}}$-fluxes,
	where the scalar function $\lengthofsidehypnorm > 0$ is defined by \eqref{E:LENGTHOFHYPNORM}:
	\begin{subequations}
	\begin{align}
	\mathbb{F}_{(Wave)}[\varphi](t)
	& := \int_{\underline{\mathcal{H}}_t}
					\left\lbrace
						\gfour_{\alpha \beta} \Jenarg{\Transport}{\alpha}[\varphi] \hat{\sidehypnorm}^{\beta}
						+ 
						\frac{1}{\lengthofsidehypnorm}
						\varphi^2
					\right\rbrace
			 \, d \varpi_{\sidefirstfund},
	& 
	\mathbb{F}_{(Transport)}[\varphi](t)
	& := \int_{\underline{\mathcal{H}}_t}
				\frac{1}{\lengthofsidehypnorm}
				\varphi^2
			 \, d \varpi_{\sidefirstfund}.
				\label{E:FLUXDEF} 
	\end{align}
	
	Finally, when the lateral boundary $\underline{\mathcal{H}} := \underline{\mathcal{N}}$
	is $\gfour$-null, we define the following ``wave'' and ``transport'' $\underline{\mathcal{N}}$-fluxes:
	\begin{align}
	\mathbb{F}_{(Wave)}[\varphi](t)
	& := \int_{\underline{\mathcal{N}}_t}
					\left\lbrace
						\gfour_{\alpha \beta} \Jenarg{\Transport}{\alpha}[\varphi]  \uLunit^{\beta}
						+ 
						\varphi^2
					\right\rbrace
			 \, d \varpi_{\gsphere} \, dt',
	&
	\mathbb{F}_{(Transport)}[\varphi](t)
	& := \int_{\underline{\mathcal{N}}_t}
				\varphi^2
			 \, d \varpi_{\gsphere} \, dt'.
	\label{E:NULLFLUXDEF}
	\end{align}
	\end{subequations}
\end{definition}

\subsubsection{Energy-flux identities}
\label{SSS:ENERGYFLUXIDENTITIES}
We now derive energy-flux identities for the quantities from Def.\,\ref{D:ENERGIESANDFLUXES}.

\begin{proposition}[Energy-flux identities]
	\label{P:ENERGYFLUXID}
	Under the assumptions stated in Def.\,\ref{D:ENERGIESANDFLUXES},
	the following ``wave'' energy-flux identity holds for $t \in [0,T]$,
	where the volume forms are defined in Def.\,\ref{D:VOLUMEFORMS}
	and $\deformarg{\Transport}{\alpha}{\beta}$ is defined by \eqref{E:DEFORMATIONTENSOR}:
	\begin{align}
		\mathbb{E}_{(Wave)}[\varphi](t)
		+
		\mathbb{F}_{(Wave)}[\varphi](t)
		& = \mathbb{E}_{(Wave)}[\varphi](0)
				-
				\int_{\mathcal{M}_t}
					(\square_{\gfour} \varphi) \Transport \varphi
				\, d \varpi_{\gfour}
				+
				2
				\int_{\mathcal{M}_t}
					(\Transport \varphi) \varphi 
				\, d \varpi_{\gfour}
					\label{E:WAVEENERGYIDENTITY} \\
		& \ \
				-
				\frac{1}{2}
				\int_{\mathcal{M}_t}
					\enmomem^{\alpha \beta}[\varphi] \deformarg{\Transport}{\alpha}{\beta}
				\, d \varpi_{\gfour}
				+
				\frac{1}{2}
				\int_{\mathcal{M}_t}
					\varphi^2 \deformmixedarg{\Transport}{\alpha}{\alpha}
				\, d \varpi_{\gfour}.
				\notag
	\end{align}
	
	Moreover, the following ``transport'' energy-flux identity holds:
		\begin{align}
		\mathbb{E}_{(Transport)}[\varphi](t)
		+
		\mathbb{F}_{(Transport)}[\varphi](t)
		& = \mathbb{E}_{(Transport)}[\varphi](0)
				+
				2
				\int_{\mathcal{M}_t}
					(\Transport \varphi) \varphi
				\, d \varpi_{\gfour}
				+
				\frac{1}{2}
				\int_{\mathcal{M}_t}
					\varphi^2 \deformmixedarg{\Transport}{\alpha}{\alpha}
				\, d \varpi_{\gfour}. 
			\label{E:TRANSPORTENERGYIDENTITY}
	\end{align}
	
\end{proposition}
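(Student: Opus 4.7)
My plan is to derive both identities via the Lorentzian divergence theorem applied to two carefully chosen vectorfields on $\mathcal{M}_t$, whose boundary decomposes per \eqref{E:BOUNDARYOFTRUNCATEDREGION} as $\widetilde{\Sigma}_t \cup \widetilde{\Sigma}_0 \cup \underline{\mathcal{H}}_t$. The starting point for \eqref{E:WAVEENERGYIDENTITY} is the master identity \eqref{E:DIVERGENCEOFENERGYCURRENT} with multiplier $\mathbf{X} = \Transport$. To also capture the $\varphi^2$ contributions appearing in the energy and flux, I would combine it with the auxiliary identity
\begin{align*}
\Dfour_\alpha(\varphi^2 \Transport^\alpha) = 2\varphi\,\Transport\varphi + \frac{1}{2}\varphi^2\,\deformmixedarg{\Transport}{\alpha}{\alpha},
\end{align*}
which follows from the product rule together with $\Dfour_\alpha \Transport^\alpha = \frac{1}{2}\deformmixedarg{\Transport}{\alpha}{\alpha}$ obtained by tracing \eqref{E:DEFORMATIONTENSOR}. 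One then sees that the vectorfield $\mathbf{V}_{(Wave)}^\alpha := -\Jenarg{\Transport}{\alpha}[\varphi] + \varphi^2 \Transport^\alpha$ has divergence matching precisely the spacetime integrand on RHS~\eqref{E:WAVEENERGYIDENTITY}; for \eqref{E:TRANSPORTENERGYIDENTITY} the analogous choice is $\mathbf{V}_{(Transport)}^\alpha := \varphi^2 \Transport^\alpha$.

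Next I would apply the Lorentzian divergence theorem. Since \eqref{E:WHENTIMEFUNCTIONISCARTESIANTRANSOPORTCOINCIDESWITHTOPNORMAL}--\eqref{E:SIGMATFIRSTFUNDAGREESWITHSIGMATILDEFIRSTFUND} show that the assumption $\Timefunction = t$ forces $\hat{\tophypnorm} = \Transport$ and $d\varpi_{\topfirstfund} = d\varpi_g$, and since all outward unit normals on $\partial \mathcal{M}_t$ are $\gfour$-timelike in the spacelike lateral case, the theorem reads
\begin{align*}
\int_{\mathcal{M}_t} \Dfour_\alpha \mathbf{V}^\alpha\, d\varpi_{\gfour} = -\int_{\widetilde{\Sigma}_t} \gfour_{\alpha\beta}\mathbf{V}^\alpha \Transport^\beta\, d\varpi_g + \int_{\widetilde{\Sigma}_0} \gfour_{\alpha\beta}\mathbf{V}^\alpha \Transport^\beta\, d\varpi_g - \int_{\underline{\mathcal{H}}_t} \gfour_{\alpha\beta}\mathbf{V}^\alpha \hat{\sidehypnorm}^\beta\, d\varpi_{\sidefirstfund},
\end{align*}
with the minus signs originating from $\gfour(\hat n, \hat n) = -1$ for timelike outward unit normals. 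With $\mathbf{V} = \mathbf{V}_{(Wave)}$, the relation $\gfour(\Transport,\Transport) = -1$ from \eqref{E:TRANSPORTISUNITLENGTHANDTIMELIKE} converts the top boundary integrand into $\gfour_{\alpha\beta}\Jenarg{\Transport}{\alpha}[\varphi]\Transport^\beta + \varphi^2$, which is exactly the integrand of $\mathbb{E}_{(Wave)}[\varphi](t)$; the bottom contribution supplies $-\mathbb{E}_{(Wave)}[\varphi](0)$, and the lateral piece becomes $\mathbb{F}_{(Wave)}[\varphi](t)$ after using \eqref{E:INNERPRODUCTOFTRANPORTANDFUTUREUNITNORMALTOHYPERSURFACE} to convert the $\varphi^2$ contribution into $\varphi^2/\lengthofsidehypnorm$. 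Rearranging yields \eqref{E:WAVEENERGYIDENTITY}, and the parallel argument with $\mathbf{V}_{(Transport)}$ yields \eqref{E:TRANSPORTENERGYIDENTITY}.

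The main obstacle is the $\gfour$-null lateral case $\underline{\mathcal{H}} = \underline{\mathcal{N}}$, in which $\sidehypnorm = \uLunit$ is itself null, so neither the unit normal $\hat{\sidehypnorm}$ nor the volume form $d\varpi_{\sidefirstfund}$ exists. Here I would establish that the lateral contribution should be replaced by $-\int_{\underline{\mathcal{N}}_t} \gfour_{\alpha\beta}\mathbf{V}^\alpha \uLunit^\beta\, d\varpi_{\gsphere}\, dt'$. The most efficient way I see to justify this is to work in the coordinates $(t,\vartheta^1,\vartheta^2)$ on $\underline{\mathcal{N}}$ constructed in the proof of Lemma~\ref{L:IDENTITIESFORVOLUMEFORMS}, in which $\uLunit = \partial/\partial t$, and to verify by a direct pullback computation that the Leray form $\iota_{\mathbf{V}}\varpi_{\gfour}$ restricts to $-\gfour_{\alpha\beta}\mathbf{V}^\alpha\uLunit^\beta\,d\varpi_{\gsphere}\,dt$ on $\underline{\mathcal{N}}$; alternatively, one may approximate $\underline{\mathcal{N}}_t$ by a family of interior $\gfour$-spacelike hypersurfaces and pass to the limit, noting that the factor $1/\lengthofsidehypnorm$ appearing in the spacelike flux is exactly compensated by the factor $\lengthofmodgen$ in \eqref{E:SIDEHYPERSURFACEVOLUMEFORMEXPRESSIONWITHRESPECTTOTIMEFUNCTIONSPACELIKECASE} as both tend to zero. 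Granting this, the identity $\gfour(\Transport,\uLunit) = -1$ from \eqref{E:EQUIVALENTFUTURENORMALTOHYPERSURFACE} turns the $\varphi^2$-piece into exactly $+\varphi^2$, reproducing the integrand of \eqref{E:NULLFLUXDEF} and completing the proof uniformly in both the $\gfour$-spacelike and $\gfour$-null cases.
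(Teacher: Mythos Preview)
Your proposal is correct and follows essentially the same approach as the paper: you apply the Lorentzian divergence theorem to the vectorfield $-\Jenarg{\Transport}{\alpha}[\varphi] + \varphi^2\Transport^\alpha$ (the paper uses its negative, $\Jenarg{\Transport}{\alpha}[\varphi] - \varphi^2\Transport^\alpha$), identify the boundary contributions with the energies and fluxes, and handle the $\gfour$-null lateral case by a limiting argument from the spacelike case. The paper makes the limit precise by invoking the identity \eqref{E:RATIOLENGTHOFSIDEHYPNORMLENGTHOFGEN} together with \eqref{E:LENGTHOFMODTOPHYPNORMISUNITY} and \eqref{E:GENCOEFFICIENTISUNITY} to show $\lengthofgen/\lengthofsidehypnorm \to 1$, which is exactly the compensation you describe; your alternative Leray-form route is not used in the paper but would also work.
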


\begin{proof}
		We start by reminding the reader that in this section, the acoustical time function $\Timefunction$ is equal to the Cartesian time function $t$.
		To prove \eqref{E:WAVEENERGYIDENTITY},
		we consider the energy current 
		$\Jenwithlowerarg{\Transport}{\alpha} 
		:= 
		\Jenarg{\Transport}{\alpha}[\varphi]
		-
		\varphi^2
		\Transport^{\alpha}
		$, 
		(where $\Jenarg{\Transport}{\alpha}[\varphi]$ is defined by \eqref{E:MULTIPLIERVECTORFIELD} with $\Transport$ in the role of $\mathbf{X}$).
		We integrate $\Dfour_{\alpha} \Jenwithlowerarg{\Transport}{\alpha}$
		with respect to $d \varpi_{\gfour}$ (see Def.\,\ref{D:VOLUMEFORMS} for the definitions of the volume and area forms)
		over the spacetime region $\mathcal{M}_t$ (see \eqref{E:TIMETRUNCATEDDOMAIN}) with respect to $d \varpi_{\gfour}$
		and apply the divergence theorem.
		The relevant unit normals to the boundary surfaces 
		$\widetilde{\Sigma}_0$,
		$\widetilde{\Sigma}_t$,
		and $\underline{\mathcal{H}}_t$
		($\underline{\mathcal{N}}_t$ in the null case)
		are stated in Def.\,\ref{D:ENERGIESANDFLUXES}.
		The relevant volume form on $\widetilde{\Sigma}_0$ and $\widetilde{\Sigma}_t$
		is $d \varpi_g$,
		while when $\underline{\mathcal{H}}_t$ is $\gfour$-spacelike, the relevant volume form on
		$\underline{\mathcal{H}}_t$ is $d \varpi_{\sidefirstfund}$; we will comment on the null case
		$\underline{\mathcal{H}}_t = \underline{\mathcal{N}}_t$ later in the proof.
		With the help of \eqref{E:TRANSPORTISUNITLENGTHANDTIMELIKE}, 
		\eqref{E:EQUIVALENTFUTURENORMALTOHYPERSURFACE},
		and
		\eqref{E:INNERPRODUCTOFTRANPORTANDFUTUREUNITNORMALTOHYPERSURFACE},
		we see that 
		the boundary integrals that arise in the divergence theorem
		are precisely the wave energies and fluxes
		from Def.\,\ref{D:ENERGIESANDFLUXES}.
		Moreover, we re-express the ``bulk term''
		$
			\int_{\mathcal{M}_t}
					\Dfour_{\alpha} \Jenwithlowerarg{\Transport}{\alpha}
				\, d \varpi_{\gfour}
		$
		using the identity 
		$\Dfour_{\alpha} \Jenwithlowerarg{\Transport}{\alpha} = 
		(\square_{\gfour} \varphi) \Transport \varphi
			+
			\frac{1}{2} 
			\enmomem^{\kappa \lambda} \deformarg{\mathbf{\Transport}}{\kappa}{\lambda}
			-
			2 (\Transport \varphi) \varphi 
			-
			\frac{1}{2}
			\varphi^2 \deformmixedarg{\Transport}{\alpha}{\alpha}
		$,
		which follows from 
		\eqref{E:DEFORMATIONTENSOR},
		\eqref{E:DIVERGENCEOFENERGYCURRENT},
		and straightforward computations.
		We clarify that when $\underline{\mathcal{H}}$ is $\gfour$-spacelike,
		$\Transport|_{\widetilde{\Sigma}_0}$ points inwards to $\mathcal{M}_t$,
		$\Transport|_{\widetilde{\Sigma}_t}$ points outwards to $\mathcal{M}_t$,
		and $\hat{\sidehypnorm}$ points outwards to $\mathcal{M}_t$
		(see \eqref{E:UNITHYPERSURFACENORMAL} and Subsect.\,\ref{SS:ASSUMPTIONSONSPACETIMEREGION}),
		and that due to the Lorentzian nature of $\gfour$,
		in the divergence theorem, the bulk integral
		$
			\int_{\mathcal{M}_t}
					\Dfour_{\alpha} \Jenwithlowerarg{\Transport}{\alpha}
				\, d \varpi_{\gfour}
		$
		is equal to boundary integrals involving \emph{inward} pointing normals.
		This yields \eqref{E:WAVEENERGYIDENTITY} when $\underline{\mathcal{H}}$
		is $\gfour$-spacelike, and in particular explains the signs
		in \eqref{E:WAVEENERGYIDENTITY}.
		Next, we note that the identity \eqref{E:WAVEENERGYIDENTITY} in the $\gfour$-null case 
		can be obtained as an appropriate limit of the $\gfour$-spacelike case.
		Specifically, one can use the relations
		$
		\hat{\sidehypnorm}^{\alpha}
		d \varpi_{\sidefirstfund}
		=
		\frac{\lengthofgen}{\lengthofsidehypnorm}
		\sidehypnorm^{\alpha}
		d \varpi_{\gsphere} dt'
		$
		(see \eqref{E:UNITHYPERSURFACENORMAL}, 
		\eqref{E:SIDEHYPERSURFACEVOLUMEFORMEXPRESSIONWITHRESPECTTOTIMEFUNCTIONSPACELIKECASE}, and \eqref{E:LENGTHOFMODGENISLENGTHOFGEN})
		and the fact that if we take a limit as $\underline{\mathcal{H}}$ becomes
		$\gfour$-null (i.e., as $\lengthofsidehypnorm \downarrow 0$), 
		then with $\uLunit$ denoting the $\gfour$-normal (i.e., the null generator) of the limiting null hypersurface, 
		we have $\sidehypnorm \rightarrow \uLunit$
		(since $\sidehypnorm t = \uLunit t = 1$ by \eqref{E:FUTURENORMALTOHYPERSURFACE} and Convention~\ref{C:NULLCASE})
		and $\frac{\lengthofgen}{\lengthofsidehypnorm} \rightarrow 1$,
		where the latter limit follows from \eqref{E:RATIOLENGTHOFSIDEHYPNORMLENGTHOFGEN},
		the identity \eqref{E:LENGTHOFMODTOPHYPNORMISUNITY} for $\lengthoftophypnorm$,
		and \eqref{E:GENCOEFFICIENTISUNITY}.
		
		The identity \eqref{E:TRANSPORTENERGYIDENTITY}
		can be proved in a similar but simpler fashion by applying the divergence theorem
		to the vectorfield 
		$
		-
		\varphi^2
		\Transport^{\alpha}
		$
		on the spacetime region $\mathcal{M}_t$; we omit the details.
	\end{proof}

\subsection{$L^2$-type Controlling quantities}
\label{SS:CONTROLLINGQUANTITIES}
In this section, we combine the previously derived integral identities and use them
to derive localized a priori estimates for solutions. Compared to standard results,
our estimates yield a gain of one derivative for the specific vorticity and entropy
(assuming that the initial data enjoy the same gain),
i.e., we exhibit application \textbf{I} described in Subsect.\,\ref{SS:APPLICATIONS}.
The main result of this section is Theorem~\ref{T:LOCALIZEDAPRIORIESTIMATES}.
The theorem is of particular interest in the case that the lateral boundary 
$\underline{\mathcal{N}}$ is $\gfour$-null;
as we discussed in Subsect.\,\ref{SS:APPLICATIONS}, 
the null case is important for applications to shock waves, 
and to handle the degeneracy of wave energies along $\underline{\mathcal{H}} = \underline{\mathcal{N}}$,
we must exploit the special structures in the lateral boundary integrals of Prop.\,\ref{P:STRUCTUREOFERRORINTEGRALS},
which we derived in Theorem~\ref{T:STRUCTUREOFERRORTERMS}.
Specifically, we exploit that the integrands involve derivatives only in directions that are tangent to $\underline{\mathcal{N}}$.

We state our a priori estimates in terms of the $L^2$-type controlling quantities provided by the following definition.

\begin{definition}[The controlling quantities]
	\label{D:CONTROLLINGQUANTITIES}
		Let $\mathcal{M} = \mathcal{M}_T$ be a spacetime region satisfying the conditions stated in 
		Subsects.\,\ref{SS:DOMAINANDTIMEFUNCTIONETC} and \ref{SS:ASSUMPTIONSONSPACETIMEREGION} for some $T > 0$;
		see Fig.\,\ref{F:SPACETIMEDOMAIN}.
		In particular, assume that the lateral boundary $\underline{\mathcal{H}} = \underline{\mathcal{H}}_T$ is $\gfour$-spacelike
		or is $\gfour$-null (in the null case, $\underline{\mathcal{H}} := \underline{\mathcal{N}} = \underline{\mathcal{N}}_T$).
		Assume further that the acoustical time function $\Timefunction$ from the beginning of Sect.\,\ref{S:SPACETIMEDOMAINS} is equal to
		the Cartesian time function $t$.
		We define the following controlling quantities,
		where the volume forms are defined in Def.\,\ref{D:VOLUMEFORMS},
		$\uposinnerproduct > 0$ is as in \eqref{E:INGOINGCONDITION},
		the quadratic form $\mathscr{Q}$ on RHS~\eqref{E:SPACETIMECONTROLLINGQUANTITY} is as in 
		Def.\,\ref{D:QUADRATICFORMSFORCONTROLLINGFIRSTDERIVATIVESOFSPECIFICVORTICITYANDENTROPYGRADIENT}
		and Lemma~\ref{L:POSITIVITYPROPERTIESOFVARIOUSQUADRATICFORMS},
		and the energies $\mathbb{E}$ and fluxes $\mathbb{F}$ 
		on RHS~\eqref{E:ENERGYFLUXCONTROLLINGQUANTITY} are as in Def.\,\ref{D:ENERGIESANDFLUXES}:
	\begin{subequations}
	\begin{align}
		\spacetimeen(t)
		& :=
		\int_{\mathcal{M}_t}	
			\mathscr{Q}(\pmb{\partial} \vortrenormalized,\pmb{\partial} \vortrenormalized)
		\, d \varpi_{\gfour}
		+
			\int_{\mathcal{M}_t}	
			\mathscr{Q}(\pmb{\partial} \GradEnt,\pmb{\partial} \GradEnt)
		\, d \varpi_{\gfour}
		+
		\int_{\mathcal{S}_t}
			\uposinnerproduct
			|\angvortrenormalized|_{\gsphere}^2
		\, d \varpi_{\gsphere} 
		+
		\int_{\mathcal{S}_t}
			\uposinnerproduct
			|\angGradEnt|_{\gsphere}^2
		\, d \varpi_{\gsphere}, 
					\label{E:SPACETIMECONTROLLINGQUANTITY} \\
		\toten(t)
		& := 
				\sum_{\varphi \in \lbrace \LogDensity,v^i,\Ent \rbrace_{i=1,2,3}} \mathbb{E}_{(Wave)}[\varphi](t)
					+
					\sum_{\varphi \in \lbrace \LogDensity,v^i,\Ent \rbrace_{i=1,2,3}} \mathbb{F}_{(Wave)}[\varphi](t)
					\label{E:ENERGYFLUXCONTROLLINGQUANTITY} \\
		& \ \
					+
					\sum_{\varphi \in \lbrace \vortrenormalized^i,\GradEnt^i,\VortVort^i,\DivGradEnt \rbrace_{i=1,2,3}} \mathbb{E}_{(Transport)}[\varphi](t)
					+
					\sum_{\varphi \in \lbrace \vortrenormalized^i,\GradEnt^i,\VortVort^i,\DivGradEnt \rbrace_{i=1,2,3}} \mathbb{F}_{(Transport)}[\varphi](t).
					\notag
\end{align}
\end{subequations}
\end{definition}

\subsection{Combining the integral identities}
In the next proposition, we set up the derivation of the a priori estimates by
combining the integral identities of
Theorem~\ref{T:MAINREMARKABLESPACETIMEINTEGRALIDENTITY} 
and Prop.\,\ref{P:ENERGYFLUXID} 
and restating them in terms of the controlling quantities
of Def.\,\ref{D:CONTROLLINGQUANTITIES}.

\begin{proposition}[Combining the integral identities]
	\label{P:COMBINEDINTEGRALIDENTITIES}
	Let $\mathcal{M} = \mathcal{M}_T$ be a spacetime region satisfying the conditions stated in 
	Subsects.\,\ref{SS:DOMAINANDTIMEFUNCTIONETC} and \ref{SS:ASSUMPTIONSONSPACETIMEREGION} for some $T > 0$;
	see Fig.\,\ref{F:SPACETIMEDOMAIN}.
	Assume further that the acoustical time function $\Timefunction$ from the beginning of Sect.\,\ref{S:SPACETIMEDOMAINS} is equal to
	the Cartesian time function $t$ (see Footnote~\ref{FN:GENERALIZETOOTHERTIMEFUNCTIONS}).
	Then for smooth solutions (see Remark~\ref{R:SMOOTHNESSNOTNEEDED})
	to the compressible Euler equations \eqref{E:TRANSPORTDENSRENORMALIZEDRELATIVECTORECTANGULAR}-\eqref{E:ENTROPYTRANSPORT}
	on $\mathcal{M}_T$,
	the controlling quantities $\spacetimeen(t)$ and $\toten(t)$
	from Def.\,\ref{D:CONTROLLINGQUANTITIES} verify the following
	identities for $t \in [0,T]$, 
	where the volume forms are defined in Def.\,\ref{D:VOLUMEFORMS},
	and the terms $\mathfrak{A}^{(\vortrenormalized)}, \cdots, \underline{\mathfrak{H}}_{(2)}[\GradEnt]$
	on RHS~\eqref{E:SPACETIMECOMBINEDINTEGRALID}
	are defined in \eqref{E:SPECIFICVORTICITYSPACETIMEERRORTERMANTISYMMETRIC}-\eqref{E:MAINTHMENTROPYGRADIENTMAINLATERALERRORINTEGRAND}:
	\begin{subequations}
	\begin{align}
		\spacetimeen(t)
		 & = 
			\spacetimeen(0)
				\label{E:SPACETIMECOMBINEDINTEGRALID} \\
		& \ \
			+	
			\int_{\mathcal{M}_t}	
			\left\lbrace
				\frac{1}{2}|\mathfrak{A}^{(\vortrenormalized)}|_{\topfirstfund}^2
				+
				|\mathfrak{B}_{(\vortrenormalized)}|_{\topfirstfund}^2
				+
				\mathfrak{C}^{(\vortrenormalized)}
				+
				\mathfrak{D}^{(\vortrenormalized)}
				+
				\mathfrak{J}_{(Coeff)}[\vortrenormalized,\pmb{\partial} \vortrenormalized]
			\right\rbrace
	\, d \varpi_{\gfour}
				\notag \\
		& \ \
			+
		\int_{\mathcal{M}_t}	
			\left\lbrace
				\frac{1}{2}|\mathfrak{A}^{(\GradEnt)}|_{\topfirstfund}^2
				+
				|\mathfrak{B}_{(\GradEnt)}|_{\topfirstfund}^2
				+
				\mathfrak{C}^{(\GradEnt)}
				+
				\mathfrak{D}^{(\GradEnt)}
				+
				\mathfrak{J}_{(Coeff)}[\GradEnt,\pmb{\partial} \GradEnt]
			\right\rbrace
	\, d \varpi_{\gfour}
	\notag
				\\
		& \ \
			+
			\int_{\underline{\mathcal{H}}_t}
			\left\lbrace
				\underline{\mathfrak{H}}[\vortrenormalized]
				+
				\underline{\mathfrak{H}}_{(1)}[\vortrenormalized]
			\right\rbrace
		\, d \varpi_{\gsphere} d t'
		+
		\int_{\underline{\mathcal{H}}_t}
			\left\lbrace
				\underline{\mathfrak{H}}[\GradEnt]
				+
				\underline{\mathfrak{H}}_{(2)}[\GradEnt]
			\right\rbrace
		\, d \varpi_{\gsphere} d t',
			\notag
				\\
		\toten(t)
		 & = \toten(0)
				-
				\sum_{\varphi \in \lbrace \LogDensity,v^i,\Ent \rbrace_{i=1,2,3}}
				\int_{\mathcal{M}_t}
					(\square_{\gfour} \varphi) \Transport \varphi
				\, d \varpi_{\gfour}
				-
				\frac{1}{2}
				\sum_{\varphi \in \lbrace \LogDensity,v^i,\Ent \rbrace_{i=1,2,3}}
				\int_{\mathcal{M}_t}
					\enmomem^{\alpha \beta}[\varphi] \deformarg{\Transport}{\alpha}{\beta}
				\, d \varpi_{\gfour}
				\label{E:ENERGYFLUXCOMBINEDINTEGRALID} \\
		& \ \
				+
				2
				\sum_{\varphi \in \lbrace \vortrenormalized^i,\GradEnt^i,\VortVort^i,\DivGradEnt \rbrace_{i=1,2,3}}
				\int_{\mathcal{M}_t}
					(\Transport \varphi) \varphi
				\, d \varpi_{\gfour}
				+
				\frac{1}{2}
				\sum_{\varphi \in \lbrace \GradEnt^i,\VortVort^i,\DivGradEnt \rbrace_{i=1,2,3}}
				\int_{\mathcal{M}_t}
					\varphi^2 \deformmixedarg{\Transport}{\alpha}{\alpha}
				\, d \varpi_{\gfour}.
				\notag
	\end{align}
	\end{subequations}
	We also note that \eqref{E:SIDEHYPERSURFACEVOLUMEFORMEXPRESSIONWITHRESPECTTOTIMEFUNCTIONSPACELIKECASE} and \eqref{E:LENGTHOFMODGENISLENGTHOFGEN}
	imply that when $\underline{\mathcal{H}}_t$ is $\gfour$-spacelike, 
	we have
	$d \varpi_{\gsphere} dt' = \lengthofgen^{-1} \, d \varpi_{\sidefirstfund}$,
	where $\lengthofgen > 0$ is the scalar function defined in \eqref{E:LENGTHOFSIDEGEN}.
\end{proposition}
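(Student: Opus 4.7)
The plan is to set $\weight \equiv 1$ in Theorem~\ref{T:MAINREMARKABLESPACETIMEINTEGRALIDENTITY}, apply Prop.\,\ref{P:ENERGYFLUXID} to each of the scalar-valued unknowns appearing in Def.\,\ref{D:CONTROLLINGQUANTITIES}, then recognize that under the assumption $\Timefunction = t$ the various geometric weights in Theorem~\ref{T:MAINREMARKABLESPACETIMEINTEGRALIDENTITY} collapse to unity by virtue of Prop.\,\ref{P:VARIOUSIDENTITIESWHENTIMEFUNCTIONISCARTESIAN}. The proof is essentially a consolidation/bookkeeping argument: no new cancellations are required, and the nontrivial structural information has already been packaged into the error terms $\mathfrak{A}, \mathfrak{B}, \mathfrak{C}, \mathfrak{D}, \mathfrak{J}_{(Coeff)}, \underline{\mathfrak{H}}, \underline{\mathfrak{H}}_{(1)}, \underline{\mathfrak{H}}_{(2)}$ by the preceding theorems.

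First I would prove \eqref{E:SPACETIMECOMBINEDINTEGRALID}. I apply \eqref{E:SPACETIMEREMARKABLEIDENTITYSPECIFICVORTICITY} with $\weight \equiv 1$ and observe that the weight-derivative contributions $\mathfrak{J}_{(\pmb{\partial} \weight)}[\vortrenormalized,\pmb{\partial}\vortrenormalized]$ and $\underline{\mathfrak{H}}_{(\pmb{\partial}\weight)}[\vortrenormalized]$ defined in \eqref{E:ELLIPTICHYPERBOLICCURRENTBULKERRORTERMWITHWEIGHTDERIVATIVES} and \eqref{E:MAINTHMWEIGHTDERIVATVELATERALBOUNDARYEASYERRORINTEGRANDTERMS} vanish identically. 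Under the assumption $\Timefunction = t$, Prop.\,\ref{P:VARIOUSIDENTITIESWHENTIMEFUNCTIONISCARTESIAN} gives $\lengthofmodtophypnorm = 1$, $\lapsemodgen = 1$, and $\seconduposinnerproduct = 1$, so the integrand on the left-hand side of \eqref{E:SPACETIMEREMARKABLEIDENTITYSPECIFICVORTICITY} reduces to $\mathscr{Q}(\pmb{\partial}\vortrenormalized,\pmb{\partial}\vortrenormalized)$ on $\mathcal{M}_t$ and the boundary $\mathcal{S}_t$ weight reduces to $\uposinnerproduct$. I then apply \eqref{E:SPACETIMEREMARKABLEIDENTITYENTROPYGRADIENT} the same way for $\GradEnt$ and add the two identities; the resulting $\mathcal{S}_t$ boundary terms and spacetime quadratic-form integrals assemble precisely into $\spacetimeen(t) - \spacetimeen(0)$ as defined in \eqref{E:SPACETIMECONTROLLINGQUANTITY}. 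All error integrals retain the stated form on RHS~\eqref{E:SPACETIMECOMBINEDINTEGRALID}.

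Next I prove \eqref{E:ENERGYFLUXCOMBINEDINTEGRALID}. I apply the wave energy-flux identity \eqref{E:WAVEENERGYIDENTITY} to each $\varphi \in \lbrace \LogDensity, v^1, v^2, v^3, \Ent \rbrace$ and the transport energy-flux identity \eqref{E:TRANSPORTENERGYIDENTITY} to each $\varphi \in \lbrace \vortrenormalized^i, \GradEnt^i, \VortVort^i, \DivGradEnt \rbrace_{i=1,2,3}$, then sum. The sum of the left-hand sides reconstructs $\toten(t)$ by definition \eqref{E:ENERGYFLUXCONTROLLINGQUANTITY}, while the data contributions at $t=0$ assemble into $\toten(0)$. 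On the right-hand side, the terms of the form $2\int_{\mathcal{M}_t}(\Transport\varphi)\varphi \, d\varpi_{\gfour}$ from the wave sector cancel with the deformation-tensor terms $\frac{1}{2}\int_{\mathcal{M}_t} \varphi^2 \deformmixedarg{\Transport}{\alpha}{\alpha}\, d\varpi_{\gfour}$ of those same variables in the combined expression---no, more carefully, I keep the wave-sector $(\Transport\varphi)\varphi$ terms together with their traces and note that upon rearrangement the stated form of \eqref{E:ENERGYFLUXCOMBINEDINTEGRALID} drops out, where the $\int_{\mathcal{M}_t}(\Transport\varphi)\varphi$ and $\frac{1}{2}\int_{\mathcal{M}_t}\varphi^2 \deformmixedarg{\Transport}{\alpha}{\alpha}$ contributions from the wave-energy identity are absorbed into the transport-error integrals displayed on RHS~\eqref{E:ENERGYFLUXCOMBINEDINTEGRALID}.

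The primary obstacle is purely organizational: one must verify that the symbols on RHS~\eqref{E:SPACETIMECOMBINEDINTEGRALID}, copied verbatim from the definitions \eqref{E:SPECIFICVORTICITYSPACETIMEERRORTERMANTISYMMETRIC}--\eqref{E:MAINTHMENTROPYGRADIENTMAINLATERALERRORINTEGRAND}, line up exactly with what Theorem~\ref{T:MAINREMARKABLESPACETIMEINTEGRALIDENTITY} produces after setting $\weight \equiv 1$ and specializing $\Timefunction = t$. A subtlety worth checking is the final remark of the proposition: the identification $d\varpi_{\gsphere}\, dt' = \lengthofgen^{-1}\, d\varpi_{\sidefirstfund}$ in the $\gfour$-spacelike case follows from \eqref{E:SIDEHYPERSURFACEVOLUMEFORMEXPRESSIONWITHRESPECTTOTIMEFUNCTIONSPACELIKECASE} together with \eqref{E:LENGTHOFMODGENISLENGTHOFGEN}, so this is immediate and requires no separate argument. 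Since no inequalities, Gronwall steps, or absorption arguments appear, the proposition is a clean statement of equality that follows by direct substitution and addition.
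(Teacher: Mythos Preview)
Your approach is essentially identical to the paper's: the paper proves \eqref{E:SPACETIMECOMBINEDINTEGRALID} by citing definition \eqref{E:SPACETIMECONTROLLINGQUANTITY}, Theorem~\ref{T:MAINREMARKABLESPACETIMEINTEGRALIDENTITY} with $\weight := 1$, and the identity $\lengthofmodtophypnorm = 1$ from \eqref{E:LENGTHOFMODTOPHYPNORMISUNITY}, while \eqref{E:ENERGYFLUXCOMBINEDINTEGRALID} is said to follow from definition \eqref{E:ENERGYFLUXCONTROLLINGQUANTITY} and Prop.~\ref{P:ENERGYFLUXID}. Your mid-argument hedging about wave-sector $(\Transport\varphi)\varphi$ terms ``cancelling'' or being ``absorbed'' is unnecessary and slightly misleading: the paper treats \eqref{E:ENERGYFLUXCOMBINEDINTEGRALID} as a direct summation of the identities in Prop.~\ref{P:ENERGYFLUXID} over the variables listed in \eqref{E:ENERGYFLUXCONTROLLINGQUANTITY}, with no further manipulation claimed.
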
	

\begin{proof}
	\eqref{E:SPACETIMECOMBINEDINTEGRALID} follows from definition \eqref{E:SPACETIMECONTROLLINGQUANTITY},
	Theorem~\ref{T:MAINREMARKABLESPACETIMEINTEGRALIDENTITY} with $\weight := 1$,
	and the last equality in \eqref{E:LENGTHOFMODTOPHYPNORMISUNITY}.
	\eqref{E:ENERGYFLUXCOMBINEDINTEGRALID} follows from definition \eqref{E:ENERGYFLUXCONTROLLINGQUANTITY} 
	and Prop.\,\ref{P:ENERGYFLUXID}.
\end{proof}

\subsection{Notation regarding constants and the norm $\| \cdot \|_{C(\mathcal{R})}$}
\label{SS:NOTATIONFORCONSTANTS}
In the rest of Sect.\,\ref{S:APRIORI}, $C > 0$ denotes a uniform constant that is free to vary from line to line.
$C$ is allowed to depend on the set $\mathfrak{K}$ of fluid variable space
featured below in equation \eqref{E:KEYCONTAINMENTOFSOLUTIONINCOMPACTSUBSETOFSTATESPACE}.

For given quantities $A,B \geq 0$,
we write $A \lesssim B$ to mean that there exists a $C > 0$ such that $A \leq C B$.
We write $A \approx B$ to mean that $A \lesssim B$ and $B \lesssim A$.

Moreover, if $\varphi$ is a continuous scalar-valued function and $\mathcal{R} \subset \mathcal{M}_T$,
is any subset
then 
\begin{align}
\| \varphi \|_{C(\mathcal{R})}
:= 
\sup_{q \in \mathcal{R}} |\varphi(q)|.
\end{align}

\subsection{Assumptions on the solution and coerciveness of the controlling quantities}
\label{SS:ASSUMPTIONSONTHESOLUTION}
In this subsection, we state some standard $C^1$-type boundedness assumptions on the solution that we 
will use in our derivation of a priori estimates, i.e., in our proof of Theorem~\ref{T:LOCALIZEDAPRIORIESTIMATES}.
Moreover, in Lemma~\ref{L:COERCIVENESS}, we use the assumptions
to quantify the coerciveness of the energies and fluxes from Def.\,\ref{D:ENERGIESANDFLUXES}.

\subsubsection{Assumptions on the solution}
\label{SSS:ASSUMPTIONSONTHESOLUTION}
The following definition describes the subset of solution space
on which the compressible Euler equations  
(specifically, the equations of Theorem~\ref{T:GEOMETRICWAVETRANSPORTSYSTEM})
are hyperbolic in a non-degenerate sense.

\begin{definition}[Regime of hyperbolicity] \label{D:REGIMEOFHYPERBOLICITY}
	We define $\mathscr{H}$ as follows:
	\begin{align} \label{E:REGIMEOFHYPERBOLICITY}
		\mathscr{H}
		& := 
		\left\lbrace
			(\LogDensity,\Ent,v,\vortrenormalized,\GradEnt)
			\in
			\mathbb{R} \times \mathbb{R} \times \mathbb{R}^3 \times \mathbb{R}^3 \times \mathbb{R}^3
			\ | \
			0 < \Speed(\LogDensity,\Ent) < \infty
		\right\rbrace.
	\end{align}
\end{definition}

We now state our assumptions on the solution.

\begin{remark}[The pointwise norms $|\cdot|_{\euct}$ and $|\cdot|_{\euc}$]
	We refer to Subsubsect.\,\ref{SSS:GRADIENTSANDPOINTWISENORMS}
	for the definitions of the pointwise 
	Euclidean norms
	$|\cdot|_{\euct}$
	and
	$|\cdot|_{\euc}$.
\end{remark}

\begin{center}
	\underline{\large \textbf{Assumptions on the solution}}
\end{center}

\begin{enumerate}
	\item We assume that for some $T > 0$,
		$(\LogDensity,\Ent,v)$
		is a smooth solution 
		(see Remark~\ref{R:SMOOTHNESSNOTNEEDED})
		to the compressible Euler equations \eqref{E:TRANSPORTDENSRENORMALIZEDRELATIVECTORECTANGULAR}-\eqref{E:ENTROPYTRANSPORT}
		(and thus $(\LogDensity,\Ent,v,\vortrenormalized,\GradEnt,\VortVort,\DivGradEnt)$ is a solution 
		to the equations of Theorem~\ref{T:GEOMETRICWAVETRANSPORTSYSTEM})
		on a compact subset $\mathcal{M} = \mathcal{M}_T$ of spacetime
		satisfying the conditions stated in 
		Subsects.\,\ref{SS:DOMAINANDTIMEFUNCTIONETC} and \ref{SS:ASSUMPTIONSONSPACETIMEREGION}.
		We also assume that the acoustical time function $\Timefunction$ from the beginning of Sect.\,\ref{S:SPACETIMEDOMAINS} is equal to
		the Cartesian time function $t$ (see Footnote~\ref{FN:GENERALIZETOOTHERTIMEFUNCTIONS}).
	\item In particular, as is stated in \eqref{E:SPLITTINGOFBOUNDARYOFREGION},
		we assume that the boundary of $\mathcal{M}_T$ 
		is the union of a flat portion consisting of a compact subset of $\Sigma_T$ (denoted by $\widetilde{\Sigma}_T$),
		of a flat portion consisting of a compact subset of $\Sigma_0$ (denoted by $\widetilde{\Sigma}_0$),
		and of lateral boundary consisting of a $\gfour$-spacelike or $\gfour$-null hypersurface 
		$\underline{\mathcal{H}}_T$,
		where we use the alternate notation $\underline{\mathcal{H}}_T = \underline{\mathcal{N}}_T$
		in the null case.
	\item Let $\mathscr{H}$ be as in \eqref{E:REGIMEOFHYPERBOLICITY}.
		We assume that there is a compact subset $\mathfrak{K}$ of $\mathscr{H}$ 
		such that 
		\begin{align} \label{E:KEYCONTAINMENTOFSOLUTIONINCOMPACTSUBSETOFSTATESPACE}
			(\LogDensity,\Ent,v,\vortrenormalized,\GradEnt)(\mathcal{M}_T) \subset \mathfrak{K}.
		\end{align}
		Note that the assumption \eqref{E:KEYCONTAINMENTOFSOLUTIONINCOMPACTSUBSETOFSTATESPACE} 
		implies a uniform $L^{\infty}(\mathcal{M}_T)$ bound for
		$|(\LogDensity,\Ent,v,\vortrenormalized,\GradEnt)|_{\euct}$, 
		a fact which we will silently use throughout the rest of Sect.\,\ref{S:APRIORI}.
	\item Under the notation of Subsubsects.\,\ref{SSS:GRADIENTS}-\ref{SSS:GRADIENTSANDPOINTWISENORMS},
		we assume that the constant $A = A(\mathcal{M}_T) > 0$ is such that
		\begin{align} \label{E:ASSUMEDC1BOUND}
			\| |(\LogDensity,\Ent,v)|_{\euct} \|_{C(\mathcal{M}_T)}
			+
			\| |\pmb{\partial} (\LogDensity,\Ent,v)|_{\euc} \|_{C(\mathcal{M}_T)}
			& \leq A.
		\end{align}
	\item	In the case that $\underline{\mathcal{H}}_T$ is $\gfour$-spacelike, 
		let $\gen$ and $\spherenormal$ be the vectorfields from
		Def.\,\ref{D:HYPNORMANDSPHEREFORMDEFS},
		and let $\uposinnerproduct > 0$ be the scalar function defined in \eqref{E:INGOINGCONDITION}.
		Under the notation of Subsubsects.\,\ref{SSS:GRADIENTS}-\ref{SSS:GRADIENTSANDPOINTWISENORMS} and \ref{SSS:MORENOTATION},
		we assume that the constant $B = B(\underline{\mathcal{H}}_T) > 0$ is such that
		\begin{align} \label{E:VECTORFIELDSTENSORIALC1BOUND}
			\| |(\gen,\spherenormal)|_{\euc} \|_{C(\underline{\mathcal{H}}_T)}
			+
			\left\| \frac{1}{\uposinnerproduct} \right\|_{C(\underline{\mathcal{H}}_T)}
			+
			\| |(\sidepartial \vec{\gen},\sidepartial \vec{\spherenormal})|_{\euc} \|_{C(\underline{\mathcal{H}}_T)}
			& \leq B.
		\end{align}
		\item In the case that $\underline{\mathcal{N}}_T$ is $\gfour$-null, 
		let $\spherenormal$ be the vectorfield from
		Def.\,\ref{D:HYPNORMANDSPHEREFORMDEFS},
		and let $\uLunit$ be as in 
		Convention~\ref{C:NULLCASE},
		Def.\,\ref{D:LENGTHOFVARIOUSVECTORFIELDSETC}, 
		and \eqref{E:NORMALISGENERATORINNULLCASE}
		(and note that Prop.\,\ref{P:VARIOUSIDENTITIESWHENTIMEFUNCTIONISCARTESIAN} 
		implies that $\utang = 0$ and $\uposinnerproduct = 1$ in the present context).
		Under the notation of Subsubsects.\,\ref{SSS:GRADIENTS}-\ref{SSS:GRADIENTSANDPOINTWISENORMS} and \ref{SSS:MORENOTATION},
		we assume that the constant $B = B(\underline{\mathcal{N}}_T) > 0$ is such that
		\begin{align} \label{E:VECTORFIELDSTENSORIALC1BOUNDNULLCASE}
			\| |\uLunit|_{\euc} \|_{C(\underline{\mathcal{N}}_T)}
			+
			\| |(\uLunit \vec{\uLunit},\angpartial \vec{\uLunit})|_{\euc} \|_{C(\underline{\mathcal{N}}_T)}
			& \leq B.
		\end{align}
\end{enumerate}

\begin{remark}[Additional assumptions are needed to control the solution's higher-order derivatives, and the sub-optimality of the assumptions]
	\label{R:ADDITIONALASSUMPTIONSFORHIGERDERIVATIVES}
	The assumptions we have stated above are sufficient for deriving a priori energy estimates
	for solutions to the equations of Theorem~\ref{T:GEOMETRICWAVETRANSPORTSYSTEM}.
	To obtain $L^2$-type energy estimates for the solutions' 
	higher-order derivatives, one would need additional norm-boundedness-type 
	assumptions on the derivatives of some of the solution variables.
	For example, to control the higher-order derivatives of some of the derivative-quadratic
	terms on RHS~\eqref{E:EVOLUTIONEQUATIONFLATCURLRENORMALIZEDVORTICITY},
	one could supplement the assumed bound \eqref{E:ASSUMEDC1BOUND}
	with an assumed bound for
	$\| |\pmb{\partial} (\vortrenormalized,\GradEnt)|_{\euc} \|_{C(\mathcal{M}_T)}$.
	These assumptions are far from optimal;
	see \cites{mDcLgMjS2019,qW2019} for recent results on low regularity solutions.
\end{remark}

\subsubsection{Coerciveness of the energies and fluxes}
\label{SSS:COERCIVENESSOFWAVEENERGIESANDFLUXES}
In this subsubsection, we use the assumption $\Timefunction \equiv t$
and the assumptions of Subsubsect.\,\ref{SSS:ASSUMPTIONSONTHESOLUTION}
to exhibit the coerciveness properties of the energies and fluxes from Def.\,\ref{D:ENERGIESANDFLUXES}. 
We highlight the coerciveness result \eqref{E:NULLCASEWAVEFLUXESSEMICOERCIVE}
for the wave fluxes, which shows that they degenerate along $\gfour$-null hypersurfaces,
controlling tangential but \emph{not transversal} derivatives.

In the rest of Sect.\,\ref{S:APRIORI}, if $\varphi$ is a scalar function and $\mathcal{R} \subset \mathcal{M}_T$, 
then $\| \varphi \|_{L^2(\mathcal{R})}$
denotes the $L^2$ norm of $\varphi$ over $\mathcal{R}$,
where the volume forms used in computing the $L^2$ norms are the ones
from Def.\,\ref{D:VOLUMEFORMS}.
For example, 
$\| \varphi \|_{L^2(\mathcal{M}_T)}^2 
:=
\int_{\mathcal{M}_T}
	\varphi^2
\, d \varpi_{\gfour} 
=
\int_0^T
\int_{\widetilde{\Sigma}_{t'}}
	\varphi^2
\, d \varpi_g
\, dt' 
$,
where the second equality follows from \eqref{E:SPACTIMEVOLUMEFORMANDSIGMATVOLUMEFORMRELATIVETOCARTESIANCOORDINATES}.
Moreover, for $0 \leq t \leq T$,
$\| \varphi \|_{L^2(\widetilde{\Sigma}_t)}^2 
:=
\int_{\widetilde{\Sigma}_t}
	\varphi^2
\, d \varpi_g
$
and (recall that $\underline{\mathcal{N}}_t = \cup_{t' \in [0,t]} \mathcal{S}_{t'}$)
$\| \varphi \|_{L^2(\underline{\mathcal{N}}_t)}^2 
:=
\int_0^t
\int_{\mathcal{S}_{t'}}
	\varphi^2
\, d \varpi_{\gsphere} 
dt'
$. 
If $\vec{\varphi}$ is a tensorfield or an array of scalar-valued functions,
then $\| \vec{\varphi} \|_{L^2(\mathcal{R})}^2$
is defined to be the sum of the squares of the $L^2$ norms of the
Cartesian components of the elements of $\vec{\varphi}$.
For example, if $\varphi$ is a scalar function, then
$\| \pmb{\partial} \varphi \|_{L^2(\underline{\mathcal{H}}_t)}^2
:=
\int_{\underline{\mathcal{H}}_t}
	|\pmb{\partial} \varphi|_{\euc}^2
\, d \varpi_{\sidefirstfund} 
$,
where $|\pmb{\partial} \varphi|_{\euc}^2$ is defined in Subsect.\,\ref{SS:PROPERTIESOFMETRICS}.

\begin{lemma}[Coerciveness of the energies and fluxes]
	\label{L:COERCIVENESS}
	Under the assumption $\Timefunction \equiv t$
	and the assumptions of Subsubsect.\,\ref{SSS:ASSUMPTIONSONTHESOLUTION},
	the energies and fluxes from Def.\,\ref{D:ENERGIESANDFLUXES}
	verify the following inequalities for $t \in [0,T]$, 
	where the implicit constants depend on the compact subset $\mathfrak{K}$ of solution-variable space
	from Point 3 of Subsubsect.\,\ref{SSS:ASSUMPTIONSONTHESOLUTION}:
	\begin{subequations}
	\begin{align}
		\mathbb{E}_{(Wave)}[\varphi](t)
		& \approx 
			\| \pmb{\partial} \varphi \|_{L^2(\widetilde{\Sigma}_t)}^2
			+
			\| \varphi \|_{L^2(\widetilde{\Sigma}_t)}^2,
				\label{E:WAVEENERGIESCOERCIVE} \\
			\mathbb{E}_{(Transport)}[\varphi](t)
		& = 
			\| \varphi \|_{L^2(\widetilde{\Sigma}_t)}^2.
			\label{E:TRANSPORTENERGIESCOERCIVE}
	\end{align}
	\end{subequations}
	
	Moreover, if the lateral boundary $\underline{\mathcal{H}}$ is $\gfour$-spacelike, 
	then the following estimates hold for $t \in [0,T]$, where 
	the implicit constants depend on the compact subset $\mathfrak{K}$ of solution-variable space
	from Point 3 of Subsubsect.\,\ref{SSS:ASSUMPTIONSONTHESOLUTION}
	as well as the reciprocal of the scalar function $\lengthofsidehypnorm$ from \eqref{E:LENGTHOFHYPNORM}
	(which is positive when $\underline{\mathcal{H}}$ is $\gfour$-spacelike):
	\begin{subequations}
	\begin{align}
		\mathbb{F}_{(Wave)}[\varphi](t)
		& \approx 
			\| \pmb{\partial} \varphi \|_{L^2(\underline{\mathcal{H}}_t)}^2
			+
			\| \varphi \|_{L^2(\underline{\mathcal{H}}_t)}^2,
				\label{E:WAVEFLUXESCOERCIVE} \\
		\mathbb{F}_{(Transport)}[\varphi](t)
		&	
			\approx
			\| \varphi \|_{L^2(\underline{\mathcal{H}}_t)}^2.
			\label{E:TRANSPORTFLUXESCOERCIVE}
	\end{align}
	\end{subequations}
	
	In addition, if the lateral boundary $\underline{\mathcal{N}}$ is $\gfour$-null, 
	then the following identities hold for $t \in [0,T]$,
	where the $\gfour$-null vectorfield $\uLunit$ on RHS~\eqref{E:NULLCASEWAVEFLUXESSEMICOERCIVE} 
	is as in Convention~\ref{C:NULLCASE} and \eqref{E:NORMALISGENERATORINNULLCASE}:
	\begin{subequations}
	\begin{align}
		\mathbb{F}_{(Wave)}[\varphi](t)
		& =
			\frac{1}{2}
			\| (\uLunit \varphi,|\angD \varphi|_{\gsphere}) \|_{L^2(\underline{\mathcal{N}}_t)}^2
			+
			\| \varphi \|_{L^2(\underline{\mathcal{N}}_t)}^2,
				\label{E:NULLCASEWAVEFLUXESSEMICOERCIVE} \\
		\mathbb{F}_{(Transport)}[\varphi](t)
		&	=
			\| \varphi \|_{L^2(\underline{\mathcal{N}}_t)}^2.
			\label{E:NULLCASETRANSPORTFLUXESSEMICOERCIVE}
	\end{align}
	\end{subequations}
	
	Finally, if the lateral boundary is either $\gfour$-spacelike at each of its points or $\gfour$-null at each of its points,
	then the following estimates hold for $t \in [0,T]$,
	where the implicit constants depend on the compact subset $\mathfrak{K}$ of solution-variable space
	from Point 3 of Subsubsect.\,\ref{SSS:ASSUMPTIONSONTHESOLUTION}
	and, when the lateral boundary is $\gfour$-spacelike, 
	the reciprocal of the scalar function $\lengthofsidehypnorm$ from \eqref{E:LENGTHOFHYPNORM}:
	\begin{align} \label{E:COERCEIVENESSOFSPACETIMECONTROLLINGQUANTITIY}
		\spacetimeen(t)
		& \approx
		\int_{\mathcal{M}_t}
			|\pmb{\partial} \vortrenormalized|_{\euc}^2
		\, d \varpi_{\gfour}
		+
		\int_{\mathcal{M}_t}
			|\pmb{\partial} \GradEnt|_{\euc}^2
		\, d \varpi_{\gfour}
		+
		\int_{\mathcal{S}_t}
			|\angvortrenormalized|_{\gsphere}^2
		\, d \varpi_{\gsphere}
		+
		\int_{\mathcal{S}_t}
			|\angGradEnt|_{\gsphere}^2
		\, d \varpi_{\gsphere}.
	\end{align}
\end{lemma}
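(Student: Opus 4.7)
The proof strategy is to reduce each of the seven equivalences in the lemma to a pointwise identity for the integrand appearing in Def.\,\ref{D:ENERGIESANDFLUXES}, derived from the explicit formula \eqref{E:ENMOMENTUMTENSOR} for the energy--momentum tensor together with the geometric identities of Proposition~\ref{P:VARIOUSIDENTITIESWHENTIMEFUNCTIONISCARTESIAN}. The uniform $C^1$ bounds of Subsubsect.\,\ref{SSS:ASSUMPTIONSONTHESOLUTION}---in particular $\Speed \approx 1$ (from \eqref{E:KEYCONTAINMENTOFSOLUTIONINCOMPACTSUBSETOFSTATESPACE} together with the definition of $\mathscr{H}$), $\lengthofsidehypnorm \gtrsim 1$ when $\underline{\mathcal{H}}_T$ is $\gfour$-spacelike, $\uposinnerproduct \approx 1$, and boundedness of $|\vec{\gen}|_{\euc}$---then promote the pointwise equivalences into the uniform ones via compactness of $\mathcal{M}_T$ and of its boundary.

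For the wave energy \eqref{E:WAVEENERGIESCOERCIVE}, I would substitute \eqref{E:INVERSEACOUSTICALMETRIC} into \eqref{E:ENMOMENTUMTENSOR} and use $\gfour(\Transport,\Transport) = -1$ to derive the pointwise identity
\begin{align*}
\gfour_{\alpha\beta}\Jenarg{\Transport}{\alpha}[\varphi]\Transport^\beta
= \enmomem_{\alpha\beta}[\varphi]\Transport^\alpha\Transport^\beta
= \tfrac{1}{2}(\Transport\varphi)^2 + \tfrac{1}{2}\Speed^2\sum_{a=1}^3(\partial_a\varphi)^2,
\end{align*}
which is pointwise $\approx |\pmb{\partial}\varphi|_{\euc}^2$; combined with \eqref{E:WHENTIMEFUNCTIONISCARTESIANTRANSOPORTCOINCIDESWITHTOPNORMAL} and \eqref{E:SPACTIMEVOLUMEFORMANDSIGMATVOLUMEFORMRELATIVETOCARTESIANCOORDINATES} this yields the equivalence. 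The identity \eqref{E:TRANSPORTENERGIESCOERCIVE} is immediate from Def.\,\ref{D:ENERGIESANDFLUXES}. For the spacelike flux \eqref{E:WAVEFLUXESCOERCIVE}, both $\Transport$ and $\hat{\sidehypnorm}$ are future-directed unit $\gfour$-timelike (cf.\ \eqref{E:INNERPRODUCTOFTRANPORTANDFUTUREUNITNORMALTOHYPERSURFACE}), so the dominant energy condition forces $\enmomem_{\alpha\beta}[\varphi]\Transport^\alpha\hat{\sidehypnorm}^\beta$ to be pointwise positive definite in $\pmb{\partial}\varphi$; its coefficients depend continuously on $(\vec{\Psi},\vec{\gen},\vec{\spherenormal})$ and on $\lengthofsidehypnorm$, and the uniform bounds \eqref{E:VECTORFIELDSTENSORIALC1BOUND} with $\lengthofsidehypnorm \gtrsim 1$ upgrade this to uniform equivalence. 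The spacelike transport flux \eqref{E:TRANSPORTFLUXESCOERCIVE} then follows from the volume-form relations \eqref{E:SIDEHYPERSURFACEVOLUMEFORMEXPRESSIONWITHRESPECTTOTIMEFUNCTIONSPACELIKECASE}--\eqref{E:LENGTHOFMODGENISLENGTHOFGEN}.

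The null-boundary identity \eqref{E:NULLCASEWAVEFLUXESSEMICOERCIVE} is the one step requiring a genuinely Lorentzian cancellation. I would use Proposition~\ref{P:VARIOUSIDENTITIESWHENTIMEFUNCTIONISCARTESIAN} to decompose $\Transport = \tfrac{1}{2}(\Lunit+\uLunit)$ and to expand $(\gfour^{-1})^{\alpha\beta}$ via \eqref{E:INVERSEACOUSTICALMETRICNULLCASE}, and I would exploit $\gfour(\Transport,\uLunit) = -1$ (a consequence of \eqref{E:GINNERPRODUCTOFLUNITANDTRANSPORT} and \eqref{E:LUNITINNULLCASEWHENTIMEFUNCTIONISCARTESIAN}). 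A short computation then produces the exact identity
\begin{align*}
\enmomem_{\alpha\beta}[\varphi]\Transport^\alpha\uLunit^\beta
= \tfrac{1}{2}(\uLunit\varphi)^2 + \tfrac{1}{2}|\angD\varphi|_{\gsphere}^2,
\end{align*}
in which every $\Lunit\varphi$-contribution cancels exactly---the algebraic manifestation of the well-known semi-coerciveness of wave energies along null hypersurfaces. Together with the null-case volume form from Def.\,\ref{D:VOLUMEFORMS} this gives \eqref{E:NULLCASEWAVEFLUXESSEMICOERCIVE} as an exact equality, and \eqref{E:NULLCASETRANSPORTFLUXESSEMICOERCIVE} is immediate from the definition. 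Finally, for \eqref{E:COERCEIVENESSOFSPACETIMECONTROLLINGQUANTITIY} I would invoke Lemma~\ref{L:POSITIVITYPROPERTIESOFVARIOUSQUADRATICFORMS}: the pointwise positive definiteness of $\mathscr{Q}(\mathbf{U},\mathbf{U})$ established there, combined with continuous dependence of its coefficients on $\vec{\Psi}$ and on the (constant, since $\Timefunction \equiv t$) covector $\pmb{\partial}\Timefunction = dt$, yields $\mathscr{Q}(\pmb{\partial}\vortrenormalized,\pmb{\partial}\vortrenormalized) \approx |\pmb{\partial}\vortrenormalized|_{\euc}^2$ uniformly on $\mathfrak{K}$ (and likewise for $\GradEnt$), while the $\mathcal{S}_t$ boundary contributions to $\spacetimeen$ are controlled via $\uposinnerproduct \approx 1$ (from \eqref{E:VECTORFIELDSTENSORIALC1BOUND} in the spacelike case, and directly from \eqref{E:NULLCASESPHERENORMALCOEFFICIENTISUNITY} in the null case).

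The main obstacle is bookkeeping rather than analysis: a careful tracking of implicit constants through the two cases (spacelike versus null lateral boundary) and through the several volume-form conversions. The one genuinely load-bearing algebraic computation is the exact cancellation of the $\Lunit\varphi$-contributions in the null-flux identity for \eqref{E:NULLCASEWAVEFLUXESSEMICOERCIVE}; this cancellation is precisely the algebraic expression of the fact---crucial for the applications in Subsect.\,\ref{SS:APPLICATIONS}---that wave-flux energies along $\gfour$-null hypersurfaces control only tangential derivatives of the wave.
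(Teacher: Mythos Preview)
Your proposal is correct and follows essentially the same approach as the paper. The only stylistic differences are that for \eqref{E:WAVEFLUXESCOERCIVE} the paper carries out an explicit decomposition $\hat{\sidehypnorm} = \upalpha\Transport + \upbeta\hat{P}$ (with $\upalpha = \sqrt{1+\upbeta^2}$) and applies Young's inequality directly, rather than invoking the dominant energy condition plus compactness as you do; and for \eqref{E:COERCEIVENESSOFSPACETIMECONTROLLINGQUANTITIY} the paper exploits the simplification $\projectedtransport = 0$ (from \eqref{E:PROJECTEDTRANSPORTVANISHES}) in the identity \eqref{E:IDENTITYMAINQUADRATICFORMFORCONTROLLINGFIRSTDERIVATIVESOFSPECIFICVORTICITYANDENTROPYGRADIENT} rather than appealing to Lemma~\ref{L:POSITIVITYPROPERTIESOFVARIOUSQUADRATICFORMS} together with compactness---but both routes are valid and the core computations (notably the null-flux identity via $\Transport = \tfrac{1}{2}(\Lunit+\uLunit)$ and \eqref{E:INVERSEACOUSTICALMETRICNULLCASE}) are identical.
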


\begin{proof}
The equations \eqref{E:TRANSPORTENERGIESCOERCIVE} and \eqref{E:NULLCASETRANSPORTFLUXESSEMICOERCIVE}
follow directly from the definitions.
\eqref{E:TRANSPORTFLUXESCOERCIVE} follows from definition \eqref{E:FLUXDEF}
and the fact that $\lengthofsidehypnorm \approx 1$ when $\underline{\mathcal{H}}$ is $\gfour$-spacelike
(see \eqref{E:LENGTHOFHYPNORM}).

To prove \eqref{E:WAVEENERGIESCOERCIVE},
we first use 
\eqref{E:MATERIALVECTORVIELDRELATIVECTORECTANGULAR},
Def.\,\ref{D:ACOUSTICALMETRIC},
\eqref{E:TRANSPORTISUNITLENGTHANDTIMELIKE}, 
and \eqref{E:ENMOMENTUMTENSOR}
to compute that
$2 \enmomem_{\alpha \beta} \Transport^{\alpha} \Transport^{\beta}
= (\Transport \varphi)^2 + |\nabla \varphi|_g^2
= (\partial_t \varphi)^2 
	+ 
	(v^a \partial_a \varphi)^2 
	+ 2 (\partial_t \varphi) (v^a \partial_a \varphi)
	+ 
\Speed^2 \updelta^{ab} \partial_a \varphi \partial_b \varphi
$.
The assumptions of Subsubsect.\,\ref{SSS:ASSUMPTIONSONTHESOLUTION} guarantee that there are constants $0< c_1 \leq c_2$ 
such that the speed of sound $\Speed$ verifies
$c_1 \leq \inf_{\mathcal{M}_T} \Speed \leq \sup_{\mathcal{M}_T} \Speed \leq c_2$ 
and a constant $C' > 0$ such that $\| |v|_g \|_{L^{\infty}(\mathcal{M}_T)} \leq C'$.
Hence, by the Cauchy--Schwarz and Young's inequalities,
for any $\varepsilon > 0$, we have $2 |(\partial_t \varphi) (v^a \partial_a \varphi)| 
\leq 
\frac{1}{1 + \varepsilon} (\partial_t \varphi)^2 
+ 
(v^a \partial_a \varphi)^2
+
\varepsilon (C')^2 |\nabla \varphi|_g^2
$,
where $|\nabla \varphi|_g^2 = \Speed^2 \updelta^{ab} \partial_a \varphi \partial_b \varphi$
and $\updelta^{ab}$ is the Kronecker delta.
Choosing $\varepsilon$ such that
$\varepsilon (C')^2 = \frac{1}{2}$, 
we conclude that
$2 \enmomem_{\alpha \beta} \Transport^{\alpha} \Transport^{\beta}
\geq
\frac{\varepsilon}{1 + \varepsilon}
(\partial_t \varphi)^2 
+
\frac{1}{2}
\Speed^2 \updelta^{ab} \partial_a \varphi \partial_b \varphi 
\gtrsim |\pmb{\partial} \varphi|_{\euc}^2
$. A similar but simpler argument yields that 
$|\enmomem_{\alpha \beta} \Transport^{\alpha} \Transport^{\beta}| \lesssim |\pmb{\partial} \varphi|_{\euc}^2$.
In view of definitions \eqref{E:MULTIPLIERVECTORFIELD} and \eqref{E:SIGMATENERGYDEF},
we conclude \eqref{E:WAVEENERGIESCOERCIVE}.

To prove \eqref{E:WAVEFLUXESCOERCIVE},
we first decompose $\hat{\sidehypnorm}$ into a vectorfield that is parallel to $\Transport$ and 
a vectorfield that is $\gfour$-orthogonal to $\Transport$. More precisely, we decompose
$\hat{\sidehypnorm} = \upalpha \Transport + \upbeta \hat{P}$,
where $\upalpha, \upbeta$ are real-valued functions, and the vectorfield $\hat{P}$ verifies 
$\gfour(\Transport,\hat{P}) = 0$ and $\gfour(\hat{P},\hat{P}) = g(\hat{P},\hat{P}) = 1$. 
From the fact that $\gfour(\Transport,\hat{\sidehypnorm}) < 0$
(see \eqref{E:INNERPRODUCTOFTRANPORTANDFUTUREUNITNORMALTOHYPERSURFACE})
and the relation
$\gfour(\Transport,\hat{\sidehypnorm}) = - \upalpha$ (which follows easily from taking the $\gfour$-inner product 
of both sides of the decomposition with respect to $\Transport$ and using \eqref{E:TRANSPORTISUNITLENGTHANDTIMELIKE}),
we find that $\upalpha > 0$.
Thus, taking the $\gfour$-inner product of each side of the decomposition with respect to itself and
using the fact that $\gfour(\hat{\sidehypnorm},\hat{\sidehypnorm}) = -1$ (see Def.\,\ref{D:LENGTHOFVARIOUSVECTORFIELDSETC}),
we compute that $\upalpha = \sqrt{1 + \upbeta^2}$.
Using these facts, \eqref{E:INVERSEACOUSTICALMETRIC}, and \eqref{E:ENMOMENTUMTENSOR}, 
we compute that
$\enmomem_{\alpha \beta} \Transport^{\alpha} \hat{\sidehypnorm}^{\beta}
 = 
\frac{1}{2} \sqrt{1 + \upbeta^2} (\Transport \varphi)^2
+
\upbeta (\Transport \varphi) \hat{P} \varphi
+ 
\frac{1}{2} \sqrt{1 + \upbeta^2} |\nabla \varphi|_g^2
$. Next, using the Cauchy--Schwarz and Young's inequalities, we bound the magnitude of the cross term as follows:
$|\upbeta (\Transport \varphi) \hat{P} \varphi| \leq \frac{|\upbeta|}{2} (\Transport \varphi)^2 +  \frac{|\upbeta|}{2} |\nabla \varphi|_g^2$.
It follows that
$
\enmomem_{\alpha \beta} \Transport^{\alpha} \hat{\sidehypnorm}^{\beta}
\geq 
\frac{1}{2}
(\sqrt{1 + \upbeta^2} - |\upbeta|) (\partial_t \varphi)^2
+
\frac{1}{2}
(\sqrt{1 + \upbeta^2} - |\upbeta|) |\nabla \varphi|_g^2
$.
Moreover, since $\hat{\sidehypnorm}$ is $\gfour$-timelike by assumption,
it follows that there exists a constant $C_1 > 0$
such that $\sup_{\underline{\mathcal{H}}_T} |\upbeta| \leq C_1$.
It follows that 
on $\underline{\mathcal{H}}_T$,
the two factors of
$
(\sqrt{1 + \upbeta^2} - |\upbeta|)
$
are uniformly bounded from above and below by positive constants
depending on $C_1$.
Using this fact and the bounds on $\Speed$ noted in the previous paragraph,
we conclude that
$
\enmomem_{\alpha \beta} \Transport^{\alpha} \hat{\sidehypnorm}^{\beta}
\gtrsim
|\pmb{\partial} \varphi|_{\euc}^2
$
as desired.
A similar but simpler argument yields that 
$|\enmomem_{\alpha \beta} \Transport^{\alpha} \hat{\sidehypnorm}^{\beta}| \lesssim |\pmb{\partial} \varphi|_{\euc}^2$.
In view of definitions \eqref{E:MULTIPLIERVECTORFIELD} and \eqref{E:FLUXDEF}, we conclude \eqref{E:WAVEFLUXESCOERCIVE}.

We now prove \eqref{E:NULLCASEWAVEFLUXESSEMICOERCIVE}.
Recalling that $\uLunit$ is alternate notation for $\sidehypnorm$ in the $\gfour$-null case
and using 
\eqref{E:EQUIVALENTFUTURENORMALTOHYPERSURFACE},
\eqref{E:INVERSEACOUSTICALMETRICNULLCASE}, 
and
\eqref{E:LUNITINNULLCASEWHENTIMEFUNCTIONISCARTESIAN}
(which implies that $\Transport = \frac{1}{2}(\uLunit + \Lunit)$),
we compute that 
$2 \enmomem_{\alpha \beta} \Transport^{\alpha} \uLunit^{\beta}
= 
(\uLunit \varphi)^2 
+
|\angD \varphi|_{\gsphere}^2
$.
In view of definitions \eqref{E:MULTIPLIERVECTORFIELD} and \eqref{E:NULLFLUXDEF},
we conclude \eqref{E:NULLCASEWAVEFLUXESSEMICOERCIVE}.

Finally, we prove \eqref{E:COERCEIVENESSOFSPACETIMECONTROLLINGQUANTITIY}.
From definition \eqref{E:SPACETIMECONTROLLINGQUANTITY},
\eqref{E:IDENTITYMAINQUADRATICFORMFORCONTROLLINGFIRSTDERIVATIVESOFSPECIFICVORTICITYANDENTROPYGRADIENT},
\eqref{E:SIGMATFIRSTFUNDAGREESWITHSIGMATILDEFIRSTFUND}-\eqref{E:PROJECTEDTRANSPORTVANISHES},
the fact that $g_{ab} = \Speed^{-2} \updelta_{ab}$ (see \eqref{E:ACOUSTICALMETRIC} and \eqref{E:FIRSTFUNDAMENTALFORMSIGMAT}),
and the bounds on $\Speed$ noted two paragraphs above,
we find that
\begin{align} \label{E:PROOFCOERCEIVENESSOFSPACETIMECONTROLLINGQUANTITIY}
		\spacetimeen(t)
		& \approx
		\int_{\mathcal{M}_t}
			\left\lbrace
				|\partial \vortrenormalized|_{\euc}^2
				+
				\sum_{a=1}^3 (\Transport \vortrenormalized)^2
				+
				|\partial \GradEnt|_{\euc}^2
				+
				\sum_{a=1}^3 (\Transport \GradEnt^a)^2
			\right\rbrace
		\, d \varpi_{\gfour}
		+
		\int_{\mathcal{S}_t}
			\uposinnerproduct
			\left\lbrace
				|\angvortrenormalized|_{\gsphere}^2
				+
				|\angGradEnt|_{\gsphere}^2
			\right\rbrace
		\, d \varpi_{\gsphere}.
	\end{align}
	Using arguments similar to the ones we used in proving
	\eqref{E:WAVEENERGIESCOERCIVE} (based on Cauchy--Schwarz and Young's inequality),
	we find that
	$
	|\pmb{\partial} \vortrenormalized|_{\euc}^2
	\approx
	|\partial \vortrenormalized|_{\euc}^2
		+
	\sum_{a=1}^3 (\Transport \vortrenormalized)^2
	$
	and
	$
		|\pmb{\partial} \GradEnt|_{\euc}^2
			\approx
			|\partial \GradEnt|_{\euc}^2
				+
				\sum_{a=1}^3 (\Transport \GradEnt^a)^2
	$.
	From these estimates,
	\eqref{E:INGOINGCONDITION},
	and
	\eqref{E:PROOFCOERCEIVENESSOFSPACETIMECONTROLLINGQUANTITIY},
	the desired result \eqref{E:COERCEIVENESSOFSPACETIMECONTROLLINGQUANTITIY} readily follows.
\end{proof}

\subsection{Localized a priori estimates}
\label{SS:LOCALIZEDAPRIORI}
We now prove the main result of Sect.\,\ref{S:APRIORI}, namely Theorem~\ref{T:LOCALIZEDAPRIORIESTIMATES},
which yields a priori estimates exhibiting the gain of regularity for the specific vorticity and the entropy gradient
(as is manifested by \eqref{E:COMBINEDSPACETIMEINTEGRALGRONWALLED}),
as we described in the introduction (see in particular Point \textbf{I} of Subsect.\,\ref{SS:APPLICATIONS}).
We again stress that when the lateral boundary is $\gfour$-null (i.e., $\underline{\mathcal{H}} = \underline{\mathcal{N}}$),
the theorem crucially relies 
on the precise structures shown in 
Theorem~\ref{T:STRUCTUREOFERRORTERMS}
and
Theorem~\ref{T:MAINREMARKABLESPACETIMEINTEGRALIDENTITY}.
In particular, in the $\gfour$-null case, 
these structures are needed to control the error integrals 
$
\int_{\underline{\mathcal{N}}_t}
		\cdots
$
on RHSs~\eqref{E:SPACETIMEREMARKABLEIDENTITYSPECIFICVORTICITY}-\eqref{E:SPACETIMEREMARKABLEIDENTITYENTROPYGRADIENT}
(recall that $\underline{\mathcal{N}}_t = \underline{\mathcal{H}}_{\Timefunction}$ in the present context),
since \eqref{E:NULLCASEWAVEFLUXESSEMICOERCIVE} shows that the wave fluxes on $\underline{\mathcal{N}}_t$
control only $\underline{\mathcal{N}}_t$-tangential derivatives
in the $\gfour$-null case.

\begin{theorem}[Localized a priori estimates exhibiting the gain in regularity for $\vortrenormalized$ and $\GradEnt$]
	\label{T:LOCALIZEDAPRIORIESTIMATES}
	Let $\mathcal{M} = \mathcal{M}_T$ be a spacetime region satisfying the conditions stated in 
	Subsects.\,\ref{SS:DOMAINANDTIMEFUNCTIONETC} and \ref{SS:ASSUMPTIONSONSPACETIMEREGION} for some $T > 0$;
	see Fig.\,\ref{F:SPACETIMEDOMAIN}.
	In particular, assume that the lateral boundary $\underline{\mathcal{H}} = \underline{\mathcal{H}}_T$ is $\gfour$-spacelike
	or is $\gfour$-null (in the null case, $\underline{\mathcal{H}} := \underline{\mathcal{N}} = \underline{\mathcal{N}}_T$).
	Assume that the acoustical time function $\Timefunction$ from the beginning of Sect.\,\ref{S:SPACETIMEDOMAINS} is equal to\footnote{As we mentioned at 
	the beginning of Sect.\,\ref{S:APRIORI}, 
	we make this assumption only to shorten the presentation; the results of Theorem~\ref{T:LOCALIZEDAPRIORIESTIMATES}
	generalize in a straightforward fashion to the case of general smooth acoustical time functions. \label{FN:GENERALIZETOOTHERTIMEFUNCTIONS}} 
	the Cartesian time function $t$.
	Consider a smooth solution
	(see Remark~\ref{R:SMOOTHNESSNOTNEEDED})
	to the compressible Euler equations \eqref{E:TRANSPORTDENSRENORMALIZEDRELATIVECTORECTANGULAR}-\eqref{E:ENTROPYTRANSPORT}
	on $\mathcal{M}_T$
	that satisfies the assumptions stated in Subsubsect.\,\ref{SSS:ASSUMPTIONSONTHESOLUTION}.
	Let $\mathfrak{K}$ be the set from Point 3 of Subsubsect.\,\ref{SSS:ASSUMPTIONSONTHESOLUTION},
	let $A$ be the assumed bound on the $C^1$ norm of $(\LogDensity,\Ent,v)$ on $\mathcal{M}_T$ stated in \eqref{E:ASSUMEDC1BOUND},
	and let $B$ be the assumed bound on $(\gen,\spherenormal)$ and some of their $\underline{\mathcal{H}}_T$-tangential first derivatives
	stated in \eqref{E:VECTORFIELDSTENSORIALC1BOUND} 
	(see \eqref{E:VECTORFIELDSTENSORIALC1BOUNDNULLCASE} for the assumed $C^1$ norm bound in the case of a lateral null hypersurface).
	Then there exists a constant $C = C(\mathfrak{K},A,B)$ (which we allow to vary from line to line)
	such that the controlling quantities $\spacetimeen(t)$ and $\toten(t)$
	from Def.\,\ref{D:CONTROLLINGQUANTITIES}
	verify the following inequalities for $t \in [0,T]$:
	\begin{subequations}
	\begin{align}
		\spacetimeen(t)
		& \leq
			2 \spacetimeen(0)
			+
			C
			\int_0^t
				\toten(t')
			\, d t'
			+
			C \toten(t), 
			\label{E:COMBINEDSPACETIMEINTEGRALGRONWALLREADY} \\
		\toten(t)
		& \leq 
				\toten(0)
				+
				\spacetimeen(0)
			+
			C
			\int_0^t
				\toten(t')
			\, d t'.
			\label{E:COMBINEDENERGYFLUXGRONWALLREADY}
		\end{align}
		\end{subequations}
		
		Moreover, the following inequalities hold for $0 \leq t \leq T$:
		\begin{subequations}
		\begin{align} 
		\spacetimeen(t)
		& \leq  
				C
				\left\lbrace
					\toten(0)
					+
					\spacetimeen(0)
				\right\rbrace
				\exp\left(C t\right),
			\label{E:COMBINEDSPACETIMEINTEGRALGRONWALLED}
				\\
		\toten(t)
			& \leq 
			\left\lbrace
					\toten(0)
					+
					\spacetimeen(0)
				\right\rbrace
				\exp\left(C t\right).
			\label{E:COMBINEDENERGYFLUXGRONWALLED}
		\end{align}
		\end{subequations}
\end{theorem}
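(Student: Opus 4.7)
The plan is to combine the combined integral identities of Proposition~\ref{P:COMBINEDINTEGRALIDENTITIES} with the coerciveness properties of Lemma~\ref{L:COERCIVENESS}, estimate every error term on the right-hand sides using the pointwise $C^1$ assumptions of Subsubsect.\,\ref{SSS:ASSUMPTIONSONTHESOLUTION} and Cauchy--Schwarz/Young's inequality, and then deduce \eqref{E:COMBINEDSPACETIMEINTEGRALGRONWALLED}-\eqref{E:COMBINEDENERGYFLUXGRONWALLED} from \eqref{E:COMBINEDSPACETIMEINTEGRALGRONWALLREADY}-\eqref{E:COMBINEDENERGYFLUXGRONWALLREADY} via a coupled Gronwall argument.

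First I will establish \eqref{E:COMBINEDENERGYFLUXGRONWALLREADY}. For each $\varphi \in \lbrace \LogDensity, v^i, \Ent \rbrace$, I substitute the wave equations \eqref{E:VELOCITYWAVEEQUATION}-\eqref{E:ENTROPYWAVEEQUATION} into $(\square_{\gfour}\varphi)\Transport \varphi$ on the right-hand side of \eqref{E:ENERGYFLUXCOMBINEDINTEGRALID}; the resulting integrands are either products of the form (bounded quantity)$\times \VortVort^i \times \pmb{\partial}\varphi$, (bounded quantity)$\times \DivGradEnt \times \pmb{\partial}\varphi$, or null-form/linear pieces bounded pointwise by $C |\pmb{\partial}(\LogDensity, v, \Ent)|_{\euc}^2$ via \eqref{E:ASSUMEDC1BOUND}. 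Cauchy--Schwarz controls each by $C\int_0^t \toten(t')\,dt'$. The deformation tensor $\deformarg{\Transport}{}{}$ is bounded pointwise via the $C^1$ assumption on $(\LogDensity, v, \Ent)$ and the identity \eqref{E:DEFORMATIONTENSOROFTRANSPORTRELATIVETOCARTESIAN}, and the transport equations \eqref{E:RENORMALIZEDVORTICTITYTRANSPORTEQUATION}, \eqref{E:GRADENTROPYTRANSPORT}, \eqref{E:EVOLUTIONEQUATIONFLATCURLRENORMALIZEDVORTICITY}, \eqref{E:TRANSPORTFLATDIVGRADENT} show that $\Transport\varphi$ for $\varphi \in \lbrace \vortrenormalized^i, \GradEnt^i, \VortVort^i, \DivGradEnt\rbrace$ is pointwise bounded by $C(|\vec\vortrenormalized| + |\vec\GradEnt| + |\vec\VortVort| + |\DivGradEnt|)$; hence the transport bulk terms in \eqref{E:ENERGYFLUXCOMBINEDINTEGRALID} are controlled by $C \int_0^t \toten(t')\, dt'$. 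Coerciveness \eqref{E:WAVEENERGIESCOERCIVE}-\eqref{E:NULLCASETRANSPORTFLUXESSEMICOERCIVE} then yields \eqref{E:COMBINEDENERGYFLUXGRONWALLREADY}.

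Second I turn to \eqref{E:COMBINEDSPACETIMEINTEGRALGRONWALLREADY} using \eqref{E:SPACETIMECOMBINEDINTEGRALID}. The bulk integrands $|\mathfrak{A}^{(\vortrenormalized)}|_{\topfirstfund}^2 + |\mathfrak{B}_{(\vortrenormalized)}|_{\topfirstfund}^2 + \mathfrak{D}^{(\vortrenormalized)}$ (and their $\GradEnt$ analogues) expand into pointwise products of bounded coefficients against $|\pmb{\partial}(\LogDensity, v, \Ent)|_{\euc}^2 + |\vec\vortrenormalized|^2 + |\vec\GradEnt|^2 + |\vec\VortVort|^2 + |\DivGradEnt|^2$, each bounded by $C\int_0^t \toten(t')\,dt'$ after time integration. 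The terms $\mathfrak{C}^{(\vortrenormalized)}, \mathfrak{C}^{(\GradEnt)}, \mathfrak{J}_{(Coeff)}[\vortrenormalized, \pmb{\partial}\vortrenormalized], \mathfrak{J}_{(Coeff)}[\GradEnt, \pmb{\partial}\GradEnt]$ contain linear factors of $\pmb{\partial}\vortrenormalized$ or $\pmb{\partial}\GradEnt$; for these, I apply Young's inequality in the form $ab \le \varepsilon a^2 + C_\varepsilon b^2$ with $\varepsilon$ chosen so that $\varepsilon$ times $\int_{\mathcal{M}_t}(|\pmb{\partial}\vortrenormalized|_{\euc}^2 + |\pmb{\partial}\GradEnt|_{\euc}^2)\,d\varpi_{\gfour}$ is absorbable into the left via \eqref{E:COERCEIVENESSOFSPACETIMECONTROLLINGQUANTITIY} (this produces the factor $2\spacetimeen(0)$ rather than $\spacetimeen(0)$ on RHS~\eqref{E:COMBINEDSPACETIMEINTEGRALGRONWALLREADY}), while the $C_\varepsilon b^2$ remainder is again controlled by $C\int_0^t \toten(t')\,dt'$. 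The main obstacle is the lateral boundary integrals $\int_{\underline{\mathcal{H}}_t}\lbrace \underline{\mathfrak{H}}[\SigmatTan] + \underline{\mathfrak{H}}_{(i)}[\SigmatTan]\rbrace\,d\varpi_{\gsphere}d\Timefunction'$, precisely because in the $\gfour$-null case the wave flux \eqref{E:NULLCASEWAVEFLUXESSEMICOERCIVE} is only semi-coercive, controlling $\uLunit \vec\Psi$ and $\angpartial \vec\Psi$ but not $\underline{\mathcal{N}}_t$-transversal derivatives. Here Theorem~\ref{T:STRUCTUREOFERRORTERMS} is the crux: the schematic structure \eqref{E:ERRORTERMSSCHEMATICSTRUCTURENULLCASE} shows that every derivative of $\vec\Psi$ appearing in the integrands is $\underline{\mathcal{N}}_t$-tangential, and every other factor is either an element of $\mathscr{L}(\vec\Psi, \vec\vortrenormalized, \vec\GradEnt, \vec\uLunit, \vec\spherenormal, \vec\VortVort)$, controlled in $L^\infty$ by $\mathfrak{K}$, $A$, $B$ from \eqref{E:KEYCONTAINMENTOFSOLUTIONINCOMPACTSUBSETOFSTATESPACE}, \eqref{E:ASSUMEDC1BOUND}, \eqref{E:VECTORFIELDSTENSORIALC1BOUND}, \eqref{E:VECTORFIELDSTENSORIALC1BOUNDNULLCASE}, or a derivative of the geometric vectorfields $\vec\uLunit, \vec\spherenormal$, likewise controlled in $L^\infty$ by $B$. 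Cauchy--Schwarz then bounds each boundary integrand by $C(|\uLunit\vec\Psi|_{\euc}^2 + |\angpartial\vec\Psi|_{\euc}^2 + |\vec\vortrenormalized|^2 + |\vec\GradEnt|^2 + |\vec\VortVort|^2) + C$; the first two contributions are bounded by the null wave flux $\sum \mathbb{F}_{(Wave)}[\varphi](t)$ via \eqref{E:NULLCASEWAVEFLUXESSEMICOERCIVE}, while the remaining contributions are bounded by the transport fluxes via \eqref{E:NULLCASETRANSPORTFLUXESSEMICOERCIVE}. In sum the boundary integrals are bounded by $C\toten(t)$. In the $\gfour$-spacelike case the analogous estimate is easier since \eqref{E:WAVEFLUXESCOERCIVE} controls all derivatives. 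Combining these estimates with the absorbable bulk terms gives \eqref{E:COMBINEDSPACETIMEINTEGRALGRONWALLREADY}.

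Finally, applying standard Gronwall to \eqref{E:COMBINEDENERGYFLUXGRONWALLREADY} yields $\toten(t) \le \lbrace \toten(0) + \spacetimeen(0)\rbrace \exp(Ct)$, which is \eqref{E:COMBINEDENERGYFLUXGRONWALLED}. Substituting this estimate into \eqref{E:COMBINEDSPACETIMEINTEGRALGRONWALLREADY} and performing the elementary time integration yields \eqref{E:COMBINEDSPACETIMEINTEGRALGRONWALLED}. The entire argument hinges on the tangential-differentiation structure of the boundary integrands in the null case, since without \eqref{E:ERRORTERMSSCHEMATICSTRUCTURENULLCASE}, appearance of an $\underline{\mathcal{N}}_t$-transversal derivative would make the boundary integrals uncontrollable from $\toten$.
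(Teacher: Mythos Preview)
Your overall strategy and your treatment of the boundary integrals via Theorem~\ref{T:STRUCTUREOFERRORTERMS} match the paper's proof, but there is a genuine gap in your derivation of \eqref{E:COMBINEDENERGYFLUXGRONWALLREADY}. You claim that the transport equations show $\Transport\varphi$ is pointwise bounded by $C(|\vec\vortrenormalized| + |\vec\GradEnt| + |\vec\VortVort| + |\DivGradEnt|)$ for $\varphi \in \lbrace \vortrenormalized^i, \GradEnt^i, \VortVort^i, \DivGradEnt\rbrace$. This is correct for $\vortrenormalized^i$ and $\GradEnt^i$, but \emph{false} for $\VortVort^i$ and $\DivGradEnt$: the right-hand sides of \eqref{E:EVOLUTIONEQUATIONFLATCURLRENORMALIZEDVORTICITY} and \eqref{E:TRANSPORTFLATDIVGRADENT} contain the derivative-quadratic terms $(\partial_a v^j)\partial_b\vortrenormalized^k$, $(\Transport v^i)\partial_a\GradEnt^a$, $(\partial_a v^a)\partial_b\GradEnt^b$, $(\Flatcurl\vortrenormalized)^a\GradEnt^b$, etc. The factors $\partial\vortrenormalized$ and $\partial\GradEnt$ are \emph{not} pointwise controlled by the assumptions of Subsubsect.~\ref{SSS:ASSUMPTIONSONTHESOLUTION}; they are only controlled in spacetime $L^2$ by $\spacetimeen(t)$ via \eqref{E:COERCEIVENESSOFSPACETIMECONTROLLINGQUANTITIY}.

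Consequently the transport bulk integral $\int_{\mathcal{M}_t}(\Transport\varphi)\varphi\,d\varpi_{\gfour}$ for $\varphi\in\lbrace\VortVort^i,\DivGradEnt\rbrace$ is not bounded by $C\int_0^t\toten(t')\,dt'$ alone; Cauchy--Schwarz instead produces a cross term $C\sqrt{\int_0^t\toten(t')\,dt'}\,\sqrt{\spacetimeen(t)}$. The paper makes this explicit: it first derives the preliminary inequality \eqref{E:COMBINEDENERGYFLUXALMOSTGRONWALLREADY} with this cross term, and the analogous \eqref{E:COMBINEDSPACETIMEINTEGRALALMOSTGRONWALLREADY} for $\spacetimeen$. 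Then \eqref{E:COMBINEDSPACETIMEINTEGRALGRONWALLREADY} follows from \eqref{E:COMBINEDSPACETIMEINTEGRALALMOSTGRONWALLREADY} and Young, and only \emph{after} that does \eqref{E:COMBINEDENERGYFLUXGRONWALLREADY} follow, by applying Young to the cross term in \eqref{E:COMBINEDENERGYFLUXALMOSTGRONWALLREADY} and substituting \eqref{E:COMBINEDSPACETIMEINTEGRALGRONWALLREADY} to eliminate $\spacetimeen(t)$. This coupling is precisely why $\spacetimeen(0)$ appears on the right-hand side of \eqref{E:COMBINEDENERGYFLUXGRONWALLREADY}; your argument gives no mechanism for it to arise. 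Once you correct this step, the remainder of your argument (absorption via Young for the linear-in-$\pmb{\partial}\vortrenormalized$ bulk terms, the boundary estimate via \eqref{E:ERRORTERMSSCHEMATICSTRUCTURENULLCASE}, and the closing Gronwall) agrees with the paper.
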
	

\begin{proof}
		First, we use the wave equations \eqref{E:VELOCITYWAVEEQUATION}-\eqref{E:ENTROPYWAVEEQUATION}
		to algebraically substitute for the terms $\square_{\gfour} \varphi$ on the first line of RHS~\eqref{E:ENERGYFLUXCOMBINEDINTEGRALID},
		and we use the transport equations 
		\eqref{E:RENORMALIZEDVORTICTITYTRANSPORTEQUATION}, 
		\eqref{E:GRADENTROPYTRANSPORT},
		\eqref{E:EVOLUTIONEQUATIONFLATCURLRENORMALIZEDVORTICITY},
		and \eqref{E:TRANSPORTFLATDIVGRADENT}
		to algebraically substitute for the terms $\Transport \varphi$
		on the second line RHS~\eqref{E:ENERGYFLUXCOMBINEDINTEGRALID}.
		We then use \eqref{E:RENORMALIZEDVORTICTITYTRANSPORTEQUATION} and \eqref{E:GRADENTROPYTRANSPORT}
		to algebraically substitute for all factors of
		$\Transport \vortrenormalized^i$ and $\Transport \GradEnt^i$ in all of the resulting expressions.
		After these substitutions, 
		we use the volume form identities of Lemma~\ref{L:IDENTITIESFORVOLUMEFORMS},
		the assumptions stated in Subsubsect.\,\ref{SSS:ASSUMPTIONSONTHESOLUTION},
		Theorem~\ref{T:STRUCTUREOFERRORTERMS},
		and the coerciveness estimates of Lemma~\ref{L:COERCIVENESS},
		to bound all integrand factors on RHS~\eqref{E:ENERGYFLUXCOMBINEDINTEGRALID}
		by $C$ times a quadratic term that is controlled by the
		controlling quantities of Def.\,\ref{D:CONTROLLINGQUANTITIES}.
		Note that we are using the fact that the right-hand side of the identity \eqref{E:DEFORMATIONTENSOROFTRANSPORTRELATIVETOCARTESIAN}
		for the deformation tensor components $\deformarg{\Transport}{\alpha}{\beta}$
		(which also appear on RHS~\eqref{E:ENERGYFLUXCOMBINEDINTEGRALID})
		can be expressed as smooth functions of $(\LogDensity,v^1,v^2,v^3,\Ent)$
		times a factor that is linear in $\pmb{\partial}(\LogDensity,v^1,v^2,v^3,\Ent)$.
		Also using the Cauchy--Schwarz inequality for integrals,
		we deduce that
		\begin{align} 	\label{E:COMBINEDENERGYFLUXALMOSTGRONWALLREADY} 
		\toten(t)
		& \leq 
			\toten(0)
			+
			C
			\int_0^t
				\toten(t')
			\, d t'
			+
			C
			\sqrt{
			\int_0^t
				\toten(t')
			\, d t'}
			\sqrt{\spacetimeen(t)}.
	\end{align}
	We clarify that the last product on RHS~\eqref{E:COMBINEDENERGYFLUXALMOSTGRONWALLREADY}
	comes from the integral
	\[
	2
				\sum_{\varphi \in \lbrace \vortrenormalized^i,\GradEnt^i,\VortVort^i,\DivGradEnt \rbrace_{i=1,2,3}}
				\int_{\mathcal{M}_t}
					(\Transport \varphi) \varphi
				\, d \varpi_{\gfour}
	\]
	on RHS~\eqref	{E:ENERGYFLUXCOMBINEDINTEGRALID} in the cases $\varphi \in \lbrace \VortVort^i,\DivGradEnt \rbrace_{i=1,2,3}$,
	specifically from the terms on RHSs \eqref{E:EVOLUTIONEQUATIONFLATCURLRENORMALIZEDVORTICITY} and \eqref{E:TRANSPORTFLATDIVGRADENT}
	that depend on the terms $\partial \vortrenormalized$ and $\partial \GradEnt$; by Cauchy--Schwarz, the corresponding integrals
	are bounded by 
	$
	C
	\sqrt{
	\int_0^t
			\toten(t')
	\, d t'}
	\sqrt{
	\int_{\mathcal{M}_t}
			\left\lbrace
				|\partial \vortrenormalized|_{\euct}^2
				+
				|\partial \GradEnt|_{\euct}^2
			\right\rbrace
	\, d \varpi_{\gfour}
	}
	$,
	which in turn is bounded by the last product on RHS~\eqref{E:COMBINEDENERGYFLUXALMOSTGRONWALLREADY} as desired.
	
	Similarly, using \eqref{E:SPACETIMECOMBINEDINTEGRALID} 
	(without further need to use the equations of Theorem~\ref{T:GEOMETRICWAVETRANSPORTSYSTEM}),
	and exploiting the fact (highlighted in Remark~\ref{R:HIGHLIGHTKEYSTRUCTURES})
	that \eqref{E:ERRORTERMSSCHEMATICSTRUCTURESPACELIKECASE}-\eqref{E:ERRORTERMSSCHEMATICSTRUCTURENULLCASE}
		show that RHS~\eqref{E:MAINTHMLATERALBOUNDARYEASYERRORINTEGRANDTERMS} (with $\vortrenormalized$ and $\GradEnt$ in the role of $\SigmatTan$)
		does not involve any $\underline{\mathcal{H}}$-transversal derivatives of $(\LogDensity,v,\Ent)$
		or $(\gen,\utang,\spherenormal)$
		and that the same statement holds for 
		RHS~\eqref{E:MAINTHMWEIGHTDERIVATVELATERALBOUNDARYEASYERRORINTEGRANDTERMS} 
		(this remark is trivial since $\weight := 1$ in the present context)
		and
		RHSs~\eqref{E:MAINTHMSPECIFICVORTITICYMAINLATERALERRORINTEGRAND}-\eqref{E:MAINTHMENTROPYGRADIENTMAINLATERALERRORINTEGRAND},
	we find that
	\begin{align}  \label{E:COMBINEDSPACETIMEINTEGRALALMOSTGRONWALLREADY}
	\spacetimeen(t)
		& \leq
			\spacetimeen(0)
			+
			C
			\int_0^t
				\toten(t')
			\, d t'
			+
			C
			\sqrt{
			\int_0^t
				\toten(t')
			\, d t'}
			\sqrt{\spacetimeen(t)}
			+
			C \toten(t).
	\end{align}
	We stress that when the lateral hypersurface is $\gfour$-null (i.e., $\underline{\mathcal{H}} = \underline{\mathcal{N}}$,
	the coerciveness result \eqref{E:NULLCASEWAVEFLUXESSEMICOERCIVE} shows that $\toten(t)$
	controls (in $L^2(\underline{\mathcal{N}}_t)$) 
	the derivatives of $(\LogDensity,v,\Ent)$ \emph{only} in the $\underline{\mathcal{N}}_t$-tangential directions;
	this is the reason that
	\emph{the absence of the $\underline{\mathcal{N}}$-transversal derivatives of $(\LogDensity,v,\Ent)$ 
	on RHSs~\eqref{E:MAINTHMLATERALBOUNDARYEASYERRORINTEGRANDTERMS}-\eqref{E:MAINTHMENTROPYGRADIENTMAINLATERALERRORINTEGRAND}
	is critically important
	in the $\gfour$-null case.}
	
	\eqref{E:COMBINEDSPACETIMEINTEGRALGRONWALLREADY} then follows from \eqref{E:COMBINEDSPACETIMEINTEGRALALMOSTGRONWALLREADY}
	and Young's inequality.
	
	\eqref{E:COMBINEDENERGYFLUXGRONWALLREADY} then follows from
	\eqref{E:COMBINEDSPACETIMEINTEGRALGRONWALLREADY},
	\eqref{E:COMBINEDENERGYFLUXALMOSTGRONWALLREADY}, 
	and Young's inequality.
	
	\eqref{E:COMBINEDENERGYFLUXGRONWALLED} follows from \eqref{E:COMBINEDENERGYFLUXGRONWALLREADY}
	and Gronwall's inequality.
	\eqref{E:COMBINEDSPACETIMEINTEGRALGRONWALLED}
	follows from \eqref{E:COMBINEDSPACETIMEINTEGRALGRONWALLREADY} and \eqref{E:COMBINEDENERGYFLUXGRONWALLED}.
\end{proof}

\section{Remarkable Hodge-transport-based integral identities relative to double-null foliations} 
\label{S:DOUBLENULL}
In this section, we extend the integral identities of Theorem~\ref{T:MAINREMARKABLESPACETIMEINTEGRALIDENTITY}
so that they apply to spacetime regions that are double-null foliated, that is, foliated by a pair $u,\underline{u}$ 
of acoustical eikonal functions. We provide the main integral identities in Theorem~\ref{T:DOUBLENULLMAINTHEOREM}.
We highlight that an analog of Theorem~\ref{T:STRUCTUREOFERRORTERMS} also holds in the present context, that is, that
the error terms in Theorem~\ref{T:DOUBLENULLMAINTHEOREM} along $\gfour$-null hypersurfaces involve only tangential derivatives;
see Remark~\ref{R:DOUBLENULLNULLHYPERSURFACEERRORTERMSTANGENTIALDERIVATIVES}.
Before proving the theorem, we set up the double-null foliation and
provide analogs of results from the previous sections,
modified so as to apply in the present context.

\begin{center}
\begin{overpic}[scale=.5,grid=false]{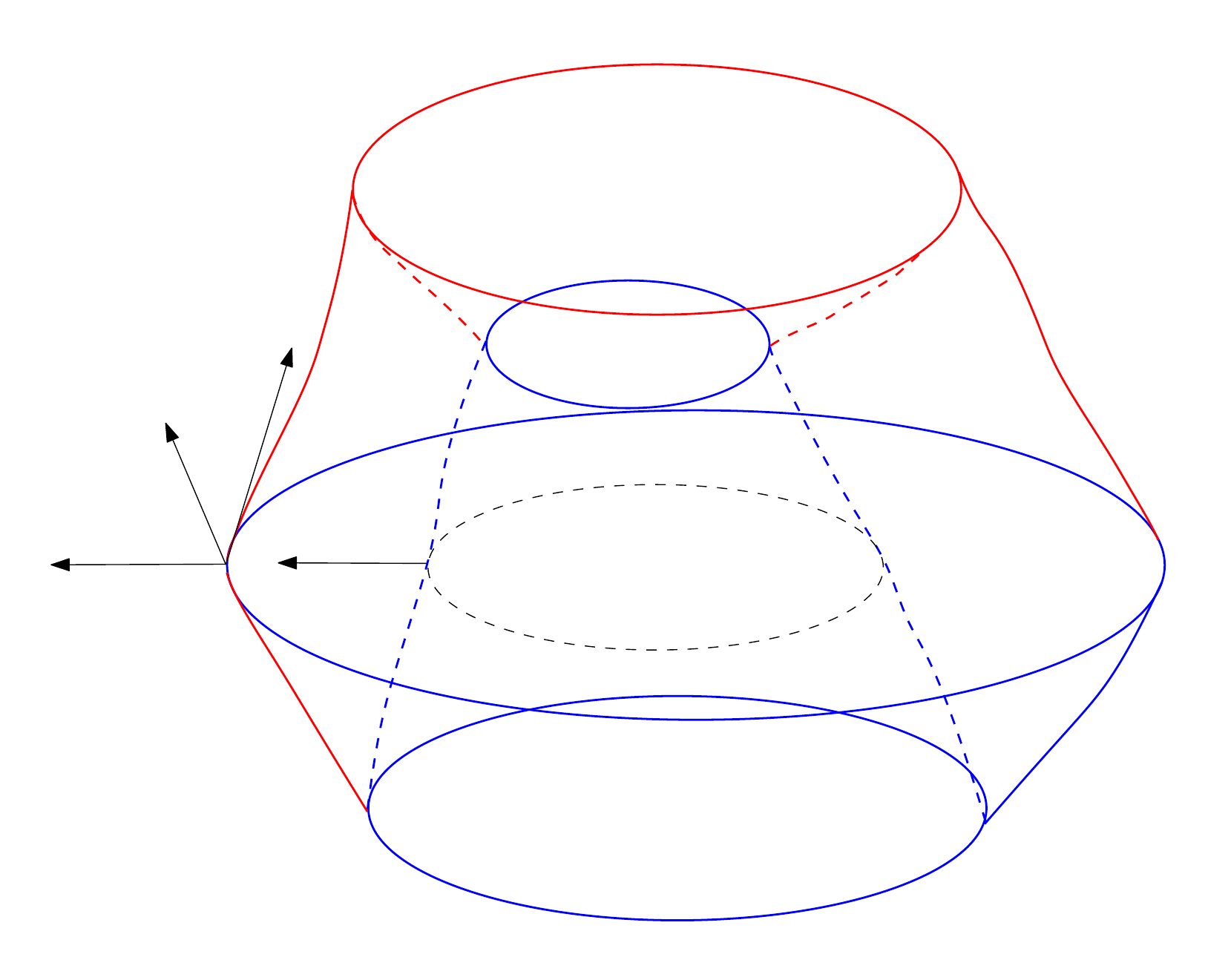} 
\put (33,2.5) {\large$\displaystyle \mathcal{S}_{1,0}$}
\put (22,28.3) {\large$\displaystyle \mathcal{S}_{1,\underline{U}}$}
\put (48,30) {\large$\displaystyle \mathcal{S}_{u = 1 + \underline{U},0}$}
\put (47,60) {\large$\displaystyle \mathcal{S}_{U,0}$}
\put (21,71) {\large$\displaystyle \mathcal{S}_{U,\underline{U}}$}
\put (87,15) {\large$\displaystyle \mathcal{H}_1$}
\put (70,50) {\large$\displaystyle \mathcal{H}_U$}
\put (75.5,29) {\large$\displaystyle \underline{\mathcal{H}}_0$}
\put (86,55) {\large$\displaystyle \underline{\mathcal{H}}_{\underline{U}}$}
\put (4,35) {\large$\displaystyle \spherenormal$}
\put (22,35) {\large$\displaystyle \spherenormal$}
\put (12,46.5) {\large$\displaystyle \newL$}
\put (21,54) {\large$\displaystyle \newuL$}
\end{overpic}
\captionof{figure}{A spacetime region $\mathcal{M}$ that can be covered by a double-null foliation}
\label{F:DOUBLENULL}
\end{center}

\subsection{Setup of the double-null foliations}
\label{SS:DOUBLENULL}
We will derive integral identities for compressible Euler solutions
on spacetime regions $\mathcal{M}$ that are
foliated by the level sets of a pair $u,\underline{u}$ of acoustic eikonal functions,
which we assume to be given solutions of the acoustical eikonal equation:
\begin{align} \label{E:EIKONALFUNCTION}
	(\gfour^{-1})^{\alpha \beta}
	\partial_{\alpha} u 
	\partial_{\beta} u
	& = 0,
	&
	(\gfour^{-1})^{\alpha \beta}
	\partial_{\alpha} \underline{u} 
	\partial_{\beta} \underline{u}
	& = 0.
\end{align}
As before, in \eqref{E:EIKONALFUNCTION}, $\gfour$ is the acoustical metric of Def.\,\ref{D:ACOUSTICALMETRIC}.
We let $\mathcal{H}_u$ and  $\underline{\mathcal{H}}_{\underline{u}}$ respectively denote the level
sets\footnote{Throughout, we abuse notation by using the symbols ``$u$'' and ``$\underline{u}$'' 
to denote the acoustical eikonal functions and the values that they take on;
the precise meaning of the symbols will be clear from context.} of $u$ and $\underline{u}$. 
We assume that there are constants 
$U$ and $\underline{U}$ satisfying
$1 + \underline{U} < U$ and $0 < \underline{U}$ such that
$u$ and $\underline{u}$ are smooth with non-vanishing, transversal,
past-directed\footnote{Equivalently, we assume that $\Transport u > 0$ and $\Transport \underline{u} > 0$.} gradients 
on a spacetime region $\mathcal{M}$ corresponding to $1 \leq u \leq U$ and $0 \leq \underline{u} \leq \underline{U}$.
Then by \eqref{E:EIKONALFUNCTION},
$\mathcal{H}_u$ and  $\underline{\mathcal{H}}_{\underline{u}}$
are three-dimensional $\gfour$-null\footnote{The vectorfield 
$(\gfour^{-1})^{\alpha \beta} \partial_{\beta} u$ is $\gfour$-null and $\gfour$-orthogonal to the level sets of $u$,
while the vectorfield $(\gfour^{-1})^{\alpha \beta} \partial_{\beta} \underline{u}$ is $\gfour$-null and $\gfour$-orthogonal to the level sets of 
$\underline{u}$.} 
hypersurfaces on $\mathcal{M}$ that intersect transversally
in two-dimensional $\gfour$-spacelike submanifolds
\begin{align} \label{E:SPHERESDOUBLENULL}
	\mathcal{S}_{u,\underline{u}}
	& := \mathcal{H}_u \cap \underline{\mathcal{H}}_{\underline{u}}.
\end{align}
We assume that all of the $\mathcal{S}_{u,\underline{u}}$ are
diffeomorphic to $\mathbb{S}^2$.

\begin{remark}[On the width of the regions]	
	\label{R:WIDTH}
	Our assumptions on $U$ and $\underline{U}$ imply that the $u$-width of $\mathcal{M}$ is larger than its $\underline{u}$-width.
	However, this is only for convenience of exposition; our results could readily be generalized to handle the case that the
	 $u$-width of $\mathcal{M}$ is less than or equal to the $\underline{u}$-width.
\end{remark}

We define
\begin{align} \label{E:DOUBLENULLTIMEFUNCTION}
	\Timefunction
	& := u + \underline{u}.
\end{align}
From \eqref{E:EIKONALFUNCTION} and our assumption that the gradients of $u$ and $\underline{u}$ are past-directed,
it follows that
\begin{align} \label{E:DOUBLENULLTIMEFUNCTIONHASTIMELIKENORMAL}
	(\gfour^{-1})^{\alpha \beta}
	\partial_{\alpha} \Timefunction 
	\partial_{\beta} \Timefunction
	& < 0.
\end{align}
In particular, the $\gfour$-normal to the level sets of $\Timefunction$ are $\gfour$-timelike, and thus these level
sets are $\gfour$-spacelike. That is, $\Timefunction$ is an acoustical time function on the region under study.
For $(u',\underline{u}') \in [1,U] \times [0,\underline{U}]$
and $\Timefunction' \in [1,U + \underline{U}]$,
we define
\begin{subequations}
\begin{align}
	\widetilde{\Sigma}_{\Timefunction'}
	& := \mathcal{M}_{U,\underline{U}}
			\cap
			\lbrace \Timefunction = \Timefunction' \rbrace,
			\\
	\mathcal{H}_{u'}(0,\underline{u}')
	& := \mathcal{H}_{u'} \cap \lbrace 0 \leq \underline{u} \leq \underline{u}' \rbrace,
		\\
	\underline{\mathcal{H}}_{\underline{u}}(1,u')
	& := \underline{\mathcal{H}}_{\underline{u}} \cap \lbrace 1 \leq u \leq u' \rbrace,
		\\
	\mathcal{M}_{u',\underline{u}'}
	& :=
			\lbrace 1 \leq u \leq u' \rbrace
			\cap
			\lbrace 0 \leq \underline{u} \leq \underline{u}' \rbrace 
			= \cup_{u'' \in [1,u']} \mathcal{H}_{u''}(0,\underline{u}')
			= \cup_{\underline{u}'' \in [0,\underline{u}']} \underline{\mathcal{H}}_{\underline{u}''}(1,u')
				\\
	& = \cup_{(u'',\underline{u}'') \in [1,u'] \times [0,\underline{u}']} \mathcal{S}_{u'',\underline{u}''}.
	\notag
\end{align}
\end{subequations}
Note that $\mathcal{M} = \mathcal{M}_{U,\underline{U}}$ and that on $\mathcal{M}_{U,\underline{U}}$, we have
\begin{align} \label{E:DOUBLENULLTIMEFUNCTIONRANGE}
	1 \leq \Timefunction \leq U + \underline{U}.
\end{align}
Note also that $\widetilde{\Sigma}_1$ and $\widetilde{\Sigma}_{U + \underline{U}}$ are degenerate
in the sense that they are not three-dimensional submanifolds with boundary, 
but rather are two-dimensional submanifolds:
$\widetilde{\Sigma}_1 = \mathcal{S}_{1,0}$,
$\widetilde{\Sigma}_{U + \underline{U}} = \mathcal{S}_{U,\underline{U}}$.
Moreover, we note that for $\Timefunction' \in (1,U + \underline{U})$,
the boundary of $\widetilde{\Sigma}_{\Timefunction'}$ (in the sense of a manifold-with-boundary),
which we denote by $\partial \widetilde{\Sigma}_{\Timefunction'}$,
satisfies:
\begin{subequations}
\begin{align} \label{E:DOUBLENULLLOWERREGIONBOUNDARYOFSPACELIKEHYPERSURFACEPORTION}
	\partial \widetilde{\Sigma}_{\Timefunction'}
	& = \mathcal{S}_{\Timefunction',0} \cup \mathcal{S}_{1,\Timefunction'-1},
	&
	\Timefunction' \in (1,1 + \underline{U}],
		\\
	\partial \widetilde{\Sigma}_{\Timefunction'}
	& = \mathcal{S}_{\Timefunction',0} \cup \mathcal{S}_{\Timefunction'-\underline{U},\underline{U}},
	&
	\Timefunction' \in [1 + \underline{U},U],
		\label{E:DOUBLENULLMIDDLEREGIONBOUNDARYOFSPACELIKEHYPERSURFACEPORTION} 
		\\
	\partial \widetilde{\Sigma}_{\Timefunction'}
	& = \mathcal{S}_{U,\Timefunction'-U} \cup \mathcal{S}_{\Timefunction'-\underline{U},\underline{U}},
	&
	\Timefunction' \in [U,U + \underline{U}).
		\label{E:DOUBLENULLUPPERREGIONBOUNDARYOFSPACELIKEHYPERSURFACEPORTION} 
\end{align}
\end{subequations}
In each of the disjoint unions on
RHSs~\eqref{E:DOUBLENULLLOWERREGIONBOUNDARYOFSPACELIKEHYPERSURFACEPORTION}-\eqref{E:DOUBLENULLUPPERREGIONBOUNDARYOFSPACELIKEHYPERSURFACEPORTION},
we refer to the first set as the ``inner boundary'' of $\widetilde{\Sigma}_{\Timefunction'}$ and the second set as
the ``outer boundary'' of $\widetilde{\Sigma}_{\Timefunction'}$; see Fig.\,\ref{F:DOUBLENULL}.

For the purpose of deriving the integral identities,
we assume that the fluid solution is smooth on $\mathcal{M}_{U,\underline{U}}$.
Moreover, for the purpose of \emph{interpreting} the integral identities,
we imagine that the ``state of the fluid solution'' is prescribed on $\mathcal{H}_1(0,\underline{U})$ and $\underline{\mathcal{H}}_0(1,U)$
(i.e., we view these as null hypersurfaces where ``initial data'' are posed);
as we mentioned in Subsect.\,\ref{SS:APPLICATIONS},
a full treatment of the characteristic initial value problem will be the subject of a future work.

\subsection{Geometric quantities adapted to the double-null foliation}
\label{SS:DOUBLENULLGEOMETRICQUANTITIES}

\subsubsection{$\gfour$-null vectorfields and related scalar functions}
\label{SSS:DOUBLENULLGFOURNULLVECTORFIELDS}
Associated to $u$ and $\underline{u}$, we define the geodesic vectorfields\footnote{More precisely, 
using \eqref{E:EIKONALFUNCTION}, is straightforward to show
that $\Dfour_{\Lgeo} \Lgeo = \Dfour_{\uLgeo} \uLgeo = 0$, where $\Dfour$ is the Levi--Civita connection of $\gfour$.} 
\begin{align} \label{E:NULLGEODESIVECTORFIELDS}
	\Lgeo^{\alpha}
	& := - (\gfour^{-1})^{\alpha \beta} \partial_{\beta} u,
	&
	\uLgeo^{\alpha}
	& := - 	
			(\gfour^{-1})^{\alpha \beta} \partial_{\beta} \underline{u}.
	\end{align}
From \eqref{E:NULLGEODESIVECTORFIELDS}, it follows that $\Lgeo$ is $\gfour$-orthogonal to $\mathcal{H}_u$,
while $\uLgeo$ is $\gfour$-orthogonal to $\underline{\mathcal{H}}_{\underline{u}}$.
The equations in \eqref{E:EIKONALFUNCTION} imply that $\Lgeo$ and $\uLgeo$ are $\gfour$-null:
\begin{align} \label{E:NULLGEODESIVECTORFIELDSAREINFACTNULL}
	\gfour(\Lgeo,\Lgeo)
	&
	= \gfour(\uLgeo,\uLgeo)
	= 0.
\end{align}

Next, we define the following scalar functions on $\mathcal{M}$:
\begin{subequations}
\begin{align} \label{E:LFOLIATIONDENSITY}
	\upmu
	& := 
	\frac{- 1}{
	(\gfour^{-1})^{\alpha \beta}
	\partial_{\alpha} t 
	\partial_{\beta} u}
	=
	\frac{1}{\Lgeo^0},
		\\
	\underline{\upmu}
	& := 
	\frac{-1}{
	(\gfour^{-1})^{\alpha \beta}
	\partial_{\alpha} t 
	\partial_{\beta} \underline{u}}
	=
	\frac{1}{\uLgeo^0},
		\label{E:ULFOLIATIONDENSITY} \\
	\MagnitueofinnerproductofnewLandnewuL
	& := 
	\frac{-1}{
	(\gfour^{-1})^{\alpha \beta}
	\partial_{\alpha} u 
	\partial_{\beta} \underline{u}}
	=
	\frac{-1}{\gfour(\Lgeo,\uLgeo)},
	\label{E:RECIPROCALOFNEGATIVENULLGEODESICVECTORFIELDINNERPRODUCT}
		\\
	\ReciprocalLunitAppliedtoTimeFunction 
	& := \frac{\MagnitueofinnerproductofnewLandnewuL}{\upmu},
	&
	\ReciprocaluLunitAppliedtoTimeFunction
	& := \frac{\MagnitueofinnerproductofnewLandnewuL}{\underline{\upmu}}.
	\label{E:RATIOOFNULLGEOSICINNERPRODUCTANDFOLIATIONDENSITY}
\end{align}
\end{subequations}
The assumptions of Subsect.\,\ref{SS:DOUBLENULL} imply that
\begin{align} \label{E:POSITIVITYOFFOLIATIONDENSITYANDNULLGEODESICINNERPRODUCTETC}
	\upmu 
	& > 0,
	&
	\underline{\upmu}
	& > 0,
	&
	\MagnitueofinnerproductofnewLandnewuL
	& > 0,
	&
	\ReciprocalLunitAppliedtoTimeFunction
	& > 0,
	&
	\ReciprocaluLunitAppliedtoTimeFunction
	& > 0.
\end{align}

Next, we define the following vectorfields, which are rescaled versions of $\Lgeo$ and $\uLgeo$:
\begin{subequations}
\begin{align}
	\Lunit^{\alpha}
	& := - \upmu (\gfour^{-1})^{\alpha \beta} \partial_{\beta} u,
	&
	\uLunit^{\alpha}
	& := - 	
			\underline{\upmu} (\gfour^{-1})^{\alpha \beta} \partial_{\beta} \underline{u}
		 \label{E:DOUBLENULLCARTESIANTIMENORMALIZEDNULLVECTORFIELDS} \\
	\newL^{\alpha}
	& := - \MagnitueofinnerproductofnewLandnewuL (\gfour^{-1})^{\alpha \beta} \partial_{\beta} u,
	&
	\newuL^{\alpha}
	& := - \MagnitueofinnerproductofnewLandnewuL (\gfour^{-1})^{\alpha \beta} \partial_{\beta} \underline{u}.
	 \label{E:DOUBLENULLEIKONALFUNCTIONNORMALIZEDNULLVECTORFIELDS}
\end{align}
\end{subequations}

The following identities easily follow from the above definitions:
\begin{align} \label{E:DOUBLENULLALLTHENEWVECTORFIELDSARENULL}
	\gfour(\Lunit,\Lunit)
	& = \gfour(\uLunit,\uLunit)
	= \gfour(\newL,\newL)
	= \gfour(\newuL,\newuL)
	= 0,
\end{align}

\begin{subequations}
\begin{align} \label{E:INNERPRODUCTOFCARTESIANNORMALIZEDNULLVECTORFIELDS}
	\gfour(\Lunit,\uLunit)
	& = \frac{- \upmu \underline{\upmu}}{\MagnitueofinnerproductofnewLandnewuL}
		= \frac{- \MagnitueofinnerproductofnewLandnewuL}{\ReciprocalLunitAppliedtoTimeFunction \ReciprocaluLunitAppliedtoTimeFunction},
		\\
	\gfour(\newL,\newuL)
	& = - \MagnitueofinnerproductofnewLandnewuL,
	\label{E:INNERPRODUCTOFEIKONALFUNCTIONNORMALIZEDNULLVECTORFIELDS}
\end{align}
\end{subequations}

\begin{subequations}
\begin{align}
	\Lunit u 
	& = \uLunit \underline{u}
	= 0,
	&
	\Lunit \underline{u}
	& = \frac{1}{\ReciprocalLunitAppliedtoTimeFunction},
	\,
	\uLunit u
	= \frac{1}{\ReciprocaluLunitAppliedtoTimeFunction},
	&
	\Lunit t 
	& = \uLunit t 
	= 1,
		\label{E:LUNITANDULUNITAPPLIEDTOEIKONALANDCARTESIANTIME} \\
	\newL u 
	& = \newuL \underline{u}
	= 0,
	&
	\newL \underline{u}
	& =
	\newuL u
	= 1,
	&
	\newL \Timefunction
	& =
	\newuL \Timefunction
	= 1,
	\label{E:GEOMETRICCOORDINATENULLVECTORFIELDSAPPLIEDTOGEOMETRICCOORDINATES}
\end{align}
\end{subequations}

\begin{align} \label{E:RELATIONBETWEENCARTESIANNORMALIZEDNULLVECTORFIELDSANDEIKONALFUNCTIONORMALIZEDNULLVECTORFIELDS}
	\newL
	& = \ReciprocalLunitAppliedtoTimeFunction \Lunit,
	&
	\newuL
	& = \ReciprocaluLunitAppliedtoTimeFunction \uLunit.
\end{align}

Note that by \eqref{E:TRANSPORTONEFORMIDENTITY}, the last two equalities in \eqref{E:LUNITANDULUNITAPPLIEDTOEIKONALANDCARTESIANTIME} are equivalent to
\begin{align} \label{E:EQUIVALENTLUNITANDULUNITAPPLIEDTOEIKONALANDCARTESIANTIME}
	\gfour(\Lunit,\Transport)
	& 
	= \gfour(\uLunit,\Transport)
	= - 1.
\end{align}

From \eqref{E:FUTURENORMALTOHYPERSURFACE},
Convention~\ref{C:NULLCASE},
and the last equality in
\eqref{E:LUNITANDULUNITAPPLIEDTOEIKONALANDCARTESIANTIME}, 
it follows that the vectorfield denoted by ``$\uLunit$'' in this section has the same properties
as the vectorfield denoted by the same symbol in Sects.\,\ref{S:SPACETIMEDOMAINS}-\ref{S:APRIORI} .

\subsubsection{Additional geometric vectorfields and scalar functions}
\label{SSS:DOUBLENULLOTHERVECTORFIELDSANDSCALARFUNCTIONS}
In this subsubsection, we define some additional vectorfields and scalar functions that play a role in the ensuing analysis.

\begin{definition}[The vectorfield $\spherenormal$]
\label{D:DOUBLENULLSPHERENORMAL}
We define $\spherenormal$ to be the following vectorfield:
\begin{align} \label{E:DOUBLENULLSPHERENORMAL}
	\spherenormal
	& : = 
		\frac{
		\newL
		-
		\newuL}{\sqrt{2 \MagnitueofinnerproductofnewLandnewuL}}
		= 
		\frac{
		\ReciprocalLunitAppliedtoTimeFunction \Lunit
		-
		\ReciprocaluLunitAppliedtoTimeFunction \uLunit}{\sqrt{2 \MagnitueofinnerproductofnewLandnewuL}},
\end{align}
where the second equality follows from \eqref{E:RELATIONBETWEENCARTESIANNORMALIZEDNULLVECTORFIELDSANDEIKONALFUNCTIONORMALIZEDNULLVECTORFIELDS}.
\end{definition}

From 
\eqref{E:DOUBLENULLTIMEFUNCTION},
\eqref{E:DOUBLENULLSPHERENORMAL},
 and \eqref{E:GEOMETRICCOORDINATENULLVECTORFIELDSAPPLIEDTOGEOMETRICCOORDINATES}, 
it follows that $\spherenormal \Timefunction = 0$, that is, that $\spherenormal$ is $\widetilde{\Sigma}_{\Timefunction}$-tangent.
Since $\newL$ and $\newuL$ are $\gfour$-orthogonal to $\mathcal{S}_{u,\underline{u}}$, it follows from \eqref{E:DOUBLENULLSPHERENORMAL}
that $\spherenormal$ is also $\gfour$-orthogonal to $\mathcal{S}_{u,\underline{u}}$.
We also note that $\mbox{\upshape span} \lbrace \newuL, \newL \rbrace$ is equal to the $\gfour$-orthogonal complement of $\mathcal{S}_{u,\underline{u}}$.
Moreover, from the first equality in \eqref{E:DOUBLENULLSPHERENORMAL},
\eqref{E:DOUBLENULLALLTHENEWVECTORFIELDSARENULL},
and \eqref{E:INNERPRODUCTOFEIKONALFUNCTIONNORMALIZEDNULLVECTORFIELDS},
we compute that
\begin{align} \label{E:DOUBLENULLSPHERENORMALISUNITLENGTH}
	\gfour(\spherenormal,\spherenormal)
	& = 1.
\end{align}

\begin{remark}[The orientation of $\spherenormal$ and the relevance for the divergence theorem]
	\label{R:DOUBLENULLORIENTATIONOFSPHERENORMAL}
	From the above discussion, it follows that the vectorfield denoted by ``$\spherenormal$''
	in \eqref{E:DOUBLENULLSPHERENORMAL} has the same properties
	as the vectorfield denoted by the same symbol in Sects.\,\ref{S:SPACETIMEDOMAINS}-\ref{S:APRIORI} 
	(see Def.\,\ref{D:HYPNORMANDSPHEREFORMDEFS}).
	We highlight that $\spherenormal$ points outwards to $\widetilde{\Sigma}_{\Timefunction}$ at its outer boundary
	while $\spherenormal$ points inwards to $\widetilde{\Sigma}_{\Timefunction}$ at its inner boundary;
	see just below 
	\eqref{E:DOUBLENULLLOWERREGIONBOUNDARYOFSPACELIKEHYPERSURFACEPORTION}-\eqref{E:DOUBLENULLUPPERREGIONBOUNDARYOFSPACELIKEHYPERSURFACEPORTION}
	for the definitions of the inner and outer boundaries of $\widetilde{\Sigma}_{\Timefunction}$.
	The precise orientation of $\spherenormal$ will be important for the sign of various terms when we apply the divergence theorem
	on $\widetilde{\Sigma}_{\Timefunction}$;
	see Fig.\,\ref{F:DOUBLENULL}.
\end{remark}

Next, we note that straightforward calculations imply
that the vectorfields
$\tophypnorm$ 
and
$\modtophypnorm$
from Def.\,\ref{D:HYPNORMANDSPHEREFORMDEFS} can be expressed as follows in the present context of double-null foliations:
\begin{align} \label{E:DOUBLENULLTOPYHYPNORMANDMODTOPHYPNORM}
	\tophypnorm
	& = \frac{\newL + \newuL}{\ReciprocalLunitAppliedtoTimeFunction + \ReciprocaluLunitAppliedtoTimeFunction}
		= \frac{\ReciprocalLunitAppliedtoTimeFunction \Lunit + \ReciprocaluLunitAppliedtoTimeFunction \uLunit}{\ReciprocalLunitAppliedtoTimeFunction + \ReciprocaluLunitAppliedtoTimeFunction},
	&
	\modtophypnorm
	& = \frac{\newL + \newuL}{2}
		= \frac{(\ReciprocalLunitAppliedtoTimeFunction + \ReciprocaluLunitAppliedtoTimeFunction)\tophypnorm}{2}.
\end{align}

Moreover, straightforward calculations based on \eqref{E:DOUBLENULLALLTHENEWVECTORFIELDSARENULL} 
yield that 
$\tophypnorm$ 
and
$\modtophypnorm$
are $\gfour$-orthogonal to the vectorfield
$\spherenormal$ defined in \eqref{E:DOUBLENULLSPHERENORMAL}:
\begin{align} \label{E:DOUBLENULLTOPYHYPNORMANDMODTOPHYPNORMAREORTHGONALTOSPHERENORMAL}
	\gfour(\tophypnorm,\spherenormal)
	& 
	= \gfour(\modtophypnorm,\spherenormal)
	= 0.
\end{align}

In addition, using 
\eqref{E:DOUBLENULLALLTHENEWVECTORFIELDSARENULL},
\eqref{E:INNERPRODUCTOFEIKONALFUNCTIONNORMALIZEDNULLVECTORFIELDS},
and \eqref{E:DOUBLENULLTOPYHYPNORMANDMODTOPHYPNORM},
we compute that the scalar function $\lengthofmodtophypnorm := \sqrt{- \gfour(\modtophypnorm,\modtophypnorm)} > 0$
defined in \eqref{E:LENGTHOFTOPHYPNORMNORMALIZEDAGAINSTTIMEFUNCTION}
can be expressed as follows:
\begin{align} \label{E:DOUBLENULLLENTHOFMODTYPHYPNORM}
	\lengthofmodtophypnorm
	& = \sqrt{\frac{\MagnitueofinnerproductofnewLandnewuL}{2}}.
\end{align}
	
	Next, we define the scalar functions 
	$\uposinnerproduct$,
	$\posinnerproduct$,
	$\seconduposinnerproduct$,
	and
	$\secondposinnerproduct$
	as follows:
	\begin{subequations}
		\begin{align} \label{E:DOUBLENULLCASEINGOINGCONDITION}
			\uposinnerproduct
			& :=
			- \gfour(\spherenormal,\uLunit) 
			= \frac{-1}{\sqrt{2 \MagnitueofinnerproductofnewLandnewuL}} \gfour(\newL,\uLunit)
			= - \ReciprocaluLunitAppliedtoTimeFunction^{-1} \frac{1}{\sqrt{2 \MagnitueofinnerproductofnewLandnewuL}} \gfour(\newL,\newuL)
			= \ReciprocaluLunitAppliedtoTimeFunction^{-1} \sqrt{\frac{\MagnitueofinnerproductofnewLandnewuL}{2}}
			= \frac{\underline{\upmu}}{\sqrt{2 \MagnitueofinnerproductofnewLandnewuL}}
			> 0,
				\\
			\posinnerproduct
			& :=
			\gfour(\spherenormal,\Lunit) 
			= \frac{-1}{\sqrt{2 \MagnitueofinnerproductofnewLandnewuL}} \gfour(\newuL,\Lunit)
			= - \ReciprocalLunitAppliedtoTimeFunction^{-1} \frac{1}{\sqrt{2 \MagnitueofinnerproductofnewLandnewuL}} \gfour(\newL,\newuL)
			= \ReciprocalLunitAppliedtoTimeFunction^{-1} \sqrt{\frac{\MagnitueofinnerproductofnewLandnewuL}{2}}
			= \frac{\upmu}{\sqrt{2 \MagnitueofinnerproductofnewLandnewuL}}
			> 0,
				\label{E:DOUBLENULLCASEDUALINGOINGCONDITION} \\
			\seconduposinnerproduct 
			& := 
			- \gfour(\uLunit,\tophypnorm)
			= \frac{-\gfour(\uLunit,\newL)}{\ReciprocalLunitAppliedtoTimeFunction + \ReciprocaluLunitAppliedtoTimeFunction}
			= \frac{-\ReciprocalLunitAppliedtoTimeFunction \gfour(\uLunit,\Lunit)}{\ReciprocalLunitAppliedtoTimeFunction + \ReciprocaluLunitAppliedtoTimeFunction}
			= \frac{\MagnitueofinnerproductofnewLandnewuL}{\ReciprocaluLunitAppliedtoTimeFunction (\ReciprocalLunitAppliedtoTimeFunction + \ReciprocaluLunitAppliedtoTimeFunction)}
			> 0,
			\label{E:DOUBLENULLCASESECONDINGOINGCONDITION}
				\\
			\secondposinnerproduct 
			& := 
				- \gfour(\Lunit,\tophypnorm)
			= \frac{-\gfour(\Lunit,\newuL)}{\ReciprocalLunitAppliedtoTimeFunction + \ReciprocaluLunitAppliedtoTimeFunction}
			= \frac{-\ReciprocaluLunitAppliedtoTimeFunction \gfour(\Lunit,\uLunit)}{\ReciprocalLunitAppliedtoTimeFunction + \ReciprocaluLunitAppliedtoTimeFunction}
			= \frac{\MagnitueofinnerproductofnewLandnewuL}{\ReciprocalLunitAppliedtoTimeFunction (\ReciprocalLunitAppliedtoTimeFunction + \ReciprocaluLunitAppliedtoTimeFunction)}
			> 0,
			\label{E:DOUBLENULLCASESEDUALSECONDINGOINGCONDITION}
		\end{align}
		\end{subequations}
		where the further equalities in \eqref{E:DOUBLENULLCASEINGOINGCONDITION}-\eqref{E:DOUBLENULLCASESEDUALSECONDINGOINGCONDITION}
		follow from straightforward computations.
		
	\begin{remark}[On the signs of $\uposinnerproduct$,
	$\posinnerproduct$,
	$\seconduposinnerproduct$,
	and
	$\secondposinnerproduct$]
	\label{E:POSITIVITYOFKEYSCALARINNERPRODUCTFUNCTIONS}
	We have chosen the signs in \eqref{E:DOUBLENULLCASEINGOINGCONDITION}-\eqref{E:DOUBLENULLCASESEDUALSECONDINGOINGCONDITION}
	so that
	 $\uposinnerproduct$,
	$\posinnerproduct$,
	$\seconduposinnerproduct$,
	and
	$\secondposinnerproduct$
	are positive.
	We note that the functions
	``$\uposinnerproduct$''
	and
	``$\seconduposinnerproduct$''
	defined in \eqref{E:DOUBLENULLCASEINGOINGCONDITION} and \eqref{E:DOUBLENULLCASESECONDINGOINGCONDITION}
	have the same properties
	as the scalar functions denoted by the same symbols in Sects.\,\ref{S:SPACETIMEDOMAINS}-\ref{S:APRIORI} 
	(see \eqref{E:INGOINGCONDITION}-\eqref{E:SECONDINGOINGCONDITION}).
\end{remark}

\subsubsection{First fundamental forms and projections}
\label{SSS:DOUBLENULLFIRSTFUNDANDPROJECTIONS}
Let $\gsphere$, $\gsphere^{-1}$, and $\sphereproject$ be the tensorfields defined by the following
equations,
where we consider $\gfour$, $\MagnitueofinnerproductofnewLandnewuL$, $\newuL$, $\newL$ to have already been defined by 
\eqref{E:ACOUSTICALMETRIC}, \eqref{E:RECIPROCALOFNEGATIVENULLGEODESICVECTORFIELDINNERPRODUCT}, and \eqref{E:DOUBLENULLEIKONALFUNCTIONNORMALIZEDNULLVECTORFIELDS},
and $\updelta_{\ \beta}^{\alpha}$ is the Kronecker delta:
\begin{subequations}
\begin{align}
	\gfour_{\alpha \beta}
	& = 
		-
		\frac{1}{\MagnitueofinnerproductofnewLandnewuL}
		\newL_{\alpha} \newuL_{\beta}
		-
		\frac{1}{\MagnitueofinnerproductofnewLandnewuL}
		\newuL_{\alpha} \newL_{\beta}
		+
		\gsphere_{\alpha \beta},
			\label{E:DOUBLENULLSPHEREFIRSTFUNDDEFININGEQUATION} \\
	(\gfour^{-1})^{\alpha \beta}
	& = 
		-
		\frac{1}{\MagnitueofinnerproductofnewLandnewuL}
		\newL^{\alpha} \newuL^{\beta}
		-
		\frac{1}{\MagnitueofinnerproductofnewLandnewuL}
		\newuL^{\alpha} \newL^{\beta}
		+
		(\gsphere^{-1})^{\alpha \beta},
			\label{E:DOUBLENULLSPHEREINVERSEFIRSTFUNDDEFININGEQUATION} \\
		\sphereproject_{\ \beta}^{\alpha}
	& = \updelta_{\ \beta}^{\alpha}
			+
			\frac{1}{\MagnitueofinnerproductofnewLandnewuL}
			\newL^{\alpha} \newuL_{\beta}
			+
			\frac{1}{\MagnitueofinnerproductofnewLandnewuL}
			\newuL^{\alpha} \newL_{\beta}.
			\label{E:DOUBLENULLSPHEREPROJECTIONDEFININGEQUATION}
\end{align}
\end{subequations}
Next, we recall that $\mbox{\upshape span} \lbrace \newuL, \newL \rbrace$ is equal to the $\gfour$-orthogonal complement of $\mathcal{S}_{u,\underline{u}}$.
With the help of 
\eqref{E:INNERPRODUCTOFEIKONALFUNCTIONNORMALIZEDNULLVECTORFIELDS},
it is straightforward to check that 
$\gsphere$ is the first fundamental form of $\mathcal{S}_{u,\underline{u}}$,
that 
$\gsphere^{-1}$ is the inverse first fundamental form of $\mathcal{S}_{u,\underline{u}}$,
and that $\sphereproject$ $\gfour$-orthogonal projection onto $\mathcal{S}_{u,\underline{u}}$
in the sense that these three tensorfields have the properties
described in Lemma~\ref{L:BASICPROPSOFFUNDAMENTALFORMSANDPROJECTIONS}
(where ``$\mbox{\upshape span} \lbrace \tophypnorm, \spherenormal \rbrace$'' in 
\eqref{E:GSPHEREVANISHESONSPANOFTOPHYPNORMANDSPHERENORMAL} and \eqref{E:STPROJECTIONANNIHILATESNORMALS} 
is equal to ``$\mbox{\upshape span} \lbrace \newuL, \newL \rbrace$'').
In particular, when restricted to $\mathcal{S}_{u,\underline{u}}$, $\gsphere$ is the Riemannian metric
induced by $\gfour$.

\subsubsection{Geometric decompositions of various vectorfields}
\label{SSS:DOUBLENULLDECOMPOSITIONSOFVECTORFIELDS}

We start by defining the following two vectorfields:
\begin{align} \label{E:DOUBLENULLSPECIALGENERATOR}
		\uspecialgen
		& := 
				\Transport
				-
				\frac{1}{\uposinnerproduct}
				\spherenormal,
		&
		\specialgen
		& := 
				\Transport
				+
				\frac{1}{\posinnerproduct}
				\spherenormal.
	\end{align}
	Note that the vectorfield
	``$\uspecialgen$''
	defined in \eqref{E:DOUBLENULLSPECIALGENERATOR}
	has the same properties
	as the vectorfield denoted by the same symbol in Sects.\,\ref{S:SPACETIMEDOMAINS}-\ref{S:APRIORI} 
	(see \eqref{E:SPECIALGENERATOR}).

	Next, we define the vectorfields $\utang$ and $\tang$ by demanding that the following
	identities hold, where 
	$\seconduposinnerproduct$,
	$\secondposinnerproduct$,
	$\uspecialgen$, 
	and $\specialgen$
	are defined by 
	\eqref{E:DOUBLENULLCASESECONDINGOINGCONDITION},
	\eqref{E:DOUBLENULLCASESEDUALSECONDINGOINGCONDITION},
	and
	\eqref{E:DOUBLENULLSPECIALGENERATOR}:
	\begin{align} \label{E:DOUBLENULLSPECIALGENERATORTANGENTIDENTITY}
		\uspecialgen
		& = 
			\frac{1}{\seconduposinnerproduct}
			\uLunit 
			+ 
			\utang,
		&
		\specialgen
		& = 
			\frac{1}{\secondposinnerproduct}
			\Lunit 
			+ 
			\tang.
	\end{align}
	
From arguments nearly identical to those used in the proof of Lemma~\ref{L:KEYIDBETWEENVARIOUSVECTORFIELDS},
we find that $\utang$ and $\tang$ are both $\mathcal{S}_{u,\underline{u}}$-tangent and that
the following identities hold:
\begin{subequations}
\begin{align}
	\Transport
	& = \frac{1}{\seconduposinnerproduct}
			\uLunit 
			+ 
			\frac{1}{\uposinnerproduct}
			\spherenormal
			+ 
			\utang
			=
			\uspecialgen
			+
			\frac{1}{\uposinnerproduct}
			\spherenormal,
				\label{E:TRANSPORTINTERMSOFINGOINGNULL}
					\\
\Transport
	& = 
			\frac{1}{\secondposinnerproduct}
			\Lunit 
			- 
			\frac{1}{\posinnerproduct}
			\spherenormal
			+ 
			\tang
			=
			\specialgen
			-
			\frac{1}{\posinnerproduct}
			\spherenormal.
			\label{E:TRANSPORTINTERMSOFOUTGOINGNULL}
\end{align}
\end{subequations}
Note that the vectorfield
	``$\utang$''
 appearing in \eqref{E:DOUBLENULLSPECIALGENERATORTANGENTIDENTITY} and \eqref{E:TRANSPORTINTERMSOFINGOINGNULL}
	has the same properties
	as the vectorfield denoted by the same symbol in Sects.\,\ref{S:SPACETIMEDOMAINS}-\ref{S:APRIORI} 
	(see \eqref{E:STUTANGENTPARTOFTRANSPORT}).

Next, we define the following two vectorfields:
\begin{align} \label{E:DOUBLENULLREGULARFORMRESCALEDVERSIONOFHYPNORMMINUSNEWGEN}
		\urescalednewgenminushypnorm
		& := \frac{\uLunit - \tophypnorm}{\seconduposinnerproduct},
		&
		\rescalednewgenminushypnorm
		& := \frac{\Lunit - \tophypnorm}{\secondposinnerproduct}.
	\end{align}
Note that the vectorfield
	``$\urescalednewgenminushypnorm$''
defined in \eqref{E:DOUBLENULLREGULARFORMRESCALEDVERSIONOFHYPNORMMINUSNEWGEN}
	has the same properties
	as the vectorfield denoted by the same symbol in Sects.\,\ref{S:SPACETIMEDOMAINS}-\ref{S:APRIORI} 
	(see \eqref{E:REGULARFORMRESCALEDVERSIONOFHYPNORMMINUSNEWGEN}).
	Moreover,
	arguments nearly identical to those used in the proof of
	Lemma~\ref{L:PROPERTIESOFRESCALEDGENERATORMINUSSIDEHYPNORM}
	yield that $\urescalednewgenminushypnorm$ and $\rescalednewgenminushypnorm$
	are both $\Sigma_t$-tangent.

	Finally, we note that arguments nearly identical to those used in the proof of
	Lemma~\ref{L:NEWDECOMPOSITIONOFCOORDINATEPARTIALDERIVATIVEVECTORFIELDS}
	yield that there exist $\mathcal{S}_{u,\underline{u}}$-tangent vectorfields 
	$\utandecompvectorfielddownarg{\alpha}$
	and
	$\tandecompvectorfielddownarg{\alpha}$ 
	such that for $\alpha = 0,1,2,3$, the following identities hold:
	\begin{subequations}
	\begin{align} \label{E:DOUBLENULLHBARDECOMPOSITIONOFCOORDINATEPARTIALDERIVATIVEVECTORFIELDS}
			\partial_{\alpha}
		& = 
			-
			\uLunit_{\alpha} 
			\Transport
			+
			\urescalednewgenminushypnorm_{\alpha}
			\uLunit
			+
			\utandecompvectorfielddownarg{\alpha},
				\\
		\partial_{\alpha}
		& = 
			-
			\Lunit_{\alpha} 
			\Transport
			+
			\rescalednewgenminushypnorm_{\alpha}
			\Lunit
			+
			\tandecompvectorfielddownarg{\alpha}.
			\label{E:DOUBLENULLHDECOMPOSITIONOFCOORDINATEPARTIALDERIVATIVEVECTORFIELDS}
	\end{align}
\end{subequations}
	Note that the vectorfield
	``$\utandecompvectorfielddownarg{\alpha}$''
	in \eqref{E:DOUBLENULLHBARDECOMPOSITIONOFCOORDINATEPARTIALDERIVATIVEVECTORFIELDS}
	has the same properties
	as the vectorfield denoted by the same symbol in Sects.\,\ref{S:SPACETIMEDOMAINS}-\ref{S:APRIORI} 
	(see \eqref{E:NEWDECOMPOSITIONOFCOORDINATEPARTIALDERIVATIVEVECTORFIELDS}).

\subsubsection{Tensorfields with the same definitions as in Sects.\,\ref{S:SPACETIMEDOMAINS}-\ref{S:APRIORI}}
\label{SSS:TENSORFIELDSWITHSAMEDEFINITIONS}
In the rest of the paper, our convention is that if we refer to a
tensorfield that was not
explicitly defined or constructed in 
Subsubsects.\,\ref{SSS:DOUBLENULLGFOURNULLVECTORFIELDS}-\ref{SSS:DOUBLENULLDECOMPOSITIONSOFVECTORFIELDS},
then it has the same definition that it had in Sects.\,\ref{S:SPACETIMEDOMAINS}-\ref{S:APRIORI}
in terms of the tensorfields from Subsubsects.\,\ref{SSS:DOUBLENULLGFOURNULLVECTORFIELDS}-\ref{SSS:DOUBLENULLDECOMPOSITIONSOFVECTORFIELDS};
we refer to Appendix~\ref{A:APPENDIXFORBULK} as an aid for quickly referencing the relevant definitions.
As an example, we note that the scalar function
$\lengthoftophypnorm$
is defined by \eqref{E:LENGTHOFTOPHYPNORM},
where it is understood that the vectorfield $\tophypnorm$ on RHS~\eqref{E:LENGTHOFTOPHYPNORM}
is as in equation \eqref{E:DOUBLENULLTOPYHYPNORMANDMODTOPHYPNORM}.
Then the vectorfield $\hat{\tophypnorm}$ is understood to be the one defined in \eqref{E:TOPUNITHYPERSURFACENORMAL},
where the scalar function $\lengthoftophypnorm$ on RHS~\eqref{E:TOPUNITHYPERSURFACENORMAL} is as above and the
vectorfield $\tophypnorm$ on RHS~\eqref{E:TOPUNITHYPERSURFACENORMAL}
is as in equation \eqref{E:DOUBLENULLTOPYHYPNORMANDMODTOPHYPNORM}.
Similarly, the $\widetilde{\Sigma}_{\Timefunction}$ projection tensorfield $\topproject$
is defined by \eqref{E:TOPPROJECT},
where the vectorfield $\hat{\tophypnorm}$ on RHS~\eqref{E:TOPPROJECT}
is as above.
As a final example, we note that the vectorfield $\projectedtransport$
is defined in \eqref{E:PROJECTIONOFTRANSPORTONTTOTILDESIGMA},
where the scalar function $\lengthoftophypnorm$ on RHS~\eqref{E:PROJECTIONOFTRANSPORTONTTOTILDESIGMA} is as above,
the tensorfield $\topproject$ on RHS~\eqref{E:PROJECTIONOFTRANSPORTONTTOTILDESIGMA} is as above,
and the vectorfield $\Transport$ on RHS~\eqref{E:PROJECTIONOFTRANSPORTONTTOTILDESIGMA}
is the material derivative vectorfield defined in \eqref{E:MATERIALVECTORVIELDRELATIVECTORECTANGULAR}.

\subsubsection{Volume and area forms}
\label{SSS:DOUBLENULLVOLUMEANDAREAFORMS}
 In this subsubsection, we discuss the volume and area forms that play a role in our ensuing analysis.
\begin{itemize}
	\item As in Sects.\,\ref{S:SPACETIMEDOMAINS}-\ref{S:APRIORI}, 
	$d \varpi_{\gfour}$ denotes the canonical volume form on $\mathcal{M}_{u,\underline{u}}$ induced by $\gfour$,
		$d \varpi_{\topfirstfund}$ denotes the canonical volume form on $\widetilde{\Sigma}_{\Timefunction}$ induced by $\topfirstfund$,
		and $d \varpi_{\gsphere}$ denotes the canonical area form on $\mathcal{S}_{u,\underline{u}}$ induced by $\gsphere$.
	\item We endow $\mathcal{H}_u(\underline{u}_1,\underline{u}_2)$ with the volume form $d \varpi_{\gsphere} d \underline{u'}$,
		where for $\underline{u}' \in [\underline{u}_1,\underline{u}_2]$, 
		$d \varpi_{\gsphere}$ is the area form on $\mathcal{S}_{u,\underline{u}'}$.
	\item Similarly, we endow $\underline{\mathcal{H}}_{\underline{u}}(u_1,u_2)$ with the volume form $d \varpi_{\gsphere} d u'$,
	where for $u' \in [u_1,u_2]$, $d \varpi_{\gsphere}$ is the area form on $\mathcal{S}_{u',\underline{u}}$.
\end{itemize}
By \eqref{E:DOUBLENULLTIMEFUNCTION}, on $\mathcal{H}_u(\underline{u}_1,\underline{u}_2)$, since $u$ is fixed,
we have $d \varpi_{\gsphere} d \underline{u'} = d \varpi_{\gsphere} d \Timefunction'$,
where on the RHS, $d \varpi_{\gsphere}$ is the area form on $\mathcal{S}_{u,\Timefunction' - u}$
and $\Timefunction' = u + \underline{u'}$.
Similarly, on $\underline{\mathcal{H}}_{\underline{u}}(u_1,u_2)$, we have
$d \varpi_{\gsphere} d u' = d \varpi_{\gsphere} d \Timefunction'$,
where on the RHS, $d \varpi_{\gsphere}$ is the area form on $\mathcal{S}_{\Timefunction' - \underline{u},\underline{u}}$
and $\Timefunction' = u' + \underline{u}$.
We also note that 
the identity \eqref{E:SPACETIMEVOLUMEFORMEXPRESSIONWITHRESPECTTOTIMEFUNCTION} remains valid in the present context,
where in the present context, $\lengthofmodtophypnorm > 0$ verifies \eqref{E:DOUBLENULLLENTHOFMODTYPHYPNORM}.

\subsubsection{Integral identities involving $\mathcal{H}_u$, $\underline{\mathcal{H}}_{\underline{u}}$, and $\mathcal{S}_{u,\underline{u}}$}
\label{SSS:INTEGRALIDENTITIESSPHERES}
In this subsubsection, we provide an analog of Lemma~\ref{L:DIFFERENTIATIONANDINTEGRALIDENTITEISINVOLVINGST}
for our double-null foliations.

\begin{lemma}[Integral identities involving $\mathcal{H}_u$, $\underline{\mathcal{H}}_{\underline{u}}$, and $\mathcal{S}_{u,\underline{u}}$]
\label{L:DOUBLENULLINTEGRALIDENTITIESINVOLVINGSPHERES}
	Let $f$ be a smooth function defined on $\mathcal{M}_{U,\underline{U}}$.
	Let 
	$u$, $u_1$, $u_2$, $\underline{u}$, $\underline{u}_1$, and $\underline{u}_2$
	be real numbers satisfying
	$1 \leq u \leq U$,
	$1 \leq u_1 \leq u_2 \leq U$, 
	$0 \leq \underline{u} \leq \underline{U}$,
	and
	$0 \leq \underline{u}_1 \leq \underline{u}_2 \leq \underline{U}$.
	Let $\newL$ be the $\mathcal{H}_u$-tangent vectorfield defined in
	\eqref{E:DOUBLENULLEIKONALFUNCTIONNORMALIZEDNULLVECTORFIELDS},
	and let $\newuL$ be the $\underline{\mathcal{H}}_{\underline{u}}$-tangent vectorfield defined in
	\eqref{E:DOUBLENULLEIKONALFUNCTIONNORMALIZEDNULLVECTORFIELDS}.
	Let $\gsphere$ be the first fundamental form of $\mathcal{S}_{u,\underline{u}}$ 
	(see Subsubsect.\,\ref{SSS:DOUBLENULLFIRSTFUNDANDPROJECTIONS}).
	Then the following identities hold,
	where $\Lie_X$ denote Lie differentiation with respect to the vectorfield $X$,
	the definition of the area form $d \varpi_{\gsphere}$ is provided in Subsubsect.\,\ref{SSS:DOUBLENULLVOLUMEANDAREAFORMS},
	and we refer to Subsubsect.\,\ref{SSS:TENSORFIELDSWITHSAMEDEFINITIONS} regarding the notation:
	\begin{subequations}
	\begin{align}   \label{E:DOUBLENULLKEYHBARINTEGRALIDENTITY}
			\int_{\underline{\mathcal{H}}_{\underline{u}}(u_1,u_2)}
				(\newuL f)
			\, d \varpi_{\gsphere} 
			d u'
			& =
				-
				\frac{1}{2}
				\int_{\underline{\mathcal{H}}_{\underline{u}}(u_1,u_2)}
					f [(\gsphere^{-1})^{\alpha \beta} \Lie_{\newuL} \gsphere_{\alpha \beta}]
				\, d \varpi_{\gsphere} 
				d u'
				+
				\int_{\mathcal{S}_{u_2,\underline{u}}}
					f
				\, d \varpi_{\gsphere}
				-
				\int_{\mathcal{S}_{u_1,\underline{u}}}
					f
				\, d \varpi_{\gsphere},
				\\
		\int_{\mathcal{H}_u(\underline{u}_1,\underline{u}_2)}
				(\newL f)
			\, d \varpi_{\gsphere} 
			d \underline{u'}
			& =
				-
				\frac{1}{2}
				\int_{\mathcal{H}_u(\underline{u}_1,\underline{u}_2)}
					f [(\gsphere^{-1})^{\alpha \beta} \Lie_{\newL} \gsphere_{\alpha \beta}]
				\, d \varpi_{\gsphere} 
				d \underline{u'}
				+
				\int_{\mathcal{S}_{u,\underline{u}_2}}
					f
				\, d \varpi_{\gsphere}
				-
				\int_{\mathcal{S}_{u,\underline{u}_1}}
					f
				\, d \varpi_{\gsphere}.
					\label{E:DOUBLENULLKEYHINTEGRALIDENTITY}
		\end{align}
	\end{subequations}
	\end{lemma}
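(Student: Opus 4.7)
\medskip

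My plan is to prove both identities by mimicking the strategy used in Lemma~\ref{L:DIFFERENTIATIONANDINTEGRALIDENTITEISINVOLVINGST}, with $\newuL$ (respectively $\newL$) playing the role that $\modgen = \lapsemodgen \gen$ played there. The two identities are symmetric under the interchange $(u,\newuL,\underline{\mathcal{H}}_{\underline{u}}) \leftrightarrow (\underline{u},\newL,\mathcal{H}_u)$, so it suffices to describe the proof of \eqref{E:DOUBLENULLKEYHBARINTEGRALIDENTITY}; the proof of \eqref{E:DOUBLENULLKEYHINTEGRALIDENTITY} is the same up to renaming.

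First I will construct adapted coordinates on $\underline{\mathcal{H}}_{\underline{u}}$. I fix arbitrary local coordinates $(\vartheta^1,\vartheta^2)$ on the initial sphere $\mathcal{S}_{u_1,\underline{u}}$ and propagate them to the rest of $\underline{\mathcal{H}}_{\underline{u}}$ by solving the transport equations $\newuL \vartheta^A = 0$, $(A=1,2)$. This is possible because \eqref{E:GEOMETRICCOORDINATENULLVECTORFIELDSAPPLIEDTOGEOMETRICCOORDINATES} gives $\newuL u = 1$ and $\newuL \underline{u} = 0$, which together with the fact that $\newuL$ is $\gfour$-orthogonal to $\mathcal{S}_{u,\underline{u}}$ (by construction, since $\newuL$ is parallel to the $\gfour$-normal of $\underline{\mathcal{H}}_{\underline{u}}$ and $\mathcal{S}_{u,\underline{u}} \subset \underline{\mathcal{H}}_{\underline{u}}$) imply that $\newuL$ is $\underline{\mathcal{H}}_{\underline{u}}$-tangent and transversal to each sphere $\mathcal{S}_{u',\underline{u}}$. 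In particular, the restriction of $(\vartheta^1,\vartheta^2)$ to each $\mathcal{S}_{u',\underline{u}}$ forms a local coordinate system, and relative to the coordinates $(u,\vartheta^1,\vartheta^2)$ on $\underline{\mathcal{H}}_{\underline{u}}$, we have $\newuL = \frac{\partial}{\partial u}$ and $\frac{\partial}{\partial \vartheta^A}$ is $\mathcal{S}_{u,\underline{u}}$-tangent.

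Next, using these adapted coordinates together with the definition of $d\varpi_{\gsphere}\, du'$ given in Subsubsect.\,\ref{SSS:DOUBLENULLVOLUMEANDAREAFORMS}, I will differentiate under the integral to obtain, for $u' \in [u_1,u_2]$,
\begin{align*}
\frac{d}{du'} \int_{\mathcal{S}_{u',\underline{u}}} f \, d\varpi_{\gsphere}
& = \int_{\mathcal{S}_{u',\underline{u}}} \frac{\partial}{\partial u'}\left( f \sqrt{\mathrm{det}\, \gsphere} \right) d\vartheta^1 d\vartheta^2
= \int_{\mathcal{S}_{u',\underline{u}}} \left\{ \newuL f + \frac{1}{2} f (\gsphere^{-1})^{\alpha\beta} \Lie_{\newuL} \gsphere_{\alpha\beta} \right\} d\varpi_{\gsphere},
\end{align*}
where I have used the standard matrix determinant differentiation identity $\frac{\partial}{\partial u'} \mathrm{det}\, \gsphere = \mathrm{det}\, \gsphere \cdot (\gsphere^{-1})^{\alpha\beta} \frac{\partial}{\partial u'} \gsphere_{\alpha\beta}$ together with the fact that $\frac{\partial}{\partial u'} \gsphere_{\alpha\beta}$, viewed via its action on pairs of $\mathcal{S}_{u',\underline{u}}$-tangent vectorfields $\frac{\partial}{\partial \vartheta^A}, \frac{\partial}{\partial \vartheta^B}$ (which commute with $\frac{\partial}{\partial u'}= \newuL$ in these coordinates), agrees with the corresponding components of $\Lie_{\newuL} \gsphere$.

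Finally, integrating this identity with respect to $u'$ from $u_1$ to $u_2$ and applying the fundamental theorem of calculus to the left-hand side yields \eqref{E:DOUBLENULLKEYHBARINTEGRALIDENTITY}. There is no genuine obstacle here; the only subtle point to be careful about is that the expression $(\gsphere^{-1})^{\alpha\beta} \Lie_{\newuL} \gsphere_{\alpha\beta}$ on the right-hand side is coordinate invariant and depends only on $\mathcal{S}_{u,\underline{u}}$-tangent components of the tensors involved, so evaluating it in the adapted coordinates $(u,\vartheta^1,\vartheta^2)$ gives the same answer as any other representation. The proof of \eqref{E:DOUBLENULLKEYHINTEGRALIDENTITY} is identical after swapping $\newuL \leftrightarrow \newL$, $u \leftrightarrow \underline{u}$, and propagating coordinates along $\mathcal{H}_u$ instead of $\underline{\mathcal{H}}_{\underline{u}}$.
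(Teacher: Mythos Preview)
Your proposal is correct and takes essentially the same approach as the paper: the paper's proof simply notes that, in view of the normalization conditions $\newuL u = 1$ and $\newL \underline{u} = 1$ from \eqref{E:GEOMETRICCOORDINATENULLVECTORFIELDSAPPLIEDTOGEOMETRICCOORDINATES}, both identities follow by a straightforward modification of the proof of Lemma~\ref{L:DIFFERENTIATIONANDINTEGRALIDENTITEISINVOLVINGST}, and you have written out precisely that modification in detail.
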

	
	\begin{proof}
		In view of the normalization conditions in \eqref{E:GEOMETRICCOORDINATENULLVECTORFIELDSAPPLIEDTOGEOMETRICCOORDINATES},
		we see that the two identities in the lemma can be derived by a straightforward modification 
		of the proof of Lemma~\ref{L:DIFFERENTIATIONANDINTEGRALIDENTITEISINVOLVINGST}.
	\end{proof}

\subsection{The main theorem for double-null foliations: Remarkable Hodge-transport integral identities for $\vortrenormalized$ and $\GradEnt$}
\label{SS:DOUBLENULLMAINTHEOREM}
We now state our main theorem, which is an analog of Theorem~\ref{T:MAINREMARKABLESPACETIMEINTEGRALIDENTITY}
for double-null foliated regions.
The proof is located in Subsect.\,\ref{SS:PROOFOFTHEOREMDOUBLENULLMAINTHEOREM}.
See Remark~\ref{R:DOUBLENULLNULLHYPERSURFACEERRORTERMSTANGENTIALDERIVATIVES} regarding the structure of the error terms on the 
$\gfour$-null hypersurfaces.

\begin{theorem}[Double-null foliations: Remarkable Hodge-transport integral identities for $\vortrenormalized$ and $\GradEnt$]
	\label{T:DOUBLENULLMAINTHEOREM}
	Let $1 < U$ and $0 < \underline{U}$, and let $\mathcal{M}_{U,\underline{U}}$ be a spacetime region that is
	double-null foliated by acoustical eikonal functions $u$ and $\underline{u}$,
	as is described in 
	Subsect.\,\ref{SS:DOUBLENULL}	
	(in particular, on $\mathcal{M}_{U,\underline{U}}$, 
	we have
	$1 \leq u \leq U$
	and $0 \leq \underline{u} \leq \underline{U}$).
	Let $\mathscr{Q}(\pmb{\partial} \mathbf{X},\pmb{\partial} \mathbf{X})$
	be the quadratic form defined by
	\eqref{E:MAINQUADRATICFORMFORCONTROLLINGFIRSTDERIVATIVESOFSPECIFICVORTICITYANDENTROPYGRADIENT},
	and recall that the positive definite nature of $\mathscr{Q}$ was revealed in
	Lemma~\ref{L:POSITIVITYPROPERTIESOFVARIOUSQUADRATICFORMS}.
	Let $\weight$ be an arbitrary scalar function.
	Let $\lengthofmodtophypnorm > 0$ be the scalar function defined in \eqref{E:LENGTHOFTOPHYPNORMNORMALIZEDAGAINSTTIMEFUNCTION}
	(recall also the identity \eqref{E:DOUBLENULLLENTHOFMODTYPHYPNORM}),
	let $\ReciprocalLunitAppliedtoTimeFunction > 0$ and $\ReciprocaluLunitAppliedtoTimeFunction > 0$ be the scalar functions defined in 
	\eqref{E:RATIOOFNULLGEOSICINNERPRODUCTANDFOLIATIONDENSITY}
	(see also \eqref{E:POSITIVITYOFFOLIATIONDENSITYANDNULLGEODESICINNERPRODUCTETC}),
	and let 
	$\uposinnerproduct > 0$, 
	$\seconduposinnerproduct > 0$,
	$\posinnerproduct > 0$, 
	and
	$\secondposinnerproduct > 0$
	be the scalar functions from \eqref{E:DOUBLENULLCASEINGOINGCONDITION}-\eqref{E:DOUBLENULLCASESEDUALSECONDINGOINGCONDITION}.
	For smooth solutions (see Remark~\ref{R:SMOOTHNESSNOTNEEDED})
	to the compressible Euler equations \eqref{E:TRANSPORTDENSRENORMALIZEDRELATIVECTORECTANGULAR}-\eqref{E:ENTROPYTRANSPORT},
	the following integral identities hold,
	where the definitions of the volume and area forms are provided in Subsubsect.\,\ref{SSS:DOUBLENULLVOLUMEANDAREAFORMS},
	and we refer to Subsubsect.\,\ref{SSS:TENSORFIELDSWITHSAMEDEFINITIONS} regarding the notation:
	\begin{subequations}
	\begin{align}
		&
		\int_{\mathcal{M}_{U,\underline{U}}}	
			\weight
			\lengthofmodtophypnorm^{-1}
			\mathscr{Q}(\pmb{\partial} \vortrenormalized,\pmb{\partial} \vortrenormalized)
		\, d \varpi_{\gfour}
		+
		\int_{\mathcal{S}_{U,\underline{U}}}
			\weight
			\left\lbrace
				\frac{\posinnerproduct}{\secondposinnerproduct \ReciprocalLunitAppliedtoTimeFunction}
				+
				\frac{\uposinnerproduct}{\seconduposinnerproduct \ReciprocaluLunitAppliedtoTimeFunction} 
			\right\rbrace
			|\angvortrenormalized|_{\gsphere}^2
		\, d \varpi_{\gsphere} 
			\label{E:DOUBLENULLSPACETIMEREMARKABLEIDENTITYSPECIFICVORTICITY} \\
		& = 
		\int_{\mathcal{S}_{U,0}}
			\weight
			\left\lbrace
				\frac{\posinnerproduct}{\secondposinnerproduct \ReciprocalLunitAppliedtoTimeFunction}
				+
				\frac{\uposinnerproduct}{\seconduposinnerproduct \ReciprocaluLunitAppliedtoTimeFunction} 
			\right\rbrace
			|\angvortrenormalized|_{\gsphere}^2
		\, d \varpi_{\gsphere}
		+
		\int_{\mathcal{S}_{1,\underline{U}}}
			\weight
			\left\lbrace
				\frac{\posinnerproduct}{\secondposinnerproduct \ReciprocalLunitAppliedtoTimeFunction}
				+
				\frac{\uposinnerproduct}{\seconduposinnerproduct \ReciprocaluLunitAppliedtoTimeFunction} 
			\right\rbrace
			|\angvortrenormalized|_{\gsphere}^2
		\, d \varpi_{\gsphere}
		-
		\int_{\mathcal{S}_{1,0}}
			\weight
			\left\lbrace
				\frac{\posinnerproduct}{\secondposinnerproduct \ReciprocalLunitAppliedtoTimeFunction}
				+
				\frac{\uposinnerproduct}{\seconduposinnerproduct \ReciprocaluLunitAppliedtoTimeFunction} 
			\right\rbrace
			|\angvortrenormalized|_{\gsphere}^2
		\, d \varpi_{\gsphere}
			\notag \\
	& \ \
	+	
	\int_{\mathcal{M}_{U,\underline{U}}}	
			\weight
			\lengthofmodtophypnorm^{-1}
			\left\lbrace
				\frac{1}{2}|\mathfrak{A}^{(\vortrenormalized)}|_{\topfirstfund}^2
				+
				|\mathfrak{B}_{(\vortrenormalized)}|_{\topfirstfund}^2
				+
				\mathfrak{C}^{(\vortrenormalized)}
				+
				\mathfrak{D}^{(\vortrenormalized)}
				+
				\mathfrak{J}_{(Coeff)}[\vortrenormalized,\pmb{\partial} \vortrenormalized]
			\right\rbrace
	\, d \varpi_{\gfour}
		\notag \\
	& \ \
	+	
	\int_{\mathcal{M}_{U,\underline{U}}}	
			\lengthofmodtophypnorm^{-1}
			\mathfrak{J}_{(\pmb{\partial} \weight)}[\vortrenormalized,\pmb{\partial} \vortrenormalized]
	\, d \varpi_{\gfour}
		\notag \\
	& \ \
		+
		\int_{\underline{\mathcal{H}}_{\underline{U}}(1,U)}
			\left\lbrace
				\underline{\mathfrak{H}}_{(\pmb{\partial} \weight)}[\vortrenormalized]
				+
				\weight 
				\underline{\mathfrak{H}}[\vortrenormalized]
				+
				\weight 
				\underline{\mathfrak{H}}_{(1)}[\vortrenormalized]
			\right\rbrace
		\, d \varpi_{\gsphere} d u'
			\notag \\
	& \ \
		+
		\int_{\mathcal{H}_U(0,\underline{U})}
			\left\lbrace
				\mathfrak{H}_{(\pmb{\partial} \weight)}[\vortrenormalized]
				+
				\weight
				\mathfrak{H}[\vortrenormalized]
				+
				\weight
				\mathfrak{H}_{(1)}[\vortrenormalized]
			\right\rbrace
		\, d \varpi_{\gsphere} d \underline{u}'
		\notag \\
	& \ \
		-
		\int_{\underline{\mathcal{H}}_{0}(1,U)}
			\left\lbrace
				\underline{\mathfrak{H}}_{(\pmb{\partial} \weight)}[\vortrenormalized]
				+
				\weight
				\underline{\mathfrak{H}}[\vortrenormalized]
				+
				\weight
				\underline{\mathfrak{H}}_{(1)}[\vortrenormalized]
			\right\rbrace
		\, d \varpi_{\gsphere} d u'
			\notag \\
	& \ \	
		-
		\int_{\mathcal{H}_1(0,\underline{U})}
			\left\lbrace
				\mathfrak{H}_{(\pmb{\partial} \weight)}[\vortrenormalized]
				+
				\weight
				\mathfrak{H}[\vortrenormalized]
				+
				\weight
				\mathfrak{H}_{(1)}[\vortrenormalized]
			\right\rbrace
		\, d \varpi_{\gsphere} d \underline{u}',
			\notag
	\end{align}
	
	\begin{align}
	&
		\int_{\mathcal{M}_{U,\underline{U}}}	
			\weight
			\lengthofmodtophypnorm^{-1}
			\mathscr{Q}(\pmb{\partial} \GradEnt,\pmb{\partial} \GradEnt)
		\, d \varpi_{\gfour}
		+
		\int_{\mathcal{S}_{U,\underline{U}}}
			\weight
			\left\lbrace
				\frac{\posinnerproduct}{\secondposinnerproduct \ReciprocalLunitAppliedtoTimeFunction}
				+
				\frac{\uposinnerproduct}{\seconduposinnerproduct \ReciprocaluLunitAppliedtoTimeFunction} 
			\right\rbrace
			|\angGradEnt|_{\gsphere}^2
		\, d \varpi_{\gsphere} 
			\label{E:DOUBLENULLSPACETIMEREMARKABLEIDENTITYENTROPYGRADIENT} \\
		& = 
		\int_{\mathcal{S}_{U,0}}
			\weight
			\left\lbrace
				\frac{\posinnerproduct}{\secondposinnerproduct \ReciprocalLunitAppliedtoTimeFunction}
				+
				\frac{\uposinnerproduct}{\seconduposinnerproduct \ReciprocaluLunitAppliedtoTimeFunction} 
			\right\rbrace
			|\angGradEnt|_{\gsphere}^2
		\, d \varpi_{\gsphere}
		+
		\int_{\mathcal{S}_{1,\underline{U}}}
			\weight
			\left\lbrace
				\frac{\posinnerproduct}{\secondposinnerproduct \ReciprocalLunitAppliedtoTimeFunction}
				+
				\frac{\uposinnerproduct}{\seconduposinnerproduct \ReciprocaluLunitAppliedtoTimeFunction} 
			\right\rbrace
			|\angGradEnt|_{\gsphere}^2
		\, d \varpi_{\gsphere}
		-
		\int_{\mathcal{S}_{1,0}}
			\weight
			\left\lbrace
				\frac{\posinnerproduct}{\secondposinnerproduct \ReciprocalLunitAppliedtoTimeFunction}
				+
				\frac{\uposinnerproduct}{\seconduposinnerproduct \ReciprocaluLunitAppliedtoTimeFunction} 
			\right\rbrace
			|\angGradEnt|_{\gsphere}^2
		\, d \varpi_{\gsphere}
			\notag \\
	& \ \
	+	
	\int_{\mathcal{M}_{U,\underline{U}}}	
			\weight
			\lengthofmodtophypnorm^{-1}
			\left\lbrace
				\frac{1}{2}|\mathfrak{A}^{(\GradEnt)}|_{\topfirstfund}^2
				+
				|\mathfrak{B}_{(\GradEnt)}|_{\topfirstfund}^2
				+
				\mathfrak{C}^{(\GradEnt)}
				+
				\mathfrak{D}^{(\GradEnt)}
				+
				\mathfrak{J}_{(Coeff)}[\GradEnt,\pmb{\partial} \GradEnt]
			\right\rbrace
	\, d \varpi_{\gfour}
		\notag \\
	& \ \
	+	
	\int_{\mathcal{M}_{U,\underline{U}}}	
			\lengthofmodtophypnorm^{-1}
			\mathfrak{J}_{(\pmb{\partial} \weight)}[\GradEnt,\pmb{\partial} \GradEnt]
	\, d \varpi_{\gfour}
		\notag \\
	& \ \
		+
		\int_{\underline{\mathcal{H}}_{\underline{U}}(1,U)}
			\left\lbrace
				\underline{\mathfrak{H}}_{(\pmb{\partial} \weight)}[\GradEnt]
				+
				\weight 
				\underline{\mathfrak{H}}[\GradEnt]
				+
				\weight
				\underline{\mathfrak{H}}_{(2)}[\GradEnt]
			\right\rbrace
		\, d \varpi_{\gsphere} d u'
			\notag \\
	& \ \
		+
		\int_{\mathcal{H}_U(0,\underline{U})}
			\left\lbrace
				\mathfrak{H}_{(\pmb{\partial} \weight)}[\GradEnt]
				+
				\weight
				\mathfrak{H}[\GradEnt]
				+
				\weight
				\mathfrak{H}_{(2)}[\GradEnt]
			\right\rbrace
		\, d \varpi_{\gsphere} d \underline{u}'
		\notag \\
	& \ \
		-
		\int_{\underline{\mathcal{H}}_{0}(1,U)}
			\left\lbrace
				\underline{\mathfrak{H}}_{(\pmb{\partial} \weight)}[\GradEnt]
				+
				\weight
				\underline{\mathfrak{H}}[\GradEnt]
				+
				\weight
				\underline{\mathfrak{H}}_{(2)}[\GradEnt]
			\right\rbrace
		\, d \varpi_{\gsphere} d u'
			\notag \\
	& \ \ 
		-
		\int_{\mathcal{H}_1(0,\underline{U})}
			\left\lbrace
				\mathfrak{H}_{(\pmb{\partial} \weight)}[\GradEnt]
				+
				\weight
				\mathfrak{H}[\GradEnt]
				+
				\weight
				\mathfrak{H}_{(2)}[\GradEnt]
			\right\rbrace
		\, d \varpi_{\gsphere} d \underline{u}'.
			\notag
	\end{align}
	\end{subequations}
	
	On RHSs~\eqref{E:DOUBLENULLSPACETIMEREMARKABLEIDENTITYSPECIFICVORTICITY}-\eqref{E:DOUBLENULLSPACETIMEREMARKABLEIDENTITYENTROPYGRADIENT},
	$\mathfrak{A}^{(\vortrenormalized)}$ 
	and
	$\mathfrak{A}^{(\GradEnt)}$
	are two-forms with the Cartesian components
	\begin{subequations}
	\begin{align}
		\mathfrak{A}_{\alpha \beta}^{(\vortrenormalized)}
		&  :=
				2 (\partial_{\beta} \ln \Speed) \vortrenormalized_{\alpha} 
				- 
				2 (\partial_{\alpha} \ln \Speed) \vortrenormalized_{\beta}
				+
				2 \updelta_{\alpha}^0 \vortrenormalized_a \partial_{\beta} v^a
				-
				2 \updelta_{\beta}^0 \vortrenormalized_a \partial_{\alpha} v^a
				\label{E:DOUBLENULLSPECIFICVORTICITYSPACETIMEERRORTERMANTISYMMETRIC} \\
			& \ \
				-
				\Speed^{-4} 
				\exp(-2 \LogDensity) 
				\frac{p_{;\Ent}}{\bar{\varrho}}
				\upepsilon_{\alpha \beta \gamma \delta}
				(\Transport v^{\gamma}) 
				\GradEnt^{\delta}
					\notag
					\\
			& \ \
				+
			\Speed^{-4} 
			\exp(-2 \LogDensity) 
			\frac{p_{;\Ent}}{\bar{\varrho}}
			\upepsilon_{\alpha \beta \gamma \delta}
			 \Transport^{\gamma}
			[\GradEnt^{\delta}
				(\partial_a v^a)
				-
				\GradEnt^a \partial_a v^{\delta}]
			\notag
				\\
		& \ \
			+
			\Speed^{-2}
			\exp(\LogDensity)
			\upepsilon_{\alpha \beta \gamma \delta}
			\Transport^{\gamma}
			\VortVort^{\delta},
			\notag
				\\
			\mathfrak{A}_{\alpha \beta}^{(\GradEnt)}
			& :
				= 
				2 (\partial_{\beta} \ln \Speed) \GradEnt_{\alpha} 
				- 
				2 (\partial_{\alpha} \ln \Speed) \GradEnt_{\beta},
	\end{align}
	\end{subequations}
	$\mathfrak{B}_{(\vortrenormalized)}$ and $\mathfrak{B}_{(\GradEnt)}$
	are $\Sigma_t$-tangent vectorfields with the Cartesian spatial components
	\begin{subequations}
	\begin{align}
			\mathfrak{B}_{(\vortrenormalized)}^i
			& 
			:=
			\vortrenormalized^a \partial_a v^i
			-
			\exp(-2 \LogDensity) \Speed^{-2} \frac{p_{;\Ent}}{\bar{\varrho}} \upepsilon_{iab} (\Transport v^a) \GradEnt^b,	
				\label{E:DOUBLENULLSPECIFICVORTICITYSPACETIMEERRORTERMTRANSPORTDERIVATIVES} \\
			\mathfrak{B}_{(\GradEnt)}^i
			& :=
				- 
				\GradEnt^a \partial_a v^i
				+ 
				\upepsilon_{iab} \exp(\LogDensity) \vortrenormalized^a \GradEnt^b,
				\label{E:DOUBLENULLENTROPYGRADIENTSPACETIMEERRORTERMTRANSPORTDERIVATIVES}
	\end{align}
	\end{subequations}
	$\mathfrak{C}^{(\vortrenormalized)}$ and $\mathfrak{C}^{(\GradEnt)}$ are scalar functions
	defined relative to the Cartesian coordinates by
	\begin{subequations}
	\begin{align}
		\mathfrak{C}^{(\vortrenormalized)}
		& :=
			-
			2 
			(\projectedtransport_a \projectedtransport \vortrenormalized^a)
			\vortrenormalized^b \partial_b \LogDensity
			-
			2
			\lengthoftophypnorm 
			(\projectedtransport_a \projectedtransport \vortrenormalized^a)
			\projectedtransport_b \mathfrak{B}_{(\vortrenormalized)}^b,
				\label{E:DOUBLENULLSPECIFICVORTICITYSPACETIMEERRORTERMNEEDSTOBEABSORBED} \\
	\mathfrak{C}^{(\GradEnt)}
		& :=
			2 
			(\projectedtransport_a \projectedtransport \GradEnt^a)
			\left\lbrace
			\exp(2 \LogDensity) \DivGradEnt 
				+ 
			\GradEnt^b \partial_b \LogDensity
			\right\rbrace
			-
			2
			\lengthoftophypnorm 
			(\projectedtransport_a \projectedtransport \GradEnt^a)
			\projectedtransport_b \mathfrak{B}_{(\GradEnt)}^b,
			\label{E:DOUBLENULLENTROPYGRADIENTSPACETIMEERRORTERMNEEDSTOBEABSORBED}
\end{align}
\end{subequations}
$\mathfrak{D}^{(\vortrenormalized)}$ and $\mathfrak{D}^{(\GradEnt)}$
are scalar functions defined relative to the Cartesian coordinates by
\begin{subequations}
\begin{align}
		\mathfrak{D}^{(\vortrenormalized)}
		&
		:=
			(\vortrenormalized^a \partial_a \LogDensity)^2
			+
			(
				\lengthoftophypnorm
				\projectedtransport_a \mathfrak{B}_{(\vortrenormalized)}^a
			)^2
			+
			2
			\lengthoftophypnorm 
			(\vortrenormalized^a \partial_a \LogDensity)
			\projectedtransport_b \mathfrak{B}_{(\vortrenormalized)}^b,
				\label{E:DOUBLENULLSPECIFICVORTICITYSPACETIMEERRORTERMDIVERGENCEERRORS} \\
		\mathfrak{D}^{(\GradEnt)}
		&
		:=
			\left\lbrace
			\exp(2 \LogDensity) \DivGradEnt 
				+ 
			\GradEnt^a \partial_a \LogDensity
			\right\rbrace^2
			+
			(
			\lengthoftophypnorm
			\projectedtransport_a \mathfrak{B}_{(\GradEnt)}^a
			)^2
			-
			2
			\lengthoftophypnorm 
			\left\lbrace
			\exp(2 \LogDensity) \DivGradEnt 
				+ 
			\GradEnt^a \partial_a \LogDensity
			\right\rbrace
			\projectedtransport_b
			\mathfrak{B}_{(\GradEnt)}^b,
				\label{E:DOUBLENULLENTROPYGRADIENTSPACETIMEERRORTERMDIVERGENCEERRORS} 
	\end{align}
	\end{subequations}
	for $\SigmatTan \in \lbrace \vortrenormalized, \GradEnt \rbrace$,
	the scalar function $\mathfrak{J}_{(Coeff)}[\SigmatTan,\pmb{\partial} \SigmatTan]$ 
	is defined relative to the Cartesian coordinates by
	\begin{align} \label{E:DOUBLENULLELLIPTICHYPERBOLICCURRENTCOEFFICIENTERRORTERM}
		\mathfrak{J}_{(Coeff)}[\SigmatTan,\pmb{\partial} \SigmatTan]
		& =
			\SigmatTan^{\alpha} 
			\gfour_{\beta \gamma}
			(\toppartialarg{\alpha} \hat{\tophypnorm}^{\beta})
			\hat{\tophypnorm} \SigmatTan^{\gamma}
			-
			\SigmatTan_{\alpha} 
			(\toppartialarg{\beta} \hat{\tophypnorm}^{\alpha})
			\hat{\tophypnorm} \SigmatTan^{\beta}
				\\
		& \ \
				+
		\SigmatTan^{\alpha} 
		\hat{\tophypnorm}_{\alpha} 
		(\toppartialarg{\beta} \hat{\tophypnorm}^{\beta}) 
		\toppartialarg{\gamma} \SigmatTan^{\gamma}
			-
			\SigmatTan^{\alpha} \hat{\tophypnorm}_{\alpha} 
			(\toppartialarg{\beta} \hat{\tophypnorm}^{\gamma}) 
			\toppartialarg{\gamma} 
			\SigmatTan^{\beta} 
				\notag \\
		& \ \
		+
		\SigmatTan^{\alpha} 
		\hat{\tophypnorm}_{\beta}
		(\toppartialarg{\alpha} \hat{\tophypnorm}^{\gamma})
		\toppartialarg{\gamma} \SigmatTan^{\beta}
			-
			\SigmatTan^{\alpha} 
			\hat{\tophypnorm}_{\beta} 
			(\toppartialarg{\gamma} \hat{\tophypnorm}^{\gamma})  
			\toppartialarg{\alpha} \SigmatTan^{\beta}
		\notag
			\\
		&  \ \
			+
			\SigmatTan^{\alpha} 
			\hat{\tophypnorm}^{\beta}
			(\toppartialarg{\alpha} \gfour_{\beta \gamma})
			\hat{\tophypnorm} \SigmatTan^{\gamma}
			-
			\SigmatTan^{\alpha} 
			\hat{\tophypnorm}^{\beta}
			(\toppartialarg{\gamma} \gfour_{\alpha \beta})
			\hat{\tophypnorm} \SigmatTan^{\gamma}
				\notag \\
		& \ \
			+
			\frac{1}{2}
			\SigmatTan^{\alpha} 
			\hat{\tophypnorm}_{\beta}
			\hat{\tophypnorm}^{\gamma}
			\hat{\tophypnorm}^{\delta}
			(\toppartialarg{\alpha} \gfour_{\gamma \delta})			
			\hat{\tophypnorm} \SigmatTan^{\beta}
			-
			\frac{1}{2}
			\SigmatTan^{\alpha} \hat{\tophypnorm}_{\alpha} 
			\hat{\tophypnorm}^{\beta}
			\hat{\tophypnorm}^{\gamma}
			(\toppartialarg{\delta} \gfour_{\beta \gamma}) 
			\hat{\tophypnorm} \SigmatTan^{\delta} 
				\notag \\
	& \ \
		+
		2
		\SigmatTan^{\alpha}
		(\toppartialarg{\beta} \gfour_{\alpha \gamma})
		\toppartialuparg{\gamma} \SigmatTan^{\beta}
		-
		2
		\topproject_{\ \beta}^{\alpha}
		\SigmatTan^{\gamma}
		(\toppartialuparg{\delta} \gfour_{\alpha \gamma}) 
		\toppartialarg{\delta} \SigmatTan^{\beta}
				\notag \\
	& \ \
		+
		\frac{1}{2}
		(\topfirstfund^{-1})^{\alpha \beta}
		\SigmatTan^{\gamma} 
		(\toppartialarg{\gamma} \gfour_{\alpha \beta})
		\toppartialarg{\delta} \SigmatTan^{\delta}
		-
		\frac{1}{2}
		(\topfirstfund^{-1})^{\alpha \beta}
		\SigmatTan^{\gamma} 
		(\toppartialarg{\delta} \gfour_{\alpha \beta})
		\toppartialarg{\gamma} \SigmatTan^{\delta}
		\notag
			\\
		& \ \
			+
			\SigmatTan^{\alpha} 
			\SigmatTan^{\beta}
			(\toppartialuparg{\gamma} \gfour_{\alpha \delta})
			(\toppartialuparg{\delta} \gfour_{\beta \gamma})
		-
		\SigmatTan^{\alpha} 
		\SigmatTan^{\beta}
		(\topfirstfund^{-1})^{\gamma \delta} 
		(\toppartialuparg{\kappa} \gfour_{\alpha \gamma}) 
		\toppartialarg{\kappa} \gfour_{\beta \delta},
				\notag
	\end{align}
	for $\SigmatTan \in \lbrace \vortrenormalized, \GradEnt \rbrace$,
	the scalar function $\mathfrak{J}_{(\pmb{\partial} \weight)}[\SigmatTan,\pmb{\partial} \SigmatTan]$ 
	is defined relative to the Cartesian coordinates by
	\begin{align} \label{E:DOUBLENULLELLIPTICHYPERBOLICCURRENTBULKERRORTERMWITHWEIGHTDERIVATIVES}
		\mathfrak{J}_{(\pmb{\partial} \weight)}[\SigmatTan,\pmb{\partial} \SigmatTan]
		:= - J[\SigmatTan] \weight 
		=
		\SigmatTan^{\kappa}  
		(\toppartialarg{\kappa} \weight)
		\toppartialarg{\lambda} \SigmatTan^{\lambda}
		-
		\SigmatTan^{\kappa} 
		(\toppartialarg{\lambda} \weight)
		\toppartialarg{\kappa} \SigmatTan^{\lambda},
	\end{align}
	for $\SigmatTan \in \lbrace \vortrenormalized, \GradEnt \rbrace$,
	the scalar functions 
	$\underline{\mathfrak{H}}_{(\pmb{\partial} \weight)}[\SigmatTan]$,
	$\underline{\mathfrak{H}}[\SigmatTan]$,
	$\mathfrak{H}_{(\pmb{\partial} \weight)}[\SigmatTan]$,
	and
	$\mathfrak{H}[\SigmatTan]$ 
	are defined relative to the Cartesian coordinates by
	\begin{subequations}
	\begin{align} 	\label{E:WEIGHTDERIVATIVESDOUBLENULLMAINTHMINGOINGLATERALBOUNDARYEASYERRORINTEGRANDTERMS}
		\underline{\mathfrak{H}}_{(\pmb{\partial} \weight)}[\SigmatTan]
		& :=
		\frac{\uposinnerproduct}{\seconduposinnerproduct \ReciprocaluLunitAppliedtoTimeFunction}
		|\angV|_{\gsphere}^2
		\newuL
		\weight
		+
		\uposinnerproduct
		|\angV|_{\gsphere}^2
		\utang
		\weight
		+
		\SigmatTan_{\alpha} \spherenormal^{\alpha}
		\angV \weight,
			 \\
		\underline{\mathfrak{H}}[\SigmatTan]
		& :=
			\frac{1}{2} 
			\frac{\uposinnerproduct}{\seconduposinnerproduct \ReciprocaluLunitAppliedtoTimeFunction}  
			|\angV|_{\gsphere}^2
			(\gsphere^{-1})^{\alpha \beta} 
			\Lie_{\newuL} \gsphere_{\alpha \beta}
			\label{E:DOUBLENULLMAINTHMINGOINGLATERALBOUNDARYEASYERRORINTEGRANDTERMS} 
			\\
		& \ \
		+
		\left\lbrace
		\newuL
			\left[\frac{\uposinnerproduct}{\seconduposinnerproduct \ReciprocaluLunitAppliedtoTimeFunction} (\gsphere^{-1})^{\alpha \beta} \right]
		\right\rbrace
		\SigmatTan_{\alpha} 
	  \SigmatTan_{\beta}
		+
		\left\lbrace
			\utang
			\left[\uposinnerproduct (\gsphere^{-1})^{\alpha \beta} \right]
		\right\rbrace
		\SigmatTan_{\alpha} 
	  \SigmatTan_{\beta}
		-
		2 
		\SigmatTan_{\alpha} \angV \Transport^{\alpha}
		\notag \\
		& \ \
				+
			\uposinnerproduct 
			|\angV|_{\gsphere}^2
			\angdiv \utang 
		+
		\spherenormal_{\alpha} \SigmatTan^{\alpha}  
		\SigmatTan^{\beta}
		\angdiv \angpartialarg{\beta}
		+
		\SigmatTan_{\alpha}
		\angV \spherenormal^{\alpha}	
			\notag \\
		& \ \
		+
		\SigmatTan^{\alpha}
		\spherenormal^{\beta}
		\angV \gfour_{\alpha \beta},
		\notag	
		\end{align}
	\end{subequations}
	
	\begin{subequations}
	\begin{align} \label{E:WEIGHTDERIVATIVESDOUBLENULLMAINTHMOUTGOINGLATERALBOUNDARYEASYERRORINTEGRANDTERMS}
		\mathfrak{H}_{(\pmb{\partial} \weight)}[\SigmatTan]
		& :=
		\frac{\posinnerproduct}{\secondposinnerproduct \ReciprocalLunitAppliedtoTimeFunction}
		|\angV|_{\gsphere}^2
		\newL
		\weight
		+
		\posinnerproduct
		|\angV|_{\gsphere}^2
		\tang
		\weight
		-
		\SigmatTan_{\alpha} \spherenormal^{\alpha}
		\angV \weight,
			\\
	\mathfrak{H}[\SigmatTan]
		& :=
			\frac{1}{2} 
			\frac{\posinnerproduct}{\secondposinnerproduct \ReciprocalLunitAppliedtoTimeFunction}  
			|\angV|_{\gsphere}^2
			(\gsphere^{-1})^{\alpha \beta} 
			\Lie_{\newL} 
			\gsphere_{\alpha \beta}
			\label{E:DOUBLENULLMAINTHMOUTGOINGLATERALBOUNDARYEASYERRORINTEGRANDTERMS} \\
		& \ \
		+
		\left\lbrace
		\newL
			\left[\frac{\posinnerproduct}{\secondposinnerproduct \ReciprocalLunitAppliedtoTimeFunction} (\gsphere^{-1})^{\alpha \beta} \right]
		\right\rbrace
		\SigmatTan_{\alpha} 
	  \SigmatTan_{\beta}
		+
		\left\lbrace
		\tang
			\left[\posinnerproduct (\gsphere^{-1})^{\alpha \beta} \right]
		\right\rbrace
		\SigmatTan_{\alpha} 
	  \SigmatTan_{\beta}
		-
		2 
		\SigmatTan_{\alpha} \angV \Transport^{\alpha}
		\notag \\
		& \ \
				+
			\posinnerproduct 
			|\angV|_{\gsphere}^2
			\angdiv \tang 
		-
		\spherenormal_{\alpha} \SigmatTan^{\alpha}  
		\SigmatTan^{\beta}
		\angdiv \angpartialarg{\beta}
		-
		\SigmatTan_{\alpha}
		\angV \spherenormal^{\alpha}	
			\notag \\
		& \ \
		-
		\SigmatTan^{\alpha}
		\spherenormal^{\beta}
		\angV \gfour_{\alpha \beta},
		\notag
	\end{align}
	\end{subequations}
	the scalar functions
	$\underline{\mathfrak{H}}_{(1)}[\vortrenormalized]$
	and
	$\mathfrak{H}_{(1)}[\vortrenormalized]$
	are defined relative to the Cartesian coordinates by
	\begin{subequations}
	\begin{align} \label{E:DOUBLENULLMAINTHMSPECIFICVORTITICYMAININGOINGLATERALERRORINTEGRAND}
		\underline{\mathfrak{H}}_{(1)}[\vortrenormalized]
		&  := 
				4 \uposinnerproduct 
				\vortrenormalized_{\alpha} \uspecialgen^{\alpha}
				\angvortrenormalized \ln \Speed 
				- 
				4 \uposinnerproduct 
				|\angvortrenormalized|_{\gsphere}^2
				\uspecialgen \ln \Speed 
				+
				4 \uposinnerproduct 
				\uspecialgen^0
				\vortrenormalized_a \angvortrenormalized v^a
				-
				4 \uposinnerproduct
				\angvortrenormalized^0
				\vortrenormalized_a \uspecialgen v^a
					\\
		& \ \
		+
		2 \uposinnerproduct
		\Speed^{-4} 
		\exp(-2 \LogDensity) 
		\frac{p_{;\Ent}}{\bar{\varrho}}
		\upepsilon_{\alpha \beta \gamma \delta}
		 \uspecialgen^{\alpha} 
		 \angvortrenormalized^{\beta}
		\urescalednewgenminushypnorm^{\gamma}
		\GradEnt^{\delta}
		\uLunit \LogDensity
		-
		2 \uposinnerproduct
		\Speed^{-4} 
		\exp(-2 \LogDensity) 
		\frac{p_{;\Ent}}{\bar{\varrho}}
		\GradEnt^a \uLunit_a
		\upepsilon_{\alpha \beta \gamma \delta}
		\uspecialgen^{\alpha} 
		\angvortrenormalized^{\beta} 
		\Transport^{\gamma}
		\urescalednewgenminushypnorm^{\delta} 
		\uLunit \LogDensity
			\notag 
			\\
	 & \ \
		+
		2 \uposinnerproduct
		\Speed^{-2} 
		\exp(-2 \LogDensity) 
		\frac{p_{;\Ent}}{\bar{\varrho}}
		\upepsilon_{\alpha \beta ab}
		\uspecialgen^{\alpha}  
		\angvortrenormalized^{\beta}
		(\utandecompvectorfielddownarg{a} \LogDensity)
		\GradEnt^b
		-
		2 \uposinnerproduct
		\Speed^{-2} 
		\exp(-2 \LogDensity) 
		\frac{p_{;\Ent}}{\bar{\varrho}}
		\GradEnt^a \uLunit_a
		\upepsilon_{\alpha \beta \gamma d}
		\uspecialgen^{\alpha} 
		\angvortrenormalized^{\beta} 
		\Transport^{\gamma}
		\utandecompvectorfielddownarg{d} \LogDensity
		\notag
			\\
	& \ \
		-
		2 \uposinnerproduct
		\Speed^{-4} 
				\exp(-2 \LogDensity) 
				\frac{p_{;\Ent}}{\bar{\varrho}}
		\GradEnt^a \urescalednewgenminushypnorm_a
		\upepsilon_{\alpha \beta \gamma \delta}
		\uspecialgen^{\alpha} 
		\angvortrenormalized^{\beta} 
		\Transport^{\gamma}
		\uLunit v^{\delta}
		-
		2 \uposinnerproduct
		\Speed^{-4} 
				\exp(-2 \LogDensity) 
				\frac{p_{;\Ent}}{\bar{\varrho}}
		\upepsilon_{\alpha \beta \gamma \delta}
		\uspecialgen^{\alpha} 
		\angvortrenormalized^{\beta} 
		\Transport^{\gamma}
		\GradEnt^a \utandecompvectorfielddownarg{a} v^{\delta}
		\notag
			\\
	& \ \
				+
			2 \uposinnerproduct
			\Speed^{-2}
			\exp(\LogDensity)
			\upepsilon_{\alpha \beta \gamma \delta}
			\uspecialgen^{\alpha}
			\angvortrenormalized^{\beta}
			\Transport^{\gamma}
			\VortVort^{\delta}
				\notag
				\\
	& \ \
		-
		2 \uposinnerproduct
		\Speed^{-4} 
		\exp(-3 \LogDensity) 
		\left\lbrace
			\frac{p_{;\Ent}}{\bar{\varrho}}
		\right\rbrace^2
		\GradEnt^a \uLunit_a
		\upepsilon_{\alpha \beta \gamma \delta}
		\uspecialgen^{\alpha} 
		\angvortrenormalized^{\beta} 
		\Transport^{\gamma}
		\GradEnt^{\delta},
				\notag
					\\
	\mathfrak{H}_{(1)}[\vortrenormalized]
		&  := 
				4 \posinnerproduct 
				\vortrenormalized_{\alpha} \specialgen^{\alpha}
				\angvortrenormalized \ln \Speed 
				- 
				4 \posinnerproduct 
				|\angvortrenormalized|_{\gsphere}^2
				\specialgen \ln \Speed 
				+
				4 \posinnerproduct 
				\specialgen^0
				\vortrenormalized_a \angvortrenormalized v^a
				-
				4 \posinnerproduct
				\angvortrenormalized^0
				\vortrenormalized_a \specialgen v^a
					\\
		& \ \
		+
		2 \posinnerproduct
		\Speed^{-4} 
		\exp(-2 \LogDensity) 
		\frac{p_{;\Ent}}{\bar{\varrho}}
		\upepsilon_{\alpha \beta \gamma \delta}
		 \specialgen^{\alpha} 
		 \angvortrenormalized^{\beta}
		\rescalednewgenminushypnorm^{\gamma}
		\GradEnt^{\delta}
		\Lunit \LogDensity
		-
		2 \posinnerproduct
		\Speed^{-4} 
		\exp(-2 \LogDensity) 
		\frac{p_{;\Ent}}{\bar{\varrho}}
		\GradEnt^a \Lunit_a
		\upepsilon_{\alpha \beta \gamma \delta}
		\specialgen^{\alpha} 
		\angvortrenormalized^{\beta} 
		\Transport^{\gamma}
		\rescalednewgenminushypnorm^{\delta} 
		\Lunit \LogDensity
			\notag 
			\\
	 & \ \
		+
		2 \posinnerproduct
		\Speed^{-2} 
		\exp(-2 \LogDensity) 
		\frac{p_{;\Ent}}{\bar{\varrho}}
		\upepsilon_{\alpha \beta ab}
		\specialgen^{\alpha}  
		\angvortrenormalized^{\beta}
		(\tandecompvectorfielddownarg{a} \LogDensity)
		\GradEnt^b
		-
		2 \posinnerproduct
		\Speed^{-2} 
		\exp(-2 \LogDensity) 
		\frac{p_{;\Ent}}{\bar{\varrho}}
		\GradEnt^a \Lunit_a
		\upepsilon_{\alpha \beta \gamma d}
		\specialgen^{\alpha} 
		\angvortrenormalized^{\beta} 
		\Transport^{\gamma}
		\tandecompvectorfielddownarg{d} \LogDensity
		\notag
			\\
	& \ \
		-
		2 \posinnerproduct
		\Speed^{-4} 
				\exp(-2 \LogDensity) 
				\frac{p_{;\Ent}}{\bar{\varrho}}
		\GradEnt^a \rescalednewgenminushypnorm_a
		\upepsilon_{\alpha \beta \gamma \delta}
		\specialgen^{\alpha} 
		\angvortrenormalized^{\beta} 
		\Transport^{\gamma}
		\Lunit v^{\delta}
		-
		2 \posinnerproduct
		\Speed^{-4} 
				\exp(-2 \LogDensity) 
				\frac{p_{;\Ent}}{\bar{\varrho}}
		\upepsilon_{\alpha \beta \gamma \delta}
		\specialgen^{\alpha} 
		\angvortrenormalized^{\beta} 
		\Transport^{\gamma}
		\GradEnt^a \tandecompvectorfielddownarg{a} v^{\delta}
		\notag
			\\
	& \ \
				+
			2 \posinnerproduct
			\Speed^{-2}
			\exp(\LogDensity)
			\upepsilon_{\alpha \beta \gamma \delta}
			\specialgen^{\alpha}
			\angvortrenormalized^{\beta}
			\Transport^{\gamma}
			\VortVort^{\delta}
				\notag
				\\
	& \ \
		-
		2 \posinnerproduct
		\Speed^{-4} 
		\exp(-3 \LogDensity) 
		\left\lbrace
			\frac{p_{;\Ent}}{\bar{\varrho}}
		\right\rbrace^2
		\GradEnt^a \Lunit_a
		\upepsilon_{\alpha \beta \gamma \delta}
		\specialgen^{\alpha} 
		\angvortrenormalized^{\beta} 
		\Transport^{\gamma}
		\GradEnt^{\delta},
				\notag
	\end{align}
\end{subequations}
and the scalar functions
$\underline{\mathfrak{H}}_{(2)}[\GradEnt]$
and
$\mathfrak{H}_{(2)}[\GradEnt]$
are defined relative to the Cartesian coordinates by
\begin{subequations}
\begin{align} \label{E:DOUBLENULLMAINTHMENTROPYGRADIENTMAININGOINGLATERALERRORINTEGRAND}
		\underline{\mathfrak{H}}_{(2)}[\GradEnt]
		& := 
			4 \uposinnerproduct \GradEnt_{\alpha} \uspecialgen^{\alpha} \angGradEnt \ln \Speed
			- 
			4 \uposinnerproduct |\angGradEnt|_{\gsphere}^2  \uspecialgen \ln \Speed,
				\\
		\mathfrak{H}_{(2)}[\GradEnt]
		& := 
			4 \posinnerproduct \GradEnt_{\alpha} \specialgen^{\alpha} \angGradEnt \ln \Speed
			- 
			4 \posinnerproduct |\angGradEnt|_{\gsphere}^2  \specialgen \ln \Speed.
			\label{E:DOUBLENULLMAINTHMENTROPYGRADIENTMAINOUTGOINGLATERALERRORINTEGRAND}
\end{align}
\end{subequations}
	
\end{theorem}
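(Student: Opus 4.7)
The plan is to adapt the proof of Theorem~\ref{T:MAINREMARKABLESPACETIMEINTEGRALIDENTITY} to the double-null setting, in which each level set $\widetilde{\Sigma}_{\Timefunction'}$ of $\Timefunction = u + \underline{u}$ now carries two boundary components (one in an ingoing null hypersurface $\underline{\mathcal{H}}_{\underline{u}}$ and one in an outgoing null hypersurface $\mathcal{H}_u$), and correspondingly the lateral boundary of $\mathcal{M}_{U,\underline{U}}$ splits into the four $\gfour$-null pieces $\underline{\mathcal{H}}_{\underline{U}}(1,U)$, $\underline{\mathcal{H}}_0(1,U)$, $\mathcal{H}_U(0,\underline{U})$, and $\mathcal{H}_1(0,\underline{U})$. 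For each $V \in \lbrace \vortrenormalized, \GradEnt \rbrace$, I would begin with the divergence identity \eqref{E:NEWSTANDARDDIVERGENCEIDENTITYFORELLIPTICHYPERBOLICCURRENT} applied to $J[V]$, add $\weight \topfirstfund_{\alpha\beta} (\Transport V^\alpha)(\Transport V^\beta)$ to both sides so that (by \eqref{E:IDENTITYMAINQUADRATICFORMFORCONTROLLINGFIRSTDERIVATIVESOFSPECIFICVORTICITYANDENTROPYGRADIENT}) the left-hand side becomes $\weight \mathscr{Q}(\pmb{\partial} V,\pmb{\partial} V)$, integrate over $\widetilde{\Sigma}_{\Timefunction'}$ against $d\varpi_{\topfirstfund}$, apply the divergence theorem, and then integrate in $\Timefunction'$ over $[1,U+\underline{U}]$ using \eqref{E:SPACETIMEVOLUMEFORMEXPRESSIONWITHRESPECTTOTIMEFUNCTION} together with \eqref{E:DOUBLENULLLENTHOFMODTYPHYPNORM}. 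The spacetime error terms $\mathfrak{A}^{(V)}$, $\mathfrak{B}_{(V)}$, $\mathfrak{C}^{(V)}$, $\mathfrak{D}^{(V)}$, $\mathfrak{J}_{(Coeff)}[V,\pmb{\partial} V]$, and $\mathfrak{J}_{(\pmb{\partial}\weight)}[V,\pmb{\partial} V]$ then emerge exactly as in the proof of Theorem~\ref{T:MAINREMARKABLESPACETIMEINTEGRALIDENTITY}, so only the boundary analysis requires genuine work.

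For the two ingoing pieces $\underline{\mathcal{H}}_{\underline{U}}(1,U)$ and $\underline{\mathcal{H}}_0(1,U)$ I would apply Lemma~\ref{L:PRELIMINARYANALYSISOFBOUNDARYINTEGRAND} pointwise (with $\newuL$ playing the role that $\modgen$ played in the single-foliation case) and then use the first identity \eqref{E:DOUBLENULLKEYHBARINTEGRALIDENTITY} of Lemma~\ref{L:DOUBLENULLINTEGRALIDENTITIESINVOLVINGSPHERES} to convert the $\newuL$-derivative of $|\angV|_{\gsphere}^2$ into sphere-boundary terms at $\underline{u}=0$ and $\underline{u}=\underline{U}$. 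The only subtle integrand, $2\uposinnerproduct \uspecialgen^\alpha \angV^\beta(\partial_\alpha V_\beta - \partial_\beta V_\alpha)$, is then handled exactly as in the proof of Prop.\,\ref{P:STRUCTUREOFERRORINTEGRALS} via Cor.\,\ref{C:SHARPDECOMPOSITIONOFANTISYMMETRICGRADIENTS}, Lemma~\ref{L:PRELIMINARYDECOMPOSITIONOFSUBTLETERMS}, and the vanishing identity \eqref{E:NULLCASEKEYDETERMINANT} of Prop.\,\ref{P:KEYDETERMINANT}, which applies directly because both hypersurfaces are $\gfour$-null. This produces precisely $\underline{\mathfrak{H}}_{(\pmb{\partial}\weight)}[V]$, $\underline{\mathfrak{H}}[V]$, and the entropy/specific-vorticity integrands $\underline{\mathfrak{H}}_{(i)}[V]$.

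The outgoing pieces $\mathcal{H}_U(0,\underline{U})$ and $\mathcal{H}_1(0,\underline{U})$ are handled by a mirror-image argument using the decomposition \eqref{E:TRANSPORTINTERMSOFOUTGOINGNULL} of $\Transport$ in place of \eqref{E:TRANSPORTINTERMSOFINGOINGNULL}, and \eqref{E:DOUBLENULLKEYHINTEGRALIDENTITY} in place of \eqref{E:DOUBLENULLKEYHBARINTEGRALIDENTITY}. Under the substitutions $\uposinnerproduct \leftrightarrow \posinnerproduct$, $\seconduposinnerproduct \leftrightarrow \secondposinnerproduct$, $\ReciprocaluLunitAppliedtoTimeFunction \leftrightarrow \ReciprocalLunitAppliedtoTimeFunction$, $\uLunit \leftrightarrow \Lunit$, $\newuL \leftrightarrow \newL$, $\uspecialgen \leftrightarrow \specialgen$, $\utang \leftrightarrow \tang$, $\urescalednewgenminushypnorm \leftrightarrow \rescalednewgenminushypnorm$, and $\utandecompvectorfielddownarg{a} \leftrightarrow \tandecompvectorfielddownarg{a}$, the proofs of Lemma~\ref{L:PRELIMINARYANALYSISOFBOUNDARYINTEGRAND} and Prop.\,\ref{P:STRUCTUREOFERRORINTEGRALS} go through and produce the outgoing integrands $\mathfrak{H}_{(\pmb{\partial}\weight)}[V]$, $\mathfrak{H}[V]$, and $\mathfrak{H}_{(i)}[V]$. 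The relative minus sign in the $\spherenormal$-dependence of \eqref{E:TRANSPORTINTERMSOFOUTGOINGNULL} compared to \eqref{E:TRANSPORTINTERMSOFINGOINGNULL} causes $\spherenormal_\alpha J^\alpha[V]$ on $\mathcal{H}_U$ and $\mathcal{H}_1$ to equal \emph{minus} the $\mathfrak{H}$-type expressions; this extra sign is precisely absorbed by the inward/outward orientation sign supplied by the divergence theorem at the corresponding components of $\partial \widetilde{\Sigma}_{\Timefunction'}$ (namely $\spherenormal$ is outward at $\mathcal{S}_{1,\Timefunction'-1} \subset \mathcal{H}_1$ and inward at $\mathcal{S}_{U,\Timefunction'-U} \subset \mathcal{H}_U$), yielding the final sign pattern $(+\underline{\mathcal{H}}_{\underline{U}},+\mathcal{H}_U,-\underline{\mathcal{H}}_0,-\mathcal{H}_1)$ displayed in \eqref{E:DOUBLENULLSPACETIMEREMARKABLEIDENTITYSPECIFICVORTICITY}--\eqref{E:DOUBLENULLSPACETIMEREMARKABLEIDENTITYENTROPYGRADIENT}.

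The hard part will not be any new geometric cancellation --- Prop.\,\ref{P:KEYDETERMINANT} in its null form \eqref{E:NULLCASEKEYDETERMINANT} is the only crucial cancellation of the paper and it applies verbatim to each of the four $\gfour$-null pieces --- but rather the careful bookkeeping of signs together with the telescoping of corner-sphere contributions produced by \eqref{E:DOUBLENULLKEYHBARINTEGRALIDENTITY}--\eqref{E:DOUBLENULLKEYHINTEGRALIDENTITY}. When assembled across the four boundary pieces, these corner-sphere terms give integrals at $\mathcal{S}_{U,\underline{U}}$, $\mathcal{S}_{U,0}$, $\mathcal{S}_{1,\underline{U}}$, and $\mathcal{S}_{1,0}$, each weighted by the combination $\frac{\posinnerproduct}{\secondposinnerproduct \ReciprocalLunitAppliedtoTimeFunction} + \frac{\uposinnerproduct}{\seconduposinnerproduct \ReciprocaluLunitAppliedtoTimeFunction}$ (reflecting the sum of one ingoing and one outgoing contribution at each corner), and the positivity of these coefficients (guaranteed by \eqref{E:POSITIVITYOFFOLIATIONDENSITYANDNULLGEODESICINNERPRODUCTETC} and \eqref{E:DOUBLENULLCASEINGOINGCONDITION}--\eqref{E:DOUBLENULLCASESEDUALSECONDINGOINGCONDITION}) yields the coerciveness of the $\mathcal{S}_{U,\underline{U}}$ integral on the left-hand side.
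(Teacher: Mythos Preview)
Your overall strategy is sound and matches the paper's: start from the divergence identity \eqref{E:NEWSTANDARDDIVERGENCEIDENTITYFORELLIPTICHYPERBOLICCURRENT}, add the transport term to produce $\mathscr{Q}$, integrate over each $\widetilde{\Sigma}_{\Timefunction'}$, apply the divergence theorem, and then integrate in $\Timefunction'$ while handling the boundary via double-null analogs of Lemma~\ref{L:PRELIMINARYANALYSISOFBOUNDARYINTEGRAND} and Prop.\,\ref{P:STRUCTUREOFERRORINTEGRALS}. The treatment of the subtle antisymmetric term via Cor.\,\ref{C:SHARPDECOMPOSITIONOFANTISYMMETRICGRADIENTS} and the null case of Prop.\,\ref{P:KEYDETERMINANT} is also exactly right, and your mirror-image substitution scheme for the outgoing pieces is what the paper (implicitly, via Lemma~\ref{L:DOUBLENULLPRELIMINARYANALYSISOFBOUNDARYINTEGRAND} and Prop.\,\ref{P:DOUBLENULLPRELIMINARYDECOMPOSITIONOFSUBTLETERMS}) does.

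There is, however, a concrete geometric error in your bookkeeping that would cause trouble in execution. You assert that each $\widetilde{\Sigma}_{\Timefunction'}$ has one boundary component in an ingoing hypersurface and one in an outgoing hypersurface. This is false in the middle range $\Timefunction' \in [1+\underline{U},U]$ (recall the standing assumption $1+\underline{U}<U$): by \eqref{E:DOUBLENULLMIDDLEREGIONBOUNDARYOFSPACELIKEHYPERSURFACEPORTION}, the boundary there is $\mathcal{S}_{\Timefunction',0}\cup \mathcal{S}_{\Timefunction'-\underline{U},\underline{U}}$, and \emph{both} of these spheres lie in ingoing hypersurfaces ($\underline{\mathcal{H}}_0$ and $\underline{\mathcal{H}}_{\underline{U}}$ respectively). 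Consequently, in that range one must apply the ingoing identity \eqref{E:DOUBLENULLMAINLATERALERRORINTEGRALSFORSPECIFICVORTICITYALONGHBAR} twice (with opposite signs, since $\spherenormal$ is inward at the first sphere and outward at the second), not once each of the ingoing and outgoing identities. The paper deals with this by explicitly partitioning $\mathcal{M}_{U,\underline{U}}$ into three sub-regions $\mathcal{M}_1(\underline{u}'')$, $\mathcal{M}_2$, $\mathcal{M}_3(u'')$ according to the three boundary regimes \eqref{E:DOUBLENULLLOWERREGIONBOUNDARYOFSPACELIKEHYPERSURFACEPORTION}--\eqref{E:DOUBLENULLUPPERREGIONBOUNDARYOFSPACELIKEHYPERSURFACEPORTION}, deriving a separate identity on each, and then summing. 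A key point you do not mention is that this produces \emph{intermediate} corner-sphere integrals at $\mathcal{S}_{1+\underline{U},0}$ and $\mathcal{S}_{U-\underline{U},\underline{U}}$ (the ``kinks'' where the boundary type changes), which must be seen to cancel between adjacent sub-regions before one is left with only the four extremal corner spheres $\mathcal{S}_{1,0}$, $\mathcal{S}_{U,0}$, $\mathcal{S}_{1,\underline{U}}$, $\mathcal{S}_{U,\underline{U}}$. Your single-pass integration over $[1,U+\underline{U}]$ could be made to work, but only after correcting the boundary description and tracking these intermediate cancellations; as written the proposal would misassign integrals in the middle region.
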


\begin{remark}[Extension of Theorem~\ref{T:STRUCTUREOFERRORTERMS} to double-null foliations]
	\label{R:DOUBLENULLNULLHYPERSURFACEERRORTERMSTANGENTIALDERIVATIVES}
	An analog of Theorem~\ref{T:STRUCTUREOFERRORTERMS} also holds in the present context of double-null foliations.
	More precisely, the error integrands on 
	RHSs~\eqref{E:DOUBLENULLSPACETIMEREMARKABLEIDENTITYSPECIFICVORTICITY}-\eqref{E:DOUBLENULLSPACETIMEREMARKABLEIDENTITYENTROPYGRADIENT}
	along the ingoing $\gfour$-null hypersurfaces
	$\underline{\mathcal{H}}_{\underline{U}}(1,U)$
	and
	$\underline{\mathcal{H}}_{0}(1,U)$,
	enjoy the properties revealed by Theorem~\ref{T:STRUCTUREOFERRORTERMS}, that is,
	they involve only tangential derivatives.
	Analogous results hold for the error integrands on 
	RHSs~\eqref{E:DOUBLENULLSPACETIMEREMARKABLEIDENTITYSPECIFICVORTICITY}-\eqref{E:DOUBLENULLSPACETIMEREMARKABLEIDENTITYENTROPYGRADIENT}
	along the outgoing $\gfour$-null hypersurfaces
	$\mathcal{H}_{0}(0,\underline{U})$
	and
	$\mathcal{H}_U(0,\underline{U})$.
	These properties could be shown by using essentially the same arguments that we used in the proof of
	Theorem~\ref{T:STRUCTUREOFERRORTERMS}; thus, for brevity, we do not provide details.
\end{remark}

\subsection{Preliminary identities for the proof of Theorem~\ref{T:DOUBLENULLMAINTHEOREM}}
\label{SS:DOUBLENULLPRELIMINARYIDENTITIES}
In this subsection, to facilitate the proof of Theorem~\ref{T:DOUBLENULLMAINTHEOREM},
we derive preliminary integral identities for the main null hypersurface error integrals
that arise in its proof.
The main result is Prop.\,\ref{P:DOUBLENULLSTRUCTUREOFERRORINTEGRALS},
which is a direct analog of Prop.\,\ref{P:STRUCTUREOFERRORINTEGRALS}.

\subsubsection{Preliminary analysis of the boundary integrands}
\label{SSS:DOUBLENULLPRELIMINARYANALYSISOFBOUNDARYINTEGRAND}
We start with the following lemma, which is a direct analog of Lemma~\ref{L:PRELIMINARYANALYSISOFBOUNDARYINTEGRAND}.
As before, the purpose of the lemma is to reveal preliminary good structures in the boundary terms that 
will appear when we apply the divergence theorem on $\widetilde{\Sigma}_{\Timefunction}$
via the divergence identity \eqref{E:NEWSTANDARDDIVERGENCEIDENTITYFORELLIPTICHYPERBOLICCURRENT}.
We highlight that in the present context of double-null foliations, 
the boundary of $\widetilde{\Sigma}_{\Timefunction}$ has two connected components:
an inner sphere and an outer sphere; see Remark~\ref{R:DOUBLENULLORIENTATIONOFSPHERENORMAL}. 
Moreover, to prove Theorem~\ref{T:DOUBLENULLMAINTHEOREM},
along each of the two spheres,
we need to identify good geo-analytic structures adapted to $\underline{\mathcal{H}}_{\underline{u}}$
and good geo-analytic structures adapted to $\mathcal{H}_u$.
This explains why
Lemma~\ref{L:DOUBLENULLPRELIMINARYANALYSISOFBOUNDARYINTEGRAND}
features four identities,
while Lemma~\ref{L:PRELIMINARYANALYSISOFBOUNDARYINTEGRAND} features only one.

\begin{lemma}[Double-null foliations: Preliminary analysis of the boundary integrands]
	\label{L:DOUBLENULLPRELIMINARYANALYSISOFBOUNDARYINTEGRAND}
	Let $\SigmatTan$ be a $\Sigma_t$-tangent vectorfield defined on $\mathcal{M}$,
	and let $\uspecialgen$, $\specialgen$, $\utang$, and $\tang$
	be the vectorfields from \eqref{E:DOUBLENULLSPECIALGENERATOR}-\eqref{E:DOUBLENULLSPECIALGENERATORTANGENTIDENTITY}.
	Let $\weight$ be an arbitrary scalar function.
	Under the assumptions of Theorem~\ref{T:DOUBLENULLMAINTHEOREM}, 
	the following identities hold, 
	where we refer to Subsubsect.\,\ref{SSS:TENSORFIELDSWITHSAMEDEFINITIONS} regarding the notation.
	
	\noindent \underline{\textbf{Identities in the ``dynamic region''}}.
	On $\mathcal{S}_{u,\underline{U}}$, for $u \in [1,U]$, we have:
	\begin{align} \label{E:DOUBLENULLPRELIMINARYDECOMPOFBOUNDARYINTEGRANDINTERESTINGCASE1}
		\weight 
		\spherenormal_{\alpha} J^{\alpha}[\SigmatTan]
		& = 
			-
		\newuL
		\left\lbrace
			\weight
			\frac{\uposinnerproduct}{\seconduposinnerproduct \ReciprocaluLunitAppliedtoTimeFunction} 
			|\SigmatTan|_{\gsphere}^2
		\right\rbrace
		-
		\angdiv
		\left\lbrace
			\weight
			\uposinnerproduct 
			|\angV|_{\gsphere}^2
			\utang 
		\right\rbrace
		-
		\angdiv
		\left\lbrace
			\weight
			\spherenormal_{\alpha} \SigmatTan^{\alpha}  
			\angV 
		\right\rbrace
			\\
		& \ \
		+
		2 
		\weight
		\uposinnerproduct
		\uspecialgen^{\alpha}
		\angV^{\beta}
		(\partial_{\alpha} \SigmatTan_{\beta} - \partial_{\beta} \SigmatTan_{\alpha})
		+
		\underline{\mathfrak{H}}_{(\pmb{\partial} \weight)}[\SigmatTan]
		+
		\weight
		\left\lbrace
			\underline{\mathfrak{H}}[\SigmatTan]
			-
			\frac{1}{2} 
			\frac{\uposinnerproduct}{\seconduposinnerproduct \ReciprocaluLunitAppliedtoTimeFunction}  
			|\angV|_{\gsphere}^2
			(\gsphere^{-1})^{\alpha \beta} 
			\Lie_{\newuL} \gsphere_{\alpha \beta}
		\right\rbrace,
		\notag
	\end{align}
	where $\underline{\mathfrak{H}}_{(\pmb{\partial} \weight)}[\SigmatTan]$ and $\underline{\mathfrak{H}}[\SigmatTan]$
	are defined in
	\eqref{E:WEIGHTDERIVATIVESDOUBLENULLMAINTHMINGOINGLATERALBOUNDARYEASYERRORINTEGRANDTERMS}-\eqref{E:DOUBLENULLMAINTHMINGOINGLATERALBOUNDARYEASYERRORINTEGRANDTERMS}.
	
	Moreover, on $\mathcal{S}_{U,\underline{u}}$, for $\underline{u} \in [0,\underline{U}]$, we have:
	\begin{align} \label{E:DOUBLENULLPRELIMINARYDECOMPOFBOUNDARYINTEGRANDINTERESTINGCASE2}
		- 
		\weight
		\spherenormal_{\alpha} J^{\alpha}[\SigmatTan]
		& = 
			-
		\newL
		\left\lbrace
			\weight
			\frac{\posinnerproduct}{\secondposinnerproduct \ReciprocalLunitAppliedtoTimeFunction} 
			|\SigmatTan|_{\gsphere}^2
		\right\rbrace
			-
		\angdiv
		\left\lbrace
			\weight
			\posinnerproduct 
			|\angV|_{\gsphere}^2
			\tang 
		\right\rbrace
		+
		\angdiv
		\left\lbrace
			\weight
			\spherenormal_{\alpha} \SigmatTan^{\alpha}  
			\angV 
		\right\rbrace
			\\
		& \ \
		+
		2 
		\weight
		\posinnerproduct
		\specialgen^{\alpha}
		\angV^{\beta}
		(\partial_{\alpha} \SigmatTan_{\beta} - \partial_{\beta} \SigmatTan_{\alpha})
		+
		\mathfrak{H}_{(\pmb{\partial} \weight)}[\SigmatTan]
		+
		\weight
		\left\lbrace
			\mathfrak{H}[\SigmatTan]
			-
			\frac{1}{2} 
			\frac{\posinnerproduct}{\secondposinnerproduct \ReciprocalLunitAppliedtoTimeFunction}  
			|\angV|_{\gsphere}^2
			(\gsphere^{-1})^{\alpha \beta} 
			\Lie_{\newL} 
			\gsphere_{\alpha \beta}
		\right\rbrace,
		\notag	
	\end{align}
	where $\mathfrak{H}_{(\pmb{\partial} \weight)}[\SigmatTan]$ and $\mathfrak{H}[\SigmatTan]$
	are defined in
	\eqref{E:WEIGHTDERIVATIVESDOUBLENULLMAINTHMOUTGOINGLATERALBOUNDARYEASYERRORINTEGRANDTERMS}-\eqref{E:DOUBLENULLMAINTHMOUTGOINGLATERALBOUNDARYEASYERRORINTEGRANDTERMS}.
		
		\medskip
	
		\noindent \underline{\textbf{Identities where the data are specified}}.
		On $\mathcal{S}_{u,0}$, for $u \in [1,U]$, we have:
		\begin{align} \label{E:DOUBLENULLPRELIMINARYDECOMPOFBOUNDARYINTEGRANDDATACASE1}
		-
		\weight
		\spherenormal_{\alpha} J^{\alpha}[\SigmatTan]
		& = 
		\newuL
		\left\lbrace
			\weight
			\frac{\uposinnerproduct}{\seconduposinnerproduct \ReciprocaluLunitAppliedtoTimeFunction} 
			|\SigmatTan|_{\gsphere}^2
		\right\rbrace
		+
		\angdiv
		\left\lbrace
			\weight
			\uposinnerproduct 
			|\angV|_{\gsphere}^2
			\utang 
		\right\rbrace
		+
		\angdiv
		\left\lbrace
			\weight
			\spherenormal_{\alpha} \SigmatTan^{\alpha}  
			\angV 
		\right\rbrace
			\\
		& \ \
		-
		2 
		\weight
		\uposinnerproduct
		\uspecialgen^{\alpha}
		\angV^{\beta}
		(\partial_{\alpha} \SigmatTan_{\beta} - \partial_{\beta} \SigmatTan_{\alpha})
		-
		\underline{\mathfrak{H}}_{(\pmb{\partial} \weight)}[\SigmatTan]
		-
		\weight
		\left\lbrace
			\underline{\mathfrak{H}}[\SigmatTan]
			-
			\frac{1}{2} 
			\frac{\uposinnerproduct}{\seconduposinnerproduct \ReciprocaluLunitAppliedtoTimeFunction}  
			|\angV|_{\gsphere}^2
			(\gsphere^{-1})^{\alpha \beta} 
			\Lie_{\newuL} \gsphere_{\alpha \beta}
		\right\rbrace.
		\notag
	\end{align}
		
	Finally, on $\mathcal{S}_{1,\underline{u}}$, for $\underline{u} \in [0,\underline{U}]$, we have:
	\begin{align} \label{E:DOUBLENULLPRELIMINARYDECOMPOFBOUNDARYINTEGRANDDATACASE2}
		\weight
		\spherenormal_{\alpha} J^{\alpha}[\SigmatTan]
		& = 
		\newL
		\left\lbrace
			\weight
			\frac{\posinnerproduct}{\secondposinnerproduct \ReciprocalLunitAppliedtoTimeFunction} 
			|\SigmatTan|_{\gsphere}^2
		\right\rbrace
		+
		\angdiv
		\left\lbrace
			\weight
			\posinnerproduct 
			|\angV|_{\gsphere}^2
			\tang 
		\right\rbrace
		-
		\angdiv
		\left\lbrace
			\weight
			\spherenormal_{\alpha} \SigmatTan^{\alpha}  
			\angV 
		\right\rbrace
			\\
		& \ \
		-
		2 
		\weight
		\posinnerproduct
		\specialgen^{\alpha}
		\angV^{\beta}
		(\partial_{\alpha} \SigmatTan_{\beta} - \partial_{\beta} \SigmatTan_{\alpha})
		-
		\mathfrak{H}_{(\pmb{\partial} \weight)}[\SigmatTan]
		-
		\weight
		\left\lbrace
			\mathfrak{H}[\SigmatTan]
			-
			\frac{1}{2} 
			\frac{\posinnerproduct}{\secondposinnerproduct \ReciprocalLunitAppliedtoTimeFunction}  
			|\angV|_{\gsphere}^2
			(\gsphere^{-1})^{\alpha \beta} 
			\Lie_{\newL} 
			\gsphere_{\alpha \beta}
		\right\rbrace.
		\notag	
	\end{align}
		
	\end{lemma}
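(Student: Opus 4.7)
The proof plan is to mimic the four-step algebraic manipulation carried out in the proof of Lemma~\ref{L:PRELIMINARYANALYSISOFBOUNDARYINTEGRAND}, but now applied in two ``dual'' variants: once using the ingoing-null decomposition \eqref{E:TRANSPORTINTERMSOFINGOINGNULL} of $\Transport$, which will yield the identities \eqref{E:DOUBLENULLPRELIMINARYDECOMPOFBOUNDARYINTEGRANDINTERESTINGCASE1} and \eqref{E:DOUBLENULLPRELIMINARYDECOMPOFBOUNDARYINTEGRANDDATACASE1} (both of which are naturally adapted to the $\underline{\mathcal{H}}_{\underline{u}}$-spheres), and once using the outgoing-null decomposition \eqref{E:TRANSPORTINTERMSOFOUTGOINGNULL}, which will yield \eqref{E:DOUBLENULLPRELIMINARYDECOMPOFBOUNDARYINTEGRANDINTERESTINGCASE2} and \eqref{E:DOUBLENULLPRELIMINARYDECOMPOFBOUNDARYINTEGRANDDATACASE2} (adapted to the $\mathcal{H}_u$-spheres). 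Note that since the boundary integrand identity derived at a given sphere $\mathcal{S}_{u,\underline{u}}$ is purely algebraic, the ``data case'' identities are simply the negatives of the corresponding ``dynamic region'' identities; the sign reversal encoded into the LHS reflects the outward orientation of $\spherenormal$ on the outer boundary versus its inward orientation on the inner boundary (see Remark~\ref{R:DOUBLENULLORIENTATIONOFSPHERENORMAL}), and this bookkeeping is relevant only when the divergence theorem is subsequently applied during the proof of Theorem~\ref{T:DOUBLENULLMAINTHEOREM}.

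For \eqref{E:DOUBLENULLPRELIMINARYDECOMPOFBOUNDARYINTEGRANDINTERESTINGCASE1}, I would begin exactly as in the proof of Lemma~\ref{L:PRELIMINARYANALYSISOFBOUNDARYINTEGRAND}: expand $\weight \spherenormal_{\alpha} J^{\alpha}[\SigmatTan]$ using \eqref{E:NEWELLIPTICHYPERBOLICCURRENT} together with the decomposition \eqref{E:TOPPARTIALINTERMSOFOUTERNORMALDERIVATIVEANDANGPARTIAL} of $\toppartialarg{\alpha}$, then integrate by parts on $\mathcal{S}_{u,\underline{u}}$ to move the derivative off of the factor $\sphereproject_{\ \lambda}^{\kappa} \angpartialarg{\kappa} \SigmatTan^{\lambda}$, thereby generating the $\angdiv$ terms together with $\angD \spherenormal$, $\angD \gfour$, $\angD \weight$, and $\angdiv \angpartialarg{\beta}$ remainders that will assemble into $\underline{\mathfrak{H}}_{(\pmb{\partial} \weight)}[\SigmatTan]$ and $\underline{\mathfrak{H}}[\SigmatTan]$. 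I would then substitute $\spherenormal = \uposinnerproduct \Transport - \frac{\uposinnerproduct}{\seconduposinnerproduct}\uLunit - \uposinnerproduct \utang$, which is the rearrangement of \eqref{E:TRANSPORTINTERMSOFINGOINGNULL}, and use $\Transport^{\alpha} \SigmatTan_{\alpha}=0$ (valid because $\SigmatTan$ is $\Sigma_t$-tangent, cf.~\eqref{E:TRANSPORTONEFORMIDENTITY}) to trade the $\Transport$ piece for $- 2 \weight \uposinnerproduct \SigmatTan_{\alpha} \angV \Transport^{\alpha}$. The $\uLunit$ and $\utang$ pieces are combined with standard symmetric/antisymmetric rearrangements to produce the antisymmetric factor $2 \weight \uposinnerproduct \uspecialgen^{\alpha} \angV^{\beta}(\partial_{\alpha} \SigmatTan_{\beta} - \partial_{\beta} \SigmatTan_{\alpha})$, leaving behind a $(\gsphere^{-1})^{\alpha\beta} \uLunit \SigmatTan_{\alpha} \SigmatTan_{\beta}$ term which one rewrites using $\newuL = \ReciprocaluLunitAppliedtoTimeFunction \uLunit$ (see \eqref{E:RELATIONBETWEENCARTESIANNORMALIZEDNULLVECTORFIELDSANDEIKONALFUNCTIONORMALIZEDNULLVECTORFIELDS}) and the Leibniz rule as $-\newuL\{\weight \frac{\uposinnerproduct}{\seconduposinnerproduct \ReciprocaluLunitAppliedtoTimeFunction} |\SigmatTan|_{\gsphere}^2\}$ plus commutator remainders that precisely complete $\underline{\mathfrak{H}}[\SigmatTan]$ and $\underline{\mathfrak{H}}_{(\pmb{\partial} \weight)}[\SigmatTan]$.

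The identity \eqref{E:DOUBLENULLPRELIMINARYDECOMPOFBOUNDARYINTEGRANDINTERESTINGCASE2} is then derived by repeating the procedure verbatim, this time writing $\spherenormal = -\posinnerproduct \Transport + \frac{\posinnerproduct}{\secondposinnerproduct}\Lunit + \posinnerproduct \tang$ (the analogous rearrangement of \eqref{E:TRANSPORTINTERMSOFOUTGOINGNULL}) and using $\newL = \ReciprocalLunitAppliedtoTimeFunction \Lunit$ in the final step. The overall minus sign on the LHS of \eqref{E:DOUBLENULLPRELIMINARYDECOMPOFBOUNDARYINTEGRANDINTERESTINGCASE2} is forced by the fact that the $\Transport$-coefficient of $\spherenormal$ in this dual decomposition is $-\posinnerproduct$ instead of $+\uposinnerproduct$. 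The two data-case identities \eqref{E:DOUBLENULLPRELIMINARYDECOMPOFBOUNDARYINTEGRANDDATACASE1} and \eqref{E:DOUBLENULLPRELIMINARYDECOMPOFBOUNDARYINTEGRANDDATACASE2} follow immediately by negating both sides of \eqref{E:DOUBLENULLPRELIMINARYDECOMPOFBOUNDARYINTEGRANDINTERESTINGCASE1} and \eqref{E:DOUBLENULLPRELIMINARYDECOMPOFBOUNDARYINTEGRANDINTERESTINGCASE2}, respectively.

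The main obstacle is not conceptual but is rather a careful bookkeeping exercise in tracking the numerous sign conventions and scalar factors so that the perfect-derivative term emerges as $\newuL\{\cdots\}$ (respectively, $\newL\{\cdots\}$) with exactly the coefficient $\frac{\uposinnerproduct}{\seconduposinnerproduct \ReciprocaluLunitAppliedtoTimeFunction}$ (respectively, $\frac{\posinnerproduct}{\secondposinnerproduct \ReciprocalLunitAppliedtoTimeFunction}$), since it is precisely these combinations that are compatible with the volume-form conventions of Subsubsect.\,\ref{SSS:DOUBLENULLVOLUMEANDAREAFORMS} and will later yield the coercive boundary terms $\int_{\mathcal{S}} \{\frac{\posinnerproduct}{\secondposinnerproduct \ReciprocalLunitAppliedtoTimeFunction} + \frac{\uposinnerproduct}{\seconduposinnerproduct \ReciprocaluLunitAppliedtoTimeFunction}\}|\angV|_{\gsphere}^2 \, d\varpi_{\gsphere}$ appearing on the LHSs of \eqref{E:DOUBLENULLSPACETIMEREMARKABLEIDENTITYSPECIFICVORTICITY} and \eqref{E:DOUBLENULLSPACETIMEREMARKABLEIDENTITYENTROPYGRADIENT}. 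Apart from this careful accounting, every step is a direct transcription of the corresponding step in the proof of Lemma~\ref{L:PRELIMINARYANALYSISOFBOUNDARYINTEGRAND}, and in particular no use is made of the compressible Euler equations themselves.
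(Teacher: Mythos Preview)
Your proposal is correct and follows essentially the same approach as the paper: mirror the proof of Lemma~\ref{L:PRELIMINARYANALYSISOFBOUNDARYINTEGRAND}, using the ingoing decomposition \eqref{E:TRANSPORTINTERMSOFINGOINGNULL} of $\Transport$ for \eqref{E:DOUBLENULLPRELIMINARYDECOMPOFBOUNDARYINTEGRANDINTERESTINGCASE1} and \eqref{E:DOUBLENULLPRELIMINARYDECOMPOFBOUNDARYINTEGRANDDATACASE1}, and the outgoing decomposition \eqref{E:TRANSPORTINTERMSOFOUTGOINGNULL} for \eqref{E:DOUBLENULLPRELIMINARYDECOMPOFBOUNDARYINTEGRANDINTERESTINGCASE2} and \eqref{E:DOUBLENULLPRELIMINARYDECOMPOFBOUNDARYINTEGRANDDATACASE2}. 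Your observation that the data-case identities are obtained by negating the dynamic-region identities, and that the argument uses no properties of the Euler equations, is also in line with the paper's treatment.
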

	
	\begin{remark}[The signs in Lemma~\ref{L:DOUBLENULLPRELIMINARYANALYSISOFBOUNDARYINTEGRAND}]
	\label{R:SIGNSINLEMMADOUBLENULLPRELIMINARYANALYSISOFBOUNDARYINTEGRAND}
	Our sign choices in Lemma~\ref{L:DOUBLENULLPRELIMINARYANALYSISOFBOUNDARYINTEGRAND} are such
	the left-hand sides of the identities correspond to outward pointing normals.
	More precisely, the vectorfield $\spherenormal$ is outward pointing to $\widetilde{\Sigma}_{\Timefunction}$
	along $\mathcal{S}_{u,\underline{U}}$ and $\mathcal{S}_{1,\underline{u}}$,
	while its negation $-\spherenormal$ is outward pointing to $\widetilde{\Sigma}_{\Timefunction}$
	along $\mathcal{S}_{U,\underline{u}}$ and $\mathcal{S}_{u,0}$; see Fig.\,\ref{F:DOUBLENULL}.
	\end{remark}
	
	\begin{remark}[A cancellation in the proof of Prop.\,\ref{P:DOUBLENULLSTRUCTUREOFERRORINTEGRALS}]
		\label{R:TERMWILLBECANCELED}
		In the proof of Prop.\,\ref{P:DOUBLENULLSTRUCTUREOFERRORINTEGRALS},
		the product
		$
			-
			\frac{1}{2} 
			\weight
			\frac{\uposinnerproduct}{\seconduposinnerproduct \ReciprocaluLunitAppliedtoTimeFunction}  
			|\angV|_{\gsphere}^2
			(\gsphere^{-1})^{\alpha \beta} 
			\Lie_{\newuL} \gsphere_{\alpha \beta}
	$
	found in the last term on RHS~\eqref{E:DOUBLENULLPRELIMINARYDECOMPOFBOUNDARYINTEGRANDINTERESTINGCASE1}
	will be canceled by a term that arises from an application of Lemma~\ref{L:DOUBLENULLINTEGRALIDENTITIESINVOLVINGSPHERES}.
	This cancellation is the reason that we have expressed the terms in the 
	last braces on RHS~\eqref{E:DOUBLENULLPRELIMINARYDECOMPOFBOUNDARYINTEGRANDINTERESTINGCASE1} in their stated form.
	Similar remarks apply to the Lie derivative-involving products in the last braces on 
	RHSs~\eqref{E:DOUBLENULLPRELIMINARYDECOMPOFBOUNDARYINTEGRANDINTERESTINGCASE2}-\eqref{E:DOUBLENULLPRELIMINARYDECOMPOFBOUNDARYINTEGRANDDATACASE2}.
	\end{remark}
	
	\begin{proof}[Proof of Lemma~\ref{L:DOUBLENULLPRELIMINARYANALYSISOFBOUNDARYINTEGRAND}]
		The proof of \eqref{E:DOUBLENULLPRELIMINARYDECOMPOFBOUNDARYINTEGRANDINTERESTINGCASE1} mirrors of the proof of 
		\eqref{E:PRELIMINARYDECOMPOFBOUNDARYINTEGRAND},
		where we use \eqref{E:TRANSPORTINTERMSOFINGOINGNULL} in place of the identity 
		\eqref{E:TRANSPORTDECOMPOSITION} used in the proof of \eqref{E:PRELIMINARYDECOMPOFBOUNDARYINTEGRAND}.
		The proof of \eqref{E:DOUBLENULLPRELIMINARYDECOMPOFBOUNDARYINTEGRANDDATACASE1} is identical.
		
		Similarly, proof of \eqref{E:DOUBLENULLPRELIMINARYDECOMPOFBOUNDARYINTEGRANDINTERESTINGCASE2} mirrors of the proof of 
		\eqref{E:PRELIMINARYDECOMPOFBOUNDARYINTEGRAND},
		where we use \eqref{E:TRANSPORTINTERMSOFOUTGOINGNULL} in place of the identity 
		\eqref{E:TRANSPORTDECOMPOSITION} used in the proof of \eqref{E:PRELIMINARYDECOMPOFBOUNDARYINTEGRAND}.
		The proof of \eqref{E:DOUBLENULLPRELIMINARYDECOMPOFBOUNDARYINTEGRANDDATACASE2} is identical.
	
	\end{proof}

\subsubsection{Geometric decompositions and remarkable cancellations for the most subtle terms 
on RHS~\eqref{E:KEYIDENTITYANTISYMMETRICPARTOFSPECIFICVORTICITYDUALGRADIENT}}	
\label{SSS:DOUBLENULLPRELIMINARYDECOMPOSITIONOFSUBTLETERMS}
The next proposition is an analog of the combination of
Lemma~\ref{L:PRELIMINARYDECOMPOSITIONOFSUBTLETERMS}
and
Prop.\,\ref{P:KEYDETERMINANT}.
For the same reasons described in Subsects.\,\ref{SS:MOSTSUBTLEENTROPYTERM} and \ref{SS:REMARKABLESTRUCTUREOFMOSTSUBTLETERM},
the purpose of the proposition
is to reveal remarkable geo-analytic cancellations in the terms
$
2 	\weight
		\uposinnerproduct
		\uspecialgen^{\alpha}
		\angV^{\beta}
		(\partial_{\alpha} \vortrenormalized_{\beta} - \partial_{\beta} \vortrenormalized_{\alpha})
$
and
$
2 	\weight
		\posinnerproduct
		\specialgen^{\alpha}
		\angV^{\beta}
		(\partial_{\alpha} \vortrenormalized_{\beta} - \partial_{\beta} \vortrenormalized_{\alpha})
$
on RHSs~\eqref{E:DOUBLENULLPRELIMINARYDECOMPOFBOUNDARYINTEGRANDINTERESTINGCASE1}-\eqref{E:DOUBLENULLPRELIMINARYDECOMPOFBOUNDARYINTEGRANDDATACASE2}
in the case $\SigmatTan = \vortrenormalized$.

\begin{proposition}[Double-null foliations: Geometric decompositions and remarkable cancellations for the most subtle terms on RHS~\eqref{E:KEYIDENTITYANTISYMMETRICPARTOFSPECIFICVORTICITYDUALGRADIENT}]	
	\label{P:DOUBLENULLPRELIMINARYDECOMPOSITIONOFSUBTLETERMS}
	Let $\uspecialgen$ be the $\underline{\mathcal{H}}_{\underline{u}}$-tangent vectorfield
	from \eqref{E:DOUBLENULLSPECIALGENERATOR},
	let $\specialgen$ be the $\mathcal{H}_u$-tangent vectorfield
	from \eqref{E:DOUBLENULLSPECIALGENERATOR},
	let $\urescalednewgenminushypnorm$ and $\rescalednewgenminushypnorm$ be the $\Sigma_t$-tangent vectorfields 
	from \eqref{E:DOUBLENULLREGULARFORMRESCALEDVERSIONOFHYPNORMMINUSNEWGEN}, 
	let $\lbrace \utandecompvectorfielddownarg{\alpha} \rbrace_{\alpha=0,1,2,3}$ 
	be the $\mathcal{S}_{u,\underline{u}}$-tangent vectorfields from
	\eqref{E:DOUBLENULLHBARDECOMPOSITIONOFCOORDINATEPARTIALDERIVATIVEVECTORFIELDS},
	and let $\lbrace \tandecompvectorfielddownarg{\alpha} \rbrace_{\alpha=0,1,2,3}$
	be the $\mathcal{S}_{u,\underline{u}}$-tangent vectorfield from
	\eqref{E:DOUBLENULLHDECOMPOSITIONOFCOORDINATEPARTIALDERIVATIVEVECTORFIELDS}.
	For smooth solutions (see Remark~\ref{R:SMOOTHNESSNOTNEEDED})
	to the compressible Euler equations \eqref{E:TRANSPORTDENSRENORMALIZEDRELATIVECTORECTANGULAR}-\eqref{E:ENTROPYTRANSPORT} on $\mathcal{M}$,
	the following identities hold,
	where we refer to Subsubsect.\,\ref{SSS:TENSORFIELDSWITHSAMEDEFINITIONS} regarding the notation:
	\begin{subequations}
	\begin{align}
		&
		\upepsilon_{\alpha \beta \gamma \delta}
		\uspecialgen^{\alpha} 
		\angvortrenormalized^{\beta}
		\left\lbrace
			-
			(\Transport v^{\gamma}) 
			\GradEnt^{\delta}
			+
			\Transport^{\gamma}
			[\GradEnt^{\delta}
				(\partial_a v^a)
				-
				\GradEnt^a \partial_a v^{\delta}]
	\right\rbrace
					\label{E:DOUBLENULLINGOINGPRELIMINARYDECOMPOSITIONOFSUBTLETERMS} 
					\\
	& =
		\upepsilon_{\alpha \beta \gamma \delta}
		 \uspecialgen^{\alpha} 
		 \angvortrenormalized^{\beta}
		\urescalednewgenminushypnorm^{\gamma}
		\GradEnt^{\delta}
		\uLunit \LogDensity
		-
		\GradEnt^a \uLunit_a
		\upepsilon_{\alpha \beta \gamma \delta}
		\uspecialgen^{\alpha} 
		\angvortrenormalized^{\beta} 
		\Transport^{\gamma}
		\urescalednewgenminushypnorm^{\delta} 
		\uLunit \LogDensity
			\notag
			\\
	 & \ \
		+
		\Speed^2
		\upepsilon_{\alpha \beta ab}
		\uspecialgen^{\alpha}  
		\angvortrenormalized^{\beta}
		(\utandecompvectorfielddownarg{a} \LogDensity)
		\GradEnt^b
		-
		\Speed^2
		\GradEnt^a \uLunit_a
		\upepsilon_{\alpha \beta \gamma d}
		\uspecialgen^{\alpha} 
		\angvortrenormalized^{\beta} 
		\Transport^{\gamma}
		\utandecompvectorfielddownarg{d} \LogDensity
		\notag
			\\
	& \ \
		-
		\GradEnt^a \urescalednewgenminushypnorm_a
		\upepsilon_{\alpha \beta \gamma \delta}
		\uspecialgen^{\alpha} 
		\angvortrenormalized^{\beta} 
		\Transport^{\gamma}
		\uLunit v^{\delta}
		-
		\upepsilon_{\alpha \beta \gamma \delta}
		\uspecialgen^{\alpha} 
		\angvortrenormalized^{\beta} 
		\Transport^{\gamma}
		\GradEnt^a \utandecompvectorfielddownarg{a} v^{\delta}
		\notag
			\\
	& \ \
			-
		\exp(-\LogDensity) \frac{p_{;\Ent}}{\bar{\varrho}}
		\GradEnt^a \uLunit_a
		\upepsilon_{\alpha \beta \gamma \delta}
		\uspecialgen^{\alpha} 
		\angvortrenormalized^{\beta} 
		\Transport^{\gamma}
		\GradEnt^{\delta},
		\notag
			\\
	&
		\upepsilon_{\alpha \beta \gamma \delta}
		\specialgen^{\alpha} 
		\angvortrenormalized^{\beta}
		\left\lbrace
			-
			(\Transport v^{\gamma}) 
			\GradEnt^{\delta}
			+
			\Transport^{\gamma}
			[\GradEnt^{\delta}
				(\partial_a v^a)
				-
				\GradEnt^a \partial_a v^{\delta}]
		\right\rbrace
					\label{E:DOUBLENULLOUTGOINGPRELIMINARYDECOMPOSITIONOFSUBTLETERMS} 
					\\
	& =
		\upepsilon_{\alpha \beta \gamma \delta}
		 \specialgen^{\alpha} 
		 \angvortrenormalized^{\beta}
		\urescalednewgenminushypnorm^{\gamma}
		\GradEnt^{\delta}
		\Lunit \LogDensity
		-
		\GradEnt^a \Lunit_a
		\upepsilon_{\alpha \beta \gamma \delta}
		\specialgen^{\alpha} 
		\angvortrenormalized^{\beta} 
		\Transport^{\gamma}
		\urescalednewgenminushypnorm^{\delta} 
		\Lunit \LogDensity
			\notag
			\\
	 & \ \
		+
		\Speed^2
		\upepsilon_{\alpha \beta ab}
		\specialgen^{\alpha}  
		\angvortrenormalized^{\beta}
		(\tandecompvectorfielddownarg{a} \LogDensity)
		\GradEnt^b
		-
		\Speed^2
		\GradEnt^a \Lunit_a
		\upepsilon_{\alpha \beta \gamma d}
		\specialgen^{\alpha} 
		\angvortrenormalized^{\beta} 
		\Transport^{\gamma}
		\tandecompvectorfielddownarg{d} \LogDensity
		\notag
			\\
	& \ \
		-
		\GradEnt^a \urescalednewgenminushypnorm_a
		\upepsilon_{\alpha \beta \gamma \delta}
		\specialgen^{\alpha} 
		\angvortrenormalized^{\beta} 
		\Transport^{\gamma}
		\Lunit v^{\delta}
		-
		\upepsilon_{\alpha \beta \gamma \delta}
		\specialgen^{\alpha} 
		\angvortrenormalized^{\beta} 
		\Transport^{\gamma}
		\GradEnt^a \tandecompvectorfielddownarg{a} v^{\delta}
		\notag
			\\
	& \ \
			-
		\exp(-\LogDensity) \frac{p_{;\Ent}}{\bar{\varrho}}
		\GradEnt^a \Lunit_a
		\upepsilon_{\alpha \beta \gamma \delta}
		\specialgen^{\alpha} 
		\angvortrenormalized^{\beta} 
		\Transport^{\gamma}
		\GradEnt^{\delta}.
		\notag
	\end{align}
	\end{subequations}
\end{proposition}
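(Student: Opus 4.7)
The plan is to obtain both identities by mirroring the proof of Lemma~\ref{L:PRELIMINARYDECOMPOSITIONOFSUBTLETERMS} in the double-null setting and then invoking the null-case identity \eqref{E:NULLCASEKEYDETERMINANT} of Proposition~\ref{P:KEYDETERMINANT} to annihilate the single problematic ``determinant'' term that would otherwise survive. For \eqref{E:DOUBLENULLINGOINGPRELIMINARYDECOMPOSITIONOFSUBTLETERMS}, I will first use the velocity equation \eqref{E:TRANSPORTVELOCITYRELATIVECTORECTANGULAR} to substitute for $\Transport v^{\gamma}$, the continuity equation \eqref{E:TRANSPORTDENSRENORMALIZEDRELATIVECTORECTANGULAR} to replace the factor $\partial_a v^a$ with $-\Transport \LogDensity$, and the $\uLunit$-adapted decomposition \eqref{E:DOUBLENULLHBARDECOMPOSITIONOFCOORDINATEPARTIALDERIVATIVEVECTORFIELDS} to split every Cartesian spatial gradient into a $\Transport$-parallel piece, a $\uLunit$-parallel piece, and a $\mathcal{S}_{u,\underline{u}}$-tangential piece. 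Collecting the algebraic pieces exactly as in the proof of Lemma~\ref{L:PRELIMINARYDECOMPOSITIONOFSUBTLETERMS} (with $\uLunit$ playing the role of $\sidehypnorm$, and with the last equality in \eqref{E:TRANSPORTINTERMSOFINGOINGNULL} replacing \eqref{E:TRANSPORTDECOMPOSITION}) produces RHS~\eqref{E:DOUBLENULLINGOINGPRELIMINARYDECOMPOSITIONOFSUBTLETERMS} plus one additional residual of the form
\begin{align*}
-\,\upepsilon_{\alpha \beta \gamma \delta}
\uspecialgen^{\alpha} \angvortrenormalized^{\beta} \uLunit^{\gamma}
(\GradEnt^{\delta} + \GradEnt^a \uLunit_a \Transport^{\delta})
\Transport \LogDensity.
\end{align*}
Because $\underline{\mathcal{H}}_{\underline{u}}$ is $\gfour$-null with null generator $\uLunit$, the vanishing \eqref{E:NULLCASEKEYDETERMINANT} applies verbatim and this residual is identically zero, yielding \eqref{E:DOUBLENULLINGOINGPRELIMINARYDECOMPOSITIONOFSUBTLETERMS}.

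For the outgoing identity \eqref{E:DOUBLENULLOUTGOINGPRELIMINARYDECOMPOSITIONOFSUBTLETERMS}, I will run the completely parallel argument under the substitutions $\uLunit \leftrightarrow \Lunit$, $\uspecialgen \leftrightarrow \specialgen$, $\urescalednewgenminushypnorm \leftrightarrow \rescalednewgenminushypnorm$, and $\utandecompvectorfielddownarg{a} \leftrightarrow \tandecompvectorfielddownarg{a}$, using the $\Lunit$-adapted decomposition \eqref{E:DOUBLENULLHDECOMPOSITIONOFCOORDINATEPARTIALDERIVATIVEVECTORFIELDS} in place of \eqref{E:DOUBLENULLHBARDECOMPOSITIONOFCOORDINATEPARTIALDERIVATIVEVECTORFIELDS} and the decomposition $\Transport = \specialgen - \frac{1}{\posinnerproduct}\spherenormal$ from \eqref{E:TRANSPORTINTERMSOFOUTGOINGNULL} in place of $\Transport = \uspecialgen + \frac{1}{\uposinnerproduct}\spherenormal$. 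Since $\mathcal{H}_u$ is likewise $\gfour$-null, with null generator $\Lunit$, the outgoing analog of Proposition~\ref{P:KEYDETERMINANT} (obtained by running the same determinant/Gram-matrix calculation with $\Lunit$ and $\specialgen$) applies once more to annihilate the corresponding residual
\begin{align*}
-\,\upepsilon_{\alpha \beta \gamma \delta}
\specialgen^{\alpha} \angvortrenormalized^{\beta} \Lunit^{\gamma}
(\GradEnt^{\delta} + \GradEnt^a \Lunit_a \Transport^{\delta})
\Transport \LogDensity,
\end{align*}
leaving precisely RHS~\eqref{E:DOUBLENULLOUTGOINGPRELIMINARYDECOMPOSITIONOFSUBTLETERMS}.

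The main obstacle is purely bookkeeping: one must carefully track signs throughout the outgoing computation, since $\specialgen$ and $\uspecialgen$ differ by the sign of their $\spherenormal$-component (see \eqref{E:DOUBLENULLSPECIALGENERATOR}), and likewise the $\Lunit$-adapted decomposition \eqref{E:DOUBLENULLHDECOMPOSITIONOFCOORDINATEPARTIALDERIVATIVEVECTORFIELDS} introduces a different sign convention than \eqref{E:DOUBLENULLHBARDECOMPOSITIONOFCOORDINATEPARTIALDERIVATIVEVECTORFIELDS}. However, no new geometric input is required beyond the proofs of Lemma~\ref{L:PRELIMINARYDECOMPOSITIONOFSUBTLETERMS} and Proposition~\ref{P:KEYDETERMINANT}: all of the genuinely novel cancellations needed in the null case have already been extracted in the latter, and the present situation simply exploits them simultaneously along both families of null hypersurfaces.
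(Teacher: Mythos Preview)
Your proposal is correct and follows essentially the same approach as the paper: both argue by rerunning the proof of Lemma~\ref{L:PRELIMINARYDECOMPOSITIONOFSUBTLETERMS} with $\uLunit$ (respectively $\Lunit$) playing the role of $\sidehypnorm$ and the decomposition \eqref{E:DOUBLENULLHBARDECOMPOSITIONOFCOORDINATEPARTIALDERIVATIVEVECTORFIELDS} (respectively \eqref{E:DOUBLENULLHDECOMPOSITIONOFCOORDINATEPARTIALDERIVATIVEVECTORFIELDS}) in the role of \eqref{E:NEWDECOMPOSITIONOFCOORDINATEPARTIALDERIVATIVEVECTORFIELDS}, and then invoking the null-case vanishing \eqref{E:NULLCASEKEYDETERMINANT} (and its outgoing analog) to kill the residual determinant term. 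One small remark: the decomposition \eqref{E:TRANSPORTDECOMPOSITION} is not actually used in the proof of Lemma~\ref{L:PRELIMINARYDECOMPOSITIONOFSUBTLETERMS}, so your reference to replacing it with \eqref{E:TRANSPORTINTERMSOFINGOINGNULL} is unnecessary (though harmless).
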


\begin{proof}
	To prove \eqref{E:DOUBLENULLINGOINGPRELIMINARYDECOMPOSITIONOFSUBTLETERMS},
	we first note that the proof of \eqref{E:PRELIMINARYDECOMPOSITIONOFSUBTLETERMS} 
	goes through verbatim with $\uLunit$ in the role of both $\sidehypnorm$ and $\gen$
	(see Convention~\ref{C:NULLCASE} and \ref{E:NORMALISGENERATORINNULLCASE}),
	where we use the identity \eqref{E:DOUBLENULLHBARDECOMPOSITIONOFCOORDINATEPARTIALDERIVATIVEVECTORFIELDS}
	in the role that we used
	\eqref{E:NEWDECOMPOSITIONOFCOORDINATEPARTIALDERIVATIVEVECTORFIELDS}
	in the proof of \eqref{E:PRELIMINARYDECOMPOSITIONOFSUBTLETERMS}.
	Next, we consider the first term on RHS~\eqref{E:PRELIMINARYDECOMPOSITIONOFSUBTLETERMS}
	(again with $\uLunit$ in the role of $\sidehypnorm$),
	namely
	$
	-
	\upepsilon_{\alpha \beta \gamma \delta}
		 \uspecialgen^{\alpha} 
		 \angvortrenormalized^{\beta}
		\uLunit^{\gamma}
		(\GradEnt^{\delta} + \GradEnt^a \uLunit_a \Transport^{\delta})
		\Transport \LogDensity
	$.
	This term completely vanishes because the proof of the identity \eqref{E:NULLCASEKEYDETERMINANT}
	(again with $\uLunit$ in the role of $\sidehypnorm$)
	goes through in the present context. In total, we have proved 
	\eqref{E:DOUBLENULLINGOINGPRELIMINARYDECOMPOSITIONOFSUBTLETERMS}.
	
	Similarly, to prove \eqref{E:DOUBLENULLOUTGOINGPRELIMINARYDECOMPOSITIONOFSUBTLETERMS},
	we use the identity \eqref{E:DOUBLENULLHDECOMPOSITIONOFCOORDINATEPARTIALDERIVATIVEVECTORFIELDS}
	in the role that
	\eqref{E:NEWDECOMPOSITIONOFCOORDINATEPARTIALDERIVATIVEVECTORFIELDS}
	was used in the proof of \eqref{E:PRELIMINARYDECOMPOSITIONOFSUBTLETERMS},
	thus concluding that the identity \eqref{E:PRELIMINARYDECOMPOSITIONOFSUBTLETERMS} holds
	in the present context, but with $\Lunit$ in the role of both $\sidehypnorm$ and $\gen$,
	$\specialgen$ in the role of $\uspecialgen$,
	and 
	$\tandecompvectorfielddownarg{\alpha}$
	in the role of
	$\utandecompvectorfielddownarg{\alpha}$.
	Moreover, the analog of the first term on RHS~\eqref{E:PRELIMINARYDECOMPOSITIONOFSUBTLETERMS},
	namely
	$
	-
	\upepsilon_{\alpha \beta \gamma \delta}
		 \specialgen^{\alpha} 
		 \angvortrenormalized^{\beta}
		\Lunit^{\gamma}
		(\GradEnt^{\delta} + \GradEnt^a \Lunit_a \Transport^{\delta})
		\Transport \LogDensity
	$,
	again completely vanishes because 
	the identity \eqref{E:NULLCASEKEYDETERMINANT}
	(now with $\specialgen$ in the role of $\uspecialgen$
	and $\Lunit$ in the role of $\sidehypnorm$)
	holds in the present context. In total, we have proved 
	\eqref{E:DOUBLENULLOUTGOINGPRELIMINARYDECOMPOSITIONOFSUBTLETERMS},
	which completes the proof of the proposition.
	
\end{proof}

We now prove the main result of this subsection.	

\begin{proposition}[Double-null foliations: Key identity for the boundary error integrals]
\label{P:DOUBLENULLSTRUCTUREOFERRORINTEGRALS}
Let 
$u$, 
$u_1$, 
$u_2$, 
$\underline{u}$, 
$\underline{u}_1$, 
and $\underline{u}_2$
be real numbers satisfying\footnote{The four identities stated in the proposition 
in fact hold for $1 \leq u \leq U$ and $0 \leq \underline{u} \leq \underline{U}$.
We have stated the proposition only for the ``endpoint values'' $u \in \lbrace 1, U \rbrace$ and $\underline{u} \in \lbrace 0, \underline{U} \rbrace$
to help the reader navigate the proof of Theorem~\ref{T:DOUBLENULLMAINTHEOREM}
(see Subsect.\,\ref{SS:PROOFOFTHEOREMDOUBLENULLMAINTHEOREM});
the endpoint values are the only ones that we use in our proof of the theorem. 
\label{FN:UNNECESSARILYRESCTRICTEDEIKONALVALUES}}
$u \in \lbrace 1, U \rbrace$,
$1 \leq u_1 \leq u_2 \leq U$, 
$\underline{u} \in \lbrace 0, \underline{U} \rbrace$,
and
$0 \leq \underline{u}_1 \leq \underline{u}_2 \leq \underline{U}$.
Under the assumptions of Theorem~\ref{T:DOUBLENULLMAINTHEOREM},
for smooth solutions (see Remark~\ref{R:SMOOTHNESSNOTNEEDED})
to the compressible Euler equations \eqref{E:TRANSPORTDENSRENORMALIZEDRELATIVECTORECTANGULAR}-\eqref{E:ENTROPYTRANSPORT},
the following integral identities hold,
where we refer to Subsubsect.\,\ref{SSS:TENSORFIELDSWITHSAMEDEFINITIONS} regarding the notation:
\begin{subequations}
\begin{align} \label{E:DOUBLENULLMAINLATERALERRORINTEGRALSFORSPECIFICVORTICITYALONGHBAR}
		\int_{\underline{\mathcal{H}}_{\underline{u}}(u_1,u_2)}
			\weight 
			\spherenormal_{\alpha} J^{\alpha}[\vortrenormalized]
		\, d \varpi_{\gsphere} d u'
		& = 
		-
		\int_{\mathcal{S}_{u_2,\underline{u}}}
			\weight 
			\frac{\uposinnerproduct}{\seconduposinnerproduct \ReciprocaluLunitAppliedtoTimeFunction}
			|\angvortrenormalized|_{\gsphere}^2
		\, d \varpi_{\gsphere}
		+
		\int_{\mathcal{S}_{u_1,\underline{u}}}
			\weight 
			\frac{\uposinnerproduct}{\seconduposinnerproduct \ReciprocaluLunitAppliedtoTimeFunction}
			|\angvortrenormalized|_{\gsphere}^2
		\, d \varpi_{\gsphere}
			\\
	& \ \
		+
		\int_{\underline{\mathcal{H}}_{\underline{u}}(u_1,u_2)}
			\left\lbrace
				\underline{\mathfrak{H}}_{(\pmb{\partial} \weight)}[\vortrenormalized]
				+
				\weight
				\underline{\mathfrak{H}}[\vortrenormalized]
				+
				\weight
				\underline{\mathfrak{H}}_{(1)}[\vortrenormalized]
			\right\rbrace
		\, d \varpi_{\gsphere} d u',
		\notag
			\\
		\int_{\mathcal{H}_u(\underline{u}_1,\underline{u}_2)}
			\weight 
			\spherenormal_{\alpha} J^{\alpha}[\vortrenormalized]
		\, d \varpi_{\gsphere} d \underline{u}'
		& = 
		\int_{\mathcal{S}_{u,\underline{u}_2}}
			\weight 
			\frac{\posinnerproduct}{\secondposinnerproduct \ReciprocalLunitAppliedtoTimeFunction}
			|\angvortrenormalized|_{\gsphere}^2
		\, d \varpi_{\gsphere}
		-
		\int_{\mathcal{S}_{u,\underline{u}_1}}
			\weight 
			\frac{\posinnerproduct}{\secondposinnerproduct \ReciprocalLunitAppliedtoTimeFunction}
			|\angvortrenormalized|_{\gsphere}^2
		\, d \varpi_{\gsphere}
			\label{E:DOUBLENULLMAINLATERALERRORINTEGRALSFORSPECIFICVORTICITYALONGH} \\
	& \ \
		-
		\int_{\mathcal{H}_u(\underline{u}_1,\underline{u}_2)}
			\left\lbrace
				\mathfrak{H}_{(\pmb{\partial} \weight)}[\vortrenormalized]
				+
				\weight
				\mathfrak{H}[\vortrenormalized]
				+
				\weight
				\mathfrak{H}_{(1)}[\vortrenormalized]
			\right\rbrace
		\, d \varpi_{\gsphere} d \underline{u}',
		\notag
			\\
		\int_{\underline{\mathcal{H}}_{\underline{u}}(u_1,u_2)}
			\weight 
			\spherenormal_{\alpha} J^{\alpha}[\GradEnt]
		\, d \varpi_{\gsphere} d u'
		& = 
		-
		\int_{\mathcal{S}_{u_2,\underline{u}}}
			\weight 
			\frac{\uposinnerproduct}{\seconduposinnerproduct \ReciprocaluLunitAppliedtoTimeFunction}
			|\angGradEnt|_{\gsphere}^2
		\, d \varpi_{\gsphere}
		+
		\int_{\mathcal{S}_{u_1,\underline{u}}}
			\weight 
			\frac{\uposinnerproduct}{\seconduposinnerproduct \ReciprocaluLunitAppliedtoTimeFunction}
			|\angGradEnt|_{\gsphere}^2
		\, d \varpi_{\gsphere}
			 \label{E:DOUBLENULLMAINLATERALERRORINTEGRALSFORENTROPYGRADIENTALONGHBAR} \\
	& \ \
		+
		\int_{\underline{\mathcal{H}}_{\underline{u}}(u_1,u_2)}
			\left\lbrace
				\underline{\mathfrak{H}}_{(\pmb{\partial} \weight)}[\GradEnt]
				+
				\weight
				\underline{\mathfrak{H}}[\GradEnt]
				+
				\weight
				\underline{\mathfrak{H}}_{(2)}[\GradEnt]
			\right\rbrace
		\, d \varpi_{\gsphere} d u',
		\notag
			\\
		\int_{\mathcal{H}_u(\underline{u}_1,\underline{u}_2)}
			\weight 
			\spherenormal_{\alpha} J^{\alpha}[\GradEnt]
		\, d \varpi_{\gsphere} d \underline{u}'
		& = 
		\int_{\mathcal{S}_{u,\underline{u}_2}}
			\weight 
			\frac{\posinnerproduct}{\secondposinnerproduct \ReciprocalLunitAppliedtoTimeFunction}
			|\angGradEnt|_{\gsphere}^2
		\, d \varpi_{\gsphere}
		-
		\int_{\mathcal{S}_{u,\underline{u}_1}}
			\weight 
			\frac{\posinnerproduct}{\secondposinnerproduct \ReciprocalLunitAppliedtoTimeFunction}
			|\angGradEnt|_{\gsphere}^2
		\, d \varpi_{\gsphere}
			\label{E:DOUBLENULLMAINLATERALERRORINTEGRALSFORENTROPYGRADIENTALONGH} \\
	& \ \
		-
		\int_{\mathcal{H}_u(\underline{u}_1,\underline{u}_2)}
			\left\lbrace
				\mathfrak{H}_{(\pmb{\partial} \weight)}[\GradEnt]
				+
				\weight
				\mathfrak{H}[\GradEnt]
				+
				\weight
				\mathfrak{H}_{(2)}[\GradEnt]
			\right\rbrace
		\, d \varpi_{\gsphere} d \underline{u}'.
		\notag				
\end{align}
	\end{subequations}
	On RHSs~\eqref{E:DOUBLENULLMAINLATERALERRORINTEGRALSFORSPECIFICVORTICITYALONGHBAR}-\eqref{E:DOUBLENULLMAINLATERALERRORINTEGRALSFORENTROPYGRADIENTALONGH},
	for $\SigmatTan \in \lbrace \vortrenormalized, \GradEnt \rbrace$,
		the scalar functions
		$\underline{\mathfrak{H}}_{(\pmb{\partial} \weight)}[\SigmatTan]$,
		$\underline{\mathfrak{H}}[\SigmatTan]$,
			$\mathfrak{H}_{(\pmb{\partial} \weight)}[\SigmatTan]$,
		and
		$\mathfrak{H}[\SigmatTan]$
		are defined in 
		\eqref{E:WEIGHTDERIVATIVESDOUBLENULLMAINTHMINGOINGLATERALBOUNDARYEASYERRORINTEGRANDTERMS}-\eqref{E:DOUBLENULLMAINTHMOUTGOINGLATERALBOUNDARYEASYERRORINTEGRANDTERMS},
		and the scalar functions
		$\underline{\mathfrak{H}}_{(1)}[\vortrenormalized]$,
		$\mathfrak{H}_{(1)}[\vortrenormalized]$,
		$\underline{\mathfrak{H}}_{(2)}[\GradEnt]$,
		$\mathfrak{H}_{(2)}[\GradEnt]$
		are defined in
		\eqref{E:DOUBLENULLMAINTHMSPECIFICVORTITICYMAININGOINGLATERALERRORINTEGRAND}-\eqref{E:DOUBLENULLMAINTHMENTROPYGRADIENTMAINOUTGOINGLATERALERRORINTEGRAND}.
		
\end{proposition}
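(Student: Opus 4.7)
\medskip

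\noindent \textbf{Proof proposal for Proposition~\ref{P:DOUBLENULLSTRUCTUREOFERRORINTEGRALS}.}
The plan is to follow the blueprint of the proof of Prop.\,\ref{P:STRUCTUREOFERRORINTEGRALS} with the double-null analogs of each ingredient substituted in. I will give the argument in detail for \eqref{E:DOUBLENULLMAINLATERALERRORINTEGRALSFORSPECIFICVORTICITYALONGHBAR}; the other three identities are proved by the same template, using \eqref{E:DOUBLENULLPRELIMINARYDECOMPOFBOUNDARYINTEGRANDINTERESTINGCASE2} (for the outgoing null hypersurface identity \eqref{E:DOUBLENULLMAINLATERALERRORINTEGRALSFORSPECIFICVORTICITYALONGH}), and by replacing $\vortrenormalized$ with $\GradEnt$ throughout, using \eqref{E:KEYIDENTITYANTISYMMETRICPARTOFENTROPYGRADIENTDUALGRADIENT} in place of \eqref{E:KEYIDENTITYANTISYMMETRICPARTOFSPECIFICVORTICITYDUALGRADIENT}, for the entropy-gradient identities \eqref{E:DOUBLENULLMAINLATERALERRORINTEGRALSFORENTROPYGRADIENTALONGHBAR}-\eqref{E:DOUBLENULLMAINLATERALERRORINTEGRALSFORENTROPYGRADIENTALONGH}.

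To start, I apply \eqref{E:DOUBLENULLPRELIMINARYDECOMPOFBOUNDARYINTEGRANDINTERESTINGCASE1} with $\SigmatTan := \vortrenormalized$ and integrate over $\underline{\mathcal{H}}_{\underline{u}}(u_1,u_2)$ against the volume form $d\varpi_{\gsphere}\,du'$ from Subsubsect.\,\ref{SSS:DOUBLENULLVOLUMEANDAREAFORMS}. The two perfect $\angD$-divergences $\angdiv\{\weight\uposinnerproduct|\angvortrenormalized|_{\gsphere}^2\utang\}$ and $\angdiv\{\weight\spherenormal_{\alpha}\vortrenormalized^{\alpha}\angvortrenormalized\}$ integrate to zero on each sphere $\mathcal{S}_{u',\underline{u}}$, since $\mathcal{S}_{u',\underline{u}}\cong\mathbb{S}^2$ has no boundary. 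Next, to handle the term $-\int_{\underline{\mathcal{H}}_{\underline{u}}(u_1,u_2)}\newuL\{\weight\tfrac{\uposinnerproduct}{\seconduposinnerproduct\ReciprocaluLunitAppliedtoTimeFunction}|\vortrenormalized|_{\gsphere}^2\}\,d\varpi_{\gsphere}du'$ I invoke the fundamental-theorem-of-calculus-type identity \eqref{E:DOUBLENULLKEYHBARINTEGRALIDENTITY} of Lemma~\ref{L:DOUBLENULLINTEGRALIDENTITIESINVOLVINGSPHERES} with $f:=-\weight\tfrac{\uposinnerproduct}{\seconduposinnerproduct\ReciprocaluLunitAppliedtoTimeFunction}|\vortrenormalized|_{\gsphere}^2$. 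This produces the two sphere-boundary terms on RHS~\eqref{E:DOUBLENULLMAINLATERALERRORINTEGRALSFORSPECIFICVORTICITYALONGHBAR} at $\mathcal{S}_{u_1,\underline{u}}$ and $\mathcal{S}_{u_2,\underline{u}}$, \emph{plus} the Lie-derivative term $+\tfrac{1}{2}\int_{\underline{\mathcal{H}}_{\underline{u}}(u_1,u_2)}\weight\tfrac{\uposinnerproduct}{\seconduposinnerproduct\ReciprocaluLunitAppliedtoTimeFunction}|\angvortrenormalized|_{\gsphere}^2(\gsphere^{-1})^{\alpha\beta}\Lie_{\newuL}\gsphere_{\alpha\beta}\,d\varpi_{\gsphere}du'$; this latter integrand exactly cancels the correction $-\tfrac{1}{2}\weight\tfrac{\uposinnerproduct}{\seconduposinnerproduct\ReciprocaluLunitAppliedtoTimeFunction}|\angvortrenormalized|_{\gsphere}^2(\gsphere^{-1})^{\alpha\beta}\Lie_{\newuL}\gsphere_{\alpha\beta}$ appearing in the last braces on RHS~\eqref{E:DOUBLENULLPRELIMINARYDECOMPOFBOUNDARYINTEGRANDINTERESTINGCASE1}, as flagged in Remark~\ref{R:TERMWILLBECANCELED}. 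After this cancellation the ``easy'' error terms are exactly $\underline{\mathfrak{H}}_{(\pmb{\partial}\weight)}[\vortrenormalized]+\weight\,\underline{\mathfrak{H}}[\vortrenormalized]$.

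It remains to rewrite the single ``difficult'' piece $2\weight\uposinnerproduct\uspecialgen^{\alpha}\angvortrenormalized^{\beta}(\partial_{\alpha}\vortrenormalized_{\beta}-\partial_{\beta}\vortrenormalized_{\alpha})$ as $\weight\,\underline{\mathfrak{H}}_{(1)}[\vortrenormalized]$. For this I substitute the compressible-Euler-valid identity \eqref{E:KEYIDENTITYANTISYMMETRICPARTOFSPECIFICVORTICITYDUALGRADIENT} from Corollary~\ref{C:SHARPDECOMPOSITIONOFANTISYMMETRICGRADIENTS} for $\partial_{\alpha}\vortrenormalized_{\beta}-\partial_{\beta}\vortrenormalized_{\alpha}$. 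The contractions with $\uspecialgen^{\alpha}\angvortrenormalized^{\beta}$ of the first four terms of \eqref{E:KEYIDENTITYANTISYMMETRICPARTOFSPECIFICVORTICITYDUALGRADIENT} and of the Cartan $\upepsilon_{\alpha\beta\gamma\delta}\Transport^{\gamma}\VortVort^{\delta}$ term produce, after obvious algebra, the corresponding manifestly $\underline{\mathcal{H}}$-tangential terms listed on RHS~\eqref{E:DOUBLENULLMAINTHMSPECIFICVORTITICYMAININGOINGLATERALERRORINTEGRAND}. The remaining two terms of \eqref{E:KEYIDENTITYANTISYMMETRICPARTOFSPECIFICVORTICITYDUALGRADIENT}, namely the ones carrying the entropy gradient, are precisely the combination addressed by \eqref{E:DOUBLENULLINGOINGPRELIMINARYDECOMPOSITIONOFSUBTLETERMS} of Proposition~\ref{P:DOUBLENULLPRELIMINARYDECOMPOSITIONOFSUBTLETERMS}; I substitute that identity directly, which supplies the remaining products on RHS~\eqref{E:DOUBLENULLMAINTHMSPECIFICVORTITICYMAININGOINGLATERALERRORINTEGRAND}.

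The three other identities are proved identically: for \eqref{E:DOUBLENULLMAINLATERALERRORINTEGRALSFORSPECIFICVORTICITYALONGH} I integrate \eqref{E:DOUBLENULLPRELIMINARYDECOMPOFBOUNDARYINTEGRANDINTERESTINGCASE2} against $d\varpi_{\gsphere}d\underline{u}'$ over $\mathcal{H}_u(\underline{u}_1,\underline{u}_2)$, invoke \eqref{E:DOUBLENULLKEYHINTEGRALIDENTITY} of Lemma~\ref{L:DOUBLENULLINTEGRALIDENTITIESINVOLVINGSPHERES} in place of \eqref{E:DOUBLENULLKEYHBARINTEGRALIDENTITY}, and use \eqref{E:DOUBLENULLOUTGOINGPRELIMINARYDECOMPOSITIONOFSUBTLETERMS} of Proposition~\ref{P:DOUBLENULLPRELIMINARYDECOMPOSITIONOFSUBTLETERMS} in place of \eqref{E:DOUBLENULLINGOINGPRELIMINARYDECOMPOSITIONOFSUBTLETERMS}; the overall sign on the left is consistent with Remark~\ref{R:SIGNSINLEMMADOUBLENULLPRELIMINARYANALYSISOFBOUNDARYINTEGRAND}. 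For the entropy-gradient identities \eqref{E:DOUBLENULLMAINLATERALERRORINTEGRALSFORENTROPYGRADIENTALONGHBAR}-\eqref{E:DOUBLENULLMAINLATERALERRORINTEGRALSFORENTROPYGRADIENTALONGH}, the analysis is strictly easier, since \eqref{E:KEYIDENTITYANTISYMMETRICPARTOFENTROPYGRADIENTDUALGRADIENT} shows that $d(\GradEnt_{\flat})$ has only two terms, each a simple multiple of $\GradEnt$ and $\partial\ln\Speed$; no analog of Proposition~\ref{P:DOUBLENULLPRELIMINARYDECOMPOSITIONOFSUBTLETERMS} is needed, and the substitution directly produces $\underline{\mathfrak{H}}_{(2)}[\GradEnt]$ and $\mathfrak{H}_{(2)}[\GradEnt]$ as defined in \eqref{E:DOUBLENULLMAINTHMENTROPYGRADIENTMAININGOINGLATERALERRORINTEGRAND}-\eqref{E:DOUBLENULLMAINTHMENTROPYGRADIENTMAINOUTGOINGLATERALERRORINTEGRAND}. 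There is no genuine analytical obstacle in this proof; the only place where something subtle happens has already been isolated and handled in Proposition~\ref{P:DOUBLENULLPRELIMINARYDECOMPOSITIONOFSUBTLETERMS}, where the determinant-type product $\upepsilon_{\alpha\beta\gamma\delta}\uspecialgen^{\alpha}\angvortrenormalized^{\beta}\uLunit^{\gamma}(\GradEnt^{\delta}+\GradEnt^a\uLunit_a\Transport^{\delta})\Transport\LogDensity$ drops out entirely, per \eqref{E:NULLCASEKEYDETERMINANT}. The main bookkeeping obstacle is therefore purely notational: keeping the signs straight between the ``inner'' sphere $\mathcal{S}_{u_1,\underline{u}}$, where $\spherenormal$ points inward to $\widetilde{\Sigma}_{\Timefunction}$, and the ``outer'' sphere $\mathcal{S}_{u_2,\underline{u}}$, where $\spherenormal$ points outward (cf.\ Remark~\ref{R:DOUBLENULLORIENTATIONOFSPHERENORMAL} and Remark~\ref{R:SIGNSINLEMMADOUBLENULLPRELIMINARYANALYSISOFBOUNDARYINTEGRAND}), and correspondingly the mirror-image signs for $\mathcal{S}_{u,\underline{u}_1}$ versus $\mathcal{S}_{u,\underline{u}_2}$ on the outgoing null hypersurface.
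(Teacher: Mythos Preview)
Your proposal is correct and follows essentially the same approach as the paper: integrate the pointwise identities of Lemma~\ref{L:DOUBLENULLPRELIMINARYANALYSISOFBOUNDARYINTEGRAND} over the relevant null hypersurface, apply Lemma~\ref{L:DOUBLENULLINTEGRALIDENTITIESINVOLVINGSPHERES} to convert the $\newuL$- (or $\newL$-) derivative term into sphere boundary terms plus the Lie-derivative term that cancels per Remark~\ref{R:TERMWILLBECANCELED}, and then substitute \eqref{E:KEYIDENTITYANTISYMMETRICPARTOFSPECIFICVORTICITYDUALGRADIENT} together with \eqref{E:DOUBLENULLINGOINGPRELIMINARYDECOMPOSITIONOFSUBTLETERMS} (or \eqref{E:DOUBLENULLOUTGOINGPRELIMINARYDECOMPOSITIONOFSUBTLETERMS}) for the antisymmetric-gradient factor. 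The only cosmetic difference is that the paper explicitly splits the two endpoint values $\underline{u}\in\{0,\underline{U}\}$ by citing \eqref{E:DOUBLENULLPRELIMINARYDECOMPOFBOUNDARYINTEGRANDINTERESTINGCASE1} and \eqref{E:DOUBLENULLPRELIMINARYDECOMPOFBOUNDARYINTEGRANDDATACASE1} separately (and likewise \eqref{E:DOUBLENULLPRELIMINARYDECOMPOFBOUNDARYINTEGRANDINTERESTINGCASE2} versus \eqref{E:DOUBLENULLPRELIMINARYDECOMPOFBOUNDARYINTEGRANDDATACASE2}), whereas you use only the former; since the ``data'' identities are literally the negations of the ``interesting'' ones, this is purely bookkeeping and changes nothing.
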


\begin{proof}
Prop.\,\ref{P:DOUBLENULLSTRUCTUREOFERRORINTEGRALS} follows from 
integrating the identities provided by Lemma~\ref{L:DOUBLENULLPRELIMINARYANALYSISOFBOUNDARYINTEGRAND}
in the same way that
Prop.\,\ref{P:STRUCTUREOFERRORINTEGRALS} followed from integrating the identity provided by Lemma~\ref{L:PRELIMINARYANALYSISOFBOUNDARYINTEGRAND},
where we use the identities
\eqref{E:DOUBLENULLKEYHBARINTEGRALIDENTITY}-\eqref{E:DOUBLENULLKEYHINTEGRALIDENTITY} 
(see also Remark~\ref{R:TERMWILLBECANCELED})
in place of the identity \eqref{E:KEYHYPERSURFACEINTEGRALIDENTITY}
used in the proof of Prop.\,\ref{P:STRUCTUREOFERRORINTEGRALS}.
For clarity, we will provide a few additional details.
When one proves \eqref{E:DOUBLENULLMAINLATERALERRORINTEGRALSFORSPECIFICVORTICITYALONGHBAR},
the argument described above, based on \eqref{E:DOUBLENULLKEYHBARINTEGRALIDENTITY}
as well as
\eqref{E:DOUBLENULLPRELIMINARYDECOMPOFBOUNDARYINTEGRANDINTERESTINGCASE1} in the case 
$\underline{u} = \underline{U}$
and
\eqref{E:DOUBLENULLPRELIMINARYDECOMPOFBOUNDARYINTEGRANDDATACASE1}
in the case $\underline{u} = 0$,
where $\vortrenormalized$ is in the role of $\SigmatTan$ in both
\eqref{E:DOUBLENULLPRELIMINARYDECOMPOFBOUNDARYINTEGRANDINTERESTINGCASE1}
and
\eqref{E:DOUBLENULLPRELIMINARYDECOMPOFBOUNDARYINTEGRANDDATACASE1},
leads to an integral identity analogous to \eqref{E:INTEGRATEDPRELIMINARYDECOMPOFBOUNDARYINTEGRAND},
which in particular features the following error integral on the right-hand side:
		$
		2 
		\int_{\underline{\mathcal{H}}_{\underline{u}}(u_1,u_2)}
			\weight
			\uposinnerproduct
			\uspecialgen^{\alpha}
			\angV^{\beta}
			(\partial_{\alpha} \vortrenormalized_{\beta} - \partial_{\beta} \vortrenormalized_{\alpha})
		\, d \varpi_{\gsphere} d u'
		$.
To handle this integral, we use the identity 
\eqref{E:KEYIDENTITYANTISYMMETRICPARTOFSPECIFICVORTICITYDUALGRADIENT} to substitute
for the integrand factor
$
\partial_{\alpha} \vortrenormalized_{\beta} - \partial_{\beta} \vortrenormalized_{\alpha}
$
in this integral.
The resulting identity features an integral that is analogous to \eqref{E:DIFFICULTINTEGRALPROOFOFSTRUCTUREOFERRORINTEGRALS},
namely 
\begin{align} \label{E:DOUBLENULLDIFFICULTINTEGRALPROOFOFSTRUCTUREOFERRORINTEGRALS}
2 
\int_{\underline{\mathcal{H}}_{\underline{u}}(u_1,u_2)}
			\weight
			\uposinnerproduct
			\Speed^{-4} 
			\exp(-2 \LogDensity) 
			\frac{p_{;\Ent}}{\bar{\varrho}}
			\upepsilon_{\alpha \beta \gamma \delta}
			\uspecialgen^{\alpha}
			\angV^{\beta}
			\left\lbrace
				-
				(\Transport v^{\gamma}) 
				\GradEnt^{\delta}
				+
				 \Transport^{\gamma}
				[\GradEnt^{\delta}
				(\partial_a v^a)
				-
				\GradEnt^a \partial_a v^{\delta}]
			\right\rbrace
		\, d \varpi_{\gsphere} d u',
\end{align}
which is generated by the fifth and sixth products on RHS~\eqref{E:KEYIDENTITYANTISYMMETRICPARTOFSPECIFICVORTICITYDUALGRADIENT}.
We then rewrite the integrand factors 
\[
\upepsilon_{\alpha \beta \gamma \delta}
			\uspecialgen^{\alpha}
			\angV^{\beta}
			\left\lbrace
				-
				(\Transport v^{\gamma}) 
				\GradEnt^{\delta}
				+
				 \Transport^{\gamma}
				[\GradEnt^{\delta}
				(\partial_a v^a)
				-
				\GradEnt^a \partial_a v^{\delta}]
			\right\rbrace
\]
from \eqref{E:DOUBLENULLDIFFICULTINTEGRALPROOFOFSTRUCTUREOFERRORINTEGRALS}
by using the remarkable identity \eqref{E:DOUBLENULLINGOINGPRELIMINARYDECOMPOSITIONOFSUBTLETERMS} 
for substitution. 
In total, these steps yield \eqref{E:DOUBLENULLMAINLATERALERRORINTEGRALSFORSPECIFICVORTICITYALONGHBAR}.

\eqref{E:DOUBLENULLMAINLATERALERRORINTEGRALSFORENTROPYGRADIENTALONGHBAR} can be proved using nearly
identical arguments, where we use the identity \eqref{E:KEYIDENTITYANTISYMMETRICPARTOFENTROPYGRADIENTDUALGRADIENT}
in place of the identity \eqref{E:KEYIDENTITYANTISYMMETRICPARTOFSPECIFICVORTICITYDUALGRADIENT} used in the previous paragraph.

The identity \eqref{E:DOUBLENULLMAINLATERALERRORINTEGRALSFORSPECIFICVORTICITYALONGH} 
can be proved using arguments similar to the ones we used to prove
\eqref{E:DOUBLENULLMAINLATERALERRORINTEGRALSFORSPECIFICVORTICITYALONGHBAR},
where we use 
\eqref{E:DOUBLENULLKEYHINTEGRALIDENTITY}
in the role of
\eqref{E:DOUBLENULLKEYHBARINTEGRALIDENTITY},
\eqref{E:DOUBLENULLPRELIMINARYDECOMPOFBOUNDARYINTEGRANDINTERESTINGCASE2}
in the role of
\eqref{E:DOUBLENULLPRELIMINARYDECOMPOFBOUNDARYINTEGRANDINTERESTINGCASE1},
and \eqref{E:DOUBLENULLPRELIMINARYDECOMPOFBOUNDARYINTEGRANDDATACASE2}
in the role of
\eqref{E:DOUBLENULLPRELIMINARYDECOMPOFBOUNDARYINTEGRANDDATACASE1}.

Finally, \eqref{E:DOUBLENULLMAINLATERALERRORINTEGRALSFORENTROPYGRADIENTALONGH} can be proved using arguments
nearly identical to the ones needed to prove \eqref{E:DOUBLENULLMAINLATERALERRORINTEGRALSFORSPECIFICVORTICITYALONGH},
where we use \eqref{E:KEYIDENTITYANTISYMMETRICPARTOFENTROPYGRADIENTDUALGRADIENT}
in the place of the identity \eqref{E:KEYIDENTITYANTISYMMETRICPARTOFSPECIFICVORTICITYDUALGRADIENT}
that is needed for the proof of \eqref{E:DOUBLENULLMAINLATERALERRORINTEGRALSFORSPECIFICVORTICITYALONGH}.
\end{proof}

\subsection{Proof of Theorem~\ref{T:DOUBLENULLMAINTHEOREM}}
\label{SS:PROOFOFTHEOREMDOUBLENULLMAINTHEOREM}
	In this subsection, we use the previously derived results to prove Theorem~\ref{T:DOUBLENULLMAINTHEOREM}.
	
	We first prove \eqref{E:DOUBLENULLSPACETIMEREMARKABLEIDENTITYSPECIFICVORTICITY}.
	For $\underline{u}''$ small and positive, we set
	$\mathcal{M}_1(\underline{u}'') := \mathcal{M}_{U,\underline{U}} \cap \lbrace 1 + \underline{u}'' \leq \Timefunction \leq 1 + \underline{U} \rbrace$.
	We set $\mathcal{M}_2 := \mathcal{M}_{U,\underline{U}} \cap \lbrace 1 + \underline{U} \leq \Timefunction \leq U \rbrace$.
	For $u''$ close to but less that $U$, we set
	$\mathcal{M}_3(u'') := \mathcal{M}_{U,\underline{U}} \cap \lbrace U \leq \Timefunction \leq u'' + \underline{U} \rbrace$.
	Note that $\mathcal{M}_{U,\underline{U}} = \mathcal{M}_1(0) \cup \mathcal{M}_2 \cup \mathcal{M}_3(U)$;
	see Fig.\,\ref{F:DOUBLENULLPROOF}, which can be viewed as a partitioned, ``spherically symmetric'' caricature 
	of Fig.\,\ref{F:DOUBLENULL}.
	
\begin{center}
\begin{overpic}[scale=.65,grid=false]{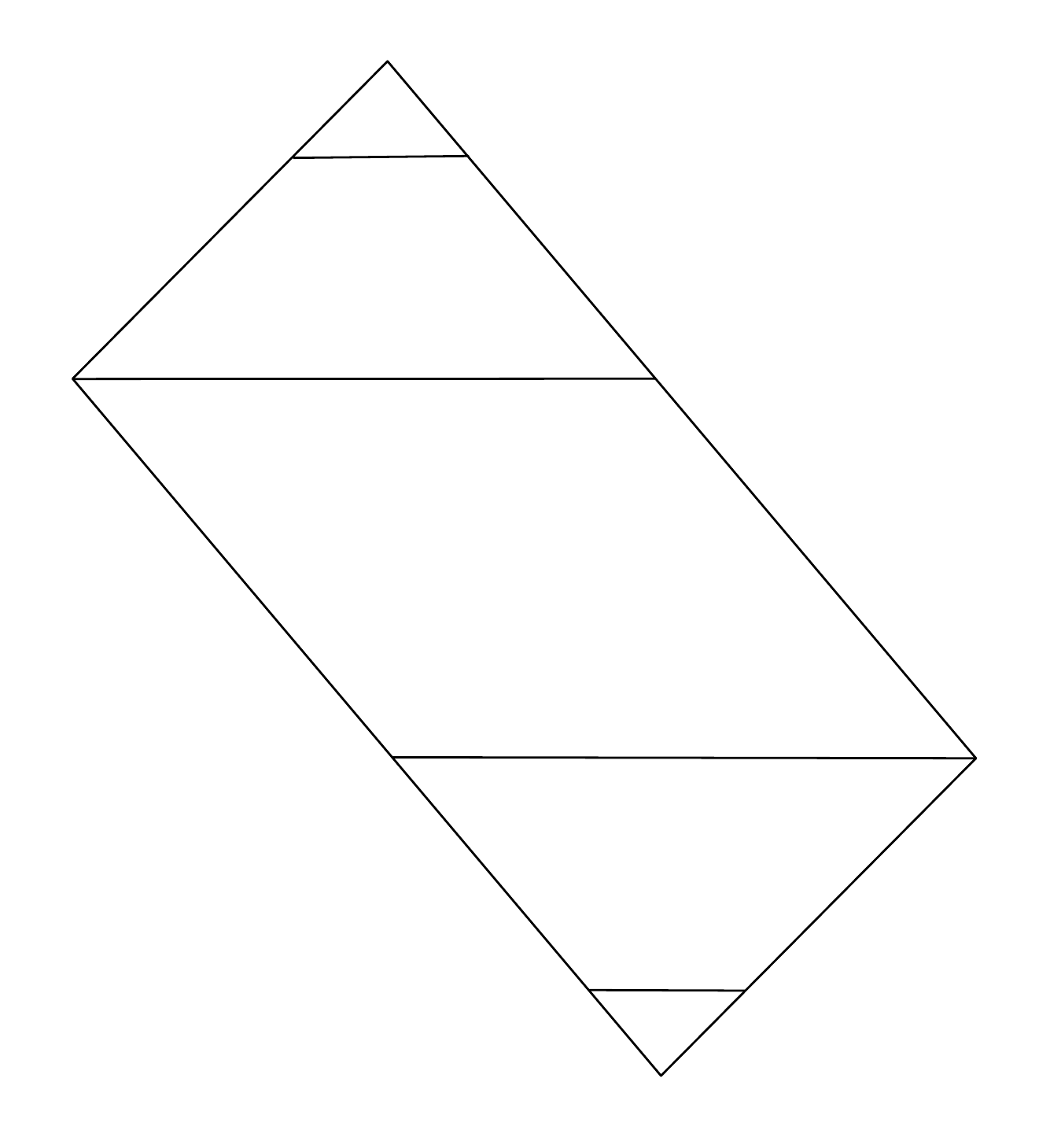} 
\put (52,23) {\tiny $\mathcal{M}_1(\underline{u}'')$}
\put (38,32.5) {\rotatebox{-50}{\tiny$\underline{\mathcal{H}}_{0}(1 + \underline{u}'',1 + \underline{U})$}}
\put (66,19) {\rotatebox{50}{\tiny$\mathcal{H}_1(\underline{u}'',\underline{U})$}}
\put (43,49) {\tiny$\mathcal{M}_2$}
\put (14,60) {\rotatebox{-50}{\tiny$\underline{\mathcal{H}}_{0}(1 + \underline{U},U)$}}
\put (56,62) {\rotatebox{-50}{\tiny$\underline{\mathcal{H}}_{\underline{U}}(1,U - \underline{U})$}}
\put (27,73) {\tiny$\mathcal{M}_3(u'')$}
\put (10,68) {\rotatebox{45}{\tiny$\mathcal{H}_U(0,\underline{U} + u'' - U)$}}
\put (30,96.5) {\tiny $\mathcal{S}_{U,\underline{U}}$}
\put (37,84) {\rotatebox{-50}{\tiny$\underline{\mathcal{H}}_{\underline{U}}(U - \underline{U},u'')$}}
\put (42,86.5) {\tiny $\mathcal{S}_{u'',\underline{U}}$}
\put (9,86.5) {\tiny $\mathcal{S}_{U,\underline{U} + \underline{u}'' - U}$}
\put (0,67) {\tiny $\mathcal{S}_{U,0}$}
\put (58,67) {\tiny $\mathcal{S}_{U - \underline{U},\underline{U}}$}
\put (67,13.5) {\tiny$\mathcal{S}_{1,\underline{u}''}$}
\put (39.5,13.5) {\tiny$\mathcal{S}_{1+\underline{u}'',0}$}
\put (23,34) {\tiny$\mathcal{S}_{1+\underline{U},0}$}
\put (86,34) {\tiny$\mathcal{S}_{1,\underline{U}}$}
\put (55,4) {\tiny$\mathcal{S}_{1,0}$}
\end{overpic}
\captionof{figure}{The spacetime regions in the proof of Theorem~\ref{T:DOUBLENULLMAINTHEOREM}}
\label{F:DOUBLENULLPROOF}
\end{center}

	The main step in proving \eqref{E:DOUBLENULLSPACETIMEREMARKABLEIDENTITYSPECIFICVORTICITY}
	is deriving the following three integral identities (where $u''$ and $\underline{u}''$ are fixed in the identities):
	\begin{align}
		&
		\int_{\mathcal{M}_1(\underline{u}'')}	
			\weight
			\lengthofmodtophypnorm^{-1}
			\mathscr{Q}(\pmb{\partial} \vortrenormalized,\pmb{\partial} \vortrenormalized)
		\, d \varpi_{\gfour}
			\label{E:FIRSTREGIONDOUBLENULLSPACETIMEREMARKABLEIDENTITYSPECIFICVORTICITY} 
			\\
		& = 
		\int_{\mathcal{S}_{1,\underline{U}}}
			\weight
			\frac{\posinnerproduct}{\secondposinnerproduct \ReciprocalLunitAppliedtoTimeFunction}
			|\angvortrenormalized|_{\gsphere}^2
		\, d \varpi_{\gsphere} 
		-
		\int_{\mathcal{S}_{1,\underline{u}''}}
			\weight
			\frac{\posinnerproduct}{\secondposinnerproduct \ReciprocalLunitAppliedtoTimeFunction}
			|\angvortrenormalized|_{\gsphere}^2
		\, d \varpi_{\gsphere}
			\notag \\
		& \ \
		+
		\int_{\mathcal{S}_{1 + \underline{U}},0}
			\weight
			\frac{\uposinnerproduct}{\seconduposinnerproduct \ReciprocaluLunitAppliedtoTimeFunction} 
			|\angvortrenormalized|_{\gsphere}^2
		\, d \varpi_{\gsphere}
		-
		\int_{\mathcal{S}_{1 + \underline{u}''},0}
			\weight
			\frac{\uposinnerproduct}{\seconduposinnerproduct \ReciprocaluLunitAppliedtoTimeFunction} 
			|\angvortrenormalized|_{\gsphere}^2
			|\angvortrenormalized|_{\gsphere}^2
		\, d \varpi_{\gsphere}
			\notag \\
	& \ \
	+	
	\int_{\mathcal{M}_1(\underline{u}'')}
			\weight
			\lengthofmodtophypnorm^{-1}
			\left\lbrace
				\frac{1}{2}|\mathfrak{A}^{(\vortrenormalized)}|_{\topfirstfund}^2
				+
				|\mathfrak{B}_{(\vortrenormalized)}|_{\topfirstfund}^2
				+
				\mathfrak{C}^{(\vortrenormalized)}
				+
				\mathfrak{D}^{(\vortrenormalized)}
				+
				\mathfrak{J}_{(Coeff)}[\vortrenormalized,\pmb{\partial} \vortrenormalized]
			\right\rbrace
	\, d \varpi_{\gfour}
		\notag \\
	& \ \
	+	
	\int_{\mathcal{M}_1(\underline{u}'')}	
			\lengthofmodtophypnorm^{-1}
			\mathfrak{J}_{(\pmb{\partial} \weight)}[\vortrenormalized,\pmb{\partial} \vortrenormalized]
	\, d \varpi_{\gfour}
		\notag \\
	& \ \
		-
		\int_{\underline{\mathcal{H}}_{0}(1 + \underline{u}'',1 + \underline{U})}
			\left\lbrace
				\underline{\mathfrak{H}}_{\pmb{\partial} \weight}[\vortrenormalized]
				+
				\weight
				\underline{\mathfrak{H}}[\vortrenormalized]
				+
				\weight
				\underline{\mathfrak{H}}_{(1)}[\vortrenormalized]
			\right\rbrace
		\, d \varpi_{\gsphere} d u'
			\notag \\
	& \ \
		-
		\int_{\mathcal{H}_1(\underline{u}'',\underline{U})}
			\left\lbrace
				\mathfrak{H}_{\pmb{\partial} \weight}[\vortrenormalized]
				+
				\weight
				\mathfrak{H}[\vortrenormalized]
				+
				\weight
				\mathfrak{H}_{(1)}[\vortrenormalized]
			\right\rbrace
		\, d \varpi_{\gsphere} d \underline{u}',
			\notag
	\end{align}
	
	\begin{align}
		&
		\int_{\mathcal{M}_2}	
			\weight
			\lengthofmodtophypnorm^{-1}
			\mathscr{Q}(\pmb{\partial} \vortrenormalized,\pmb{\partial} \vortrenormalized)
		\, d \varpi_{\gfour}
			\label{E:SECONDREGIONDOUBLENULLSPACETIMEREMARKABLEIDENTITYSPECIFICVORTICITY} 
			\\
		& = 
		\int_{\mathcal{S}_{U,0}}
			\weight
			\frac{\uposinnerproduct}{\seconduposinnerproduct \ReciprocaluLunitAppliedtoTimeFunction} 
			|\angvortrenormalized|_{\gsphere}^2
		\, d \varpi_{\gsphere} 
		-
		\int_{\mathcal{S}_{1 + \underline{U},0}}
			\weight
			\frac{\uposinnerproduct}{\seconduposinnerproduct \ReciprocaluLunitAppliedtoTimeFunction} 
			|\angvortrenormalized|_{\gsphere}^2
		\, d \varpi_{\gsphere} 
			\notag \\
		& \ \
		-
		\int_{\mathcal{S}_{U - \underline{U},\underline{U}}}
			\weight
			\frac{\uposinnerproduct}{\seconduposinnerproduct \ReciprocaluLunitAppliedtoTimeFunction} 
			|\angvortrenormalized|_{\gsphere}^2
		\, d \varpi_{\gsphere}
		+
		\int_{\mathcal{S}_{1,\underline{U}}}
			\weight
			\frac{\uposinnerproduct}{\seconduposinnerproduct \ReciprocaluLunitAppliedtoTimeFunction} 
			|\angvortrenormalized|_{\gsphere}^2
		\, d \varpi_{\gsphere}
			\notag \\
	& \ \
	+	
	\int_{\mathcal{M}_2}
			\weight
			\lengthofmodtophypnorm^{-1}
			\left\lbrace
				\frac{1}{2}|\mathfrak{A}^{(\vortrenormalized)}|_{\topfirstfund}^2
				+
				|\mathfrak{B}_{(\vortrenormalized)}|_{\topfirstfund}^2
				+
				\mathfrak{C}^{(\vortrenormalized)}
				+
				\mathfrak{D}^{(\vortrenormalized)}
				+
				\mathfrak{J}_{(Coeff)}[\vortrenormalized,\pmb{\partial} \vortrenormalized]
			\right\rbrace
	\, d \varpi_{\gfour}
		\notag \\
	& \ \
	+	
	\int_{\mathcal{M}_2}	
			\lengthofmodtophypnorm^{-1}
			\mathfrak{J}_{(\pmb{\partial} \weight)}[\vortrenormalized,\pmb{\partial} \vortrenormalized]
	\, d \varpi_{\gfour}
		\notag \\
	& \ \
		-
		\int_{\underline{\mathcal{H}}_{0}(1 + \underline{U},U)}
			\left\lbrace
				\underline{\mathfrak{H}}_{\pmb{\partial} \weight}[\vortrenormalized]
				+
				\weight
				\underline{\mathfrak{H}}[\vortrenormalized]
				+
				\weight
				\underline{\mathfrak{H}}_{(1)}[\vortrenormalized]
			\right\rbrace
		\, d \varpi_{\gsphere} d u'
			\notag \\
		& \ \
		+
		\int_{\underline{\mathcal{H}}_{\underline{U}}(1,U - \underline{U})}
			\left\lbrace
				\underline{\mathfrak{H}}_{\pmb{\partial} \weight}[\vortrenormalized]
				+
				\weight
				\underline{\mathfrak{H}}[\vortrenormalized]
				+
				\weight
				\underline{\mathfrak{H}}_{(1)}[\vortrenormalized]
			\right\rbrace
		\, d \varpi_{\gsphere} d \underline{u}',
			\notag
	\end{align}
	
	\begin{align}
		&
		\int_{\mathcal{M}_3(u'')}	
			\weight
			\lengthofmodtophypnorm^{-1}
			\mathscr{Q}(\pmb{\partial} \vortrenormalized,\pmb{\partial} \vortrenormalized)
		\, d \varpi_{\gfour}
			\label{E:THIRDREGIONDOUBLENULLSPACETIMEREMARKABLEIDENTITYSPECIFICVORTICITY} 
			\\
		& = 
		-
		\int_{\mathcal{S}_{U,\underline{U} + u'' - U}}
			\weight
			\frac{\posinnerproduct}{\secondposinnerproduct \ReciprocalLunitAppliedtoTimeFunction}
			|\angvortrenormalized|_{\gsphere}^2
		\, d \varpi_{\gsphere} 
		+
		\int_{\mathcal{S}_{U,0}}
			\weight
			\frac{\posinnerproduct}{\secondposinnerproduct \ReciprocalLunitAppliedtoTimeFunction}
			|\angvortrenormalized|_{\gsphere}^2
		\, d \varpi_{\gsphere}
			\notag \\
		& \ \
		-
		\int_{\mathcal{S}_{u'',\underline{U}}}
			\weight
			\frac{\uposinnerproduct}{\seconduposinnerproduct \ReciprocaluLunitAppliedtoTimeFunction} 
			|\angvortrenormalized|_{\gsphere}^2
		\, d \varpi_{\gsphere}
		+
		\int_{\mathcal{S}_{U - \underline{U},\underline{U}}}
			\weight
			\frac{\uposinnerproduct}{\seconduposinnerproduct \ReciprocaluLunitAppliedtoTimeFunction} 
			|\angvortrenormalized|_{\gsphere}^2
		\, d \varpi_{\gsphere}
			\notag \\
	& \ \
	+	
	\int_{\mathcal{M}_3(u'')}
			\weight
			\lengthofmodtophypnorm^{-1}
			\left\lbrace
				\frac{1}{2}|\mathfrak{A}^{(\vortrenormalized)}|_{\topfirstfund}^2
				+
				|\mathfrak{B}_{(\vortrenormalized)}|_{\topfirstfund}^2
				+
				\mathfrak{C}^{(\vortrenormalized)}
				+
				\mathfrak{D}^{(\vortrenormalized)}
				+
				\mathfrak{J}_{(Coeff)}[\vortrenormalized,\pmb{\partial} \vortrenormalized]
			\right\rbrace
	\, d \varpi_{\gfour}
		\notag \\
	& \ \
	+	
	\int_{\mathcal{M}_3(u'')}	
			\lengthofmodtophypnorm^{-1}
			\mathfrak{J}_{(\pmb{\partial} \weight)}[\vortrenormalized,\pmb{\partial} \vortrenormalized]
	\, d \varpi_{\gfour}
		\notag \\
	& \ \
		+
		\int_{\mathcal{H}_U(0,\underline{U} + u'' - U)}
			\left\lbrace
				\mathfrak{H}_{\pmb{\partial} \weight}[\vortrenormalized]
				+
				\weight
				\mathfrak{H}[\vortrenormalized]
				+
				\weight
				\mathfrak{H}_{(1)}[\vortrenormalized]
			\right\rbrace
		\, d \varpi_{\gsphere} d \underline{u}'
			\notag \\
		& \ \
		+
		\int_{\underline{\mathcal{H}}_{\underline{U}}(U - \underline{U},u'')}
			\left\lbrace
				\underline{\mathfrak{H}}_{\pmb{\partial} \weight}[\vortrenormalized]
				+
				\weight
				\underline{\mathfrak{H}}[\vortrenormalized]
				+
				\weight
				\underline{\mathfrak{H}}_{(1)}[\vortrenormalized]
			\right\rbrace
		\, d \varpi_{\gsphere} d u'.
			\notag
	\end{align}
Then by adding \eqref{E:FIRSTREGIONDOUBLENULLSPACETIMEREMARKABLEIDENTITYSPECIFICVORTICITY}-\eqref{E:THIRDREGIONDOUBLENULLSPACETIMEREMARKABLEIDENTITYSPECIFICVORTICITY},
noting the cancellation of the two integrals over $\mathcal{S}_{1 + \underline{U},0}$
and the two integrals over $\mathcal{S}_{U - \underline{U},\underline{U}}$ 
on RHSs~\eqref{E:FIRSTREGIONDOUBLENULLSPACETIMEREMARKABLEIDENTITYSPECIFICVORTICITY}-\eqref{E:THIRDREGIONDOUBLENULLSPACETIMEREMARKABLEIDENTITYSPECIFICVORTICITY},
taking the limit as $\underline{u}'' \downarrow 0$ in \eqref{E:FIRSTREGIONDOUBLENULLSPACETIMEREMARKABLEIDENTITYSPECIFICVORTICITY} 
and $u'' \uparrow U$ in \eqref{E:THIRDREGIONDOUBLENULLSPACETIMEREMARKABLEIDENTITYSPECIFICVORTICITY}, 
and recalling that $\mathcal{M}_{U,\underline{U}} = \mathcal{M}_1(0)  \cup \mathcal{M}_2 \cup \mathcal{M}_3(U)$,
we arrive at the desired identity \eqref{E:DOUBLENULLSPACETIMEREMARKABLEIDENTITYSPECIFICVORTICITY}.

It remains for us to prove 
\eqref{E:FIRSTREGIONDOUBLENULLSPACETIMEREMARKABLEIDENTITYSPECIFICVORTICITY}-\eqref{E:THIRDREGIONDOUBLENULLSPACETIMEREMARKABLEIDENTITYSPECIFICVORTICITY}.
We first prove \eqref{E:SECONDREGIONDOUBLENULLSPACETIMEREMARKABLEIDENTITYSPECIFICVORTICITY}.
The proof is similar to the proof of \eqref{E:SPACETIMEREMARKABLEIDENTITYSPECIFICVORTICITY},
so we only sketch the argument.
For each fixed $\Timefunction' \in [1 + \underline{U},U]$,
we consider the $\widetilde{\Sigma}_{\Timefunction'}$-divergence identity \eqref{E:NEWSTANDARDDIVERGENCEIDENTITYFORELLIPTICHYPERBOLICCURRENT}
with $\vortrenormalized$ in the role of $\SigmatTan$.
We integrate this identity over $\widetilde{\Sigma}_{\Timefunction'}$ with respect to the volume form
$d \varpi_{\topfirstfund}$ of $\topfirstfund$, 
and use the divergence theorem
to obtain an integral identity. 
This ``spatial'' integral identity features \emph{two boundary integrals}
coming from the term $\widetilde{\nabla}_{\alpha} \left(\weight J^{\alpha}[\SigmatTan] \right)$ 
on RHS~\eqref{E:NEWSTANDARDDIVERGENCEIDENTITYFORELLIPTICHYPERBOLICCURRENT}
(in the proof of \eqref{E:SPACETIMEREMARKABLEIDENTITYSPECIFICVORTICITY}, we encountered only one boundary integral):
	$
	-
	\int_{\mathcal{S}_{u''',0}}
		\weight \spherenormal_{\alpha} J^{\alpha}[\vortrenormalized]
	\,  d \varpi_{\gsphere}
	+
		\int_{\mathcal{S}_{\underline{u}''',\underline{U}}}
		\weight \spherenormal_{\alpha} J^{\alpha}[\vortrenormalized]
	\,  d \varpi_{\gsphere}
	$,
	where $u'''= \underline{u}''' + \underline{U} = \Timefunction'$.
	We clarify that by definition \eqref{E:DOUBLENULLSPHERENORMAL},
	$\spherenormal$ points inward to $\widetilde{\Sigma}_{\Timefunction'}$ at $\mathcal{S}_{u''',0}$,
	while $\spherenormal$ points outward to $\widetilde{\Sigma}_{\Timefunction'}$ at $\mathcal{S}_{\underline{u}''',\underline{U}}$;
	see also Figures~\ref{F:DOUBLENULL} and \ref{F:DOUBLENULLPROOF}.
	This explains the different signs of the two boundary integrals;
	see also Remark~\ref{R:DOUBLENULLORIENTATIONOFSPHERENORMAL}.
	To account for the last term on
	RHS~\eqref{E:IDENTITYMAINQUADRATICFORMFORCONTROLLINGFIRSTDERIVATIVESOFSPECIFICVORTICITYANDENTROPYGRADIENT}
	(with $\vortrenormalized$ in the role of $\SigmatTan$),
	we add the integral 
	$\int_{\widetilde{\Sigma}_{\Timefunction}'} 
		\weight 
		\topfirstfund_{\alpha \beta} (\Transport \vortrenormalized^{\alpha}) (\Transport \vortrenormalized^{\beta}) 
	d \varpi_{\topfirstfund}$ to each side of the integral identity.
	We then integrate the resulting integral identity with respect $\Timefunction'$ over the interval $[1 + \underline{U},U]$.
	Considering \eqref{E:DOUBLENULLTIMEFUNCTION}, we see that the two aforementioned boundary integrals lead to the
	following two $\gfour$-null hypersurface integrals:
	$
	-
	\int_{\underline{\mathcal{H}}_{0}(1 + \underline{U},U)}
		\weight \spherenormal_{\alpha} J^{\alpha}[\vortrenormalized]
	\,  d \varpi_{\gsphere} d u'
	+
	\int_{\underline{\mathcal{H}}_{\underline{U}}(1,U - \underline{U})}
		\weight \spherenormal_{\alpha} J^{\alpha}[\vortrenormalized]
	\,  d \varpi_{\gsphere} d \underline{u'}
	$.
	We then use two applications of
	\eqref{E:DOUBLENULLMAINLATERALERRORINTEGRALSFORSPECIFICVORTICITYALONGHBAR}
	(see Footnote~\ref{FN:UNNECESSARILYRESCTRICTEDEIKONALVALUES})
	to substitute for these two $\gfour$-null hypersurface integrals.
	Also using the arguments given in the discussion surrounding \eqref{E:FIRSTSTEPSPACETIMEREMARKABLEIDENTITYSPECIFICVORTICITY}
	and the identity 
	$d \varpi_{\topfirstfund} d \Timefunction' = \lengthofmodtophypnorm^{-1} d \varpi_{\gfour}$ 
	(see \eqref{E:SPACETIMEVOLUMEFORMEXPRESSIONWITHRESPECTTOTIMEFUNCTION}),
	we arrive at the desired identity \eqref{E:SECONDREGIONDOUBLENULLSPACETIMEREMARKABLEIDENTITYSPECIFICVORTICITY}.
	
	The identity \eqref{E:FIRSTREGIONDOUBLENULLSPACETIMEREMARKABLEIDENTITYSPECIFICVORTICITY} can be proved using arguments
	similar to the ones we used to prove \eqref{E:SECONDREGIONDOUBLENULLSPACETIMEREMARKABLEIDENTITYSPECIFICVORTICITY},
	as we now sketch.
	We argue as before, 
	this time obtaining a ``spatial'' integral identity (over $\widetilde{\Sigma}_{\Timefunction'}$) that involves
	the two boundary integrals
	$
	-
	\int_{\mathcal{S}_{u''',0}}
		\weight \spherenormal_{\alpha} J^{\alpha}[\vortrenormalized]
	\,  d \varpi_{\gsphere}
	+
		\int_{\mathcal{S}_{1,\underline{u}'''}}
		\weight \spherenormal_{\alpha} J^{\alpha}[\vortrenormalized]
	\,  d \varpi_{\gsphere}
	$,
	where $u''' = 1 + \underline{u}''' = \Timefunction'$.
	Again adding the integral 
	$\int_{\widetilde{\Sigma}_{\Timefunction}'} 
		\weight 
		\topfirstfund_{\alpha \beta} (\Transport \vortrenormalized^{\alpha}) (\Transport \vortrenormalized^{\beta}) 
	d \varpi_{\topfirstfund}$ to each side of the integral identity
	to account for the last term on
	RHS~\eqref{E:IDENTITYMAINQUADRATICFORMFORCONTROLLINGFIRSTDERIVATIVESOFSPECIFICVORTICITYANDENTROPYGRADIENT}
	and integrating with respect $\Timefunction'$ over $\Timefunction' \in [1 + \underline{u}'',1 + \underline{U}]$,
	we arrive at an integral identity that involves the following two $\gfour$-null hypersurface integrals:
	$
	-
	\int_{\underline{\mathcal{H}}_{0}(1 + \underline{u}'',1 + \underline{U})}
		\weight \spherenormal_{\alpha} J^{\alpha}[\vortrenormalized]
	\,  d \varpi_{\gsphere} d u'
	+
	\int_{\mathcal{H}_1(\underline{u}'',\underline{U})}
		\weight \spherenormal_{\alpha} J^{\alpha}[\vortrenormalized]
	\,  d \varpi_{\gsphere} d \underline{u'}
	$.
	Next, we respectively use
	\eqref{E:DOUBLENULLMAINLATERALERRORINTEGRALSFORSPECIFICVORTICITYALONGHBAR}-\eqref{E:DOUBLENULLMAINLATERALERRORINTEGRALSFORSPECIFICVORTICITYALONGH}
	to substitute for these $\gfour$-null hypersurface integrals
	(note that in proving \eqref{E:SECONDREGIONDOUBLENULLSPACETIMEREMARKABLEIDENTITYSPECIFICVORTICITY}, 
	we used only \eqref{E:DOUBLENULLMAINLATERALERRORINTEGRALSFORSPECIFICVORTICITYALONGHBAR}).
	Carefully noting the sign differences between the terms on RHS~\eqref{E:DOUBLENULLMAINLATERALERRORINTEGRALSFORSPECIFICVORTICITYALONGHBAR}
	and
	RHS~\eqref{E:DOUBLENULLMAINLATERALERRORINTEGRALSFORSPECIFICVORTICITYALONGH},
	and again using the arguments given in the discussion surrounding \eqref{E:FIRSTSTEPSPACETIMEREMARKABLEIDENTITYSPECIFICVORTICITY}
	as well as the identity 
	$d \varpi_{\topfirstfund} d \Timefunction' = \lengthofmodtophypnorm^{-1} d \varpi_{\gfour}$,
	we arrive at \eqref{E:FIRSTREGIONDOUBLENULLSPACETIMEREMARKABLEIDENTITYSPECIFICVORTICITY}.
	
	The identity \eqref{E:THIRDREGIONDOUBLENULLSPACETIMEREMARKABLEIDENTITYSPECIFICVORTICITY}
	can be proved using arguments similar to the ones we used to prove
	\eqref{E:FIRSTREGIONDOUBLENULLSPACETIMEREMARKABLEIDENTITYSPECIFICVORTICITY}.
	The two $\gfour$-null hypersurface integrals that one encounters are
	\[
	-
	\int_{\mathcal{H}_U(0,\underline{U} + u'' - U)}
		\weight \spherenormal_{\alpha} J^{\alpha}[\vortrenormalized]
	\,  d \varpi_{\gsphere} d \underline{u'}
	+
	\int_{\underline{\mathcal{H}}_{\underline{U}}(U - \underline{U},u'')}
		\weight \spherenormal_{\alpha} J^{\alpha}[\vortrenormalized]
	\,  d \varpi_{\gsphere} d u',
	\]
	and we again respectively use
	\eqref{E:DOUBLENULLMAINLATERALERRORINTEGRALSFORSPECIFICVORTICITYALONGHBAR}-\eqref{E:DOUBLENULLMAINLATERALERRORINTEGRALSFORSPECIFICVORTICITYALONGH}
	to substitute for them.
	This completes the proof of \eqref{E:DOUBLENULLSPACETIMEREMARKABLEIDENTITYSPECIFICVORTICITY}.
	
	The identity \eqref{E:DOUBLENULLSPACETIMEREMARKABLEIDENTITYENTROPYGRADIENT}
	can be proved using nearly identical arguments,
	where we use the identities
	\eqref{E:DOUBLENULLMAINLATERALERRORINTEGRALSFORENTROPYGRADIENTALONGHBAR}-\eqref{E:DOUBLENULLMAINLATERALERRORINTEGRALSFORENTROPYGRADIENTALONGH}
	in place of the identities
	\eqref{E:DOUBLENULLMAINLATERALERRORINTEGRALSFORSPECIFICVORTICITYALONGHBAR}-\eqref{E:DOUBLENULLMAINLATERALERRORINTEGRALSFORSPECIFICVORTICITYALONGH}
	that we used in proving \eqref{E:DOUBLENULLSPACETIMEREMARKABLEIDENTITYSPECIFICVORTICITY};
	we omit the details. We have therefore proved Theorem~\ref{T:DOUBLENULLMAINTHEOREM}.
	
	\hfill $\qed$

\appendix

\newcolumntype{L}[1]{>{\raggedright\arraybackslash}p{#1}}
\newcolumntype{C}[1]{>{\centering\arraybackslash}p{#1}}
\newcolumntype{R}[1]{>{\raggedleft\arraybackslash}p{#1}}

\section{Notation for Sections~\ref{S:INTRO}-\ref{S:APRIORI}}	
\label{A:APPENDIXFORBULK}
For the reader's convenience, in this appendix, we have gathered some of the notation
from Sects.\,\ref{S:INTRO}-\ref{S:APRIORI} into a table.
We caution that some of the symbols defined in Sects.\,\ref{S:INTRO}-\ref{S:APRIORI} 
have a slightly different -- although analogous -- definition in Sect.\,\ref{S:DOUBLENULL};
see Subsubsect.\,\ref{SSS:TENSORFIELDSWITHSAMEDEFINITIONS} for clarification of this point.
We also refer to Subsubsect.\,\ref{SSS:BASICNOTATION} for basic notation.

\begin{tabular}{ |L{3in} | L{3in} | }
	\hline
	Symbol & Description/ Reference
		\\
	\hline
	\hline
	$\weight$ & An arbitrary scalar function, fixed throughout 
		\\
		\hline
	$\Lie_{\mathbf{X}}$ & Lie differentiation with respect to $\mathbf{X}$
		\\
		\hline
	$\LogDensity$, $\vortrenormalized$, $\GradEnt$ & Def.\,\ref{D:ADDITIONALFLUIDVARIABLES}
		\\
	\hline
	$\vortrenormalized_{\flat}$, $\GradEnt_{\flat}$ & \eqref{E:DUALONEFORMINDICES}
	\\
	\hline
	$d \vortrenormalized_{\flat}$, $d \GradEnt_{\flat}$ & Cor.\,\ref{C:SHARPDECOMPOSITIONOFANTISYMMETRICGRADIENTS}
		\\
		\hline
	$\VortVort$, $\DivGradEnt$ & Def.\,\ref{D:RENORMALIZEDCURLOFSPECIFICVORTICITY}
		\\
	\hline
	$\gfour$, $\gfour^{-1}$ & Def.\,\ref{D:ACOUSTICALMETRIC}	
	\\ 
	\hline
	$\Transport$ & \eqref{E:MATERIALVECTORVIELDRELATIVECTORECTANGULAR}
	\\
	\hline
	$\square_{\gfour}$ & Def.\,\ref{D:COVWAVEOP}
		\\
		\hline
		$\Timefunction$ & Subsect.\,\ref{SS:DOMAINANDTIMEFUNCTIONETC}
	\\
	\hline
	$\mathcal{M}$, $\mathcal{M}_{\Timefunction}$, $\underline{\mathcal{H}}$, $\underline{\mathcal{H}}_{\Timefunction}$,
	$\mathcal{S}_{\Timefunction}$ & Subsect.\,\ref{SS:DOMAINANDTIMEFUNCTIONETC}
	\\
	\hline
	$\underline{\mathcal{N}}$, $\underline{\mathcal{N}}_{\Timefunction}$, $\uLunit$ & Convention~\ref{C:NULLCASE}
	\\
	\hline
	$\tophypnorm$, $\modtophypnorm$, $\sidehypnorm$, $\spherenormal$, $\gen$, $\modgen$ & Def.\,\ref{D:HYPNORMANDSPHEREFORMDEFS}
		\\
		\hline
	$\uposinnerproduct > 0$ & \eqref{E:INGOINGCONDITION}  
		\\
	\hline
	$\seconduposinnerproduct > 0$ & \eqref{E:SECONDINGOINGCONDITION}
		\\
	\hline
	$\lapsemodgen > 0$ & Def.\,\ref{D:HYPNORMANDSPHEREFORMDEFS}, \eqref{E:POSITIVITYOFLAPSEMODGEN}
	\\
	\hline
	$\lengthofgen$, $\lengthofmodgen$, $\lengthoftophypnorm$, $\lengthofmodtophypnorm > 0$, $\lengthofsidehypnorm$ & Def.\,\ref{D:LENGTHOFVARIOUSVECTORFIELDSETC}
		\\
		\hline
	$\hat{\tophypnorm}$, $\hat{\sidehypnorm}$ & Def.\,\ref{D:LENGTHOFVARIOUSVECTORFIELDSETC}
		\\
	\hline
	$g$, $g^{-1}$, $\topfirstfund$, $\topfirstfund^{-1}$, $\sidefirstfund$, $\sidefirstfund^{-1}$, $\gsphere$, $\gsphere^{-1}$
		& Def.\,\ref{D:FIRSTFUNDAMENTALFORMSANDPROJECTIONS}, Lemma~\ref{L:BASICPROPSOFFUNDAMENTALFORMSANDPROJECTIONS}
		\\
	\hline	
	$\Sigmatproject$, $\topproject$, $\sideproject$, $\sphereproject$, $\angV$, $\angvortrenormalized$, $\angGradEnt$
		& Def.\,\ref{D:FIRSTFUNDAMENTALFORMSANDPROJECTIONS}, 
		Lemma~\ref{L:BASICPROPSOFFUNDAMENTALFORMSANDPROJECTIONS},
		Def.\,\ref{D:TANGENTTENSORFIELDS}
			\\
		\hline
	$\toppartialarg{\alpha}$, $\sidepartialarg{\alpha}$, $\angpartialarg{\alpha}$,
	$\toppartialuparg{\alpha}$, $\sidepartialuparg{\alpha}$, $\angpartialuparg{\alpha}$
	&
	Def.\,\ref{D:PROJECTIONSOFCARTESIANCOORDINATEVECTORFIELDS}
		\\
		\hline
	$\Dfour$, $\nabla$, $\widetilde{\nabla}$, $\underline{\nabla}$, $\angD$, $\angdiv$ & Subsect.\,\ref{SS:LEVICIVITACONNECTIONS}
	\\
	\hline
	$\pmb{\partial} \pmb{\upxi}$,
	$\toppartial \pmb{\upxi}$,
	$\sidepartial \pmb{\upxi}$,
	$\angpartial \pmb{\upxi}$,
	$\partial \upxi$
	&
	Subsubsect.\,\ref{SSS:GRADIENTS}
		\\
	\hline
	$|\upxi|_{\euc}$, $|\upxi|_{\euct}$, $|\upxi|_g$, $|\upxi|_{\topfirstfund}$, $|\upxi|_{\gsphere}$
	& Subsubsect.\,\ref{SSS:GRADIENTSANDPOINTWISENORMS}
		\\
	\hline
	$\utang$, $\uspecialgen$ & Def.\,\ref{D:SPECIALGENERATOR}
		\\
	\hline
	$\projectedtransport$ & Def.\,\ref{D:PROJECTIONOFPONTTOTILDESIGMA}
	\\
	\hline
	$\mathscr{Q}$ & Def.\,\ref{D:QUADRATICFORMSFORCONTROLLINGFIRSTDERIVATIVESOFSPECIFICVORTICITYANDENTROPYGRADIENT}, 
		Lemma~\ref{L:POSITIVITYPROPERTIESOFVARIOUSQUADRATICFORMS}
	\\
	\hline
	$J[\SigmatTan]$ & Def.\,\ref{D:ELLIPTICHYPERBOLICCURRENT}
		\\
	\hline
	$\urescalednewgenminushypnorm$ & Def.\,\ref{D:RESCALEDVERSIONOFHYPNORMMINUSNEWGEN}
		\\
	\hline
	$\utandecompvectorfielddownarg{\alpha}$ & Lemma~\ref{L:NEWDECOMPOSITIONOFCOORDINATEPARTIALDERIVATIVEVECTORFIELDS}
		\\
		\hline
	$\LeftoverGradEnt$ & Lemma~\ref{L:ENTROPYVECTORFIELDKEYTENSORIALDECOMPOSITION}
		\\
		\hline
		$\keydetvectorfield$ & Def.\,\ref{D:VECTORFIELDINKEYDETERMINANT}
			\\
		\hline
		$\upsigma$ & \eqref{E:SIGNOFMAINDETERMINANTTERM}
			\\
		\hline
		$d \varpi_{\gfour}$, $d \varpi_g$, $d \varpi_{\topfirstfund}$, $d \varpi_{\sidefirstfund}$ $d \varpi_{\gsphere}$ & Def.\,\ref{D:VOLUMEFORMS}
		\\
		\hline
		$\enmomem$ & \eqref{E:ENMOMENTUMTENSOR}
			\\
		\hline
		$\mathbb{E}_{(Wave)}$, $\mathbb{E}_{(Transport)}$, $\mathbb{F}_{(Wave)}$, 
		$\mathbb{F}_{(Transport)}$ & Def.\,\ref{D:ENERGIESANDFLUXES}
			\\
		\hline
		$\Jenarg{\mathbf{X}}{\alpha}$
		& \eqref{E:MULTIPLIERVECTORFIELD}
			\\
		\hline
		$\deformarg{\mathbf{X}}{\alpha}{\beta}$
		&
		\eqref{E:DEFORMATIONTENSOR}
			\\
		\hline
		$\spacetimeen$, $\toten$ & Def.\,\ref{D:CONTROLLINGQUANTITIES}
		\\
		\hline
\end{tabular}	
\bibliographystyle{amsalpha}
\bibliography{JBib}

\end{document}